\def\cyc{\mathrm{cyc}}
\def\ord{\mathrm{ord}}
\def\GL{\mathrm{GL}}
\def\BA{\mathbf{A}}
\def\BC{\mathbf{C}}
\def\BQ{\mathbf{Q}}
\def\BR{\mathbf{R}}
\def\BZ{\mathbf{Z}}
\def\CA{\mathcal{A}}
\def\CD{\mathcal{D}}
\def\CW{\mathcal{W}}
\def\ord{\mathrm{ord}}
\def\log{\mathrm{log}}
\def\LOG{\mathrm{LOG}}
\def\Dist{\mathrm{Dist}}
\def\Tei{\mathrm{Tei}}
    \theoremstyle{plain}
    \newtheorem{thm}{Theorem}[section] \newtheorem{cor}[thm]{Corollary}
    \newtheorem{lem}[thm]{Lemma} 
    \newtheorem{prop}[thm]{Proposition}
    \newtheorem{Thm}[thm]{Theorem}
    \theoremstyle{definition}
    \newtheorem{defn}[thm]{Definition}
    \theoremstyle{remark}
    \newtheorem {rem}[thm]{Remark}
\numberwithin{equation}{section}
\newcommand{\wvec}[4]{{\scriptsize{\big [ \!\!
\begin{array}{cc} #1 \!\!\! & \!\!\! #2 \\ #3 \!\!\! & \!\!\! #4 \end{array} \!\! \big ] }}}
\newcommand{\binc}[2]{{\scriptsize{\big ( \!\!
\begin{array}{c} #1   \\ #2   \end{array} \!\! \big ) }}}
\begin{document}

\title{Anticyclotomic exceptional zero phenomenon for Hilbert modular forms}
\author{Bingyong Xie \footnote{This paper is supported by
the National Natural Science Foundation of China (grant 11671137),
and supported in part by Science and Technology Commission of
Shanghai Municipality (no. 18dz2271000).}
 \\ \small Department of Mathematics, East China Normal University, Shanghai, China \\ \small byxie@math.ecnu.edu.cn} \date{}
\maketitle


\begin{abstract} In this paper we study the exceptional zero
phenomenon for Hilbert modular forms in the anticyclotomic setting.
We prove a formula expressing the leading term of the $p$-adic
$L$-functions via arithmetic $L$-invariants. 
\end{abstract}

\section{Introduction}

\subsection{Exceptional zero phenomenon for cyclotomic $\BZ_p$-extensions}

Exceptional zero phenomenon was firstly discovered by Mazur, Tate
and Teitelbaum in their remarkable paper \cite{MMT}. They considered
elliptic modular forms .

For a modular form $f=\sum a_n q^n$ of weight $2$, of level $N$ and
character $\varepsilon$, the attached classical $L$-function
$L(s,f)$ admits an analytic continuous to the entire complex plane,
and satisfies a functional equation which relates its values at $s$
and $k-s$. The famous Birch and Swinnerton-Dyer conjecture relates
the behaviour of $L(s,f)$ at $s=1$ to the arithmetic of $A_\phi$,
the abelian variety associated to $\phi$ by Eichler-Shimura
correspondence.

A $p$-adic analogue of the Birch and Swinnerton-Dyer conjecture is
formulated in \cite{MMT}, which replaces $L(s,f)$ by a $p$-adic
$L$-function $L_p(s,f)$ attached to the cyclotomic $\BZ_p$-extension
$\BQ_{\cyc}$ of $\BQ$. Let $\alpha$ be the root of
$X^2-a_pX+\varepsilon(p)p$ that is a unit. Then there exists a
continuous function $L_p(f, \chi)$ on the set of continuous $p$-adic
characters $\chi$ of $\mathrm{Gal}(\BQ^{\cyc}_\infty/\BQ)$, such
that when $\chi$ is of finite order, we have the interpolation
formula \begin{equation} \label{eq:interp} L_p(f,\chi) = e_p(\chi)
\frac{L(1,f,\chi)}{\Omega_f} \end{equation} where $\Omega_f$ is a
real period making $\frac{L(1,f,\chi)}{\Omega_f}$ algebraic, and
$e_p(\chi)$ is the $p$-adic multiplier.
Letting $\chi$ varying in $\{ \langle \ \cdot \ \rangle^s: s\in
\BC_p, |s|\leq 1\}$ where $\langle \ \cdot \ \rangle$ is the
Teichm\"uller character, we obtain the analytic function
$L_p(s,f):=L_p(f, \langle\ \cdot\ \rangle^s).$

When $p||N$ and $a_p=1$, $e_p(1)=0$ and thus by the interpolation
formula $L_p(s,f)$ vanishes at $0$. The exceptional zero conjecture
made in \cite{MMT} relates the first derivative of $L_p(s,f)$ at $0$
to the algebraic part of $L(f,1)$:
\begin{equation} \label{eq:1-order}
L'_p(0,f) = \mathcal{L}(f) \frac{L(1,f)}{\Omega_f} ,
\end{equation} where $\mathcal{L}(f)$, is the so called
$L$-invariant, an isogeny invariant of $A_f$. When $A_f$ is an
elliptic curve (i.e. all of $a_n$ are rational),
$$\mathcal{L}(f)=\frac{\log_p(q(A_f))}{\ord_p(q(A_f))}$$ is defined by
using the $p$-adic period $q(A_f)$ of $A_f$; in this case it is
showed in \cite{BDGG} that $\mathcal{L}(f)\neq 0$.

This conjecture was proved by Greenberg and Stevens \cite{GS} using
Hida's theory of $p$-adic families of ordinary eigenforms and the
$2$-variable $p$-adic $L$-functions attached to them.

In \cite{Hida} Hida formulated a generalization of this exceptional
zero conjecture. To state it we let $F$ be a totally real field, and
let $\mathbf{f}$ be a Hilbert modular form of parallel weight $2$
over $F$. Corresponding to $\mathbf{f}$ we also have the classical
$L$-function $L(s,\mathbf{f})$ and the $p$-adic $L$-function
$L_p(s,\mathbf{f})$ attached to the cyclotomic $\BZ_p$-extension
$F\BQ_\infty^{\mathrm{cyc}}$ of $F$; $L_p(s,\mathbf{f})$ satisfies
an interpolation formula similar to (\ref{eq:interp}) with the
$p$-adic multiplier replaced by a product
$\prod\limits_{\mathfrak{p}|p}e_\mathfrak{p}(\cdot)$.

Let $J_p$ be the set of primes of $F$ above $p$, $J_1$ the subset of
prime $\mathfrak{p}|p$ such that $\alpha_\mathfrak{p}=1$, i.e.
$\mathfrak{p}$ exactly divided the level of $\mathbf{f}$, and the
Fourier coefficient $a_\mathfrak{p}$ at $\mathfrak{p}$ is $1$. Put
$J_2=J_p\backslash J_1$. Then $e_\mathfrak{p}(1)=0$ if and only if
$\mathfrak{p}\in J_1$.

Hida \cite{Hida} conjectured that
\begin{equation}\label{eq:order}
\ord_{s=0}L_p(s,\mathbf{f}) \geq r:= \sharp (J_1)
\end{equation} and
\begin{equation}\label{eq:coef} \frac{d^rL_p(s,\mathbf{f})}{ds^r}\big|_{s=0} = r! \prod_{\mathfrak{p}\in
J_1} \mathcal{L}_\mathfrak{p}(\mathbf{f}) \cdot
\prod_{\mathfrak{p}\in J_2}e_\mathfrak{p}(1)\cdot
\frac{L(1,\mathbf{f})}{\Omega_\mathbf{f}}
\end{equation} where $\mathcal{L}_\mathfrak{p}(\mathbf{f})$ is the
$L$-invariant of $\mathbf{f}$ at $\mathfrak{p}$. When $\mathbf{f}$
is attached to an elliptic curve $A$ over $F$ that is split
multiplicative at $\mathfrak{p}$ with Tate period
$q({A/F_\mathfrak{p}})$, one has
$$\mathcal{L}_\mathfrak{p}(\mathbf{f}) =
\frac{\log_p(N_{F_\mathfrak{p}/\BQ_p}q(A/F_\mathfrak{p}))}{\ord_p(N_{F_\mathfrak{p}/\BQ_p}q(A/F_\mathfrak{p}))}.
$$

When $r=1$, Mok \cite{Mok} proved (\ref{eq:order}) and
(\ref{eq:coef}) under some local assumption by extending the method
in \cite{GS}. In general, Spiess \cite{Spiess} proved
(\ref{eq:order})  unconditionally and  (\ref{eq:coef}) under some
local assumption.

\subsection{Exceptional zero phenomenon for anticyclotomic
$\BZ_p$-extensions}

To explain what an anticyclotomic $\BZ_p$-extension is, let
$\mathcal{K}$ be an imaginary quadratic field. There are many
different $\BZ_p$-extensions of $\mathcal{K}$. Among them exactly
two have the property that $\mathcal{K}_\infty/\BQ$ is Galois. One
is the cyclotomic $\BZ_p$-extension $\mathcal{K}
\BQ^{\mathrm{cyc}}_\infty$. The other is called the {\it
anticyclotomic $\BZ_p$-extension} of $\mathcal{K}$ and denoted by
$\mathcal{K}^{-}_\infty$; $\mathcal{K}^{-}_\infty$ is characterized
by $\mathcal{K}^{-}_\infty=\bigcup\limits_n \mathcal{K}^{-}_n$,
where $\mathcal{K}^-_n$ is Galois over $\BQ$ and
$\mathrm{Gal}(\mathcal{K}^-_n/\BQ)$ is isomorphic to the dihedral
group of order $2p^n$.

Bertolini and Darmon \cite{BD99} proved that, when $p$ is split in
$\mathcal{K}$, exceptional zero phenomenon again holds for elliptic
modular forms. Now, one considers the complex $L$ function
$L(s,f/\mathcal{K})$ associated to the base change of $f$ to
$\mathcal{K}$; the $p$-adic $L$-adic function is replaced by the
anticyclotomic one, denoted by
$L_p(s,f/\mathcal{K})=L_p(f/\mathcal{K}, \langle\ \cdot\
\rangle_{\mathrm{anti}}^s)$, that interpolates $L(1,f/\mathcal{K},
\chi)$ with $\chi$ being anticyclotomic characters, i.e.\ characters
of $\mathrm{Gal}(\mathcal{K}^{-}_\infty/\mathcal{K})$, instead of
cyclotomic characters, i.e. characters of
$\mathrm{Gal}(\mathcal{K}\BQ^{\mathrm{cyc}}_\infty/\mathcal{K})$.
Here $\langle\ \cdot\ \rangle_{\mathrm{anti}}$ is the special
anticyclotomic character that plays the same role in the
anticyclotomic setting as  the Teichm\"uller character $\langle \
\cdot \ \rangle$ in the cyclotomic setting. See Section
\ref{ss:anti-ext-char} for its definition. The function
$L_p(s,f/\mathcal{K})$ was constructed by Bertolini and Darmon in
\cite{BD96}, and was showed to satisfy the interpolation formula by
Chida and Hsieh in \cite{CH}.

Their formula says
\begin{equation}\label{eq:2-order}
\frac{d^2 L_p(s,f/\mathcal{K})}{d s^2} \big|_{s=0} =
\mathcal{L}^\Tei(f)^2 \cdot \frac{L(1,f/\mathcal{K})}{\Omega^-_f} ,
\end{equation} where $\Omega^-_f$ is certain complex period making
$\frac{L(1,f/\mathcal{K}, \chi)}{\Omega^-_f}$ algebraic, and
$\mathcal{L}^\Tei(f)$ is the $L$-invariant of Teitelbaum type. To
define $\mathcal{L}^\Tei(f)$ Bertolini and Darmon \cite{BD99} fixed
a branch of the $p$-adic logarithmic.

The reader may notice that it is the first derivative appeared in
(\ref{eq:1-order}), while it is the second derivative appeared in
(\ref{eq:2-order}). It is rather mysterious to the author this
difference between the cyclotomic and the anticyclotomic settings.

The purpose of our paper is to study the exceptional zero phenomenon
for Hilbert modular forms in the anticyclotomic setting.

We keep Bertolini and Darmon's assumption that $p$ is split in
$\mathcal{K}$. Let $F$ be a totally real field, $K=\mathcal{K}F$.
Let $\mathbf{f}$ be a Hilbert modular form of parallel weight $2$
over $F$. Let $\pi$ denote the automorphic representation of
$\mathrm{GL}_2(\BA_F)$ associated to $\mathbf{f}$, $\pi_K$ the base
change of $\pi$ to $K$.

Our first task is to construct the anticyclotomic $p$-adic
$L$-function $L_p(\mathbf{f}/K,\chi)$ that interpolates the special
$L$-values of $L(1,\mathbf{f}/K,\cdot)=L(\frac{1}{2},\pi_K,\cdot)$.
This function is defined on the set
\begin{equation} \label{eq:cont-char}
\left\{\ \chi: p\text{-adic continuous characters of }
\mathrm{Gal}(\mathcal{K}^-_\infty F/ F) \text{ unramified outside
}p\ \right\}.
\end{equation}

We need some level conditions on $\mathbf{f}$. We decompose the
level $\mathfrak{n}$ of $\mathbf{f}$ as
$$
\mathfrak{n}
=\mathfrak{n}^+\mathfrak{n}^-\prod_{\mathfrak{p}|p}\mathfrak{p}^{r_\mathfrak{p}},$$
where $\mathfrak{n}^+$ (resp. $\mathfrak{n}^-$) is only divided by
primes that are split (resp. inert or ramified) in $K$, and
$(\mathfrak{n}^+\mathfrak{n}^-,p)=1$. We assume that

$\bullet$ $r_\mathfrak{p}\leq 1$ for each $\mathfrak{p}|p$;

$\bullet$ $\mathfrak{n}^-$ is square-free;

$\bullet$ the cardinal number of prime factors of $\mathfrak{n}^{-}$
that are inert in $K$ has the same parity as $[F:\BQ]$.

\begin{Thm}\label{thm:A} Suppose that $\mathbf{f}$ satisfies the above level conditions and that $\mathbf{f}$ is ordinary at $p$ in the sense that
$\alpha_\mathfrak{p}$ is a $p$-adic unit for each prime
$\mathfrak{p}$ above $p$.
 \begin{enumerate}\item There exists a nonzero complex number $\Omega^-_{\mathbf{f}}$ and
a $p$-adic continuous function $L_p(\mathbf{f}/K, \chi)$ on the set
$(\ref{eq:cont-char})$ such that when $\chi$ is of finite order,
$\frac{L(1,\mathbf{f}/K, \chi )}{\Omega^-_\mathbf{f}}$ is algebraic
and
$$ L_p(\mathbf{f}/K,\chi) = \prod_{\mathfrak{p}|p} e_\mathfrak{p}(\chi)  \cdot  \frac{L(1,\mathbf{f}/K, \chi)}{\Omega^-_\mathbf{f}}.
$$ Here $e_\mathfrak{p}(\chi)$ is the multiplier
$$ e_\mathfrak{p} (\chi) =\left\{\begin{array}{ll}
\frac{\alpha_\mathfrak{p}^2(1-\alpha_\mathfrak{p}\chi(\mathfrak{P}))(1-\alpha_\mathfrak{p}\chi(\bar{\mathfrak{P}}))}{p^{2[F_\mathfrak{p}:\BQ_p]}}
& \text{ if } \ord_\mathfrak{P}\chi=0 \text{ i.e. } \chi \text{ is
unramified at } \mathfrak{P},
\\  \frac{p^{n[F_\mathfrak{p}:\BQ_p]}}{\alpha_\mathfrak{p}^{2n}} & \text{ if } \ord_\mathfrak{P}\chi=n >0 , \end{array}\right.
$$ where $\mathfrak{p}=\mathfrak{P}\bar{\mathfrak{P}}$.
\item When $\chi$ varies in the analytic family $\{\langle \ \cdot \
\rangle_{\mathrm{anti}}^s: s\in \BC_p , |s|_p <\frac{1}{p^2}\}$, we
obtain an analytic function
$$ L_p(s,\mathbf{f}/K): = L_p(\mathbf{f}/K, \langle \ \cdot \
\rangle_{\mathrm{anti}}^s). $$ Here, by abuse of notation the
restriction of $\langle \ \cdot \ \rangle_{\mathrm{anti}}$ to
$\mathrm{Gal}(\mathcal{K}^-_\infty F/ F)$ is again denoted by
$\langle \ \cdot \ \rangle_{\mathrm{anti}}$.
\end{enumerate}
\end{Thm}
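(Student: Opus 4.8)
The plan is to construct $L_p(\mathbf{f}/K,\chi)$ as a $p$-adic measure (equivalently, a bounded distribution) on the Galois group $G:=\mathrm{Gal}(\mathcal{K}^-_\infty F/F)$, built out of automorphic periods on a quaternion algebra, and then verify the interpolation formula by a Waldspurger-type (Gross-type) formula. The starting point is the Jacquet--Langlands transfer of $\pi$ to the quaternion algebra $B$ over $F$ ramified exactly at the archimedean places dictated by the parity condition on $\mathfrak{n}^-$ together with the finite places dividing $\mathfrak{n}^-$ that are inert in $K$; the parity hypothesis on $\sharp\{\text{inert primes}\mid\mathfrak{n}^-\}$ versus $[F:\mathbb{Q}]$ is exactly what makes such a $B$ exist with $K$ embedding into it. Because $p$ splits in $\mathcal{K}$, each $\mathfrak{p}\mid p$ splits in $K$ as $\mathfrak{p}=\mathfrak{P}\bar{\mathfrak{P}}$, so $K_\mathfrak{p}\cong F_\mathfrak{p}\times F_\mathfrak{p}$ and $B_\mathfrak{p}\cong\mathrm{GL}_2(F_\mathfrak{p})$; this lets me realize $\mathrm{Gal}(\mathcal{K}^-_\infty F/F)$ via the local-at-$p$ action on a Bruhat--Tits tree (or product of trees), and the ordinariness of $\mathbf{f}$ at each $\mathfrak{p}$ ($\alpha_\mathfrak{p}$ a unit) is what guarantees the resulting distribution is bounded, hence a genuine measure. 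Concretely, one takes the $\mathbf{f}$-isotypic vector in the space of functions on the relevant double coset space, restricts it along the torus $\mathrm{Res}_{K/F}\mathbb{G}_m$, and packages the resulting values over the tower $\mathcal{K}^-_n F$ into a compatible system; the $U_\mathfrak{p}$-eigenvalue relation $a_\mathfrak{p}=\alpha_\mathfrak{p}+\alpha_\mathfrak{p}^{-1}p^{[F_\mathfrak{p}:\mathbb{Q}_p]}$ (or $a_\mathfrak{p}=\alpha_\mathfrak{p}$ when $r_\mathfrak{p}=1$) yields the distribution relation making the limit exist.

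For part (1), once the measure $\mu_{\mathbf{f}}$ is in hand I would define $L_p(\mathbf{f}/K,\chi):=\int_G\chi\,d\mu_{\mathbf{f}}$; continuity in $\chi$ is immediate from boundedness. The interpolation formula at finite-order $\chi$ is then a matter of unfolding $\int_G\chi\,d\mu_{\mathbf{f}}$ into a finite sum of torus-period integrals and identifying that sum, via the Waldspurger / Gross--Zagier-type formula for $\mathrm{GL}_2\times\mathrm{Res}_{K/F}\mathrm{GL}_1$ in its explicit (Gross, Zhang, or Cai--Shu--Tian) form, with $L(1/2,\pi_K\otimes\chi)$ times explicit local factors. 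The archimedean and the split-away-from-$p$ local factors get absorbed into the single complex period $\Omega^-_{\mathbf{f}}$ (chosen so that $L(1,\mathbf{f}/K,\chi)/\Omega^-_{\mathbf{f}}$ is algebraic — its existence follows from the rationality of the quaternionic period pairing), while the local factors at the primes above $p$ must be computed by hand: a local zeta integral / local linear-functional computation at $F_\mathfrak{p}$ for the ordinary line, distinguishing $\mathrm{ord}_{\mathfrak{P}}\chi=0$ from $\mathrm{ord}_{\mathfrak{P}}\chi=n>0$. That local computation is precisely what produces $e_\mathfrak{p}(\chi)$ with the stated shape: the factor $\alpha_\mathfrak{p}^2/p^{2[F_\mathfrak{p}:\mathbb{Q}_p]}$ and the two Euler-type factors $(1-\alpha_\mathfrak{p}\chi(\mathfrak{P}))(1-\alpha_\mathfrak{p}\chi(\bar{\mathfrak{P}}))$ in the unramified case, and $p^{n[F_\mathfrak{p}:\mathbb{Q}_p]}/\alpha_\mathfrak{p}^{2n}$ in the ramified case, reflecting the $U_\mathfrak{p}$-operator acting on a conductor-$\mathfrak{P}^n$ vector. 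The "$2$" in the exponents is the anticyclotomic phenomenon: $\chi$ evaluated at both $\mathfrak{P}$ and $\bar{\mathfrak{P}}$ enters, as opposed to only one prime in the cyclotomic case.

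For part (2), I substitute $\chi=\langle\ \cdot\ \rangle_{\mathrm{anti}}^s$ and observe that $s\mapsto\int_G\langle g\rangle_{\mathrm{anti}}^s\,d\mu_{\mathbf{f}}(g)$ is given by an Iwasawa power series in $s$; the radius of convergence is controlled by how close $\langle g\rangle_{\mathrm{anti}}$ is to $1$ along $G$, and a direct estimate — using that $\langle\ \cdot\ \rangle_{\mathrm{anti}}$ takes values in $1+p\mathbb{Z}_p$ after restriction and that the measure has bounded denominators divisible by a power of $\alpha_\mathfrak{p}^{-1}$, a $p$-adic unit — gives convergence for $|s|_p<p^{-2}$. (The sharper-looking bound $1/p^2$ rather than $1/p$ again traces back to the quadratic dependence on $\chi$ at the primes above $p$.) The main obstacle, I expect, is the local-at-$p$ zeta integral computation feeding into $e_\mathfrak{p}(\chi)$: one must pin down the correct local test vector on $\mathrm{GL}_2(F_\mathfrak{p})\times(F_\mathfrak{p}^\times\times F_\mathfrak{p}^\times)$ adapted to the ordinary filtration, compute the local toric integral against a ramified character, and match normalizations with the global Gross-type formula so that the product over $\mathfrak{p}\mid p$ of the local factors is exactly $\prod_{\mathfrak{p}\mid p}e_\mathfrak{p}(\chi)$ with no stray constants; a secondary technical point is checking that $\mu_{\mathbf{f}}\neq 0$ (so that $\Omega^-_{\mathbf{f}}$ can legitimately be taken nonzero), which follows from non-vanishing of the quaternionic period — i.e. from surjectivity of Jacquet--Langlands together with the local non-vanishing at the ramified places guaranteed by the parity condition.
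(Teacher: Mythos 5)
Your overall strategy (Jacquet--Langlands transfer to a totally definite quaternion algebra whose existence is guaranteed by the parity condition, theta-type elements built from the $U_\mathfrak{p}$-stabilized form evaluated at CM/Gross points over the ring class tower, ordinarity giving boundedness of the resulting distribution, and a Waldspurger-type formula plus local toric computations at $p$ for the interpolation) is the same as the paper's. But there is one step that fails as written: you define $L_p(\mathbf{f}/K,\chi):=\int_G\chi\,d\mu_{\mathbf{f}}$ and then claim that unfolding this integral into a finite sum of torus-period integrals and invoking Waldspurger identifies it with $L(\tfrac{1}{2},\pi_K\otimes\chi)$ times local factors. Waldspurger's formula does not do this: it expresses $L(\tfrac{1}{2},\pi_K\otimes\chi)$ (up to explicit local terms) as the \emph{square} of the toric period $P(\varsigma,\phi^{\dagger},\chi)$ (more precisely as $P(\phi_1,\chi)P(\phi_2,\chi^{-1})$, which the anticyclotomy collapses to a square). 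The sum of torus periods you write down is the theta element itself, and it interpolates only a square root of the central $L$-value. Consequently the anticyclotomic $p$-adic $L$-function cannot be the integral of a measure against $\chi$; it must be defined as the square $L_p=\mathscr{L}^2$ of that integral, exactly as in the paper ($L_J(\pi_K,\widehat\chi)=\widehat\Theta_J(\widehat\chi\widehat\nu^{-1})^2$, after stripping the harmless unit factors $\chi(\mathfrak{N}^+)\prod_v\chi_v(\omega_v)$ coming from the test-vector choices). This squaring is not a cosmetic normalization: it is the source of the $\alpha_\mathfrak{p}^{2}$, $\alpha_\mathfrak{p}^{2n}$, $p^{2[F_\mathfrak{p}:\BQ_p]}$ in $e_\mathfrak{p}(\chi)$ and of the doubled order of vanishing $2r$ in Theorem \ref{thm:B}, so your proposal needs this correction before either part of the theorem can be verified. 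With the squared definition, part (2) still works since the square of an analytic function of $\vec s$ is analytic.

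Two smaller inaccuracies: the radius $|s|_p<p^{-2}$ in part (2) is not a reflection of ``quadratic dependence on $\chi$ at $p$''; it comes from the convergence of $\exp(s\log_\sigma)$, the anticyclotomic logarithm $\log_\sigma$ taking values in $c_0^{-1}p^{-1}\mathcal{O}_{\BC_p}$ (with $c_0$ a class-number-type index), which is how the family $\epsilon^{\vec s}$ is constructed in the paper. And the nonvanishing of $\Omega^-_\mathbf{f}$ does not require nonvanishing of the measure: $\Omega^-_\mathbf{f}$ is an explicit finite product of local terms, nonzero by Casselman's theorem ($\langle\varphi_{\pi'},\pi'(\tau^{\mathfrak{n},B})\varphi_{\pi'}\rangle_G\neq0$) and the local non-vanishing statements for the toric integrals (this is where the auxiliary hypothesis $\alpha_\mathfrak{p}=-1$ at inert special places outside $J$ enters), independently of whether the theta element vanishes.
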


To construct the $p$-adic $L$-functions demanded in Theorem
\ref{thm:A}, we embed the anticyclotomic extension
$\mathcal{K}^-_\infty F/F$ into a much larger one. For each set of
places $J$ of $F$ above $p$, the union of the ring class fields of
conductor $\prod_{\mathfrak{p}\in J}\mathfrak{p}^{n_\mathfrak{p}}$
($n_\mathfrak{p}\geq 0$ for each $\mathfrak{p}\in J$) contains a
maximal subfield whose Galois group is a free module $\BZ_p$ of rank
$\sharp(\Sigma_J)$, which we call the $\Gamma^-_J$-extension. See
our context for the meaning of the notation $\Sigma_J$. When
$J=J_p$, the full set of places above $p$, the
$\Gamma^-_{J_p}$-extension contains $\mathcal{K}^-_\infty F$.

When $J=\{\mathfrak{p}\}$ is single, Hung \cite{Hung14} extended
Chida and Hsieh's method \cite{CH} to construct a $p$-adic
$L$-function for the $\Gamma^-_\mathfrak{p}$-extension. In this
paper we extend their method to the case of arbitrary $J$. For each
$J$ we obtain a multi-variable $p$-adic $L$-function denoted by
$L_J$ that interpolating the special values
$L_{\mathrm{alg}}(\frac{1}{2}, \pi_K, \chi)$ with $\chi$ factors
through $\Gamma^-_J$. When $J=J_p$, we refer $L_{J_p}$ as the
total-variable $p$-adic $L$-function. The $p$-adic $L$-function in
Theorem \ref{thm:A} is obtained by restricting $L_{J_p}$.

Let $J_1$ be the set of primes $\mathfrak{p}$ above $p$ such that
$\alpha_\mathfrak{p}=1$, and put $J_2=J_p\backslash J_1$. By
definition $e_\mathfrak{p}(1)=0$ if and only if $\mathfrak{p}\in
J_1$. Hence, if $J_1$ is non-empty, then $L_p(0,\mathbf{f}/K)=0$.

Comparing Spiess' theorem and Bertolini-Darmon's, one expects the
order of $L_p(s,\mathbf{f}/K)$ at $0$ is at least $2\:\sharp (J_1)
$. Our second task is to show that this is indeed the case.

\begin{Thm}\label{thm:B} Suppose that $\mathbf{f}$ satisfies the same conditions as in Theorem
$\ref{thm:A}$.
\begin{enumerate}
\item Put $r=\sharp (J_1) $. Then
$$\ord_{s=0}L_p(s,\mathbf{f}/K) \geq 2r.$$
\item
We have
$$ \frac{d^{2r}L_p(s,\mathbf{f}/K)}{ds^{2r}}\big|_{s=0} = (r!)^2 \cdot \prod_{\mathfrak{p}\in
J_1} \mathcal{L}^\Tei_\mathfrak{p}(\mathbf{f})^2 \cdot
\prod_{\mathfrak{p}\in J_2}e_\mathfrak{p}(1)\cdot
\frac{L(1,\mathbf{f}/K)}{\Omega^-_\mathbf{f}}.
$$
\end{enumerate}
\end{Thm}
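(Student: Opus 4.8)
The plan is to derive both assertions from a structural analysis of the total-variable $p$-adic $L$-function $L_{J_p}$ constructed above, $L_p(s,\mathbf{f}/K)$ being its restriction along the one-parameter family $\langle\ \cdot\ \rangle_{\mathrm{anti}}^s$. Write $\Lambda=\BZ_p[[\Gamma^-_{J_p}]]$ (extended to a suitable $p$-adic coefficient field) and let $\vartheta\in\Lambda$ be the theta element underlying $L_{J_p}$, so that $\vartheta(\chi)=\prod_{\mathfrak{p}|p}e_\mathfrak{p}(\chi)\cdot L(1,\mathbf{f}/K,\chi)/\Omega^-_\mathbf{f}$ on finite-order characters and $L_p(s,\mathbf{f}/K)=\vartheta(\langle\ \cdot\ \rangle_{\mathrm{anti}}^s)$. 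For $\mathfrak{p}\in J_p$ let $\Gamma^-_\mathfrak{p}\subset\Gamma^-_{J_p}$ be the local factor at $\mathfrak{p}$, $\mathcal{I}_\mathfrak{p}\subset\Lambda$ the ideal generated by the $\gamma-1$ with $\gamma\in\Gamma^-_\mathfrak{p}$, and $\mathcal{J}=\prod_{\mathfrak{p}\in J_1}\mathcal{I}_\mathfrak{p}^2$. Since $p$ splits in $K$ and $\mathcal{K}^-_\infty F/F$ is ramified at every prime above $p$, the pullback of $\langle\ \cdot\ \rangle_{\mathrm{anti}}^s$ has contact of order exactly $1$ in $s$ with the trivial character along each $\Gamma^-_\mathfrak{p}$; hence membership $\vartheta\in\mathcal{J}$ immediately forces $\ord_{s=0}L_p(s,\mathbf{f}/K)\geq 2r$, which is part (a).

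To prove $\vartheta\in\mathcal{J}$: that $\vartheta\in\mathcal{I}_\mathfrak{p}$ for each $\mathfrak{p}\in J_1$ is immediate from $e_\mathfrak{p}(1)=0$ together with the density of finite-order characters. Sharpening this to $\vartheta\in\mathcal{I}_\mathfrak{p}^2$ is the heart of the matter: for a single prime this is essentially Hung's input \cite{Hung14}, which I would transport to the present level and coefficients — because $\mathbf{f}$ is ordinary with $\alpha_\mathfrak{p}=1$ the quaternionic form underlying $\vartheta$ is $\mathfrak{p}$-new with $U_\mathfrak{p}$-eigenvalue $1$, so one reads off a double zero in the unramified directions at $\mathfrak{p}$ from the factor $(1-\alpha_\mathfrak{p}\chi(\mathfrak{P}))(1-\alpha_\mathfrak{p}\chi(\bar{\mathfrak{P}}))$, and the first-order part of $\vartheta$ in the remaining direction is killed by the involution $\sigma\mapsto\sigma^{-1}$ of $\Lambda$ induced by $\mathrm{Gal}(\mathcal{K}/\BQ)$, under which $\vartheta$ is invariant (the sign is $+1$ because the conditions on $\mathfrak{n}^-$ make the relevant quaternion algebra definite and the global root number $+1$). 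The genuinely new, multi-variable step is to carry this out simultaneously over all of $J_1$, i.e.\ to pass from ``$\vartheta\in\mathcal{I}_\mathfrak{p}^2$ for each $\mathfrak{p}\in J_1$'' — which does not suffice, since the complementary Iwasawa algebra is no longer trivial — to ``$\vartheta\in\mathcal{J}$''; for this I would introduce iterated partial derived theta elements, one prime of $J_1$ at a time, and argue by induction on $\sharp(J_1)$, using that the $\Gamma^-_\mathfrak{p}$ are mutually independent direct factors of $\Gamma^-_{J_p}$ so that the derived constructions commute.

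For part (b) one computes the leading coefficient. Given $\vartheta\in\mathcal{J}$, its image $\overline{\vartheta}$ in the associated graded quotient lies in $\bigotimes_{\mathfrak{p}\in J_1}\mathrm{Sym}^2(\Gamma^-_\mathfrak{p}\otimes\BQ_p)$ tensored with the scalar $\prod_{\mathfrak{p}\in J_2}e_\mathfrak{p}(1)\cdot L(1,\mathbf{f}/K)/\Omega^-_\mathbf{f}$, the scalar being read off from the interpolation formula along the non-exceptional primes, where no zero occurs. The central local input identifies, for each $\mathfrak{p}\in J_1$, the value on the anticyclotomic direction of the corresponding quadratic-form component of $\overline{\vartheta}$ with $\mathcal{L}^{\Tei}_{\mathfrak{p}}(\mathbf{f})^2$: via the Cerednik--Drinfeld-type rigid-analytic uniformization of the quaternionic Shimura variety at $\mathfrak{p}$ the form underlying $\vartheta$ becomes a harmonic cocycle on the Bruhat--Tits tree of $\mathrm{PGL}_2(F_\mathfrak{p})$, whose ``$\ord$-period'' is detected by the unramified directions and whose ``$\log$-period'' is detected by the ramified direction, their ratio being by definition $\mathcal{L}^{\Tei}_{\mathfrak{p}}(\mathbf{f})$ — the square arising because the split prime $\mathfrak{p}$ is felt through both $\mathfrak{P}$ and $\bar{\mathfrak{P}}$. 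Pairing $\overline{\vartheta}$ with the $2r$-th power of the anticyclotomic logarithm then gives $\frac{d^{2r}}{ds^{2r}}\big|_{s=0}L_p(s,\mathbf{f}/K)$; carrying out this pairing precisely — keeping track of how the single parameter $s$ couples simultaneously to all $r$ pairs $\{\mathfrak{P},\bar{\mathfrak{P}}\}$ and of the bilinear structure of $\overline{\vartheta}$ at each split $\mathfrak{p}$ — produces the combinatorial constant $(r!)^2$ and the stated formula. I expect the principal obstacles to be exactly these two: the simultaneous upgrade from each $\mathcal{I}_\mathfrak{p}^2$ to the product ideal $\mathcal{J}$, and the exact evaluation of the leading pairing; the interpolation property, the analytic continuation, and the single-prime identification of $\mathcal{L}^{\Tei}_{\mathfrak{p}}(\mathbf{f})$ are available from the construction above and from the work of Hung \cite{Hung14} and Chida--Hsieh \cite{CH}.
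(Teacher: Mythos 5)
There is a genuine gap, and it sits exactly where you say the ``heart of the matter'' is. The paper's proof of Theorem \ref{thm:B} does not establish $\vartheta\in\prod_{\mathfrak{p}\in J_1}\mathcal{I}_\mathfrak{p}^2$ for the $L$-function itself; it exploits the structural fact that $L_{J_p}=\mathscr{L}_{J_p}^2$ is the \emph{square} of the theta element, proves an order-$r$ statement for $\mathscr{L}_{J_p}$ along the anticyclotomic direction with leading term $r!\prod_{\mathfrak{p}\in J_1}\mathcal{L}^{\Tei}_\mathfrak{p}(\mathbf{f})\cdot\mathscr{L}_{J_2}(0,\dots,0)$ (Theorem \ref{thm:final}, via the distribution $\widetilde{\mu}_{J_p}$, the vanishing Lemma \ref{lem:int-0}, and the comparison $[c_{\mathfrak{p},l}]\cup[\widetilde{\mu}_{J_p}]=\mathcal{L}_{\mathfrak{p},l}[c_{\mathfrak{p},\ord}]\cup[\widetilde{\mu}_{J_p}]$ in $H^*(\Delta,\cdot)$), and then squares; the $(r!)^2$ and the squared $L$-invariants in Theorem \ref{thm:B} come from this squaring, not from $\mathfrak{P}$ and $\bar{\mathfrak{P}}$ each contributing a factor to a quadratic form at a split prime. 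Your proposed substitute for the square structure does not work as stated: interpolation only controls $\vartheta$ at finite-order characters, and for anticyclotomic $\chi$ one has $\chi(\bar{\mathfrak{P}})=\chi(\mathfrak{P})^{-1}$, so the multiplier $(1-\alpha_\mathfrak{p}\chi(\mathfrak{P}))(1-\alpha_\mathfrak{p}\chi(\bar{\mathfrak{P}}))$ vanishes precisely on the locus $\chi_\mathfrak{p}=1$ --- this yields only $\vartheta\in\mathcal{I}_\mathfrak{p}$ (Corollary \ref{cor:vanish-value}), i.e.\ first-order information; no double zero is visible from interpolation. The functional-equation argument also fails to close this: the involution acts by inversion on \emph{all} of $\Gamma^-_{J_p}$, so invariance of $\vartheta$ relates the image of $\vartheta$ in $\mathcal{I}_\mathfrak{p}/\mathcal{I}_\mathfrak{p}^2$ to its image under inversion of the remaining variables rather than killing it; at best one gets a parity statement for a one-variable restriction, not the ideal membership you need when $[F:\BQ]>1$ or $\sharp(J_1)>1$.

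Beyond this, the two steps you yourself identify as the main obstacles --- the simultaneous upgrade to the product ideal and the exact evaluation of the leading pairing --- are left as intentions (``iterated partial derived theta elements,'' induction on $\sharp(J_1)$), and the structural input you invoke for the induction, namely that the $\Gamma^-_\mathfrak{p}$ are mutually independent direct factors of $\Gamma^-_{J_p}$, is not true in general: the local units only generate a subgroup of index dividing $c_0$, and $p\mid c_0$ is allowed --- avoiding exactly such class-number hypotheses (present in Hung's one-prime result, which is \cite{Hung18}, not \cite{Hung14}) is one of the points of the paper, handled there by the averaging argument of Lemma \ref{lem:average-vanishing} and by working with the free group $\Delta\subset K^\times/F^\times$ rather than with a product decomposition of $\Gamma^-_{J_p}$. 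Finally, the identification of $\mathcal{L}^{\Tei}_\mathfrak{p}(\mathbf{f})$ in the leading term is not obtained from a Cerednik--Drinfeld uniformization (the quaternion algebra here is definite and split at all $\mathfrak{p}\mid p$); it requires the Schneider/Coleman comparison for harmonic-cocycle-valued modular forms and its transport into $H^r(\Delta,\mathcal{D}^{J_0,\le N})$ (Proposition \ref{prop:coh-1} and Theorem \ref{thm:coh}), together with the determinant/permutation bookkeeping of Proposition \ref{prop:ell} and Proposition \ref{prop:final} that actually produces the constant $r!$ before squaring. As written, your argument establishes neither $\ord_{s=0}L_p\ge 2r$ nor the leading-term formula.
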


In the above formula $\mathcal{L}^\Tei_\mathfrak{p}(\mathbf{f})$ is
the $L$-invariant of Teitelbaum type of $\mathbf{f}$ at
$\mathfrak{p}$, which is defined in \cite{CMP}. The branch of the
$p$-adic logarithmic $\log$ we choose to define
$\mathcal{L}^\Tei_\mathfrak{p}(\mathbf{f})$ is the same as Bertolini
and Darmon's, and thus is independent of $\mathfrak{p}$ and
$\mathbf{f}$. When $f$ is attached to an elliptic curve $A$, and
$q(A/F_\mathfrak{p})$ is the Tate period, then \begin{equation}
\mathcal{L}^\Tei_\mathfrak{p}(\mathbf{f}) = \frac{\log
(N_{F_\mathfrak{p}/\BQ_p}q(A/F_\mathfrak{p}))}{\ord_p
(N_{F_\mathfrak{p}/\BQ_p}q(A/F_\mathfrak{p}))}.
\end{equation}

We obtain a result beyond Theorem \ref{thm:B}. Indeed, we can
determine all partial derivatives of order $2r$ of the
total-variable $p$-adic $L$-function $L_{J_p}$.

\begin{thm}\label{thm:C} We have the following Taylor expansion of $L_{J_p}$ at
$(0,\cdots, 0)$:
\begin{eqnarray*} L_{J_p}((s_\sigma)_{\sigma\in \Sigma_{J_p}},\pi_K) &=& \frac{L(1,\mathbf{f}/K)}{\Omega^-_\mathbf{f}} \cdot \prod_{\mathfrak{p}\in J_2}e_\mathfrak{p}(1)
\cdot \prod\limits_{\mathfrak{p}\in J_1 }\left(
\sum\limits_{\sigma\in
 \Sigma_{J_p} } s_\sigma
 \mathcal{L}_{\mathfrak{p},\sigma} \right )^2
  +
 \text{ higher order terms}.  \end{eqnarray*} For
the definition of $\mathcal{L}_{\mathfrak{p},\sigma}$ see Definition
\ref{defn:L-inv} in our context. When
$\sigma\in\Sigma_\mathfrak{p}$, $\mathcal{L}_{\mathfrak{p},\sigma}$
is L-invariant of Teitelbaum type; when
$\sigma\notin\Sigma_\mathfrak{p}$,
$\mathcal{L}_{\mathfrak{p},\sigma}$ is a constant independent of
$\pi_\mathbf{f}$.
\end{thm}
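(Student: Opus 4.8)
\textbf{Proof plan for Theorem \ref{thm:C}.}

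The plan is to read the Taylor expansion off the explicit description of $L_{J_p}$ furnished by its construction (carried out in the body of the paper, extending Chida--Hsieh \cite{CH} and Hung \cite{Hung14}). That construction attaches to $\mathbf{f}$ the totally definite quaternion algebra $B/F$ ramified exactly at the archimedean places and at the primes dividing $\mathfrak{n}^-$ that are inert in $K$ -- it exists by the parity hypothesis -- together with the Jacquet--Langlands transfer $\hat{\mathbf{f}}$ of $\mathbf{f}$ to $B$, a function on a finite set of Gross points; $L_{J_p}$ is then assembled from the \emph{theta elements} $\Theta_{\underline n}$, each a weighted sum of the values of $\hat{\mathbf{f}}$ at Gross points of conductor $\prod_{\mathfrak{p}\in J_p}\mathfrak{p}^{n_\mathfrak{p}}$, $U_\mathfrak{p}$-stabilized at every $\mathfrak{p}|p$. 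Ordinariness of $\mathbf{f}$ makes these stabilizations $p$-adic units, so the $\Theta_{\underline n}$ interpolate to $\Theta_\infty\in\CO[[\Gamma^-_{J_p}]]$, and the Gross--Zhang/Waldspurger formula proved in the interpolation step identifies $L_{J_p}(\chi)$ with $\Theta_\infty(\chi)\,\Theta_\infty^{*}(\chi)$ up to the fixed period $\Omega^-_{\mathbf{f}}$, where $*$ is the involution $\gamma\mapsto\gamma^{-1}$ (equivalently the action of complex conjugation); in particular $\Theta_\infty(\mathbf{1})\,\Theta_\infty^{*}(\mathbf{1})=\prod_{\mathfrak{p}|p}e_\mathfrak{p}(\mathbf{1})\cdot L(1,\mathbf{f}/K)/\Omega^-_{\mathbf{f}}$. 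Thus Theorem \ref{thm:C} reduces to computing the expansion of $\Theta_\infty$ at the trivial character $\mathbf{1}$ up to order $r$.

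First I would localize at the primes above $p$. Write $\Gamma^-_{J_p}=\prod_{\mathfrak{p}|p}\Gamma^-_\mathfrak{p}$ and group the variables $s_\sigma$ by the prime beneath $\sigma$; the $U_p$-stabilization of $\hat{\mathbf{f}}$ then factors prime by prime. For $\mathfrak{p}\in J_2$ the local factor is a unit at $\mathbf{1}$ and, after the chosen normalization, contributes a constant accounting for $e_\mathfrak{p}(\mathbf{1})$, so the whole content sits at the exceptional primes $\mathfrak{p}\in J_1$, where $\alpha_\mathfrak{p}=1$ and $\pi_\mathfrak{p}$ is (a twist of) the Steinberg representation. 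For such $\mathfrak{p}$ I would realize the $\mathfrak{p}$-new part of $\hat{\mathbf{f}}$ as a bounded $\CO$-valued harmonic cocycle on the product over $\sigma\in\Sigma_\mathfrak{p}$ of Bruhat--Tits trees of $\mathrm{PGL}_2(F_\mathfrak{p})$ -- equivalently a mass-zero $\CO$-valued measure $\mu_{\hat{\mathbf{f}},\mathfrak{p}}$ on $\mathbf{P}^1(F_\mathfrak{p})^{\Sigma_\mathfrak{p}}$ -- and rewrite the $\Gamma^-_\mathfrak{p}$-part of the theta-element sum as a $p$-adic multiplicative integral against $\mu_{\hat{\mathbf{f}},\mathfrak{p}}$. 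Since $e_\mathfrak{p}(\mathbf{1})=0$ this factor vanishes at $\mathbf{1}$; expanding it to first order in the $s_\sigma$ produces, for each $\mathfrak{p}\in J_1$, a single linear form, and since $\Theta_\infty^{*}(\chi)$ contributes the same forms with opposite sign (as $\chi^{*}([\mathfrak{P}_\mathfrak{p}])=\chi([\mathfrak{P}_\mathfrak{p}])^{-1}$), multiplying $\Theta_\infty$ by $\Theta_\infty^{*}$ already yields the product-of-squares shape $\prod_{\mathfrak{p}\in J_1}(\cdots)^2$, everything else being of total degree exceeding $2r$.

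The crux, which I expect to be the hardest step, is the identification of that linear form with $\sum_{\sigma\in\Sigma_{J_p}}s_\sigma\,\mathcal{L}_{\mathfrak{p},\sigma}$; this is the Hilbert-modular, several-variable analogue of the derivative computation underlying Bertolini--Darmon's second-derivative formula \cite{BD99}, itself the anticyclotomic counterpart of Greenberg--Stevens \cite{GS}. The linear term has two sources. One is the class-field-theoretic variation of $\chi([\mathfrak{P}_\mathfrak{p}])$ in the $\Gamma^-_\mathfrak{q}$-directions for $\mathfrak{q}\ne\mathfrak{p}$; this is independent of $\pi_{\mathbf{f}}$ and produces the coefficients $\mathcal{L}_{\mathfrak{p},\sigma}$ for $\sigma\notin\Sigma_\mathfrak{p}$ -- it is here that the fixed branch of $\log$ enters. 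The other is the variation, in the $\Gamma^-_\mathfrak{p}$-directions, of the values of $\hat{\mathbf{f}}$ at the Gross points; this I would compute as a Coleman-type $p$-adic line integral $\int_{\mathbf{P}^1(F_\mathfrak{p})}\log_\sigma(t)\,d\mu_{\hat{\mathbf{f}},\mathfrak{p}}(t)$, and dividing it by the corresponding $\ord$-integral -- which I would fold into the definition of $\Omega^-_{\mathbf{f}}$ -- yields exactly the Teitelbaum-type $L$-invariant $\mathcal{L}^\Tei_{\mathfrak{p},\sigma}(\mathbf{f})$ of \cite{CMP}, i.e.\ the coefficients $\mathcal{L}_{\mathfrak{p},\sigma}$ for $\sigma\in\Sigma_\mathfrak{p}$ (cf.\ Definition \ref{defn:L-inv}).

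Assembling the pieces gives the asserted expansion: the constant term is $\prod_{\mathfrak{p}\in J_2}e_\mathfrak{p}(\mathbf{1})\cdot L(1,\mathbf{f}/K)/\Omega^-_{\mathbf{f}}$ by the $J_2$-analysis and the Waldspurger value at $\mathbf{1}$; each $\mathfrak{p}\in J_1$ contributes $\bigl(\sum_{\sigma}s_\sigma\mathcal{L}_{\mathfrak{p},\sigma}\bigr)^2$ from the previous two paragraphs; and the cross-terms with the higher expansion of the $\mathfrak{p}$-depleted theta element, together with the even higher powers of the exceptional factors, are of total degree $>2r$. Beyond the core derivative formula, the remaining obstacles are routine: fixing $\Omega^-_{\mathbf{f}}$ so the constant term and the perfect squares come out on the nose, handling the product of $[F_\mathfrak{p}:\BQ_p]$ trees (with its diagonal $\CO_{F_\mathfrak{p}}^\times$-action) and several exceptional primes simultaneously, and pinning down the sign in $L_{J_p}(\chi)=\Theta_\infty(\chi)\Theta_\infty^{*}(\chi)$.
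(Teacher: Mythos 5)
Your overall philosophy (theta elements at Gross points, harmonic cocycles viewed as measures on products of $\mathbf{P}^1(F_\mathfrak{p})$, Coleman log-integrals versus $\ord$-integrals, a Bertolini--Darmon style first-order computation) matches the paper's, but two steps you treat as routine are in fact where the real work lies, and as stated they do not go through. First, the reduction "localize at the primes above $p$, $\Gamma^-_{J_p}=\prod_{\mathfrak{p}\mid p}\Gamma^-_\mathfrak{p}$, the stabilization factors prime by prime" is not available: $\Gamma^-_{J_p}$ contains $\prod_\mathfrak{p}(\mathcal{O}^1_{K_\mathfrak{p}}/\mathcal{O}^1_{F_\mathfrak{p}})/\mathrm{torsion}$ only with index dividing a class-number-type constant $c_0$, and the paper deliberately avoids any hypothesis like $p\nmid c_0$ (this is exactly the condition of Hung that Theorem \ref{thm:D} removes). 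Because global units and the class group entangle the local directions, the vanishing of all terms of total degree $<2r$ and the absence of cross-terms between distinct $\mathfrak{p}\in J_1$ is not a formal consequence of a product decomposition; the paper obtains it by working with the $\Delta$-invariant distribution $\widetilde\mu_{J_p}$ on an adelic space, an inductive vanishing lemma (Lemma \ref{lem:int-0}), an averaging lemma that invokes the exceptional zero of the depleted theta element (Lemma \ref{lem:average-vanishing} via Corollary \ref{cor:vanish-value}), and Spiess's determinant identity (Proposition \ref{prop:ell}); cohomology of the rank-$r$ group $\Delta$ generated by the $\beta_\mathfrak{p}$ (whose exponents $h_\mathfrak{p}$ encode the class group) replaces your prime-by-prime expansion. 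Your plan contains no substitute for these steps.

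Second, the identification of the linear coefficient with $\sum_\sigma s_\sigma\mathcal{L}_{\mathfrak{p},\sigma}$ cannot be done by "dividing the log-integral by the corresponding $\ord$-integral and folding the latter into $\Omega^-_{\mathbf{f}}$". The $\ord$-data assembles precisely into the depleted value $\mathscr{L}_{J_p\backslash J_1}(0,\dots,0)$, which is the constant appearing in the leading term (and then squared gives $\prod_{J_2}e_\mathfrak{p}(1)\,L(1,\mathbf{f}/K)/\Omega^-_{\mathbf{f}}$, after one separately checks $\Omega^-_{J_2,\phi}=\Omega^-_{J_p,\phi}$ via an $\epsilon$-factor computation); absorbing it into the period would destroy the asserted constant term. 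Moreover a pointwise ratio of integrals at a single Gross point is not the Teitelbaum invariant (the denominator may vanish point by point); the equality $[c_{\mathfrak{p},\log}]\cup[\widetilde\mu_{J_p}]=\mathcal{L}^\Tei[c_{\mathfrak{p},\ord}]\cup[\widetilde\mu_{J_p}]$ has to be established at the level of cohomology classes, using the Schneider and Coleman morphisms, Hecke equivariance and multiplicity one (Propositions \ref{prop:schneider}, \ref{prop:coleman}, \ref{prop:deriv-induction}, \ref{prop:coh-1}). Finally, the paper defines $L_{J_p}=\mathscr{L}_{J_p}^2$ outright, so your factorization $L_{J_p}(\chi)=\Theta_\infty(\chi)\Theta^*_\infty(\chi)$ with the linear forms entering $\Theta^*$ "with opposite sign" is both unnecessary and unsubstantiated: taken literally it would introduce a stray $(-1)^r$ into the leading coefficient.
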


We also obtain exceptional zero phenomenon between those $L_{J}$.

\begin{thm}\label{thm:D}
If $\widehat{\chi}$ is a character of $\Gamma^-_J$ such that
$\widehat{\chi}_\mathfrak{p}=1$ for each $\mathfrak{p}\in J_1\cap
J$, then
\begin{equation}\label{eq:thm-D}
L_J(({s}_\sigma)_{\sigma\in\Sigma_J},\pi_K,\widehat{\chi}) =
L_{J\backslash (J_1\cap J)} (\pi_K, \widehat{\chi})\cdot
\prod\limits_{\mathfrak{p}\in J_1\cap J }\left(
\sum\limits_{\sigma\in
 \Sigma_J } s_\sigma
 \mathcal{L}_{\mathfrak{p},\sigma} \right )^2
  +
 \text{ higher order terms}.  \end{equation}
\end{thm}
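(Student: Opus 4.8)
The plan is to deduce Theorem \ref{thm:D} as a factorization/specialization of the total-variable interpolation identity already used in the proof of Theorem \ref{thm:C}, rather than re-running the whole construction. First I would recall that the multi-variable $p$-adic $L$-function $L_J$ is built by integrating over $\Gamma^-_J$ a measure obtained from a cohomology class (in the style of Chida--Hsieh \cite{CH} and Hung \cite{Hung14}), and that for a character $\psi$ of $\Gamma^-_J$ of finite order the value $L_J(\psi,\pi_K)$ equals, up to the algebraic period $\Omega^-_{\mathbf f}$, the product $\prod_{\mathfrak p\in J} e_{\mathfrak p}(\psi_{\mathfrak p})\cdot L_{\mathrm{alg}}(\tfrac12,\pi_K,\psi)$, exactly the shape displayed in Theorem \ref{thm:A}(1) but now indexed by $J$. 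The key input is that the Euler-type multiplier factors as a product over $\mathfrak p\in J$, and that the ``bad'' factors are precisely those with $\mathfrak p\in J_1\cap J$: for such $\mathfrak p$ one has $e_{\mathfrak p}(1)=0$, which forces the vanishing at $s_\sigma=0$ and produces, after the local $\log$/$\ord$ computation at $\mathfrak p$, the quadratic factor $\bigl(\sum_{\sigma\in\Sigma_J}s_\sigma\mathcal L_{\mathfrak p,\sigma}\bigr)^2$.

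Concretely I would argue as follows. Write $\widehat\chi$ as in the statement and consider the restriction of $L_J(\cdot,\pi_K)$ to the one-dimensional-per-prime analytic discs parametrized by $(s_\sigma)_{\sigma\in\Sigma_J}$ twisted by $\widehat\chi$. Step 1: using the interpolation property (Theorem \ref{thm:A}(1), in its $J$-version), identify $L_J((s_\sigma),\pi_K,\widehat\chi)$ with an integral whose integrand splits as a product of local integrals at each $\mathfrak p\in J$ times a fixed global factor equal to $L_{J\backslash(J_1\cap J)}(\pi_K,\widehat\chi)$; here one uses that the hypothesis $\widehat\chi_{\mathfrak p}=1$ for $\mathfrak p\in J_1\cap J$ decouples those primes. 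Step 2: for $\mathfrak p\in J_1\cap J$, compute the local factor as a function of the $s_\sigma$'s. Because $\alpha_{\mathfrak p}=1$ we are in the exceptional situation; the local term is, up to higher order, the square of the derivative of $\psi_{\mathfrak p}\mapsto \log\psi_{\mathfrak p}(\text{uniformizer data})$, which by Definition \ref{defn:L-inv} and the computation of the Teitelbaum-type $L$-invariant $\mathcal L^{\Tei}_{\mathfrak p}(\mathbf f)$ in \cite{CMP} equals $\bigl(\sum_{\sigma\in\Sigma_J}s_\sigma\mathcal L_{\mathfrak p,\sigma}\bigr)^2+O(s^3)$. Step 3: multiply the local contributions over $\mathfrak p\in J_1\cap J$ and collect the remaining factors into $L_{J\backslash(J_1\cap J)}(\pi_K,\widehat\chi)$, giving \eqref{eq:thm-D}. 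Theorem \ref{thm:C} is then the special case $J=J_p$, $\widehat\chi=1$.

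The main obstacle, I expect, is Step 2: making rigorous the passage from the finite-order interpolation formula to a statement about the analytic function near $0$, and in particular showing that the exceptional local factor at $\mathfrak p\in J_1\cap J$ contributes exactly a perfect square of a \emph{linear} form in the $s_\sigma$ with coefficients $\mathcal L_{\mathfrak p,\sigma}$, rather than a general quadratic. This requires identifying the order of vanishing (each such prime contributes vanishing order exactly $2$, not just $\geq 2$) and pinning down the quadratic coefficient via the improved/partial $p$-adic $L$-function attached to $\mathfrak p$, in the Greenberg--Stevens spirit; the factorization of the measure and the $\log_p/\ord_p$ bookkeeping across the two primes $\mathfrak P,\bar{\mathfrak P}$ above $\mathfrak p$ (reflecting the split hypothesis) is where the ``second derivative'' phenomenon of the anticyclotomic setting enters and must be tracked carefully. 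Everything else is a specialization of machinery already in place from Theorems \ref{thm:A}--\ref{thm:C}.
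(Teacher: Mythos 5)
Your proposal has a genuine gap at its core, and it is exactly the step you flag as the ``main obstacle''. The interpolation property (Theorem \ref{thm:A}(1) and its $J$-indexed version, Theorem \ref{thm:interpol}) only controls $L_J$ at finite-order characters, and at every such character in the exceptional directions the multiplier $e_{\mathfrak p}(\pi,\chi)$ either vanishes (at the trivial local character) or carries no information about the derivative at $0$. No amount of ``decoupling into local integrals'' recovers the Taylor coefficients of the analytic function $L_J((s_\sigma),\pi_K,\widehat\chi)$ at the origin from these interpolated values: the identification of the quadratic coefficient with $\bigl(\sum_\sigma s_\sigma\mathcal L_{\mathfrak p,\sigma}\bigr)^2$ is precisely the content of the theorem, and your Step~2 asserts it rather than proves it. The appeal to an ``improved/partial $p$-adic $L$-function in the Greenberg--Stevens spirit'' does not close the gap either: no such improved $L$-function is constructed in this setting, and the Greenberg--Stevens mechanism requires a second (weight/Hida) variable which is neither available in your argument nor used anywhere in the paper. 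A further point you gloss over is the identification of the constant term with $L_{J\backslash(J_1\cap J)}(\pi_K,\widehat\chi)$; this is not a formal consequence of a product decomposition of the integrand but rests on a comparison of the measures attached to $J$ and to $J\backslash(J_1\cap J)$ (Proposition \ref{prop:transfer}), which in turn uses the relation $U_{\mathfrak p}\widehat f=\widehat f$ at the exceptional primes.

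The paper's route is different and is where all the work lives: the function $\mathscr L_J$ (with $L_J=\mathscr L_J^2$) is expressed through the $\Delta$-invariant distribution $\widetilde\mu_J$ on an adelic space, the directional derivatives of order $\le r$ along $(s_\sigma)$ are rewritten as integrals of powers of the logarithm $l$ against $\widetilde\mu_J$ (Section \ref{sec:meas-dist}), and the order-$r$ term is computed by a cohomological argument for the free abelian group $\Delta$ generated by the elements $\beta_{\mathfrak p}$: the classes $[c_{\mathfrak p,l}]$ and $[c_{\mathfrak p,\ord}]$ are compared via the Schneider and Coleman morphisms, yielding $[c_{\mathfrak p,l}]\cup[\widetilde\mu_J]=\mathcal L_{\mathfrak p,l}(f)\,[c_{\mathfrak p,\ord}]\cup[\widetilde\mu_J]$ (Proposition \ref{prop:coh-1}, Theorem \ref{thm:coh}), which is what produces the linear forms $\sum_\sigma s_\sigma\mathcal L_{\mathfrak p,\sigma}$; squaring gives \eqref{eq:thm-D}, and Theorem \ref{thm:D} then follows immediately from the expansion \eqref{eq:auxi} of Theorem \ref{thm:auxi}. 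In short, the deduction ``Theorem \ref{thm:D} is a specialization of machinery already in place'' is only true once Theorems \ref{thm:final}/\ref{thm:auxi} are proved with $\widehat\chi$ allowed to be nontrivial on $\Gamma^-_{J\backslash(J_1\cap J)}$ --- which is the substance your proposal defers.
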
 Note that the condition ``$\widehat{\chi}_\mathfrak{p}=1$ for each $\mathfrak{p}\in J_1\cap
J$'' implies that $\widehat{\chi}$ comes from a character of
$\Gamma^-_{J_1\cap J}$ and so $L_{J\backslash (J_1\cap J)} (\pi_K,
\widehat{\chi})$ makes sense.

In the special case that $J=\{\mathfrak{p}\}\subset J_1$ and
$J\backslash (J_1\cap J)=\emptyset$, Theorem \ref{thm:D} was
obtained by Hung \cite[Theorem A and Theorem B]{Hung18} following
Bertolini-Darmon's method. But Hung needed the condition on class
number that $p\nmid \sharp(\widehat{F}^\times K^\times \backslash
\widehat{K}^\times /\widehat{\mathcal{O}}_K^\times)$. This condition
is removed in Theorem \ref{thm:D}.

A result related to Theorem \ref{thm:D} was obtained by Bergunde and
Gehrmann \cite[Theorem 6.5]{BG18}. They proved a formula for leading
terms of anticyclotomic stickelberger elements. Using their result,
one may deduce a formula similar to (\ref{eq:thm-D}) in the case of
$\widehat{\chi}=1$ but less precise. Indeed, the automorphic periods
$q_{S_\mathrm{m},\mathfrak{p}}$ in \cite{BG18} are not showed to
coincide with Tate's period; these periods
$q_{S_\mathrm{m},\mathfrak{p}}$ are even not showed to be
independent of $S_\mathfrak{m}$. While in our formula
(\ref{eq:thm-D}) the leading term is precise.

\subsection{Strategy}

We give a sketch of the proof of Theorem \ref{thm:B} and Theorem
\ref{thm:C}. The argument for Theorem \ref{thm:D} is similar.

The function $L_{J_p}$ is equal to $\mathscr{L}_{J_p}^2$, the square
of another analytic function $\mathscr{L}_{J_p}$. We restrict
$\mathscr{L}_{J_p}$ to one direction.  Let
$(s_\sigma)_{\sigma\in\Sigma_{J_p}}$ be fixed, and put
$$ \mathscr{L}_{J_p}(t; (s_\sigma)):=\mathscr{L}_{J_p}(ts_\sigma)_{\sigma\in \Sigma_{J_p}}.
$$  We will show that
\begin{equation} \label{eq:D}
\frac{d^{n}\mathscr{L}_{J_p}(t; (s_\sigma))}{dt^{n}}\Big|_{t=0} =
\left\{
\begin{array}{cl}
r!\cdot \prod\limits_{\mathfrak{p}\in J_1 }\left(
\sum\limits_{\sigma\in
 \Sigma_{J_p} } s_\sigma
 \mathcal{L}_{\mathfrak{p},\sigma} \right ) \cdot \mathscr{L}_{J_p\backslash J_1}(0,\cdots,0) & \text{ if } n=r, \\
 0 & \text{ if } n<r. \end{array} \right. \end{equation}
Taking $(s_\sigma)=(1,\cdots, 1)$ we obtain Theorem \ref{thm:B}.
Letting $(s_\sigma)$ vary we obtain Theorem \ref{thm:C}.

Our approach to (\ref{eq:D}) is somewhat a mixture of Spiess' and
Bertolini-Darmon's.

In \cite{Spiess} Spiess related the computation of
$\frac{d^r}{ds^r}L_p(s,f)\big{|}_{s=0}$ to group cohomology. Let
$F^\times_+$ denote the group of totally positive elements of $F$.
Each $\mathfrak{p}\in J_1=\{\mathfrak{p}_1,\cdots, \mathfrak{p}_r\}$
is associated with an element $c_{\ell_\mathfrak{p}}\in
H^1(F^\times_+, C_c(F_\mathfrak{p},\BC_p))$. Then
$\frac{d^r}{ds^r}L_p(s,f)\big{|}_{s=0}$ was showed to equal the
cap-product \begin{equation} \label{eq:spiess}
\frac{d^r}{ds^r}L_p(s,f)\Big{|}_{s=0} =
(-1)^{\binc{r}{2}}r!(\kappa\cup c_{\ell_{\mathfrak{p}_1}}\cup \cdots
\cup c_{\ell_{\mathfrak{p}_r}})\cap \vartheta. \end{equation} Here,
$\kappa\in H^{[F:\BQ]-1}(F^\times_+,\mathscr{D})$ is in the
$([F:\BQ]-1)$-th cohomology of $F^\times_+$ with values in a
distribution space, and $\vartheta$ is the fundamental class of the
quotient $M/F^\times_+$ of some $(r-1+[F:\BQ])$-dimensional real
manifold $M$ with a free action of $F^\times_+$.

Instead of $F^\times_+$, the group $\Delta$ we use is a subgroup of
$F^\times\backslash K^\times$, which is a free abelian group of rank
$r$.

We reduce the computation of $\frac{d^{n}\mathscr{L}_{J_p}(t;
(s_\sigma))}{dt^{2r}}\big|_{t=0}$ to
$$ \int l^n \widetilde{\mu}_{J_p} $$
the integration of $l^n$ on some domain of $\BA^\infty_F\backslash
\BA^\infty_K$ for certain $F^\times\backslash K^\times$-invariant
distribution $\widetilde{\mu}_{J_p}$, where $l$ is a logarithm
function depending on the direction $(s_\sigma)_{\Sigma_{J_p}}$. In
the case of $n<r$, we show directly that the integration is zero. In
the case of $n=r$, we relate the integration of $l^{r}$ to group
cohomology. Being invariant by $\Delta$, $\widetilde{\mu}_{J_p}$
provides an element in the $0$-th cohomology group of $\Delta$ with
values in some space of distributions.

To each $\mathfrak{p}\in J_1=\{\mathfrak{p}_1,\cdots,
\mathfrak{p}_r\}$ and the logarithmic function $l$ we attach an
element $[c_{\mathfrak{p},l}]$ in the first cohomology of $\Delta$
with values in certain function space
$\mathcal{C}_\mathfrak{p}^{\leq 1}$. Taking cup-product we obtain an
element $[c_{\mathfrak{p}_1, l}]\cup \cdots \cup
[c_{\mathfrak{p}_r,l}]$ in the $r$-th cohomology of $\Delta$ with
values in some function space $\mathcal{C}_{J_1}^{\leq 1}$. Then we
have an element $$[\widetilde{\mu}_{J_p}]\cup([c_{\mathfrak{p}_1,
l}]\cup \cdots \cup [c_{\mathfrak{p}_r,l}]) \in H^r(\Delta, \BC_p)$$
whose evaluation at the $r$ generators of $\Delta$ is closely
related to $\int l^n \widetilde{\mu}_{J_p}$.

Similarly, to $\mathfrak{p}\in J_1$ and the order function $l$ we
attach an element $[c_{\mathfrak{p},\ord}]\in H^1(\Delta,
\mathcal{C}_\mathfrak{p}^{\leq 1})$. Taking product we obtain
$$[\widetilde{\mu}_{J_p}]\cup([c_{\mathfrak{p}_1, \ord}]\cup \cdots
\cup [c_{\mathfrak{p}_r,\ord}]) \in H^r(\Delta, \BC_p) $$ whose
evaluation at the the $r$ generators of $\Delta$ is closely related
to $\mathscr{L}_{J_p\backslash J_1}(0,\cdots,0)$.

We adjust Bertolini-Darmon's method \cite{BD99} to show that
\begin{equation}\label{eq:L-prod}
[\widetilde{\mu}_{J_p}]\cup([c_{\mathfrak{p}_1, l}]\cup \cdots \cup
[c_{\mathfrak{p}_r,l}]) = \prod\limits_{\mathfrak{p}\in J_1 }\left(
\sum\limits_{\sigma\in
 \Sigma_{J_p} } s_\sigma
 \mathcal{L}_{\mathfrak{p},\sigma} \right ) \cdot
[\widetilde{\mu}_{J_p}]\cup([c_{\mathfrak{p}_1, \ord}]\cup \cdots
\cup [c_{\mathfrak{p}_r,\ord}]) \end{equation} in
$H^r(\Delta,\BC_p)$.

We compare our method and Spiess' \cite{Spiess}. We use the $r$-th
cohomology group, while Spiess used the $(r-1+[F:\BQ])$-th
cohomology group. In \cite{Spiess} Spiess needed the element
$\vartheta$ in the $([F:\BQ]-1)$-th cohomology group that comes from
the real world. In our approach all things are $p$-adic, so that we
could avoid the comparison between the $p$-adic world and the real
world. However, our construction of the function spaces
$\mathcal{C}^{\leq 1}_\mathfrak{p}$ is more technical than that of
$C_c(F_\mathfrak{p},\BC_p)$ used by Spiess.

The $p$-adic $L$-functions $L_J$ we studied are of multi-variable,
while the $p$-adic $L$-function studied in \cite{Spiess} is of
one-variable. Such a discrepancy occurs, because the maximal abelian
$p$-extension of $K$ and the associated Galois group are rather
large, while according to Leopoldt conjecture (which was proved to
be true if $F/\BQ$ is abelian) the maximal abelian $p$-extension of
$F$ is the cyclotomic extension $F\BQ_\infty^{\mathrm{cyc}}$
 and the Galois group is just $\BZ_p$.

However, considering the Hida families one also has multi-variable
$p$-adic $L$-functions in the cyclotomic setting. The author
believes that it is possible to obtain results parallel to Theorem
\ref{thm:C} and Theorem \ref{thm:D} for these multi-variable
$p$-adic $L$-functions by extending Spiess' method.

Our paper is organized as follows. In Section \ref{sec:char-measure}
we recall some basic facts on Haar measures and construct families
of $p$-adic anticyclotomic characters. In Section \ref{sec:auto-rep}
we recall needed facts on automorphic forms. We extend Chida and
Hsieh's method to construct $p$-adic $L$-functions $L_J$ in Section
\ref{sec:special-value} and Section \ref{sec:p-L-function}. In
Section \ref{sec:har-coh} we define Harmonic cocycle valued and
cohomological valued modular forms, and use them as tools to define
$L$-invariant of Teitelbaum type. In Section \ref{sec:meas-dist} we
compute some integrations closely related to the partial derivatives
of $L_J$. In Section \ref{sec:L-inv-gh} we define the elements
$[c_{\mathfrak{p},l}]$ and $[c_{\mathfrak{p},\ord}]$ in group
cohomology of $\Delta$, and then we prove (\ref{eq:L-prod}). Finally
we prove Theorems \ref{thm:B}, \ref{thm:C} and \ref{thm:D} in
Section \ref{sec:proof-main}.

\subsection{Notations}


Let $\BA$ be the ring of adeles of $\BQ$.

For any prime number $\ell$ we use $|\cdot|$ to denote the absolute value on $\BC_\ell$ such that $|\ell|=1/\ell$.
For a finite extension $L$  of $\BQ_\ell$ in $\BC_\ell$, and a uniformizing element $\omega_L$ of $L$,
we use $|\cdot|_{\omega_L}$ to denote the absolute value on $\BC_\ell$ such that $|a|_{\omega_L}=|N_{L/\BQ_\ell}(a)|$ for $a\in L$.
So $|\cdot|_{\omega_L}=|\cdot|^{[L:\BQ_\ell]}$. If $q_L$ is the cardinal number of the residue field of $L$, then $|\omega_L|_{\omega_L}=q_L^{-1}$.

Let $p$ be an odd prime number.

\section{Characters and measures} \label{sec:char-measure}

\subsection{$p$-adic characters and Hecke characters}

We fix a totally real number field $F$. Let $\Sigma_F$ be the set of
all real embeddings of $F$, $J_p$ the set of places of $F$ above
$p$.

Let $K$ be a totally imaginary extension of $F$. We fix a CM type
$\Sigma_K$ of $K$, i.e. for $\sigma\in\Sigma$ there exists exactly
one element of $\Sigma_K$ (an embedding of $K$ into $\BC$) that
restricts to $\sigma$.  By abuse of notation we again use $\sigma$
to denote this element, and use $\bar{\sigma}$ to denote the other
embedding $K\hookrightarrow\BC$ that restricts to
$\sigma:F\hookrightarrow\BR$. We fix an isomorphism of fields
$\jmath: \BC\xrightarrow{\sim} \BC_p$.

Let $\Omega_p$ be the maximal ideal of $\mathcal{O}_{\BC_p}$. For
each $\sigma\in \Sigma_F$ or $\Sigma_K$ we put
$$\mathfrak{p}_\sigma=\sigma(F)\cap \jmath^{-1}\Omega_p,\ \
\mathfrak{P}_\sigma=\sigma(K)\cap \jmath^{-1}\Omega_p \ \ \text{and}
\ \ \bar{\mathfrak{P}}_\sigma=\bar{\sigma}(K)\cap
\jmath^{-1}\Omega_p.$$ Note that, for a prime $\mathfrak{p}$ of $F$
above $p$, the set of $\sigma$ such that
$\mathfrak{p}_\sigma=\mathfrak{p}$ has cardinal number
$[F_\mathfrak{p}:\BQ_p]$.

For our convenience, we demand $\Sigma_K$ satisfies the following
condition:
\begin{quote}
If $\mathfrak{p}_{\sigma_1}=\mathfrak{p}_{\sigma_2}$, then
$\mathfrak{P}_{\sigma_1}=\mathfrak{P}_{\sigma_2}$. \end{quote} Thus,
when $\mathfrak{p}$ is split in $K$, we may write
$\mathfrak{p}\mathcal{O}_K=\mathfrak{P}\overline{\mathfrak{P}}$,
where $\mathfrak{P}=\mathfrak{P}_\sigma$ for each $\sigma$
satisfying $\mathfrak{p}_\sigma=\mathfrak{p}$. We will use this
convention throughout this paper without mention.

The inclusion $\jmath \sigma$ extends to $K_{\mathfrak{P}_\sigma}$
which will be denoted by $\sigma':K_{\mathfrak{P}_\sigma}
\rightarrow \BC_p$. For $\bar{\sigma}$ instead of $\sigma$, we have
$\bar{\sigma}': K_{\bar{\mathfrak{P}}_\sigma} \rightarrow \BC_p$. We
write $F_{\sigma'}$, $K_{\sigma'}$ and $K_{\bar{\sigma}'}$ for
$\sigma'(F_{\mathfrak{p}_\sigma})$,
$\sigma'(K_{\mathfrak{P}_\sigma})$ and
$\bar{\sigma}'(K_{\bar{\mathfrak{P}}_\sigma})$ respectively.

Let $\widehat{\nu}: \mathrm{Gal}(\overline{K}/K)\rightarrow
\mathcal{O}_{\BC_p}^\times$ be a $p$-adic continuous character. Via
the geometrically normalized reciprocity map
$$\mathrm{rec}_K: K^\times K_\infty^\times\backslash \BA_K^\times
\xrightarrow{\sim} \mathrm{Gal}(\overline{K}/K)$$  we may view
$\widehat{\nu}$ as a character of $\BA_K^\times$ that is trivial on
$K^\times K_\infty^\times$.

Fix $\mathbf{m}=\sum_{\sigma\in \Sigma_F} m_\sigma \sigma$ with
$m_\sigma$ nonnegative. We say that $\widehat{\nu}$ is of type
$(\mathbf{m}, -\mathbf{m})$, if the following holds.

In the case when $\mathfrak{p}$ is inert or ramified in $K$, i.e.
$\mathfrak{p}\mathcal{O}_K=\mathfrak{P}$ or $\mathfrak{P}^2$, we
have
$$\widehat{\nu}_{\mathfrak{P}}(a)=\prod_{\sigma\in\Sigma_K:\mathfrak{p}_\sigma=\mathfrak{p}}
\sigma'\Big(\frac{a}{\bar{a}}\Big)^{m_\sigma}$$ on a compact open
subgroup of $\mathcal{O}_{K_\mathfrak{P}}^\times$.

In the case when $\mathfrak{p}$ is split in $K$, i.e.
$\mathfrak{p}\mathcal{O}_K=\mathfrak{P}\bar{\mathfrak{P}}$ and
$F_\mathfrak{p}\cong K_\mathfrak{P}\cong K_{\bar{\mathfrak{P}}}$, we
have
$$\widehat{\nu}_{\mathfrak{P}}(a)=\prod_{\sigma\in \Sigma_K:\mathfrak{P}_\sigma=\mathfrak{P}}
\sigma(a)^{m_\sigma} $$ and
$$\widehat{\nu}_{\bar{\mathfrak{P}}}(a)=\prod_{\sigma\in \Sigma_K:\mathfrak{P}_\sigma=\mathfrak{P}}
\sigma(a)^{-m_\sigma} $$ on a compact open subgroup of
$\mathcal{O}_{F_\mathfrak{p}}^\times$.

If $\widehat{\nu}$ is of type $(\mathbf{m},-\mathbf{m})$, we attach
to $\widehat{\nu}$ a complex Hecke character $\nu$ of
$K^\times\backslash \BA _K^\times$ by
$$\nu  (a)= \jmath^{-1} \Big(\widehat{\nu}(a) \cdot \prod_{\sigma\in \Sigma_K}
\Big(\frac{
\bar{\sigma}'a_{\bar{\mathfrak{P}}_\sigma}}{\sigma'a_{\mathfrak{P}_\sigma}}\Big)^{m_\sigma}
 \Big) \cdot
\prod_{\sigma \in \Sigma_K}
\Big(\frac{a_\sigma}{\bar{a}_\sigma}\Big)^{m_\sigma}  .
$$ Then $\nu$ is of archimedean type $(\mathbf{m},-\mathbf{m})$, i.e. $\nu_\infty=\prod\limits_{\sigma \in \Sigma_K}
\Big(\frac{a_\sigma}{\bar{a}_\sigma}\Big)^{m_\sigma}$.

Any complex Hecke character of $K^\times\backslash \BA _K^\times$ of
archimedean type $(\mathbf{m},-\mathbf{m})$ comes from a $p$-adic
character of type $(\mathbf{m},-\mathbf{m})$ in the above way.

\subsection{Anticyclotomic extensions and anticyclotomic
characters} \label{ss:anti-ext-char}

Fix a subset $J$ of $J_p$. Put $\Sigma_J:=\{\sigma\in \Sigma_F:
\mathfrak{p}_\sigma\in J\}.$ For $J=\{\mathfrak{p}\}$ we write
$\Sigma_\mathfrak{p}$ for $\Sigma_{\{\mathfrak{p}\}}$.

For each $\mathfrak{p}\in J$ we write
$K_\mathfrak{p}=K\otimes_FF_\mathfrak{p}$. If $\mathfrak{p}$ is
inert or ramified in $K$, then $K_\mathfrak{p}$ is a quadratic
extension of $F_\mathfrak{p}$. If $\mathfrak{p}$ is split in $K$,
then $K_\mathfrak{p}\cong F_\mathfrak{p}^{\oplus 2}$ and
$F_\mathfrak{p}$ diagonally embeds into $K_\mathfrak{p}$.

Let $\mathfrak{c}$ be an ideal of $\mathcal{O}_F$ that is coprime to
$p$. For each $J$-tuple of nonnegative integers
$\vec{n}=(n_{\mathfrak{p}})_{\mathfrak{p}\in J}$, let
$$\mathcal{O}_{\vec{n},\mathfrak{c}}:=\mathcal{O}_F +
\mathfrak{c}\cdot\prod_{\mathfrak{p}\in
J}\mathfrak{p}^{n_\mathfrak{p}} \mathcal{O}_K$$ be the order of $K$
of conductor $\mathfrak{c}\cdot\prod_{\mathfrak{p}\in
J}\mathfrak{p}^{n_\mathfrak{p}} $. Let $K_{\vec{n},\mathfrak{c}}$ be
the ring class field of $K$ of conductor $
\mathfrak{c}\cdot\prod_{\mathfrak{p}\in
J}\mathfrak{p}^{n_\mathfrak{p}}$, and
$\mathcal{G}_{\vec{n},\mathfrak{c}}=\mathrm{Gal}(K_{\vec{n},\mathfrak{c}}/K)$
be its Galois group. Then $\mathcal{G}_{\vec{n},\mathfrak{c}}$ is
isomorphic to $K^\times \widehat{F}^\times \backslash
\widehat{K}^\times/\widehat{\mathcal{O}}_{\vec{n},\mathfrak{c}}^\times$.
Put $\mathcal{G}_{J;\mathfrak{c}}
:=\lim\limits_{\overleftarrow{\;\;\;\vec{n}\;\;}}
\mathcal{G}_{\vec{n},\mathfrak{c}}$.
Let $\Gamma^-_J$ be the maximal $\BZ_p$-free quotient of
$\mathcal{G}_{J;\mathfrak{c}}$.

Note that $$c_0:=[K^\times \widehat{F}^\times \backslash
\widehat{K}^\times:\mathcal{O}_K^\times
\widehat{\mathcal{O}}_F^\times \backslash
\widehat{\mathcal{O}}_K^\times]\leq \mathrm{cl}(K)$$ where
$\mathrm{cl}(K)$ denote the class number of $K$. Via
$\mathrm{rec}_K$
$$\prod_{\mathfrak{p}\in
J}(\mathcal{O}_{K_\mathfrak{p}}^1/\mathcal{O}_{F_\mathfrak{p}}^1)/\mathrm{torsion}$$
is a subgroup of $\Gamma^-_J$ of index dividing $c_0$, and
$$\Gamma^-_J\cong \prod_{\mathfrak{p}\in
J}(\mathcal{O}_{K_\mathfrak{p}}^1/\mathcal{O}_{F_\mathfrak{p}}^1)/\mathrm{torsion}$$
if $c_0$ is corpime to $p$. Here, for a discrete valuation field $L$
let $\mathfrak{m}_L$ be the maximal ideal of $\mathcal{O}_L$ and we
write $\mathcal{O}_L^1= 1+\mathfrak{m}_L$. When
$\mathfrak{p}=\mathfrak{P}\bar{\mathfrak{P}}$ is split in $K$, we
write $\mathcal{O}_{K_\mathfrak{p}}^1=
\mathcal{O}_{K_\mathfrak{P}}^1 \oplus
\mathcal{O}_{K_{\bar{\mathfrak{P}}}}^1 \cong
(\mathcal{O}_{F_\mathfrak{p}}^1)^{\oplus 2}$.

We will define an analytic family of characters of $\Gamma^-_J$.

Let $\sigma$ be a real place of $F$. In the case when
$\mathfrak{p}_\sigma$ is inert or ramified in $K$, let
$\epsilon_\sigma$ denote the character
$$\tilde{\epsilon}_\sigma: \mathcal{O}^1_{K_{\mathfrak{p}_\sigma}}/\mathcal{O}_{F_{\mathfrak{p}_\sigma}}^1 \rightarrow
\BC_p^\times, \ \ a\mapsto
 \frac{\sigma'(a)}{\bar{\sigma}'(a)} .$$ In the case when
$\mathfrak{p}_\sigma$ is split in $K$, let $\epsilon_\sigma$ denote
the character
$$\tilde{\epsilon}_\sigma: \mathcal{O}^1_{K_{\mathfrak{p}_\sigma}}/\mathcal{O}_{F_{\mathfrak{p}_\sigma}}^1
=(\mathcal{O}^1_{K_{\mathfrak{P}_\sigma}}\oplus
\mathcal{O}^1_{K_{\bar{\mathfrak{P}}_\sigma}})/\mathcal{O}_{F_{\mathfrak{p}_\sigma}}^1
\rightarrow \BC_p^\times , \ \ \ \ (a, b)\mapsto
\sigma'\left(\frac{a}{b}\right) .  $$

Let $\mu$ be a sufficiently large positive integer such that for
each $\mathfrak{p}\in J$, roots of unity in $F_\mathfrak{p}$ and
$K_\mathfrak{p} $ (when $\mathfrak{p}$ is inert or ramified in $K$)
are of order dividing $p^\mu$.

\begin{lem}
$\tilde{\epsilon}_\sigma^{p^{\mu}}$ factors through $$
(\mathcal{O}_{K_{\mathfrak{p}_\sigma}}^1/\mathcal{O}_{F_{\mathfrak{p}_\sigma}}^1)/\mathrm{torsion}
$$ and thus gives a character
$$
(\mathcal{O}_{K_{\mathfrak{p}_\sigma}}^1/\mathcal{O}_{F_{\mathfrak{p}_\sigma}}^1)/\mathrm{torsion}
\xrightarrow{\tilde{\epsilon}_\sigma^{p^\mu}} \BC_p^\times  .$$
\end{lem}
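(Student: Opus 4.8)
Set $Q:=\mathcal{O}^1_{K_{\mathfrak{p}_\sigma}}/\mathcal{O}^1_{F_{\mathfrak{p}_\sigma}}$, so that $\tilde{\epsilon}_\sigma$ is a (continuous) homomorphism $Q\to\BC_p^\times$, and write $T$ for the torsion subgroup of $Q$ (which is finite, as $Q$ is a finitely generated $\BZ_p$-module). Since $\tilde{\epsilon}_\sigma^{p^\mu}(x)=\tilde{\epsilon}_\sigma(x^{p^\mu})$, the homomorphism $\tilde{\epsilon}_\sigma^{p^\mu}$ kills $T$ as soon as every element of $T$ is annihilated by $p^\mu$, and in that case it automatically factors through $Q/T$, which is the assertion of the lemma. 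So the plan is to prove the purely group-theoretic statement that $T$ has exponent dividing $p^\mu$; the rest is bookkeeping.

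To bound $T$ I would exhibit an injection $Q\hookrightarrow\mathcal{O}^1_{K_{\mathfrak{p}_\sigma}}$ and control torsion on the target. Let $c$ be the nontrivial element of $\mathrm{Gal}(K/F)$, acting on $K_{\mathfrak{p}_\sigma}=K\otimes_F F_{\mathfrak{p}_\sigma}$ — this is the Galois involution of $K_{\mathfrak{p}_\sigma}/F_{\mathfrak{p}_\sigma}$ when $\mathfrak{p}_\sigma$ is inert or ramified, and the interchange of the two factors when $\mathfrak{p}_\sigma$ splits (in which case $\mathcal{O}^1_{K_{\mathfrak{p}_\sigma}}\cong(\mathcal{O}^1_{F_{\mathfrak{p}_\sigma}})^{\oplus 2}$). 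Consider $\theta\colon\mathcal{O}^1_{K_{\mathfrak{p}_\sigma}}\to\mathcal{O}^1_{K_{\mathfrak{p}_\sigma}}$, $x\mapsto x/c(x)$. Since $c$ fixes $F_{\mathfrak{p}_\sigma}$ pointwise, $\theta$ is trivial on $\mathcal{O}^1_{F_{\mathfrak{p}_\sigma}}$ and descends to a homomorphism $\bar\theta\colon Q\to\mathcal{O}^1_{K_{\mathfrak{p}_\sigma}}$, which is injective: if $\bar\theta(\bar x)=\bar\theta(\bar y)$ then $x/y$ is $c$-fixed, hence lies in $\mathcal{O}^1_{K_{\mathfrak{p}_\sigma}}\cap F_{\mathfrak{p}_\sigma}=\mathcal{O}^1_{F_{\mathfrak{p}_\sigma}}$ (using $\mathfrak{m}_{K_{\mathfrak{p}_\sigma}}\cap F_{\mathfrak{p}_\sigma}=\mathfrak{m}_{F_{\mathfrak{p}_\sigma}}$ in the inert/ramified case, and the diagonal embedding in the split case), so $\bar x=\bar y$ in $Q$. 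Incidentally this reproves that $\tilde{\epsilon}_\sigma$ is well defined on $Q$, since $\tilde{\epsilon}_\sigma$ is $\theta$ followed by the embedding $\sigma'$ (applied to the $\mathfrak{P}_\sigma$-component when $\mathfrak{p}_\sigma$ splits).

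Finally I would bound the torsion of the target. For a finite extension $L$ of $\BQ_p$ the torsion subgroup of $\mathcal{O}^1_L=1+\mathfrak{m}_L$ is precisely the group of $p$-power roots of unity in $L$; in the split case $\mathcal{O}^1_{K_{\mathfrak{p}_\sigma}}$ is a product of two copies of $\mathcal{O}^1_{F_{\mathfrak{p}_\sigma}}$. By the defining property of $\mu$, the group of $p$-power roots of unity in $F_{\mathfrak{p}_\sigma}$, and in $K_{\mathfrak{p}_\sigma}=K_{\mathfrak{P}_\sigma}$ when $\mathfrak{p}_\sigma$ is inert or ramified, has exponent dividing $p^\mu$; hence in every case the torsion of $\mathcal{O}^1_{K_{\mathfrak{p}_\sigma}}$ has exponent dividing $p^\mu$, and via $\bar\theta$ the same holds for $T$. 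This gives what was needed. The only point demanding any care is keeping the split and inert/ramified cases straight in the definition of $c$ and in the identification of $\tilde{\epsilon}_\sigma$ with $\sigma'\circ\theta$; there is no serious obstacle.
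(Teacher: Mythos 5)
Your proof is correct, and it fills in exactly what the paper dismisses with the single word ``Obvious'': since $\tilde{\epsilon}_\sigma^{p^\mu}(x)=\tilde{\epsilon}_\sigma(x^{p^\mu})$, everything reduces to showing the torsion of $\mathcal{O}^1_{K_{\mathfrak{p}_\sigma}}/\mathcal{O}^1_{F_{\mathfrak{p}_\sigma}}$ has exponent dividing $p^\mu$. Your one genuinely substantive step --- injecting the quotient into $\mathcal{O}^1_{K_{\mathfrak{p}_\sigma}}$ via $x\mapsto x/c(x)$ so that its torsion is bounded by the $p$-power roots of unity covered by the choice of $\mu$ (rather than naively assuming torsion of the quotient comes from torsion upstairs) --- is handled correctly in both the split and inert/ramified cases, so the argument is complete.
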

\begin{proof} Obvious. \end{proof}

We extend the additive character
$\frac{1}{p^\mu}\log\tilde{\epsilon}_\sigma^{p^\mu}$ linearly to
$\Gamma^-_J$; it is independent of $\mu$, and we denote it by
$\log_\sigma$. Note that the image of $\log_\sigma$ is contained in
$c_0^{-1}p^{-1}\sigma'(\mathcal{O}_{K_{\mathfrak{P}_\sigma}})$.

Let $\LOG_J$ denote the $\BC_p$-vector space spanned by $\log_\sigma$ ($\sigma\in \Sigma_J$). For each $\mathfrak{p}\in J$ we write
$$ \log_\mathfrak{p}=\sum_{\sigma\in\Sigma_\mathfrak{p}}\log_\sigma. $$ Then $\log_\mathfrak{p}\in \LOG_J$.

If $s\in \BC_p$ satisfies $|s|\leq |c_0|_pp^{-2}$, we define the
character $\epsilon_\sigma^{s}$ of $\Gamma^-_J$ by
$$\epsilon_\sigma^{s}(a) := \exp\Big(s\cdot \log_\sigma(a) \Big) .$$
For each $\Sigma_J$-tuple $\vec{s}=(s_\sigma)_{\sigma\in\Sigma_J}$
of elements in $\BC_p$ with $|s_\sigma|\leq |c_0|_pp^{-2}$ for each
$\sigma\in \Sigma_J$, we have the character
$$ \epsilon^{\vec{s}}:=\prod_{\sigma\in
\Sigma_J}\epsilon_\sigma^{s_\sigma} $$ of $\Gamma_J^-$. Hence, we
have an analyitc families of characters $$ \{  \epsilon ^{\vec{s}} :
\vec{s}=(s_{\sigma})_{\sigma\in \Sigma_J}, |s_\sigma|\leq |c_0|_p
p^{-2} \}.
$$

Applying the above construction to the case $F=\BQ$ and
$K=\mathcal{K}$, we obtain the character
$\tilde{\epsilon}_\mathcal{K}$ and an analytic family
$\epsilon_\mathcal{K}^s$ of characters of the group
$\Gamma^-_\mathcal{K}=\mathrm{Gal}(\mathcal{K}^-_\infty/\mathcal{K})$.
The character $\langle \ \cdot \ \rangle_{\mathrm{anti}}$ in our
introduction is exactly $\epsilon_\mathcal{K}^1$ and $\langle \
\cdot \ \rangle_{\mathrm{anti}}^s=\epsilon_\mathcal{K}^s$.

\subsection{Additive characters and measures}

Let $L$ be a number field, $d_L=[L:\BQ]$, $\CD_L$ the different of
$L$. In our application $L=F$ or $K$.

For each finite place $v$ of $L$ above a prime number $\ell$, let
$|\cdot|_v$ or $|\cdot|_{L_v}$ denote the absolute value of $L_v$
defined by $|x|_v=|\mathrm{N}_{L_v/\BQ_\ell}(x)|_{\ell}$. For each
finite place $v$ let $\omega_v$ be a uniformizing element of
$\mathcal{O}_{L_v}$.

For each finite place $v$ of $L$ we define the local zeta function
$\zeta_{L_v}(s)$ by $\zeta_{L_v}(s)=\frac{1}{1-|\omega_v|_v^s}$. For
real archimedean places we define
$\zeta_\BR(s)=\pi^{-\frac{s}{2}}\Gamma(\frac{s}{2})$ and for complex
archimedean places we define $\zeta_\BC(s)=2(2\pi)^{-s}\Gamma(s)$.
Then the zeta function $\zeta_L(s)$ is defined by
$$\zeta_L(s)=\prod_{v} \zeta_{L_v}(s)$$ which is convergent when
$\mathrm{Re}(s)>1$.

Let $\psi_\BQ$ be the standard additive character of $\BA/\BQ$ such
that $\psi_\BQ(x_\infty)=\mathrm{exp}(2\pi i x_\infty)$. We define
the additive character $\psi$ of $\BA_L/L$ by $$\psi=\psi_\BQ\circ
\mathrm{Tr}_{L/\BQ},$$  where $\mathrm{Tr}_{L/\BQ}$ is the trace map
from $\BA_L$ to $\BA$.  We write $\psi=\prod_v \psi_v$, where for
each $v$, $\psi_v$ is an additive character of $L_v$.

If $v$ is finite, let $d_v x$ be the Haar measure on $L_v$ that is
self dual with respect to the pairing $(x, x')\mapsto \psi_v(xx')$.
Then $$\mathrm{vol}(\mathcal{O}_{L_v}, d_v x)=
|\CD_{L_v}|_v^{\frac{1}{2}}.$$ If $v$ is real, we define $d_v x$ to
be the Lebesgue measure on $\BR$. If $v$ is complex, we define $d_v
x$ to be twice the Lebesgue measure on $\BC$.

We define the Haar measure on $L_v^\times$ by $$d_v^\times x=
\zeta_{L_v}(1) \frac{d_vx}{|x|_v}.$$ When $v$ is finite, we have
$$\mathrm{vol}(\mathcal{O}_{L_v}^\times, d^\times_v x)=
|\CD_{L_v}|_v^{\frac{1}{2}}.$$ Taking product we obtain the measure
on $\BA_L^\times$.

Taking $L=F$ or $K$ we obtain measures on $\BA_F^\times$ and
$\BA_K^\times$. We also need the quotient measure on
$\BA_K^\times/\BA_F^\times$. For our convenience we give a
description of it. If $v$ is a finite place of $F$ that is split in
$K$, then $K^\times_v\cong F^\times_v\times F^\times_v$ with
$F_v^\times$ diagonally embedding into $K^\times_v$. In this case we
have $K_v^\times/F_v^\times \cong F_v^\times , \ \ (a,b)\mapsto
ab^{-1}$. The quotient measure on $K_v^\times/F_v^\times$ coincides
with the pullback via this isomorphism of the measure on
$F_v^\times$. When $v$ is inert or ramified in $K$, the quotient
measure on $K_v^\times/F_v^\times$ is the Haar measure such that the
volume of $\mathcal{O}_{K_v}^\times/\mathcal{O}_{F_v}^\times$ is
\begin{equation}\label{eq:local-vol}
\mathrm{vol}
(\mathcal{O}_{K_v}^\times/\mathcal{O}_{F_v}^\times)
=\frac{\mathrm{vol}(\mathcal{O}^\times_{K_v})}{\mathrm{vol}(\mathcal{O}_{F_v}^\times)}
=
\frac{|\mathcal{D}_{K_v}|_{K_v}^{\frac{1}{2}}}{|\mathcal{D}_{F_v}|_v^{\frac{1}{2}}}
\end{equation}

\section{Automorphic representations and Whittaker models}
\label{sec:auto-rep}

\subsection{Admissible representations}

The reference of this subsection is \cite{JL70}.

Let $v$ be a finite place of $F$. If $\mathfrak{n}$ is an ideal of
$\mathcal{O}_{F_v}$, we use $U_0(\mathfrak{n})$ to denote the
subgroup
$$\{ g\in \wvec{a}{b}{c}{d} \in \GL_2(\mathcal{O}_{F_v}) : c \in
\mathfrak{n} \}.$$ When $\mathfrak{n}=(\omega_v)$ we write $
U_0(\omega_v)$ for $U_0(\mathfrak{n})$.

For two characters $\mu_1, \mu_2$ of $F_v^\times$, we have an
induced representation $\mathrm{Ind}(\mu_1\otimes \mu_2)$ of
$\mathrm{GL}_2(F_v)$. It is realized by right translation on the
space of functions $f$ on $\GL_2(F_v)$ satisfying
$$ f(\wvec{a}{b}{0}{d} g)  = \mu_1(a)\mu_2(d) \left|\frac{a}{d}\right|_v^{\frac{1}{2}} f(g)
$$ for all $g\in \GL_2(F_v)$, $a,d \in F_v^\times$ and $b\in F_v$.

When $\mu_1\mu_2^{-1}\neq |\cdot|_v, |\cdot|_v^{-1}$,
$\mathrm{Ind}(\mu_1\otimes \mu_2)$ is irreducible, and we denote it
by $\pi(\mu_1,\mu_2)$. If furthermore $\mu_1$ and $\mu_2$ are
unramified, then there exist nonzero
$\mathrm{GL}_2(\mathcal{O}_{F_v})$-invariant vectors in
$\pi(\mu_1,\mu_2)$. In this case, we say that $\pi(\mu_1,\mu_2)$ is
unraimified.

When $\mu_1\mu_2^{-1}=|\cdot|_v$, $\mathrm{Ind}(\mu_1\otimes \mu_2)$
contains a unique irreducible proper invariant subspace denoted by
$\sigma(\mu_1, \mu_1|\cdot|_v^{-1})$; $\sigma(\mu_1,
\mu_1|\cdot|_v^{-1})$ is of codimension $1$ in
$\mathrm{Ind}(\mu_1\otimes \mu_2)$; we call $\sigma(\mu_1,
\mu_1|\cdot|_v^{-1})$ {\it special}. If furthermore $\mu_1$ is
unramified, then there exist nonzero $U_0(\omega_v)$-invariant
vectors in $\sigma(\mu_1, \mu_1|\cdot|_v^{-1})$. In this case we say
that $\sigma(\mu_1, \mu_1|\cdot|_v^{-1})$ is {\it unramified
special}.

If $\pi=\pi(\mu_1,\mu_2)$ is unramified and $\chi$ is an unramified
character of $F_v^\times$, one defines the $L$-function
$$ L(s, \pi, \chi ) = [(1-\chi(\omega_v)\mu_1(\omega_v)|\omega_v|_v^s) (1-\chi(\omega_v)\mu_2(\omega_v)|\omega_v|_v^s)]^{-1} . $$
If $\pi=\sigma(\mu, \mu|\cdot|_v^{-1})$ is unramified special, and
$\chi$ is unramified, one defines the $L$-function
$$L(s,\pi, \chi) = (1-\chi(\omega_v)\mu(\omega_v)|\omega_v|_v^s)^{-1} .$$

\subsection{Whittaker models}

Fix  a place $v$ of $F$. Let $\pi$ be an admissible irreducible
representation of $\GL_2(F_v)$ with trivial central character,
$\mathfrak{n}=\mathfrak{n}_\pi$ the conductor of $\pi$.

Let $\mathcal{W}(\pi, \psi_v)$ be the Whittaker model of $\pi$
attached to the additive character $\psi_v$ \cite{JL70}. Recall that
$\mathcal{W}(\pi, \psi_v)$ is a subspace of smooth functions $$W:
\GL_2(F_v)\rightarrow \BC$$ that satisfy the following conditions:

$\bullet$ $W(\wvec{1}{x}{0}{1}g)=\psi_v(x)W(g)$ for all $x\in F_v$.

$\bullet$ If $v$ is archimedean, then
$W(\wvec{a}{0}{0}{1})=O(|a|_v^M)$ for some positive number $M$.

\noindent When $\pi$ is not $1$-dimensional, such a  Whittaker model
uniquely exists \cite[Theorem 2.14]{JL70}. The space
$\mathcal{W}(\pi, \psi_v)$ is precisely described by \cite[Lemma
14.3]{Jac72}.

If $v$ is archimedean and $\pi$ is a discrete series of weight $k$,
we take $W_\pi\in \mathcal{W}(\pi, \psi_v)$ to be
$$ W_\pi( z \wvec{a}{x}{0}{1} \wvec{\cos \phi}{\sin \phi}{-\sin \phi}{\cos \phi} ) = a^{\frac{k}{2}} e^{-2\pi a} 1_{\BR^+}(a)\cdot \mathrm{sgn}(z)^k \psi(x)e^{ik \phi}
$$ for $a,z\in \BR^\times, x, \phi\in \BR$. See
\cite[\S 5]{JL70} for the existence of such a $W_\pi$. 

Let $W_{\pi}$ be the Whittaker newform normalized such that
$W_{\pi}(\wvec{d_{F_v}}{0}{0}{1})=1$ and $W_{\pi}(gu)=W_{\pi}(g)$
for all $u\in U_0(\mathfrak{n})$. Here, $d_{F_v}$ a generator of
$\CD_{F_v}$.

\begin{lem}\label{lem:whittaker}
\begin{enumerate}
\item\label{it:don} If $\pi=\pi(\mu_1,\mu_2)$ is unramified, then
$W_\pi(\wvec{a}{0}{0}{1})=W^0_\pi(\wvec{ad_{F_v}^{-1}}{0}{0}{1})$
with
$$ W^0_\pi (\wvec{a}{0}{0}{1})=
\frac{\mu_1(a\omega_v)-\mu_2(a\omega_2)}{\mu_1(\omega_v)-\mu_2(\omega_v)}
|a|_v^{\frac{1}{2}}1_{\mathcal{O}_{F_v}}(a) .
$$
\item\label{it:special} If $\pi=\sigma(\mu, \mu|\cdot|_v^{-1})$ is unramified special, then
$W_\pi(\wvec{a}{0}{0}{1})=W^0_\pi(\wvec{ad_{F_v}^{-1}}{0}{0}{1})$
with
$$ W^0_\pi (\wvec{a}{0}{0}{1})= \mu(a) |a|_v^{\frac{1}{2}}  1_{\mathcal{O}_{F_v}}(a) .
$$
\end{enumerate}
\end{lem}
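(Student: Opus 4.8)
The plan is to reduce everything to an explicit computation of the Whittaker newform on the torus, using the local Whittaker functional and the known structure of the induced/special representations recalled just above. The normalization $W_\pi(\wvec{d_{F_v}}{0}{0}{1})=1$ together with the defining property $W(\wvec{1}{x}{0}{1}g)=\psi_v(x)W(g)$ and right $U_0(\mathfrak n)$-invariance pin down $W_\pi$ up to the scalar that the normalization fixes, so it suffices to identify the function $a\mapsto W_\pi(\wvec{a}{0}{0}{1})$ on $F_v^\times$ (extended by $0$ where it vanishes). I would first treat case (\ref{it:don}): for the unramified principal series $\pi(\mu_1,\mu_2)$ the spherical vector $f_0$ in $\mathrm{Ind}(\mu_1\otimes\mu_2)$ is the unique $\GL_2(\mathcal O_{F_v})$-fixed function with $f_0(1)=1$, and the associated Whittaker function is the Jacquet integral $W_{f_0}(g)=\int_{F_v}f_0(w\wvec{1}{x}{0}{1}g)\psi_v(-x)\,dx$ with $w=\wvec{0}{1}{-1}{0}$. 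Evaluating this integral at $g=\wvec{a}{0}{0}{1}$ is the classical Casselman--Shalika type computation: decompose $F_v=\bigsqcup_n \omega_v^n\mathcal O_{F_v}^\times$, use the Iwasawa decomposition of $w\wvec{1}{x}{0}{1}$ on each piece, and sum the resulting geometric-type series; this yields, after dividing by the value at $a=d_{F_v}$ to enforce the normalization, the stated formula $W^0_\pi(\wvec{a}{0}{0}{1})=\frac{\mu_1(a\omega_v)-\mu_2(a\omega_v)}{\mu_1(\omega_v)-\mu_2(\omega_v)}|a|_v^{1/2}1_{\mathcal O_{F_v}}(a)$, where the shift by $d_{F_v}^{-1}$ simply records that $\psi_v$ has conductor $\CD_{F_v}$ rather than $\mathcal O_{F_v}$.

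For case (\ref{it:special}) I would run the same Jacquet-integral computation but now on the unramified special representation $\sigma(\mu,\mu|\cdot|_v^{-1})$, realized as the unique irreducible (codimension one) subspace of $\mathrm{Ind}(\mu|\cdot|_v^{1/2}\otimes\mu|\cdot|_v^{-1/2})$ described in the text. The $U_0(\omega_v)$-fixed vector in that subspace is again essentially unique, and the integral formally specializes the principal-series answer at $\mu_1=\mu|\cdot|_v^{1/2}$, $\mu_2=\mu|\cdot|_v^{-1/2}$; the one subtlety is that $\mu_1\mu_2^{-1}=|\cdot|_v$ is exactly the reducibility point, so the naive limit of the principal-series formula is $\frac{0}{0}$. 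The correct statement is that on the special subspace the Whittaker function degenerates to the single term: the ``$\mu_2$'' contribution (the one that would come from the quotient, the one-dimensional constituent) drops out, leaving $W^0_\pi(\wvec{a}{0}{0}{1})=\mu(a)|a|_v^{1/2}1_{\mathcal O_{F_v}}(a)$. I would verify this either by taking a careful limit along unramified twists $\mu_1=\mu\nu$, $\mu_2=\mu\nu^{-1}$ as $\nu(\omega_v)\to |\omega_v|_v^{1/2}$ while staying inside the subspace (so the pole of the intertwining operator is cancelled), or directly: restrict the Jacquet integral to the subspace $\sigma(\mu,\mu|\cdot|_v^{-1})$, for which the relevant local integral telescopes to a single power, again with the $d_{F_v}^{-1}$-shift accounting for the conductor of $\psi_v$.

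The main obstacle is the special-representation case, specifically making the passage to the reducibility point rigorous: one must check that the Whittaker functional is nonzero on the special subspace (so that $W_\pi$ is well-defined and the normalization at $\wvec{d_{F_v}}{0}{0}{1}$ makes sense) and that in the decomposition of the principal-series Whittaker function into two ``geometric series'' pieces it is precisely the piece surviving on the subspace that gives $\mu(a)|a|_v^{1/2}$, with the other piece being the one supported on the quotient. All of this is standard — it is the local input behind the functional equation computations in Jacquet--Langlands — and I would cite \cite[Theorem 2.14]{JL70} and \cite[Lemma 14.3]{Jac72} for existence/uniqueness of the Whittaker model and its precise shape, then simply record the two evaluations. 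The geometric-sum bookkeeping and the tracking of the $d_{F_v}$-normalization are routine and I would not carry them out in detail.
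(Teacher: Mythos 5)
Your proposal is essentially correct but takes a different route from the paper, and the difference is worth recording. For part (\ref{it:don}) the paper does not recompute anything: it simply invokes Macdonald's formula (citing \cite{Mac-1,Mac-2} and \cite[Theorem 4.6.5]{Bump}) and notes the shift coming from the fact that $\psi_v$ has conductor $\mathcal{D}_{F_v}$ rather than $\mathcal{O}_{F_v}$; your Jacquet-integral/Casselman--Shalika computation is just a proof of that cited formula, so it buys self-containedness at the cost of redoing a standard calculation. For part (\ref{it:special}) the paper avoids the induced model entirely: it observes that $W_\pi(\wvec{a}{0}{0}{1})$ only depends on $|a|_v$ by right $U_0(\omega_v)$-invariance, writes out the action of the Hecke operator $\mathrm{U}_v$ on $W_\pi$, and obtains the recursion $\alpha_v W_\pi(\wvec{\omega_v^m}{0}{0}{1})=|\omega_v|_v^{-1}W_\pi(\wvec{\omega_v^{m+1}}{0}{0}{1})$ for $m\geq \ord_v d_{F_v}$ (and $=0$ below), then pins down $\alpha_v=\mu(\omega_v)|\omega_v|_v^{-1/2}$ by comparison with \cite[Lemma 14.3]{Jac72}. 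This completely sidesteps the degeneration at the reducibility point that you (correctly) identify as the delicate step in your approach; your route instead needs the nonvanishing of the Whittaker functional on the special subspace and a direct or limiting evaluation of the Jacquet integral on the new vector, which is doable but heavier.

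One caution if you carry your computation out: with the paper's conventions, $\sigma(\mu,\mu|\cdot|_v^{-1})$ is the irreducible subspace of $\mathrm{Ind}(\mu\otimes\mu|\cdot|_v^{-1})$, not of $\mathrm{Ind}(\mu|\cdot|_v^{1/2}\otimes\mu|\cdot|_v^{-1/2})$ as you wrote. If you specialize the principal-series parameters to $\mu_1=\mu|\cdot|_v^{1/2}$, $\mu_2=\mu|\cdot|_v^{-1/2}$ and keep the surviving ``$\mu_1$-piece'', you would land on $\mu(a)|a|_v\,1_{\mathcal{O}_{F_v}}(a)$, off by $|a|_v^{1/2}$ from the stated formula; with the paper's parametrization $\mu_1=\mu$, $\mu_2=\mu|\cdot|_v^{-1}$ the surviving term is indeed $\mu(a)|a|_v^{1/2}\,1_{\mathcal{O}_{F_v}}(a)$. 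This is a normalization slip rather than a flaw in the method, but it is exactly the kind of half-unit discrepancy the telescoping computation must keep track of.
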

\begin{proof} Assertion (\ref{it:don}) is just Macdonald's formula
\cite{Mac-1, Mac-2}. See also \cite[Theorem 4.6.5]{Bump}. Note that
the conductor of our $\psi_v$ is $\mathcal{D}_{F_v}$ while the
conductor of $\psi_v$ in loc. cit. is $\mathcal{O}_{F_v}$.

For (\ref{it:special}) we observe that
\begin{equation} \label{eq:unit-invariant}
W_\pi(\wvec{au}{0}{0}{1}) = W_\pi(\wvec{a}{0}{0}{1})
\end{equation} for
$u\in \mathcal{O}^\times_{F_v}$, since $W_\pi$ is right invariant by
the group $\{\wvec{u}{0}{0}{1}: u\in \mathcal{O}^\times_{F_v}\}
\subset U_0(\omega_v)$. Let $\mathrm{U}_v$ be the Hecke operator
defined by
$$(W_\pi|\mathrm{U}_v)(g)=\sum_{b\in
\mathcal{O}_{F_v}/\omega_v\mathcal{O}_{F_v}}W_\pi(g
\wvec{\omega_v}{b}{0}{1}).$$ Then $W_\pi$ is an eigenvector of
$\mathrm{U}_v$. Write $W_\pi|\mathrm{U}_v = \alpha_vW_\pi$. Then
\begin{eqnarray*} \alpha_v W_\pi (\wvec{\omega_v^m}{0}{0}{1}) & = &
\sum_{b\in
\mathcal{O}_{F_v}/\omega_v\mathcal{O}_{F_v}}W_\pi(\wvec{\omega_v^m}{0}{0}{1}
\wvec{\omega_v}{b}{0}{1}) \\ &=& \sum_{b\in
\mathcal{O}_{F_v}/\omega_v\mathcal{O}_{F_v}}
W_\pi(\wvec{1}{\omega_v^{m}b}{0}{1}\wvec{\omega_v^{m+1}}{0}{0}{1})
\\ &=& \sum_{b\in \mathcal{O}_{F_v}/\omega_v\mathcal{O}_{F_v}}
\psi_v( \omega_v^{m}b)W_\pi(\wvec{\omega_v^{m+1}}{0}{0}{1})
\\ &=& \left\{\begin{array}{ll} 0 & \text{ if } m<\ord_v d_{F_v}, \\
|\omega_v|_v^{-1} W_\pi(\wvec{\omega_v^{m+1}}{0}{0}{1}) & \text{ if
} m\geq\ord_v d_{F_v}. \end{array}\right.
\end{eqnarray*} Comparing this with \cite[Lemma 14.3]{Jac72} we
see that $\alpha_v=\mu(\omega_v)|\omega_v|_v^{-\frac{1}{2}}$ and
obtain the desired expression of $W_\pi$.
\end{proof}

\begin{rem}\label{rem:alpha} If further $\pi=\sigma(\mu,\mu|\cdot|_v^{-1})$ is of trivial central character, then $\mu^2=|\cdot|_v$ and $\alpha_v=\pm 1$.
\end{rem}

For each $W\in \mathcal{W}(\pi, \psi_v)$ and each continuous
character $\chi: F_v^\times \rightarrow \BC$ the local zeta integral
is defined by
$$ \Psi(s,W, \chi) =\int_{F_v^\times} W(\wvec{a}{0}{0}{1})\chi(a) |a|_v^{s-\frac{1}{2}}d^\times a , \hskip 10pt (s\in \BC).
$$ Then $\Psi(s,W,\chi)$ converges when $\mathrm{Re}(s) $ is sufficiently
large, and has a meromorphic continuation to the whose $\BC$.

\begin{prop}\label{prop:zeta-L} In the case when $v$ is finite,
if $\pi$ is unramified or unramified special, and if $\chi$ is
unramified, then
$$ \Psi (s,W_{\pi}, \chi ) = L(s,\pi\otimes \chi )
\cdot  \chi(\mathcal{D}_{F_v})|\mathcal{D}_{F_v}|_v^s
$$ \end{prop}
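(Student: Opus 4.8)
The plan is to compute the local zeta integral directly by plugging in the explicit formula for $W_\pi(\wvec{a}{0}{0}{1})$ supplied by Lemma \ref{lem:whittaker}, reducing the integral over $F_v^\times$ to a geometric series in powers of $\chi(\omega_v)\mu_i(\omega_v)|\omega_v|_v^s$, and then recognizing that series as the relevant Euler factor $L(s,\pi\otimes\chi)$ up to the stated normalizing factor coming from the conductor $\mathcal{D}_{F_v}$ of our additive character $\psi_v$. First I would treat the unramified case $\pi=\pi(\mu_1,\mu_2)$. Using Lemma \ref{lem:whittaker}(\ref{it:don}), $W_\pi(\wvec{a}{0}{0}{1})=W^0_\pi(\wvec{ad_{F_v}^{-1}}{0}{0}{1})$, so after the substitution $a\mapsto a\, d_{F_v}$ the integral becomes
$$\Psi(s,W_\pi,\chi)=\chi(d_{F_v})|d_{F_v}|_v^{s-\frac12}\int_{F_v^\times}W^0_\pi(\wvec{a}{0}{0}{1})\chi(a)|a|_v^{s-\frac12}\,d^\times a,$$
and since $|d_{F_v}|_v=|\mathcal{D}_{F_v}|_v$ and $\chi(d_{F_v})=\chi(\mathcal{D}_{F_v})$, the prefactor $\chi(d_{F_v})|d_{F_v}|_v^{s-\frac12}\cdot|a|_v^{1/2}$ contributions will combine into $\chi(\mathcal{D}_{F_v})|\mathcal{D}_{F_v}|_v^s$ once the remaining $|a|_v^{1/2}$ in $W^0_\pi$ is absorbed. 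The residual integral, using $W^0_\pi(\wvec{a}{0}{0}{1})=\frac{\mu_1(a\omega_v)-\mu_2(a\omega_v)}{\mu_1(\omega_v)-\mu_2(\omega_v)}|a|_v^{1/2}1_{\mathcal{O}_{F_v}}(a)$ and $\mathrm{vol}(\mathcal{O}_{F_v}^\times,d^\times_va)=|\mathcal{D}_{F_v}|_v^{1/2}$, is a sum over $m=\ord_v(a)\geq 0$ of $\bigl(\chi\mu_1(\omega_v)|\omega_v|_v^s\bigr)^m$ and $\bigl(\chi\mu_2(\omega_v)|\omega_v|_v^s\bigr)^m$ with the appropriate weights; summing the two geometric series and simplifying the telescoping numerator $\mu_1(\omega_v^{m+1})-\mu_2(\omega_v^{m+1})$ against the denominator $\mu_1(\omega_v)-\mu_2(\omega_v)$ gives exactly $[(1-\chi\mu_1(\omega_v)|\omega_v|_v^s)(1-\chi\mu_2(\omega_v)|\omega_v|_v^s)]^{-1}=L(s,\pi\otimes\chi)$.

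For the unramified special case $\pi=\sigma(\mu,\mu|\cdot|_v^{-1})$, the argument is the same but simpler: Lemma \ref{lem:whittaker}(\ref{it:special}) gives $W^0_\pi(\wvec{a}{0}{0}{1})=\mu(a)|a|_v^{1/2}1_{\mathcal{O}_{F_v}}(a)$, so the residual integral is the single geometric series $\sum_{m\geq 0}\bigl(\chi\mu(\omega_v)|\omega_v|_v^s\bigr)^m=(1-\chi\mu(\omega_v)|\omega_v|_v^s)^{-1}$, which is precisely $L(s,\pi\otimes\chi)$ in that case by the definition recalled just before the proposition; the normalizing prefactor $\chi(\mathcal{D}_{F_v})|\mathcal{D}_{F_v}|_v^s$ comes out in exactly the same way. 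I would present both cases together, doing the change of variables and the volume bookkeeping once.

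The only genuinely delicate point — and the step I expect to take the most care — is the bookkeeping of the conductor factor: one must correctly track the shift $a\mapsto a\,d_{F_v}$ forced by $W_\pi(\wvec{a}{0}{0}{1})=W^0_\pi(\wvec{ad_{F_v}^{-1}}{0}{0}{1})$, the factor $|a|_v^{1/2}$ built into $W^0_\pi$, the power $|a|_v^{s-1/2}$ in the integrand, and the volume $\mathrm{vol}(\mathcal{O}_{F_v}^\times,d^\times_va)=|\mathcal{D}_{F_v}|_v^{1/2}$, and check that these assemble into precisely $\chi(\mathcal{D}_{F_v})|\mathcal{D}_{F_v}|_v^s$ and nothing else. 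This is purely a normalization check, reflecting the fact noted in the proof of Lemma \ref{lem:whittaker} that our $\psi_v$ has conductor $\mathcal{D}_{F_v}$ rather than $\mathcal{O}_{F_v}$ as in Macdonald's formula; once it is done, the convergence for $\mathrm{Re}(s)$ large and the meromorphic continuation are automatic from the geometric-series expression, and the identity holds as an identity of rational functions in $|\omega_v|_v^s$.
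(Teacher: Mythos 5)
Your proposal is correct and follows essentially the same route as the paper's proof: substitute the explicit Whittaker values from Lemma \ref{lem:whittaker}, shift $a\mapsto a\,d_{F_v}$ to extract $\chi(d_{F_v})|d_{F_v}|_v^{s-\frac12}$, and evaluate the remaining integral as geometric series whose unit-volume factor $|\mathcal{D}_{F_v}|_v^{1/2}$ supplies the missing half-power of the different. Only a minor caveat: the extra $|\mathcal{D}_{F_v}|_v^{1/2}$ comes from $\mathrm{vol}(\mathcal{O}_{F_v}^\times,d^\times a)$, not from absorbing the $|a|_v^{1/2}$ in $W^0_\pi$ (which merely cancels against $|a|_v^{s-1/2}$), but you state the correct volume later, so the bookkeeping closes as claimed.
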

Compare this with \cite[(2.1)]{Hung14} and note that the formula in
loc. cit. misses a factor.
\begin{proof} We have
{\allowdisplaybreaks \begin{eqnarray*} \Psi ( s,W_{\pi}, \chi) &=&
\int_{F_v^\times} W^0(\wvec{ad_{F_v}^{-1}}{0}{0}{1})\chi(a)
|a|_v^{s-\frac{1}{2}}d^\times a \\ &=& \int_{F_v^\times}
W^0(\wvec{a}{0}{0}{1})\chi(ad_{F_v} ) |ad_{F_v}
|_v^{s-\frac{1}{2}}d^\times a \\
&=& |d_{F_v}|_v^{s-\frac{1}{2}}\chi(d_{F_v}) \int_{F_v^\times}
W^0(\wvec{a}{0}{0}{1})\chi(a ) |a|_v^{s-\frac{1}{2}}d^\times a.
\end{eqnarray*} }

In the unramified special case $\pi=\sigma(\mu, \mu|\cdot|_v^{-1})$ we
have {\allowdisplaybreaks \begin{eqnarray*} \int_{F_v^\times}
W^0(\wvec{a}{0}{0}{1})\chi(a) |a|_v^{s-\frac{1}{2}}d^\times a &=&
\mathrm{vol}(\mathcal{O}_{L_v}^\times, d^\times_v x)\sum_{i\geq 0}
\mu(\omega_v)^i\chi(\omega_v)^i |\omega_v|_v^{is} \\
&=& \frac{|d_{F_v}|_v^{1/2}}{1-\mu(\omega_v) \chi(\omega_v)
|\omega_v|_v^{s}} \\ &=& |d_{F_v}|_v^{1/2} L(s,\pi\otimes \chi).
\end{eqnarray*}}
In the unramified case $\pi=\pi(\mu_1,\mu_2)$ we have
{\allowdisplaybreaks \begin{eqnarray*} && \int_{F_v^\times}
W^0(\wvec{a}{0}{0}{1})\chi(a) |a|_v^{s-\frac{1}{2}}d^\times a \\ &=&
\mathrm{vol}(\mathcal{O}_{L_v}^\times, d^\times_v x)\sum_{i\geq
0}\frac{\mu_1(\omega_v)^{1+i}-\mu_2(\omega_v)^{1+i}}{\mu_1(\omega_v)-\mu_2(\omega_v)}
\chi(\omega_v)^i |\omega_v|_v^{is} \\
&=& \frac{|d_{F_v}|_v^{1/2}}{\mu_1(\omega_v)-\mu_2(\omega_v)}
\left(\frac{\mu_1(\omega_v)}{1-\chi(\omega_v)\mu_1(\omega_v)|\omega_v|_v^s}-\frac{\mu_2(\omega_v)}{1-\chi(\omega_v)\mu_2(\omega_v)|\omega_v|_v^s}\right)\\
&=& |d_{F_v}|_v^{1/2} L(s,\pi\otimes \chi),
\end{eqnarray*}} as expected.
\end{proof}


One defines the $\GL_2(F_v)$-invariant pairing
$$ \mathbf{b}_v: \CW(\pi,\psi_v)\times \CW(\pi,\psi_v)\rightarrow
\BC $$ by
$$ \mathbf{b}_v(W_1, W_2) = \int_{F_v^\times} W_1(\wvec{a}{0}{0}{1}) W_2(\wvec{-a}{0}{0}{1}) d^\times a.  $$

\begin{prop} \label{prop:hung} $($\cite[Lemma 4.2]{Hung14}$)$
\begin{enumerate}
\item\label{it:arch} If $v$ is archimedean and $\pi$ is a discrete
series of weight $k$, then
$$ \mathbf{b}_v( W_\pi,
\pi(\wvec{-1}{0}{0}{1})W_\pi ) = (4\pi)^{-k}\Gamma(k). $$
\item\label{it:b-value-unram}
If $v$ is finite and $\pi$ is unramified, then $$
\mathbf{b}_v(W_{\pi}, W_{\pi})=
\frac{\zeta_{F_v}(1)}{\zeta_{F_v}(2)}L( 1,\mathrm{Ad} \pi )
|\CD_{F_v}|_v^{1/2}.$$
\item\label{it:b-value-special}
If $v$ is finite and $\pi=\sigma(\mu,\mu|\cdot|_v^{-1})$ is unramified
special, then
$$\mathbf{b}_v(W_{\pi}, \pi(\wvec{0}{1}{-\omega_v}{0})W_{\pi})=
\epsilon({1}/{2}, \pi, \psi_v) L(1, \mathrm{Ad} \pi)
|\CD_{F_v}|_v^{1/2}.$$
\end{enumerate}

\noindent Here, $\mathrm{Ad} \pi$ is the adjoint representation
associated to $\pi$ \cite{GJ78}.
\end{prop}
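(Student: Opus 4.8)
The plan is to treat the three cases separately; in each one the matrix inserted into the second slot of $\mathbf{b}_v$ is designed precisely so that the integrand in $\mathbf{b}_v(W_1,W_2)=\int_{F_v^\times}W_1(\wvec{a}{0}{0}{1})W_2(\wvec{-a}{0}{0}{1})\,d^\times a$ becomes the square of the Whittaker newform restricted to the diagonal torus, after which one only has to evaluate a one-variable integral using the explicit formulas recorded in Section \ref{sec:auto-rep}.

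For part (a): since $(\pi(\wvec{-1}{0}{0}{1})W_\pi)(\wvec{-a}{0}{0}{1})=W_\pi(\wvec{-a}{0}{0}{1}\wvec{-1}{0}{0}{1})=W_\pi(\wvec{a}{0}{0}{1})$, the pairing collapses to $\int_{\BR^\times}W_\pi(\wvec{a}{0}{0}{1})^2\,d^\times a$. Plugging in the discrete-series formula $W_\pi(\wvec{a}{0}{0}{1})=a^{k/2}e^{-2\pi a}1_{\BR^+}(a)$ from Section \ref{sec:auto-rep} and using $\zeta_\BR(1)=1$, so that $d^\times a=da/|a|$ on $\BR^\times$, this equals $\int_0^\infty a^{k-1}e^{-4\pi a}\,da=(4\pi)^{-k}\Gamma(k)$. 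I expect no difficulty here.

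For parts (b) and (c): since $\wvec{-1}{0}{0}{1}\in U_0(\mathfrak{n}_\pi)$ one has $W_\pi(\wvec{-a}{0}{0}{1})=W_\pi(\wvec{a}{0}{0}{1})$, hence $\mathbf{b}_v(W_\pi,W_\pi)=\int_{F_v^\times}W_\pi(\wvec{a}{0}{0}{1})^2\,d^\times a$; after the change of variable $a\mapsto a\,d_{F_v}$, Lemma \ref{lem:whittaker} together with $\mathrm{vol}(\mathcal{O}_{F_v}^\times,d^\times_v x)=|\CD_{F_v}|_v^{1/2}$ reduces this to $|\CD_{F_v}|_v^{1/2}\sum_{n\ge0}W^0_\pi(\wvec{\omega_v^n}{0}{0}{1})^2$. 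In the unramified case, writing $\alpha=\mu_1(\omega_v)$, $\beta=\mu_2(\omega_v)$ with $\alpha\beta=1$ (trivial central character), the three geometric series obtained from $(\alpha^{n+1}-\beta^{n+1})^2$ recombine, via the elementary identity (with $t=|\omega_v|_v$)
$$\alpha^2(1-t)(1-\beta^2t)-2(1-\alpha^2t)(1-\beta^2t)+\beta^2(1-\alpha^2t)(1-t)=(1+t)(\alpha-\beta)^2,$$
into $(1+|\omega_v|_v)L(1,\mathrm{Ad}\,\pi)=\frac{\zeta_{F_v}(1)}{\zeta_{F_v}(2)}L(1,\mathrm{Ad}\,\pi)$, using $L(s,\mathrm{Ad}\,\pi)^{-1}=(1-\alpha^2|\omega_v|_v^s)(1-|\omega_v|_v^s)(1-\beta^2|\omega_v|_v^s)$. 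In the unramified special case $\mu^2=|\cdot|_v$ forces $W^0_\pi(\wvec{\omega_v^n}{0}{0}{1})^2=|\omega_v|_v^{2n}$, so the sum is $\zeta_{F_v}(2)=L(1,\mathrm{Ad}\,\pi)$, giving $\mathbf{b}_v(W_\pi,W_\pi)=|\CD_{F_v}|_v^{1/2}L(1,\mathrm{Ad}\,\pi)$.

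The one genuinely nonformal point, and what I expect to be the crux of part (c), is the identification of the Atkin--Lehner sign. Because $\wvec{0}{1}{-\omega_v}{0}^2=-\omega_vI$ is central and acts trivially, $\pi(\wvec{0}{1}{-\omega_v}{0})$ is an involution of the one-dimensional space of $U_0(\omega_v)$-fixed vectors, so $\pi(\wvec{0}{1}{-\omega_v}{0})W_\pi=\eta_vW_\pi$ with $\eta_v=\pm1$ and $\mathbf{b}_v(W_\pi,\pi(\wvec{0}{1}{-\omega_v}{0})W_\pi)=\eta_v\,\mathbf{b}_v(W_\pi,W_\pi)$. It then remains to show $\eta_v=\epsilon(1/2,\pi,\psi_v)$, which I would derive from the local functional equation for the zeta integral $\Psi(s,W,\chi)$ combined with the computation of $\Psi(s,W_\pi,\chi)$ in Proposition \ref{prop:zeta-L}: writing $\wvec{0}{1}{-\omega_v}{0}=\wvec{1}{0}{0}{\omega_v}\wvec{0}{1}{-1}{0}$ relates the Atkin--Lehner element to the long Weyl element, and one must track carefully the factors coming from the conductor $\CD_{F_v}$ of $\psi_v$; alternatively one invokes Atkin--Lehner theory for $\mathrm{GL}_2$-newvectors directly. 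This bookkeeping with the different is the only delicate step; everything else is the geometric-series and Gamma-integral computation above. The whole proposition is \cite[Lemma 4.2]{Hung14}, and the sketch above follows that reference.
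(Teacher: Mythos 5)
Your computation is correct, and it is worth pointing out that the paper itself contains no proof of Proposition \ref{prop:hung}: the statement is simply quoted from \cite[Lemma 4.2]{Hung14}, so your direct calculation is exactly the kind of argument the citation stands in for. Parts (a) and (b) check out as written: $\zeta_\BR(1)=1$ makes the archimedean integral $\int_0^\infty a^{k-1}e^{-4\pi a}\,da=(4\pi)^{-k}\Gamma(k)$, the shift $a\mapsto ad_{F_v}$ together with $\mathrm{vol}(\mathcal{O}_{F_v}^\times,d^\times_v x)=|\CD_{F_v}|_v^{1/2}$ is the same manipulation the paper performs in Proposition \ref{prop:zeta-L}, and your three-series identity (I verified it: with $\alpha\beta=1$ the numerator collapses to $(\alpha^2+\beta^2-2)(1+t)=(\alpha-\beta)^2(1+t)$) gives precisely $\frac{\zeta_{F_v}(1)}{\zeta_{F_v}(2)}L(1,\mathrm{Ad}\,\pi)$; likewise $\mu^2=|\cdot|_v$ gives $\sum_{n\geq 0}t^{2n}=\zeta_{F_v}(2)=L(1,\mathrm{Ad}\,\pi)$ in the special case. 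For (c), you correctly isolate the only nonformal input, namely that $\pi(\wvec{0}{1}{-\omega_v}{0})$ acts on the newvector by $\epsilon(1/2,\pi,\psi_v)$; note that this is exactly the fact the paper itself invokes, again without proof, in the proof of Proposition \ref{prop:xie}, so your proposed derivation via the local functional equation (or Casselman--Atkin--Lehner newvector theory) is the right thing to supply. One simplification for the step you flag as delicate: since the central character is trivial, changing the additive character $\psi$ to $\psi(a\,\cdot)$ multiplies $\epsilon(s,\pi,\psi)$ by $\omega_\pi(a)|a|_v^{2s-1}$, so at $s=1/2$ the epsilon factor is independent of the conductor of $\psi_v$; hence no bookkeeping with $\CD_{F_v}$ is actually needed in identifying the Atkin--Lehner sign, only in the normalization $W_\pi(\wvec{d_{F_v}}{0}{0}{1})=1$, which you have already accounted for.
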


Now, let $\pi$ be an irreducible admissible representation of
$\GL_2(\BA _F)$. By tensor product theorem (see \cite[Theorem
3.3.3]{Bump}) $\pi$ has a decomposition $\pi=\otimes_v \pi_v$, where
for each $v$, $\pi_v$ is an admissible representation of
$\GL_2(F_v)$. Then $\pi$ has a Whittaker model $\mathcal{W}$ if and
only if for each place $v$, $\pi_v$ has a Whittaker model
$\mathcal{W}_v$ (see \cite[Theorem 3.5.4]{Bump}); in this case
$\mathcal{W}$ is unique and consists of functions of the from
$W(g)=\prod_{v}W_v(g_v)$, where $W_v\in \mathcal{W}_v$ and
$W_v=W_{\pi_v}$ for almost all $v$.

We write $\mathfrak{n}$, the conductor of $\pi$, in the form
\begin{equation}\label{eq:conductor}
\mathfrak{n}
=\mathfrak{n}^+\mathfrak{n}^-\prod_{\mathfrak{p}|p}\mathfrak{p}^{r_\mathfrak{p}},\end{equation}
where $\mathfrak{n}^+$ (resp. $\mathfrak{n}^-$) is only divided by
primes that are split (resp. inert or ramified) in $K$, and
$(\mathfrak{n}^+\mathfrak{n}^-,p)=1$. We assume that
$\mathfrak{n}^-$ is square-free.

\subsection{Modular forms on definite quaternion
algebras} \label{ss:mod-form}

Let $B$ be a definite quaternion algebra over $F$ with discriminant
$\mathfrak{n}^-_b|\mathfrak{n}^-$. Fix an Eichler order $R$ of $B$
of level $\mathfrak{n}^+$. \label{ss:quaternion}

As ramified primes of $B$ are not split in $K$ (i.e.
$\mathfrak{n}^-_b|\mathfrak{n}^-$), $K$ is isomorphic to a subfield
of $B$.  We embed $K$ into $ B$ such that $\mathcal{O}_K=K\cap R$.
Write $B=K\oplus KI$ with $I^2=\beta\in F^\times$. We may take $I$
such that $\beta$ is totally negative and $\beta_{v}\in
(\mathcal{O}_{F_v}^\times)^2$ for each finite place
$v|p\frac{\mathfrak{n}}{\mathfrak{n}^-_b}\mathfrak{n}_\nu $;
$\beta_{v}\in  \mathcal{O}_{F_v}^\times $ if $v|
N_{K/F}\mathcal{D}_K$. Here $\mathfrak{n}_\nu$ is the conductor of
$\nu$. We will assume that $(\mathfrak{n}_\nu,\mathfrak{n}^-_b)=1$.

We fix an isomorphism $B\otimes_FK\cong M_2(K)$ such that $a\otimes
1\mapsto \wvec{a}{0}{0}{\bar{a}}$ (for  $a\in K)$, and $I\otimes
1\rightarrow \wvec{0}{-\beta}{-1}{0}$. For each $\sigma\in \Sigma$,
we have an inclusion $$i_\sigma: B\hookrightarrow B\otimes_FK\cong
M_2(K) \xrightarrow{\sigma} M_2(\BC).
$$ and extend it to $B_\sigma$.

Fix an element $\vartheta\in K$ such that $\{1,\vartheta_v\}$ is a
basis of $\mathcal{O}_{K_v}$ over $\mathcal{O}_{F_v}$ for each
$v|p\frac{\mathfrak{n}}{\mathfrak{n}^-_b}\mathfrak{n}_\nu$. For such
a $v$ we define the isomorphism $i_{v}: B_v\simeq M_2(F_{v})$  by
$$i_{v}(\vartheta_\mathfrak{p})=\wvec{\mathrm{T}(\vartheta_v)}{-\mathrm{N}(\vartheta_{v})}{1}{0},
\hskip 10pt
i_v(I)=\sqrt{\beta}\wvec{-1}{\mathrm{T}(\vartheta_v)}{0}{1},
$$ where
$\mathrm{T}(\vartheta_v)=\vartheta_v+\bar{\vartheta}_v$ and
$\mathrm{N}(\vartheta_v)=\vartheta_v\bar{\vartheta}_v$.

For each finite place $v\nmid p\mathfrak{n} \mathfrak{n}_\nu$ that
is split in $K$ we fix an isomorphism $i_v: B_v \simeq M_2(F_v)$
such that $i_v(\mathcal{O}_{K})\subset M_2(\mathcal{O}_{F_v})$ and
is diagonally in $M_2(\mathcal{O}_{F_v})$, and such that
$$i_v(I)\in \left\{\begin{array}{ll} F_v^\times
\mathrm{GL}_2(\mathcal{O}_{F_v}) & \text{ if }\mathrm{val}_v(\beta) \text{ is even}, \\
F_v^\times \wvec{\omega_v}{0}{0}{1}\mathrm{GL}_2(\mathcal{O}_{F_v})
& \text{ if }\mathrm{val}_v(\beta) \text{ is odd}.
\end{array}\right.
$$ Here $\mathrm{val}_v$ is the valuation on $F_v^\times$ such that
$\mathrm{val}_v(\omega_v)=1$.

Let $G$ be the algebraic group over $\BQ$ such that for any
$\BQ$-algebra $A$, $$G(A)=(A\otimes_{\BQ}B)^\times.$$ Let $Z$ be the
center of $G$. Then $$ Z(A)=(A\otimes_{\BQ}F)^\times.$$

For each even integer $h\geq 2$ and a ring $A$ let $V_{h}(A)$ denote
the set of homogenous polynomials of degree $h-2$ with coefficients
in $A$. Write
$$ V_{h}(A)= \oplus_{-\frac{h}{2}< m<\frac{h}{2}} A v_m,   \hskip 10pt v_m = X ^ {\frac{h-2}{2}-m} Y^ { \frac{h-2}{2}+m }.
$$ Let $\rho_h$ be the algebraic representation of
$\mathrm{GL}_2(A)$ on $V_{h}(A)$ defined by
$$ (\rho_h(g) P)(X,Y)= \det(g)^{-\frac{h-2}{2}} P((X,Y)g) .$$ Then
$\rho_h$ has a trivial central character.

Let $\vec{k}=(k_\sigma)_{\sigma\in \Sigma_F}$ be a $\Sigma_F$-tuple
of positive even integers. Put $V_{\vec{k}}(\BC)=\otimes_{\sigma\in
\Sigma_F}V_{k_\sigma}(\BC)$ and let $\rho_{\vec{k}}$ be the
representation of $G(\BR)$ defined by
$$ \rho_{\vec{k}}: G(\BR) \xrightarrow{\prod_\sigma \iota_\sigma}
\prod_{\sigma}\mathrm{GL}_2(\BC)
\xrightarrow{\prod_{\sigma}\rho_{k_\sigma}} \mathrm{Aut}(
V_{\vec{k}}(\BC) ). $$ For
$\mathbf{m}=\sum\limits_{\sigma\in\Sigma_K}m_\sigma \sigma$ of
integers with $-\frac{k_\sigma}{2}<m_\sigma<\frac{k_\sigma}{2}$ let
$\mathbf{v}_{\mathbf{m}}$ denote the element $\otimes_\sigma
v_{m_\sigma}$ of $ V_{\vec{k}}(\BC)$. Then $z =(z_\sigma)_{\sigma}
\in \prod_{\sigma}K_\sigma^\times = K_\infty^\times$ acts on
$\mathbf{v}_{\mathbf{m}}$ by $z\cdot \mathbf{v}_{\mathbf{m}} =
\prod_{\sigma}(\frac{z_\sigma}{\bar{z}_\sigma})^{-m_\sigma} \cdot
\mathbf{v}_{\mathbf{m}}$.

The contragredient representation $\check{\rho}_{\vec{k}}$ of
$\rho_{\vec{k}}$ can be realized on the dual $L_{\vec{k}}(\BC)$ of
$V_{\vec{k}}(\BC)$ such that
$$ \langle \check{\rho}_{\vec{k}}(g) \ell, {\rho}_{\vec{k}}(g) v \rangle =\langle\ell, v\rangle
$$ for all $g\in G(\BR)$, $\ell\in L_{\vec{k}}(\BC)$ and $v\in V_{\vec{k}}(\BC)$.
Here $\langle\cdot, \cdot \rangle$ is the pairing $$
\langle\cdot,\cdot\rangle: \hskip 10pt L_{\vec{k}}(\BC)\times
V_{\vec{k}}(\BC)\rightarrow \BC.$$

\begin{defn} Let $U$ be a compact open subgroup of $G(\BA^\infty)$.  A {\it $($algebraic$)$ modular form} on $B^\times$ of {\it trivial central character}, {\it
weight} $\vec{k}$ and {\it level} $U$, is a function $f: \widehat{ B
}^\times\rightarrow L_{\vec{k}}(\BC)$ that satisfies
$$ f( z \gamma b u ) = \check{\rho}_{\vec{k}} (\gamma) f(b) $$
for all $\gamma\in  B ^\times$, $u\in U$, $b\in \widehat{ B
}^\times$ and $z\in \widehat{F}^\times$. Denote by $M^{ B
}_{\vec{k}}(U, \BC)$ the space of such forms. Put
$M^B_{\vec{k}}(\BC)=\lim\limits_{\overrightarrow{\;\; U \;\; }}M^{ B
}_{\vec{k}}(U,\BC) $. Then $G(\BA ^\infty)$ acts on
$M^B_{\vec{k}}(\BC)$ by right translation.
\end{defn}

For each prime $\mathfrak{l}$ of $F$ such that $ B $ splits at
$\mathfrak{l}$, $\mathfrak{l}\nmid p$, and $U_\mathfrak{l}$ is
maximal, we define a Hecke operator $\mathrm{T}_\mathfrak{l}$ on
$M^{ B }_{\mathrm{k}}(U,E)$ as follows.  We may assume that
$U_\mathfrak{l}$ becomes identified with
$\mathrm{GL}_2(\mathcal{O}_{F_\mathfrak{l}})$ via $i_\mathfrak{l}$.
Let $\omega_\mathfrak{l}$ be a uniformizer of
$\mathcal{O}_{F_\mathfrak{l}}$. Given a double coset decomposition
$$ \mathrm{GL}_2(\mathcal{O}_{F_\mathfrak{l}}) \wvec{\omega_\mathfrak{l}}{0}{0}{1} \mathrm{GL}_2(\mathcal{O}_{F_\mathfrak{l}})
=\coprod b_{\mathfrak{l},i}
\mathrm{GL}_2(\mathcal{O}_{F_\mathfrak{l}})
$$ we define the Hecke operator $\mathrm{T}_\mathfrak{l}$ on $S^{ B
}_{\mathrm{k}}(\Sigma)$ by
$$ (\mathrm{T}_\mathfrak{l}  {f})(g) =\sum_i  {f}(g b_{\mathfrak{l},i}) . $$
Assume that $i_p(U_p)\cong
U_{p,0}:=\prod_{\mathfrak{p}|p}U_0(\omega_\mathfrak{p})$. Then for
each $\mathfrak{p}|p$ using the double coset decomposition
$$ U_{\mathfrak{p}} \wvec{\omega_\mathfrak{p}}{0}{0}{1}
U_{\mathfrak{p}} =\coprod b_{\mathfrak{p},i} U_{\mathfrak{p}}$$ we
define $\mathrm{U}_\mathfrak{p}$ by
$$ \mathrm{U}_{\mathfrak{p}}  {f} (g) = \sum_i   {f} (g b_{\mathfrak{p},i}) $$
We define the Atkin operator $w_{\mathfrak{p}}$ by
$$ (w_{\mathfrak{p}} f)(g) =  {f}(g\wvec{0}{1}{\omega_\mathfrak{p}}{0}). $$ Let
$\mathbb{T}_U$ be the Hecke algebra generated by these
$\mathrm{T}_\mathfrak{l}$ and $\mathrm{U}_\mathfrak{p}$,
$w_{\mathfrak{p}}$.

For $v\in V_{\vec{k}}(\BC)$ and $f\in M^B_{\vec{k}}(\BC)$ we can
attach to $v\otimes f$ an automorphic form $\Psi(v\otimes f)$ on
$G(\BA )$ by
$$ \Psi(v\otimes f) (g) : = \langle f(g^\infty), \rho_{\vec{k}}(g_\infty) v  \rangle .
$$

Every automorphic form arises in this way. Indeed, if
$\pi'=\pi'_\infty\otimes \pi'^\infty$ is an automorphic
representation of $G(\BA )$ with $\pi'_\infty \cong \rho_{\vec{k}}$,
then $\pi'^\infty$ appears in $M_{\vec{k}}^B(\BC)$.

\subsection{$p$-adic modular forms on definite quaternion
algebras} \label{ss:p-mod-form}

For each $\sigma$ we put $k_{\sigma'}=k_\sigma$. We form two vector
spaces $V_{\vec{k}}(\BC_p)= V_{\vec{k}}(\BC)\otimes_{\BC,
\jmath}\BC_p$ and $L_{\vec{k}}(\BC_p)=L_{\vec{k}}(\BC)\otimes_{\BC,
\jmath}\BC_p$ over $\BC_p$. So there is a pairing
$L_{\vec{k}}(\BC_p)\times V_{\vec{k}}(\BC_p)\rightarrow \BC_p$. Via
$\jmath$ we have algebraic representations of $\prod_{\sigma}\GL_2(\BC_p)$ on
$V_{\vec{k}}(\BC_p)$ and $L_{\vec{k}}(\BC_p)$. Precisely, for
$g_p\in \GL_2(\BC_p)$ and $f\in L_{\vec{k}}(\BC_p)$ we have $$ g_p \cdot
f = \jmath (\check{\rho}_{\vec{k}}(\jmath^{-1}(g_p))
\jmath^{-1}(f));$$ the same holds for the algebraic representation
on $V_{\vec{k}}(\BC_p)$. By abuse of notation we again use
$\check{\rho}_{\vec{k}}$ and $\rho_{\vec{k}}$ to denote the
resulting algebraic representations.

In Section \ref{ss:mod-form} we fix an isomorphism . For each place
$\mathfrak{p}$ of $F$ above $p$, using the isomorphism
$B\otimes_FK\cong M_2(K)$ given in Section \ref{ss:mod-form} we
obtain an inclusion $$\mathbf{i}_\mathfrak{p}:
B_\mathfrak{p}\hookrightarrow M_2(K_\mathfrak{p});$$ when
$\mathfrak{p}=\mathfrak{P}\bar{\mathfrak{P}}$ is split in $K$, from
this inclusion we obtain an isomorphism
$$\mathbf{i}_\mathfrak{p}: B_\mathfrak{p}\cong M_2(K_\mathfrak{P})\cong M_2(F_\mathfrak{p}).$$

\begin{lem}\label{lem:h-sigma} In the case of
$\mathfrak{p}$ split in $K$,
$\mathfrak{i}_{\mathfrak{p}}i_{\mathfrak{p}}^{-1}=\mathrm{Ad}(\hbar_\mathfrak{p})$
with
$$ \hbar_\mathfrak{p}=\wvec{\sqrt{\beta}}{0}{0}{\sqrt{\beta}^{-1}}\wvec{1}{- \vartheta_{\bar{\mathfrak{P}} }}{1}{- \vartheta_\mathfrak{P}}\in
\GL_2(F_{\mathfrak{p}}). $$
\end{lem}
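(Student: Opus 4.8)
The plan is to track explicitly how the two identifications of $B_\mathfrak{p}$ with $M_2(F_\mathfrak{p})$ are built from the single fixed isomorphism $B\otimes_F K\cong M_2(K)$, and then to read off the change-of-basis matrix. Recall that $\mathbf{i}_\mathfrak{p}$ is defined by composing $B_\mathfrak{p}\hookrightarrow M_2(K_\mathfrak{p})$ with the projection to the $\mathfrak{P}$-component (when $\mathfrak{p}=\mathfrak{P}\bar{\mathfrak{P}}$ splits in $K$), using $a\otimes 1\mapsto \wvec{a}{0}{0}{\bar a}$ and $I\otimes 1\mapsto\wvec{0}{-\beta}{-1}{0}$. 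The map $i_\mathfrak{p}$, on the other hand, is characterized by $i_\mathfrak{p}(\vartheta_\mathfrak{p})=\wvec{\mathrm{T}(\vartheta_\mathfrak{p})}{-\mathrm{N}(\vartheta_\mathfrak{p})}{1}{0}$ and $i_\mathfrak{p}(I)=\sqrt{\beta}\,\wvec{-1}{\mathrm{T}(\vartheta_\mathfrak{p})}{0}{1}$.

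First I would compute the images under $\mathbf{i}_\mathfrak{p}$ of the two elements $\vartheta_\mathfrak{p}$ and $I$. For $\vartheta$, since $\vartheta\in K$ acts on the $\mathfrak{P}$-coordinate by $\sigma$ and on $\bar{\mathfrak{P}}$ by $\bar\sigma$, we get $\mathbf{i}_\mathfrak{p}(\vartheta_\mathfrak{p})=\wvec{\vartheta_\mathfrak{P}}{0}{0}{\vartheta_{\bar{\mathfrak{P}}}}$; for $I$ we get $\mathbf{i}_\mathfrak{p}(I)=\wvec{0}{-\beta}{-1}{0}$. Next, since both $i_\mathfrak{p}$ and $\mathbf{i}_\mathfrak{p}$ are $F_\mathfrak{p}$-algebra isomorphisms $B_\mathfrak{p}\xrightarrow{\sim}M_2(F_\mathfrak{p})$, the composite $\mathbf{i}_\mathfrak{p}\circ i_\mathfrak{p}^{-1}$ is an $F_\mathfrak{p}$-algebra automorphism of $M_2(F_\mathfrak{p})$, hence of the form $\mathrm{Ad}(\hbar_\mathfrak{p})$ for some $\hbar_\mathfrak{p}\in\GL_2(F_\mathfrak{p})$ by Skolem–Noether, well defined up to scalar. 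To pin down $\hbar_\mathfrak{p}$ it then suffices to solve the two linear equations
\begin{equation*}
\hbar_\mathfrak{p}\,\wvec{\mathrm{T}(\vartheta_\mathfrak{p})}{-\mathrm{N}(\vartheta_\mathfrak{p})}{1}{0}\,\hbar_\mathfrak{p}^{-1}=\wvec{\vartheta_\mathfrak{P}}{0}{0}{\vartheta_{\bar{\mathfrak{P}}}},
\qquad
\hbar_\mathfrak{p}\,\sqrt{\beta}\,\wvec{-1}{\mathrm{T}(\vartheta_\mathfrak{p})}{0}{1}\,\hbar_\mathfrak{p}^{-1}=\wvec{0}{-\beta}{-1}{0},
\end{equation*}
and to verify that the claimed $\hbar_\mathfrak{p}=\wvec{\sqrt\beta}{0}{0}{\sqrt\beta^{-1}}\wvec{1}{-\vartheta_{\bar{\mathfrak{P}}}}{1}{-\vartheta_\mathfrak{P}}$ does the job. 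Here one uses $\mathrm{T}(\vartheta_\mathfrak{p})=\vartheta_\mathfrak{P}+\vartheta_{\bar{\mathfrak{P}}}$ and $\mathrm{N}(\vartheta_\mathfrak{p})=\vartheta_\mathfrak{P}\vartheta_{\bar{\mathfrak{P}}}$ after passing to the $\mathfrak{P}$-coordinate, so that the matrix $\wvec{\mathrm{T}(\vartheta_\mathfrak{p})}{-\mathrm{N}(\vartheta_\mathfrak{p})}{1}{0}$ is precisely the companion matrix of $(X-\vartheta_\mathfrak{P})(X-\vartheta_{\bar{\mathfrak{P}}})$, whose eigenvectors are the columns of $\wvec{\vartheta_\mathfrak{P}}{\vartheta_{\bar{\mathfrak{P}}}}{1}{1}$; this explains the second factor of $\hbar_\mathfrak{p}$, while the diagonal factor $\wvec{\sqrt\beta}{0}{0}{\sqrt\beta^{-1}}$ is forced by the normalization coming from $i_\mathfrak{p}(I)$ versus $\mathbf{i}_\mathfrak{p}(I)$.

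The verification is essentially a $2\times 2$ matrix computation, so there is no conceptual obstacle; the one point requiring a little care is checking that the same $\hbar_\mathfrak{p}$ simultaneously conjugates the image of $\vartheta$ and the image of $I$ correctly — i.e. that the scalar ambiguity in Skolem–Noether is consistent between the two generators, which is what fixes the relative normalization $\sqrt\beta$ versus $\sqrt\beta^{-1}$ in the diagonal factor. I would carry this out by conjugating $i_\mathfrak{p}(\vartheta_\mathfrak{p})$ first to diagonal form via $\wvec{1}{-\vartheta_{\bar{\mathfrak{P}}}}{1}{-\vartheta_\mathfrak{P}}$ (up to reordering/rescaling), then applying the diagonal matrix and confirming both target matrices $\wvec{\vartheta_\mathfrak{P}}{0}{0}{\vartheta_{\bar{\mathfrak{P}}}}$ and $\wvec{0}{-\beta}{-1}{0}$ are obtained on the nose. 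Once both identities hold, $\mathbf{i}_\mathfrak{p}\circ i_\mathfrak{p}^{-1}$ and $\mathrm{Ad}(\hbar_\mathfrak{p})$ agree on the generators $\vartheta,I$ of $B_\mathfrak{p}$ over $F_\mathfrak{p}$, hence on all of $B_\mathfrak{p}$, completing the proof.
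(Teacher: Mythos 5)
Your overall strategy is the same ``simple computation'' the paper has in mind: evaluate both $\mathbf{i}_\mathfrak{p}$ and $i_\mathfrak{p}$ on the generators $\vartheta$ and $I$, use Skolem--Noether to know the composite is $\mathrm{Ad}(\hbar)$ for some $\hbar$ determined up to scalar, and pin $\hbar$ down by the two conjugation identities. Your eigenvector analysis is also correct as far as it goes: the rows of $\wvec{1}{-\vartheta_{\bar{\mathfrak{P}}}}{1}{-\vartheta_\mathfrak{P}}$ are left eigenvectors of the companion matrix $\wvec{\mathrm{T}(\vartheta_\mathfrak{p})}{-\mathrm{N}(\vartheta_\mathfrak{p})}{1}{0}$, so any admissible $\hbar$ is of the form $\mathrm{diag}(c_1,c_2)\,A$ with $A=\wvec{1}{-\vartheta_{\bar{\mathfrak{P}}}}{1}{-\vartheta_\mathfrak{P}}$, and conjugation by such an $\hbar$ sends $i_\mathfrak{p}(\vartheta_\mathfrak{p})$ to $\wvec{\vartheta_\mathfrak{P}}{0}{0}{\vartheta_{\bar{\mathfrak{P}}}}$ for every choice of $c_1,c_2$.

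The gap is that you never carry out the one verification you yourself flag as the delicate point, namely that the displayed diagonal factor also handles $I$ correctly; you only assert that $\wvec{\sqrt{\beta}}{0}{0}{\sqrt{\beta}^{-1}}$ is ``forced by the normalization.'' Doing the computation: $A\,i_\mathfrak{p}(I)\,A^{-1}=\sqrt{\beta}\,\wvec{0}{-1}{-1}{0}$, and conjugating an antidiagonal matrix by $\mathrm{diag}(c_1,c_2)$ multiplies its upper-right entry by $c_1/c_2$ and its lower-left entry by $c_2/c_1$. Matching with $\mathbf{i}_\mathfrak{p}(I)=\wvec{0}{-\beta}{-1}{0}$ therefore forces $c_1/c_2=\sqrt{\beta}$ (e.g.\ $\mathrm{diag}(\sqrt{\beta},1)$, up to the irrelevant overall scalar), whereas the printed factor has $c_1/c_2=\beta$ and yields $\wvec{0}{-\beta^{3/2}}{-\beta^{-1/2}}{0}$ instead of $\wvec{0}{-\beta}{-1}{0}$. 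So your outline does not close as written: the decisive check is missing, and the claim that it succeeds for the matrix exactly as displayed is not borne out by the computation (the ratio of the diagonal entries must be $\sqrt{\beta}$, not $\beta$). Your appeal to the ``scalar ambiguity in Skolem--Noether'' is also misplaced: scalars never affect $\mathrm{Ad}$, and the difference between $\mathrm{diag}(\sqrt{\beta},\sqrt{\beta}^{-1})$ and $\mathrm{diag}(\sqrt{\beta},1)$ is not a scalar; what the $I$-identity pins down is precisely the ratio $c_1/c_2$. To complete the argument you must perform the $2\times 2$ computation explicitly, which both determines this ratio and would have alerted you to the normalization discrepancy in the stated formula.
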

\begin{proof} This follows from a simple computation.
\end{proof}

For each $\sigma\in \Sigma_F$ we put $i_{\sigma'}=\sigma'\circ
i_{\mathfrak{p}_\sigma}$ and $\mathbf{i}_{\sigma'}=\sigma'\circ
\mathbf{i}_{\mathfrak{p}_\sigma}$. Write
$\mathfrak{i}_p=(\mathfrak{i}_{\sigma'})_{\sigma'}$.

\begin{defn} Let $U=U_pU^p$ be a compact open subgroup of $G(\BA^\infty)$.
A {\it $p$-adic modular form on $ B^\times$, of trivial central
character, weight $\vec{k}=(k_{\sigma'})$ and level $U$}, is a
continuous function $\widehat{f}: \widehat{ B }^\times\rightarrow
L_{\vec{k}}(\BC_p)$ that satisfies
$$ \widehat{f}( z \gamma b u ) =  \check{\rho}_{\vec{k}}(\mathfrak{i}_p(u_p^{-1}))\widehat{f}(b)  $$
for all $\gamma\in  B ^\times$, $u\in U$, $b\in \widehat{ B
}^\times=G(\BA^\infty)$ and $z\in \widehat{F}^\times$. Denote by
$M^{B}_{\vec{k}}(U,\BC_p)$ the space of such forms.
\end{defn}

\begin{lem}\label{lem:control-1} If $\widehat{f}$ is a $p$-adic modular form on $ B^\times$ (of trivial central
character, weight $\vec{k}=(k_{\sigma'})$ and level $U$), then the
image of $\widehat{f}$ lies in an $\mathcal{O}_{\BC_p}$-lattice of
$L_{\vec{k}}(\BC_p)$.
\end{lem}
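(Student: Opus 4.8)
The plan is to combine the finiteness of the relevant double coset space with a boundedness estimate for the operators $\check{\rho}_{\vec{k}}(\mathfrak{i}_p(u_p^{-1}))$. Since $B$ is definite, the double coset space $B^\times\widehat{F}^\times\backslash \widehat{B}^\times/U$ is finite; fix a set of representatives $b_1,\dots,b_h\in\widehat{B}^\times$. Any $b\in\widehat{B}^\times$ can be written as $b=z\gamma b_i u$ with $z\in\widehat{F}^\times$, $\gamma\in B^\times$, $u\in U$ and some index $i$, so the defining transformation law gives $\widehat{f}(b)=\check{\rho}_{\vec{k}}(\mathfrak{i}_p(u_p^{-1}))\,\widehat{f}(b_i)$. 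Hence it suffices to show that the set $\{\check{\rho}_{\vec{k}}(\mathfrak{i}_p(u_p^{-1}))\,\widehat{f}(b_i): 1\le i\le h,\ u\in U\}$ is a bounded subset of $L_{\vec{k}}(\BC_p)$: once this is known, it is contained in $t^{-1}L_{\vec{k}}(\mathcal{O}_{\BC_p})$ for a suitable $t\in\BC_p^\times$, where $L_{\vec{k}}(\mathcal{O}_{\BC_p})$ denotes the $\mathcal{O}_{\BC_p}$-lattice spanned by the dual basis of the vectors $\mathbf{v}_{\mathbf{m}}$, and this is the desired lattice.

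The next step is the boundedness of the operators. The group $U_p$ is compact, so for each $\mathfrak{p}\mid p$ and each $\sigma$ with $\mathfrak{p}_\sigma=\mathfrak{p}$, the image $\mathfrak{i}_{\sigma'}(U_\mathfrak{p})$ under $B_\mathfrak{p}\hookrightarrow M_2(K_\mathfrak{p})\xrightarrow{\sigma'}M_2(\BC_p)$ (resp.\ $B_\mathfrak{p}\cong M_2(F_\mathfrak{p})\xrightarrow{\sigma'}M_2(\BC_p)$ when $\mathfrak{p}$ is split) is a compact subgroup of $\GL_2(\BC_p)$. On such a compact subgroup the matrix entries are bounded and $\det$ takes values in a compact subgroup of $\BC_p^\times$, hence is bounded above and bounded away from $0$. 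From the explicit formula $(\rho_h(g)P)(X,Y)=\det(g)^{-\frac{h-2}{2}}P((X,Y)g)$ it follows that $\rho_{\vec{k}}$, and therefore its contragredient $\check{\rho}_{\vec{k}}$, carries each of these compact subgroups to a bounded subset of $\mathrm{Aut}(L_{\vec{k}}(\BC_p))$. Consequently there is a constant $C$ with $\|\check{\rho}_{\vec{k}}(\mathfrak{i}_p(u_p^{-1}))\|\le C$ for all $u\in U$.

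Combining the two steps, $\|\widehat{f}(b)\|\le C\cdot\max_{1\le i\le h}\|\widehat{f}(b_i)\|$ for every $b\in\widehat{B}^\times$, so the image of $\widehat{f}$ is bounded and hence lies in an $\mathcal{O}_{\BC_p}$-lattice of $L_{\vec{k}}(\BC_p)$. The only point that needs a little care is the boundedness of $\check{\rho}_{\vec{k}}$ on the compact subgroup $\mathfrak{i}_p(U_p)$, that is, controlling the scalar factor $\det(g)^{-\frac{h-2}{2}}$; this is harmless precisely because the determinant of a compact subgroup of $\GL_2(\BC_p)$ lies in a compact, hence bounded and bounded-away-from-zero, subgroup of $\BC_p^\times$. (We remark that continuity of $\widehat{f}$ plays no role here: the conclusion uses only the transformation law and the finiteness of the double coset space.)
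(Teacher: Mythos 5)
Your proof is correct, and it rests on the same key ingredient as the paper's: the finiteness of the double coset space of $\widehat{B}^\times$ by $B^\times$ (times the center) and $U$, which the paper cites as Fujisaki's theorem. The only real difference is how boundedness on a single coset $b_iU$ is obtained: the paper uses the continuity of $\widehat{f}$ together with compactness of $U$ (so $\widehat{f}(b_iU)=\widehat{f}(B^\times b_i U)$ is compact), whereas you bypass continuity entirely by bounding the operator family $\{\check{\rho}_{\vec{k}}(\mathfrak{i}_p(u_p^{-1})):u\in U\}$ directly, using that $\mathfrak{i}_p(U_p)$ is a compact subgroup of $\prod_\sigma\GL_2(\BC_p)$ on which the matrix entries are bounded and $|\det|$ is bounded away from $0$ (indeed equal to $1$ on a compact subgroup of $\BC_p^\times$). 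Your variant is slightly more self-contained in that it shows the transformation law and the class-number finiteness alone already force the image into a lattice, which justifies your closing remark that continuity of $\widehat{f}$ is not needed; the paper's route is shorter but does invoke continuity.
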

\begin{proof} By definition, for each $a\in \widehat{B}^\times$,
$B^\times a U$ and $aU$ have the same image by the map
$\widehat{f}$. It is compact since $U$ is compact. By the theorem of
Fujisaki (see \cite{Vig}) we know that the double coset
$B^\times\backslash \widehat{B}^\times/ U$ is finite, which implies
our lemma.
\end{proof}

For each $f\in M^B_{\vec{k}}(U, \BC)$ we can attach to it a $p$-adic
modular form $\widehat{f}\in M^{B}_{\vec{k}}(U,\BC_p)$ defined by
$$ \widehat{f} (b) = \check{\rho}_{\vec{k}}(\mathfrak{i}_p(b_p^{-1})) \jmath ( f (b) ) . $$
Then we get an isomorphism
\begin{equation}\label{eq:mod-p-mod}
M^{B}_{\vec{k}}(U,\BC)\rightarrow M^{B}_{\vec{k}}(U,\BC_p), \hskip
10pt f \mapsto \widehat{f} .
\end{equation}

Let $E$ be a finite extension of $\BQ_p$ in $\BC_p$ that contains
all embeddings of $K_{\sigma'}$ for all $\sigma\in\Sigma_K$. With
$$ L_{\vec{k}}(E)=\bigotimes_{\sigma\in \Sigma_K} L_{k_\sigma}(E) $$
instead of $L_{\vec{k}}(\BC_p)$ we have the space
$M^{B}_{\vec{k}}(U,E)$ of $E$-valued modular form on $ B^\times$ of
trivial central character, weight $\vec{k}=(k_{\sigma'})$ and level
$U$. We also have the notations $V_{k_\sigma}(E)$
($\sigma\in\Sigma_K$) and $V_{\vec{k}}(E)$.

Assume that $U_p=U_{p,0}$. Using notations at the end of Section
\ref{ss:mod-form} we define operators $\mathrm{T}_\mathfrak{l}$,
$\mathrm{U}_\mathfrak{p}$ and $w_\mathfrak{p}$ by
$$ (\mathrm{T}_\mathfrak{l} \widehat{f})(g) =\sum_i \widehat{f}(g b_{\mathfrak{l},i}) , $$
$$ \mathrm{U}_{\mathfrak{p}} \widehat{f} (g) = \sum_i \check{\rho}_{\vec{k}}(\mathfrak{i}_pi_\mathfrak{p}^{-1}(b_{\mathfrak{p},i}))\widehat{f} (g b_{\mathfrak{p},i})
$$ and
$$ (w_{\mathfrak{p}} \widehat{f})(g) = \check{\rho}_{\vec{k}}(\mathfrak{i}_p i_\mathfrak{p}^{-1}\wvec{0}{1}{\omega_\mathfrak{p}}{0}) \widehat{f}(g\wvec{0}{1}{\omega_\mathfrak{p}}{0}).
$$

Let $\mathbb{T}_U$ be the Hecke algebra generated by these
$\mathrm{T}_\mathfrak{l}$ and $\mathrm{U}_\mathfrak{p}$,
$w_{\mathfrak{p}}$. Then the map (\ref{eq:mod-p-mod}) is
$\mathbb{T}_U$-equivariant.

\section{Special values of $L$-function} \label{sec:special-value}

In this section and the next one we extend the result in
\cite{Hung14} in two directions. One is that we allow much bigger
field extensions; such an extension is essential for our work. The
other is that we weaken a local condition in \cite{Hung14}.




\subsection{Waldspurger formula}

Let $\pi$ be an unitary cuspidal automorphic representation on
$\GL_2(\BA_F)$ with trivial central character. We assume that $\pi$
satisfies the following conditions:

$\bullet$ For each $\sigma\in\Sigma_F$, $\pi_\sigma$ is a discrete
series of even weight $k_\sigma$.

$\bullet$ If we write the conductor of $\pi$ as in
(\ref{eq:conductor}), then $r_{\mathfrak{p}}\leq 1$ for each
$\mathfrak{p}|p$.

$\bullet$ If $v|\mathfrak{n}^-$, then $\pi_v$ is a special
representation $\sigma(\mu_v, \mu_v|\cdot|_v^{-1})$ with unramified
character $\mu_v$.

We will fix a decomposition $\mathfrak{n}^+\mathcal{O}_K=\mathfrak{N}^+\bar{\mathfrak{N}}^+$.

Put $W_\pi=\prod_v W_{\pi_v}$. Let $\varphi_\pi$ be the normalized
new form in $\pi$ defined by
$$ \varphi_\pi(g) := \sum_{\alpha\in F} W_\pi ( \wvec{\alpha}{0}{0}{1} g).
$$

Let $\tau^\mathfrak{n}=\prod_v \tau_v^{\mathfrak{n}}$ be the
Atkin-Lehner element defined by
$$ \tau_\sigma^\mathfrak{n} = \wvec{1}{0}{0}{-1}$$ if $\sigma\in \Sigma_F$, and $$ \tau_v^\mathfrak{n}=\wvec{0}{1}{-\varpi_v^{\mathrm{ord}_v\mathfrak{n}}}{0}. $$
if $v$ is finite.

Let $d^tg$ be the Tamagawa measure on $\GL_2$. Put
$$ \langle\varphi_\pi, \varphi_\pi\rangle_{\GL_2} = \int_{\BA^\times_F \GL_2(F)\backslash \GL_2(\BA_F)} \varphi_\pi(g)\varphi_\pi(g\tau^{\mathfrak{n}}) d^tg, $$
and
$$ ||\varphi_\pi||_v := \frac{ \zeta_{F_v}(2) }{\zeta_{F_v}(1)L(\mathrm{Ad}\pi_v,1)} \mathbf{b}_v (W_{\pi_v}, \pi(\tau^{\mathfrak{n}}_v)W_{\pi_v}).
$$

\begin{prop}\label{prop:wald-1} $($Waldspurger formula \cite[Proposition 6]{Wald}$)$ We have
$$ \langle \varphi_\pi,\varphi_\pi \rangle_{\GL_2} = \frac{2L(1,\mathrm{Ad}\pi)}{\zeta_F(2)} \prod_v ||\varphi_\pi||_v .
$$
\end{prop}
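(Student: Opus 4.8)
The plan is to deduce the Waldspurger formula stated in Proposition \ref{prop:wald-1} from the general local-global identity of \cite{Wald}, by unwinding the global Petersson-type pairing $\langle\varphi_\pi,\varphi_\pi\rangle_{\GL_2}$ into an Euler product of local pairings and then recognizing the result. First I would recall that the global new form $\varphi_\pi$ is a pure tensor $\otimes_v W_{\pi_v}$ in the Whittaker model $\mathcal{W}(\pi,\psi)$, via the standard expansion $\varphi_\pi(g)=\sum_{\alpha\in F^\times}W_\pi(\mathrm{diag}(\alpha,1)g)$ together with $\prod_v W_{\pi_v}=W_\pi$. Then I would apply the Rankin–Selberg/zeta-integral unfolding: the global integral $\int_{\BA_F^\times\GL_2(F)\backslash\GL_2(\BA_F)}\varphi_\pi(g)\varphi_\pi(g\tau^{\mathfrak n})\,d^tg$ unfolds against the Whittaker expansion to an adelic integral over $\BA_F^\times\backslash$ (something), which factors as a product of local integrals of the shape $\mathbf{b}_v(W_{\pi_v},\pi(\tau^{\mathfrak n}_v)W_{\pi_v})$, up to the completed $L$-value governing the normalization of the Tamagawa measure and the residue of the Eisenstein series. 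This is exactly the content of \cite[Proposition 6]{Wald}, whose output is a formula of the form $\langle\varphi_\pi,\varphi_\pi\rangle_{\GL_2} = C\cdot\prod_v \mathbf{b}_v(W_{\pi_v},\pi(\tau^{\mathfrak n}_v)W_{\pi_v})$ with an explicit constant $C$.

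**Next I would** identify the constant. By definition $||\varphi_\pi||_v=\frac{\zeta_{F_v}(2)}{\zeta_{F_v}(1)L(\mathrm{Ad}\pi_v,1)}\mathbf{b}_v(W_{\pi_v},\pi(\tau^{\mathfrak n}_v)W_{\pi_v})$, so $\prod_v \mathbf{b}_v(W_{\pi_v},\pi(\tau^{\mathfrak n}_v)W_{\pi_v}) = \prod_v ||\varphi_\pi||_v \cdot \prod_v \frac{\zeta_{F_v}(1)L(\mathrm{Ad}\pi_v,1)}{\zeta_{F_v}(2)}$. Using $\prod_v \zeta_{F_v}(s)=\zeta_F(s)$ and $\prod_v L(\mathrm{Ad}\pi_v,1)=L(1,\mathrm{Ad}\pi)$ (the adjoint $L$-function being the Euler product of its local factors, convergent and nonzero at $1$ for cuspidal $\pi$), this gives $\prod_v \mathbf{b}_v(W_{\pi_v},\pi(\tau^{\mathfrak n}_v)W_{\pi_v}) = \frac{\zeta_F(1)L(1,\mathrm{Ad}\pi)}{\zeta_F(2)}\prod_v ||\varphi_\pi||_v$. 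Comparing with Waldspurger's constant — which, once the Tamagawa measure normalization and the factor $\frac{\zeta_F(2)}{\zeta_F(1)}$ coming from the volume of $\BA_F^\times\GL_2(F)\backslash\GL_2(\BA_F)$ are accounted for — yields precisely $\langle\varphi_\pi,\varphi_\pi\rangle_{\GL_2}=\frac{2L(1,\mathrm{Ad}\pi)}{\zeta_F(2)}\prod_v ||\varphi_\pi||_v$. I would cross-check the numerical factor $2$ against the archimedean computation in Proposition \ref{prop:hung}(\ref{it:arch}) and the residue $\mathrm{Res}_{s=1}\zeta_F(s)$ normalization used by Waldspurger, since that is where spurious constants tend to creep in.

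**The main obstacle** will be bookkeeping of normalizations rather than any conceptual difficulty: reconciling the Tamagawa measure $d^tg$ on $\GL_2$ with the product of the local measures $d^\times_v x$ fixed in Section \ref{sec:char-measure} (which carry the factors $|\mathcal{D}_{F_v}|_v^{1/2}$), tracking the completed versus finite parts of $\zeta_F$ and $L(\mathrm{Ad}\pi,-)$, and matching the Atkin–Lehner element $\tau^{\mathfrak n}$ used here with the one in \cite{Wald}. A secondary point to be careful about is convergence and meromorphic continuation: the naive unfolding produces a Rankin–Selberg integral that is only conditionally convergent, so one invokes the analytic continuation of the Eisenstein series and evaluates at the relevant point, which is standard but must be cited correctly. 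Once these normalizations are pinned down the formula drops out directly from \cite[Proposition 6]{Wald}, so I would present the proof as a short reduction: expand $\varphi_\pi$ as a pure tensor, unfold, invoke Waldspurger's local-global identity, and substitute the definition of $||\varphi_\pi||_v$ together with the Euler factorizations of $\zeta_F$ and $L(1,\mathrm{Ad}\pi)$.
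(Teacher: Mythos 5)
The paper offers no proof of this proposition beyond citing Waldspurger's Proposition 6, and your proposal amounts to essentially the same thing: quote Waldspurger's local--global identity obtained by unfolding the Petersson pairing, and then match his normalization with the definition of $||\varphi_\pi||_v$. One caution: your intermediate identity $\prod_v \mathbf{b}_v(W_{\pi_v},\pi(\tau^{\mathfrak n}_v)W_{\pi_v})=\frac{\zeta_F(1)L(1,\mathrm{Ad}\pi)}{\zeta_F(2)}\prod_v||\varphi_\pi||_v$ is only formal, since $\zeta_F(1)$ diverges (this is exactly why Waldspurger's formula is stated in terms of the normalized local pairings you use in the end), so the reduction should be phrased directly in terms of those normalized factors rather than through that divergent product.
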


Put $$ ||\varphi_\pi||_{\Gamma_0(\mathfrak{n})} = \Big(
\zeta_F(2)\cdot \mathrm{N}\mathfrak{n} \cdot \prod_{v|\mathfrak{n}}
(1+|\varpi_v|_v)\Big)
 \langle\varphi_\pi, \varphi_\pi\rangle_{\GL_2} .$$

\begin{cor}
One has {\small $$ L(1,\mathrm{Ad} \pi) = || \varphi_\pi
||_{\Gamma_0(\mathfrak{n})} \cdot \frac{2^{|k|}
2^{d-1}}{\mathrm{N}\mathfrak{n}} \cdot \prod_{v\nmid \mathfrak{n}:
\text{finite}}\mathrm{N}\CD_{F_v}^{1/2}\cdot
\prod_{v|\mathfrak{n}^-}
\epsilon(\frac{1}{2},\pi_v,\psi_v)\mathrm{N}\CD_{F_v}^{1/2} \cdot
\prod_{v|\frac{\mathfrak{n}}{\mathfrak{n}^-}}
\frac{1}{(1+|\varpi_v|_v)||\varphi_\pi||_v}.
$$} \end{cor}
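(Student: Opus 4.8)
The plan is to read the identity directly off Waldspurger's formula (Proposition \ref{prop:wald-1}): solve it for $L(1,\mathrm{Ad}\pi)$ and then evaluate the local factors $||\varphi_\pi||_v$ explicitly at every place outside the ``bad'' set $\{v\mid\tfrac{\mathfrak{n}}{\mathfrak{n}^-}\}$, where they are left untouched.

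First I would rearrange. Substituting the definition of $||\varphi_\pi||_{\Gamma_0(\mathfrak{n})}$ into Proposition \ref{prop:wald-1} gives
$$ L(1,\mathrm{Ad}\pi) = \frac{||\varphi_\pi||_{\Gamma_0(\mathfrak{n})}}{2\,\mathrm{N}\mathfrak{n}\cdot\prod_{v\mid\mathfrak{n}}(1+|\varpi_v|_v)\cdot\prod_v ||\varphi_\pi||_v}, $$
so the whole problem reduces to evaluating $\prod_v||\varphi_\pi||_v$, which I split as a product over (i) the archimedean $\sigma\in\Sigma_F$, (ii) the finite $v\nmid\mathfrak{n}$, (iii) $v\mid\mathfrak{n}^-$, and (iv) $v\mid\tfrac{\mathfrak{n}}{\mathfrak{n}^-}$; the type-(iv) factors are carried unchanged into the final answer.

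The three remaining local computations are each a direct application of Proposition \ref{prop:hung}. At archimedean $\sigma$ one has $\tau^\mathfrak{n}_\sigma=\wvec{1}{0}{0}{-1}$, which differs from $\wvec{-1}{0}{0}{1}$ by the central element $\wvec{-1}{0}{0}{-1}$; since $\pi$ has trivial central character, part (\ref{it:arch}) gives $\mathbf{b}_\sigma(W_{\pi_\sigma},\pi(\tau^\mathfrak{n}_\sigma)W_{\pi_\sigma})=(4\pi)^{-k_\sigma}\Gamma(k_\sigma)$, and inserting $\zeta_\BR(1)=1$, $\zeta_\BR(2)=\pi^{-1}$ together with $L(1,\mathrm{Ad}\pi_\sigma)=\zeta_\BR(2)\zeta_\BC(k_\sigma)$ collapses $||\varphi_\pi||_\sigma$ to $2^{-k_\sigma-1}$; hence $\prod_\sigma||\varphi_\pi||_\sigma=2^{-|k|-d}$. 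At finite $v\nmid\mathfrak{n}$, $\pi_v$ is unramified and $\tau^\mathfrak{n}_v=\wvec{0}{1}{-1}{0}\in\GL_2(\mathcal{O}_{F_v})$ fixes the newform, so part (\ref{it:b-value-unram}) together with the cancellation of $L(1,\mathrm{Ad}\pi_v)$ and of $\zeta_{F_v}(1)/\zeta_{F_v}(2)$ gives $||\varphi_\pi||_v=|\CD_{F_v}|_v^{1/2}=\mathrm{N}\CD_{F_v}^{-1/2}$ (equal to $1$ at $v$ unramified in $F$, so the product is finite). At $v\mid\mathfrak{n}^-$, squarefreeness of $\mathfrak{n}^-$ forces $\mathrm{ord}_v\mathfrak{n}=1$, so $\tau^\mathfrak{n}_v=\wvec{0}{1}{-\varpi_v}{0}$; part (\ref{it:b-value-special}), combined with $L(1,\mathrm{Ad}\pi_v)=\zeta_{F_v}(2)$ for the unramified special $\pi_v$ and the elementary identity $\zeta_{F_v}(2)/\zeta_{F_v}(1)=(1+|\varpi_v|_v)^{-1}$, yields $||\varphi_\pi||_v=\epsilon(\tfrac12,\pi_v,\psi_v)\,\mathrm{N}\CD_{F_v}^{-1/2}/(1+|\varpi_v|_v)$.

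Finally I would feed these values back into the displayed formula above. The archimedean contribution $2^{-|k|-d}$ inverts and, after absorbing the overall factor $\tfrac12$, produces $2^{|k|}2^{d-1}$; the factors $\prod_{v\mid\mathfrak{n}^-}(1+|\varpi_v|_v)$ coming from the type-(iii) inverses cancel the corresponding part of $\prod_{v\mid\mathfrak{n}}(1+|\varpi_v|_v)$ in the denominator, leaving only $\prod_{v\mid\tfrac{\mathfrak{n}}{\mathfrak{n}^-}}(1+|\varpi_v|_v)^{-1}$; and since $\epsilon(\tfrac12,\pi_v,\psi_v)=\pm1$ by Remark \ref{rem:alpha}, its inverse equals itself, so the type-(iii) places contribute exactly $\prod_{v\mid\mathfrak{n}^-}\epsilon(\tfrac12,\pi_v,\psi_v)\mathrm{N}\CD_{F_v}^{1/2}$. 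Together with the type-(ii) contribution $\prod_{v\nmid\mathfrak{n}}\mathrm{N}\CD_{F_v}^{1/2}$ and the untouched type-(iv) factors $\prod_{v\mid\tfrac{\mathfrak{n}}{\mathfrak{n}^-}}\tfrac{1}{(1+|\varpi_v|_v)||\varphi_\pi||_v}$, this is precisely the asserted identity. I expect the only real obstacle to be bookkeeping at the archimedean places: one must line up the normalizations of $\psi_\sigma$, of $d^\times_\sigma x$, of the discrete-series Whittaker vector $W_{\pi_\sigma}$, and of the adjoint $\Gamma$-factor so that all powers of $\pi$ cancel and only the powers of $2$ survive; the non-archimedean steps are routine once Proposition \ref{prop:hung} is in hand.
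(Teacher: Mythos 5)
Your proposal is correct and follows essentially the same route as the paper: rearrange the Waldspurger formula of Proposition \ref{prop:wald-1} for $L(1,\mathrm{Ad}\pi)$ and evaluate $\|\varphi_\pi\|_v$ place by place via Proposition \ref{prop:hung}, using $L(1,\mathrm{Ad}\pi_\sigma)=2^{1-k_\sigma}\pi^{-(k_\sigma+1)}\Gamma(k_\sigma)$ (your $\zeta_\BR(2)\zeta_\BC(k_\sigma)$) at the archimedean places, $L(1,\mathrm{Ad}\pi_v)=\zeta_{F_v}(2)$ and $\epsilon(\tfrac12,\pi_v,\psi_v)=\pm1$ at $v\mid\mathfrak{n}^-$, and leaving the $v\mid\tfrac{\mathfrak{n}}{\mathfrak{n}^-}$ factors untouched. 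You merely spell out the $(1+|\varpi_v|_v)$ and discriminant bookkeeping that the paper's terse proof leaves implicit, and the cancellations come out exactly as in the stated corollary.
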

\begin{proof} This follows from Proposition \ref{prop:wald-1} and
Proposition \ref{prop:hung}. Indeed, by Proposition \ref{prop:hung}
one has
$$||\varphi_\pi||_v = \left\{\begin{array}{ll}  2^{-k_\sigma-1} & \text{ if } v \text{ is archimedean},
\\
\mathrm{N}\CD_{F_v}^{1/2} &  \text{ if } v \text{ is finite and }
v\nmid \mathfrak{n},
\\ \mathrm{N}\CD_{F_v}^{1/2} \epsilon(\frac{1}{2},\pi_v,\psi_v)  &  \text{ if } v|\mathfrak{n}^-. \end{array}\right.$$
For archimedean places $\sigma$ one uses the fact
$L(1,\mathrm{Ad}\pi_\sigma )=
2^{1-k_\sigma}\pi^{-(k_\sigma+1)}\Gamma(k_\sigma)$.
\end{proof}

\subsection{Gross points}

Let $B$ and $G$ be as in Section \ref{ss:quaternion}.

Let $J$ be a set of places of $F$ that are above $p$.

Let $\mathfrak{p}$ be a prime above $p$. If $\mathfrak{p}\notin J$
and $\mathfrak{p}$ is inert or ramified in $K$, we put
$\varsigma_\mathfrak{p}=1$. If $\mathfrak{p}\notin J$ and
$\mathfrak{p}=\mathfrak{P}\bar{\mathfrak{P}}$ is split in $K$, we
put
$$\varsigma_{\mathfrak{p}}=\wvec{\vartheta_\mathfrak{P}}{\vartheta_{\bar{\mathfrak{P}}}}{1}{1}.$$
In this case we have for any $t=(t_1,t_2)\in
K_\mathfrak{p}=K_{\mathfrak{P}}\oplus K_{\bar{\mathfrak{P}}}$,
$$\varsigma_{\mathfrak{p}}^{-1} i_\mathfrak{p}(t) \varsigma_{\mathfrak{p}}=\mathfrak{i}_\mathfrak{p}(t)=\wvec{t_1}{0}{0}{t_2} .$$

When $\mathfrak{p}\in J$, for each integer $n$ we define
$\varsigma_\mathfrak{p}^{(n)}\in G(F_\mathfrak{p})$ as follows. When
$\mathfrak{p}=\mathfrak{P}\bar{\mathfrak{P}}$ splits in $K$, we put
$$ \varsigma_\mathfrak{p}^{(n)} =
\wvec{\vartheta}{-1}{1}{0}\wvec{\varpi_{\mathfrak{p}}^n}{0}{0}{1}
\in \mathrm{GL}_2(F_\mathfrak{p}) .$$ When $\mathfrak{p}$ is inert
or ramified in $K$, we put $$\varsigma_\mathfrak{p}^{(n)} =
\wvec{0}{1}{-1}{0}\wvec{\varpi_\mathfrak{p}^n}{0}{0}{1}.$$ When $n=0$ we also write $\varsigma_\mathfrak{p}$ for $\varsigma_\mathfrak{p}^{(0)}$.

Let $J'$ be a subset of $J_p$ such that $J\subset J'\subset J_p$.
 For each $J$-tuple of integers
$\vec{n}=(n_{\mathfrak{p}})_{\mathfrak{p}\in J}$ we put
$\varsigma_{J,J'}^{(\vec{n})}= \prod_{\mathfrak{p}\in J}
\varsigma_{\mathfrak{p}}^{(n_\mathfrak{p})} \cdot
\prod_{\mathfrak{p} \in J'\backslash J} \varsigma_{\mathfrak{p}} $.
When $J'=J_p$ we just write $\varsigma_{J}^{(\vec{n})}$ for
$\varsigma_{J,J_p}^{(\vec{n})}$. We also write
$\varsigma_J=\varsigma_J^{(\vec{0})}$.

\subsection{$p$-stabilizer} \label{ss:stab}

Let $\pi'=\otimes_v \pi'_v$ be the unitary cuspidal automorphic
representation on $G(\BA_F)$ with trivial central character attached
to $\pi$ via Jacquet-Langlands correspondence.  Then $\pi'_\infty$
is isomorphic to the algebraic representation $\rho_{\vec{k}}$ of
$G(\BR)$. We may assume that $\pi'_v=\pi_v\circ i_v$ for finite
place $v\nmid \mathfrak{n}^-_b$. When $v|\mathfrak{n}^-_b$, as
$\pi_v=\sigma(\mu_v, \mu_v|\cdot|_v^{-1})$ is special, $\pi'_v=
(\mu_v|\cdot|_v^{-1/2})\circ N_{B_v/F_v}$ is $1$-dimensional, where
$N_{B_v/F_v}$ is the reduced norm of $B_v$. For each $v\nmid
\mathfrak{n}$, $\pi'_v$ is unramified. We may assume that
$\pi'_v=\pi_v\circ i_v$ for each finite place $v\nmid
\mathfrak{n}^-_b$.

For each finite place $v$ we define a new vector $\varphi_v\in
\pi'_v$ by the following

$\bullet$ if $v|\mathfrak{n}^-_b$, then $\varphi_v$ is a basis of
the one dimensional representation $\pi'_v$;

$\bullet$ if $v|\mathfrak{n}^-$ but $v\nmid\mathfrak{n}^-_b$, then
$\varphi_v$ is fixed by $U_0(\mathfrak{n})_v$;

$\bullet$ if $v\nmid \mathfrak{n}$, $\varphi_v$ is the normalized
spherical vector of $\pi'_v$ that corresponds to $W_{\pi_v}$ in the
Whittaker model.

Realize $\pi'^{\infty}$ in $S^B_{\vec{k}}(\BC)$ so that
$\varphi^\infty:=\otimes_{v: \text{finite} }\varphi_v$ is an element
of $S^B_{\vec{k}}(\BC)$. Then we put $\varphi_{\pi'}=  \Psi(
\mathbf{v}_{\mathbf{m}} \otimes \varphi^\infty)$.

Next we define $p$-stabilization of $\varphi_v$.

For
$\mathfrak{p}\in J$, if $r_v=1$, we put
$\varphi^\dagger_\mathfrak{p}=\varphi_\mathfrak{p}$.
If
$r_\mathfrak{p}=0$, we choose a Satake parameter $a_\mathfrak{p}$ of $\pi_\mathfrak{p}$
and define $\alpha_{\mathfrak{p}}=a_\mathfrak{p}|\omega_\mathfrak{p}|_{\mathfrak{p}}^{-1/2}$; then we put $\varphi^\dagger_\mathfrak{p}=
\varphi_\mathfrak{p}- \frac{1}{\alpha_\mathfrak{p}}
\pi(\wvec{1}{0}{0}{\varpi_\mathfrak{p}})\varphi_\mathfrak{p}$.

If $\phi\in \pi'$ satisfies $\phi_p=\varphi_p$ (i.e.
$\phi_\mathfrak{p}=\varphi_\mathfrak{p}$ for each $\mathfrak{p}$
above $p$), we put $$ \phi^{\dagger_J}= \phi-
\Big(\prod_{\mathfrak{p}\in J:
r_\mathfrak{p}=0}\frac{1}{\alpha_\mathfrak{p}}
\pi(\wvec{1}{0}{0}{\varpi_\mathfrak{p}})\Big)\phi .$$ From now on,
when the notation $ \phi^{\dagger_J}$ appears, we always means that
$\phi$ satisfies $\phi_p=\varphi_p$ and
$\phi_\infty=\varphi_\infty$.

Define the Atkin-Lehner element $\tau_v^{\mathfrak{n},B}$ of
$G(F_v)$ by
$$ \tau_v^{\mathfrak{n},B}= \left\{ \begin{array}{cl} I & \text{ if } v \text{ is archimedean or } v|\mathfrak{n}^-_b , \\
\wvec{0}{1}{-\varpi_v^{\mathrm{ord}_v \mathfrak{n}}}{0} & \text{ if
} v|\frac{\mathfrak{n}}{\mathfrak{n}^-_b}, \\ 1 & \text{ if } v\nmid
\mathfrak{n}.
\end{array}\right. $$

For $\phi_1, \phi_2 \in \CA(\pi')$, we define the
$G(\BA_F)$-equivalent pairing
$$ \langle \phi_1, \phi_2 \rangle_G := \int_{G(F)Z(\BA_F)\backslash G(\BA_F)} \phi_1(g)\phi_2(g) dg
$$ where $dg$ is the Tamagawa measure on $G/Z$. For each place $v$
as $\pi'_v$ is self-dual, there exists a non-degenerate pairing
$\langle \cdot,\cdot\rangle_v: \pi'_v\times \pi'_v\rightarrow \BC$.
The pairing is unique up to a nonzero scalar. We have
\begin{equation}\label{eq:pairing-loc-gl}
\langle \cdot,\cdot\rangle_G = C_{\pi'}
\bigotimes_v\langle\cdot,\cdot\rangle_v
\end{equation} for some constant $C_{\pi'}$ that only depends on $\pi$.

By Casselman's results \cite[Theorem 1]{Cass} 
we have
$$ \langle \varphi_{\pi'}, \pi'(\tau^{\mathfrak{n},B})\varphi_{\pi'}
\rangle_G\neq 0 $$ and
$$ \langle \varphi_v, \pi'(\tau^{\mathfrak{n},B}_v)\varphi_v
\rangle_v\neq 0 $$ for each $v$.


For $g\in G(F_v)$ and a character $\chi_v: K^\times_v\rightarrow
\BC^\times$ one defines the local toric integral for  $\phi_v$ by
$$ \mathcal{P}( g, \phi_v, \chi_v ) =
\frac{ L( 1,\mathrm{Ad} \pi_v ) L(1, \tau_{K_v/F_v})
}{\zeta_{F_v}(2)L(\frac{1}{2}, \pi_{K_v}\otimes \chi )}
\int_{K^\times_v/F^\times_v} \frac{ \langle \pi'(tg)
\phi^{\dagger_J}_v, \pi'(Ig)\phi^{\dagger_J}_v \rangle_v }{ \langle
\varphi_v, \pi'(\tau^{\mathfrak{n},B}_v)\varphi_v \rangle_v }
\chi_v(t) dt .
$$ 
Let $\chi: K^\times\BA_F^\times \backslash \BA_K^\times \rightarrow
\BC^\times$ be a Hecke character of archimedean weight
$(\mathbf{m},-\mathbf{m})$,
$\mathbf{m}=\sum\limits_{\sigma\in\Sigma_F}m_\sigma\sigma\in\BZ[\Sigma_F]$.

For $\phi\in \CA(G)$ and $g\in G(\BA_F)$ one defines the global
toric period integral by
$$ P(g,\phi,\chi):=\int_{K^\times\BA_F^\times\backslash \BA^\times_K} \phi(tg)\chi(t)
dt,
$$ where $dt$ is the Tamagawa measure.

\begin{prop} \label{prop:wald} For any $\phi$ such that
$\phi_p=\varphi_p$, we have \begin{eqnarray*}
\frac{P(\varsigma_J^{(\vec{n})},
\phi^{\dagger_J},\chi)^2}{\langle\varphi_{\pi'},
\pi'(\tau^{\mathfrak{n},B})\varphi_{\pi'}\rangle_G} &=&
\frac{\zeta_F(2)L(\frac{1}{2}, \pi_K\otimes \chi) }{2
L(1,\mathrm{Ad}\pi )} \cdot \prod_{\mathfrak{p}\in J}
\mathcal{P}(\varsigma_{\mathfrak{p}}^{(\vec{n})}, \phi_\mathfrak{p},
\chi_\mathfrak{p}) \\ &&\cdot \prod_{\mathfrak{p}|p,
\mathfrak{p}\notin J} \mathcal{P}(\varsigma_{\mathfrak{p}},
\phi_\mathfrak{p}, \chi_\mathfrak{p}) \cdot \prod_{v\nmid
p}\mathcal{P}(1, \phi_v, \chi_v).
\end{eqnarray*} \end{prop}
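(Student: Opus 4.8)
The plan is to derive this global formula from a standard unfolding of the global toric period integral into a product of local toric integrals, in the spirit of Waldspurger's formula and its refinements by Xue, Martin--Whitehouse, and Cai--Friedberg--Kaplan--Zhang, but carried out concretely in the quaternionic setting following \cite{Hung14, CH}. First I would express the square of the global period $P(\varsigma_J^{(\vec n)},\phi^{\dagger_J},\chi)^2$ as a product of two copies of the integral, and insert the factorization $(\ref{eq:pairing-loc-gl})$ of the global $G(\BA_F)$-pairing into a constant $C_{\pi'}$ times a tensor product of local pairings. By a Rankin--Selberg type computation one identifies the $T(\BA_F)$-integral of the matrix coefficient $g\mapsto \langle\pi'(g)\phi^{\dagger_J},\pi'(\tau^{\mathfrak n,B})\phi^{\dagger_J}\rangle_G$ against $\chi$ with a global $L$-value, namely $\frac{\zeta_F(2)L(\frac12,\pi_K\otimes\chi)}{2L(1,\mathrm{Ad}\pi)}$ times a product of normalized local integrals; this is exactly the shape of Waldspurger's formula as recalled in Proposition \ref{prop:wald-1}, now applied on the quaternion algebra $B$ instead of $\GL_2$. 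The normalization of each $\mathcal P(g,\phi_v,\chi_v)$ by the local $L$-factors $\frac{L(1,\mathrm{Ad}\pi_v)L(1,\tau_{K_v/F_v})}{\zeta_{F_v}(2)L(\frac12,\pi_{K_v}\otimes\chi_v)}$ is precisely designed so that $\mathcal P(1,\phi_v,\chi_v)=1$ for all but finitely many $v$, so the Euler product converges and the global $L$-value pops out.

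\textbf{Key steps.} The steps, in order, would be: (1) write $P(\varsigma_J^{(\vec n)},\phi^{\dagger_J},\chi)^2 = P\cdot P$ and use the fact that $P$ with a real-analytic archimedean input factors through the unique (up to scalar) $K^\times\BA_F^\times$-invariant functional on $\pi'_K$; (2) apply the global seesaw / doubling identity to rewrite $P^2/\langle\varphi_{\pi'},\pi'(\tau^{\mathfrak n,B})\varphi_{\pi'}\rangle_G$ as an integral over $[T]\times[T]$ of a matrix coefficient, reduced to a single integral over $T(\BA_F)/Z(\BA_F)$ after killing one toric variable against the invariance; (3) factor that integral using $(\ref{eq:pairing-loc-gl})$, so that the constant $C_{\pi'}$ and the normalization of $\langle\varphi_{\pi'},\pi'(\tau^{\mathfrak n,B})\varphi_{\pi'}\rangle_G$ conspire with the global Waldspurger period to produce the constant $\frac{\zeta_F(2)L(\frac12,\pi_K\otimes\chi)}{2L(1,\mathrm{Ad}\pi)}$; (4) match the resulting product of local factors with the definition of $\mathcal P(g,\phi_v,\chi_v)$, being careful that at places $\mathfrak p\in J$ the test vector is the $p$-stabilized $\phi^{\dagger_J}_\mathfrak p$ and the group element is $\varsigma_\mathfrak p^{(n_\mathfrak p)}$, at places $\mathfrak p\mid p$ with $\mathfrak p\notin J$ the element is $\varsigma_\mathfrak p$, and at all other finite places it is $1$ with the spherical or $U_0(\mathfrak n)$-fixed vector; (5) check that the archimedean contribution is absorbed into the definition of $\varphi_{\pi'}=\Psi(\mathbf v_\mathbf{m}\otimes\varphi^\infty)$, using Proposition \ref{prop:hung}(\ref{it:arch}).

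\textbf{Main obstacle.} The hard part will be bookkeeping the normalizing constants so that no spurious factor of a power of $2$, of $\zeta_F(2)$, or of a discriminant survives: one must match Waldspurger's formula in the precise form of Proposition \ref{prop:wald-1} (which carries $\frac{2L(1,\mathrm{Ad}\pi)}{\zeta_F(2)}\prod_v\|\varphi_\pi\|_v$) against the definition of $\mathcal P(g,\phi_v,\chi_v)$ and against $C_{\pi'}$, and verify that the Jacquet--Langlands transfer $\pi\leadsto\pi'$ preserves all relevant local $L$- and $\epsilon$-factors. A secondary subtlety is that the global period $P$ involves the Tamagawa measure on $K^\times\BA_F^\times\backslash\BA_K^\times$, which must be compared with the product of the local quotient measures on $K_v^\times/F_v^\times$ described via $(\ref{eq:local-vol})$; the volume of the class group and the discriminant factors there must cancel against the finite-place contributions. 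Granting Waldspurger's formula and the local nondegeneracy from Casselman's theorem \cite{Cass}, no genuinely new input is needed — the content is the explicit identification of the normalized local toric integrals with the $\mathcal P(g,\phi_v,\chi_v)$, which will be carried out place by place in the sections that follow (this proposition is essentially a repackaging of \cite[\S4]{Hung14} allowing an arbitrary subset $J$ of $p$-adic places).
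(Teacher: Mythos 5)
Your overall strategy --- invoking the explicit Waldspurger toric-period formula on the quaternion algebra $B$ and then matching the normalized local integrals with the $\mathcal{P}(g,\phi_v,\chi_v)$ --- is the same route the paper takes: the paper simply cites \cite[Theorem 1.4]{YZZ} as a black box rather than re-unfolding the period, and your remark about comparing the Tamagawa measure on $K^\times\BA_F^\times\backslash\BA_K^\times$ with the local quotient measures correctly anticipates why the constant is $\frac{\zeta_F(2)L(\frac{1}{2},\pi_K\otimes\chi)}{2L(1,\mathrm{Ad}\pi)}$ rather than the constant with $8L(1,\tau_{K/F})^2$ appearing in \cite{YZZ}. (A minor misattribution: the relevant input is the toric-period formula of \cite{YZZ}, not Proposition \ref{prop:wald-1}, which is the Petersson-norm/adjoint $L$-value formula.)

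There is, however, a genuine gap at the central step. Waldspurger's formula does not directly produce $P(\varsigma_J^{(\vec{n})},\phi^{\dagger_J},\chi)^2$: applied to the pair of vectors the paper chooses, $\phi_1=\pi'(\varsigma_J^{(\vec{n})})\phi^{\dagger_J}$ and $\phi_2=\pi'(I\varsigma_J^{(\vec{n})})\phi^{\dagger_J}$, it gives $P(1,\phi_1,\chi)\,P(1,\phi_2,\chi^{-1})$ divided by $\langle\varphi_{\pi'},\pi'(\tau^{\mathfrak{n},B})\varphi_{\pi'}\rangle_G$, where the second period is taken against $\chi^{-1}$ and with the $I$-translated vector; this $I$-translate is also the global source of the $\pi'(Ig)\phi^{\dagger_J}_v$ appearing in the definition of $\mathcal{P}(g,\phi_v,\chi_v)$, which your sketch never accounts for. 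To get the square one must prove $P(1,\pi'(I\varsigma_J^{(\vec{n})})\phi^{\dagger_J},\chi^{-1})=P(\varsigma_J^{(\vec{n})},\phi^{\dagger_J},\chi)$, which the paper does using $tI=I\bar{t}$ (conjugation by $I$ induces the nontrivial automorphism of $K/F$) together with the anticyclotomic property $\chi(t^{-1})=\chi(\bar{t})$. Your proposal instead writes $P^2=P\cdot P$ with the same character in both factors and appeals to a ``seesaw / doubling identity, killing one toric variable''; that passage from a double toric integral to the matrix-coefficient integral is not a formal unfolding but the content of Waldspurger's theorem itself, and without the $I$/anticyclotomic symmetrization you cannot identify the product of the two distinct periods occurring in that theorem with the square of a single one. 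Supplying this step (and citing \cite[Theorem 1.4]{YZZ} in place of Proposition \ref{prop:wald-1}) closes the gap; the rest of your outline then matches the paper's argument.
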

\begin{proof} Set
$\phi_1=\pi'(\varsigma_J^{(\vec{n})})\phi^{\dagger_J}$,
$\phi_2=\pi'(I\varsigma_J^{(\vec{n})})\phi^{\dagger_J}$,
$\phi_3=\varphi_{\pi'}$ and
$\phi_4=\pi'(\tau^{\mathfrak{n},B})\varphi_{\pi'}$. By Waldspurger's
formula (see \cite[Theorem 1.4]{YZZ}), noting that in \cite{YZZ}
$\{\langle\cdot,\cdot\rangle_v\}_v$ are taken such that
$C_{\pi'}=1$, we have
\begin{eqnarray*}\frac{P(1, \phi_1,\chi)P(1,\phi_2,
\chi^{-1})}{\langle \phi_3, \phi_4 \rangle_G} &=&
\frac{\zeta_F(2)L(\frac{1}{2}, \pi_K\otimes \chi)}{ 2
L(1,\mathrm{Ad}\pi)}
 \cdot \prod_{\mathfrak{p}\in J}
\mathcal{P}(\varsigma_{\mathfrak{p}}^{(\vec{n})}, \phi_\mathfrak{p},
\chi_\mathfrak{p})  \\ && \cdot \prod_{\mathfrak{p}|p,
\mathfrak{p}\notin J} \mathcal{P}(\varsigma_{\mathfrak{p}},
\phi_\mathfrak{p}, \chi_\mathfrak{p}) \cdot \prod_{v\nmid
p}\mathcal{P}(1, \phi_v, \chi_v). \end{eqnarray*} Our coefficient
$\frac{\zeta_F(2)L(\frac{1}{2}, \pi_K\otimes \chi) }{2
L(1,\mathrm{Ad}\pi)}$ is different to the coefficient
$\frac{\zeta_F(2)L(\frac{1}{2}, \pi_K\otimes \chi)
}{8L(1,\tau_{K/F})^2 L(1,\mathrm{Ad}\pi)}$ in \cite[Theorem
1.4]{YZZ} is due to the reason that our measure on
$K^\times\BA_F^\times\backslash \BA^\times_K$ is the Tamagawa
measure so that $\mathrm{vol}(K^\times\BA_F^\times\backslash
\BA^\times_K)=2L(1,\tau_{K/F})$, while in \cite{YZZ} one has
$\mathrm{vol}(K^\times\BA_F^\times\backslash \BA^\times_K)=1$.

Clearly $P(1, \phi_1,\chi)=P(\varsigma_J^{(\vec{n})},
\phi^{\dagger_J},\chi)$, and we have
\begin{eqnarray*} P(1,
\phi_2,  \chi^{-1}) & = & \int_{ K^\times\BA _F^\times \backslash
\BA ^\times_K }
\phi^{\dagger_J}(tI\varsigma^{(\vec{n})}_J)\chi(t^{-1}) dt =
\int_{K^\times\BA _F^\times\backslash \BA ^\times_K}
\phi^{\dagger_J}(I\bar{t}\varsigma^{(\vec{n})}_J)\chi(\bar{t}) dt \\
&=& \int_{K^\times\BA _F^\times\backslash \BA ^\times_K}
\phi^{\dagger_J}( t\varsigma_J^{(\vec{n})})\chi(t) dt
=P(\varsigma_J^{(\vec{n})}, \phi^{\dagger_J} ,\chi).
\end{eqnarray*} Here, $ \chi(t^{-1})
=\chi(\bar{t})$ since $\chi$ is anticyclotomic.
\end{proof}

We fix an anticyclotomic character $\nu$ of archimedean type
$(\mathbf{m},-\mathbf{m})$ and will apply Proposition
\ref{prop:wald} to the characters $\chi$ such that

$\bullet$ $\chi$ is of archimedean type $(\mathbf{m},-\mathbf{m})$;

$\bullet$ $\chi\nu^{-1}$ comes from a character of $\Gamma^-_J$.

\noindent Note that  the second condition implies that
$\chi\nu^{-1}$ is unramified outside of $J$.

We assume that $\nu$ is of conductor $\mathfrak{n}_\nu
\mathcal{O}_K$, where $\mathfrak{n}_\nu$ is an ideal of
$\mathcal{O}_F$ coprime to $p$. By technical reason we assume that
all prime factors of $\mathfrak{n}_\nu$ are inert or ramified in
$K$, and that $\mathfrak{n}_\nu$ is coprime to $\mathfrak{n}^-_b$.

\begin{lem} \label{lem:char} \begin{enumerate}
\item If $v\nmid \mathfrak{n}_\nu$ is inert in $K$, then $\nu_v=1$.
\item For each $v\notin J$ inert or ramified in $K$, we have ${\chi}_v={\nu}_v$.\end{enumerate}
\end{lem}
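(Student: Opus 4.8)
The plan is to exploit the fact that $\nu$ is anticyclotomic together with the prescribed control of its conductor, namely $\mathfrak{n}_\nu\mathcal{O}_K$ with $\mathfrak{n}_\nu$ supported only at primes inert or ramified in $K$ and coprime to $p$. For part (a), if $v\nmid\mathfrak{n}_\nu$ is inert in $K$, then $\nu$ is unramified at the unique place $w$ of $K$ above $v$, so $\nu_w$ is determined by its value on a uniformizer, i.e.\ by $\nu_w(\varpi_v)$ since $\varpi_v$ is a uniformizer of $K_w$ as well (the residue degree is $2$ but the ramification index is $1$, so $\varpi_v$ still generates the maximal ideal). Being anticyclotomic means $\nu$ is trivial on $\BA_F^\times$; as $\varpi_v\in F_v^\times\subset\BA_F^\times$ (placed at $v$, with $1$ elsewhere, up to the obstruction of it not being a global element) one must instead argue via the reciprocity description: anticyclotomic characters satisfy $\nu(\bar a)=\nu(a)^{-1}$, so on the norm-one subgroup they are pinned down, and the quotient $K_w^\times/F_v^\times\mathcal{O}_{K_w}^\times$ is generated by the image of a uniformizer which, because $v$ is inert, lies in the image of $F_v^\times$; hence $\nu_w$ factors through $K_w^\times/F_v^\times$ and is unramified, forcing $\nu_w$ to be trivial on all of $\mathcal{O}_{K_w}^\times$ and on $\varpi_v$, i.e.\ $\nu_v=1$. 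The key input here is that for an inert place the local norm map $K_w^\times\to F_v^\times$ has image $F_v^{\times2}\mathcal{O}_{F_v}^\times\cdot\varpi_v^{\,?}$ — more precisely $\mathrm{ord}_v$ of the norm of a uniformizer of $K_w$ is $1$ — so that anticyclotomicity plus unramifiedness leaves no room for a nontrivial character.

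For part (b), fix $v\notin J$ that is inert or ramified in $K$ and let $w$ be the place of $K$ above it. By hypothesis $\chi\nu^{-1}$ comes from a character of $\Gamma^-_J$, and $\Gamma^-_J$ is the maximal $\BZ_p$-free quotient of $\mathcal{G}_{J;\mathfrak{c}}=\varprojlim\mathcal{G}_{\vec n,\mathfrak{c}}$, whose ramification is concentrated at the primes in $J$ (and at $\mathfrak{c}$, but $\mathfrak{c}$ can be taken coprime to $v$). Therefore $\chi\nu^{-1}$ is unramified at $w$, so $(\chi\nu^{-1})_w$ is trivial on $\mathcal{O}_{K_w}^\times$. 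It remains to control the value on a uniformizer: since $\chi\nu^{-1}$ factors through the anticyclotomic tower and $v$ is non-split, the local component at $w$ factors through $K_w^\times/F_v^\times\mathcal{O}_{K_w}^\times$, which — exactly as in part (a), using that $v$ is inert or ramified — is a finite group killed by the $\BZ_p$-freeness of $\Gamma^-_J$ only after passing to the free quotient; one checks directly that the image of a uniformizer of $K_w$ in this quotient is torsion (it equals the image of $\varpi_v^{1}$ or $\varpi_v^{1/2}\cdot(\text{unit})$, landing in the part coming from $F_v^\times$, hence trivial in $\Gamma^-_J$). Thus $(\chi\nu^{-1})_w=1$, i.e.\ $\chi_v=\nu_v$.

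The main obstacle I anticipate is the careful local analysis at a \emph{ramified} prime $v$ of $K$, where $\varpi_v$ need not be a norm and the quotient $K_w^\times/F_v^\times\mathcal{O}_{K_w}^\times$ can be more subtle than in the inert case; one must verify that the uniformizer of $K_w$ still maps to a torsion element of $\Gamma^-_J$ (equivalently, that the ``anticyclotomic'' local unit group $\mathcal{O}_{K_w}^1/\mathcal{O}_{F_v}^1$ contributes nothing to $\Gamma^-_J$ at places outside $J$), which is where the definition of $\Gamma^-_J$ as the maximal $\BZ_p$-free quotient — together with the fact that the relevant local contribution is a finite group — does the work. The inert case and the conductor bookkeeping for $\nu$ in part (a) are routine once this point is settled.
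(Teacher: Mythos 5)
Your proposal is correct and follows essentially the paper's own route: triviality on $F_v^\times$ from anticyclotomicity, triviality on $\mathcal{O}_{K_v}^\times$ because the local unit contribution has finite image and so dies in the $\BZ_p$-free quotient $\Gamma^-_J$, the decomposition $K_v^\times=F_v^\times\mathcal{O}_{K_v}^\times$ at inert places, and in the ramified case the disposal of the residual index-$2$ possibility (the paper phrases this as ``$\chi_v\nu_v^{-1}$ factors through a pro-$p$ group and $p>2$'', which is the same mechanism as your observation that the image of a uniformizer of $K_v$ is torsion in $\mathcal{G}_{J;\mathfrak{c}}$, hence trivial in the torsion-free $\Gamma^-_J$). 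Two slips to fix in the write-up, neither of which affects the argument: at an inert place the norm of a uniformizer of $K_v$ has $\ord_v$ equal to $2$, not $1$ (and this ``key input'' is not needed, since $K_v^\times=F_v^\times\mathcal{O}_{K_v}^\times$ already does the job); and you cannot simply ``take $\mathfrak{c}$ coprime to $v$'', because $\mathfrak{n}_\nu$ is supported precisely at primes inert or ramified in $K$, so the unramifiedness of $\chi_v\nu_v^{-1}$ at such $v$ must be justified by the finiteness-of-local-image argument you invoke at the end, which is exactly what underlies the paper's remark that $\chi\nu^{-1}$ is unramified outside $J$.
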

\begin{proof} When $v\nmid \mathfrak{n}_\nu$ is inert in $K$, $\nu_v$ is unramified. From $K_v^\times=F^\times_v\mathcal{O}^\times_{K_v}$ we obtain ${\nu}_v=1$. If $v\notin J$ is inert in $K$,
then ${\chi}_v\cdot {\nu}_v^{-1}$ is unramified and thus we must
have ${\chi}_v\cdot {\nu}_v^{-1}=1$.

If $v\notin J$ is ramified in $K$, as $[K_v^\times:F^\times_v\mathcal{O}^\times_{K_v}]=2$, we have either ${\chi}_v\cdot {\nu}_v^{-1}=1$ or ${\chi}_v\cdot {\nu}_v^{-1}$ is of order $2$. However, ${\chi}_v\cdot {\nu}_v^{-1}$ comes from a character of a pro-$p$ abelian group. Since $p>2$, we must have ${\chi}_v{\nu}^{-1}_v=1$.
\end{proof}

We compute the terms $\mathcal{P}(\varsigma_\mathfrak{p}, \phi_{\mathfrak{p}}, \chi_{\mathfrak{p}})$ ($\mathfrak{p}|p$) and $\mathcal{P}(1, \phi_v, \chi_v)$ ($v\nmid p$).

For each $\mathfrak{p}\in J$ put
\begin{eqnarray*} \bar{e}_{\mathfrak{p}}(\pi,\chi)
= \left\{ \begin{array}{ll} 1 & \text{ if } \chi \text{ is
ramified}; \\ (1-\alpha_\mathfrak{p}^{-1}\chi(\mathfrak{P}))
(1-\alpha_\mathfrak{p}^{-1}\chi(\bar{\mathfrak{P}})) & \text{ if }
\chi \text{ is unramified,} \
\mathfrak{p}=\mathfrak{P}\bar{\mathfrak{P}} \text{ is split}; \\
1-\alpha_\mathfrak{p}^{-2} & \text{ if } \chi \text{ is
unramified,}\
\mathfrak{p} \text{ is inert;} \\
1-\alpha_\mathfrak{p}^{-1}\chi(\mathfrak{P}) & \text{ if } \chi
\text{ is unramified} , \  \mathfrak{p}=\mathfrak{P}^2 \text{ is
ramified. }
\end{array}\right.
\end{eqnarray*}
If $\chi_\mathfrak{p}$ has conductor $\mathfrak{p}^s$,  we put
$$ \tilde{e}_{\mathfrak{p}}(\pi,\chi)= \bar{e}_{\mathfrak{p}}(\pi,\chi)^{2-\mathrm{ord}_\mathfrak{p}\mathfrak{n}}\cdot
\left\{ \begin{array}{ll} \alpha_\mathfrak{p}^2 |\mathfrak{p}|_{\mathfrak{p}}^2 &
\text{ if } s=0, \\ |\mathfrak{p}|_{\mathfrak{p}}^s & \text{ if }  s>0 .
\end{array}\right. $$ Then we define the $\mathfrak{p}$-adic
multiplier $e_\mathfrak{p}(\pi,\chi)$ by $$e_\mathfrak{p}(\pi,
\chi)=(\alpha_\mathfrak{p}^2|\mathfrak{p}|_{\mathfrak{p}})^{-s}\tilde{e}_{\mathfrak{p}}(\pi,\chi).$$

In \cite{Hung14} the $p$-adic multiplier $e_\mathfrak{p}(\pi,\chi)$
is defined to be our  $\bar{e}_\mathfrak{p}(\pi, \chi)$. But the
author think that it is better to define it as above since it
instead of $\bar{e}_\mathfrak{p}(\pi,\chi)$ appears in the
interpolation formula in Theorem \ref{thm:interpol}.

Let $d_{K_\mathfrak{p}}$ be a generator of $\mathcal{D}_{K_\mathfrak{p}}$. Note that $|d_{K_\mathfrak{p}}|_\mathfrak{p}=|d_{K_\mathfrak{p}}|_{\mathfrak{p}\mathcal{O}_K}^{1/2}$.

\begin{prop}$($\cite[Proposition 4.11]{Hung14}$)$ Let $\mathfrak{p}$
be in $J$. Suppose that $\chi_\mathfrak{p}$ has conductor
$\mathfrak{p}^s$. Put $n=\max\{ 1, s\}$. Then we have
$$ \mathcal{P}(\varsigma^{(n)}_{\mathfrak{p}}, \varphi_{\pi,\mathfrak{p}}, \chi_\mathfrak{p}) =
\tilde{e}_\mathfrak{p}(\pi,\chi) \cdot
\frac{1}{||\varphi||_{\mathfrak{p}}}\cdot \left\{\begin{array}{ll}
|d_{F_\mathfrak{p}}|_{\mathfrak{p}}\zeta_{F_\mathfrak{p}}(1)^2 &
\text{ if } \mathfrak{p} \text{ is split in } K,
\\
|d_{K_\mathfrak{p}}|_{\mathfrak{p}}
L(1,\tau_{K_\mathfrak{p}/F_\mathfrak{p}})^2 & \text{ if }
\mathfrak{p} \text{ is inert in } K.
\end{array}\right.
$$
\end{prop}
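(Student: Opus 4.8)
The plan is to rerun the local computation of Chida--Hsieh and Hung \cite[Proposition 4.11]{Hung14}, adapting it to the normalizations used here. Since $\mathfrak{p}\mid p$ while $\mathfrak{n}^-_b$ is prime to $p$, the algebra $B$ splits at $\mathfrak{p}$, so $i_\mathfrak{p}$ identifies $\pi'_\mathfrak{p}$ with $\pi_\mathfrak{p}$, which I would realize in its Whittaker model $\CW(\pi_\mathfrak{p},\psi_\mathfrak{p})$ with $\varphi_{\pi,\mathfrak{p}}\leftrightarrow W_{\pi_\mathfrak{p}}$; the local pairing $\langle\cdot,\cdot\rangle_\mathfrak{p}$ occurs once in the numerator and once in the denominator of $\mathcal{P}$, so we may take it to be $\mathbf{b}_\mathfrak{p}$. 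I would first clear the denominator: as $\mathfrak{n}^-$ is square-free and $r_\mathfrak{p}\le1$, one has $\tau^{\mathfrak{n},B}_\mathfrak{p}=\tau^{\mathfrak{n}}_\mathfrak{p}=\wvec{0}{1}{-\varpi_\mathfrak{p}^{r_\mathfrak{p}}}{0}$, so Proposition \ref{prop:hung}(b)--(c) (and $\wvec{0}{1}{-1}{0}\in\GL_2(\CO_{F_\mathfrak{p}})$ when $r_\mathfrak{p}=0$) yields $\langle\varphi_{\pi,\mathfrak{p}},\pi'(\tau^{\mathfrak{n},B}_\mathfrak{p})\varphi_{\pi,\mathfrak{p}}\rangle_\mathfrak{p}=\frac{\zeta_{F_\mathfrak{p}}(1)}{\zeta_{F_\mathfrak{p}}(2)}L(1,\mathrm{Ad}\,\pi_\mathfrak{p})\,||\varphi||_\mathfrak{p}$. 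Combining this with the prefactor of $\mathcal{P}$ cancels $L(1,\mathrm{Ad}\,\pi_\mathfrak{p})$ and $\zeta_{F_\mathfrak{p}}(2)$ and reduces the statement to evaluating the bare toric integral
\[
I_\mathfrak{p}=\int_{K^\times_\mathfrak{p}/F^\times_\mathfrak{p}}\mathbf{b}_\mathfrak{p}\big(\pi_\mathfrak{p}(i_\mathfrak{p}(t)\varsigma^{(n)}_\mathfrak{p})W^\dagger,\ \pi_\mathfrak{p}(i_\mathfrak{p}(I)\varsigma^{(n)}_\mathfrak{p})W^\dagger\big)\,\chi_\mathfrak{p}(t)\,dt,
\]
where $W^\dagger=W_{\pi_\mathfrak{p}}$ if $r_\mathfrak{p}=1$ and $W^\dagger=W_{\pi_\mathfrak{p}}-\alpha_\mathfrak{p}^{-1}\pi_\mathfrak{p}\big(\wvec{1}{0}{0}{\varpi_\mathfrak{p}}\big)W_{\pi_\mathfrak{p}}$ if $r_\mathfrak{p}=0$, and then to checking that $\frac{L(1,\tau_{K_\mathfrak{p}/F_\mathfrak{p}})}{\zeta_{F_\mathfrak{p}}(1)\,L(\frac{1}{2},\pi_{K_\mathfrak{p}}\otimes\chi_\mathfrak{p})\,||\varphi||_\mathfrak{p}}\,I_\mathfrak{p}$ equals the asserted value.

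For $\mathfrak{p}$ split in $K$ I would use that $K^\times_\mathfrak{p}/F^\times_\mathfrak{p}\cong F^\times_\mathfrak{p}$ with the measure transported from $d^\times_\mathfrak{p}$, and that $L(s,\tau_{K_\mathfrak{p}/F_\mathfrak{p}})=\zeta_{F_\mathfrak{p}}(s)$. The Gross point $\varsigma^{(n)}_\mathfrak{p}=\wvec{\vartheta}{-1}{1}{0}\wvec{\varpi_\mathfrak{p}^n}{0}{0}{1}$ — with $n=\max\{1,s\}\ge1$, precisely the level needed to pair nontrivially against the $U_0(\varpi_\mathfrak{p})$-fixed vector $W^\dagger$ — is rigged (through the $\vartheta$-construction of $i_\mathfrak{p}$) so that $i_\mathfrak{p}(t)\varsigma^{(n)}_\mathfrak{p}$ is, modulo centre and $U_0(\varpi_\mathfrak{p})$, upper triangular with diagonal entries read off from $t=(t_1,t_2)$. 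Unwinding $\mathbf{b}_\mathfrak{p}$ through the values of $W_{\pi_\mathfrak{p}}$ on the diagonal torus (Lemma \ref{lem:whittaker}) and integrating over $t$ unfolds $I_\mathfrak{p}$ into a product of two local zeta integrals $\Psi(\frac{1}{2},W^\dagger,\chi_\mathfrak{P})\,\Psi(\frac{1}{2},W^\dagger,\chi_{\bar{\mathfrak{P}}})$, up to an explicit power of $\varpi_\mathfrak{p}$ and the factor $|d_{F_\mathfrak{p}}|_\mathfrak{p}\zeta_{F_\mathfrak{p}}(1)^2$ accounting for $d^\times_\mathfrak{p}=\zeta_{F_\mathfrak{p}}(1)\frac{dx}{|x|}$ and $\mathrm{vol}(\CO^\times_{F_\mathfrak{p}},d^\times_\mathfrak{p})=|d_{F_\mathfrak{p}}|_\mathfrak{p}$. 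Proposition \ref{prop:zeta-L} turns $\Psi\cdot\Psi$ into $L(\frac{1}{2},\pi_{K_\mathfrak{p}}\otimes\chi_\mathfrak{p})$, and the elementary identity $\Psi(\frac{1}{2},\pi_\mathfrak{p}(\wvec{1}{0}{0}{\varpi_\mathfrak{p}})W_{\pi_\mathfrak{p}},\chi)=\chi(\varpi_\mathfrak{p})|\varpi_\mathfrak{p}|_\mathfrak{p}\,\Psi(\frac{1}{2},W_{\pi_\mathfrak{p}},\chi)$ shows that the $\dagger$-correction contributes one Euler factor $\bar{e}_\mathfrak{p}(\pi,\chi)$ per slot when $r_\mathfrak{p}=0$ and none when $r_\mathfrak{p}=1$ — i.e. $\bar{e}_\mathfrak{p}(\pi,\chi)^{\,2-\ord_\mathfrak{p}\mathfrak{n}}$ — while for ramified $\chi_\mathfrak{p}$ the torus values of $W_{\pi_\mathfrak{p}}$ (\cite[Lemma 14.3]{Jac72}) supply the $|\varpi_\mathfrak{p}|_\mathfrak{p}^s$-type normalization. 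These pieces assemble into $\tilde{e}_\mathfrak{p}(\pi,\chi)$, and after the cancellation above leave $\tilde{e}_\mathfrak{p}(\pi,\chi)\cdot|d_{F_\mathfrak{p}}|_\mathfrak{p}\zeta_{F_\mathfrak{p}}(1)^2/||\varphi||_\mathfrak{p}$.

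For $\mathfrak{p}$ inert in $K$ the argument is parallel: $K_\mathfrak{p}/F_\mathfrak{p}$ is the unramified quadratic extension, $\pi_{K_\mathfrak{p}}$ is the base change, $K^\times_\mathfrak{p}/F^\times_\mathfrak{p}$ is a torsion-free compact group with $\mathrm{vol}(\CO^\times_{K_\mathfrak{p}}/\CO^\times_{F_\mathfrak{p}})=|\CD_{K_\mathfrak{p}}|_{K_\mathfrak{p}}^{1/2}|\CD_{F_\mathfrak{p}}|_\mathfrak{p}^{-1/2}$ by \eqref{eq:local-vol}, and $\varsigma^{(n)}_\mathfrak{p}=\wvec{0}{1}{-1}{0}\wvec{\varpi_\mathfrak{p}^n}{0}{0}{1}$ together with the inert-prime normalization of $i_\mathfrak{p}$ puts the anisotropic torus in standard position. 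The same unfolding — now the standard local Waldspurger computation for a nonsplit torus, via Lemma \ref{lem:whittaker} and Casselman's formulas \cite[Theorem 1]{Cass} — expresses $I_\mathfrak{p}$ as $L(\frac{1}{2},\pi_{K_\mathfrak{p}}\otimes\chi_\mathfrak{p})$ times $\bar{e}_\mathfrak{p}(\pi,\chi)^{\,2-\ord_\mathfrak{p}\mathfrak{n}}$, an explicit power of $\varpi_\mathfrak{p}$, and a constant in which $|d_{K_\mathfrak{p}}|_\mathfrak{p}$ and $L(1,\tau_{K_\mathfrak{p}/F_\mathfrak{p}})^2$ take over the roles of $|d_{F_\mathfrak{p}}|_\mathfrak{p}$ and $\zeta_{F_\mathfrak{p}}(1)^2$, the difference being precisely \eqref{eq:local-vol}; the same cancellation then produces the stated value with $|d_{K_\mathfrak{p}}|_\mathfrak{p}L(1,\tau_{K_\mathfrak{p}/F_\mathfrak{p}})^2$.

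I expect the one genuinely delicate point to be this last bookkeeping: reading off from the unfolded $I_\mathfrak{p}$ exactly which powers of $\alpha_\mathfrak{p}$ and of $|\varpi_\mathfrak{p}|_\mathfrak{p}$ — indexed by the conductor exponent $s$ and by $\ord_\mathfrak{p}\mathfrak{n}$ — are generated by the combined effect of the $\dagger$-stabilization, the level shift $\varsigma^{(n)}_\mathfrak{p}$, and the two Haar-measure normalizations, so that one recovers $\tilde{e}_\mathfrak{p}(\pi,\chi)$ rather than the plain Euler product $\bar{e}_\mathfrak{p}(\pi,\chi)$. This is exactly where the present conventions differ from those of \cite{Hung14}; everything else is the computation done there.
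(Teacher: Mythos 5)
Your strategy is the route the paper itself takes: this proposition is not proved in the text at all, it is quoted from \cite[Proposition 4.11]{Hung14}, and its substance is exactly the Chida--Hsieh/Hung local toric computation you propose to rerun. Your reduction is framed correctly: take the local pairing to be $\mathbf{b}_\mathfrak{p}$ (it cancels between numerator and denominator), use the definition of $||\varphi||_\mathfrak{p}$ to cancel $L(1,\mathrm{Ad}\,\pi_\mathfrak{p})$ and $\zeta_{F_\mathfrak{p}}(2)$ against the prefactor of $\mathcal{P}$, and evaluate what remains using Lemma \ref{lem:whittaker} and Proposition \ref{prop:zeta-L}.

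As a proof, however, the proposal has a genuine gap, and it is the one you flag yourself: the entire content of the statement is the exact constant --- the multiplier $\tilde{e}_\mathfrak{p}(\pi,\chi)$ with its powers of $\alpha_\mathfrak{p}$, the factor $\alpha_\mathfrak{p}^2|\mathfrak{p}|_\mathfrak{p}^2$ (for $s=0$) or $|\mathfrak{p}|_\mathfrak{p}^s$ (for $s>0$), the exponent $2-\ord_\mathfrak{p}\mathfrak{n}$, and the volume/different factors $|d_{F_\mathfrak{p}}|_\mathfrak{p}\zeta_{F_\mathfrak{p}}(1)^2$ resp.\ $|d_{K_\mathfrak{p}}|_\mathfrak{p}L(1,\tau_{K_\mathfrak{p}/F_\mathfrak{p}})^2$ --- and this is precisely the bookkeeping you defer. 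Moreover, the few constants you do assert are not reliable in the paper's normalizations: $\mathrm{vol}(\mathcal{O}_{F_\mathfrak{p}}^\times,d^\times)=|\mathcal{D}_{F_\mathfrak{p}}|_\mathfrak{p}^{1/2}$, not $|d_{F_\mathfrak{p}}|_\mathfrak{p}$; and since $\Psi(s,W,\chi)$ carries $|a|_v^{s-1/2}$, the stabilization identity at $s=\tfrac12$ reads $\Psi(\tfrac12,\pi(\mathrm{diag}(1,\varpi_\mathfrak{p}))W_{\pi_\mathfrak{p}},\chi)=\chi(\varpi_\mathfrak{p})\,\Psi(\tfrac12,W_{\pi_\mathfrak{p}},\chi)$, without your extra $|\varpi_\mathfrak{p}|_\mathfrak{p}$ (this is what makes the factor $1-\alpha_\mathfrak{p}^{-1}\chi(\mathfrak{P})$ of $\bar{e}_\mathfrak{p}$ come out). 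The split-case ``factorization into two zeta integrals'' is only a heuristic ($\mathbf{b}_\mathfrak{p}$ is itself an integral; the actual computation evaluates the matrix coefficient of the stabilized vector along the torus orbit of the Gross point), and the inert case is only gestured at. So what you have is a correct roadmap to the cited computation; the decisive calculation that distinguishes $\tilde{e}_\mathfrak{p}$ from $\bar{e}_\mathfrak{p}$ and pins down the measure and $\CD$-factors --- exactly where the present conventions differ from Hung's --- is missing, and carrying it out (or citing \cite[Proposition 4.11]{Hung14} with the normalization adjustments made explicit) is what would complete the argument.
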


For $\mathfrak{p}\notin J$ above $p$, $\chi_\mathfrak{p}$ is unramified.

\begin{prop}\label{prop:xie} Let $\mathfrak{p}$
be a prime of $F$ above $p$, $\mathfrak{p}\notin J$.
\begin{enumerate}
\item\label{it:split-P} If $\mathfrak{p}$ is split in $K$, and if $\pi'_\mathfrak{p}$ is a
unramified principal series,
then
$$\mathcal{P}(\varsigma_{\mathfrak{p}},\varphi_\mathfrak{p}, \chi_\mathfrak{p})= |d_{F_\mathfrak{p}}|_{\mathfrak{p}} . $$
\item \label{it:split-S} If $\mathfrak{p}$ is split in $K$, and if $\pi'_\mathfrak{p}$ is a
unramified special representation,
then
$$\mathcal{P}(\varsigma_{\mathfrak{p}},\varphi_\mathfrak{p}, \chi_\mathfrak{p})= \frac{\epsilon(\frac{1}{2}, \pi_\mathfrak{p},\psi_\mathfrak{p})}{||\varphi_\mathfrak{p}||_\mathfrak{p}}
\chi_{\mathfrak{P}}(\omega_\mathfrak{P})|d_{F_\mathfrak{p}}|_{\mathfrak{p}} . $$
\item \label{it:unram-P} If $\mathfrak{p}$ is inert or ramified in $K$, and if $\pi'_\mathfrak{p}$ is a unramified principal series, then
$$ \mathcal{P}(\varsigma_\mathfrak{p},\varphi_\mathfrak{p}, \chi_\mathfrak{p})=  |d_{F_\mathfrak{p}}|_{\mathfrak{p}}^{-\frac{1}{2}}
|d_{K_\mathfrak{p}}|_{\mathfrak{p}}.
$$
\item \label{it:unram-S} If $\mathfrak{p}$ is inert in $K$, and if $\pi'_\mathfrak{p}=\sigma(\mu,\mu|\cdot|_\mathfrak{p}^{-1})$ is a unramified special representation, then
$$\mathcal{P}(\varsigma_\mathfrak{p},\varphi_\mathfrak{p}, \chi_\mathfrak{p})
       = \frac{|d_{K_\mathfrak{p}}|_\mathfrak{p}}{|d_{F_\mathfrak{p}}|_{\mathfrak{p}}^{1/2}}
\frac{|\omega_\mathfrak{p}|_{\mathfrak{p}}(\epsilon(\frac{1}{2},\pi_\mathfrak{p},\psi_\mathfrak{p})
       +\alpha_\mathfrak{p})}{1-|\omega_\mathfrak{p}|^2_{\mathfrak{p}}}, $$ where $\alpha_\mathfrak{p}$ is defined in the proof of Lemma \ref{lem:whittaker} $($\ref{it:special}$)$.
\item \label{it:ram} If $\mathfrak{p}$ is ramified in $K$, and if $\pi'_\mathfrak{p}$ is a unramified special representation, then
$$\mathcal{P}(\varsigma_\mathfrak{p},\varphi_\mathfrak{p}, \chi_\mathfrak{p})
       = \frac{|d_{K_\mathfrak{p}}|_{\mathfrak{p}}}{|d_{F_\mathfrak{p}}|_{\mathfrak{p}}^{1/2}}
\frac{2(1+|\omega_\mathfrak{p}|_{\mathfrak{p}})}{\epsilon(\frac{1}{2},\pi_\mathfrak{p},\psi_\mathfrak{p})
       }.$$
\end{enumerate}
\end{prop}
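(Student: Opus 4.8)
The plan is to reduce each of the five cases to a purely local computation on $\mathrm{GL}_2(F_\mathfrak{p})$, of the kind carried out in \cite{Hung14}. Since $\mathfrak{p}\notin J$, the $p$-stabilization does nothing at $\mathfrak{p}$, i.e.\ $\phi^{\dagger_J}_\mathfrak{p}=\varphi_\mathfrak{p}$, and $\chi_\mathfrak{p}$ is unramified. As $B$ splits at every $\mathfrak{p}\mid p$, I identify $\pi'_\mathfrak{p}$ with $\pi_\mathfrak{p}$ via $i_\mathfrak{p}$ and realize it in the Whittaker model $\CW(\pi_\mathfrak{p},\psi_\mathfrak{p})$, $\varphi_\mathfrak{p}$ corresponding to the normalized new vector $W_{\pi_\mathfrak{p}}$ ($\mathrm{GL}_2(\mathcal{O}_{F_\mathfrak{p}})$-fixed if $r_\mathfrak{p}=0$, fixed by $U_0(\omega_\mathfrak{p})$ if $r_\mathfrak{p}=1$). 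Both $\langle\cdot,\cdot\rangle_\mathfrak{p}$ and $\mathbf{b}_\mathfrak{p}$ are $\mathrm{GL}_2(F_\mathfrak{p})$-invariant, hence proportional; as $\mathcal{P}(\varsigma_\mathfrak{p},\varphi_\mathfrak{p},\chi_\mathfrak{p})$ is the quotient of a toric integral of matrix coefficients by the single scalar $\langle\varphi_\mathfrak{p},\pi'(\tau^{\mathfrak{n},B}_\mathfrak{p})\varphi_\mathfrak{p}\rangle_\mathfrak{p}$, the proportionality constant cancels and I may work throughout with $\mathbf{b}_\mathfrak{p}$. The denominator is then read off from Proposition \ref{prop:hung}: part (b) when $\mathfrak{p}\nmid\mathfrak{n}$ (so $\tau^{\mathfrak{n},B}_\mathfrak{p}=1$), and part (c)---which is what produces the factor $\epsilon(\frac{1}{2},\pi_\mathfrak{p},\psi_\mathfrak{p})$---when $r_\mathfrak{p}=1$ and $\pi_\mathfrak{p}$ is unramified special.

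For the split cases (a) and (b), $\varsigma_\mathfrak{p}$ conjugates $i_\mathfrak{p}(K_\mathfrak{p}^\times)$ onto the diagonal torus and $K_\mathfrak{p}^\times/F_\mathfrak{p}^\times\cong F_\mathfrak{p}^\times$ via $(t_1,t_2)\mapsto t_1t_2^{-1}$, under which $\chi_\mathfrak{p}$ becomes $\chi_\mathfrak{P}$. Using invariance of $\mathbf{b}_\mathfrak{p}$ together with $I^{-1}tI=\bar t$ in $B_\mathfrak{p}$, I rewrite the numerator toric integral as $\int_{F_\mathfrak{p}^\times}\mathbf{b}_\mathfrak{p}\!\left(\pi_\mathfrak{p}(\mathrm{diag}(a,1)h)W_{\pi_\mathfrak{p}},W_{\pi_\mathfrak{p}}\right)\chi_\mathfrak{P}(a)^{\pm1}\,d^\times a$, where $h=\varsigma_\mathfrak{p}^{-1}i_\mathfrak{p}(I^{-1})\varsigma_\mathfrak{p}$; since $\{1,\vartheta_\mathfrak{p}\}$ is an $\mathcal{O}_{F_\mathfrak{p}}$-basis of $\mathcal{O}_{K_\mathfrak{p}}$ and $\beta_\mathfrak{p}\in(\mathcal{O}_{F_\mathfrak{p}}^\times)^2$, one checks $\varsigma_\mathfrak{p},h\in\mathrm{GL}_2(\mathcal{O}_{F_\mathfrak{p}})$. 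Unfolding $\mathbf{b}_\mathfrak{p}$ and changing variables splits this into a product of two local zeta integrals, which by Lemma \ref{lem:whittaker} and Proposition \ref{prop:zeta-L} equals a product of discriminant factors times $L(\frac{1}{2},\pi_\mathfrak{p}\otimes\chi_\mathfrak{P})L(\frac{1}{2},\pi_\mathfrak{p}\otimes\chi_{\bar{\mathfrak{P}}})=L(\frac{1}{2},\pi_{K_\mathfrak{p}}\otimes\chi_\mathfrak{p})$. Dividing by the denominator and multiplying by the normalizing factor $\frac{L(1,\mathrm{Ad}\pi_\mathfrak{p})L(1,\tau_{K_\mathfrak{p}/F_\mathfrak{p}})}{\zeta_{F_\mathfrak{p}}(2)L(\frac{1}{2},\pi_{K_\mathfrak{p}}\otimes\chi_\mathfrak{p})}$ (note $L(1,\tau_{K_\mathfrak{p}/F_\mathfrak{p}})=\zeta_{F_\mathfrak{p}}(1)$ for split $\mathfrak{p}$) cancels $L(1,\mathrm{Ad}\pi_\mathfrak{p})$ and $L(\frac{1}{2},\pi_{K_\mathfrak{p}}\otimes\chi_\mathfrak{p})$ and leaves $|d_{F_\mathfrak{p}}|_\mathfrak{p}$ in case (a). In case (b), $\pi_\mathfrak{p}$ unramified special forces $r_\mathfrak{p}=1$, so an Atkin--Lehner element enters through $h$ and $\tau^{\mathfrak{n},B}_\mathfrak{p}$; tracking it through the Kirillov-model computation produces the eigenvalue $\chi_\mathfrak{P}(\omega_\mathfrak{P})$, while $||\varphi_\mathfrak{p}||_\mathfrak{p}$ and the $\epsilon$-factor come out of the denominator via Proposition \ref{prop:hung}(c).

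For the non-split cases (c), (d), (e) one has $\varsigma_\mathfrak{p}=1$ and $K_\mathfrak{p}^\times/F_\mathfrak{p}^\times$ is compact---equal to $\mathcal{O}_{K_\mathfrak{p}}^\times/\mathcal{O}_{F_\mathfrak{p}}^\times$ when $\mathfrak{p}$ is inert, an extension of $\BZ/2\BZ$ by that when $\mathfrak{p}$ is ramified---and $\chi_\mathfrak{p}$ is unramified (in the ramified case it then has order dividing $2$ and, $p$ being odd, is trivial, exactly as in the proof of Lemma \ref{lem:char}). In the principal series case (c), $\varphi_\mathfrak{p}$ is $\mathrm{GL}_2(\mathcal{O}_{F_\mathfrak{p}})$-fixed and $i_\mathfrak{p}(\mathcal{O}_{K_\mathfrak{p}}^\times),i_\mathfrak{p}(I)\subset\mathrm{GL}_2(\mathcal{O}_{F_\mathfrak{p}})$ (the latter for $\mathfrak{p}\mid p$), so the toric integral reduces to $\mathrm{vol}(\mathcal{O}_{K_\mathfrak{p}}^\times/\mathcal{O}_{F_\mathfrak{p}}^\times)$ (given by \eqref{eq:local-vol}) times a single matrix coefficient, a second coset contributing in the ramified case (the source of the factor $2$ and of $1+|\omega_\mathfrak{p}|_\mathfrak{p}$); combined with the normalizing $L$-factors ($L(1,\tau_{K_\mathfrak{p}/F_\mathfrak{p}})=(1+|\omega_\mathfrak{p}|_\mathfrak{p})^{-1}$ for inert and $1$ for ramified, and $L(1,\mathrm{Ad}\pi_\mathfrak{p})$ cancelling the denominator) this gives $|d_{F_\mathfrak{p}}|_\mathfrak{p}^{-1/2}|d_{K_\mathfrak{p}}|_\mathfrak{p}=\mathrm{vol}(\mathcal{O}_{K_\mathfrak{p}}^\times/\mathcal{O}_{F_\mathfrak{p}}^\times)$. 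In the unramified special cases (d), (e), $\varphi_\mathfrak{p}$ is only $U_0(\omega_\mathfrak{p})$-fixed, so I instead compute the toric integral by decomposing over the torus and using that $\varphi_\mathfrak{p}$ is a $\mathrm{U}_\mathfrak{p}$-eigenvector with eigenvalue $\alpha_\mathfrak{p}=\pm1$ (Remark \ref{rem:alpha} and the proof of Lemma \ref{lem:whittaker}(b)), summing the resulting geometric series; together with the $\epsilon$-factor from the denominator (Proposition \ref{prop:hung}(c)) and the value $\zeta_{F_\mathfrak{p}}(2)=(1-|\omega_\mathfrak{p}|_\mathfrak{p}^2)^{-1}$ coming from that series, this yields the factors $\epsilon(\frac{1}{2},\pi_\mathfrak{p},\psi_\mathfrak{p})+\alpha_\mathfrak{p}$ (inert) and $2/\epsilon(\frac{1}{2},\pi_\mathfrak{p},\psi_\mathfrak{p})$ (ramified).

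The obstacle here is bookkeeping rather than conceptual: all five formulas come out of essentially the same local integral, and the real work is tracking the exact powers of $|d_{F_\mathfrak{p}}|_\mathfrak{p}$ and $|d_{K_\mathfrak{p}}|_\mathfrak{p}$ (keeping straight the two normalizations of the absolute value on $K_\mathfrak{p}$), the local $L$- and $\epsilon$-factors, the values $\zeta_{F_\mathfrak{p}}(1),\zeta_{F_\mathfrak{p}}(2)$, the torus volume \eqref{eq:local-vol}, the normalization $||\varphi_\mathfrak{p}||_\mathfrak{p}$, and the conductor exponents hidden in $\tau^{\mathfrak{n},B}_\mathfrak{p}$, so that all of these cancel into the compact expressions stated. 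The genuinely delicate part is the unramified special cases (b), (d) and (e): there one must compute precisely how $i_\mathfrak{p}(I)$ and the Atkin--Lehner/$\mathrm{U}_\mathfrak{p}$ operators act on the new vector in the Kirillov model and identify the resulting sign with $\epsilon(\frac{1}{2},\pi_\mathfrak{p},\psi_\mathfrak{p})$ through Proposition \ref{prop:hung}(c) and Remark \ref{rem:alpha}.
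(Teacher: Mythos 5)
Your plan is the right one, but note how the paper itself distributes the work: parts (a) and (b) are simply quoted from \cite[Propositions 4.6, 4.9]{Hung14}, parts (c) and (e) from \cite[Propositions 3.8, 3.9]{Hsieh}, and only part (d) --- the inert, unramified special case --- is computed in the paper. So for (a), (b), (c), (e) you are re-deriving the content of those references, and your outline (unfolding $\mathbf{b}_\mathfrak{p}$ into a product of two zeta integrals in the split cases via Lemma \ref{lem:whittaker} and Proposition \ref{prop:zeta-L}; reducing to the torus volume (\ref{eq:local-vol}) in the non-split principal-series case) is consistent with how those computations actually go.

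In the one case the paper proves, (d), your toolkit is right (matrix coefficients, the $\mathrm{U}_\mathfrak{p}$/Atkin--Lehner eigenvalues, the $\epsilon$-factor entering through the denominator of Proposition \ref{prop:hung}(c)), but the mechanism you describe --- ``decomposing over the torus \dots summing the resulting geometric series'' --- is not what happens and, taken literally, would not produce the two-term factor $\epsilon(\frac{1}{2},\pi_\mathfrak{p},\psi_\mathfrak{p})+\alpha_\mathfrak{p}$. Since $\mathfrak{p}$ is inert, $K_\mathfrak{p}^\times/F_\mathfrak{p}^\times$ is compact, so no geometric series arises from the torus. The paper's argument is: $\chi_\mathfrak{p}=1$ (Lemma \ref{lem:char}), the coefficient $m(g)=\mathbf{b}_\mathfrak{p}(\pi'_\mathfrak{p}(g)W_\mathfrak{p},W_\mathfrak{p})$ depends only on the $U_0(\omega_\mathfrak{p})$-double coset of $g$, and the torus splits into exactly two pieces, $F_\mathfrak{p}^\times(1+\omega_\mathfrak{p}\mathcal{O}_{F_\mathfrak{p}}\vartheta)\subset F_\mathfrak{p}^\times U_0(\omega_\mathfrak{p})$ and $F_\mathfrak{p}^\times(\mathcal{O}_{F_\mathfrak{p}}+\vartheta)\subset F_\mathfrak{p}^\times U_0(\omega_\mathfrak{p})\wvec{0}{1}{1}{0}U_0(\omega_\mathfrak{p})$, of relative volumes $\frac{|\omega_\mathfrak{p}|_\mathfrak{p}}{1+|\omega_\mathfrak{p}|_\mathfrak{p}}$ and $\frac{1}{1+|\omega_\mathfrak{p}|_\mathfrak{p}}$; the Atkin--Lehner relation $\pi'_\mathfrak{p}(\wvec{0}{1}{-\omega_\mathfrak{p}}{0})W_\mathfrak{p}=\epsilon(\frac{1}{2},\pi_\mathfrak{p},\psi_\mathfrak{p})W_\mathfrak{p}$ gives $m(\wvec{0}{1}{1}{0})=\epsilon(\frac{1}{2},\pi_\mathfrak{p},\psi_\mathfrak{p})\,\mu(\omega_\mathfrak{p})|\omega_\mathfrak{p}|_\mathfrak{p}^{1/2}\,m(1)$, and adding the two weighted contributions and dividing by the pairing of Proposition \ref{prop:hung}(c) (which carries the $\epsilon$-factor) is what produces $\epsilon(\frac{1}{2},\pi_\mathfrak{p},\psi_\mathfrak{p})+\alpha_\mathfrak{p}$. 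The only geometric series in sight lives inside the individual matrix coefficients, e.g.\ $m(1)=|d_{F_\mathfrak{p}}|_\mathfrak{p}^{1/2}L(1,\mathrm{Ad}\pi_\mathfrak{p})$, and the factor $(1-|\omega_\mathfrak{p}|_\mathfrak{p}^2)^{-1}$ in the final formula emerges from the $L$-factor bookkeeping in the normalization (using $L(\frac{1}{2},\pi_{K_\mathfrak{p}}\otimes\chi_\mathfrak{p})=L(1,\mathrm{Ad}\pi_\mathfrak{p})$ for the unramified special $\pi_\mathfrak{p}$), not from your claimed cancellation of $\zeta_{F_\mathfrak{p}}(2)$ against a torus series --- as written that accounting would misplace a factor. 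So: same route as the paper and its sources in outline, but the key new case needs the explicit two-double-coset decomposition of the compact torus rather than a series argument.
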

\begin{proof}
Assertions (\ref{it:split-P}) and (\ref{it:split-S}) follow from \cite[Proposition 4.6, 4.9]{Hung14}.
Assertions (\ref{it:unram-P}) and (\ref{it:ram}) follows from \cite[Proposition 3.8, 3.9]{Hsieh}.

We prove (\ref{it:unram-S}). By Lemma \ref{lem:char},
$\chi_\mathfrak{p}$ is trivial. Since $i_\mathfrak{p}(I)$ is in
$U_0(\omega_\mathfrak{p})$, we have
$$\mathbf{b}_\mathfrak{p}( \pi'_\mathfrak{p}(g)W_\mathfrak{p} , \pi'_\mathfrak{p}(I) W_\mathfrak{p})
= \mathbf{b}_\mathfrak{p}( \pi'_\mathfrak{p}(g)W_\mathfrak{p} ,
W_\mathfrak{p}).$$ Note that
$${m}(g)=\mathbf{b}_\mathfrak{p}(\pi'_\mathfrak{p}(g)W_\mathfrak{p},
W_\mathfrak{p}) $$ only depends on the double coset
$U_0(\mathfrak{p})gU_0(\mathfrak{p})$. For $g=1$ we have
$${m}(1)= |d_{F_\mathfrak{p}}|_{\mathfrak{p}}^{1/2}L(1, \mathrm{Ad} \pi_\mathfrak{p}). $$ For $g=\wvec{0}{1}{1}{0}$, by the fact $\pi'_\mathfrak{p}(\wvec{0}{1}{-\omega_\mathfrak{p}}{0})W_\mathfrak{p}=\epsilon(\frac{1}{2}, \pi_\mathfrak{p},\psi_\mathfrak{p})W_\mathfrak{p}$, we obtain
 \begin{eqnarray*} {m}(\wvec{0}{1}{1}{0}) &=& \epsilon(\frac{1}{2}, \pi'_\mathfrak{p},\psi_\mathfrak{p}) \mathbf{b}_\mathfrak{p}(\pi'_\mathfrak{p}(\wvec{-\omega_\mathfrak{p}}{0}{0}{1})W_\mathfrak{p}, W_\mathfrak{p}) \\ &=& \epsilon (\frac{1}{2},\pi'_\mathfrak{p},\psi_\mathfrak{p}) \mu(\omega_\mathfrak{p})|\omega_\mathfrak{p}|_{\mathfrak{p}}^{1/2}|d_{F_\mathfrak{p}}|_{\mathfrak{p}}^{1/2}L(1, \mathrm{Ad}\pi_\mathfrak{p}).  \end{eqnarray*}

We have the decomposition
$$ K_\mathfrak{p}^\times =
F_\mathfrak{p}^\times (1+\omega_\mathfrak{p}\mathcal{O}_{F_\mathfrak{p}}\vartheta)\sqcup F_\mathfrak{p}^\times (\mathcal{O}_{F_\mathfrak{p}}+\vartheta). $$ For any $a\in \omega_\mathfrak{p}\mathcal{O}_{F_\mathfrak{p}}$ we have $1+a\vartheta \in U_0(\mathfrak{p})$. For any
$a\in \mathcal{O}_{F_\mathfrak{p}}$, we have $a+\vartheta \in U_0(\mathfrak{p})\wvec{0}{1}{1}{0}U_0(\mathfrak{p})$. Now we have
\begin{eqnarray*}  \int_{K^\times_\mathfrak{p}/F^\times_\mathfrak{p}}  {m}(t)\chi_\mathfrak{p}(t) dt  &=&  {m}(1)\mathrm{vol}(1+\omega_\mathfrak{p}\mathcal{O}_{F_\mathfrak{p}}\vartheta) +
       {m}(\wvec{0}{1}{1}{0})\mathrm{vol}(\mathcal{O}_{F_\mathfrak{p}}+\vartheta)\\
       &=&  {m}(1) \frac{|\omega_\mathfrak{p}|_{\mathfrak{p}}}{1+|\omega_\mathfrak{p}|_{\mathfrak{p}}}
       \frac{|d_{K_\mathfrak{p}}|_{\mathfrak{p}}}{|d_{F_\mathfrak{p}}|_{\mathfrak{p}}^{1/2}} +  {m}(\wvec{0}{1}{1}{0}) \frac{1}{1+|\omega_\mathfrak{p}|_{\mathfrak{p}}}
       \frac{|d_{K_\mathfrak{p}}|_{\mathfrak{p}}}{|d_{F_\mathfrak{p}}|_{\mathfrak{p}}^{1/2}} \\
       &=& \frac{|d_{K_\mathfrak{p}}|_{\mathfrak{p}} L(1, \mathrm{Ad} \pi_\mathfrak{p})  }{1+|\omega_\mathfrak{p}|_{\mathfrak{p}}} [|\omega_\mathfrak{p}|_{\mathfrak{p}}
       +\epsilon(\frac{1}{2},\pi_\mathfrak{p},\psi_\mathfrak{p})\mu(\omega_\mathfrak{p})|\omega_\mathfrak{p}|
       _{\mathfrak{p}}^{1/2}],
\end{eqnarray*} and
$$\mathbf{b}_\mathfrak{p}(W_\mathfrak{p}, \wvec{0}{1}{-\omega_\mathfrak{p}}{0}W_\mathfrak{p})=\epsilon(\frac{1}{2},\pi_\mathfrak{p},\psi_\mathfrak{p})
|d_{F_\mathfrak{p}}|_{\mathfrak{p}}^{1/2}L(1,\mathrm{Ad}\pi_\mathfrak{p}).$$
When $\pi_\mathfrak{p}$ is unrmaified special, we have
$$ L(\frac{1}{2}, \pi_{K_\mathfrak{p}}\otimes \chi_\mathfrak{p})=L(\frac{1}{2}, \pi_{K_\mathfrak{p}}\otimes 1)=L(1, \mathrm{Ad} \pi_\mathfrak{p}) .$$ Therefore,
\begin{eqnarray*}\mathcal{P}(\varsigma_\mathfrak{p},\varphi_\mathfrak{p}, \chi_\mathfrak{p}) &=& \frac{L(1,\tau_{K_\mathfrak{p}/F_{\mathfrak{p}}})}{\zeta_{F_\mathfrak{p}}(2)} \frac{|d_{K_\mathfrak{p}}|_\mathfrak{p}}{|d_{F_\mathfrak{p}}|_{\mathfrak{p}}^{1/2}}
\frac{\epsilon(\frac{1}{2},\pi_\mathfrak{p},\psi_\mathfrak{p})|\omega_\mathfrak{p}|_{\mathfrak{p}}
       +\mu(\omega_\mathfrak{p})|\omega_\mathfrak{p}|
       _{\mathfrak{p}}^{1/2}}{1+|\omega_\mathfrak{p}|_{\mathfrak{p}}} \\
       &=& \frac{|d_{K_\mathfrak{p}}|_{\mathfrak{p}}}{|d_{F_\mathfrak{p}}|_{\mathfrak{p}}^{1/2}}
\frac{\epsilon(\frac{1}{2},\pi_\mathfrak{p},\psi_\mathfrak{p})|\omega_\mathfrak{p}|_{\mathfrak{p}}
       +\mu(\omega_\mathfrak{p})|\omega_\mathfrak{p}|
       _{\mathfrak{p}}^{1/2}}{1-|\omega_\mathfrak{p}|^2_{\mathfrak{p}}}, \end{eqnarray*} as expected.
\end{proof}

\begin{cor}\label{cor:P-nonzero}
Let $\mathfrak{p}$ be a prime of $\mathcal{O}_F$,
$\mathfrak{p}\notin J$. In the case that $\mathfrak{p}$ is inert in
$K$ and $\pi_\mathfrak{p}$ is unramified special, we assume that
$\alpha_\mathfrak{p}=-1$. Then
$\mathcal{P}(\varsigma_{\mathfrak{p}},\varphi_\mathfrak{p},
\chi_\mathfrak{p})\neq 0. $ \end{cor}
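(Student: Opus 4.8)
The plan is to prove the corollary by a straightforward case analysis: I would read off the value of $\mathcal{P}(\varsigma_\mathfrak{p},\varphi_\mathfrak{p},\chi_\mathfrak{p})$ from the appropriate part of Proposition \ref{prop:xie} in each of the five situations listed there and check that it is nonzero. In four of the five cases this is immediate from the closed formula; the remaining one is where the extra hypothesis $\alpha_\mathfrak{p}=-1$ is used and where the real work lies.

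First I would dispose of the two principal-series cases and the ramified special case. If $\mathfrak{p}$ is split in $K$ with $\pi'_\mathfrak{p}$ an unramified principal series, Proposition \ref{prop:xie}(\ref{it:split-P}) gives $\mathcal{P}(\varsigma_\mathfrak{p},\varphi_\mathfrak{p},\chi_\mathfrak{p})=|d_{F_\mathfrak{p}}|_\mathfrak{p}\neq0$; if $\mathfrak{p}$ is inert or ramified in $K$ with $\pi'_\mathfrak{p}$ an unramified principal series, part (\ref{it:unram-P}) gives $|d_{F_\mathfrak{p}}|_\mathfrak{p}^{-1/2}|d_{K_\mathfrak{p}}|_\mathfrak{p}\neq0$; and if $\mathfrak{p}$ is ramified in $K$ with $\pi'_\mathfrak{p}$ unramified special, part (\ref{it:ram}) gives a nonzero constant times $\epsilon(\tfrac12,\pi_\mathfrak{p},\psi_\mathfrak{p})^{-1}$, which is nonzero because $\epsilon(\tfrac12,\pi_\mathfrak{p},\psi_\mathfrak{p})=\pm1$: it is the eigenvalue on the newvector $W_{\pi_\mathfrak{p}}$ of the element $\wvec{0}{1}{-\omega_\mathfrak{p}}{0}$, whose square $-\omega_\mathfrak{p}I$ acts trivially since $\pi_\mathfrak{p}$ has trivial central character. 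Next, for $\mathfrak{p}$ split in $K$ with $\pi'_\mathfrak{p}$ unramified special, part (\ref{it:split-S}) gives $\mathcal{P}(\varsigma_\mathfrak{p},\varphi_\mathfrak{p},\chi_\mathfrak{p})=\epsilon(\tfrac12,\pi_\mathfrak{p},\psi_\mathfrak{p})\,||\varphi_\mathfrak{p}||_\mathfrak{p}^{-1}\,\chi_\mathfrak{P}(\omega_\mathfrak{P})\,|d_{F_\mathfrak{p}}|_\mathfrak{p}$; here $\epsilon(\tfrac12,\pi_\mathfrak{p},\psi_\mathfrak{p})=\pm1$, $\chi_\mathfrak{P}(\omega_\mathfrak{P})$ is a nonzero value of a Hecke character, $|d_{F_\mathfrak{p}}|_\mathfrak{p}\neq0$, and $||\varphi_\mathfrak{p}||_\mathfrak{p}$ is a nonzero scalar --- by Proposition \ref{prop:hung}(\ref{it:b-value-special}) together with $\mathrm{ord}_\mathfrak{p}\mathfrak{n}=1$ it equals $\tfrac{\zeta_{F_\mathfrak{p}}(2)}{\zeta_{F_\mathfrak{p}}(1)}\epsilon(\tfrac12,\pi_\mathfrak{p},\psi_\mathfrak{p})|\CD_{F_\mathfrak{p}}|_\mathfrak{p}^{1/2}$, and in any case one may invoke the Casselman nonvanishing $\langle\varphi_\mathfrak{p},\pi'(\tau^{\mathfrak{n},B}_\mathfrak{p})\varphi_\mathfrak{p}\rangle_\mathfrak{p}\neq0$ recalled in \S\ref{ss:stab} --- so the product is nonzero.

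The remaining case, and the only place the hypothesis $\alpha_\mathfrak{p}=-1$ enters, is $\mathfrak{p}$ inert in $K$ with $\pi'_\mathfrak{p}=\sigma(\mu,\mu|\cdot|_\mathfrak{p}^{-1})$ unramified special. There Proposition \ref{prop:xie}(\ref{it:unram-S}) gives $\mathcal{P}(\varsigma_\mathfrak{p},\varphi_\mathfrak{p},\chi_\mathfrak{p})=\tfrac{|d_{K_\mathfrak{p}}|_\mathfrak{p}}{|d_{F_\mathfrak{p}}|_\mathfrak{p}^{1/2}}\cdot\tfrac{|\omega_\mathfrak{p}|_\mathfrak{p}\,(\epsilon(\tfrac12,\pi_\mathfrak{p},\psi_\mathfrak{p})+\alpha_\mathfrak{p})}{1-|\omega_\mathfrak{p}|_\mathfrak{p}^2}$, in which every factor save $\epsilon(\tfrac12,\pi_\mathfrak{p},\psi_\mathfrak{p})+\alpha_\mathfrak{p}$ is visibly nonzero, so the whole argument reduces to showing this sum does not vanish. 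By Remark \ref{rem:alpha} we have $\alpha_\mathfrak{p}=\pm1$, and as above $\epsilon(\tfrac12,\pi_\mathfrak{p},\psi_\mathfrak{p})=\pm1$. The crux --- and the step I expect to be the main obstacle --- is to pin down the sign of the local root number $\epsilon(\tfrac12,\pi_\mathfrak{p},\psi_\mathfrak{p})$ of the unramified special representation $\pi_\mathfrak{p}$ in terms of its $\mathrm{U}_\mathfrak{p}$-eigenvalue $\alpha_\mathfrak{p}$ (equivalently, in terms of the unramified quadratic twist distinguishing $\pi_\mathfrak{p}$ from the Steinberg), and to check that when $\alpha_\mathfrak{p}=-1$ the signs $\epsilon(\tfrac12,\pi_\mathfrak{p},\psi_\mathfrak{p})$ and $\alpha_\mathfrak{p}$ do not cancel, so that $\epsilon(\tfrac12,\pi_\mathfrak{p},\psi_\mathfrak{p})+\alpha_\mathfrak{p}=\pm2\neq0$. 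This sign bookkeeping is precisely the reason for the hypothesis $\alpha_\mathfrak{p}=-1$: it excludes the one configuration in which $\mathcal{P}(\varsigma_\mathfrak{p},\varphi_\mathfrak{p},\chi_\mathfrak{p})$, a factor of the $p$-adic $L$-function $L_J$, could be $0$. Granting it, all five cases are nonzero and the corollary follows.
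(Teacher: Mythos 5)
Your reduction of the statement to the five formulas of Proposition \ref{prop:xie} matches the paper's strategy, and your treatment of the easy cases (split principal series, split special, inert/ramified principal series, ramified special) is fine. But in the decisive case --- $\mathfrak{p}$ inert in $K$ with $\pi_\mathfrak{p}=\sigma(\mu,\mu|\cdot|_\mathfrak{p}^{-1})$ unramified special --- your argument stops exactly where the proof has to begin. You correctly observe that everything reduces to $\epsilon(\tfrac12,\pi_\mathfrak{p},\psi_\mathfrak{p})+\alpha_\mathfrak{p}\neq 0$, and that both summands are $\pm1$ (Remark \ref{rem:alpha}), but you never determine the sign of $\epsilon(\tfrac12,\pi_\mathfrak{p},\psi_\mathfrak{p})$; you explicitly defer it (``the step I expect to be the main obstacle'', ``Granting it''), and ``$\pm2\neq0$'' is only true if the two signs agree, which is precisely what has to be proved. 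The paper closes this point by a specific input: it invokes the computation in the proof of \cite[Theorem 4.12]{Hung14} to assert $\epsilon(\tfrac12,\pi_\mathfrak{p},\psi_\mathfrak{p})=-1$ in this configuration, so that the hypothesis $\alpha_\mathfrak{p}=-1$ gives $\epsilon(\tfrac12,\pi_\mathfrak{p},\psi_\mathfrak{p})+\alpha_\mathfrak{p}=-2$, while $\alpha_\mathfrak{p}=1$ would give $0$. Without that determination your case analysis does not close, so this is a genuine gap rather than a routine omission.

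Moreover, the route you sketch for filling it --- expressing the root number ``in terms of the $\mathrm{U}_\mathfrak{p}$-eigenvalue $\alpha_\mathfrak{p}$, equivalently in terms of the unramified quadratic twist distinguishing $\pi_\mathfrak{p}$ from the Steinberg'' --- is exactly the dangerous move. The familiar relation for an unramified special representation (Atkin--Lehner eigenvalue of the newvector, hence the local sign, opposite to the $\mathrm{U}_\mathfrak{p}$-eigenvalue) would give $\epsilon(\tfrac12,\pi_\mathfrak{p},\psi_\mathfrak{p})=-\alpha_\mathfrak{p}$, i.e.\ $\epsilon+\alpha_\mathfrak{p}=0$ for \emph{both} choices of $\alpha_\mathfrak{p}$, which is the opposite of what the corollary needs. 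So the sign bookkeeping cannot be waved through by general nonsense about $\pm1$'s: the proof stands or falls with the specific evaluation $\epsilon(\tfrac12,\pi_\mathfrak{p},\psi_\mathfrak{p})=-1$ imported from \cite{Hung14}, and you would need either to reproduce that computation or to cite it, as the paper does.
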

\begin{proof} The assertion for the case of $\mathfrak{p}$ split in $K$ and the case of $\pi$ unramified follows directly from Proposition \ref{prop:xie}. If $\mathfrak{p}$ is inert in $K$, and $\pi_\mathfrak{p}=\sigma(\mu,\mu|\cdot|_{\mathfrak{p}}^{-1})$ is unramified special, then
$\epsilon(\frac{1}{2},\pi_\mathfrak{p},\psi_\mathfrak{p})=-1$ (see
the proof of \cite[Theorem 4.12]{Hung14}). If
$\alpha_\mathfrak{p}\neq 1$, then by Proposition \ref{prop:xie}
(\ref{it:unram-S}) we have
$\mathcal{P}(\varsigma_{\mathfrak{p}},\varphi_\mathfrak{p},
\chi_\mathfrak{p})\neq 0.$ \end{proof}

Put $c_v=\ord_v \mathfrak{n}_\nu$.

\begin{prop}\label{prop:phi-old} $($\cite[Proposition 3.8, 3.9]{Hsieh}, \cite[Proposition 4.6, 4.9]{Hung14}$)$ Let $v$ be a finite place coprime to
$p\mathfrak{n}^-_b$, and satisfies the condition that if
$v|\frac{\mathfrak{n}^-}{\mathfrak{n}^-_b}$  and is inert in $K$,
then $v|\mathfrak{n}_\nu$.
\begin{enumerate}
\item\label{it:error} If $v$ is split in $K$ and $v\nmid \mathfrak{n}^+$, put $\phi_v=\varphi_v$. Then $$\mathcal{P}(1,\phi_v,
\chi_v)= \left\{\begin{array}{ll}   |d_{F_v}|_v  & \text{ if
}\mathrm{val}_v(\beta) \text{ is even} \\
|d_{F_v}|_v \cdot \chi_v((\omega_v,1))
 & \text{ if }\mathrm{val}_v(\beta) \text{ is odd}.
\end{array} \right.$$ Here $(\omega_v,1)$ is in $K_v\cong F_v\oplus
F_v$.
\item If $v|\mathfrak{n}^+$, writing $v=w\bar{w}$ with $w|\mathfrak{N}^+$, put
$\phi_v=\pi'_v(\wvec{\vartheta_w}{\vartheta_{\bar{w}}}{1}{1})\varphi_v$.
Then $$\mathcal{P}(1,\phi_v,
\chi_v)=\frac{\epsilon(\frac{1}{2},\pi_v,\psi_v)}{||\varphi_\pi||_v}\chi_w(\mathfrak{N}^+)|d_{F_v}|_v.$$
\item If $v$ is inert or ramified in $K$ and $v\nmid\mathfrak{n}^-$, put $\phi_v=\pi'_v(\wvec{\omega_v^{\ord_v\mathfrak{n}_\nu}}{0}{0}{1})\varphi_v$. Then
$$\mathcal{P}(1,\phi_v, \chi_v)=|d_{K_v}|_v|d_{F_v}|_v^{-1/2}\cdot\left\{
\begin{array}{ll} 1 & \text{ if }c_v=0, \\ L(1,\tau_{K_v/F_v})^2|\omega_v|_v^{c_v} & \text{ if }c_v>0.  \end{array}\right.$$
\item If $v|\frac{\mathfrak{n}^-}{\mathfrak{n}^-_b}$, put $\phi_v=\pi'_v(\wvec{\omega_v^{\ord_v\mathfrak{n}_\nu-\ord_v\mathfrak{n}}}{0}{0}{1})\varphi_v$. Then
\begin{eqnarray*} &&\mathcal{P}(1,\phi_v, \chi_v) \\ &=& \frac{1+|\omega_v|_v}{\epsilon(\frac{1}{2},\pi_v,\psi_v)}\frac{|d_{K_v}|_v}{|d_{F_v}|_v^{1/2}}
\cdot \left\{\begin{array}{ll} 2 & \text{if }c_v=0 \text{ and } \\ & v \text{ is ramified in }K, \\
L(1,\tau_{K_v/F_v})^2|\omega_v|_v^{c_v} & \text{if }c_v>0.\end{array}\right.
\end{eqnarray*}
\end{enumerate}
\end{prop}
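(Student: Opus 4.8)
The plan is to unwind the definition of the local toric integral and then reduce each of the four cases to the explicit local computations of Hsieh \cite{Hsieh} and Hung \cite{Hung14}, the only additional inputs being the normalisations fixed in Section \ref{sec:char-measure} (the additive character $\psi_v$ of conductor $\mathcal{D}_{F_v}$ and the Tamagawa-type measures) and the translate by a power of $\varpi_v$ that compensates for a possibly ramified auxiliary character $\nu$. Since $v\nmid p$, the $v$-component of $\phi^{\dagger_J}$ equals $\phi_v$, so that
$$\mathcal{P}(1,\phi_v,\chi_v)=\frac{L(1,\mathrm{Ad}\pi_v)\,L(1,\tau_{K_v/F_v})}{\zeta_{F_v}(2)\,L(\tfrac12,\pi_{K_v}\otimes\chi_v)}\int_{K_v^\times/F_v^\times}\frac{\langle\pi'_v(t)\phi_v,\pi'_v(I)\phi_v\rangle_v}{\langle\varphi_v,\pi'_v(\tau_v^{\mathfrak{n},B})\varphi_v\rangle_v}\,\chi_v(t)\,dt.$$
As $v\nmid\mathfrak{n}^-_b$, the algebra $B$ is split at $v$; I would take $\pi'_v=\pi_v\circ i_v$ and realise $\langle\cdot,\cdot\rangle_v$ as the pairing $\mathbf{b}_v$ on the Whittaker model $\mathcal{W}(\pi_v,\psi_v)$, so that the denominator equals $\mathbf{b}_v(W_{\pi_v},\pi_v(\tau_v^{\mathfrak{n},B})W_{\pi_v})$. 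Comparing with the definition of $\|\varphi_\pi\|_v$ shows that this denominator is exactly what produces the factor $1/\|\varphi_\pi\|_v$ and the root numbers $\epsilon(\tfrac12,\pi_v,\psi_v)$ occurring in the statement; its value in the unramified and unramified-special cases is given by Proposition \ref{prop:hung}(\ref{it:b-value-unram})--(\ref{it:b-value-special}).

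Next I would treat the cases (a)--(d) in turn. In (a), $\pi_v$ is unramified and $\varphi_v=W_{\pi_v}$ is spherical; because $i_v(\mathcal{O}_{K_v})$ is the diagonal torus and $i_v(I)$ lies in $F_v^\times\GL_2(\mathcal{O}_{F_v})$ or in $F_v^\times\wvec{\varpi_v}{0}{0}{1}\GL_2(\mathcal{O}_{F_v})$ according to the parity of $\mathrm{val}_v(\beta)$, the integrand is $\mathcal{O}_{F_v}^\times$-biinvariant and the integral collapses to a single term, giving $|d_{F_v}|_v$, respectively $|d_{F_v}|_v\,\chi_v((\varpi_v,1))$, as in \cite[Prop.\ 4.6]{Hung14}. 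In (b) the translate $\pi'_v\bigl(\wvec{\vartheta_w}{\vartheta_{\bar w}}{1}{1}\bigr)\varphi_v$ is the Gross point adapted to the $U_0(\mathfrak{n}^+)$-level, and \cite[Prop.\ 4.9]{Hung14} yields the value carrying the extra $\epsilon$-factor and the character value $\chi_w(\mathfrak{N}^+)$. In (c), $v$ is inert or ramified in $K$ and $\pi_v$ is unramified, so the torus is non-split; the translate by $\wvec{\varpi_v^{\ord_v\mathfrak{n}_\nu}}{0}{0}{1}$ matches the conductor $c_v$ of $\nu_v$ (every prime of $\mathfrak{n}_\nu$ is inert or ramified in $K$, and $\chi_v=\nu_v$ by Lemma \ref{lem:char}), and \cite[Prop.\ 3.8]{Hsieh} supplies the answer, the factor $L(1,\tau_{K_v/F_v})^2|\varpi_v|_v^{c_v}$ appearing precisely when $c_v>0$. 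In (d), where $\pi_v=\sigma(\mu,\mu|\cdot|_v^{-1})$ is unramified special and $\ord_v\mathfrak{n}=1$, I would argue as in the proof of Proposition \ref{prop:xie}(\ref{it:unram-S}): write $K_v^\times/F_v^\times=F_v^\times(1+\varpi_v\mathcal{O}_{F_v}\vartheta)\sqcup F_v^\times(\mathcal{O}_{F_v}+\vartheta)$, evaluate $m(g)=\mathbf{b}_v(\pi'_v(g)W_{\pi_v},W_{\pi_v})$ at $g=1$ and $g=\wvec{0}{1}{1}{0}$ using $\pi'_v(\wvec{0}{1}{-\varpi_v}{0})W_{\pi_v}=\epsilon(\tfrac12,\pi_v,\psi_v)W_{\pi_v}$, add the contributions weighted by the volume \eqref{eq:local-vol}, and finally incorporate the conductor shift from the translate by $\wvec{\varpi_v^{c_v-1}}{0}{0}{1}$; the hypothesis that an inert prime dividing $\mathfrak{n}^-/\mathfrak{n}^-_b$ must divide $\mathfrak{n}_\nu$ is what forces $c_v\ge 1$ there and hence guarantees non-vanishing, while in the ramified case with $c_v=0$ one picks up the factor $2$ exactly as in Proposition \ref{prop:xie}(\ref{it:ram}). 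Assembling these with the explicit local values of $\zeta_{F_v}$, $L(1,\mathrm{Ad}\pi_v)$, $L(1,\tau_{K_v/F_v})$ and $L(\tfrac12,\pi_{K_v}\otimes\chi_v)$, and using $|d_{K_v}|_v=|d_{K_v}|_{v\mathcal{O}_K}^{1/2}$, gives the stated formulas.

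The routine but delicate part is the bookkeeping of the constants $|d_{F_v}|_v$, $|d_{K_v}|_v$, $\zeta_{F_v}(1)/\zeta_{F_v}(2)$ and the root numbers, which differ from those in \cite{Hsieh,Hung14} because our $\psi_v$ has conductor $\mathcal{D}_{F_v}$ and our measures are Tamagawa-normalised. The genuinely new point is the handling of the $\varpi_v$-translates in (c) and (d): one must verify that translating $\varphi_v$ by $\wvec{\varpi_v^{c_v}}{0}{0}{1}$ (resp.\ $\wvec{\varpi_v^{c_v-1}}{0}{0}{1}$) shifts the support of the integrand so as to produce exactly the factor $L(1,\tau_{K_v/F_v})^2|\varpi_v|_v^{c_v}$, and that the dependence on $\chi_v$ is absorbed into the displayed factors. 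I expect the main obstacle to be organising this constant-tracking uniformly across the four cases rather than any conceptual difficulty.
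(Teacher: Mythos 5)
Your proposal is correct and follows essentially the same route as the paper, whose proof simply defers to Hsieh (Prop.\ 3.8, 3.9) and Hung (Prop.\ 4.6, 4.9) and remarks that the odd-valuation subcase of (a) needs only a slight revision of Hung's argument — precisely the $\chi_v((\omega_v,1))$ twist coming from $i_v(I)\in F_v^\times\wvec{\omega_v}{0}{0}{1}\GL_2(\mathcal{O}_{F_v})$ that you identify, with the remaining cases handled by the same citation-plus-normalization bookkeeping. One loose phrase: in (a) the unramified toric integral does not literally ``collapse to a single term'' (it is the usual Macdonald-type sum over the cocharacter lattice whose value cancels the normalizing $L$-factors), but since you defer the evaluation to Hung's Proposition 4.6 this does not affect the argument.
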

\begin{proof} In the first case Assertion (\ref{it:error}) is proved in \cite[Proposition
4.6]{Hung14}.  In the second case, one only needs to revise the
argument in loc. cit. slightly.
\end{proof}

\begin{prop}\label{prop:phi-new} If $v|\frac{\mathfrak{n}^-}{\mathfrak{n}^-_b}$, $v\nmid \mathfrak{n}_\nu$, and $v$ is inert in $K$, put $\phi_v=\varphi_v$.  Then
$$\mathcal{P}(1,\phi_v, \chi_v)
       = \frac{|d_{K_v}|_{v}}{|d_{F_v}|_{v}^{1/2}}
\frac{|\omega_v|_{v}(\epsilon(\frac{1}{2},\pi_v,\psi_v)
       +\alpha_v)}{1-|\omega_v|^2_{v}}. $$
\end{prop}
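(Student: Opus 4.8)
The plan is to carry out, essentially verbatim, the computation proving Proposition \ref{prop:xie}(\ref{it:unram-S}): the present statement is the exact analogue for a finite place $v\nmid p$ in place of a prime $\mathfrak{p}\mid p$ with $\mathfrak{p}\notin J$ (note that there $\varsigma_\mathfrak{p}=1$, so the toric integral in loc.\ cit.\ is $\mathcal{P}(1,\varphi_\mathfrak{p},\chi_\mathfrak{p})$), and the asserted value is literally the one there with $\mathfrak{p}$ replaced by $v$. So the work is to check that the local situation matches and then invoke the same string of identities.

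First I would verify the two inputs that make this case indistinguishable from the $\mathfrak{p}\mid p$ one. Since $v$ is inert in $K$ and $v\nmid\mathfrak{n}_\nu$, Lemma \ref{lem:char}(a) gives $\nu_v=1$, and since $v\notin J$ (as $J\subset J_p$ while $v\nmid p$) Lemma \ref{lem:char}(b) gives $\chi_v=\nu_v$; hence $\chi_v$ is trivial. Because $v$ divides $\frac{\mathfrak{n}^-}{\mathfrak{n}^-_b}$, hence $p\frac{\mathfrak{n}}{\mathfrak{n}^-_b}\mathfrak{n}_\nu$, the isomorphism $i_v\colon B_v\simeq M_2(F_v)$ fixed in Section \ref{ss:mod-form} has $i_v(I)=\sqrt{\beta}\,\wvec{-1}{\mathrm{T}(\vartheta_v)}{0}{1}$ with $\beta_v\in(\mathcal{O}_{F_v}^\times)^2$ and $\mathrm{T}(\vartheta_v)\in\mathcal{O}_{F_v}$, so $i_v(I)\in F_v^\times U_0(\omega_v)$. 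Since $\pi_v$ has trivial central character, $\ord_v\mathfrak{n}=1$ (as $\mathfrak{n}^-$ is square-free and $v\nmid p\mathfrak{n}^+$), $\pi'_v=\pi_v\circ i_v$ is unramified special, and $\varphi_v$ corresponds under $i_v$ to the $U_0(\omega_v)$-fixed Whittaker newvector $W_{\pi_v}$, it follows that $\pi'_v(I)\varphi_v=\varphi_v$.

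Then I would unwind the local toric integral. Put $m(g)=\mathbf{b}_v(\pi'_v(g)W_{\pi_v},W_{\pi_v})$. Using $\chi_v=1$, the identity $L(\tfrac12,\pi_{K_v}\otimes\chi_v)=L(1,\mathrm{Ad}\,\pi_v)$ valid for $\pi_v$ unramified special, and $\pi'_v(I)\varphi_v=\varphi_v$, one obtains
$$\mathcal{P}(1,\varphi_v,\chi_v)=\frac{L(1,\tau_{K_v/F_v})}{\zeta_{F_v}(2)}\cdot\frac{1}{\langle\varphi_v,\pi'_v(\tau_v^{\mathfrak{n},B})\varphi_v\rangle_v}\int_{K_v^\times/F_v^\times}m(t)\,dt .$$
Exactly as in the proof of Proposition \ref{prop:xie}(\ref{it:unram-S}) I would use the decomposition
$$K_v^\times=F_v^\times(1+\omega_v\mathcal{O}_{F_v}\vartheta)\ \sqcup\ F_v^\times(\mathcal{O}_{F_v}+\vartheta),$$
note that $1+\omega_v\mathcal{O}_{F_v}\vartheta\subset U_0(\omega_v)$ while $\mathcal{O}_{F_v}+\vartheta\subset U_0(\omega_v)\wvec{0}{1}{1}{0}U_0(\omega_v)$, so that $m$ is constant on each of the two pieces, equal to $m(1)$ on the first and to $m(\wvec{0}{1}{1}{0})$ on the second, and then evaluate the ingredients: the two volumes from (\ref{eq:local-vol}), the values $m(1)$ and $m(\wvec{0}{1}{1}{0})$ from Lemma \ref{lem:whittaker}(\ref{it:special}) together with the Atkin--Lehner relation $\pi'_v(\wvec{0}{1}{-\omega_v}{0})W_{\pi_v}=\epsilon(\tfrac12,\pi_v,\psi_v)W_{\pi_v}$, and the denominator $\langle\varphi_v,\pi'_v(\tau_v^{\mathfrak{n},B})\varphi_v\rangle_v$ from Proposition \ref{prop:hung}(\ref{it:b-value-special}). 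Substituting $\alpha_v=\mu(\omega_v)|\omega_v|_v^{-1/2}$ and $\epsilon(\tfrac12,\pi_v,\psi_v)^2=1$ and simplifying the combination of $L(1,\tau_{K_v/F_v})$, $\zeta_{F_v}(2)$, $|d_{K_v}|_v$ and $|d_{F_v}|_v$ as there then yields the claimed formula.

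I do not anticipate a genuine conceptual obstacle: the computation coincides with the one already done for $\mathfrak{p}\mid p$. The only point worth underlining is that the configuration here --- $v\mid\frac{\mathfrak{n}^-}{\mathfrak{n}^-_b}$ inert with $v\nmid\mathfrak{n}_\nu$, which forces one to keep the bare newvector $\varphi_v$ rather than the translate used in Proposition \ref{prop:phi-old} --- is precisely the one excluded by the running hypothesis of that proposition, and treating it is exactly what removes the corresponding local restriction present in \cite{Hung14}. The only mildly delicate part is the bookkeeping of the local constants $|d_{K_v}|_v$, $|d_{F_v}|_v$, $L(1,\tau_{K_v/F_v})$, $\zeta_{F_v}(2)$, which recombine exactly as in Proposition \ref{prop:xie}(\ref{it:unram-S}).
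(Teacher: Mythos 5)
Your proposal is correct and follows exactly the paper's route: the paper's proof of this proposition is precisely the remark that the argument is the same as that of Proposition \ref{prop:xie} (\ref{it:unram-S}), and you reproduce that computation, including the needed verifications that $\chi_v=\nu_v=1$ via Lemma \ref{lem:char} and that $i_v(I)$ lies in $F_v^\times U_0(\omega_v)$ so the pairing against $\pi'_v(I)\varphi_v$ reduces to $m(g)=\mathbf{b}_v(\pi'_v(g)W_{\pi_v},W_{\pi_v})$. The remaining steps (the decomposition of $K_v^\times/F_v^\times$, the values of $m(1)$ and $m(\wvec{0}{1}{1}{0})$, and the normalization by Proposition \ref{prop:hung} (\ref{it:b-value-special})) are exactly those of the cited proof, so no gap remains.
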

\begin{proof} The argument is the same as the proof of Proposition \ref{prop:xie} (\ref{it:unram-S}).
\end{proof}

Now we consider the case of $v|\mathfrak{n}^-_b$. Note that $\pi'_v$ is the representation $\xi\circ N_{B_v/F_v}$ where $N_{B_v/F_v}$ is the reduced norm, and $\xi$ is an unramified character of $F_\mathfrak{p}^\times$ such that $\xi(\omega_\mathfrak{p})=\alpha_v$.

\begin{prop}\label{prop:nb} If $v|\mathfrak{n}^-_b$, take $\phi_v=\varphi_v$.
\begin{enumerate}
\item If $v$ is inert in $K$, then
$$ \mathcal{P}(1,\phi_v, \chi_v) =\frac{1}{\zeta_{F_v}(1)}\cdot\frac{|d_{K_v}|_v}{|d_{F_v}|_v^{1/2}}. $$
\item In the case of $v$ ramified in $K$, let $\tilde{\omega}_v$ be a unifomrmizing element in $K_v$. If $\nu_v(\tilde{\omega}_v)=\alpha_v$, then
$$\mathcal{P}(1,\phi_v, \chi_v) =\frac{2}{\zeta_{F_v}(1)}\cdot\frac{|d_{K_v}|_v}{|d_{F_v}|_v^{1/2}}. $$ If
$\nu_v(\tilde{\omega}_v)=-\alpha_v$, then $\mathcal{P}(1,\phi_v,\chi_v)=0$.
\end{enumerate}
\end{prop}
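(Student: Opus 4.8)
The plan is to exploit that for $v\mid\mathfrak{n}^-_b$ the local representation $\pi'_v=\xi\circ N_{B_v/F_v}$ is one-dimensional, so the matrix coefficient in the definition of $\mathcal{P}$ degenerates to a scalar. First I would record that, since $v\nmid p$, there is no $p$-stabilization at $v$ and $\phi_v^{\dagger_J}=\phi_v=\varphi_v$; that $v\nmid\mathfrak{n}_\nu$ as $\mathfrak{n}_\nu$ is coprime to $\mathfrak{n}^-_b$; and that $\tau_v^{\mathfrak{n},B}=I$ by definition. From $\pi'_v(h)\varphi_v=\xi(N_{B_v/F_v}(h))\varphi_v$ for $h\in B_v^\times$, from the fact that the reduced norm of $B_v$ restricts on the embedded field $K_v\hookrightarrow B_v$ to $N_{K_v/F_v}$, and from the non-vanishing of $\langle\varphi_v,\pi'_v(\tau_v^{\mathfrak{n},B})\varphi_v\rangle_v=\xi(N_{B_v/F_v}(I))\langle\varphi_v,\varphi_v\rangle_v$ recalled in Section \ref{ss:stab} (which forces $\langle\varphi_v,\varphi_v\rangle_v\neq0$), the ratio inside the integral collapses to $\xi(N_{K_v/F_v}(t))$, so
$$\mathcal{P}(1,\varphi_v,\chi_v)=\frac{L(1,\mathrm{Ad}\pi_v)\,L(1,\tau_{K_v/F_v})}{\zeta_{F_v}(2)\,L(\tfrac12,\pi_{K_v}\otimes\chi_v)}\int_{K_v^\times/F_v^\times}\xi(N_{K_v/F_v}(t))\,\chi_v(t)\,dt.$$

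Next I would compute the integral. By Lemma \ref{lem:char}(2) (whose proof uses $p>2$ at ramified places) one has $\chi_v=\nu_v$; since $v\nmid\mathfrak{n}_\nu$ this $\nu_v$ is unramified, hence trivial on $\mathcal{O}_{K_v}^\times$ and on $F_v^\times$, equal to $1$ when $v$ is inert (Lemma \ref{lem:char}(1)), and in the ramified case determined by $\nu_v(\tilde{\omega}_v)\in\{1,-1\}$, which squares to $\nu_v(\omega_v)=1$. The quotient $K_v^\times/F_v^\times$ is compact: for $v$ inert a uniformizer of $F_v$ is also a uniformizer of $K_v$, so $K_v^\times/F_v^\times=\mathcal{O}_{K_v}^\times/\mathcal{O}_{F_v}^\times$; for $v$ ramified it is the disjoint union of the cosets of $\mathcal{O}_{K_v}^\times/\mathcal{O}_{F_v}^\times$ and of $\tilde{\omega}_v\mathcal{O}_{K_v}^\times/\mathcal{O}_{F_v}^\times$. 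On $\mathcal{O}_{K_v}^\times$ the integrand is identically $1$ (both $\xi\circ N_{K_v/F_v}$ and $\nu_v$ being unramified), contributing $\mathrm{vol}(\mathcal{O}_{K_v}^\times/\mathcal{O}_{F_v}^\times)=|d_{K_v}|_v/|d_{F_v}|_v^{1/2}$ by (\ref{eq:local-vol}); on the extra coset in the ramified case the integrand is the constant $\xi(N_{K_v/F_v}(\tilde{\omega}_v))\,\nu_v(\tilde{\omega}_v)=\alpha_v\,\nu_v(\tilde{\omega}_v)$ (using $\xi(\omega_v)=\alpha_v$ and that $N_{K_v/F_v}(\tilde{\omega}_v)$ has valuation one), which has the same volume. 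Hence the integral equals $|d_{K_v}|_v/|d_{F_v}|_v^{1/2}$ for $v$ inert and $(1+\alpha_v\nu_v(\tilde{\omega}_v))\,|d_{K_v}|_v/|d_{F_v}|_v^{1/2}$ for $v$ ramified; and since $\alpha_v=\pm1$ by Remark \ref{rem:alpha}, the latter is $2\,|d_{K_v}|_v/|d_{F_v}|_v^{1/2}$ if $\nu_v(\tilde{\omega}_v)=\alpha_v$ and $0$ if $\nu_v(\tilde{\omega}_v)=-\alpha_v$.

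Finally I would evaluate the $L$-factor prefactor, a routine local computation with the $L$-factors recalled in Section \ref{sec:auto-rep}: since $\pi_v=\sigma(\mu_v,\mu_v|\cdot|_v^{-1})$ is an unramified twist of Steinberg, $L(1,\mathrm{Ad}\pi_v)=\zeta_{F_v}(2)$ and cancels the denominator $\zeta_{F_v}(2)$, while $\pi_{K_v}$ is the corresponding unramified twist of Steinberg over $K_v$ and $\chi_v=\nu_v$ is unramified, so $L(\tfrac12,\pi_{K_v}\otimes\chi_v)$ is read off from the formula for the special $L$-factor using $\mu_v(\omega_v)=\alpha_v|\omega_v|_v^{1/2}$ and $\xi(\omega_v)=\alpha_v$. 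When $v$ is inert one gets $L(1,\tau_{K_v/F_v})=(1+|\omega_v|_v)^{-1}$ and $L(\tfrac12,\pi_{K_v}\otimes\chi_v)=(1-|\omega_v|_v^2)^{-1}$, so the prefactor equals $1-|\omega_v|_v=\zeta_{F_v}(1)^{-1}$ and the product gives assertion (1). When $v$ is ramified $L(1,\tau_{K_v/F_v})=1$ and $L(\tfrac12,\pi_{K_v}\otimes\chi_v)=(1-\alpha_v\nu_v(\tilde{\omega}_v)|\omega_v|_v)^{-1}$, so the prefactor is $\zeta_{F_v}(1)^{-1}$ if $\nu_v(\tilde{\omega}_v)=\alpha_v$ and $1+|\omega_v|_v$ if $\nu_v(\tilde{\omega}_v)=-\alpha_v$; multiplying by the integral yields $2\,\zeta_{F_v}(1)^{-1}\,|d_{K_v}|_v/|d_{F_v}|_v^{1/2}$ in the first case and $0$ in the second, which is assertion (2). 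The one point needing care is the ramified case: one must split $K_v^\times/F_v^\times$ into its two cosets correctly and track the constant $\alpha_v\nu_v(\tilde{\omega}_v)$ on the nontrivial one, and it is worth noting that the sign $\nu_v(\tilde{\omega}_v)=-\alpha_v$ which annihilates the toric integral is precisely the one keeping $L(\tfrac12,\pi_{K_v}\otimes\chi_v)$ finite and nonzero, so the product is unambiguously $0$.
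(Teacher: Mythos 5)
Your proposal is correct and follows essentially the same route as the paper's proof: use that $\pi'_v=\xi\circ N_{B_v/F_v}$ is one-dimensional so the toric integrand reduces to the unramified character $\xi(N_{K_v/F_v}(t))\chi_v(t)$ on the compact quotient $F_v^\times\backslash K_v^\times$, compute its integral via the coset decomposition (with the two cosets in the ramified case producing the factor $1+\chi_v(\tilde\omega_v)\xi(\omega_v)$), evaluate the local $L$-factor prefactor, and convert $\chi_v$ to $\nu_v$ via Lemma \ref{lem:char}. The only cosmetic difference is bookkeeping in the inert case, where the paper cancels $L(1,\mathrm{Ad}\,\pi_v)$ against $L(\frac12,\pi_{K_v})$ while you cancel it against $\zeta_{F_v}(2)$; these are the same quantity, so the computations agree.
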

\begin{proof}
Note that, if $\pi_v=\sigma(\mu, \mu|\cdot|_v^{-1})$, then $\pi'_v$ is the representation $\xi\circ N_{B_v/F_v}$ where $N_{B_v/F_v}$ is the reduced norm, and $\xi=\mu|\cdot|_v^{-1/2}$. In particular,  $\xi(\omega_v)=\alpha_v$.

In the case when $v$ is inert in $K$, as $\chi_v$ is unramified, $\chi_v=1$. Using $L(1, \mathrm{Ad}\pi_v)=L(\frac{1}{2}, \pi_{K_v}\otimes 1)$ we obtain
$$ \mathcal{P}(1,\phi_v, \chi_v) =\mathcal{P}(1,\phi_v, 1) = \frac{ L(1, \mathrm{Ad} \pi_v) L(1, \tau_{K_v/F_v})
}{\zeta_{F_v}(2)L(\frac{1}{2}, \pi_{K_v}\otimes 1)} \mathrm{vol}(F_v^\times\backslash K_v^\times) = \frac{1}{\zeta_{F_v}(1)}\cdot\frac{|d_{K_v}|_v}{|d_{F_v}|_v^{1/2}}.$$

In the case when $v$ is ramified in $K$, as $\chi_v$ is unramified,
we have $\chi_v(\tilde{\omega}_v^2)=\chi_v(\omega_v)=1$. Thus
$\chi_v(\tilde{\omega}_v)=\pm 1=\pm \alpha_v$. In this case, $L(1,
\mathrm{Ad}
\pi_v)=\frac{1}{1-\mu(\omega_v)^2|\omega_v|_v}=\zeta_{F_v}(2)$ and
$L(1,\tau_{K_v/F_v})=1$, and $L(\frac{1}{2}, \pi_{K_v}\otimes
\chi_v)=\frac{1}{1-\xi(\omega_v)\chi_v(\tilde{\omega}_v)|\omega_v|_v}$.

We have the decomposition $F_v^\times\backslash K^\times_v= \mathcal{O}^\times_{F_v}\backslash\mathcal{O}^\times_{K_v}\sqcup \mathcal{O}^\times_{F_v}\backslash\mathcal{O}^\times_{K_v} \tilde{\omega}_v$. Thus
 \begin{eqnarray*}\mathcal{P}(1,\phi_v, \chi_v)  &=& (1-\xi(\omega_v)\chi_v(\tilde{\omega}_v)|\omega_v|_v) \mathrm{vol}(\mathcal{O}_{F_v}^\times\backslash \mathcal{O}_{K_v}^\times)(1+\chi_v(\tilde{\omega}_v)\xi(\omega_v)) \\ &=&
\left\{\begin{array}{ll} \frac{2}{\zeta_{F_v}(1)}\frac{|d_{K_v}|_v}{|d_{F_v}|_v^{1/2}} & \text{ if } \chi_v(\tilde{\omega}_v)=\xi_v(\omega_v)=\alpha_v \\
0 & \text{ if }\chi_v(\tilde{\omega}_v)=-\xi_v(\omega_v)=-\alpha_v.\end{array}\right.\end{eqnarray*} By Lemma \ref{lem:char}, $\chi_v=\nu_v$. We obtain the assertion in the ramified case.
\end{proof}

\begin{prop}\label{prop:cal-P} For $\sigma\in \Sigma$ we have $$ \mathcal{P}(1, v_{m_\sigma}, \chi_\sigma)=\frac{\Gamma(k_\sigma)}{\pi \Gamma(k_\sigma/2+m_\sigma)\Gamma(k_\sigma/2-m_\sigma)}. $$
\end{prop}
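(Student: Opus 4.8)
The plan is to exploit the fact that at an archimedean place the toric integral degenerates completely. Fix $\sigma\in\Sigma_F$ and abbreviate $k=k_\sigma$, $m=m_\sigma$. At $\sigma$ one has $F_\sigma\cong\BR$, $K_\sigma\cong\BC$, $\tau^{\mathfrak n,B}_\sigma=I$, and $\varphi_\sigma=\phi^{\dagger_J}_\sigma=v_m$; the representation $\pi'_\sigma$ is realized on $V_k(\BC)$ via $i_\sigma$, and $t\in K_\sigma^\times$ acts through $i_\sigma(t)=\wvec{t}{0}{0}{\bar t}$. First I would record that, by the description of the action of $K_\infty^\times$ on $\mathbf v_{\mathbf m}$ given in Section~\ref{ss:mod-form}, $v_m$ is an eigenvector,
\[
\pi'_\sigma(t)v_m=\Big(\tfrac{t}{\bar t}\Big)^{-m}v_m\qquad(t\in K_\sigma^\times),
\]
while $\pi'_\sigma(I)v_m=\rho_k(i_\sigma(I))v_m$ is a nonzero scalar multiple of $v_{-m}$ (immediate from $i_\sigma(I)=\wvec{0}{-\sigma(\beta)}{-1}{0}$ and the definition of $\rho_k$). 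Since the $\GL_2(\BC)$-invariant bilinear pairing on $V_k(\BC)$ pairs $v_m$ with $v_{-m}$ nontrivially, the normalizing quantity $\langle v_m,\pi'_\sigma(I)v_m\rangle_\sigma$ is nonzero, so $\mathcal P$ is well defined. Because $\chi_\sigma(t)=(t/\bar t)^{m}$ is exactly the $\sigma$-component of a Hecke character of archimedean type $(\mathbf m,-\mathbf m)$, the integrand collapses:
\[
\frac{\langle\pi'_\sigma(t)v_m,\pi'_\sigma(I)v_m\rangle_\sigma}{\langle v_m,\pi'_\sigma(I)v_m\rangle_\sigma}\,\chi_\sigma(t)=\Big(\tfrac{t}{\bar t}\Big)^{-m}\Big(\tfrac{t}{\bar t}\Big)^{m}=1,
\]
so the toric integral is simply $\mathrm{vol}(K_\sigma^\times/F_\sigma^\times)$.

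Next I would pin down this volume from the measure conventions of Section~\ref{sec:char-measure}. Since $\zeta_\BC(1)=1/\pi$ and $d_\sigma x$ on $\BC$ is twice Lebesgue measure, in polar coordinates $z=re^{i\theta}$ one gets $d^\times_\sigma z=\tfrac{2}{\pi}\tfrac{dr}{r}\,d\theta$; the subgroup $F_\sigma^\times=\BR^\times$ has "angular part" $\{0,\pi\}$ carrying counting measure under $d^\times_\sigma x=dx/|x|$, so the quotient measure on $K_\sigma^\times/F_\sigma^\times\cong\BR/\pi\BZ$ is $\tfrac{2}{\pi}\,d\theta$ and $\mathrm{vol}(K_\sigma^\times/F_\sigma^\times)=2$. (This is consistent with the finite-place normalization forced by the computations in Propositions~\ref{prop:nb} and~\ref{prop:xie}, where the same $\mathcal P$-prefactor is used.)

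It then remains to evaluate the archimedean $L$- and $\zeta$-factors in the prefactor of $\mathcal P$. One has $L(1,\mathrm{Ad}\,\pi_\sigma)=2^{1-k}\pi^{-(k+1)}\Gamma(k)$ (the identity used for archimedean places in the corollary to Proposition~\ref{prop:wald-1}), while $\zeta_{F_\sigma}(2)=\zeta_\BR(2)=1/\pi$ and $L(1,\tau_{K_\sigma/F_\sigma})=\zeta_\BR(2)=1/\pi$, so those two cancel. For the last factor, $\pi_{K_\sigma}$ is the base change of the weight-$k$ discrete series to $\GL_2(\BC)$, a principal series with Langlands parameter $z\mapsto\wvec{(z/|z|)^{k-1}}{0}{0}{(z/|z|)^{-(k-1)}}$; twisting by $\chi_\sigma$, whose parameter is $z\mapsto(z/|z|)^{2m}$, and using $-k/2<m<k/2$ to remove the absolute values in the local $L$-factors, gives
\[
L(\tfrac12,\pi_{K_\sigma}\otimes\chi_\sigma)=\zeta_\BC\!\big(\tfrac k2+m\big)\,\zeta_\BC\!\big(\tfrac k2-m\big)=4(2\pi)^{-k}\,\Gamma\!\big(\tfrac k2+m\big)\Gamma\!\big(\tfrac k2-m\big).
\]
Substituting these together with $\mathrm{vol}=2$ into
\[
\mathcal P(1,v_m,\chi_\sigma)=\frac{L(1,\mathrm{Ad}\,\pi_\sigma)\,L(1,\tau_{K_\sigma/F_\sigma})}{\zeta_{F_\sigma}(2)\,L(\tfrac12,\pi_{K_\sigma}\otimes\chi_\sigma)}\cdot\mathrm{vol}(K_\sigma^\times/F_\sigma^\times),
\]
all powers of $2$ and all but one power of $\pi$ cancel, leaving $\dfrac{\Gamma(k_\sigma)}{\pi\,\Gamma(k_\sigma/2+m_\sigma)\,\Gamma(k_\sigma/2-m_\sigma)}$, as claimed.

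The representation-theoretic content is essentially trivial once one observes that $v_{m_\sigma}$ is precisely the $K_\sigma^\times$-eigenvector on which $\chi_\sigma$ acts by the inverse eigen-character, which forces the integrand to be constant. The only real work — and the step where a sign or a stray factor of $2$ can slip in — is the normalization bookkeeping: extracting $\mathrm{vol}(K_\sigma^\times/F_\sigma^\times)=2$ from the self-dual Haar measures of Section~\ref{sec:char-measure}, and keeping the $\zeta_\BR$ versus $\zeta_\BC$ conventions and the powers of $2$ and $\pi$ straight across the four archimedean local factors, especially in the base-change factor $L(\tfrac12,\pi_{K_\sigma}\otimes\chi_\sigma)$.
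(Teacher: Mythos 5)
Your computation is correct: the eigenvector property $\pi'_\sigma(t)v_{m_\sigma}=(t/\bar t)^{-m_\sigma}v_{m_\sigma}$ together with $\chi_\sigma(t)=(t/\bar t)^{m_\sigma}$ makes the integrand identically $1$, the measure conventions of Section \ref{sec:char-measure} give $\mathrm{vol}(K_\sigma^\times/F_\sigma^\times)=\mathrm{vol}(\BC^\times/\BR^\times)=2$, and the archimedean factors $L(1,\mathrm{Ad}\,\pi_\sigma)=2^{1-k_\sigma}\pi^{-(k_\sigma+1)}\Gamma(k_\sigma)$, $L(1,\tau_{K_\sigma/F_\sigma})=\zeta_{F_\sigma}(2)=1/\pi$ and $L(\tfrac12,\pi_{K_\sigma}\otimes\chi_\sigma)=4(2\pi)^{-k_\sigma}\Gamma(\tfrac{k_\sigma}{2}+m_\sigma)\Gamma(\tfrac{k_\sigma}{2}-m_\sigma)$ assemble to exactly the stated value. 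The paper gives no independent argument here—it simply refers to the proof of \cite[Theorem 4.12]{Hung14}—and your direct archimedean computation is essentially the one underlying that citation.
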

\begin{proof} See the proof of \cite[Theorem 4.12]{Hung14}.
\end{proof}

We make the following assumption on $\pi$, $\nu$ and $B$.

(i) If $\mathfrak{p}\notin J$ is inert in $K$, and if $\pi_{\mathfrak{p}}$ is unramified special, then $\alpha_\mathfrak{p}=-1$. See Remark \ref{rem:alpha}.

(ii) All prime factors of $\mathfrak{n}_\nu$ is inert or ramified in $K$, and $(\mathfrak{n}_\nu,\mathfrak{n}^-_b)=1$.

(iii) If $v|\frac{\mathfrak{n}^-}{\mathfrak{n}^-_b}$ and $v$ is inert in $K$ (in this case $\pi_v$ must be unramified special), then either $v|\mathfrak{n}_\nu$ or $\alpha_v=-1$.

(iv) If $v|\mathfrak{n}^-_b$ is ramified in $K$, then $\nu_v(\tilde{\omega}_v)=\alpha_v$.

\begin{rem} If $\mathfrak{n}^-_b$ is the product of the prime factors of $\mathfrak{n}^-$ that are inert in $K$, then (iii) and (iv) automatically hold.
\end{rem}

\begin{rem}\label{rem:weaken} In \cite{Hung14} it demands (ii), (iv) and a stronger version of (iii) that  if $v|\frac{\mathfrak{n}^-}{\mathfrak{n}^-_b}$,
then $v|\mathfrak{n}_\nu$. These conditions are assumed or implicitly assumed in \cite[Theorem 4.12]{Hung14}.
So we weaken Hung's conditions on $\nu$. It is not harm for us to
assume (iv), since in the branches with
$\nu_v(\tilde{\omega}_v)=-\alpha_v$ for some $v|\mathfrak{n}^-_b$
ramified in $K$, the $p$-adic $L$-functions we will construct are
identically zero.
\end{rem}

Now, we fix our choice of $\phi$. If $v|p$, we take
$\phi_v=\varphi_v$. At other places let $\phi_v$ be as defined in Propositions \ref{prop:phi-old}, \ref{prop:phi-new} and \ref{prop:nb}. \label{ss:phi}

Put \begin{eqnarray*} \widetilde{ \Omega}^-_{J,\phi} &=& \frac{2
L(1, \mathrm{Ad}\pi)}{\zeta_F(2)\langle \varphi_{\pi'},
\pi'(\tau^{\mathfrak{n},B})\varphi_{\pi'}\rangle_{G}}\cdot
\prod_{\sigma\in\Sigma}\frac{1}{\mathcal{P}(1,v_{m_\sigma},
\nu_\sigma)}
 \\ && \cdot \prod_{\mathfrak{p}\in
J: \
\text{inert}}\frac{||\varphi_\mathfrak{p}||_{\mathfrak{p}}}{|d_{K_\mathfrak{p}}|_{\mathfrak{p}}L(1,\tau_{K_\mathfrak{p}/F_{\mathfrak{p}}})^2}
\cdot \prod_{\mathfrak{p}\in J: \
\text{split}}\frac{||\varphi_\mathfrak{p}||_{\mathfrak{p}}}{|d_{F_\mathfrak{p}}|_{\mathfrak{p}}\zeta_{F_\mathfrak{p}}(1)^2}
 \\ && \cdot
\prod_{\begin{array}{c}\mathfrak{p}|p,\mathfrak{p}\notin J: \\
\mathfrak{p} \text{ is split and } r_\mathfrak{p}=1\end{array}}
\frac{||\varphi_v||_{\mathfrak{p}}}{\epsilon(\frac{1}{2},\pi_v,\psi_v)|d_{F_\mathfrak{p}}|_{\mathfrak{p}}}
\cdot \prod_{\begin{array}{c}\mathfrak{p}|p, \mathfrak{p}\notin J:
\\ \mathfrak{p} \text{ is not split  or
}r_\mathfrak{p}=0\end{array}}
\frac{1}{\mathcal{P}(\varsigma_\mathfrak{p},\varphi_\mathfrak{p},
1)}
\\ && \cdot
\prod_{v|\mathfrak{n}^+}\frac{||\varphi_v||_{v}}{\epsilon(\frac{1}{2},\pi_v,\psi_v)|d_{F_v}|_{v}}
\cdot \prod_{\begin{array}{c}v \nmid p \mathfrak{n}^+ : \\ \text{
finite and not split} \end{array}}\frac{1}{\mathcal{P}(1,\phi_v,
\nu_v)} \cdot \prod_{\begin{array}{c}v \nmid p \mathfrak{n}^+ : \\
\text{ finite and split} \end{array}} \frac{1}{|d_{F_v}|_v}
.\end{eqnarray*} This makes sense since almost all factors in the
product are $1$; $\widetilde{\Omega}^-_{J,\phi}$ depends on $J$,
$\pi'$ and $\nu$, but not on $\chi$;
$\widetilde{\Omega}^-_{J,\phi}\neq 0$.

We can state Proposition \ref{prop:wald} as
follows.
\begin{cor}\label{cor:spe-value}
Let $\pi$ and $\chi$ be as above. Put $$ \chi(\mathfrak{N}^+) =
\prod\limits_{w|\mathfrak{N}^+}\chi_v(\omega_w)^{\ord_w\mathfrak{N}^+}.
$$ Then
$$P(\varsigma_J^{(n)}, \phi^{\dagger_J}, \chi)^2
=
\chi(\mathfrak{N}^+)(\prod_{v}\chi_v((\omega_v,1))(\prod_{\mathfrak{p}\in
J} \tilde{e}_{\mathfrak{p}}(\pi,\chi)  ) \cdot \frac{L(\frac{1}{2},
\pi_K\otimes \chi)}{\widetilde{\Omega}^-_{J,\phi}} ,$$ where $v$
runs over the set
\begin{equation}\label{eq:set}
\{ v|p , v \notin J :  v \text{ is split in } K \text{ and } r_v = 1
\} \cup \{ v \text{ is finite and split in }K: \mathrm{val}_v(I)
\text{ is odd} \}. \end{equation}
\end{cor}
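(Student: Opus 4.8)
The plan is to feed the explicit local computations assembled above into Proposition \ref{prop:wald}. First I would apply that proposition with the vector $\phi$ fixed in Section \ref{ss:phi} (which satisfies $\phi_v=\varphi_v$ for all $v\mid p$, so the hypothesis $\phi_p=\varphi_p$ holds) and with $\vec{n}=(n_\mathfrak{p})_{\mathfrak{p}\in J}$, $n_\mathfrak{p}=\max\{1,s_\mathfrak{p}\}$ where $\mathfrak{p}^{s_\mathfrak{p}}$ is the conductor of $\chi_\mathfrak{p}$, obtaining
$$\frac{P(\varsigma_J^{(\vec{n})},\phi^{\dagger_J},\chi)^2}{\langle\varphi_{\pi'},\pi'(\tau^{\mathfrak{n},B})\varphi_{\pi'}\rangle_G}=\frac{\zeta_F(2)L(\tfrac12,\pi_K\otimes\chi)}{2L(1,\mathrm{Ad}\pi)}\cdot\prod_{\mathfrak{p}\in J}\mathcal{P}(\varsigma_\mathfrak{p}^{(\vec{n})},\varphi_\mathfrak{p},\chi_\mathfrak{p})\cdot\prod_{\mathfrak{p}\mid p,\,\mathfrak{p}\notin J}\mathcal{P}(\varsigma_\mathfrak{p},\varphi_\mathfrak{p},\chi_\mathfrak{p})\cdot\prod_{v\nmid p}\mathcal{P}(1,\phi_v,\chi_v).$$
Then I would substitute the value of each local factor on the right.

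At each archimedean $\sigma\in\Sigma$, Proposition \ref{prop:cal-P} gives $\mathcal{P}(1,v_{m_\sigma},\chi_\sigma)=\mathcal{P}(1,v_{m_\sigma},\nu_\sigma)$ since $\chi$ and $\nu$ share the archimedean type $(\mathbf{m},-\mathbf{m})$. At $\mathfrak{p}\in J$, the cited \cite[Proposition 4.11]{Hung14} writes $\mathcal{P}(\varsigma_\mathfrak{p}^{(\vec{n})},\varphi_\mathfrak{p},\chi_\mathfrak{p})$ as $\tilde{e}_\mathfrak{p}(\pi,\chi)\,||\varphi_\mathfrak{p}||_\mathfrak{p}^{-1}$ times $|d_{F_\mathfrak{p}}|_\mathfrak{p}\zeta_{F_\mathfrak{p}}(1)^2$ in the split case, or $|d_{K_\mathfrak{p}}|_\mathfrak{p}L(1,\tau_{K_\mathfrak{p}/F_\mathfrak{p}})^2$ in the inert case. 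At $\mathfrak{p}\mid p$ with $\mathfrak{p}\notin J$ the character $\chi_\mathfrak{p}$ is unramified (because $\chi\nu^{-1}$ factors through $\Gamma^-_J$, hence is unramified outside $J$, and $\mathfrak{n}_\nu$ is coprime to $p$), so Proposition \ref{prop:xie} applies: only the unramified-principal-series case occurs, together with the unramified-special case when $\mathfrak{p}$ splits and $r_\mathfrak{p}=1$, which contributes the extra factor $\chi_\mathfrak{P}(\omega_\mathfrak{P})$. At the remaining finite $v\nmid p$, Propositions \ref{prop:phi-old}, \ref{prop:phi-new} and \ref{prop:nb} give $\mathcal{P}(1,\phi_v,\chi_v)$; here Lemma \ref{lem:char} identifies $\chi_v=\nu_v$ whenever $v\notin J$ is inert or ramified in $K$, and assumption (iv) together with the ramified case of Proposition \ref{prop:nb} ensures the places $v\mid\mathfrak{n}^-_b$ ramified in $K$ contribute the nonzero value $2\zeta_{F_v}(1)^{-1}|d_{K_v}|_v|d_{F_v}|_v^{-1/2}$ rather than $0$, so that $\widetilde\Omega^-_{J,\phi}$ is well-defined and nonzero.

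Next I would separate the outcome into a $\chi$-independent and a $\chi$-dependent part. By construction, the $\chi$-independent part — consisting of $\bigl(\tfrac{2L(1,\mathrm{Ad}\pi)}{\zeta_F(2)\langle\varphi_{\pi'},\pi'(\tau^{\mathfrak{n},B})\varphi_{\pi'}\rangle_G}\bigr)^{-1}$, the archimedean terms $\mathcal{P}(1,v_{m_\sigma},\nu_\sigma)^{-1}$, the normalizing constants at $\mathfrak{p}\in J$, the unramified contributions at $\mathfrak{p}\mid p\notin J$, the $\epsilon$-factor and $||\varphi_v||_v$ contributions at $v\mid\mathfrak{n}^+$, and the factors $\mathcal{P}(1,\phi_v,\nu_v)^{-1}$ and $|d_{F_v}|_v^{-1}$ at the other finite places away from $p$ — is exactly the product defining $\widetilde\Omega^-_{J,\phi}$. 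The surviving $\chi$-dependent factors are $L(\tfrac12,\pi_K\otimes\chi)$, the product $\prod_{\mathfrak{p}\in J}\tilde{e}_\mathfrak{p}(\pi,\chi)$, the factors $\chi_w(\mathfrak{N}^+)$ over $w\mid\mathfrak{N}^+$ from Proposition \ref{prop:phi-old} (their product being $\chi(\mathfrak{N}^+)$), and the factors $\chi_v((\omega_v,1))$ coming from the split places with $\mathrm{val}_v(\beta)$ odd in Proposition \ref{prop:phi-old} together with the factors $\chi_\mathfrak{P}(\omega_\mathfrak{P})=\chi_v((\omega_v,1))$ at split $v\mid p\notin J$ with $r_v=1$; these last places are precisely the set \eqref{eq:set}. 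Assembling everything gives the asserted identity.

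The hard part is purely \emph{bookkeeping}: one must verify that every local factor produced in the substitution step is cancelled, with the correct exponent, by a matching factor in the definition of $\widetilde\Omega^-_{J,\phi}$, carefully tracking the contributions of $|d_{F_v}|_v$, $|d_{K_v}|_v$, $\zeta_{F_v}$, $L(1,\tau_{K_v/F_v})$, $L(1,\mathrm{Ad}\pi_v)$ and $\epsilon(\tfrac12,\pi_v,\psi_v)$, and confirming that the only residual $\chi$-dependence beyond $L(\tfrac12,\pi_K\otimes\chi)$ and the $\tilde{e}_\mathfrak{p}$'s comes from $\mathfrak{N}^+$ and the finitely many split places recorded in \eqref{eq:set}. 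No new idea is required; one uses in addition only the nonvanishing of $L(1,\mathrm{Ad}\pi)$ and (via Casselman's theorem) of $\langle\varphi_{\pi'},\pi'(\tau^{\mathfrak{n},B})\varphi_{\pi'}\rangle_G$, which makes $\widetilde\Omega^-_{J,\phi}\neq0$.
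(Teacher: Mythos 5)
Your proposal is correct and follows exactly the paper's intended route: the paper presents this corollary as a direct restatement of Proposition \ref{prop:wald}, obtained by substituting the local toric integrals from Propositions \ref{prop:xie}, \ref{prop:phi-old}, \ref{prop:phi-new}, \ref{prop:nb}, \ref{prop:cal-P} and \cite[Proposition 4.11]{Hung14} and recognizing the $\chi$-independent product as $\widetilde{\Omega}^-_{J,\phi}$, which is precisely your bookkeeping argument.
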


\section{Anticyclotomic $p$-adic
$L$-functions} \label{sec:p-L-function}

Fix $\mathbf{m}=\sum\limits_{\sigma\in \Sigma_K}m_{\sigma}\sigma$,
and a continuous $p$-adic character $\widehat{\nu}$ of
$\mathrm{Gal}(\bar{K}/K)$ of type $(\mathbf{m},-\mathbf{m})$. Let
$\nu$ be the complex Hecke character attached to $\widehat{\nu}$; we
assume that $\nu$ is of conductor
$\mathfrak{c}=\mathfrak{n}_\nu\mathcal{O}_K$ ($(\mathfrak{n}_\nu,
p)=1$) and that $\nu_p=1$. We keep the assumption on $\pi$ and $\nu$
at the end of Section \ref{ss:phi}.

\begin{defn} For a continuous $p$-adic character $\widehat{\chi}$ of $\mathrm{Gal}(\bar{K}/K)$, we say
that $\widehat{\chi}$ lies in the {\it $J$-branch of
$\widehat{\nu}$} if $\widehat{\chi}\cdot \widehat{\nu}^{-1}$ comes
from a character of $\Gamma^-_J$.
\end{defn}

Write $$ U^p=\prod_{v: \text{ finite and coprime to }
p}\mathcal{O}_{K_v}^\times. $$ For an ideal $\mathfrak{a}$ of
$\mathcal{O}_K$ (coprime to $p$) let $U^p_{\mathfrak{a}}$ be the
subgroup
$$U^p_{\mathfrak{a}}:=\{x=(x_v)_v \in \BA_K^{\infty,p} :  x_v
\in \widehat{\mathcal{O}}_{K_v}^\times \cap 1+ \mathfrak{a}
\widehat{\mathcal{O}}_{K_v} \}$$ of $U^p$.

We take $\phi$ as in Section \ref{ss:phi}.
There exists an ideal $\tilde{\mathfrak{c}}$ of $\mathcal{O}_K$
coprime to $p$ such that $\phi^\infty$ is
$U^{p}_{\tilde{\mathfrak{c}}}$-invariant. Sharking
$\tilde{\mathfrak{c}}$ if necessary we may assume that
$\mathfrak{c}|\tilde{\mathfrak{c}}$. We put
$$\tilde{\phi}^\infty = \frac{1}{[U^p_{\mathfrak{c}}:U^p_{\widetilde{\mathfrak{c}}}]} \sum_{g\in
U^p_{\mathfrak{c}}/U^p_{\widetilde{\mathfrak{c}}}} g \phi^\infty,
$$ and $\tilde{\phi} =\Psi(\mathbf{v}_{\mathbf{m}}\otimes
\tilde{\phi}^\infty)$. Then $\tilde{\phi}^\infty$, $\tilde{\phi}$
and $\tilde{\phi}^{\dagger_J}$ are $U^{p}_{\mathfrak{c}}$-invariant.
\label{ss:tilde-phi}

\begin{lem}\label{lem:theta} The function $$ a \mapsto \tilde{\phi}^{\dagger_J}
(a\varsigma_J^{(\vec{n})})\nu(a) $$ on $\BA_K^\times$ is
$\BA_F^\times K^\times K_\infty^\times
\widehat{\mathcal{O}}_{\vec{n},\mathfrak{c}}^\times$-invariant.
\end{lem}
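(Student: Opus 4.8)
The plan is to verify the stated invariance one factor at a time, using the definitions gathered in Sections~\ref{ss:mod-form}--\ref{ss:phi}. Recall $\tilde\phi = \Psi(\mathbf v_{\mathbf m}\otimes\tilde\phi^\infty)$ and $\tilde\phi^{\dagger_J} = \tilde\phi - \big(\prod_{\mathfrak p\in J: r_\mathfrak p=0}\tfrac1{\alpha_\mathfrak p}\pi(\wvec{1}{0}{0}{\varpi_\mathfrak p})\big)\tilde\phi$; since every operator applied to $\tilde\phi$ on the right at places above $p$ commutes with left translation by $\BA_K^\times\subset G(\BA_F)$ embedded via $i_p$, it suffices to prove the invariance for $\tilde\phi$ itself and then transport it through the $p$-stabilization operator. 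So the core assertion is that $a\mapsto \tilde\phi(a\varsigma_J^{(\vec n)})\nu(a)$ is invariant under left multiplication of $a$ by $\BA_F^\times K^\times K_\infty^\times$ and under right multiplication of $a$ by $\widehat{\mathcal O}_{\vec n,\mathfrak c}^\times$.

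First I would treat the $\BA_F^\times K^\times K_\infty^\times$ part. For $z\in\BA_F^\times$: $\tilde\phi$ has trivial central character (it is built from a modular form of trivial central character on $B^\times$), so $\tilde\phi(za\varsigma_J^{(\vec n)}) = \tilde\phi(a\varsigma_J^{(\vec n)})$, while $\nu(za) = \nu(a)$ because $\nu$ is trivial on $\BA_F^\times$ (it is an anticyclotomic Hecke character). For $\gamma\in K^\times$, viewed inside $G(F)$ via the embedding $K\hookrightarrow B$: automorphy of $\tilde\phi$ on $G(F)$ gives $\tilde\phi(\gamma a\varsigma_J^{(\vec n)}) = \tilde\phi(a\varsigma_J^{(\vec n)})$ at the finite part, but one must also account for the action of $\gamma_\infty = (\gamma_\sigma)_\sigma\in K_\infty^\times$ on the weight vector $\mathbf v_{\mathbf m}$; by the formula $z\cdot\mathbf v_{\mathbf m} = \prod_\sigma(z_\sigma/\bar z_\sigma)^{-m_\sigma}\mathbf v_{\mathbf m}$ recorded in Section~\ref{ss:mod-form}, pairing through $\langle\,,\rangle$ this contributes exactly $\prod_\sigma(\gamma_\sigma/\bar\gamma_\sigma)^{m_\sigma} = \nu_\infty(\gamma)^{-1}$, which cancels against $\nu(\gamma a) = \nu_\infty(\gamma)\cdot(\text{finite part})$ once one uses that $\nu$ is trivial on $K^\times$ globally. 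The $K_\infty^\times$-invariance is the same computation with $\gamma$ replaced by an element of $K_\infty^\times$ alone, again matching the weight action against $\nu_\infty$.

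Next I would handle right invariance under $u\in\widehat{\mathcal O}_{\vec n,\mathfrak c}^\times = \widehat{\mathcal O}_F^\times + \widehat{\mathfrak c}\prod_{\mathfrak p\in J}\mathfrak p^{n_\mathfrak p}\widehat{\mathcal O}_K^\times$ (the units of the order of conductor $\mathfrak c\prod\mathfrak p^{n_\mathfrak p}$). Here the point is that $\varsigma_J^{(\vec n)}$ is engineered precisely so that $\varsigma_J^{(\vec n)}\,i(u)\,(\varsigma_J^{(\vec n)})^{-1}$ lands in the level group fixing $\tilde\phi$, after absorbing the character $\nu(u)$. One splits $u = (u_v)_v$ over finite places: at $v\nmid p\mathfrak c$, $u_v\in\mathcal O_{K_v}^\times\subset U^p_{\mathfrak c}$ and $\tilde\phi$ is $U^p_{\mathfrak c}$-invariant (Section~\ref{ss:tilde-phi}) while $\nu_v(u_v)=1$ since $\nu$ has conductor $\mathfrak c$; at $v\mid\mathfrak n_\nu$, $u_v\in 1+\mathfrak c\widehat{\mathcal O}_{K_v}$ so $\nu_v(u_v)=1$ and $u_v$ still lies in the level group; at $\mathfrak p\in J$, $u_\mathfrak p\in\mathcal O_{F_\mathfrak p}^\times + \mathfrak p^{n_\mathfrak p}\mathcal O_{K_\mathfrak p}$, and a direct matrix computation with $\varsigma_\mathfrak p^{(n_\mathfrak p)} = \wvec{\vartheta}{-1}{1}{0}\wvec{\varpi_\mathfrak p^{n_\mathfrak p}}{0}{0}{1}$ (split case) or $\wvec{0}{1}{-1}{0}\wvec{\varpi_\mathfrak p^{n_\mathfrak p}}{0}{0}{1}$ (inert/ramified) shows $\varsigma_\mathfrak p^{(n_\mathfrak p)} i_\mathfrak p(u_\mathfrak p)(\varsigma_\mathfrak p^{(n_\mathfrak p)})^{-1}\in U_0(\varpi_\mathfrak p)$ (using $r_\mathfrak p\le 1$), which fixes $\tilde\phi$; and $\nu_\mathfrak p=1$ by hypothesis, so no character correction is needed there. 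At the remaining $\mathfrak p\mid p$ with $\mathfrak p\notin J$ one uses $\varsigma_\mathfrak p$ and the same conjugation identity $\varsigma_\mathfrak p^{-1}i_\mathfrak p(t)\varsigma_\mathfrak p = \mathfrak i_\mathfrak p(t)$ stated just before the definition of $\varsigma_\mathfrak p^{(n)}$, together with $\nu_p=1$.

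The main obstacle, and the step deserving the most care, is the bookkeeping of the archimedean weight action in the $K^\times$- and $K_\infty^\times$-invariance: one must check that the factor $\prod_\sigma(\gamma_\sigma/\bar\gamma_\sigma)^{\pm m_\sigma}$ picked up from $\rho_{\vec k}(\gamma_\infty)$ acting on $\mathbf v_{\mathbf m}$ matches $\nu_\infty$ with the correct sign, so that it cancels rather than doubles. Everything else is a finite assembly of local conjugation identities already essentially set up by the choice of $\varsigma$'s and the conductor conditions on $\nu$; I would present it as the product over places with the cancellations displayed place by place.
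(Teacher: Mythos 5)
Your proposal is correct and follows essentially the same route as the paper: the paper's own proof just records that $K_\infty^\times$ acts on $\mathbf{v}_{\mathbf{m}}$ by $\nu_\infty^{-1}$ (your archimedean bookkeeping) and then asserts that $a\mapsto \tilde{\phi}^{\dagger_J}(a\varsigma_J^{(\vec{n})})$ and $\nu$ are each $\BA_F^\times K^\times\widehat{\mathcal{O}}^\times_{\vec{n},\mathfrak{c}}$-invariant, which is exactly the place-by-place verification you spell out. Two small corrections: the relevant conjugate is $(\varsigma_J^{(\vec{n})})^{-1} i(u)\, \varsigma_J^{(\vec{n})}$ (right translation of the argument), not $\varsigma_J^{(\vec{n})} i(u) (\varsigma_J^{(\vec{n})})^{-1}$, and at $\mathfrak{p}\in J$ what must be fixed is the stabilized vector $\tilde{\phi}^{\dagger_J}$ rather than $\tilde{\phi}$ — which your computation does give, since landing in $U_0(\omega_\mathfrak{p})$ fixes $\varphi^{\dagger}_{\mathfrak{p}}$ as well.
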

\begin{proof} As $K_\infty^\times$ acts on $\mathbf{v}_{m}$ by
$\nu_\infty^{-1}$,  we obtain that $a\mapsto \tilde{\phi}^{\dagger_J}
(a\varsigma_J^{(\vec{n})}) \nu(a)$ is $K_\infty^\times$-invariant.

Both $a \mapsto \tilde{\phi}^{\dagger_J} (a\varsigma_J^{(\vec{n})})$
and $\nu$ are $\BA _F^\times$-invariant and
$K^\times\widehat{\mathcal{O}}_{\vec{n},\mathfrak{c}}^\times$-invariant.
Thus the function $a\mapsto \tilde{\phi}^{\dagger_J}
(a\varsigma_J^{(\vec{n})}) \nu(a)$ is $\BA _F^\times
K^\times\widehat{\mathcal{O}}_{\vec{n},\mathfrak{c}}^\times$-invariant.
\end{proof}

For any $J$-tuple $\vec{n}=(n_\mathfrak{p})_{\mathfrak{p}\in J}$ let
$[ \hskip 10pt ]_{\vec{n},\mathfrak{c}}$ be the composition of maps
$$ \BA_{K}^{\infty,\times}/K^\times \xrightarrow{\mathrm{rec}_K} \mathrm{Gal}(\overline{K}/K) \rightarrow \mathcal{G}_{ \vec{n},\mathfrak{c}}.
$$ Note that $[ \hskip 10pt ]_{ \vec{n},\mathfrak{c}}$ factors through $X_{\vec{n},\mathfrak{c}}:=\BA _F^{\infty,\times} \backslash
\BA _K^{\infty,\times} / K^\times \widehat{\mathcal{O}}_{\vec{n},
\mathfrak{c}} $. We define the theta element $\Theta_{
\vec{n},\mathfrak{c}}$ by
$$ \Theta_{ \vec{n},\mathfrak{c}}
 = \frac{1}{\prod_{\mathfrak{p}\in J}\alpha_\mathfrak{p}^{n_\mathfrak{p}}}
\cdot   \sum_{ a \in X_{\vec{n},\mathfrak{c}} }
\tilde{\phi}^{\dagger_J} (a\varsigma_J^{(\vec{n})})\nu(a)
[a]_{\vec{n},\mathfrak{c}}.
$$

For two $J$-tuples $\vec{n}$ and $\vec{n}'$ we say that
$\vec{n}'\geq \vec{n}$ if $n'_{\mathfrak{p}}\geq n_{\mathfrak{p}}$
for each $\mathfrak{p}\in J$. When $\vec{n}'\geq \vec{n}$ we have
the natural quotient map $\pi_{\vec{n}',\vec{n}}: \mathcal{G}_{
\vec{n}',\mathfrak{c}}\rightarrow
\mathcal{G}_{\vec{n},\mathfrak{c}}$.

The theta elements $\Theta_{\vec{n},\mathfrak{c}}$ ($\vec{n}$
varying) have the following compatible property.

\begin{prop}\label{prop:comp} If $\vec{n}'\geq
\vec{n}$, then
$$ \pi_{\vec{n}',\vec{n}} (\Theta_{ \vec{n}',\mathfrak{c}})=\Theta_{ \vec{n},\mathfrak{c}} . $$
\end{prop}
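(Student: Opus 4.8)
The plan is to reduce to a one–step comparison and then identify the push‑forward sum with the $\mathrm{U}_{\mathfrak q}$‑eigenvalue relation for $\tilde{\phi}^{\dagger_J}$, exactly as in the one–variable construction of Hung \cite{Hung14}, carried out separately in each coordinate of the tower.

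First I would reduce to the case $\vec{n}' = \vec{n} + e_{\mathfrak q}$ for a single $\mathfrak q \in J$, where $e_{\mathfrak q}$ is the standard basis vector. Since $\mathcal{G}_{J;\mathfrak{c}}$ is the inverse limit of the $\mathcal{G}_{\vec{n},\mathfrak{c}}$ and $\pi_{\vec{n}'',\vec{n}} = \pi_{\vec{n}',\vec{n}} \circ \pi_{\vec{n}'',\vec{n}'}$ whenever $\vec{n}'' \geq \vec{n}' \geq \vec{n}$, the general statement follows by iterating this one–step case.

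Next, writing $n = n_{\mathfrak q}$, I would apply $\pi_{\vec{n}',\vec{n}}$ to $\Theta_{\vec{n}',\mathfrak{c}}$ and group the sum over $a' \in X_{\vec{n}',\mathfrak{c}}$ according to the image $a$ of $a'$ in $X_{\vec{n},\mathfrak{c}}$. Because $\mathfrak{c}$ is coprime to $p$, the orders $\widehat{\mathcal{O}}_{\vec{n},\mathfrak{c}}$ and $\widehat{\mathcal{O}}_{\vec{n}',\mathfrak{c}}$ differ only in their $\mathfrak q$–component, so each fiber is parametrized by a fixed finite set $R$ of coset representatives at $\mathfrak q$, via $a' \mapsto a\,i_{\mathfrak q}(u)$ with $u \in R$. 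Since $\nu_p = 1$ by our standing assumption and each $u \in R$ lies in $\widehat{\mathcal{O}}_K^\times$, one has $\nu(a\,i_{\mathfrak q}(u)) = \nu(a)$ and $[a\,i_{\mathfrak q}(u)]_{\vec{n},\mathfrak{c}} = [a]_{\vec{n},\mathfrak{c}}$, while Lemma \ref{lem:theta} guarantees the summand is well defined, so
$$\pi_{\vec{n}',\vec{n}}(\Theta_{\vec{n}',\mathfrak{c}}) = \frac{1}{\alpha_{\mathfrak q}\prod_{\mathfrak p \in J}\alpha_{\mathfrak p}^{n_{\mathfrak p}}} \sum_{a \in X_{\vec{n},\mathfrak{c}}} \Big( \sum_{u \in R} \tilde{\phi}^{\dagger_J}\big(a\,i_{\mathfrak q}(u)\,\varsigma_J^{(\vec{n}')}\big) \Big)\,\nu(a)\,[a]_{\vec{n},\mathfrak{c}}.$$
Thus everything reduces to the local identity $\sum_{u \in R} \tilde{\phi}^{\dagger_J}\big(a\,i_{\mathfrak q}(u)\,\varsigma_J^{(\vec{n}')}\big) = \alpha_{\mathfrak q}\cdot\tilde{\phi}^{\dagger_J}\big(a\,\varsigma_J^{(\vec{n})}\big)$, after which the extra factor $\alpha_{\mathfrak q}$ cancels against the difference of normalizations and the proof is done. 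For this identity I would use the relation $\varsigma_{\mathfrak q}^{(n+1)} = \varsigma_{\mathfrak q}^{(n)}\wvec{\varpi_{\mathfrak q}}{0}{0}{1}$ (immediate from the definition of the Gross–point elements) together with the fact that $\varsigma_{\mathfrak q}^{(n)}$ conjugates the relevant local unit group into $U_0(\varpi_{\mathfrak q})$; as $u$ runs through $R$ the elements $\varsigma_{\mathfrak q}^{(n),-1} i_{\mathfrak q}(u)\varsigma_{\mathfrak q}^{(n)}\wvec{\varpi_{\mathfrak q}}{0}{0}{1}$ run through representatives of the double coset defining $\mathrm{U}_{\mathfrak q}$, and since $\tilde{\phi}^{\dagger_J}$ has $\mathfrak q$–component $\varphi^{\dagger}_{\mathfrak q}$ — a $U_0(\varpi_{\mathfrak q})$–fixed $\mathrm{U}_{\mathfrak q}$–eigenvector with eigenvalue $\alpha_{\mathfrak q}$, by the very definition of the $p$–stabilization when $r_{\mathfrak q} = 0$, and by Lemma \ref{lem:whittaker}(\ref{it:special}) and Remark \ref{rem:alpha} when $r_{\mathfrak q} = 1$ — the fiber sum evaluates to $\alpha_{\mathfrak q}\,\tilde{\phi}^{\dagger_J}\big(a\,\varsigma_J^{(\vec{n})}\big)$.

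The main obstacle is the bookkeeping in the middle step: one must verify that the fiber of $X_{\vec{n}',\mathfrak{c}} \to X_{\vec{n},\mathfrak{c}}$ over a given class is exactly and freely parametrized by $R$ — that is, that no two representatives $a\,i_{\mathfrak q}(u)$ become identified in $X_{\vec{n}',\mathfrak{c}}$ by a hidden global or central unit — and that the resulting local sum then matches the double–coset decomposition of $\mathrm{U}_{\mathfrak q}$ on the nose, including in the initial step $n = 0$, where the correction term built into the $p$–stabilization is essential for the identity to hold. This is precisely the computation carried out by Hung \cite[\S 4]{Hung14} in the one–variable setting; the new content here is only that it applies verbatim in each coordinate of the multi–variable tower.
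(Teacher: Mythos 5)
Your proposal follows essentially the same route as the paper's own proof: reduce to a single step $\vec{n}'=\vec{n}+e_{\mathfrak{q}}$, then identify the push-forward sum with the eigenvalue relation $\tilde{\phi}^{\dagger_J}|\mathrm{U}_{\mathfrak{q}}=\alpha_{\mathfrak{q}}\tilde{\phi}^{\dagger_J}$ through the Gross-point coset identity $(\tilde{\phi}^{\dagger_J}|\mathrm{U}_{\mathfrak{q}})(a\varsigma_{\mathfrak{q}}^{(n)})=\sum_{u}\tilde{\phi}^{\dagger_J}(au\varsigma_{\mathfrak{q}}^{(n+1)})$, with $u$ running over $[(1+\mathfrak{q}^{n}\mathcal{O}_{K_{\mathfrak{q}}})\mathcal{O}_{F_{\mathfrak{q}}}^\times]/[(1+\mathfrak{q}^{n+1}\mathcal{O}_{K_{\mathfrak{q}}})\mathcal{O}_{F_{\mathfrak{q}}}^\times]$, the stabilization taking care of the bottom layer. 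The fiber-parametrization bookkeeping you single out as the remaining obstacle is precisely the step the paper's own terse proof also leaves implicit, so your write-up matches the original and is, if anything, more explicit about where the verification lies.
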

\begin{proof} We easily reduce to the case that there exists exactly
one $\mathfrak{p}\in J$ such that
$n'_\mathfrak{p}=n_{\mathfrak{p}}+1$ and
$n'_\mathfrak{q}=n_{\mathfrak{q}} $ for $\mathfrak{q}\in J, \
\mathfrak{q}\neq \mathfrak{p}$. In this case our assertion follows
from the relation
$\tilde{\phi}^{\dagger_{J}}|_{U_{\mathfrak{p}}}=\alpha_\mathfrak{p}\tilde{\phi}^{\dagger_{J}}$
and the fact that
$$ (\tilde{\phi}^{\dagger_{J}}|_{U_{\mathfrak{p}}})(a\varsigma_\mathfrak{p}^{(n_\mathfrak{p})})
 = \sum_u \tilde{\phi}^{\dagger_{J}} (au \varsigma_\mathfrak{p}^{(n_\mathfrak{p}+1)})
 $$ where $u$ runs through the coset
$[(1+\mathfrak{p}^{n_{\mathfrak{p}}}\mathcal{O}_{K_\mathfrak{p}})\mathcal{O}^\times_{F_\mathfrak{p}}]
 /[(1+\mathfrak{p}^{n_{\mathfrak{p}}+1}\mathcal{O}_{K_\mathfrak{p}})\mathcal{O}_{F_\mathfrak{p}}^\times
 ]$.
\end{proof}

\begin{prop}\label{prop:theta} If $\widehat{\chi}$ is in the $J$-branch of
$\widehat{\nu}$, and if $\widehat{\chi}\widehat{\nu}^{-1}$ comes
from a character of $\mathcal{G}_{\vec{n},\mathfrak{c}}$, then
$$ \frac{1}{\prod_{\mathfrak{p}\in
J}\alpha_\mathfrak{p}^{n_\mathfrak{p}}} P(\varsigma^{(\vec{n})}_J,
\phi^{\dagger_J},  \chi) =
\mathrm{vol}(\widehat{\mathcal{O}}_{\vec{n},\mathfrak{c}}^\times
/(\widehat{\mathcal{O}}_{\vec{n},\mathfrak{c}}^\times\cap
K^\times)\widehat{\mathcal{O}}^\times_F)
\Theta_{\vec{n},\mathfrak{c}}(\jmath^{-1}(\widehat{\chi}\widehat{\nu}^{-1})
) ,
$$ where $\chi$ is the complex Hecke character attached to
$\widehat{\chi}$.
\end{prop}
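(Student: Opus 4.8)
The plan is to unwind the definition of the theta element $\Theta_{\vec n,\mathfrak c}$ and compare it term by term with the global toric period $P(\varsigma_J^{(\vec n)},\phi^{\dagger_J},\chi)$. First I would recall that, by Lemma \ref{lem:theta}, the function $a\mapsto \tilde\phi^{\dagger_J}(a\varsigma_J^{(\vec n)})\nu(a)$ on $\BA_K^\times$ is invariant under $\BA_F^\times K^\times K_\infty^\times \widehat{\mathcal O}_{\vec n,\mathfrak c}^\times$, so it descends to a function on the finite double coset space $X_{\vec n,\mathfrak c}=\BA_F^{\infty,\times}\backslash\BA_K^{\infty,\times}/K^\times\widehat{\mathcal O}_{\vec n,\mathfrak c}^\times$; this is exactly the space over which the sum defining $\Theta_{\vec n,\mathfrak c}$ runs, and exactly the space that (up to finite volume normalization) the integral $P$ lives on. Since $\widehat\chi\widehat\nu^{-1}$ factors through $\mathcal G_{\vec n,\mathfrak c}$, and $[\,\cdot\,]_{\vec n,\mathfrak c}$ realizes $X_{\vec n,\mathfrak c}$ as a quotient of $\mathcal G_{\vec n,\mathfrak c}$ (more precisely of a quotient thereof), applying $\jmath^{-1}(\widehat\chi\widehat\nu^{-1})$ to $\Theta_{\vec n,\mathfrak c}$ produces a finite sum $\frac{1}{\prod_{\mathfrak p\in J}\alpha_\mathfrak p^{n_\mathfrak p}}\sum_{a\in X_{\vec n,\mathfrak c}}\tilde\phi^{\dagger_J}(a\varsigma_J^{(\vec n)})\nu(a)\,(\jmath^{-1}(\widehat\chi\widehat\nu^{-1}))([a]_{\vec n,\mathfrak c})$.

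Next I would identify this finite sum with a Haar-measure integral. Since $\chi$ is the complex Hecke character with $\jmath(\chi)=\widehat\chi$, and $\nu$ is the Hecke character attached to $\widehat\nu$, we have $\jmath^{-1}(\widehat\chi\widehat\nu^{-1})\circ[\,\cdot\,]_{\vec n,\mathfrak c} = \chi\nu^{-1}$ as characters of $\BA_K^{\infty,\times}$ trivial on $K^\times\widehat{\mathcal O}_{\vec n,\mathfrak c}^\times$ (this uses the geometric normalization of $\mathrm{rec}_K$, and that $\chi\nu^{-1}$ is anticyclotomic unramified outside $J$, matching the level $\vec n,\mathfrak c$). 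Hence the summand is $\tilde\phi^{\dagger_J}(a\varsigma_J^{(\vec n)})\,\nu(a)\,\chi(a)\nu(a)^{-1}=\tilde\phi^{\dagger_J}(a\varsigma_J^{(\vec n)})\chi(a)$, the integrand of $P(\varsigma_J^{(\vec n)},\phi^{\dagger_J},\chi)$. Converting the integral over the compact quotient $K^\times\BA_F^\times\backslash\BA_K^\times$ with Tamagawa measure into a sum over the \emph{finite} set $X_{\vec n,\mathfrak c}$ introduces precisely the volume factor $\mathrm{vol}(\widehat{\mathcal O}_{\vec n,\mathfrak c}^\times/(\widehat{\mathcal O}_{\vec n,\mathfrak c}^\times\cap K^\times)\widehat{\mathcal O}_F^\times)$ of the stabilizer of each point, after collapsing the archimedean and $\BA_F^\times$ directions — on which the integrand is constant since $\tilde\phi^{\dagger_J}$ has trivial central character and $\chi$ is trivial on $\BA_F^\times$, and the $K_\infty^\times$-direction contributes trivially by the weight matching used in Lemma \ref{lem:theta}. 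Finally I would note that replacing $\phi$ by $\tilde\phi$ does not change $P$: by construction $\tilde\phi^\infty$ is the $U^p_{\mathfrak c}$-average of $\phi^\infty$, and since $P$ is an integral over a quotient by $\BA_F^\times$ and $\chi$ (hence $\chi\nu^{-1}$) is already $U^p_{\mathfrak c}/U^p_{\widetilde{\mathfrak c}}$-invariant — it factors through $X_{\vec n,\mathfrak c}$ — the averaging operator acts as the identity on the relevant period, so $P(\varsigma_J^{(\vec n)},\tilde\phi^{\dagger_J},\chi)=P(\varsigma_J^{(\vec n)},\phi^{\dagger_J},\chi)$.

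The main obstacle I anticipate is bookkeeping the measure normalization: one must check that the Tamagawa measure on $K^\times\BA_F^\times\backslash\BA_K^\times$, restricted to a fundamental domain for the discrete (finite) quotient $X_{\vec n,\mathfrak c}$, assigns to each orbit exactly $\mathrm{vol}(\widehat{\mathcal O}_{\vec n,\mathfrak c}^\times/(\widehat{\mathcal O}_{\vec n,\mathfrak c}^\times\cap K^\times)\widehat{\mathcal O}_F^\times)$, with all the local volume conventions from Section \ref{sec:char-measure} (in particular \eqref{eq:local-vol} and the quotient measure on $\BA_K^\times/\BA_F^\times$) consistently applied. The cleanest route is: write $\BA_K^\times = \BA_K^{\infty,\times}\times K_\infty^\times$, push everything through $\mathrm{rec}_K$, observe the integrand depends only on the image in $X_{\vec n,\mathfrak c}$, and then the integral is $\big(\text{vol of one fiber}\big)\cdot\sum_{a\in X_{\vec n,\mathfrak c}}(\text{integrand})$, where the fiber is the image of $K_\infty^\times\widehat{\mathcal O}_{\vec n,\mathfrak c}^\times$ and its volume in the quotient is the stated index-type volume. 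Everything else is a direct substitution, so once the measure computation is pinned down the proposition follows.
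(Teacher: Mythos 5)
Your proposal is correct and follows essentially the same route as the paper: invoke Lemma \ref{lem:theta} (together with the hypothesis that $\widehat{\chi}\widehat{\nu}^{-1}$ factors through $\mathcal{G}_{\vec{n},\mathfrak{c}}$) to see that $t\mapsto \tilde{\phi}^{\dagger_J}(t\varsigma_J^{(\vec{n})})\chi(t)$ is $\BA_F^\times K^\times K_\infty^\times\widehat{\mathcal{O}}_{\vec{n},\mathfrak{c}}^\times$-invariant, unfold the toric integral into the finite sum over $X_{\vec{n},\mathfrak{c}}$ with the stated volume factor, and rewrite $\chi(a)=\nu(a)\cdot(\chi\nu^{-1})([a]_{\vec{n},\mathfrak{c}})$ to recognize $\Theta_{\vec{n},\mathfrak{c}}(\jmath^{-1}(\widehat{\chi}\widehat{\nu}^{-1}))$. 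The only difference is cosmetic (you run the comparison from $\Theta$ to $P$ rather than from $P$ to $\Theta$, and you spell out the measure bookkeeping and the harmlessness of replacing $\phi$ by $\tilde{\phi}$, which the paper leaves implicit), so no gap.
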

\begin{proof}  As
$\chi\nu^{-1}=\jmath^{-1}(\widehat{\chi}\widehat{\nu}^{-1})$ comes
from a character of $\mathcal{G}_{\vec{n},\mathfrak{c}}$, by Lemma
\ref{lem:theta} the function
$$t\mapsto \tilde{\phi}^{\dagger_J} (t\varsigma_J^{(\vec{n})})
\chi(t)$$ is $\BA _F^\times K^\times K_\infty^\times
\widehat{\mathcal{O}}_{\vec{n}, \mathfrak{c}}^\times$-invariant.
Hence, \begin{eqnarray*}
P(\varsigma_J^{(\vec{n})},\phi^{\dagger_J},\chi)&=&\int_{K^\times\BA_F^\times\backslash
\BA^\times_K} \phi^{\dagger_J}(t \varsigma_J^{(\vec{n})})\chi(t) dt
\\ & = &
\mathrm{vol}(\widehat{\mathcal{O}}_{\vec{n},\mathfrak{c}}^\times/(\widehat{\mathcal{O}}_{\vec{n},\mathfrak{c}}^\times\cap K^\times )\widehat{\mathcal{O}}^\times_{F}) \cdot \sum_{a\in \BA _F^{\infty,\times} \backslash
\BA _K^{\infty,\times} / K^\times
\widehat{\mathcal{O}}_{\vec{n}, \mathfrak{c}}^\times }
\tilde{\phi}^{\dagger_J}
(a\varsigma_J^{(\vec{n})})\chi(a)  \\
&=&
\mathrm{vol}(\widehat{\mathcal{O}}_{\vec{n},\mathfrak{c}}^\times/(\widehat{\mathcal{O}}_{\vec{n},\mathfrak{c}}^\times\cap
K^\times)\widehat{\mathcal{O}}^\times_{F} ) \cdot \sum_{a\in \BA
_F^{\infty,\times} \backslash \BA _K^{\infty,\times} / K^\times
\widehat{\mathcal{O}}_{\vec{n}, \mathfrak{c}}^\times }
\tilde{\phi}^{\dagger_J} (a\varsigma_J^{(\vec{n})})\nu(a)
\chi\nu^{-1}([a]_{J,\vec{n}}),
\end{eqnarray*}
as wanted.
\end{proof}

 By abuse of notation we use $\mathrm{N}\mathfrak{c}$ to denote the positive integer generating the ideal $\mathrm{N}_{F/\BQ}(\mathfrak{c})$.
 \begin{lem}\label{lem:volume} We have
\begin{eqnarray*}\mathrm{vol}(\widehat{\mathcal{O}}^\times_{\vec{n},\mathfrak{c}}
 /(\widehat{\mathcal{O}}^\times_{\vec{n},\mathfrak{c}}\cap K^\times )\widehat{\mathcal{O}}^\times_{F})
& =& \frac{1}{[\mathcal{O}^\times_K\cap
\widehat{\mathcal{O}}^\times_{\vec{n},\mathfrak{c}}
:\mathcal{O}^\times_F]}
 \frac{|N_{F/\BQ}\mathcal{D}_F|_\infty^{\frac{1}{2}}}{|N_{K/\BQ}\mathcal{D}_K|_\infty^{\frac{1}{2}}}
 \frac{1}{\mathbf{N}\mathfrak{c}
 \cdot\prod_{\mathfrak{p}\in J}|\omega_\mathfrak{p}|_{\mathfrak{p}}^{-n_\mathfrak{p}}}  \\
 &&\cdot   \prod\limits_{v\in J\text{ or }v|\mathfrak{c}: v \text{ inert }}L(1,\tau_{K_v/F_v})
  \cdot   \prod\limits_{v\in J\text{ or }v|\mathfrak{c}: v \text{ split }}\zeta_{F_v}(1)  .
  \end{eqnarray*}
 \end{lem}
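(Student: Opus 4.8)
The plan is to break the volume, computed with respect to the quotient measure introduced in Section \ref{sec:char-measure}, into a product of purely local volumes over the finite places of $F$, evaluate each factor, and reassemble. First I would observe that $\widehat{\mathcal{O}}^\times_{\vec{n},\mathfrak{c}}\cap K^\times$ consists of global units of $K$ that lie in $\mathcal{O}_{\vec{n},\mathfrak{c}}$ locally at every finite place, so it equals $\mathcal{O}_K^\times\cap\widehat{\mathcal{O}}^\times_{\vec{n},\mathfrak{c}}$, and its image in $\widehat{\mathcal{O}}^\times_{\vec{n},\mathfrak{c}}\widehat{\mathcal{O}}^\times_F/\widehat{\mathcal{O}}^\times_F$ has kernel $\mathcal{O}_K^\times\cap\widehat{\mathcal{O}}^\times_F=\mathcal{O}_F^\times$, hence is a finite group of order $[\mathcal{O}_K^\times\cap\widehat{\mathcal{O}}^\times_{\vec{n},\mathfrak{c}}:\mathcal{O}_F^\times]$. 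Since moreover $\mathcal{O}_{\vec{n},\mathfrak{c},v}^\times\cap F_v^\times=\mathcal{O}_{F_v}^\times$ at every $v$, this gives
$$\mathrm{vol}\Big(\widehat{\mathcal{O}}^\times_{\vec{n},\mathfrak{c}}/(\widehat{\mathcal{O}}^\times_{\vec{n},\mathfrak{c}}\cap K^\times)\widehat{\mathcal{O}}^\times_F\Big)=\frac{1}{[\mathcal{O}_K^\times\cap\widehat{\mathcal{O}}^\times_{\vec{n},\mathfrak{c}}:\mathcal{O}_F^\times]}\;\prod_{v\ \text{finite}}\mathrm{vol}\big(\mathcal{O}_{\vec{n},\mathfrak{c},v}^\times/\mathcal{O}_{F_v}^\times\big),$$
the local volumes being taken for the quotient measures on $K_v^\times/F_v^\times$ described in Section \ref{sec:char-measure}.

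Next I would evaluate the local factors at the places where the order is maximal. At a finite $v$ with $v\notin J$ and $v\nmid\mathfrak{c}$ one has $\mathcal{O}_{\vec{n},\mathfrak{c},v}=\mathcal{O}_{K_v}$, and I claim $\mathrm{vol}(\mathcal{O}_{K_v}^\times/\mathcal{O}_{F_v}^\times)=|\mathcal{D}_{K_v}|_{K_v}^{1/2}/|\mathcal{D}_{F_v}|_v^{1/2}$: this is (\ref{eq:local-vol}) when $v$ is inert or ramified, while when $v=\mathfrak{P}\bar{\mathfrak{P}}$ splits the quotient measure identifies $\mathcal{O}_{K_v}^\times/\mathcal{O}_{F_v}^\times$ with $\mathcal{O}_{F_v}^\times$, of volume $|\mathcal{D}_{F_v}|_v^{1/2}$, which coincides with $|\mathcal{D}_{K_v}|_{K_v}^{1/2}/|\mathcal{D}_{F_v}|_v^{1/2}$ because $|\mathcal{D}_{K_v}|_{K_v}=|\mathcal{D}_{F_v}|_v^{2}$ for $K_v=F_v\oplus F_v$. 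Multiplying over all finite $v$ and using the standard identities $\prod_{v\ \text{finite}}|\mathcal{D}_{K_v}|_{K_v}^{1/2}=|\mathrm{N}_{K/\BQ}\mathcal{D}_K|_\infty^{-1/2}$ and $\prod_{v\ \text{finite}}|\mathcal{D}_{F_v}|_v^{1/2}=|\mathrm{N}_{F/\BQ}\mathcal{D}_F|_\infty^{-1/2}$ (equivalent, with the archimedean normalizations of Section \ref{sec:char-measure}, to $\mathrm{vol}(\BA_K/K)=\mathrm{vol}(\BA_F/F)=1$), I obtain $\prod_{v\ \text{finite}}\mathrm{vol}(\mathcal{O}_{K_v}^\times/\mathcal{O}_{F_v}^\times)=|\mathrm{N}_{F/\BQ}\mathcal{D}_F|_\infty^{1/2}/|\mathrm{N}_{K/\BQ}\mathcal{D}_K|_\infty^{1/2}$, which is the middle factor of the claimed formula.

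Finally I would treat the remaining finitely many places, $v\in J$ or $v\mid\mathfrak{c}$ — two disjoint sets, since $\mathfrak{c}=\mathfrak{n}_\nu\mathcal{O}_K$ is prime to $p$ while $J$ lies above $p$. At such $v$ one has $\mathcal{O}_{\vec{n},\mathfrak{c},v}=\mathcal{O}_{F_v}+\mathfrak{p}_v^{m_v}\mathcal{O}_{K_v}$ with $m_v=c_v+n_v$, where $n_v=n_{\mathfrak{p}}$ if $v=\mathfrak{p}\in J$ and $n_v=0$ otherwise; writing $\mathcal{O}_{\vec{n},\mathfrak{c},v}^\times=\mathcal{O}_{F_v}^\times(1+\mathfrak{p}_v^{m_v}\mathcal{O}_{K_v})$ one gets
$$\frac{\mathrm{vol}(\mathcal{O}_{\vec{n},\mathfrak{c},v}^\times/\mathcal{O}_{F_v}^\times)}{\mathrm{vol}(\mathcal{O}_{K_v}^\times/\mathcal{O}_{F_v}^\times)}=\frac{1}{[\mathcal{O}_{K_v}^\times:\mathcal{O}_{\vec{n},\mathfrak{c},v}^\times]}=\frac{\big|(\mathcal{O}_{F_v}/\mathfrak{p}_v^{m_v})^\times\big|}{\big|(\mathcal{O}_{K_v}/\mathfrak{p}_v^{m_v}\mathcal{O}_{K_v})^\times\big|},$$
and a short case check (split, inert, ramified, using $L(s,\tau_{K_v/F_v})=(1-\tau_{K_v/F_v}(\omega_v)|\omega_v|_v^{s})^{-1}$ with $\tau_{K_v/F_v}(\omega_v)=1,-1,0$ respectively, and $\tau_{K_v/F_v}$ trivial so $L(1,\tau_{K_v/F_v})=\zeta_{F_v}(1)$ in the split case) shows this ratio equals $|\omega_v|_v^{m_v}L(1,\tau_{K_v/F_v})$. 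Multiplying with the contribution of the previous paragraph, using $\prod_{v\in J\ \text{or}\ v\mid\mathfrak{c}}|\omega_v|_v^{-m_v}=\mathbf{N}\mathfrak{c}\cdot\prod_{\mathfrak{p}\in J}|\omega_\mathfrak{p}|_\mathfrak{p}^{-n_\mathfrak{p}}$, and noting that ramified places contribute $L(1,\tau_{K_v/F_v})=1$ so only inert and split $v$ survive in the $L$-factor product, I arrive at the asserted formula. I expect the only real difficulty to be bookkeeping: keeping straight the two absolute values $|\cdot|_v$ and $|\cdot|_{K_v}$, the conductor--different normalization of $\psi$, and the ``twisted'' quotient measure at split places, together with the local unit-index counts in the inert and ramified cases; there is no conceptual obstacle.
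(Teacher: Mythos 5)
Your proposal is correct and follows essentially the same route as the paper: the paper first computes $\mathrm{vol}(\widehat{\mathcal{O}}_K^\times/\widehat{\mathcal{O}}_F^\times)=\prod_v\mathrm{vol}(\mathcal{O}_{K_v}^\times/\mathcal{O}_{F_v}^\times)=|N_{F/\BQ}\mathcal{D}_F|_\infty^{1/2}/|N_{K/\BQ}\mathcal{D}_K|_\infty^{1/2}$ and then divides by the local unit index $[\widehat{\mathcal{O}}_K^\times:\widehat{\mathcal{O}}_{\vec{n},\mathfrak{c}}^\times]$, which is exactly your place-by-place computation assembled in a slightly different order. Your handling of the global-units factor $[\mathcal{O}_K^\times\cap\widehat{\mathcal{O}}_{\vec{n},\mathfrak{c}}^\times:\mathcal{O}_F^\times]$ is a correct (and more explicit) account of a step the paper leaves implicit.
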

 \begin{proof} By (\ref{eq:local-vol}) we have $$\mathrm{vol}(\widehat{\mathcal{O}}^\times_{K}/\widehat{\mathcal{O}}^\times_{F})
=\prod_{v: \text{ finite}}\mathrm{vol}
(\mathcal{O}_{K_v}^\times/\mathcal{O}_{F_v}^\times)=\frac{|N_{F/\BQ}\mathcal{D}_F|_\infty^{\frac{1}{2}}}{|N_{K/\BQ}\mathcal{D}_K|_\infty^{\frac{1}{2}}}
. $$ Note that
$$[\widehat{\mathcal{O}}^\times_K:\widehat{\mathcal{O}}_{\vec{n},\mathfrak{c}}]=
\prod\limits_{v\in J\text{ or }v|\mathfrak{c}: v \text{ inert
}}L(1,\tau_{K_v/F_v})^{-1} \cdot   \prod\limits_{v\in J\text{ or
}v|\mathfrak{c}: v \text{ split
}}\zeta_{F_v}(1)^{-1}\cdot\mathbf{N}\mathfrak{c}
\cdot\prod_{\mathfrak{p}\in
J}|\omega_\mathfrak{p}|_{\mathfrak{p}}^{-n_\mathfrak{p}}.$$ So
$$\mathrm{vol}(\widehat{\mathcal{O}}^\times_{\vec{n},\mathfrak{c}}/\widehat{\mathcal{O}}^\times_{F})
=\frac{|N_{F/\BQ}\mathcal{D}_F|_\infty^{\frac{1}{2}}}{|N_{K/\BQ}\mathcal{D}_K|_\infty^{\frac{1}{2}}}
\frac{1}{\mathbf{N}\mathfrak{c} \cdot\prod_{\mathfrak{p}\in
J}|\omega_\mathfrak{p}|_{\mathfrak{p}}^{-n_\mathfrak{p}}} \cdot
\prod\limits_{v\in J\text{ or }v|\mathfrak{c}: v \text{ inert
}}L(1,\tau_{K_v/F_v})
  \cdot   \prod\limits_{v\in J\text{ or }v|\mathfrak{c}: v \text{ split }}\zeta_{F_v}(1). $$ Our
lemma follows.
 \end{proof}

Put $\widehat{\mathbf{v}}_{\mathbf{m}}= \jmath
(\mathbf{v}_{\mathbf{m}})$.

\begin{lem}\label{lem:shift-inft-p}
For any $a\in \BA _K^\times$ we have
$$\jmath[\tilde{\phi}^{\dagger_J} (a\varsigma_J^{(\vec{n})})\nu(a)] =
\langle  \rho_{\vec{k}}
(\mathfrak{i}_p(\varsigma_J^{(\vec{n})}))^{-1}
\widehat{\mathbf{v}}_{\mathbf{m}}  ,
\widehat{\tilde{\phi}^{\infty,\dagger_J}} (a\varsigma_J^{(\vec{n})})
\rangle \cdot \widehat{\nu}(a) .
$$
\end{lem}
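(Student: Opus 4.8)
The plan is to peel the statement into three layers: first reduce $\tilde\phi^{\dagger_J}$ to its defining pairing and strip off the archimedean action of $a$ on $\mathbf v_{\mathbf m}$; then transport everything through $\jmath$, rewrite $\jmath\circ\tilde\phi^{\infty,\dagger_J}$ in terms of the $p$-adic modular form $\widehat{\tilde\phi^{\infty,\dagger_J}}$, and move the twist $\check\rho_{\vec k}(\mathfrak i_p(-))$ across the pairing onto $\widehat{\mathbf v}_{\mathbf m}$; and finally check that the scalars that fall out combine exactly into $\widehat\nu(a)$. The last step is the only one that is not formal; it is precisely where the recipe attaching $\nu$ to $\widehat\nu$ gets used.

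For the first layer: because $\phi^{\dagger_J}$ is obtained from $\phi$ by right translations by elements of $G(\BA^\infty_F)$ supported at the primes above $p$, and $\Psi(v\otimes f)(gh)=\langle f(g^\infty h^\infty),\rho_{\vec k}(g_\infty)v\rangle$ for such $h$, we have $\tilde\phi^{\dagger_J}=\Psi(\mathbf v_{\mathbf m}\otimes\tilde\phi^{\infty,\dagger_J})$ for the classical form $\tilde\phi^{\infty,\dagger_J}\in M^B_{\vec k}(U,\BC)$ obtained from $\tilde\phi^\infty$ by the same right translations, and its associated $p$-adic modular form via $f\mapsto\widehat f$ is $\widehat{\tilde\phi^{\infty,\dagger_J}}$ (by $\mathbb T_U$-equivariance of $f\mapsto\widehat f$). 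Evaluating at $g=a\varsigma_J^{(\vec n)}$, the element $a\in\BA^\times_K\hookrightarrow G(\BA_F)$ has archimedean component $a_\infty\in K_\infty^\times$ while $\varsigma_J^{(\vec n)}$ is supported above $p$, so the definition of $\Psi$ gives $\tilde\phi^{\dagger_J}(a\varsigma_J^{(\vec n)})=\langle\tilde\phi^{\infty,\dagger_J}(a^\infty\varsigma_J^{(\vec n)}),\rho_{\vec k}(a_\infty)\mathbf v_{\mathbf m}\rangle$, and $\rho_{\vec k}(a_\infty)\mathbf v_{\mathbf m}=\nu_\infty(a)^{-1}\mathbf v_{\mathbf m}$ by the stated $K_\infty^\times$-action on $\mathbf v_{\mathbf m}$. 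Multiplying by $\nu(a)$ and factoring $\nu$ into its archimedean and non-archimedean parts cancels $\nu_\infty(a)$ and leaves (a complex number depending only on $a^\infty$) times $\langle\tilde\phi^{\infty,\dagger_J}(a^\infty\varsigma_J^{(\vec n)}),\mathbf v_{\mathbf m}\rangle$.

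For the second layer: applying $\jmath$ turns the complex pairing into the $\BC_p$-pairing with $\widehat{\mathbf v}_{\mathbf m}=\jmath(\mathbf v_{\mathbf m})$; then $\jmath(\tilde\phi^{\infty,\dagger_J}(b))=\check\rho_{\vec k}(\mathfrak i_p(b_p))\widehat{\tilde\phi^{\infty,\dagger_J}}(b)$ (from $\widehat f(b)=\check\rho_{\vec k}(\mathfrak i_p(b_p^{-1}))\jmath(f(b))$ and multiplicativity of $\mathfrak i_p$), applied at $b=a^\infty\varsigma_J^{(\vec n)}$ with $b_p=a_p\varsigma_J^{(\vec n)}$, together with the adjunction $\langle\check\rho_{\vec k}(g)\ell,w\rangle=\langle\ell,\rho_{\vec k}(g^{-1})w\rangle$, moves $\rho_{\vec k}(\mathfrak i_p(\varsigma_J^{(\vec n)}))^{-1}\rho_{\vec k}(\mathfrak i_p(a_p))^{-1}$ onto $\widehat{\mathbf v}_{\mathbf m}$. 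Since each $\mathfrak i_{\sigma'}(a_{\mathfrak p_\sigma})$ is diagonal and $v_{m_\sigma}$ is a weight vector (use $\rho_h(\mathrm{diag}(x,y))v_m=(x/y)^{-m}v_m$), $\rho_{\vec k}(\mathfrak i_p(a_p))^{-1}$ acts on $\widehat{\mathbf v}_{\mathbf m}$ by the scalar $\prod_{\sigma\in\Sigma_K}\tilde\epsilon_\sigma(a_{\mathfrak p_\sigma})^{m_\sigma}$, with $\tilde\epsilon_\sigma$ as in Section \ref{ss:anti-ext-char}. What remains paired against $\widehat{\tilde\phi^{\infty,\dagger_J}}(a^\infty\varsigma_J^{(\vec n)})$ is $\rho_{\vec k}(\mathfrak i_p(\varsigma_J^{(\vec n)}))^{-1}\widehat{\mathbf v}_{\mathbf m}$ — the right-hand side of the lemma up to the overall scalar $\jmath(\nu(a)/\nu_\infty(a))\cdot\prod_\sigma\tilde\epsilon_\sigma(a_{\mathfrak p_\sigma})^{m_\sigma}$ (the two orderings of the arguments of $\langle\cdot,\cdot\rangle$ denoting the same number).

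For the third layer, the only substantive step: from $\nu(a)=\jmath^{-1}\big(\widehat\nu(a)\prod_\sigma(\bar\sigma'a_{\bar{\mathfrak P}_\sigma}/\sigma'a_{\mathfrak P_\sigma})^{m_\sigma}\big)\cdot\prod_\sigma(a_\sigma/\bar a_\sigma)^{m_\sigma}$ and $\nu_\infty(a)=\prod_\sigma(a_\sigma/\bar a_\sigma)^{m_\sigma}$ one gets $\jmath(\nu(a)/\nu_\infty(a))=\widehat\nu(a)\prod_\sigma(\bar\sigma'a_{\bar{\mathfrak P}_\sigma}/\sigma'a_{\mathfrak P_\sigma})^{m_\sigma}$, so the overall scalar equals $\widehat\nu(a)\prod_\sigma\big(\tilde\epsilon_\sigma(a_{\mathfrak p_\sigma})\cdot\bar\sigma'a_{\bar{\mathfrak P}_\sigma}/\sigma'a_{\mathfrak P_\sigma}\big)^{m_\sigma}$, and each bracketed factor is $1$ because $\tilde\epsilon_\sigma(a_{\mathfrak p_\sigma})=\sigma'a_{\mathfrak P_\sigma}/\bar\sigma'a_{\bar{\mathfrak P}_\sigma}$ — immediate in the inert or ramified case where both sides are $\sigma'(a_{\mathfrak p_\sigma})/\bar\sigma'(a_{\mathfrak p_\sigma})$, and in the split case because $\sigma'$ and $\bar\sigma'$ agree on $F_{\mathfrak p_\sigma}\cong K_{\bar{\mathfrak P}_\sigma}$. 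I expect the bookkeeping of the local embeddings $\sigma',\bar\sigma'$ and the primes $\mathfrak P_\sigma,\bar{\mathfrak P}_\sigma$ across the split versus inert/ramified cases to be the only delicate point; everything else is a formal consequence of the definitions of $\Psi$, of $f\mapsto\widehat f$, and of the invariance of the pairing $\langle\cdot,\cdot\rangle$.
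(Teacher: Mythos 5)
Your proof is correct and follows essentially the same route as the paper's: unwind $\Psi$ and the definition of $f\mapsto\widehat f$, move $\check{\rho}_{\vec{k}}(\mathfrak{i}_p(a_p\varsigma_J^{(\vec{n})}))$ across the invariant pairing, observe that $\mathfrak{i}_p(a_p)$ acts on $\widehat{\mathbf{v}}_{\mathbf{m}}$ by the scalar $\prod_{\sigma}\bigl(\sigma' a_{\mathfrak{P}_\sigma}/\bar{\sigma}' a_{\bar{\mathfrak{P}}_\sigma}\bigr)^{m_\sigma}$, and cancel this against the corresponding factor in the relation between $\nu$ and $\widehat{\nu}$. The only cosmetic differences are that you carry the archimedean factor $\nu_\infty(a_\infty)$ explicitly rather than invoking $K_\infty^\times$-invariance of both sides as the paper does, and your appeal to $\mathbb{T}_U$-equivariance in the first layer is unnecessary, since $\widehat{\tilde{\phi}^{\infty,\dagger_J}}$ is by definition the hat of the classically $p$-stabilized form, which is all your second layer actually uses.
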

\begin{proof} Both sides of the equality are
$K_\infty^\times$-invariant. So we may assume $a=(a_p,a^p)\in
\BA _K^{\infty, \times}$. By definition of $\widehat{
\tilde{\phi}^{\infty,\dagger_J} }$ we have
$$ \jmath (\tilde{\phi}^{\infty,\dagger_J}(a\varsigma_J^{(\vec{n})}) )=
\check{\rho}_{\vec{k}} (\mathfrak{i}_p(a_p\varsigma_J^{(\vec{n})}))
\widehat{ \tilde{\phi}^ { \infty,\dagger_J }
}(a\varsigma_J^{(\vec{n})}).
$$ Thus {\allowdisplaybreaks\begin{eqnarray*} \jmath(\tilde{\phi}^{\dagger_J} (a\varsigma_J^{(\vec{n})}))
&=&\langle  \widehat{\mathbf{v}}_{\mathbf{m}},
\check{\rho}_{\vec{k}} (\mathfrak{i}_p(a_p\varsigma_J^{(\vec{n})}))
\tilde{\phi}^{\infty,\dagger_J} (a\varsigma_J^{(\vec{n})}) \rangle
\\ &=& \langle \rho_{\vec{k}}
(\mathfrak{i}_p(a_p\varsigma_J^{(\vec{n})}))^{-1}
\widehat{\mathbf{v}}_{\mathbf{m}}, \tilde{\phi}^{\infty,\dagger_J}
(a\varsigma_J^{(\vec{n})}) \rangle
\\ &=& \prod_{\sigma\in
\Sigma_K}\left(\frac{\sigma' a_{\mathfrak{P}_\sigma}}{\bar{\sigma}'
a_{\bar{\mathfrak{P}}_\sigma}}\right)^{m_\sigma} \cdot \langle
\rho_{\vec{k}} (\mathfrak{i}_p(\varsigma_J^{(\vec{n})}))^{-1}
\widehat{\mathbf{v}}_{\mathbf{m}}, \tilde{\phi}^{\infty,\dagger_J}
(a\varsigma_J^{(\vec{n})}) \rangle,
\end{eqnarray*}}

\noindent where the last equality follows from the fact $$
\rho_{\vec{k}}( \mathfrak{i}_p(a_p)^{-1})
\widehat{\mathbf{v}}_{\mathbf{m}} = \prod_{\sigma\in
\Sigma_K}\left(\frac{\sigma' a_{\mathfrak{P}_\sigma}}{\bar{\sigma}'
a_{\bar{\mathfrak{P}}_\sigma}}\right)^{m_\sigma}
\widehat{\mathbf{v}}_{\mathbf{m}}. $$ Now, our lemma follows from
the relation
$$ \widehat{\nu}(a) = \prod_{\sigma\in \Sigma_K}\left(\frac{\sigma'
a_{\mathfrak{P}_\sigma}}{\bar{\sigma}'
a_{\bar{\mathfrak{P}}_\sigma}}\right)^{m_\sigma} \cdot
\jmath(\nu(a)) $$ for $a\in \BA _K^{\infty,\times}$.
\end{proof}

Put $\widehat{\Theta}_{ \vec{n},\mathfrak{c}}:=\jmath \Theta_{
\vec{n},\mathfrak{c}}$. The following is a direct consequence of
Lemma \ref{lem:shift-inft-p}.

\begin{cor}\label{cor:theta} We have
$$ \widehat{\Theta}_{\vec{n},\mathfrak{c}}=
\frac{1}{\jmath(\prod_{\mathfrak{p}\in
J}\alpha_\mathfrak{p}^{n_\mathfrak{p}})}
 \cdot \sum_{a\in X_{\vec{n},
\mathfrak{c}} } \langle \rho_{\vec{k}}
(\mathfrak{i}_p(\varsigma_J^{(\vec{n})}))^{-1}
\widehat{\mathbf{v}}_{\mathbf{m}}  ,
\widehat{\tilde{\phi}^{\infty,\dagger_J}} (a\varsigma_J^{(\vec{n})})
\rangle \widehat{\nu}(a) [a]_{\vec{n},\mathfrak{c}}. $$
\end{cor}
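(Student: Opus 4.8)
The plan is to obtain the asserted formula by applying the fixed field isomorphism $\jmath:\BC\xrightarrow{\sim}\BC_p$ directly to the defining expression of the theta element $\Theta_{\vec{n},\mathfrak{c}}$, term by term, and then rewriting each summand by means of Lemma \ref{lem:shift-inft-p}.

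First I would recall that $\Theta_{\vec{n},\mathfrak{c}}$ is an element of the group algebra $\BC[\mathcal{G}_{\vec{n},\mathfrak{c}}]$, equal by definition to $\big(\prod_{\mathfrak{p}\in J}\alpha_\mathfrak{p}^{n_\mathfrak{p}}\big)^{-1}$ times the \emph{finite} sum $\sum_{a\in X_{\vec{n},\mathfrak{c}}}\tilde{\phi}^{\dagger_J}(a\varsigma_J^{(\vec{n})})\nu(a)\,[a]_{\vec{n},\mathfrak{c}}$ (the set $X_{\vec{n},\mathfrak{c}}$ being a double coset space of the type that parametrizes ring class fields, hence finite, and the summand being independent of the chosen representative $a$ by Lemma \ref{lem:theta}, so that the sum makes sense). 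The map $\jmath$ extends coefficientwise to a ring isomorphism $\BC[\mathcal{G}_{\vec{n},\mathfrak{c}}]\xrightarrow{\sim}\BC_p[\mathcal{G}_{\vec{n},\mathfrak{c}}]$ that fixes each group-algebra element $[a]_{\vec{n},\mathfrak{c}}$. Since $\widehat{\Theta}_{\vec{n},\mathfrak{c}}=\jmath\,\Theta_{\vec{n},\mathfrak{c}}$ and $\jmath$ is multiplicative, I can pull $\jmath$ past the scalar $\big(\prod_{\mathfrak{p}\in J}\alpha_\mathfrak{p}^{n_\mathfrak{p}}\big)^{-1}$ (the $\alpha_\mathfrak{p}$ are algebraic, so $\jmath\big(\prod_{\mathfrak{p}\in J}\alpha_\mathfrak{p}^{n_\mathfrak{p}}\big)$ makes sense) and inside the finite sum, obtaining
$$\widehat{\Theta}_{\vec{n},\mathfrak{c}} = \frac{1}{\jmath\big(\prod_{\mathfrak{p}\in J}\alpha_\mathfrak{p}^{n_\mathfrak{p}}\big)}\cdot\sum_{a\in X_{\vec{n},\mathfrak{c}}}\jmath\big(\tilde{\phi}^{\dagger_J}(a\varsigma_J^{(\vec{n})})\nu(a)\big)\,[a]_{\vec{n},\mathfrak{c}}.$$

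Finally I would substitute Lemma \ref{lem:shift-inft-p}, which identifies each coefficient $\jmath\big(\tilde{\phi}^{\dagger_J}(a\varsigma_J^{(\vec{n})})\nu(a)\big)$ with $\langle\rho_{\vec{k}}(\mathfrak{i}_p(\varsigma_J^{(\vec{n})}))^{-1}\widehat{\mathbf{v}}_{\mathbf{m}},\widehat{\tilde{\phi}^{\infty,\dagger_J}}(a\varsigma_J^{(\vec{n})})\rangle\,\widehat{\nu}(a)$; inserting this into the displayed sum gives precisely the claimed expression for $\widehat{\Theta}_{\vec{n},\mathfrak{c}}$. I expect no genuine obstacle here: all the real content sits in Lemma \ref{lem:shift-inft-p}, and what remains is the purely formal bookkeeping that $\jmath$ is a ring homomorphism commuting with the finite sum and fixing the basis elements $[a]_{\vec{n},\mathfrak{c}}$, together with the observation that the $p$-adic character $\widehat{\nu}$ appearing on the right-hand side of the Lemma is already the one occurring in the target formula.
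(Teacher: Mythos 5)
Your argument is correct and matches the paper, which simply states the corollary as a direct consequence of Lemma \ref{lem:shift-inft-p}: one applies $\jmath$ coefficientwise to the finite defining sum of $\Theta_{\vec{n},\mathfrak{c}}$ and substitutes the lemma for each summand. The bookkeeping you spell out (multiplicativity of $\jmath$, fixing the basis elements $[a]_{\vec{n},\mathfrak{c}}$) is exactly what the paper leaves implicit.
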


Let $\mathfrak{o}_{V_{\vec{k}}}$ be the
$\mathcal{O}_{\BC_p}$-lattice in $V_{\vec{k}}(\BC_p)$ generated by
$\{\widehat{\mathbf{v}}_{\vec{s}}:\frac{k_\sigma-2}{2}\leq
s_\sigma\leq \frac{k_\sigma-2}{2}\}$.

\begin{lem}\label{lem:control-2} There is a constant $c\in \BC_p^\times$ such that
$$\rho_{\vec{k}}(\mathfrak{i}_p(\varsigma_J^{(\vec{n})}))^{-1}(\mathfrak{o}_{ V_{\vec{k}} })
\subseteq c \left( \prod_{\mathfrak{p}\in J}
\mathfrak{p}^{n_{\mathfrak{p}}\sum_{\sigma\in\Sigma_\mathfrak{p}}\frac{k_\sigma-2}{2}}
\right) \mathfrak{o}_{ V_{\vec{k}} } . $$
\end{lem}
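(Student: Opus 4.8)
The plan is to reduce the inclusion to one real place $\sigma$ at a time and then to compute $\rho_{k_\sigma}$ directly on the explicit Gross‑point matrices. Note that $\mathfrak{o}_{V_{\vec{k}}}$ is the tensor product over $\sigma\in\Sigma_F$ of the $\mathcal{O}_{\BC_p}$‑lattices $\mathfrak{o}_{V_{k_\sigma}}\subset V_{k_\sigma}(\BC_p)$ generated by the monomials $v_m$ ($|m|\le\frac{k_\sigma-2}{2}$); that the $\sigma'$‑component of $\mathfrak{i}_p(\varsigma_J^{(\vec{n})})$ is the single matrix $g_\sigma:=\mathfrak{i}_{\sigma'}(\varsigma_{\mathfrak{p}_\sigma}^{(n_{\mathfrak{p}_\sigma})})$ (with $n_{\mathfrak{p}_\sigma}:=0$ when $\mathfrak{p}_\sigma\notin J$); and that the right‑hand side factors accordingly. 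Hence it suffices to produce, for each $\sigma$, a constant $c_\sigma\in\BC_p^\times$ with $\rho_{k_\sigma}(g_\sigma)^{-1}(\mathfrak{o}_{V_{k_\sigma}})\subseteq c_\sigma\,\mathfrak{p}_\sigma^{\,n_{\mathfrak{p}_\sigma}(k_\sigma-2)/2}\,\mathfrak{o}_{V_{k_\sigma}}$, and then take $c=\prod_\sigma c_\sigma$ (a finite product). When $\mathfrak{p}_\sigma\notin J$ the matrix $g_\sigma$ is independent of $\vec{n}$, so $\rho_{k_\sigma}(g_\sigma)^{-1}(\mathfrak{o}_{V_{k_\sigma}})$ is a fixed bounded $\mathcal{O}_{\BC_p}$‑lattice in the finite‑dimensional space $V_{k_\sigma}(\BC_p)$; being commensurable with $\mathfrak{o}_{V_{k_\sigma}}$ it lies in $c_\sigma\mathfrak{o}_{V_{k_\sigma}}$ for $c_\sigma$ of suitably small valuation, and the required exponent is $0$.

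Now fix $\sigma$ with $\mathfrak{p}:=\mathfrak{p}_\sigma\in J$ and write $n=n_\mathfrak{p}$, $h=k_\sigma$. By definition $\varsigma_\mathfrak{p}^{(n)}=C_\mathfrak{p}\cdot\wvec{\varpi_\mathfrak{p}^{\,n}}{0}{0}{1}$, where $C_\mathfrak{p}=\wvec{\vartheta}{-1}{1}{0}$ (split case) or $\wvec{0}{1}{-1}{0}$ (inert/ramified case) is a fixed matrix of determinant $1$ with entries in $\mathcal{O}_{K_\mathfrak{p}}$. Passing through $\mathfrak{i}_{\sigma'}$ — which is $\sigma'$ composed with the algebra map $\mathbf{i}_\mathfrak{p}\circ i_\mathfrak{p}^{-1}$; this is $\mathrm{Ad}(\hbar_\mathfrak{p})$ in the split case by Lemma~\ref{lem:h-sigma}, and in the inert/ramified case is conjugation by a fixed element of $\GL_2(K_\mathfrak{p})$ by Skolem--Noether, which, since $\mathfrak{p}\nmid\mathfrak{n}^+$, may be chosen in $\GL_2(\mathcal{O}_{K_\mathfrak{p}})$ (it conjugates the maximal order $i_\mathfrak{p}(R_\mathfrak{p})$ onto $\mathbf{i}_\mathfrak{p}(R_\mathfrak{p})$) — one gets $g_\sigma=A_\sigma\cdot\sigma'\!\wvec{\varpi_\mathfrak{p}^{\,n}}{0}{0}{1}\cdot B_\sigma$ with $A_\sigma,B_\sigma\in\GL_2(\BC_p)$ fixed (independent of $\vec{n}$). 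Using that $\beta_v\in(\mathcal{O}_{F_v}^\times)^2$ for $v\mid p$ and that $\{1,\vartheta_v\}$ is an $\mathcal{O}_{F_v}$‑basis of $\mathcal{O}_{K_v}$, one checks $\hbar_\mathfrak{p}$ (resp.\ the conjugating matrix) and $C_\mathfrak{p}$ lie in $\GL_2(\mathcal{O}_{K_\mathfrak{p}})$ with unit determinant, so $A_\sigma,B_\sigma\in\GL_2(\mathcal{O}_{\BC_p})$ with unit determinant. Hence $\rho_h(A_\sigma^{-1})$ and $\rho_h(B_\sigma^{-1})$ preserve $\mathfrak{o}_{V_h}$ — the coefficients of the $\gamma$‑substituted monomials are $\BZ$‑polynomials in the entries of $\gamma$, and $\det(\gamma)^{\pm(h-2)/2}$ is a unit — so $\rho_h(g_\sigma)^{-1}(\mathfrak{o}_{V_h})=\rho_h\!\big(\sigma'\!\wvec{\varpi_\mathfrak{p}^{-n}}{0}{0}{1}\big)(\mathfrak{o}_{V_h})$. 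Finally $(\rho_h(g)P)(X,Y)=\det(g)^{-(h-2)/2}P((X,Y)g)$ yields $\rho_h\!\wvec{a}{0}{0}{1}v_m=a^{-m}v_m$, hence $\rho_h\!\big(\sigma'\!\wvec{\varpi_\mathfrak{p}^{-n}}{0}{0}{1}\big)v_m=\sigma'(\varpi_\mathfrak{p})^{nm}v_m$ for $|m|\le\frac{h-2}{2}$, so $\rho_h(g_\sigma)^{-1}(\mathfrak{o}_{V_h})=\bigoplus_{|m|\le(h-2)/2}\sigma'(\varpi_\mathfrak{p})^{nm}\,\mathcal{O}_{\BC_p}\,v_m$. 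This is contained in $c_\sigma\,\mathfrak{p}_\sigma^{\,\pm n(h-2)/2}\,\mathfrak{o}_{V_h}$, the extreme valuation being realized by the monomial $v_{\mp(h-2)/2}$; the power $\frac{h-2}{2}=\frac{k_\sigma-2}{2}$ is precisely what the determinant normalization $\det(g)^{-(k_\sigma-2)/2}$ in $\rho_{k_\sigma}$ contributes. Collecting the finitely many $c_\sigma$ gives the lemma.

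The routine parts are the two linear‑algebra facts ($\rho_h$ on diagonal matrices, and $\GL_2(\mathcal{O}_{\BC_p})$ preserving the monomial lattice). The main obstacle is the bookkeeping: carefully unwinding the several identifications $i_\mathfrak{p}$, $\mathbf{i}_\mathfrak{p}$, $\mathfrak{i}_p$ and the roles of $\beta$ and $\vartheta$ to be certain that the "fixed" matrices $A_\sigma$, $B_\sigma$ are genuinely $\vec{n}$‑independent and lie in $\GL_2(\mathcal{O}_{\BC_p})$ with unit determinant (so that they contribute only to the $\vec{n}$‑independent constant $c$), and then tracking which monomial $v_m$ realizes the extreme power of $\varpi_\mathfrak{p}$, which is what fixes the sign of the exponent in the final inclusion.
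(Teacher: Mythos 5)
Your strategy is the same as the paper's (peel off the $\vec{n}$-independent matrices, including the conjugation coming from $\mathfrak{i}_\mathfrak{p}i_\mathfrak{p}^{-1}$, into the constant $c$, and read off the action of $\wvec{\varpi_\mathfrak{p}^{n}}{0}{0}{1}$ on the monomial basis), and the bookkeeping up to that point is essentially fine, apart from a small slip: $\rho_h(g_\sigma)^{-1}(\mathfrak{o}_{V_h})$ equals $\rho_h(B_\sigma^{-1})$ applied to $\rho_h(\sigma'\wvec{\varpi_\mathfrak{p}^{-n}}{0}{0}{1})(\mathfrak{o}_{V_h})$, not that lattice itself, since the outer matrices do not commute with the diagonal one; as $\rho_h(B_\sigma^{\pm1})$ preserves $\mathfrak{o}_{V_h}$ this only costs a unit and is harmless for a containment statement.

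The genuine gap is the unresolved sign in your last sentence, and it is not a cosmetic issue. Your own (correct) eigenvalue computation gives $\rho_h(\wvec{\varpi_\mathfrak{p}^{-n}}{0}{0}{1})v_m=\sigma'(\varpi_\mathfrak{p})^{nm}v_m$ with $m$ ranging over $-\tfrac{h-2}{2}\le m\le \tfrac{h-2}{2}$, so the image of the full monomial lattice is $\bigoplus_m \sigma'(\varpi_\mathfrak{p})^{nm}\mathcal{O}_{\BC_p}v_m$. This lattice contains $\sigma'(\varpi_\mathfrak{p})^{-n(h-2)/2}v_{-(h-2)/2}$, so for $h=k_\sigma>2$ it is contained in $c_\sigma\,\mathfrak{p}^{-n(h-2)/2}\mathfrak{o}_{V_h}$ but, for any $\vec n$-independent $c_\sigma$, it is \emph{not} contained in $c_\sigma\,\mathfrak{p}^{+n(h-2)/2}\mathfrak{o}_{V_h}$ once $n$ is large; the lemma asserts the inclusion with the positive exponent, which is exactly what is needed later to offset $\prod_\mathfrak{p}\alpha_\mathfrak{p}^{-n_\mathfrak{p}}$ under the hypothesis (ord) in the proof of Theorem \ref{thm:interpol}. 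Writing "$\mathfrak{p}_\sigma^{\pm n(h-2)/2}$, the extreme valuation being realized by $v_{\mp(h-2)/2}$" papers over this: your computation, taken at face value, yields only the negative-exponent inclusion (equivalently the reverse inclusion $c'\mathfrak{p}^{+n(h-2)/2}\mathfrak{o}_{V_h}\subseteq\rho_h(g_\sigma)^{-1}\mathfrak{o}_{V_h}$), and for $k_\sigma=2$ (where the exponent is $0$) it degenerates to the trivial statement. To get the positive exponent one cannot work with the whole monomial lattice after the torus factor has been isolated; one has to work with the particular generator(s) of $\mathfrak{o}_{V_{\vec k}}$ that actually enter $\widehat{\Theta}_{\vec n,\mathfrak{c}}$ (note the paper's definition of $\mathfrak{o}_{V_{\vec k}}$ lists only $s_\sigma$ with $\tfrac{k_\sigma-2}{2}\le s_\sigma\le\tfrac{k_\sigma-2}{2}$, i.e.\ the top weight vector, on which the diagonal torus acts by the nonnegative power) and control how the fixed conjugating matrices interact with that choice; supplying that argument, and thereby pinning the sign, is the missing content of your proposal.
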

\begin{proof}
Let $\mathfrak{o}_\sigma$ be the lattice in $\jmath V_{k_\sigma}$
generated by $\{\jmath(v_{s}):\frac{k_\sigma-2}{2}\leq s \leq
\frac{k_\sigma-2}{2}\}$. It is easy to check that if
$\mathfrak{p}_\sigma\in J$, then
$\varsigma_{\mathfrak{p}_\sigma}^{(n_\mathfrak{p})} $ sends
$\mathfrak{o}_\sigma$ to
$\mathfrak{p}_\sigma^{n_{\mathfrak{p}_\sigma}\frac{k_\sigma-2}{2}}
\varsigma_{\mathfrak{p}_\sigma}\mathfrak{o}_\sigma$. Let $c\in
\BC_p^\times$ be such that $\varsigma_J \mathfrak{o}_{ V_{\vec{k}}
}\subset c\mathfrak{o}_{ V_{\vec{k}} }$. We obtain our assertion.
\end{proof}

Now, we assume $\widehat{\varphi^\infty}$ is ``algebraic'' in the
sense that
\begin{quote}(alg) there is a finite extension $E$ of $\BQ_p$ in $\BC_p$
that contains all embeddings of $F_\mathfrak{p}$ ($\mathfrak{p}|p$),
such that $\widehat{\varphi^\infty}$ lies in $M_{\vec{k}}(U, E)$.
\end{quote} We also make the following ``ordinary'' assumption.
\begin{quote}
(ord) For each $\mathfrak{p}\in J$ we have
$$|\alpha_{\mathfrak{p}}|_{\mathfrak{p}}=|\omega_\mathfrak{p}|_{\mathfrak{p}}^{\sum_{\sigma\in\Sigma_\mathfrak{p}}\frac{k_\sigma-2}{2}},$$
i.e. the $\mathfrak{p}$-adic valuation of the
$U_\mathfrak{p}$-eigenvalue $\alpha_\mathfrak{p}$ is
$\sum_{\sigma\in
\Sigma_\mathfrak{p}}\frac{2-k_\sigma}{2}$.\end{quote}

We define the complex period $\Omega^-_{J,\phi}$ by
\begin{eqnarray*} \Omega^-_{J,\phi}: &=& \widetilde{\Omega}^-_{J,\phi} \cdot
\frac{|N_{F/\BQ}\mathcal{D}_F|_\infty}{|N_{K/\BQ}\mathcal{D}_K|_\infty}
\cdot
\frac{\prod\limits_{w|\mathfrak{c}:\text{inert}}L(1,\tau_{K_w/F_w})^2\cdot
\prod\limits_{w|\mathfrak{c}:\text{ split}}\zeta_{F_w}(1)^2}
{[\mathcal{O}^\times_K\cap
\widehat{\mathcal{O}}^\times_{\vec{n},\mathfrak{c}}
:\mathcal{O}^\times_F]^2 \ \mathbf{N}\mathfrak{c}^2}\\
&& \cdot   \prod_{\mathfrak{p}\in J:
\text{inert}}{L(1,\tau_{K_w/F_w})^2} \cdot \prod_{\mathfrak{p}\in J:
\text{split}}{\zeta_{F_w}(1)^2} \\
&=& \frac{2 L(1, \mathrm{Ad}\pi)}{\zeta_F(2)\langle \varphi_{\pi'},
\pi'(\tau^{\mathfrak{n},B})\varphi_{\pi'}\rangle_{G}}\cdot
\prod_{\sigma\in\Sigma}\frac{1}{\mathcal{P}(1,v_{m_\sigma},
\nu_\sigma)}  \cdot
\frac{|N_{F/\BQ}\mathcal{D}_F|_\infty}{|N_{K/\BQ}\mathcal{D}_K|_\infty}
\\ &&   \\
&& \cdot
\prod_{v|\mathfrak{n}^+}\frac{||\varphi_v||_{v}}{\epsilon(\frac{1}{2},\pi_v,\psi_v)|d_{F_v}|_{v}}
\cdot \prod_{v \nmid p \mathfrak{n}^+ :  \text{ finite and not
split} }\frac{1}{\mathcal{P}(1,\phi_v, \nu_v)} \cdot \prod_{v \nmid
p \mathfrak{n}^+ :  \text{ finite and split} } \frac{1}{|d_{F_v}|_v}
\\
&&\cdot
\frac{\prod\limits_{w|\mathfrak{c}:\text{inert}}L(1,\tau_{K_w/F_w})^2\cdot
\prod\limits_{w|\mathfrak{c}:\text{ split}}\zeta_{F_w}(1)^2}
{[\mathcal{O}^\times_K\cap
\widehat{\mathcal{O}}^\times_{\vec{n},\mathfrak{c}}
:\mathcal{O}^\times_F]^2 \ \mathbf{N}\mathfrak{c}^2} \cdot
\prod_{\mathfrak{p}\in J:
\text{inert}}\frac{||\varphi_\mathfrak{p}||_{\mathfrak{p}}}{|d_{K_\mathfrak{p}}|_{\mathfrak{p}}}
\cdot \prod_{\mathfrak{p}\in J:
\text{split}}\frac{||\varphi_\mathfrak{p}||_{\mathfrak{p}}}{|d_{F_\mathfrak{p}}|_{\mathfrak{p}}}\\
&&  \cdot
\prod_{\begin{array}{c}\mathfrak{p}|p,\mathfrak{p}\notin J: \\
\mathfrak{p} \text{ is split and } r_\mathfrak{p}=1\end{array}}
\frac{||\varphi_\mathfrak{p}||_{\mathfrak{p}}}{\epsilon(\frac{1}{2},\pi_\mathfrak{p},\psi_\mathfrak{p})|d_{F_\mathfrak{p}}|_{\mathfrak{p}}}
\cdot \prod_{\begin{array}{c}\mathfrak{p}|p, \mathfrak{p}\notin J:
\\ \mathfrak{p} \text{ is not split  or
}r_\mathfrak{p}=0\end{array}}
\frac{1}{\mathcal{P}(\varsigma_\mathfrak{p},\varphi_\mathfrak{p},
1)}
\end{eqnarray*}

\begin{thm}\label{thm:interpol} Under the conditions $(\mathrm{alg})$ and
$(\mathrm{ord})$ there exists $\widehat{\Theta}_{J }\in
\mathcal{O}_E[[\Gamma^-_J]]\otimes_{\mathcal{O}_E}E$ such that the
following holds:

For each continuous $p$-adic character $\widehat{\chi}$ of
$\mathrm{Gal}(\bar{K}/K)$ of type $(\mathbf{m},-\mathbf{m})$ in the
$J$-branch of $\widehat{\nu}$ such that $ \chi \cdot \nu ^{-1}$ is a
character of conductor $\prod_{\mathfrak{p}\in
J}\mathfrak{p}^{n_\mathfrak{p}}$ for some
$\vec{n}=(n_\mathfrak{p})_{\mathfrak{p}\in J}$, we have
\begin{eqnarray*} \jmath^{-1}\widehat{\Theta}_{J}(\widehat{\chi}
\widehat{\nu}^{-1})^2 =\chi(\mathfrak{N}^+)\cdot
\prod_{v}\chi_v(\omega_v) \cdot \prod_{\mathfrak{p}\in J}
e_{\mathfrak{p}}(\pi,\chi) \cdot \frac{L(\frac{1}{2}, \pi_K\otimes
\chi)}{\Omega^-_{J,\phi}}
\end{eqnarray*} where $v$ runs over the set
$($\ref{eq:set}$)$.
\end{thm}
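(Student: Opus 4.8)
The plan is to assemble $\widehat{\Theta}_J$ from the theta elements $\Theta_{\vec{n},\mathfrak{c}}$ constructed above. By Proposition~\ref{prop:comp} the elements $\jmath\,\Theta_{\vec{n},\mathfrak{c}}\in\BC_p[\mathcal{G}_{\vec{n},\mathfrak{c}}]$ form a compatible system under the transition maps $\pi_{\vec{n}',\vec{n}}$, hence assemble into an element $\widehat{\Theta}$ of $\lim\limits_{\overleftarrow{\vec{n}}}\BC_p[\mathcal{G}_{\vec{n},\mathfrak{c}}]$; pushing $\widehat{\Theta}$ forward along the surjection $\mathcal{G}_{J;\mathfrak{c}}\twoheadrightarrow\Gamma^-_J$ onto the maximal $\BZ_p$-free quotient produces $\widehat{\Theta}_J\in\BC_p[[\Gamma^-_J]]$, defined independently of any choice of $\vec{n}$. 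Under the hypothesis $(\mathrm{alg})$ every coefficient of every $\widehat{\Theta}_{\vec{n},\mathfrak{c}}$ actually lies in $E$: indeed $\widehat{\tilde{\phi}^{\infty,\dagger_J}}$ is built from $\widehat{\varphi^\infty}\in M_{\vec{k}}(U,E)$ by the averaging over $U^p_{\mathfrak{c}}/U^p_{\widetilde{\mathfrak{c}}}$, by $p$-stabilization with $\alpha_\mathfrak{p}\in E$, and by Hecke operators, while $\widehat{\nu}$, $\widehat{\mathbf{v}}_{\mathbf{m}}$, $\rho_{\vec{k}}$ and the matrices $\mathfrak{i}_p(\varsigma_J^{(\vec{n})})$ are all defined over $E$; hence $\widehat{\Theta}_J\in E[[\Gamma^-_J]]$.

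The crucial point is that $\widehat{\Theta}_J$ is in fact a \emph{measure}, i.e.\ lies in $\CO_E[[\Gamma^-_J]]\otimes_{\CO_E}E$. By Corollary~\ref{cor:theta} the coefficients of $\widehat{\Theta}_{\vec{n},\mathfrak{c}}$ are, up to the scalar $\jmath(\prod_{\mathfrak{p}\in J}\alpha_\mathfrak{p}^{n_\mathfrak{p}})^{-1}$, values of the pairing
$$ \big\langle\, \rho_{\vec{k}}\big(\mathfrak{i}_p(\varsigma_J^{(\vec{n})})\big)^{-1}\widehat{\mathbf{v}}_{\mathbf{m}} \,,\ \widehat{\tilde{\phi}^{\infty,\dagger_J}}(a\varsigma_J^{(\vec{n})}) \,\big\rangle\,\widehat{\nu}(a). $$
By Lemma~\ref{lem:control-1} the values of $\widehat{\tilde{\phi}^{\infty,\dagger_J}}$ lie in a fixed $\CO_{\BC_p}$-lattice of $L_{\vec{k}}(\BC_p)$, and $\widehat{\nu}$ is unit-valued; by Lemma~\ref{lem:control-2} the vector $\rho_{\vec{k}}(\mathfrak{i}_p(\varsigma_J^{(\vec{n})}))^{-1}\widehat{\mathbf{v}}_{\mathbf{m}}$ lies in $c\big(\prod_{\mathfrak{p}\in J}\mathfrak{p}^{n_\mathfrak{p}\sum_{\sigma\in\Sigma_\mathfrak{p}}\frac{k_\sigma-2}{2}}\big)\mathfrak{o}_{V_{\vec{k}}}$. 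Hence every coefficient of $\widehat{\Theta}_{\vec{n},\mathfrak{c}}$ is bounded in absolute value by a constant independent of $\vec{n}$ times $\prod_{\mathfrak{p}\in J}|\omega_\mathfrak{p}|^{n_\mathfrak{p}\sum_{\sigma\in\Sigma_\mathfrak{p}}\frac{k_\sigma-2}{2}}\cdot|\alpha_\mathfrak{p}|^{-n_\mathfrak{p}}$, and by the ordinarity hypothesis $(\mathrm{ord})$ — which says precisely that $|\alpha_\mathfrak{p}|_\mathfrak{p}=|\omega_\mathfrak{p}|_\mathfrak{p}^{\sum_{\sigma\in\Sigma_\mathfrak{p}}\frac{k_\sigma-2}{2}}$ — these two factors cancel for each $\mathfrak{p}\in J$. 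Therefore the coefficients of $\widehat{\Theta}_J$ are uniformly bounded, so $\widehat{\Theta}_J$ lies in $\CO_E[[\Gamma^-_J]]\otimes_{\CO_E}E$.

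To check the interpolation identity, fix $\widehat{\chi}$ as in the statement and let $\vec{n}=(n_\mathfrak{p})$ be a level through which the pullback to $\mathcal{G}_{J;\mathfrak{c}}$ of $\chi\nu^{-1}$ factors; one may take $n_\mathfrak{p}=\max\{1,s_\mathfrak{p}\}$ with $\mathfrak{p}^{s_\mathfrak{p}}$ the conductor of $\chi_\mathfrak{p}$. By construction $\jmath^{-1}\widehat{\Theta}_J(\widehat{\chi}\widehat{\nu}^{-1})=\Theta_{\vec{n},\mathfrak{c}}(\chi\nu^{-1})$, and Proposition~\ref{prop:theta} gives
$$ \Theta_{\vec{n},\mathfrak{c}}(\chi\nu^{-1})=\frac{1}{\mathrm{vol}\big(\widehat{\CO}^\times_{\vec{n},\mathfrak{c}}/(\widehat{\CO}^\times_{\vec{n},\mathfrak{c}}\cap K^\times)\widehat{\CO}^\times_F\big)}\cdot\frac{1}{\prod_{\mathfrak{p}\in J}\alpha_\mathfrak{p}^{n_\mathfrak{p}}}\,P\big(\varsigma_J^{(\vec{n})},\phi^{\dagger_J},\chi\big). $$
Squaring and substituting the value of $P(\varsigma_J^{(\vec{n})},\phi^{\dagger_J},\chi)^2$ from Corollary~\ref{cor:spe-value} expresses $\jmath^{-1}\widehat{\Theta}_J(\widehat{\chi}\widehat{\nu}^{-1})^2$ through $L(\tfrac{1}{2},\pi_K\otimes\chi)$, the factors $\chi(\mathfrak{N}^+)$ and $\prod_v\chi_v(\omega_v)$, the local quantities $\tilde{e}_\mathfrak{p}(\pi,\chi)$, the powers $\alpha_\mathfrak{p}^{-2n_\mathfrak{p}}$, the period $\widetilde{\Omega}^-_{J,\phi}$, and the volume of Lemma~\ref{lem:volume}. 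It then remains to match this against the asserted formula: using $\nu_p=1$, the definitions relating $\tilde{e}_\mathfrak{p}(\pi,\chi)$ to $e_\mathfrak{p}(\pi,\chi)$, the evaluation of the volume in Lemma~\ref{lem:volume}, and the definition of $\Omega^-_{J,\phi}$ in terms of $\widetilde{\Omega}^-_{J,\phi}$, all the level-dependent contributions (discriminant factors, the norm of $\mathfrak{c}$, the unit index $[\mathcal{O}^\times_K\cap\widehat{\mathcal{O}}^\times_{\vec{n},\mathfrak{c}}:\mathcal{O}^\times_F]$, and the $\zeta$- and $L$-factors attached to primes in $J$ or dividing $\mathfrak{c}$) cancel, yielding
$$ \jmath^{-1}\widehat{\Theta}_J(\widehat{\chi}\widehat{\nu}^{-1})^2=\chi(\mathfrak{N}^+)\Big(\prod_{v}\chi_v(\omega_v)\Big)\Big(\prod_{\mathfrak{p}\in J}e_\mathfrak{p}(\pi,\chi)\Big)\frac{L(\tfrac{1}{2},\pi_K\otimes\chi)}{\Omega^-_{J,\phi}} $$
with $v$ running over the set $(\ref{eq:set})$.

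The main obstacle is the boundedness established in the second paragraph: this is exactly where the ordinarity hypothesis $(\mathrm{ord})$ is indispensable, compensating the distortion of the weight lattice under $\varsigma_J^{(\vec{n})}$ recorded in Lemma~\ref{lem:control-2} against the normalizing powers of the $\mathrm{U}_\mathfrak{p}$-eigenvalues $\alpha_\mathfrak{p}$. The interpolation step, by contrast, is a lengthy but purely mechanical reconciliation of the normalization constants — $\zeta$- and $L$-factors, discriminants, volumes of local unit groups, local $\epsilon$-factors — distributed among $\widetilde{\Omega}^-_{J,\phi}$, $\Omega^-_{J,\phi}$, the volume of Lemma~\ref{lem:volume}, and the local toric integrals of Section~\ref{sec:special-value}.
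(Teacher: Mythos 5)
Your proposal is correct and follows essentially the same route as the paper: form the inverse limit of the theta elements via Proposition \ref{prop:comp}, use Lemma \ref{lem:control-1}, Lemma \ref{lem:control-2} and the hypotheses (alg), (ord) to get uniform integrality (so the limit lies in $\mathcal{O}_E[[\Gamma^-_J]]\otimes_{\mathcal{O}_E}E$), and deduce the interpolation formula from Proposition \ref{prop:theta}, Corollary \ref{cor:spe-value} and Lemma \ref{lem:volume} together with the definitions of $e_\mathfrak{p}(\pi,\chi)$ and $\Omega^-_{J,\phi}$. The only difference is cosmetic: you make explicit the cancellation between the lattice distortion of Lemma \ref{lem:control-2} and the powers of $\alpha_\mathfrak{p}$ that the paper leaves implicit.
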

\begin{proof} By Lemma \ref{lem:control-1} and Lemma
\ref{lem:control-2} there exists some integer $r$ independent of
$\vec{n}$ such that $\widehat{\Theta}_{\vec{n},\mathfrak{c}}\in
\omega_E^r \mathcal{O}_E[[\mathcal{G}_{\vec{n},\mathfrak{c}}]]$. So
by Proposition \ref{prop:comp}, the inverse limit
$\lim\limits_{\overleftarrow{\;\;\vec{n}\:\;}}\widehat{\Theta}_{\vec{n},\mathfrak{c}}$
exists in $\omega_E^r \cdot
\mathcal{O}_E[[\mathcal{G}_{J;\mathfrak{c}}]]$. We denote it as
$\widehat{\Theta}_{J;\mathfrak{c}}$ and let $\widehat{\Theta}_{J}$
be its image in
$\mathcal{O}_E[[\Gamma^-_J]]\otimes_{\mathcal{O}_E}E$ by the
projection $\mathcal{G}_{J;\mathfrak{c}}\rightarrow \Gamma^-_J$. Now
our assertion follows from Corollary \ref{cor:spe-value},
Proposition \ref{prop:theta} and Lemma \ref{lem:volume}.
\end{proof}

\begin{defn} \label{def:measure} The element
$\widehat{\Theta}_{\vec{n},\mathfrak{c}}$ defines a measure $\mu_J$
on $$ \mathcal{G}_{\vec{n},\mathfrak{c}}\cong \BA _F^{\infty,\times}
\backslash \BA _K^{\infty,\times} / K^\times
\widehat{\mathcal{O}}_{\vec{n}, \mathfrak{c}}^\times . $$ The
element $\widehat{\Theta}_{J;\mathfrak{c}}$ defines a measure
$\mu_J$ on $$ \mathcal{G}_{J;\mathfrak{c}}\cong \BA
_F^{\infty,\times} \backslash \BA _K^{\infty,\times} / K^\times
\widehat{\mathcal{O}}_{J; \mathfrak{c}}^\times . $$ Here
$\widehat{\mathcal{O}}_{J; \mathfrak{c}} = \bigcap\limits_{\vec{n}}
\widehat{\mathcal{O}}_{\vec{n}, \mathfrak{c}}$.
\end{defn}

Put $$\mathscr{L}_J(\pi_K, \widehat{\chi})=
\widehat{\Theta}_{J}(\widehat{\chi} \widehat{\nu}^{-1}),$$ where
$\widehat{\chi}$ runs through the $J$-branch of $\widehat{\nu}$.
Then we define the anticyclotomic $p$-adic $L$-function $L_J(\pi_K,
\widehat{\chi})$  by
$$ L_J (\pi_K, \widehat{\chi}) = \mathscr{L}_J(\pi_K,
\widehat{\chi})^2 . $$ In particular, when
$\widehat{\chi}\widehat{\nu}^{-1}$ runs over the analytic family $$
\{  \epsilon ^{\vec{s}} : \vec{s}=(s_{\sigma})_{\sigma\in \Sigma_J},
|s_\sigma|\leq |c_0|_pp^{-2} \} ,
$$ we obtain a $\sharp(\Sigma_J)$-variable analytic function
$$ L_J(\vec{s},\pi_K, \widehat{\nu} ) := L_J(\pi_K,\widehat{\nu}\epsilon ^{\vec{s}} )  $$
on $$ \{(s_{\sigma})_{\sigma\in \Sigma_J}: |s_\sigma|\leq
|c_0|_pp^{-2}\} . $$ We also write $$ \mathscr{L}_J(\vec{s},\pi_K,
\widehat{\nu}) := \mathscr{L}_J(\pi_K,\widehat{\nu}\epsilon
^{\vec{s}} )  . $$

\begin{cor} \label{cor:vanish-value}
If there exists some $\mathfrak{p}\in J$ such that
$\alpha_\mathfrak{p}=1$, and if
$\widehat{\chi}_\mathfrak{p}\widehat{\nu}_\mathfrak{p}^{-1}=1$, then
$$\mathscr{L}_J(\pi_K, \widehat{\chi})=0.$$
\end{cor}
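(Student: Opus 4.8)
The plan is to deduce the vanishing from the interpolation formula of Theorem~\ref{thm:interpol} by a density argument, exploiting that $\widehat{\Theta}_J$ is an honest element of the Iwasawa algebra $\mathcal{O}_E[[\Gamma^-_J]]\otimes_{\mathcal{O}_E}E$, so that it defines a rigid-analytic function on the whole character variety of $\Gamma^-_J$ and not merely a pointwise-defined map. Thus it suffices to see that this function vanishes on the (dense set of) finite-order characters trivial at $\mathfrak{p}$, and then to propagate the vanishing.

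First I would treat finite-order characters. Let $\widehat{\chi}$ be a finite-order character in the $J$-branch of $\widehat{\nu}$ with $\widehat{\chi}_\mathfrak{p}\widehat{\nu}_\mathfrak{p}^{-1}=1$; since $\nu_\mathfrak{p}=1$ this forces $\chi_\mathfrak{p}=1$, so $\chi$ is unramified at $\mathfrak{p}$ and takes the value $1$ at the prime(s) of $K$ above $\mathfrak{p}$. Inspecting the definition of $\bar{e}_\mathfrak{p}(\pi,\chi)$ in this unramified situation: it equals $(1-\alpha_\mathfrak{p}^{-1})^2$ when $\mathfrak{p}$ is split, $1-\alpha_\mathfrak{p}^{-2}$ when $\mathfrak{p}$ is inert, and $1-\alpha_\mathfrak{p}^{-1}$ when $\mathfrak{p}$ is ramified — all zero because $\alpha_\mathfrak{p}=1$. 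Since $\ord_\mathfrak{p}\mathfrak{n}=r_\mathfrak{p}\le 1$, the exponent $2-\ord_\mathfrak{p}\mathfrak{n}$ is positive, hence $\tilde{e}_\mathfrak{p}(\pi,\chi)=0$ and therefore $e_\mathfrak{p}(\pi,\chi)=0$. Feeding this into Theorem~\ref{thm:interpol} gives $\jmath^{-1}\widehat{\Theta}_J(\widehat{\chi}\widehat{\nu}^{-1})^2=0$, whence $\mathscr{L}_J(\pi_K,\widehat{\chi})=\widehat{\Theta}_J(\widehat{\chi}\widehat{\nu}^{-1})=0$ for every such finite-order $\widehat{\chi}$.

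Next I would pass to arbitrary characters. The characters $\widehat{\chi}\widehat{\nu}^{-1}$ that occur above are exactly the finite-order characters of $\Gamma^-_J$ trivial on the $\mathfrak{p}$-component, that is, via the projection $\Gamma^-_J\twoheadrightarrow\Gamma^-_{J\setminus\{\mathfrak{p}\}}$ (compare the remark following Theorem~\ref{thm:D}), the finite-order characters of $\Gamma^-_{J\setminus\{\mathfrak{p}\}}$. Let $\bar{\Theta}\in\mathcal{O}_E[[\Gamma^-_{J\setminus\{\mathfrak{p}\}}]]\otimes_{\mathcal{O}_E}E$ be the image of $\widehat{\Theta}_J$ under this projection; the previous paragraph shows $\bar{\Theta}$ evaluates to $0$ at every finite-order character of $\Gamma^-_{J\setminus\{\mathfrak{p}\}}$. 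As $\Gamma^-_{J\setminus\{\mathfrak{p}\}}$ is, up to finite torsion, a free $\BZ_p$-module of finite rank, its finite-order characters are dense in the space of continuous characters, and an element of the Iwasawa algebra vanishing at all of them is zero (Weierstrass preparation in each variable); hence $\bar{\Theta}=0$. Consequently, for any $\widehat{\chi}$ in the $J$-branch of $\widehat{\nu}$ with $\widehat{\chi}_\mathfrak{p}\widehat{\nu}_\mathfrak{p}^{-1}=1$ — finite order or not — the character $\widehat{\chi}\widehat{\nu}^{-1}$ factors through $\Gamma^-_{J\setminus\{\mathfrak{p}\}}$ and $\mathscr{L}_J(\pi_K,\widehat{\chi})=\bar{\Theta}(\widehat{\chi}\widehat{\nu}^{-1})=0$.

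I do not expect a genuine obstacle here: the computation of $e_\mathfrak{p}(\pi,\chi)$ is a routine inspection, and the only step requiring a little care is the final propagation — one must use both the density of finite-order characters in the relevant component of the character variety and the fact that $\widehat{\Theta}_J$ genuinely lies in the Iwasawa algebra, so that vanishing on a dense subset forces the associated measure (after projecting away the $\mathfrak{p}$-factor) to vanish identically.
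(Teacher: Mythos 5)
Your proposal is correct and takes essentially the same route as the paper: the paper's proof also reduces by continuity to finite-order $\widehat{\chi}\widehat{\nu}^{-1}$, notes $\chi_\mathfrak{p}=1$ so that $\tilde{e}_\mathfrak{p}(\pi,\chi)=0$, and concludes from Theorem \ref{thm:interpol}. Your second paragraph (projecting $\widehat{\Theta}_J$ to the Iwasawa algebra of the quotient and using density of finite-order characters) is just an explicit rendering of the paper's ``by continuity'' step, and your case-by-case check of $\bar{e}_\mathfrak{p}$ together with the observation $2-\ord_\mathfrak{p}\mathfrak{n}>0$ fills in details the paper leaves implicit.
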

\begin{proof} By continuity we may assume that
$\widehat{\chi}\widehat{\nu}^{-1}$ is of finite order, i.e.
$\widehat{\chi}$ is of $(\mathbf{m},\mathbf{m})$-type. Then
$\chi_\mathfrak{p}=\nu_\mathfrak{p}\jmath^{-1}(\widehat{\chi}\widehat{\mu}^{-1})=1$
and so $\tilde{e}_\mathfrak{p}(\pi, \chi)=0$. Thus our statement
follows from Theorem \ref{thm:interpol}.
\end{proof}

\section{Harmonic cocycle valued and cohomological valued modular
forms}\label{sec:har-coh}

In this section we define Harmonic cocycle valued and cohomological
valued modular forms. Then we construct two maps called the
Schneider morphism and the Coleman morphism from Harmonic cocycle
valued modular forms to cohomological valued modular forms. Finally
we explain $L$-invariant of Teitelbaum type as the ratio of two
cohomological valued modular forms. We also define
``multiple-variable Harmonic cocycle'' valued modular forms and
attach them distributions needed in Section \ref{sec:meas-dist}.

The $L$-invariant of Teitelbaum type was defined by Chida, Mok and
Park \cite{CMP}. They defined $L$-invariant of this type to be the
ratio of two sets of data; those data are evaluations at special
points of our cohomological valued modular forms. We will fill some
details omitted in \cite{CMP}.  For example, to define Hecke
operators on cohomological valued modular forms, we need
corestriction maps for cohomological groups, while this is not
clearly described in \cite{CMP}.

To prove our result in Section \ref{sec:partial}, it is rather
convenient to use our language of cohomological valued modular forms
than that in \cite{CMP},  since we need evaluations of the
cohomological valued modular forms at points other than those chosen
out in \cite{CMP}.

\subsection{Notations}
Let $B$ and $G$ be as in Section \ref{ss:quaternion}. Then $B$
splits at all primes above $p$. Let $J_0$ be the subset of $J$ of
elements $\mathfrak{p}\in J$ such that $\alpha_\mathfrak{p}=1$. We
assume that each $\mathfrak{p}\in J_0$ splits in $K$.

Let $U=\prod_{\mathfrak{l}}U_{\mathfrak{l}}$ be a compact open
subgroup of $G^\infty$.  We assume that for each $\mathfrak{p}\in
J_0$, $U_{\mathfrak{p}}=U_0(\omega_\mathfrak{p})$.

Let $E$ be a sufficiently large extension of $\BQ_p$. In \S
\ref{ss:p-mod-form} for each $\sigma$ we define two vector spaces
$V_{k_\sigma}(E)$ and $L_{k_\sigma}(E)$ over $E$ with actions
$\rho_{k_{\sigma}}\circ \mathfrak{i}_{\sigma'}i_{\sigma'}^{-1}$ and
$\check{\rho}_{k_\sigma}\circ
\mathfrak{i}_{\sigma'}i_{\sigma'}^{-1}$ of
$\mathrm{GL}_2(F_{\mathfrak{p}_\sigma})$. As we will frequently use
these two notations $\rho_{k_{\sigma}}\circ
\mathfrak{i}_{\sigma'}i_{\sigma'}^{-1}$ and
$\check{\rho}_{k_\sigma}\circ
\mathfrak{i}_{\sigma'}i_{\sigma'}^{-1}$, to avoid cumbursomeness we
use $\rho_{k_\sigma}$ and $\check{\rho}_{k_\sigma}$ to denote them.

For each subset $\Sigma$ of $\Sigma_F$ we form two $E$-vector spaces
$$V_\Sigma(E)=\bigotimes_{\sigma\in \Sigma} V_{k_{\sigma}}(E) $$
and
$$V^{\Sigma}(E)=\bigotimes_{\sigma\notin \Sigma} V_{k_\sigma}(E)$$ with tensor product actions of
$G_p=\prod_{\mathfrak{p}|p}\GL_2(F_\mathfrak{p})$. Here we make the
convention that, if there is no $\sigma\in \Sigma$ such that
$\mathfrak{p}_\sigma=\mathfrak{p}$, then $\GL_2(F_\mathfrak{p})$
acts trivially on $V_\Sigma$; and if there is no $\sigma\in
\Sigma_F\backslash \Sigma$ such that
$\mathfrak{p}_\sigma=\mathfrak{p}$ then $\GL_2(F_\mathfrak{p})$ acts
trivially on $V^\Sigma$. When $V_\Sigma$ or $V^\Sigma$ is clear, by
abuse of notation we use $\rho_{\vec{k}}$ to denote the action of
$G_p$. Similarly, we form the $E$-vector spaces $L_\Sigma(E)$ and
$L^\Sigma(E)$ with actions $\check{\rho}_{\vec{k}}$ of $G_p$. When
$\Sigma=\Sigma_{J_1}$ ($J_1\subset J_0$), we also write $J_1$
instead of $\Sigma_{J_1}$ in the superscript and the subscript.

\subsection{Bounded harmonic cocycles and analytic distributions}
\label{ss:har-to-rigid} \label{ss:harmonic}

First we fix a prime $\mathfrak{p}\in J_0$.

Consider the Bruhat-Tits $\mathcal{T}_\mathfrak{p}$ tree for
$\GL_2(F_\mathfrak{p})$. Denote by $\mathcal{V}_\mathfrak{p}$ and
$\mathcal{E}_\mathfrak{p}$ the set of vertices and the set of
oriented edges of $\mathcal{T}_\mathfrak{p}$ respectively. The
source and the target of an oriented edge $e$ are denoted by $s(e)$
and $t(e)$ respectively. A vertex can be represented by a homotopy
class of lattices $L$ in $F_\mathfrak{p}^{\oplus 2}$. An edge can be
represented by a pair of lattices $(L_1,L_2)$ with
$[L_1:L_2]=|\omega_\mathfrak{p}|_{\mathfrak{p}}^{-1}$. Let $e_0$ be
the edge $(L_1,L_2)$ with
$L_1=\omega_\mathfrak{p}^{-1}\mathcal{O}_{F_\mathfrak{p}}\oplus
\mathcal{O}_{F_\mathfrak{p}}$ and
$L_2=\mathcal{O}_{F_\mathfrak{p}}\oplus
\mathcal{O}_{F_\mathfrak{p}}$. Let $\hbar_\mathfrak{p}$ be as in
Lemma \ref{lem:h-sigma}. If
$\hbar_\mathfrak{p}=\wvec{a_0}{b_0}{c_0}{d_0}$, we let $e_*$ be the
edge $(L_3, L_4)$ with
$$L_3=\{(a_0\omega_\mathfrak{p}^{-1}x+b_0y,
c_0\omega_\mathfrak{p}^{-1}x+d_0y):
x,y\in\mathcal{O}_{F_\mathfrak{p}}\}$$ and $$L_4=\{(a_0 x+b_0y, c_0
x+d_0y): x,y\in\mathcal{O}_{F_\mathfrak{p}}\}.$$

We consider the twisted action of $\GL_2(F_\mathfrak{p})$ on
$F_\mathfrak{p}^{\oplus 2}$: if $\mathbf{i}_\mathfrak{p}
i_\mathfrak{p}^{-1}(g)=\wvec{a}{b}{c}{d}$, then $g\cdot
(x,y)=(ax+by, cx+dy)$. This action  induces an action on
$\mathcal{T}_\mathfrak{p}$, and also an action on
$\mathbf{P}^1(F_\mathfrak{p})=F_\mathfrak{p}\cup\{\infty\}$. The
benefit of this twisted action is that
$i_\mathfrak{p}(K^\times_\mathfrak{p})$ acts on
$\mathbf{P}^1(F_\mathfrak{p})$ diagonally. Indeed, for any
$t=(t_\mathfrak{P}, t_{\bar{\mathfrak{P}}})\in
K_\mathfrak{p}^\times$, we have $i_\mathfrak{p}(t)\cdot
x=\frac{t_\mathfrak{P} x}{t_{\bar{\mathfrak{P}}} }$ for any $x\in
\mathbf{P}^1(F_\mathfrak{p})$. Hence, the fixed points in
$\mathbf{P}^1(F_\mathfrak{p})$ of
$i_\mathfrak{p}(K^\times_\mathfrak{p})$ are $0$ and $\infty$.

\begin{rem}\label{rem:twist-2} For the purpose of defining $L$-invariants, it is cumbersome and unnecessary to consider the
twisted action. However, it is rather convenience when we apply the
result in this section to prove the main result in Section
\ref{sec:partial}.
\end{rem}

The isotropy group of $e_*$ is $F_{\mathfrak{p}}^\times
U_\mathfrak{p}$, and the isotropy group of $e_0$ is
$F_{\mathfrak{p}}^\times\mathrm{Ad}(\hbar_\mathfrak{p})^{-1}(U_\mathfrak{p})$.
Thus $\mathcal{E}_\mathfrak{p}$ is isomorphic to the coset
$\GL_2(F_\mathfrak{p})/F_{\mathfrak{p}}^\times U_\mathfrak{p}$. Let
$G_p$ act on $\mathcal{T}_\mathfrak{p}$ by the projection
$G_p\rightarrow \GL_2(F_\mathfrak{p})$.

Let $C^0(\mathcal{T}_\mathfrak{p},L_{\vec{k}}(E))$ be the space of
$L_{\vec{k}}(E)$-valued functions on $\mathcal{V}_\mathfrak{p}$,
$C^1(\mathcal{T}_\mathfrak{p},L_{\vec{k}}(E))$ the space of
$L_{\vec{k}}(E)$-valued functions on $\mathcal{E}_\mathfrak{p}$ such
that $f(e)=-f(\bar{e})$. Let $G_p$ act on
$C^i(\mathcal{T}_\mathfrak{p}, L_{\vec{k}}(E))$ by $\gamma \star f = \gamma
\circ f\circ \gamma_\mathfrak{p}^{-1}$. Then we have a $G_p$-equivariant short
exact sequence
\begin{equation} \label{eq:cover-sq}
\xymatrix{ 0\ar[r] & L_{\vec{k}}(E) \ar[r] &
C^0(\mathcal{T}_\mathfrak{p}, L_{\vec{k}}(E)) \ar[r]^{\partial} &
C^1(\mathcal{T}_\mathfrak{p}, L_{\vec{k}}(E)) \ar[r] & 0
}\end{equation} where $$\partial(f)(e)=f(s(e))-f(t(e)).$$ For any
subgroup $\Gamma$ of $G_p$, from (\ref{eq:cover-sq}) we get the
injective map \begin{equation}\label{eq:delta} \delta_\Gamma:
C^1(\mathcal{T}_{\mathfrak{p}}, L_{\vec{k}}(E))^\Gamma \rightarrow
H^1(\Gamma,L_{\vec{k}}(E)).\end{equation}

Let $C^1_{\mathrm{har}}(\mathcal{T}_\mathfrak{p},L_{\vec{k}}(E))$ be
the space of {\it harmonic forms} $$ \left\{f\in
C^1(\mathcal{T}_\mathfrak{p},L_{\vec{k}}(E)): f(e)=-f(\bar{e}) \
\forall\ e\in \mathcal{E}_\mathfrak{p}, \text{ and }  \sum_{t(e)=v}
f(e)=0 \ \forall\ v \in \mathcal{V}_\mathfrak{p} \right\}.$$ Observe
that $C^1_{\mathrm{har}}(\mathcal{T}_\mathfrak{p},L_{\vec{k}}(E))$
is $G_p$-stable. We again use $\delta_\Gamma$ to denote the
composition
$$ C^1_{\mathrm{har}}(\mathcal{T}_\mathfrak{p},L_{\vec{k}}(E))^{\Gamma}\hookrightarrow
C^1(\mathcal{T}_\mathfrak{p},L_{\vec{k}}(E))^{\Gamma} \rightarrow H^1(\Gamma,L_{\vec{k}}(E)) . $$

For a subgroup $\Gamma$ of $G_p$, and $h\in G_p$, we have a map
\begin{eqnarray*} r_h : H^1(\Gamma, L_{\vec{k}}(E))
&\rightarrow& H^1(h^{-1}\Gamma h, L_{\vec{k}}(E)) , \\
  \phi &\mapsto&
( r_h \phi)(\gamma')= \check{\rho}_{\vec{k}}(h)^{-1}\phi(h \gamma
h^{-1}).
\end{eqnarray*}

We shall use the following lemma later. It comes from homological
theory.

\begin{lem}\label{lem:HA} Let  $\Gamma_1\subset \Gamma_2$ be two subgroups of
$G_p$.
\begin{enumerate} \item\label{it:HA-a} We have the following commutative diagram
\[ \xymatrix{ C^1_{\mathrm{har}}(\mathcal{T}_\mathfrak{p},L_{\vec{k}}(E))^{\Gamma_2} \ar[r]^{\delta_{\Gamma_2}}
\ar@{^(->}[d] & H^1(\Gamma_2,L_{\vec{k}}(E))
\ar[d]_{\mathrm{Res}_{\Gamma_2/\Gamma_1}}
\\ C^1_{\mathrm{har}}(\mathcal{T}_\mathfrak{p},L_{\vec{k}}(E))^{\Gamma_1} \ar[r]^{\delta_{\Gamma_1}}
 & H^1(\Gamma_1,L_{\vec{k}}(E))  . }\]
 \item\label{it:HA-b} When $[\Gamma_2:\Gamma_1]$ is finite, we have the following commutative diagram
\[ \xymatrix{ C^1_{\mathrm{har}}(\mathcal{T}_\mathfrak{p},L_{\vec{k}}(E))^{\Gamma_1} \ar[r]^{\delta_{\Gamma_1}}
\ar[d]_{\mathrm{Cor}_{\Gamma_2/\Gamma_1}}  &
H^1(\Gamma_2,L_{\vec{k}}(E))
\ar[d]_{\mathrm{Cor}_{\Gamma_2/\Gamma_1}}
\\ C^1_{\mathrm{har}}(\mathcal{T}_\mathfrak{p},L_{\vec{k}}(E))^{\Gamma_2} \ar[r]^{\delta_{\Gamma_2}}
 & H^1(\Gamma_2,L_{\vec{k}}(E)) . }\]
 \item\label{it:HA-c} If $\Gamma$ is a subgroup of $G_p$ and $h$ is an element of
 $G_p$, then we have the following commutative diagram
 \[ \xymatrix{ C^1_{\mathrm{har}}(\mathcal{T}_\mathfrak{p},L_{\vec{k}}(E))^{\Gamma} \ar[rr]^{\delta_{\Gamma}}
\ar[d]_{\check{\rho}_{\vec{k}}(h^{-1})} &&
H^1(\Gamma,L_{\vec{k}}(E)) \ar[d]_{r_{h}}
\\ C^1_{\mathrm{har}}(\mathcal{T}_\mathfrak{p},L_{\vec{k}}(E))^{h^{-1}\Gamma h} \ar[rr]^{\delta_{h\Gamma h^{-1}}}
 && H^1(h^{-1}\Gamma h,L_{\vec{k}}(E)).   }\]
\end{enumerate}
\end{lem}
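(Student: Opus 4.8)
The plan is to prove all three statements by unwinding the definition of the map $\delta_\Gamma$ through the connecting homomorphism of the short exact sequence \eqref{eq:cover-sq}, and then invoking the standard naturality properties of restriction, corestriction, and conjugation in group cohomology. First I would make explicit what $\delta_\Gamma$ does on a $1$-cocycle level: given $f \in C^1_{\mathrm{har}}(\mathcal{T}_\mathfrak{p},L_{\vec{k}}(E))^\Gamma$, pick any preimage $\tilde f \in C^0(\mathcal{T}_\mathfrak{p},L_{\vec{k}}(E))$ under $\partial$ (for instance $\tilde f(v)$ defined by summing $f$ along a path from a fixed base vertex $v_0$ to $v$), and then $\delta_\Gamma(f)$ is represented by the $1$-cocycle $\gamma \mapsto \gamma \star \tilde f - \tilde f$, which lands in $\ker \partial = L_{\vec{k}}(E)$ precisely because $f$ is $\Gamma$-invariant. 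This is the explicit form of the boundary map attached to \eqref{eq:cover-sq}, and all three parts follow by comparing this formula before and after applying the relevant operation.

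For part \ref{it:HA-a}, the point is simply that the inclusion $C^1_{\mathrm{har}}(\mathcal{T}_\mathfrak{p},L_{\vec{k}}(E))^{\Gamma_2} \hookrightarrow C^1_{\mathrm{har}}(\mathcal{T}_\mathfrak{p},L_{\vec{k}}(E))^{\Gamma_1}$ does not change $f$, so the same $\tilde f$ works for both, and the cocycle $\gamma \mapsto \gamma \star \tilde f - \tilde f$ for $\gamma \in \Gamma_2$ restricts to the cocycle for $\gamma \in \Gamma_1$; this is exactly the definition of $\mathrm{Res}_{\Gamma_2/\Gamma_1}$ on cocycles, so the square commutes. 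Part \ref{it:HA-c} is the analogous statement for conjugation: one checks that if $\tilde f$ is a $0$-cochain lifting $f$, then $\check\rho_{\vec k}(h^{-1}) \circ \tilde f \circ (\text{conj by }h)$ — i.e. $v \mapsto \check\rho_{\vec k}(h^{-1})\tilde f(h v)$ — is a $0$-cochain lifting $\check\rho_{\vec k}(h^{-1})\star f$ (which lies in the $h^{-1}\Gamma h$-invariants), and computing $\gamma' \mapsto \gamma' \star (\check\rho_{\vec k}(h^{-1})\circ \tilde f \circ h) - \check\rho_{\vec k}(h^{-1})\circ \tilde f \circ h$ for $\gamma' = h^{-1}\gamma h$ yields exactly $\check\rho_{\vec k}(h)^{-1}\,(\delta_\Gamma f)(h\gamma' h^{-1}) = (r_h \delta_\Gamma f)(\gamma')$. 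So both these parts reduce to matching two cocycle formulas.

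Part \ref{it:HA-b}, the corestriction compatibility, is the one I expect to require the most care, since corestriction is not defined by a naive cocycle formula but via a choice of coset representatives (or via the Shapiro/transfer description). The cleanest route is: both vertical maps $\mathrm{Cor}_{\Gamma_2/\Gamma_1}$ — the one on harmonic cocycles and the one on $H^1$ — can be described uniformly as the composite $\mathrm{Cor} = \sum_{i} r_{\gamma_i^{-1}} \circ (\text{suitable restriction/invariance)}$ over a set of representatives $\{\gamma_i\}$ of $\Gamma_2/\Gamma_1$, where on $C^1_{\mathrm{har}}$ the map is $f \mapsto \sum_i \gamma_i \star f$ (this makes sense and lands in the $\Gamma_2$-invariants because the sum is over a transversal). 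Then applying parts \ref{it:HA-a} and \ref{it:HA-c} termwise, the commutativity of the square in \ref{it:HA-b} follows formally from the fact that $\delta$ intertwines $f \mapsto \gamma_i \star f$ with the conjugation map $r_{\gamma_i}$ on cohomology, and that summing these over the transversal recovers corestriction on both sides. The main obstacle is thus bookkeeping: one must pin down precisely which normalization of $\mathrm{Cor}_{\Gamma_2/\Gamma_1}$ on harmonic cocycles is intended in \cite{CMP} (it is $f\mapsto \sum_i \gamma_i\star f$, independent of the transversal by $\Gamma_1$-invariance of $f$), and then verify that the explicit corestriction formula on inhomogeneous cochains, $(\mathrm{Cor}\,\phi)(\gamma) = \sum_i \check\rho_{\vec k}(\gamma_i)^{-1}\phi(\gamma_i \gamma \overline{\gamma_i\gamma}^{-1})$ with $\bar\cdot$ denoting the representative in the same coset, is compatible with $\delta$ — a computation that is routine once the lift $\tilde f$ and the transversal are fixed, but which must be done carefully to track the $\check\rho_{\vec k}$-twists.
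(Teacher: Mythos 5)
Your proposal is correct and amounts to the argument the paper intends: the paper offers no proof of Lemma \ref{lem:HA}, asserting only that it ``comes from homological theory'', and your cocycle-level check that restriction, corestriction and conjugation commute with the connecting homomorphism of the short exact sequence (\ref{eq:cover-sq}) is precisely that standard $\delta$-functoriality, with the left vertical map in part (\ref{it:HA-c}), labelled $\check{\rho}_{\vec{k}}(h^{-1})$, correctly interpreted as the full action $h^{-1}\star(-)$ (twisting both the values and the edges), which is what makes the image land in the $h^{-1}\Gamma h$-invariants. The one step you flag as delicate, the degree $0\to 1$ compatibility of the transfer in part (\ref{it:HA-b}), does work out exactly as you describe once $\mathrm{Cor}_{\Gamma_2/\Gamma_1}$ on invariant harmonic cochains is taken to be $c\mapsto\sum_i\gamma_i\star c$ over a transversal (writing $\gamma\gamma_i=\gamma_{j(i)}h_i$ with $h_i\in\Gamma_1$, the boundary of $\sum_i\gamma_i\star\tilde f$ reproduces the inhomogeneous transfer formula on the nose), and this is the same coset bookkeeping the paper later performs explicitly in the proof of Lemma \ref{lem:coleman} (\ref{it:col-b}) in the harder, non-equivariant setting of the Coleman morphism.
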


We extend the above notion of harmonic cocycles to multiple
variables.

Fix a nonempty subset $J_1$ of $J_0$. We form
$\mathcal{E}_{J_1}=\prod_{\mathfrak{p}\in J_1}
\mathcal{E}_\mathfrak{p}$. Taking product we obtain action of
$G_{J_1}=\prod_{\mathfrak{p}\in J_1} G_\mathfrak{p}$ on
$\mathcal{E}_{J_1}$. For each element $e\in \mathcal{E}_{J_1}$ we
write $e_\mathfrak{p}(e)$ for the $\mathfrak{p}$-component of $e$
and put $s_\mathfrak{p}(e)=s(e_\mathfrak{p}(e))$. For each
$\mathfrak{p}\in J_1$ let $e_{\mathfrak{p},*}\in
\mathcal{E}_\mathfrak{p}$ be an oriented edge fixed by
$U_{\mathfrak{p}}$. Write
$e_{J_1,*}=(e_{\mathfrak{p},*})_{\mathfrak{p}\in J_1}$.

For any edge $e\in \mathcal{E}_\mathfrak{p}$ let
$\mathcal{U}_e\subset \mathbf{P}^1(F_\mathfrak{p})$ denote the set
of end points of paths (in $\mathcal{T}_\mathfrak{p}$) through $e$.
For $g\in G_\mathfrak{p}$ we have $\mathcal{U}_{ge}=g\mathcal{U}_e$.
If we identify $\mathbf{P}^1(F_\mathfrak{p})$ with
$F_\mathfrak{p}\cup\{\infty\}$ by $[a,b]\mapsto \frac{a}{b}$, then
$\mathcal{U}_{e_{\mathfrak{p},0}}=\mathcal{O}_{F_\mathfrak{p}}$. For
$e=\{e_j\}_{j\in J_1}\in\mathcal{E}_{J_1}$ we put
$\mathcal{U}_e=\prod_{j\in J_1}\mathcal{U}_{e_j}$.

\begin{defn}\label{defn:harmonic}
A {\it harmonic cocycle on $\mathcal{E}_{J_1}$ with values in
$L_{\vec{k}}(E)$} is a function $c: \mathcal{E}_{J_1}\rightarrow
L_{\vec{k}}(E)$ that satisfies the following two conditions:
\begin{enumerate}
\item\label{it:har-a} For each $\mathfrak{p}\in J_1$ if
$e_\mathfrak{p}$ is replaced by $\bar{e}_\mathfrak{p}$, then $c(e)$
is changed by a sign.
\item\label{it:har-b} For each $\mathfrak{p}\in J_1$ and $\sharp(J_1)$ elements $v_\mathfrak{p}\in
\mathcal{V}_\mathfrak{p}$ and  $e_{\mathfrak{p}'}\in
\mathcal{E}_{\mathfrak{p}'}$ ($\mathfrak{p}'\in J_1$,
$\mathfrak{p}'\neq \mathfrak{p}$), we have
$$\sum_{s_\mathfrak{p}(e)=v, e_{\mathfrak{p}'}(e)=e_{\mathfrak{p}'}}
c(e)=0,$$ where the sum runs through all $e\in \mathcal{E}_{J_1}$
such that $s_\mathfrak{p}(e)=v_\mathfrak{p}$ and
$e_{\mathfrak{p}'}(e)=e_{\mathfrak{p}'}$ ($\mathfrak{p}'\neq
\mathfrak{p}$).
\end{enumerate}
\end{defn}

Let $G_p$ act on the space of harmonic cocycles by
$$ (g\star  c) (e) = \check{\rho}_{\vec{k}}(g)  c(g_{J_1}^{-1} e) .$$

\begin{defn} We say that a harmonic cocycle $c$ is {\it bounded}, if $$ \{ (
g\star c)(e_{J_1,*}) : g\in G_{J_1}\} $$ is bounded for any norm
$|\cdot|$ on $L_{\vec{k}}(E)$, i.e.
$$ \sup_{ g\in G_{J_1} } |\check{\rho}_{\vec{k}}(g) c(g^{-1}e_{J_1,*})|
<\infty.
$$
\end{defn}

Next we attach to each bounded harmonic cocycle an analytic (vector
valued) distribution.

For each $\mathfrak{p}\in J_1$ we fix an $\iota_\mathfrak{p}\in
\Sigma_\mathfrak{p}$. Write
$\iota_{J_1}=(\iota_\mathfrak{p})_{\mathfrak{p}\in J_1}$.

Let $\mathrm{LP}_{\iota_\mathfrak{p}}$ be the space of of locally
polynomials on $\mathbf{P}^1(F_\mathfrak{p})$ of degree $\leq
k_{\iota_\mathfrak{p}}-2$. Precisely, a function on
$\mathbf{P}^1(F_\mathfrak{p})$ belongs to
$\mathrm{LP}_{\iota_\mathfrak{p}}$ if and only if for each point $x$
of $\mathbf{P}^1(F_\mathfrak{p})$ there exists an open neighborhood
$U_x$ of $x$ such that $f|_{U_x}$ is a polynomial with coefficients
in $E$ of degree $\leq k_{\iota_\mathfrak{p}}-2$. We define an
action of $\mathrm{GL}(2,F_\mathfrak{p})$ on $\mathrm{LP}_{
\iota_\mathfrak{p} }$ by
$$(\mathrm{Ad}(\hbar_\mathfrak{p}^{-1})\wvec{a}{b}{c}{d})^{-1} f  (x) = \iota_{\mathfrak{p}}\left(\frac{( c x+ d )^{k_{\iota_\mathfrak{p}}-2} }{(ad-bc)^{\frac{k_{\iota_\mathfrak{p}}}{2}-1}} \right)
 f \left(\frac{ax+b}{cx+d}\right) .
$$
Let $\mathrm{LP}_{\iota_{J_1}}$ be the tensor product
$\bigotimes_{\mathfrak{p}\in J_1}\mathrm{LP}_{\iota_\mathfrak{p}}$
(of $E$-vector spaces) with the tensor product action of
$G_{J_1}=\prod_{\mathfrak{p}\in J_1}\GL_2(F_\mathfrak{p})$;
$\mathrm{LP}_{\iota_{J_1}}$ can be considered as a space of
functions on $\mathbf{P}_{J_1}:=\prod_{\mathfrak{p}\in
J_1}\mathbf{P}^1(F_\mathfrak{p})$.

Using the relation (\ref{it:har-b}) in Definition
\ref{defn:harmonic} we attach to each harmonic cocycle $c$ an
$L^{\iota_{J_1}}(E)$-valued linear functional $\mu_c^{\iota_{J_1}}$
of $\mathrm{LP}_{\iota_{J_1}}$ such that
$$ \langle \int_{\mathcal{U}_e} \prod_{\mathfrak{p}\in J_1}x_{\mathfrak{p}}^{j_\mathfrak{p}} \mu_{c}^{\iota_{J_1}} , Q \rangle =
\frac{ \langle  c(e),   (\bigotimes_{\mathfrak{p}\in
J_1}X_{\mathfrak{p}}^{j_{\mathfrak{p}}}Y_{\mathfrak{p}}^{k_{\iota_\mathfrak{p}}-2-j_{\mathfrak{p}}})
\otimes Q\rangle}{\prod\limits_{\mathfrak{p}\in
J_1}\binc{k_{\iota_\mathfrak{p}}-2}{j_{\mathfrak{p}}}}
$$ for $e\in \mathcal{E}_{J_1}$, $Q\in V^{\iota_{J_1}}(E)$,
and  $j_{\mathfrak{p}} \in \{0,1,\cdots, k_{\iota_\mathfrak{p}}-2
\}$.

In the following, for the purpose of simplifying notations, we will
write $\prod_{\mathfrak{p}\in J_1}x_{\mathfrak{p}}^{j_\mathfrak{p}}$
for the element $\bigotimes_{\mathfrak{p}\in
J_1}X_{\mathfrak{p}}^{j_{\mathfrak{p}}}Y_{\mathfrak{p}}^{k_{\iota_\mathfrak{p}}-2-j_{\mathfrak{p}}}$
in $V_{\iota_{J_1}}(E)$.

\begin{lem} For $g\in G_p$ we have
\begin{equation}\label{eq:invariant}
\int_{\mathcal{U}_e} (\rho_{\vec{k}}(g^{-1}) P  )
\mu_{c}^{\iota_{J_1}}= \check{\rho}_{\vec{k}}(g^{-1}) \int_{g
\mathcal{U}_e} P \mu_{g\star c}^{\iota_{J_1}}
\end{equation} for each $P\in V_{\iota_{J_1}}(E)$.
\end{lem}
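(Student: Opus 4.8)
The plan is to reformulate the integral against $\mu_c^{\iota_{J_1}}$ as a partial contraction with the cocycle value, and then to reduce (\ref{eq:invariant}) to the $G_p$-equivariance of a single binomial-normalized correspondence between polynomial functions and the module $V_{\iota_{J_1}}(E)$. First I would introduce the $E$-linear map $\phi\colon\mathrm{LP}_{\iota_{J_1}}\to V_{\iota_{J_1}}(E)$ carrying the monomial function $\prod_{\mathfrak{p}\in J_1}x_\mathfrak{p}^{j_\mathfrak{p}}$ to $\big(\prod_{\mathfrak{p}\in J_1}\binom{k_{\iota_\mathfrak{p}}-2}{j_\mathfrak{p}}\big)^{-1}\bigotimes_{\mathfrak{p}\in J_1}X_\mathfrak{p}^{j_\mathfrak{p}}Y_\mathfrak{p}^{k_{\iota_\mathfrak{p}}-2-j_\mathfrak{p}}$, so that the defining formula of $\mu_c^{\iota_{J_1}}$ reads $\int_{\mathcal{U}_e}f\,\mu_c^{\iota_{J_1}}=\langle c(e)\,|\,\phi(f)\rangle$, where $\langle\,\cdot\,|\,\cdot\,\rangle\colon L_{\vec k}(E)\times V_{\iota_{J_1}}(E)\to L^{\iota_{J_1}}(E)$ is the partial pairing that contracts the $L_{\iota_{J_1}}(E)$-factor against $V_{\iota_{J_1}}(E)$ and leaves the $L^{\iota_{J_1}}(E)$-factor. (For $f$ a polynomial, as will be the case below, no partition of $\mathcal{U}_e$ into balls is needed; harmonicity of $c$ only enters in making the formula consistent for genuinely locally polynomial $f$, which is given.)

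Both sides of (\ref{eq:invariant}) are $E$-linear in $P$, so I may take $P=\prod_{\mathfrak{p}\in J_1}x_\mathfrak{p}^{j_\mathfrak{p}}$ a monomial. Using $g\mathcal{U}_e=\mathcal{U}_{ge}$ (recalled just above, together with $\mathcal{U}_e=\prod_j\mathcal{U}_{e_j}$) and the reformulation above, the right-hand side of (\ref{eq:invariant}) equals $\check\rho_{\vec k}(g^{-1})\langle(g\star c)(ge)\,|\,\phi(P)\rangle$, while the left-hand side equals $\langle c(e)\,|\,\phi(\rho_{\vec k}(g^{-1})P)\rangle$. Now the definition of the $\star$-action gives $(g\star c)(ge)=\check\rho_{\vec k}(g)c(e)$; and from the defining property $\langle\check\rho_{\vec k}(h)\ell,\rho_{\vec k}(h)v\rangle=\langle\ell,v\rangle$ of the contragredient pairing together with the fact that $g\in G_p$ acts diagonally on $V_{\vec k}(E)=V_{\iota_{J_1}}(E)\otimes V^{\iota_{J_1}}(E)$, one obtains $\check\rho_{\vec k}(g^{-1})\langle\check\rho_{\vec k}(g)\xi\,|\,w\rangle=\langle\xi\,|\,\rho_{\vec k}(g^{-1})w\rangle$ for $\xi\in L_{\vec k}(E)$ and $w\in V_{\iota_{J_1}}(E)$. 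Hence the right-hand side becomes $\langle c(e)\,|\,\rho_{\vec k}(g^{-1})\phi(P)\rangle$, and (\ref{eq:invariant}) reduces to the single identity $\phi(\rho_{\vec k}(g^{-1})P)=\rho_{\vec k}(g^{-1})\phi(P)$, i.e. to the assertion that $\phi$ is $G_p$-equivariant, where on the source $\rho_{\vec k}$ denotes the action on $\mathrm{LP}_{\iota_{J_1}}$ (defined via the twist $\mathrm{Ad}(\hbar_\mathfrak{p}^{-1})$ and the embeddings $\iota_\mathfrak{p}$) and on the target the action on $V_{\iota_{J_1}}(E)$.

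The equivariance of $\phi$ is the classical duality between $\mathrm{Sym}^{k-2}$ and its functional realization: the binomial coefficients built into $\phi$ are precisely what makes it hold, and I would verify it by a short computation on a single monomial using the binomial theorem and Lemma \ref{lem:h-sigma} (to identify $\mathfrak{i}_\mathfrak{p}i_\mathfrak{p}^{-1}$ with $\mathrm{Ad}(\hbar_\mathfrak{p})$, so that the two actions are expressed by the same matrices). Granting this, (\ref{eq:invariant}) follows. The main obstacle is exactly this last point — keeping the three actions in play straight ($\rho_{\vec k}$ and its contragredient on $V_{\vec k}(E)=V_{\iota_{J_1}}(E)\otimes V^{\iota_{J_1}}(E)$, the twisted action on $\mathrm{LP}_{\iota_{J_1}}$, and the translation action $\star$ on harmonic cocycles) and tracking the binomial normalization that reconciles the first two; the remainder is purely formal and requires no analytic input, since $\rho_{\vec k}(g^{-1})$ preserves the finite-dimensional space of polynomials of degree $\le k_{\iota_\mathfrak{p}}-2$ in each variable.
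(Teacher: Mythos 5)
Your proof has exactly the same skeleton as the paper's own argument: reduce to a monomial $P$, apply the defining relation of the distribution at the edge $ge$, use $(g\star c)(ge)=\check\rho_{\vec k}(g)c(e)$, and move the group action across the invariant pairing; those formal steps are all correct, and your observation that harmonicity is not needed for polynomial integrands at a single edge is also right. The difference is that you isolate the remaining input as a single claim: that the binomially normalized map $\phi\colon x^{j}\mapsto\binom{k_{\iota_\mathfrak{p}}-2}{j}^{-1}X^{j}Y^{k_{\iota_\mathfrak{p}}-2-j}$ intertwines the $\mathrm{LP}_{\iota_{J_1}}$-action with $\rho_{\vec k}$ on $V_{\iota_{J_1}}(E)$, to be checked ``by the binomial theorem''.

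That claim is where the proposal breaks: it is false as stated, and no binomial-theorem computation will establish it. Test it on an upper-triangular unipotent $u$ with $\mathfrak{i}_\mathfrak{p}i_\mathfrak{p}^{-1}(u)=\big(\begin{smallmatrix}1&b\\0&1\end{smallmatrix}\big)$: in $\mathrm{LP}_{\iota_\mathfrak{p}}$ the automorphy factor is $(cx+d)^{k-2}=1$ and the constant function $1$ is fixed (by $u$ and $u^{-1}$ alike), whereas $\phi(1)=Y^{k_{\iota_\mathfrak{p}}-2}$ is not fixed by $\rho_{\vec k}(u^{\pm1})$ (it is the lower-unipotent-fixed vector). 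Since $\phi$ is diagonal in the monomial basis, the binomial rescaling cannot repair this: the function model with that automorphy factor is identified by such a normalization with the \emph{contragredient} of $\mathrm{Sym}^{k-2}$ (equivalently, equivariance holds only after a transpose/main-involution twist), which is precisely where binomial coefficients classically arise. The paper's proof does not need (and does not prove) your identity: its first equality simply applies the defining formula of $\mu_c^{\iota_{J_1}}$ with $\rho_{\vec k}(g^{-1})P$ in place of the monomial, i.e.\ it uses one and the same dictionary between elements of $V_{\iota_{J_1}}(E)$ and locally polynomial functions on both sides of the identity, so the normalization constants cancel and the whole chain is formal. To repair your write-up, either do the same — treat the integrand $\rho_{\vec k}(g^{-1})P$ as the (transported) element of $V_{\iota_{J_1}}(E)$ and pair it with $c(e)$ by the same convention used in the definition of $\mu_c^{\iota_{J_1}}$, which removes any need for equivariance of $\phi$ — or, if you insist on comparing with the M\"obius-formula action on $\mathrm{LP}_{\iota_{J_1}}$, state and prove the correct intertwining, which involves the contragredient (transpose-inverse) and not $\rho_{\vec k}$ itself.
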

\begin{proof} If $P=\prod_{\mathfrak{p}\in J_1}x_{\mathfrak{p}}^{j_\mathfrak{p}}\in V_{\iota_J}(E)$ and $Q\in
V^{\iota_{J_1}}(E)$, then \begin{eqnarray*} && \langle
\int_{\mathcal{U}_e}
 \rho_{\vec{k}}(g^{-1}) P   \mu_{c}^{\iota_{J_1}},Q \rangle  = \frac{
\langle  c(e), ( \rho_{\vec{k}}(g^{-1})P) \otimes
Q\rangle}{\prod_{\mathfrak{p}\in
J_1}\binc{k_{\iota_\mathfrak{p}}-2}{j_{\mathfrak{p}}}}  = \frac{
\langle   \check{\rho}_{\vec{k}}(g)  c(e), P \otimes
 \rho_{\vec{k}}(g)Q \rangle}{\prod_{\mathfrak{p}\in
J_1}\binc{k_{\iota_\mathfrak{p}}-2}{j_{\mathfrak{p}}}} \\
&=& \frac{ \langle (g\star c)(g e), P \otimes \rho_{\vec{k}}(g)Q
\rangle}{\prod_{\mathfrak{p}\in
J_1}\binc{k_{\iota_\mathfrak{p}}-2}{j_{\mathfrak{p}}}}  =  \langle
 \int_{\mathcal{U}_{g e}} P \mu_{g\star c}^{\iota_{J_1}} ,\rho_{\vec{k}}(g)Q  \rangle = \langle
\check{\rho}(g^{-1})\int_{\mathcal{U}_{g e}} P \mu_{g\star
c}^{\iota_{J_1}}, Q \rangle ,
\end{eqnarray*} as wanted.
\end{proof}

\begin{rem}\label{rem:measure} If $k_{\mathfrak{p}}=2$ for each $\mathfrak{p}\in J_1$, then for each $g\in G_{J_1}$ we have
$$ \mu^{\iota_{J_1}}_c(g \cdot \mathcal{U}_{e_{J_1,*}}) = c(g\cdot e_{J_1,*}). $$
\end{rem}

For a point $a_{J_1}\in \prod_{\mathfrak{p}\in J_1} F_\mathfrak{p}$,
and a vector of positive integers
$m_{J_1}=(m_\mathfrak{p})_{\mathfrak{p}\in J_1}$, we use
$\mathcal{U}(a_{J_1}, m_{J_1})$ to denote the product of closed
discs
$$ \mathcal{U}(a_{J_1}, m_{J_1})=\prod_{\mathfrak{p}\in J_1}\mathcal{U}(a_{\mathfrak{p}}, m_{\mathfrak{p}}). $$ with the convention
$$\mathcal{U}(\infty, m_\mathfrak{p})=\{x\in \mathbf{P}^1(F_\mathfrak{p}):
|x|\geq |\omega_\mathfrak{p}|^{-m_\mathfrak{p}}\}.$$

For each point $a_{\mathfrak{p}}\in \mathbf{P}^1(F_\mathfrak{p})$,
put $$ A(a_{\mathfrak{p}}) =
\left\{\begin{array}{ll} \max\{ 1, |a_{\mathfrak{p}}|\}^{k_\mathfrak{p}} & \text{ if } a_\mathfrak{p}\neq \infty ,  \\
1 & \text{ if } a_\mathfrak{p} = \infty.
\end{array}\right.
$$ Then we put $A(a_{J_1})=\prod_{\mathfrak{p}\in
J_1}A(a_\mathfrak{p})$.

\begin{prop}\label{prop:estimate}
If $c$ is bounded, then there exists a constant $A>0$ such that
$$\left| \int_{\mathcal{U}(a_{J_1}, m_{J_1})} \prod_{\mathfrak{p}\in J_1}
(x_\mathfrak{p}-a_{\mathfrak{p}})^{j_\mathfrak{p}}
\mu_c^{\iota_{J_1}} \right| \leq A \cdot A(a_{J_1})\cdot
\prod_{\mathfrak{p}\in J_1} |\omega_\mathfrak{p}|
^{m_\mathfrak{p}(j_\mathfrak{p}+1-\frac{k_{\iota_\mathfrak{p}}}{2})}
$$ for each $a_{J_1}$ and $m_{J_1}\geq 0$. Here $m_{J_1}\geq 0$ means that each
component of $m_{J_1}$ is nonnegative.
\end{prop}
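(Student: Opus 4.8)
The plan is to reduce the multivariable estimate to the single-variable case by a Fubini-type argument, and then prove the single-variable bound by the standard Amice–Vélu / Bertolini–Darmon style computation using only the finitely many moments that the definition of $\mu_c^{\iota_{J_1}}$ actually records. First I would fix $a_{J_1}$ and $m_{J_1}$. Note that the disc $\mathcal{U}(a_{J_1},m_{J_1})$ is a product over $\mathfrak{p}\in J_1$, and that for each $\mathfrak{p}$ there is an explicit element $g_\mathfrak{p}\in\GL_2(F_\mathfrak{p})$ (built from $\hbar_\mathfrak{p}$ together with the translation $x\mapsto x-a_\mathfrak{p}$ and scaling by $\omega_\mathfrak{p}^{m_\mathfrak{p}}$, with the obvious modification at $\infty$) carrying the base edge $e_{\mathfrak{p},0}$ to the edge $e_{\mathfrak{p}}(a_\mathfrak{p},m_\mathfrak{p})$ whose ends are $\mathcal{U}(a_\mathfrak{p},m_\mathfrak{p})$; put $g_{J_1}=(g_\mathfrak{p})$. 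The monomial $\prod_\mathfrak{p}(x_\mathfrak{p}-a_\mathfrak{p})^{j_\mathfrak{p}}$ is, up to the explicit scalar from the $\rho_{\vec k}$-action on $V_{\iota_{J_1}}(E)$ (a power of $\iota_\mathfrak{p}(\omega_\mathfrak{p})^{m_\mathfrak{p}}$ times a factor bounded by $A(a_{J_1})$ coming from the $(cx+d)^{k_{\iota_\mathfrak{p}}-2}$ and the determinant), the image under $\rho_{\vec k}(g_{J_1}^{-1})$ of the monomial $\prod_\mathfrak{p} x_\mathfrak{p}^{j_\mathfrak{p}}$ supported on $\mathcal{U}_{e_{J_1,0}}$.

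Next I would apply the invariance relation (\ref{eq:invariant}):
$$\int_{\mathcal{U}(a_{J_1},m_{J_1})}\prod_\mathfrak{p}(x_\mathfrak{p}-a_\mathfrak{p})^{j_\mathfrak{p}}\,\mu_c^{\iota_{J_1}}
=(\text{scalar})\cdot\check\rho_{\vec k}(g_{J_1}^{-1})\int_{\mathcal{U}_{e_{J_1,0}}}\prod_\mathfrak{p}x_\mathfrak{p}^{j_\mathfrak{p}}\,\mu_{g_{J_1}\star c}^{\iota_{J_1}}.$$
By the defining formula for $\mu_c^{\iota_{J_1}}$, the integral on the right is exactly $\langle (g_{J_1}\star c)(e_{J_1,0}),\ (\bigotimes_\mathfrak{p}X_\mathfrak{p}^{j_\mathfrak{p}}Y_\mathfrak{p}^{k_{\iota_\mathfrak{p}}-2-j_\mathfrak{p}})\otimes(-)\rangle$ divided by the product of binomial coefficients, i.e. a bounded multiple of a value of $g_{J_1}\star c$; but since $c$ is bounded and $e_{J_1,0}$ differs from $e_{J_1,*}$ by a fixed element of $G_{J_1}$, $\sup_{g}\,|(g\star c)(e_{J_1,0})|<\infty$ as well. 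So this contributes a factor $\le A_0$, a constant depending only on $c$. Then I would track the two remaining sources of growth: the scalar produced in the previous step (which is $\prod_\mathfrak{p}|\iota_\mathfrak{p}(\omega_\mathfrak{p})|^{m_\mathfrak{p}(j_\mathfrak{p}+1)}$ times the det-factor $\prod_\mathfrak{p}|\iota_\mathfrak{p}(\omega_\mathfrak{p})|^{-m_\mathfrak{p}(k_{\iota_\mathfrak{p}}/2-1)}$, i.e. $\prod_\mathfrak{p}|\omega_\mathfrak{p}|^{m_\mathfrak{p}(j_\mathfrak{p}+1-k_{\iota_\mathfrak{p}}/2)}$ since $\iota_\mathfrak{p}$ is an embedding over $\BQ_p$ so preserves the normalized absolute value on $F_{\mathfrak{p}_\sigma}$), together with the factor $A(a_{J_1})$ from the $(cx+d)$ and the normalization of $g_\mathfrak{p}$ at $a_\mathfrak{p}=\infty$; absorbing $A_0$ and the binomial coefficients into a single constant $A>0$ gives the claimed inequality.

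The main obstacle I anticipate is bookkeeping rather than conceptual: one must choose the matrices $g_\mathfrak{p}$ carefully so that the $\rho_{\vec k}$-scalar in (\ref{eq:invariant}) comes out to precisely $\prod_\mathfrak{p}|\omega_\mathfrak{p}|^{m_\mathfrak{p}(j_\mathfrak{p}+1-k_{\iota_\mathfrak{p}}/2)}$ and no worse, being attentive to the twist by $\mathrm{Ad}(\hbar_\mathfrak{p}^{-1})$ built into the $\GL_2(F_\mathfrak{p})$-action on $\mathrm{LP}_{\iota_\mathfrak{p}}$ (this is exactly why the $\hbar_\mathfrak{p}$ enters the definitions of $e_0$ and $e_*$), and to the separate behavior of the disc centered at $\infty$, where $A(\infty)=1$ and the relevant matrix is $\wvec{1}{0}{0}{\omega_\mathfrak{p}^{m_\mathfrak{p}}}$ rather than a translate-and-scale. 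Since everything factors as a product over $\mathfrak{p}\in J_1$ and the $\mathfrak{p}$-components of the integral, the edge, and the distribution are independent, the multivariable case follows from the one-variable estimate with $A=\prod_\mathfrak{p}A_\mathfrak{p}$ and $A_0$ the boundedness constant of $c$; no genuinely new analytic input beyond (\ref{eq:invariant}) and boundedness of $c$ is needed.
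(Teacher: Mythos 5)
Your proposal is correct and follows essentially the same route as the paper: choose the explicit affine matrices $g_\mathfrak{p}$ (upper triangular for finite $a_\mathfrak{p}$, the anti-diagonal matrix at $\infty$) carrying $\mathcal{U}(a_{J_1},m_{J_1})$ to the standard disc, apply the invariance relation (\ref{eq:invariant}), identify the resulting integral as a moment of the translated cocycle bounded via the boundedness of $c$, and read off the scalar $\prod_\mathfrak{p}|\omega_\mathfrak{p}|^{m_\mathfrak{p}(j_\mathfrak{p}+1-k_{\iota_\mathfrak{p}}/2)}$ together with the factor $A(a_{J_1})$. The only (immaterial) difference is bookkeeping: the paper attributes $A(a_{J_1})$ to the boundedness of $\check{\rho}_{\vec{k}}(g^{-1})$ acting on the cocycle values, whereas you locate it in the $(cx+d)$/determinant factors of the polynomial transformation.
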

\begin{proof} Put $$ g_\mathfrak{p} = \left\{\begin{array}{ll}
{i}_\mathfrak{p}
\mathbf{i}_\mathfrak{p}^{-1}\wvec{1}{-a_\mathfrak{p}}{0}{
\omega_\mathfrak{p}^{ m_\mathfrak{p} } } & \text{ if }
a_\mathfrak{p}\neq \infty , \\ {i}_\mathfrak{p}
\mathbf{i}_\mathfrak{p}^{-1}\wvec{0}{1}{ \omega_\mathfrak{p}^{
m_\mathfrak{p} } }{0} & \text{ if } a_\mathfrak{p} = \infty
\end{array}\right.$$ and $g=(g_{\mathfrak{p}})_{\mathfrak{p}}\in
G_{J_1}$. Then
$g_\mathfrak{p}(\mathcal{U}(a_{\mathfrak{p}},{m_{\mathfrak{p}}}
))=\mathcal{O}_{F_\mathfrak{p}}$, and so $g\mathcal{U}(a_{J_1},
m_{J_1})=\mathcal{U}(0_{J_1},0_{J_1})$.

When none of $a_\mathfrak{p}$ is infinite, as
$$  g  \prod_{\mathfrak{p}\in J_1} x_\mathfrak{p}^{j_\mathfrak{p}}
= \frac{ \omega_{\mathfrak{p}}^{
m_\mathfrak{p}(k_{\iota_\mathfrak{p}} -2)} } {
\omega_{\mathfrak{p}}^{ m_\mathfrak{p}
(\frac{k_{\iota_\mathfrak{p}}}{2}-1) } } \left(\frac{
x_{\mathfrak{p}}-a_{\mathfrak{p}} } { \omega_\mathfrak{p}^{
m_\mathfrak{p}} }\right)^{j_\mathfrak{p}} = \prod_{\mathfrak{p}\in
J_1} \omega_{\mathfrak{p}}^{
m_\mathfrak{p}(\frac{k_{\iota_\mathfrak{p}} }{2}-1-j_\mathfrak{p}) }
(x_\mathfrak{p}-a_\mathfrak{p})^{j_\mathfrak{p}},$$ by
(\ref{eq:invariant}) we have
$$ \int_{\mathcal{U}(a_{J_1}, m_{J_1})} \prod_{\mathfrak{p}\in J_1}(x_\mathfrak{p}-a_\mathfrak{p})^{j_\mathfrak{p}}  \mu_c^{\iota_{J_1}}  =
\prod_{\mathfrak{p}\in J_1}
\omega_\mathfrak{p}^{m_\mathfrak{p}(j_\mathfrak{p}+1-\frac{k_{\iota_\mathfrak{p}}}{2})}
g^{-1}\int_{\mathcal{U}(0_{J_1},0_{J_1})} \prod_{\mathfrak{p}\in
J_1} x_\mathfrak{p}^{j_\mathfrak{p}} \mu^{\iota_{J_1}}_{g\star c}  .
$$ As $c$ is bounded and $g^{-1}$ is bounded by $A(a_{J_1})$, this yields our
assertion. When some of $a_\mathfrak{p}$ is infinite, the argument
is similar, and we omit it.
\end{proof}

\begin{prop}\label{prop:har-to-measure}
If $c$ is bounded, then there is a unique $L^{\iota_{J_1}}$-valued
analytic distribution $\mu_c^{\iota_{J_1}}$ on $\mathbf{P}_{J_1}$
such that
$$ \langle Q, \int_{\mathcal{U}_e} \prod\limits_{\mathfrak{p}\in J_1}x_{\mathfrak{p}}^{j_\mathfrak{p}}  \mu_{c}^{\iota_{J_1}}\rangle =
\frac{ \langle \prod\limits_{\mathfrak{p}\in
J_1}X_{\iota_\mathfrak{p}}^{j_\mathfrak{p}}Y_{\iota_\mathfrak{p}}^{k_{\iota_\mathfrak{p}}-2-j_{\mathfrak{p}}}\otimes
Q, c(e)\rangle}{\prod\limits_{\mathfrak{p}\in
J_1}\binc{k_{\iota_{\mathfrak{p}}}-2}{j_\mathfrak{p}}}, \hskip 20pt
0\leq j_\mathfrak{p} \leq k_{\iota_\mathfrak{p}}-2
$$ for each $Q\in V^{\iota_{J_1}}(E)$.
\end{prop}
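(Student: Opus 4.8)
The plan is to view Proposition~\ref{prop:har-to-measure} as an extension statement. The preceding lemma has already attached to the bounded harmonic cocycle $c$ an $L^{\iota_{J_1}}(E)$-valued linear functional on the space $\mathrm{LP}_{\iota_{J_1}}$ of locally polynomial functions of multidegree $\le(k_{\iota_\mathfrak{p}}-2)_{\mathfrak{p}\in J_1}$ on $\mathbf{P}_{J_1}=\prod_{\mathfrak{p}\in J_1}\mathbf{P}^1(F_\mathfrak{p})$; concretely this is a finitely additive $L^{\iota_{J_1}}(E)$-valued function on the algebra of compact opens of $\mathbf{P}_{J_1}$ together with its moments against the monomials $\prod_\mathfrak{p}(x_\mathfrak{p}-a_\mathfrak{p})^{j_\mathfrak{p}}$, $0\le j_\mathfrak{p}\le k_{\iota_\mathfrak{p}}-2$. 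What remains is to extend it to a continuous functional on the space of locally analytic functions on $\mathbf{P}_{J_1}$ and to see that the extension is unique within the admissible range. This is the several-variable version of the correspondence of Amice--V\'elu and Vishik between admissible $p$-adic distributions and their bounded systems of moments, and the only new input required is the growth estimate furnished by Proposition~\ref{prop:estimate}.

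First I would record the combinatorics. Up to the action of $\GL_2(F_\mathfrak{p})$ on $\mathbf{P}^1(F_\mathfrak{p})$ every ball $\mathcal{U}(a,m)$ --- finite or containing $\infty$ --- is one of the basic sets $\mathcal{U}_e$, $e\in\mathcal{E}_\mathfrak{p}$, and refining an edge $e$ to the edges $e'$ with $s(e')=t(e)$, $e'\ne\bar e$, partitions $\mathcal{U}_e$ into the corresponding sub-balls; condition~(\ref{it:har-b}) of Definition~\ref{defn:harmonic} says precisely that the prescribed quantities $\langle c(e),(\prod_\mathfrak{p} x_\mathfrak{p}^{j_\mathfrak{p}})\otimes Q\rangle\big/\prod_\mathfrak{p}\binom{k_{\iota_\mathfrak{p}}-2}{j_\mathfrak{p}}$ add up correctly under such refinements, one variable at a time. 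Hence the moments are consistent, $\mu_c^{\iota_{J_1}}$ is well defined on $\mathrm{LP}_{\iota_{J_1}}$, and, rephrasing Proposition~\ref{prop:estimate} with $h_\mathfrak{p}:=(k_{\iota_\mathfrak{p}}-2)/2$,
\[
\Big|\int_{\mathcal{U}(a_{J_1},m_{J_1})}\prod_{\mathfrak{p}\in J_1}(x_\mathfrak{p}-a_\mathfrak{p})^{j_\mathfrak{p}}\,\mu_c^{\iota_{J_1}}\Big|\ \le\ A\cdot A(a_{J_1})\cdot\prod_{\mathfrak{p}\in J_1}|\omega_\mathfrak{p}|_{\mathfrak{p}}^{\,m_\mathfrak{p}(j_\mathfrak{p}-h_\mathfrak{p})},\qquad 0\le j_\mathfrak{p}\le k_{\iota_\mathfrak{p}}-2,
\]
which is exactly the admissibility bound of order $(h_\mathfrak{p})_{\mathfrak{p}\in J_1}$. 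With this in hand the Amice--V\'elu/Vishik machine applies and produces the desired analytic distribution, unique within the admissible class; for the several-variable statement one may either quote it directly (it is product-compatible) or reduce to one variable at a time by a Fubini-type argument.

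For completeness I would also indicate the construction of the extension. For a locally analytic function $f$ on $\mathbf{P}_{J_1}$, let $T_{a_{J_1}}f$ denote its Taylor polynomial at $a_{J_1}$ truncated to multidegree $(k_{\iota_\mathfrak{p}}-2)_{\mathfrak{p}}$, and set $\int_{\mathbf{P}_{J_1}}f\,\mu_c^{\iota_{J_1}}:=\lim\sum_{a_{J_1}}\int_{\mathcal{U}(a_{J_1},m_{J_1})}(T_{a_{J_1}}f)\,\mu_c^{\iota_{J_1}}$, the limit being taken over finer and finer finite coverings of $\mathbf{P}_{J_1}$ by product balls $\mathcal{U}(a_{J_1},m_{J_1})$. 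Since $f$ is locally analytic its Taylor coefficients are uniformly bounded and $f-T_{a_{J_1}}f$ lies locally at $a_{J_1}$ in the ideal generated by the $(x_\mathfrak{p}-a_\mathfrak{p})^{k_{\iota_\mathfrak{p}}-1}$; combined with the displayed bound this forces the difference between the sums for a covering and for a refinement of it to tend to $0$, so the limit exists and does not depend on the sequence of coverings chosen --- this is the telescoping estimate of Amice--V\'elu and Vishik carried out in the product. A monomial of multidegree $\le(k_{\iota_\mathfrak{p}}-2)_\mathfrak{p}$ equals its own truncation, so the functional so obtained restricts on each $\mathcal{U}_e$ to the prescribed moments, giving existence; uniqueness is as in the previous paragraph.

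The main obstacle is the convergence-and-coherence step of the third paragraph: verifying that the generalized Riemann sums are Cauchy and covering-independent in the $L^{\iota_{J_1}}(E)$-valued, several-variable setting, with the non-uniform weight factor $A(a_{J_1})$ coming from the loci at infinity. This is where Proposition~\ref{prop:estimate} is used with full force, and, if one wants a self-contained account rather than an appeal to the literature, it is the place where the Amice--V\'elu/Vishik telescoping must be reproduced with the product and the growth factor carried along; the remaining ingredients --- the tree combinatorics, the identification of moments, and the uniqueness --- are formal.
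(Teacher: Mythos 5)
Your proposal is correct and follows essentially the same route as the paper: both rest on the admissibility bound of Proposition \ref{prop:estimate} and the Amice--V\'elu/Vishik procedure of defining $\int f\,\mu_c^{\iota_{J_1}}$ as a limit of Riemann sums of truncated Taylor expansions, with the telescoping estimate (the paper's (\ref{eq:coef-b})) controlling the change of covering, and with the low-degree monomials matching their own truncations so the prescribed moments are recovered. The only cosmetic difference is that you phrase the refinement estimate via the ideal generated by $(x_\mathfrak{p}-a_\mathfrak{p})^{k_{\iota_\mathfrak{p}}-1}$ while the paper bookkeeps it with the lattices $I(f,a_{J_1},m_{J_1})$, which amounts to the same thing.
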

\begin{proof}
This follows from Proposition \ref{prop:estimate} and a standard
Amice-Velu and Vishik's argument (\cite{A-V,Vishik}, see also
\cite[\S 11]{MMT}). We only give a sketch below.

For an analytic function $f$ on some disc $\mathcal{U}(a_{J_1},
m_{J_1})$ we write
$$ f= \sum_{\vec{j}=(j_{\mathfrak{p}}): j_{\mathfrak{p}}\geq 0}c_{\vec{j}} (x-a)^{\vec{j}} . $$ Here, we write $(x-a)^{\vec{j}}$ for
$\prod\limits_{\mathfrak{p}\in
J}(x_\mathfrak{p}-a_{\mathfrak{p}})^{j_{\mathfrak{p}}}$; the
coefficients $c_{\vec{j}}$ are in $\BC_p\otimes_E L^{\iota_{J_1}}$.

We write $I(f, a_{J_1}, m_{J_1})$ for the lattice of $\BC_p\otimes
_E L^{\iota_{J_1}}$ generated by $$
c_{\vec{j}}\cdot\prod\limits_{\mathfrak{p}\in
J_1}\iota_{\mathfrak{p}}(\omega_\mathfrak{p})^{m_{\mathfrak{p}}(j_\mathfrak{p}-k_{\iota_\mathfrak{p}}+1)}
$$ for all $\vec{j}$. If $\mathcal{U}(a'_{J_1}, m'_{J_1})\subset \mathcal{U}(a_{J_1}, m_{J_1})$, writing
$$f|_{\mathcal{U}(a'_{J_1},m'_{J_1})}=\sum_{\vec{j}}c'_{\vec{j}} (x-a')^{\vec{j}}  $$
we have $$ c'_{\vec{j}}=
\sum_{\vec{n}}c_{\vec{n}}\prod_{\mathfrak{p}\in
J_1}\binc{n_\mathfrak{p}}{j_{\mathfrak{p}}}(a'_\mathfrak{p}-a_{\mathfrak{p}})^{n_{\mathfrak{p}}-j_{\mathfrak{p}}}.
$$ Since $a'_\mathfrak{p}-a_{\mathfrak{p}}\in
(\omega_\mathfrak{p}^{m_\mathfrak{p}})$, we obtain $I(f, a'_{J_1},
m'_{J_1})\subset I(f, a_{J_1}, m_{J_1})$. In particular, $I(f,
a_{J_1}, m_{J_1})$ is independent of the choice of the center
$a_{J_1}$.

Consider the truncations $$f_{a_{J_1}}=\sum_{\vec{j}:
j_{\mathfrak{p}}\leq k_{\iota_\mathfrak{p}}-2} c_{\vec{j}}
(x-a)^{\vec{j}}$$ and $$f_{a'_{J_1}}=\sum_{\vec{j}:
j_{\mathfrak{p}}\leq k_{\iota_\mathfrak{p}}-2} c_{\vec{j}}
(x-a')^{\vec{j}}.$$ Write
$$ f_{a_{J_1}}-f_{a'_{J_1}} = \sum_{\vec{j}:
j_{\mathfrak{p}}\leq k_{\iota_\mathfrak{p}}-2} b_{\vec{j}}
(x-a)^{\vec{j}} .$$ Then we have
\begin{equation} \label{eq:coef-b}
b_{\vec{j}}\prod\limits_{\mathfrak{p}\in J_1}
\omega_\mathfrak{p}^{m_{\mathfrak{p}}j_\mathfrak{p}} \in
 \prod\limits_{\mathfrak{p}\in
J_1}\iota_\mathfrak{p}(\omega_\mathfrak{p})^{m_{\mathfrak{p}}(k_\mathfrak{p}-1)}
\cdot I(f, a_{J_1}, m_{J_1}).\end{equation}

It follows from (\ref{eq:coef-b}) and the estimate in Proposition
\ref{prop:estimate} that, for any analytic function $f$ on an open
set $U$ and sufficient large $m_{J_1}$, writing $U$ as disjoint
union of $\mathcal{U}(a_{i,J_1}, m_{J_1})$, the ``$m_{J_1}$-th
Riemann sum''
$$ \sum_{i} \int_{\mathcal{U}(a_{i,J_1}, m_{J_1})} f_{a_{i,J}} \mu_c^{\iota_J}  $$
converges when $m_{J_1}\rightarrow \infty$, yielding our integral
$\int_{U}f \mu_c^{\iota_{J_1}}$.
\end{proof}

\begin{cor}\label{cor:measure} When $k_\mathfrak{p}=2$ for each $\mathfrak{p}\in J_1$,
if $c$ is bounded, then $\int ? \mu_c^{\iota_{J_1}}$ extends to all
compactly supported continuous functions on $F_{J_1}$.
\end{cor}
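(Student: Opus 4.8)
The plan is to read off the statement from Proposition \ref{prop:estimate} specialised to $k_{\iota_\mathfrak{p}}=2$, combined with the standard fact that a uniformly bounded finitely additive function on the compact open subsets of a locally compact totally disconnected space extends to a measure.

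First I would note that when $k_\mathfrak{p}=2$ for every $\mathfrak{p}\in J_1$ the space $\mathrm{LP}_{\iota_\mathfrak{p}}$ is exactly the space of locally constant functions on $\mathbf{P}^1(F_\mathfrak{p})$, so by Proposition \ref{prop:har-to-measure} the distribution $\mu_c^{\iota_{J_1}}$ is defined on all locally constant functions on $\mathbf{P}_{J_1}$; equivalently it is a finitely additive $(\BC_p\otimes_E L^{\iota_{J_1}})$-valued function on the compact open subsets of $\mathbf{P}_{J_1}$, with $\mu_c^{\iota_{J_1}}(\mathcal{U}_e)=c(e)$ (cf.\ Remark \ref{rem:measure}). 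Now apply Proposition \ref{prop:estimate} with $j_\mathfrak{p}=0$ and $k_{\iota_\mathfrak{p}}=2$ for each $\mathfrak{p}\in J_1$: the exponent $m_\mathfrak{p}(j_\mathfrak{p}+1-k_{\iota_\mathfrak{p}}/2)$ vanishes, so there is a constant $A>0$ with
$$ \bigl|\,\mu_c^{\iota_{J_1}}(\mathcal{U}(a_{J_1},m_{J_1}))\,\bigr| \leq A\cdot A(a_{J_1}) $$
for all $a_{J_1}$ and all $m_{J_1}\geq 0$. Fix a compact open subset $W$ of $F_{J_1}=\prod_{\mathfrak{p}\in J_1}F_\mathfrak{p}$. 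Since $W$ is bounded away from $\infty$ in every coordinate, $C_W:=\sup_{a_{J_1}\in W}A(a_{J_1})<\infty$, and hence $|\mu_c^{\iota_{J_1}}(U)|\leq A\,C_W$ for every compact open $U\subseteq W$, because any such $U$ is a finite disjoint union of discs $\mathcal{U}(a_{i,J_1},m_{J_1})$ centred in $W$ (use that $L^{\iota_{J_1}}(E)$ is finite dimensional, so the norm is non-archimedean).

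Finally, for a locally constant function $f$ supported in $W$ one has $\int f\,\mu_c^{\iota_{J_1}}=\sum_i f(a_{i,J_1})\,\mu_c^{\iota_{J_1}}(\mathcal{U}(a_{i,J_1},m_{J_1}))$ for $m_{J_1}$ large, whence $|\int f\,\mu_c^{\iota_{J_1}}|\leq A\,C_W\,\|f\|_\infty$. Locally constant functions supported in $W$ are dense, in the supremum norm, in the space of continuous functions supported in $W$, so this bounded functional extends uniquely and continuously; letting $W$ run over an exhaustion of $F_{J_1}$ by compact opens glues these extensions into a functional on all compactly supported continuous functions on $F_{J_1}$, and uniqueness is immediate from density. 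The only point needing care is that the support must be a genuine compact subset of $F_{J_1}$, i.e.\ it must avoid $\infty$ in each coordinate; this is precisely where the hypothesis ``compactly supported on $F_{J_1}$'' (rather than on $\mathbf{P}_{J_1}$) enters and where the factor $A(a_{J_1})$ — which is unbounded only near $\infty$ — gets controlled. Everything else is routine.
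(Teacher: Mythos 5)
Your proposal is correct and follows essentially the same route as the paper: specialize Proposition \ref{prop:estimate} to $j_\mathfrak{p}=0$, $k_{\iota_\mathfrak{p}}=2$ to get a bound $C_U$ for locally constant functions supported in a fixed compact open $U\subset F_{J_1}$, then extend by uniform-limit density to continuous functions with compact support. You merely spell out details the paper leaves implicit (the role of $A(a_{J_1})$, the disjoint-union decomposition, and the gluing over an exhaustion), which is fine.
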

\begin{proof} By Proposition \ref{prop:estimate},
for any fixed compact open subset $U$ of $F_{J_1}$, there exists a
constant $C_U$ only depending on $U$ such that, for any locally
constant function $g$ on $F_{J_1}$ that is supported in $U$, we have
$$|\int g \mu_c^{\iota}|\leq C_U \max_{x\in U}|g(x)|. $$ So, for any
continuous function $g$ supported on $U$, we can take a series of
locally constant functions $g_i$ supported on $U$, such that
$g_i\rightarrow g$. Then we put $\int g
\mu_c^{\iota_{J_1}}=\lim\limits_{\;\; i\;\;}\int g_i
\mu_c^{\iota_{J_1}}$. When $g$ is further locally analytic, the
integral coincides with that defined in the proof of Proposition
\ref{prop:har-to-measure}. Indeed, in this case we may take $g_i$ as
in that proof.
\end{proof}

\begin{prop}\label{prop:int-vanish}
Fix $\mathfrak{p}\in J_1$. For every locally analytic function
$f^{\mathfrak{p}}$ on
$$ \mathbf{P}_{J_1\backslash\{\mathfrak{p}\}}:=\prod_{\mathfrak{p}' \in J_1:
\mathfrak{p}'\neq \mathfrak{p}} \mathbf{P}^1(F_{\mathfrak{p}'}) $$
and each integer $j\in \{0, \cdots, k_{\mathfrak{p}}-2\}$  we have
$$ \int (f^{\mathfrak{p}} \otimes x_\mathfrak{p}^j) \mu_c^{\iota_{J_1}} =0. $$
\end{prop}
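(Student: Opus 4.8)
The plan is to reduce the statement, via the Riemann--sum construction of $\mu_c^{\iota_{J_1}}$ carried out in the proof of Proposition \ref{prop:har-to-measure}, to a single elementary vanishing, which then follows at once from the harmonicity condition (\ref{it:har-b}) of Definition \ref{defn:harmonic} applied in the $\mathfrak{p}$-direction.

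\emph{Step 1 (reduction to elementary integrands).} Recall that $\int (f^{\mathfrak{p}}\otimes x_\mathfrak{p}^{j})\mu_c^{\iota_{J_1}}$ is, by construction, the limit as $m_{J_1}\to\infty$ of the Riemann sums $\sum_i\int_{\mathcal{U}(a_{i},m_{J_1})}(f^{\mathfrak{p}}\otimes x_\mathfrak{p}^{j})_{a_i}\mu_c^{\iota_{J_1}}$ attached to partitions $\mathbf{P}_{J_1}=\bigsqcup_i\mathcal{U}(a_{i},m_{J_1})$, where $(\cdot)_{a_i}$ denotes the Taylor truncation keeping, in each variable $\mathfrak{q}\in J_1$, only degrees $\le k_{\iota_\mathfrak{q}}-2$. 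Since $x_\mathfrak{p}^{j}$ already has degree $j\le k_{\iota_\mathfrak{p}}-2$ — it is the global section $X_{\iota_\mathfrak{p}}^{j}Y_{\iota_\mathfrak{p}}^{k_{\iota_\mathfrak{p}}-2-j}$ of $V_{\iota_\mathfrak{p}}(E)$ — this truncation leaves the $\mathfrak{p}$-slot untouched, so $(f^{\mathfrak{p}}\otimes x_\mathfrak{p}^{j})_{a_i}=(f^{\mathfrak{p}})_{a_i}\otimes x_\mathfrak{p}^{j}$ with $(f^{\mathfrak{p}})_{a_i}$ not involving the $\mathfrak{p}$-variable. Grouping the indices $i$ by the components of $a_i$ away from $\mathfrak{p}$ and using finite additivity of $\mu_c^{\iota_{J_1}}$ to sum over the $\mathfrak{p}$-component balls (which partition $\mathbf{P}^1(F_\mathfrak{p})$), each Riemann sum becomes — after expanding the local monomials $(x_{\mathfrak{p}'}-a_{\mathfrak{p}'})^{j_{\mathfrak{p}'}}$ in the standard monomials $x_{\mathfrak{p}'}^{i}$, $i\le j_{\mathfrak{p}'}\le k_{\iota_{\mathfrak{p}'}}-2$ — a finite linear combination of integrals of the shape
$$ I\ :=\ \int_{\big(\prod_{\mathfrak{p}'\ne\mathfrak{p}}\mathcal{U}_{e_{\mathfrak{p}'}}\big)\times\mathbf{P}^1(F_\mathfrak{p})} \Big(\bigotimes_{\mathfrak{p}'\ne\mathfrak{p}}x_{\mathfrak{p}'}^{\,j_{\mathfrak{p}'}}\Big)\otimes x_\mathfrak{p}^{\,j}\ \mu_c^{\iota_{J_1}}, $$
taken over arbitrary tuples of basic opens $\mathcal{U}_{e_{\mathfrak{p}'}}$ and exponents $0\le j_{\mathfrak{p}'}\le k_{\iota_{\mathfrak{p}'}}-2$. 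Hence it suffices to show $I=0$ for every such datum; then every Riemann sum vanishes, and therefore so does the integral.

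\emph{Step 2 (elementary vanishing via harmonicity).} Fix a vertex $v\in\mathcal{V}_\mathfrak{p}$. The basic opens $\mathcal{U}_{e}$, as $e$ runs over the finitely many edges of $\mathcal{T}_\mathfrak{p}$ with $s(e)=v$, are pairwise disjoint and cover $\mathbf{P}^1(F_\mathfrak{p})$. By finite additivity of $\mu_c^{\iota_{J_1}}$ over this partition together with the defining formula of Proposition \ref{prop:har-to-measure}, the pairing $\langle Q,I\rangle$ for $Q\in V^{\iota_{J_1}}(E)$ equals a fixed nonzero rational multiple of
$$ \Big\langle\ \Big(\bigotimes_{\mathfrak{p}'\ne\mathfrak{p}}X_{\iota_{\mathfrak{p}'}}^{\,j_{\mathfrak{p}'}}Y_{\iota_{\mathfrak{p}'}}^{\,k_{\iota_{\mathfrak{p}'}}-2-j_{\mathfrak{p}'}}\Big)\otimes X_{\iota_\mathfrak{p}}^{\,j}Y_{\iota_\mathfrak{p}}^{\,k_{\iota_\mathfrak{p}}-2-j}\otimes Q\ ,\ \ \sum_{s(e)=v}c(e')\ \Big\rangle, $$
where $e'\in\mathcal{E}_{J_1}$ is the tuple with $\mathfrak{p}'$-component $e_{\mathfrak{p}'}$ for $\mathfrak{p}'\ne\mathfrak{p}$ and $\mathfrak{p}$-component $e$; the polynomial in the $\mathfrak{p}$-slot and all binomial denominators are independent of $e$, which is what lets the sum pass inside the pairing. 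By condition (\ref{it:har-b}) of Definition \ref{defn:harmonic}, applied to the distinguished place $\mathfrak{p}$, the vertex $v$, and the fixed edges $e_{\mathfrak{p}'}$ ($\mathfrak{p}'\ne\mathfrak{p}$), one has $\sum_{s(e)=v}c(e')=0$ in $L_{\vec k}(E)$. Hence $\langle Q,I\rangle=0$ for all $Q$, so $I=0$, and combined with Step 1 this proves the proposition.

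I expect the only delicate point to be the Fubini-type decoupling in Step 1: it is legitimate precisely because $x_\mathfrak{p}^{j}$ is globally polynomial of degree $\le k_{\iota_\mathfrak{p}}-2$ and so lies in $V_{\iota_\mathfrak{p}}(E)$, which is exactly what permits leaving the $\mathfrak{p}$-th factor of $\mathbf{P}_{J_1}$ un-subdivided while refining only in the remaining coordinates, so that the harmonicity relation in the $\mathfrak{p}$-direction can be invoked termwise.
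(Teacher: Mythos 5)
Your proof is correct and follows essentially the same route as the paper: reduce via the Riemann-sum construction of Proposition \ref{prop:har-to-measure} to truncated (locally polynomial) integrands on basic product opens, and then kill each resulting basic integral by the multi-variable harmonicity relation (\ref{it:har-b}) of Definition \ref{defn:harmonic} applied in the $\mathfrak{p}$-direction. Your Step 2, decomposing $\mathbf{P}^1(F_\mathfrak{p})$ into the sets $\mathcal{U}_e$ with $s(e)=v$, just makes explicit what the paper leaves implicit when it says the assertion ``is exactly'' condition (\ref{it:har-b}).
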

\begin{proof}  We easily
reduce to the case when $f^{\mathfrak{p}}$ is an analytic function
on a disc $\mathcal{U}_{e'}$ ($e'\in \prod_{\mathfrak{p}'\neq
\mathfrak{p}} \mathcal{E}_{\mathfrak{p}'}$) and is $0$ outside of
this disc. As the integral is defined as limit of ``Riemann sum'',
it suffices to consider the truncation $(f^{\mathfrak{p}})_{a'}$
(for some $a'\in \mathcal{U}_{e'}$) instead of $f^{\mathfrak{p}}$.
But in this case, the assertion is exactly (\ref{it:har-b}) in
Definition \ref{defn:harmonic}.
\end{proof}

\begin{prop}\label{prop:auto-change}
For any
$g=(i_\mathfrak{p}\mathfrak{i}_\mathfrak{p}^{-1}(\wvec{a_{\mathfrak{p}}}{b_{\mathfrak{p}}}{c_{\mathfrak{p}}}{d_{\mathfrak{p}}}))_\mathfrak{p}\in
G_p$ and any locally analytic function $f$ on $\mathbf{P}_{J_1}$ we
have
$$  \check{\rho}_{\vec{k}}(g) \int f(g x ) \cdot
\prod_{\mathfrak{p}\in
J_1}\iota_{\mathfrak{p}}\left(\frac{(c_\mathfrak{p}x_{\mathfrak{p}}+d_\mathfrak{p})^{k_{\iota_\mathfrak{p}}-2}}
{(a_\mathfrak{p}d_\mathfrak{p}-b_\mathfrak{p}c_\mathfrak{p})^{\frac{
k_{ \iota_\mathfrak{p}} }{2}-1 }}\right) \mu^{\iota_{J_1}}_{c}(x) =
\int f(x) \mu^{\iota_{J_1}}_{g\star c}(x). $$ In particular, if $c$
is $\Gamma$-invariant, then for any
$\gamma=(\wvec{a_{\mathfrak{p}}}{b_{\mathfrak{p}}}{c_{\mathfrak{p}}}{d_{\mathfrak{p}}})_\mathfrak{p}\in
\Gamma$ we have \begin{equation}\label{eq:gamma-inv}
\check{\rho}_{\vec{k}}(\gamma) \int f(\gamma x ) \cdot
\prod_{\mathfrak{p}\in
J_1}\iota_{\mathfrak{p}}\left(\frac{(c_\mathfrak{p}x_{\mathfrak{p}}+d_\mathfrak{p})^{k_{\iota_\mathfrak{p}}-2}}
{(a_\mathfrak{p}d_\mathfrak{p}-b_\mathfrak{p}c_\mathfrak{p})^{\frac{
k_{ \iota_\mathfrak{p}} }{2}-1 }}\right) \mu^{\iota_{J_1}}_{c}(x) =
\int f(x) \mu^{\iota_{J_1}}_{c}(x). \end{equation}
\end{prop}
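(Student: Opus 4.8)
The plan is to recognize the displayed formula as the equivariance identity $(\ref{eq:invariant})$, extended from polynomial integrands on the basic discs $\mathcal{U}_e$ to arbitrary locally analytic functions on $\mathbf{P}_{J_1}$; the extension will be carried out by the same Riemann-sum (Amice--V\'elu--Vishik) argument as in the proof of Proposition $\ref{prop:har-to-measure}$. First I would observe that, for $g$ as in the statement, the operator
\[ T_g\colon\ f\ \longmapsto\ \Big[\, x\mapsto f(gx)\prod_{\mathfrak{p}\in J_1}\iota_\mathfrak{p}\Big(\frac{(c_\mathfrak{p}x_\mathfrak{p}+d_\mathfrak{p})^{k_{\iota_\mathfrak{p}}-2}}{(a_\mathfrak{p}d_\mathfrak{p}-b_\mathfrak{p}c_\mathfrak{p})^{k_{\iota_\mathfrak{p}}/2-1}}\Big)\,\Big] \]
is exactly the action of $g$ on functions on $\mathbf{P}_{J_1}$ that restricts, on $\mathrm{LP}_{\iota_{J_1}}$, to the $G_{J_1}$-action fixed just before the proposition. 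Since $G_p$ permutes the edge set $\mathcal{E}_{J_1}$ with $g\,\mathcal{U}_e=\mathcal{U}_{ge}$, the operator $T_g$ carries a function supported on $\mathcal{U}_e$ that is polynomial there of degree $\le k_{\iota_\mathfrak{p}}-2$ in each $x_\mathfrak{p}$ to a function of the same shape supported on $\mathcal{U}_{g^{-1}e}$; in particular $T_g$ preserves, disc by disc, those finite-dimensional polynomial spaces.

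Next I would prove the identity when $f$ is such a polynomial supported on a single disc $\mathcal{U}_e$. Expanding $f$ in the monomial basis of $V_{\iota_{J_1}}(E)$ and inserting the defining formula for $\mu_c^{\iota_{J_1}}$ on discs from Proposition $\ref{prop:har-to-measure}$, the two sides of the claimed equality become, after the relabelling $e\leftrightarrow g^{-1}e$, $g\leftrightarrow g^{-1}$, precisely the two sides of $(\ref{eq:invariant})$. Thus the formula holds for polynomial integrands supported on one disc.

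Finally I would pass to a general locally analytic $f$ on $\mathbf{P}_{J_1}$. Exactly as in the proof of Proposition $\ref{prop:har-to-measure}$, write the support of $f$ as a disjoint union of discs $\mathcal{U}(a_{i,J_1},m_{J_1})$ with $m_{J_1}$ large, replace $f$ on each by its truncation $f_{a_{i,J_1}}$ to degree $\le k_{\iota_\mathfrak{p}}-2$ in each variable, apply the single-disc case to every truncated piece, and let $m_{J_1}\to\infty$; the estimates of Proposition $\ref{prop:estimate}$ together with the truncation-error bound $(\ref{eq:coef-b})$ guarantee that the $m_{J_1}$-th Riemann sums converge on both sides, that their limits compute $\int f\,\mu^{\iota_{J_1}}$, and that the bounded maps $\check{\rho}_{\vec{k}}(g)$ and $T_g$ commute with the limit. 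The one step requiring care — which I expect to be the main, though routine, technical obstacle — is the compatibility of $T_g$ with the truncation operators: because $g$ acts through $\mathrm{LP}_{\iota_{J_1}}$, it sends the truncation of $f$ around $g^{-1}a$ on $\mathcal{U}(g^{-1}a,m_{J_1})$ to the truncation of $T_g f$ around $a$ on $\mathcal{U}(a,m_{J_1})$ up to terms absorbed by $(\ref{eq:coef-b})$ and Proposition $\ref{prop:estimate}$, so the two limiting processes are intertwined. This yields $(\ref{eq:invariant})$ for all locally analytic $f$, i.e.\ the displayed formula. The ``in particular'' clause is then immediate: taking $g=\gamma\in\Gamma$, one has $\gamma\star c=c$ by $\Gamma$-invariance of $c$, and the formula specializes to $(\ref{eq:gamma-inv})$.
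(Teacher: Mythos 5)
Your proposal is correct and follows essentially the same route as the paper, whose proof simply invokes the polynomial equivariance identity $(\ref{eq:invariant})$ together with a truncation/Riemann-sum limit argument of the kind used for Proposition \ref{prop:int-vanish}; your write-up just makes the limit step (and the compatibility of $T_g$ with truncations, controlled by Proposition \ref{prop:estimate} and $(\ref{eq:coef-b})$) explicit.
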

\begin{proof} This follows from (\ref{eq:invariant})
and a limit argument similar to that in the proof of Proposition
\ref{prop:int-vanish}.
\end{proof}

\begin{rem} \label{rem:gamma-inv} In the case when $k_\mathfrak{p}=2$ for each $\mathfrak{p}\in J_1$, (\ref{eq:gamma-inv}) says
that $\mu^{\iota_{J_1}}_c$ is $\Gamma$-invariant. \end{rem}

\subsection{Harmonic cocycle valued modular forms}
\label{ss:harm-mod-form}

Assume that  $\pi_\mathfrak{p}=\sigma(|\cdot|^{1/2},|\cdot|^{-1/2})$
for each $\mathfrak{p}\in J_1$. In this case we have
$\mathrm{U}_\mathfrak{p}\varphi_\mathfrak{p}=\varphi_\mathfrak{p}$
and $w_\mathfrak{p}\varphi_\mathfrak{p}=-\varphi_\mathfrak{p}$ for
each $\mathfrak{p}\in J_1$.

\begin{defn}\label{defn:J-mod-form} A {\it $J_1$-typle ``harmonic cocycle'' valued modular form of trivial central character, weight $\vec{k}$ and level $U=U^{J_1} \prod\limits_{\mathfrak{p}\in
J_1}U_{0}(\omega_\mathfrak{p})$} is a function $c:
G^{\infty,J_1}\times \mathcal{E}_{J_1}\rightarrow L_{\vec{k}}(E)$
that satisfies the following conditions:
\begin{enumerate}
\item For each $g\in G^{\infty,J_1}$, $c(g,\cdot)$ is a harmonic
cocycle.
\item \label{it:J-center} If $z\in
(F^{\infty,J_1})^{\times}$, then $c(z g,\cdot)= c(g, \cdot)$.
\item\label{it-J-mod-c} If $h\in U^{J_1}  $,
then $$ c( gh, \cdot )= (h_p^{-1}\star c)(g, \cdot). $$ That is  $
c(gh,\cdot)= \check{\rho}_{\vec{k}}(h_p^{-1}) c(g, \cdot). $
\item\label{it:J-mod-d} If $x_0\in G(\BQ)$, then $$ c(x_0 g, \cdot )=(x_{0,J_1}\star c)(g,
\cdot). $$ That is $ c(x_0 g, e)= \check{\rho}_{\vec{k}}(x_{0,J_1})
c(g, x_{0,J_1}^{-1}e). $
\end{enumerate}
Let $C^{1}_{J_1}(U^{J_1}, L_{\vec{k}}(E))$ be the space of such
``harmonic cocycle'' valued modular forms.
\end{defn}

For each $g\in G^{\infty,J_1}$ put $$\widetilde{\Gamma}^{J_1}_g=\{
\gamma \in G(\BQ): \gamma_{\mathfrak{l}} \in  g
U_{\mathfrak{l}}g^{-1} \text{ for }\mathfrak{l}\notin J_1 \}.$$ We
embed $\widetilde{\Gamma}^{J_1}_g$ into $G_p$ by
$$\iota^{J_1}_g: \widetilde{\Gamma}_g^{J_1}\rightarrow G_p, \hskip 10pt \gamma \mapsto g_p^{-1}\gamma_p g_p, $$
and put $\Gamma^{J_1}_p=\iota^{J_1}_g(\widetilde{\Gamma}^{J_1}_g)$.
Here, we consider $g_p$ as an element of $G_p$ whose
$J_1$-components are all $1$.

\begin{rem} Though $\widetilde{\Gamma}^{J_1}_g$ only depends on the coset $g U^{J_1}$,  both $\iota^{J_1}_g$ and $\Gamma^{J_1}_g$ do depend on $g_p$. Indeed, if $y\in U^{J_1}$, then $\Gamma^{J_1}_{gy}=y_p^{-1}\Gamma^{J_1} y_p$. \end{rem}

\begin{lem} For each $g\in G^{\infty,J_1}$, $c(g, \cdot)$ is
$ \Gamma^{J_1}_g $-invariant.
\end{lem}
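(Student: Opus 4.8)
The plan is to obtain the $\Gamma^{J_1}_g$-invariance of $c(g,\cdot)$ by combining the right $U^{J_1}$-equivariance (condition (\ref{it-J-mod-c})) with the left $G(\BQ)$-equivariance (condition (\ref{it:J-mod-d})). Fix $g\in G^{\infty,J_1}$; since every element of $\Gamma^{J_1}_g$ is of the form $\delta=\iota^{J_1}_g(\gamma)=g_p^{-1}\gamma_p g_p$ for some $\gamma\in\widetilde{\Gamma}^{J_1}_g\subset G(\BQ)$, what must be shown is that $\delta\star c(g,\cdot)=c(g,\cdot)$, i.e.\ $\check{\rho}_{\vec{k}}(\delta)\,c(g,\delta_{J_1}^{-1}e)=c(g,e)$ for every $e\in\mathcal{E}_{J_1}$.

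The first step is a factorization of $\gamma g$. By the definition of $\widetilde{\Gamma}^{J_1}_g$ one has $g_{\mathfrak{l}}^{-1}\gamma_{\mathfrak{l}}g_{\mathfrak{l}}\in U_{\mathfrak{l}}$ at every finite place $\mathfrak{l}\notin J_1$, so if we set $h:=g^{-1}\gamma g$ (the product being formed in $G^{\infty,J_1}$, using the components of $\gamma$ away from $J_1$) then $h\in U^{J_1}$ and $\gamma g=gh$ in $G^{\infty,J_1}$. Inspecting components, the $p$-part $h_p$ of $h$ consists of the factors $g_{\mathfrak{p}}^{-1}\gamma_{\mathfrak{p}}g_{\mathfrak{p}}$ at the primes $\mathfrak{p}\mid p$ outside $J_1$, which are exactly the components of $\delta$ there, while the $J_1$-components of $\delta$ are the $\gamma_{\mathfrak{p}}$, $\mathfrak{p}\in J_1$; as these two blocks have disjoint support we get $\delta=h_p\cdot\gamma_{J_1}$ and $\delta_{J_1}=\gamma_{J_1}$.

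Next I would evaluate $c(gh,e)$ in two ways. Using condition (\ref{it:J-mod-d}) with $x_0=\gamma$ and base point $g$, together with $\gamma g=gh$, gives
\[
c(gh,e)=c(\gamma g,e)=\check{\rho}_{\vec{k}}(\gamma_{J_1})\,c(g,\gamma_{J_1}^{-1}e);
\]
using condition (\ref{it-J-mod-c}) with the element $h\in U^{J_1}$ gives $c(gh,e)=\check{\rho}_{\vec{k}}(h_p^{-1})\,c(g,e)$. Equating these and applying $\check{\rho}_{\vec{k}}(h_p)$ yields
\[
c(g,e)=\check{\rho}_{\vec{k}}(h_p)\check{\rho}_{\vec{k}}(\gamma_{J_1})\,c(g,\gamma_{J_1}^{-1}e)=\check{\rho}_{\vec{k}}(h_p\gamma_{J_1})\,c(g,\gamma_{J_1}^{-1}e)=\check{\rho}_{\vec{k}}(\delta)\,c(g,\delta_{J_1}^{-1}e),
\]
which is exactly $(\delta\star c(g,\cdot))(e)=c(g,e)$. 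Since $\delta$ ranged over all of $\Gamma^{J_1}_g$, this is the lemma; note that the central-character condition (\ref{it:J-center}) plays no role here.

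The manipulations are essentially formal once the conventions are pinned down, so the one place demanding care --- the main obstacle in practice --- is bookkeeping at $p$: one must check that ``$x_0g$'' in condition (\ref{it:J-mod-d}) is read as the product in $G^{\infty,J_1}$ obtained after discarding the $J_1$- and archimedean components of $x_0$ (so that it equals $gh$), that the edge-twist occurring there is precisely $x_{0,J_1}=\delta_{J_1}$, and that the two operators $\check{\rho}_{\vec{k}}(h_p^{-1})$ and $\check{\rho}_{\vec{k}}(\gamma_{J_1})$ produced by the two conditions are the $\check{\rho}_{\vec{k}}$-images of the complementary blocks of $\delta$, so that their product reassembles $\check{\rho}_{\vec{k}}(\delta)$. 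Concretely this means tracking the (twisted) $G_p$-action on $\mathcal{T}_{\mathfrak{p}}$ and the matching action on $L_{\vec{k}}(E)$ consistently through $\iota^{J_1}_g$, which is where a stray conjugation or sign could slip in.
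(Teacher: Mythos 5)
Your argument is correct and is essentially the paper's own proof: both rest on the factorization $\gamma g = g\,(g^{-1}\gamma g)$ with $g^{-1}\gamma g\in U^{J_1}$, the decomposition $\iota^{J_1}_g(\gamma)=h_p\cdot\gamma_{J_1}$, and the combination of conditions (\ref{it-J-mod-c}) and (\ref{it:J-mod-d}) of Definition \ref{defn:J-mod-form}, merely rearranged (you equate two evaluations of $c(gh,e)$, the paper starts from $(\iota^{J_1}_g(\gamma)\star c)(g,e)$ and simplifies). No gap; the bookkeeping at $p$ you flag is handled exactly as in the paper, using that $g_p$ is taken with trivial $J_1$-components.
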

\begin{proof} Let $\gamma$ be in $\widetilde{\Gamma}_g^{J_1}$. Then $$(\iota^{J_1}_g(\gamma) \star c)(g, e)=
\check{\rho}_{\vec{k}}(g_p^{-1}\gamma_p^{J_1} g_p)
\check{\rho}_{\vec{k}}(\gamma_{J_1})c(g, \gamma_{J_1}^{-1}e)=
\check{\rho}_{\vec{k}}(g_p^{-1}\gamma_p^{J_1} g_p) c(\gamma g, e).$$
The latter equality follows from  Definition \ref{defn:J-mod-form}
(\ref{it:J-mod-d}). As $\gamma\in \widetilde{\Gamma}^{J_1}_g$, we
have $g^{-1}\gamma g \in U_\mathfrak{l}$ for $\mathfrak{l}\notin
J_1$. Thus by Definition \ref{defn:J-mod-form} (\ref{it-J-mod-c}) we
have
$$ \check{\rho}_{\vec{k}}(g_p^{-1}\gamma_p^{J_1} g_p) c(\gamma g,
e)= c(\gamma g (g^{-1}\gamma g)^{-1}, e)=c(g,e),$$ as desired.
\end{proof}

There exist Hecke operators $\mathrm{T}_\mathfrak{l}$
($\mathfrak{l}\nmid p$, $U_\mathfrak{l}$ is maximal compact in
$G_{\mathfrak{l}}$) on $C^{1}_{J_1}(U^{J_1}, L_{\vec{k}}(E))$.  For
such $\mathfrak{l}$ we fix an isomorphism $G_\mathfrak{l}\cong
\mathrm{GL}_2(F_\mathfrak{l})$ such that $\Sigma_\mathfrak{l}\cong
\mathrm{GL}_2(\mathcal{O}_{F_\mathfrak{l}})$ and write
$$ \mathrm{GL}_2(\mathcal{O}_{F_\mathfrak{l}})
\wvec{\omega_\mathfrak{l}}{0}{0}{1}
\mathrm{GL}_2(\mathcal{O}_{F_\mathfrak{l}}) =\prod t_{i}
\mathrm{GL}_2(\mathcal{O}_{F_\mathfrak{l}}) .  $$ Then we define
$$ (\mathrm{T}_\mathfrak{l} \cdot c) (g, e) = \sum_{i} c(g t_i, e) .  $$
Let $\mathbb{T}^{p}_U$ be the algebra generated by
$\mathrm{T}_\mathfrak{l}$ ($\mathfrak{l}\nmid p$, $U_\mathfrak{l}$
is maximal compact in $G_{\mathfrak{l}}$).

We attach to each ``harmonic cocycle'' valued modular form a
$p$-adic modular form. For each $c\in C^{1}_{J_1}(U^{J_1},
L_{\vec{k}}(E))$ let $\widehat{f}_c$ be the function on $G^\infty$
defined by
$$\widehat{f}_c(g) = (g_{J_1}^{-1}\star  c)(g^{J_1}, e_{J_1,*})= \check{\rho}_{\vec{k}}(g_{J_1}^{-1})  c(g^{J_1}, g_{J_1}e_{J_1,*}) .  $$

\begin{prop} \label{prop:har-to-mod}
\begin{enumerate}
\item If $c\in C^{1}_{J}(U^{J_1}, L_{\vec{k}}(E))$, then $\widehat{f}_c \in M_{\vec{k}}(U,
E)$.
\item The map $$C^{1}_{J}(U^{J_1}, L_{\vec{k}}(E))\rightarrow M_{\vec{k}}(U,
E) \hskip 10pt   c\mapsto \widehat{f}_c$$ is injective and
$\mathbb{T}_U^{p}$-equivariant.
\item For $\mathfrak{p}\in J_1$ we have $\mathrm{U}_\mathfrak{p}\widehat{f}_c
= \widehat{f}_c$ and $w_\mathfrak{p}\widehat{f}_c= - \widehat{f}_c$.
\end{enumerate}
\end{prop}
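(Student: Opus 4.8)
The plan is to verify the three assertions of Proposition \ref{prop:har-to-mod} in order, treating (1) and (2) first since (3) is the genuinely substantive point. For (1), I would check directly that $\widehat{f}_c$ satisfies the defining transformation law of a $p$-adic modular form in $M_{\vec{k}}(U,E)$, i.e.\ that $\widehat{f}_c(z\gamma b u)=\check\rho_{\vec{k}}(\mathfrak{i}_p(u_p^{-1}))\widehat{f}_c(b)$ for $z\in\widehat F^\times$, $\gamma\in B^\times$, $u\in U$. The central-character invariance comes from Definition \ref{defn:J-mod-form}(\ref{it:J-center}); the $B^\times$-equivariance is forced by condition (\ref{it:J-mod-d}) combined with the fact that $x_{0,J_1}\star$ on the cocycle side matches $\check\rho_{\vec{k}}(x_{0,J_1})$ on the value side after one shifts the base point $e_{J_1,*}$; and the $U$-equivariance splits into the $U^{J_1}$-part (handled by (\ref{it-J-mod-c})) and the $\prod_{\mathfrak p\in J_1}U_0(\omega_{\mathfrak p})$-part, which works because $U_0(\omega_{\mathfrak p})$ fixes $e_{\mathfrak p,*}$, so $\check\rho_{\vec{k}}((gu)_{J_1}^{-1})c(g^{J_1},(gu)_{J_1}e_{J_1,*})=\check\rho_{\vec{k}}(u_{p,J_1}^{-1})\widehat{f}_c(g)$ exactly as required by the definition of $M_{\vec{k}}(U,E)$ with $U_p=U_{p,0}$. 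One also needs continuity, which is automatic since $c$ takes values in a fixed lattice (Lemma \ref{lem:control-1}-type boundedness is built into the construction).

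For (2), injectivity follows by running the construction backwards: given $\widehat{f}_c$, one recovers $c(g^{J_1},e)$ for an arbitrary edge $e$ by writing $e=u_{J_1}e_{J_1,*}$ for suitable $u_{J_1}\in G_{J_1}$ (the $U_{\mathfrak p}$-orbit of $e_{\mathfrak p,*}$ is all of $\mathcal E_{\mathfrak p}$ since $\mathcal E_{\mathfrak p}\cong \mathrm{GL}_2(F_{\mathfrak p})/F_{\mathfrak p}^\times U_{\mathfrak p}$) and then $c(g^{J_1},e)=\check\rho_{\vec{k}}(u_{J_1})\widehat{f}_c(g^{J_1}u_{J_1}^{-1}\cdot\text{(?)})$ — more precisely $\widehat f_c(g)$ with the $J_1$-component of $g$ chosen to produce $e$ — so $c$ is determined by $\widehat f_c$. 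For $\mathbb T^p_U$-equivariance one just notes that the Hecke operators $\mathrm T_{\mathfrak l}$ are defined by the identical coset-sum formula on both sides (the coset representatives $t_i$ live at $\mathfrak l\nmid p$, away from $J_1$ and $p$), so they commute with $c\mapsto\widehat f_c$ by inspection.

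The main obstacle is assertion (3): the identities $\mathrm U_{\mathfrak p}\widehat f_c=\widehat f_c$ and $w_{\mathfrak p}\widehat f_c=-\widehat f_c$ for $\mathfrak p\in J_1$. These encode the hypothesis $\pi_{\mathfrak p}=\sigma(|\cdot|^{1/2},|\cdot|^{-1/2})$ together with the harmonicity of $c(g,\cdot)$, and they are the reason the definition of $C^1_{J_1}$ uses harmonic (rather than arbitrary) cocycles. Here is the approach: unwind $\mathrm U_{\mathfrak p}\widehat f_c(g)=\sum_i\check\rho_{\vec k}(\mathfrak i_p i_{\mathfrak p}^{-1}(b_{\mathfrak p,i}))\widehat f_c(gb_{\mathfrak p,i})$ using the coset decomposition of $U_{\mathfrak p}\wvec{\omega_{\mathfrak p}}{0}{0}{1}U_{\mathfrak p}$; each term becomes $\check\rho_{\vec k}(g_{J_1}^{-1})$ applied to $c(g^{J_1},\,g_{J_1}b'_{\mathfrak p,i}e_{\mathfrak p,*})$ for appropriate $b'_{\mathfrak p,i}\in G_{\mathfrak p}$, and the edges $b'_{\mathfrak p,i}e_{\mathfrak p,*}$ are precisely the $q_{\mathfrak p}=|\omega_{\mathfrak p}|_{\mathfrak p}^{-1}$ edges whose source is $t(e_{\mathfrak p,*})$ and which are distinct from $\bar e_{\mathfrak p,*}$ — wait, rather, the edges emanating appropriately so that the harmonicity relation $\sum_{t(e')=v}c(e')=0$ (Definition \ref{defn:harmonic}, in the one-variable form of \S\ref{ss:harmonic}) together with $c(e')=-c(\bar{e'})$ collapses the sum to $c(g^{J_1},g_{J_1}e_{J_1,*})=\widehat f_c(g)$ up to the sign/normalization dictated by $\check\rho_{\vec k}$ having trivial central character and the weights being such that $\rho_{k_{\mathfrak p}}$ contributes $\det^{-(k_{\mathfrak p}-2)/2}=1$ when $k_{\mathfrak p}=2$ — but since general $\vec k$ is allowed here one must carry the $\det$-factor through, and it matches the $|\omega_{\mathfrak p}|$-powers in the Hecke normalization exactly because $\mathrm U_{\mathfrak p}$ in Definition of $p$-adic forms already includes the $\check\rho_{\vec k}(\mathfrak i_pi_{\mathfrak p}^{-1}(b_{\mathfrak p,i}))$ twist. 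For $w_{\mathfrak p}$: from Lemma \ref{lem:h-sigma} the element $\wvec{0}{1}{\omega_{\mathfrak p}}{0}$ swaps $e_{\mathfrak p,*}$ with $\bar e_{\mathfrak p,*}$ (up to the isotropy group), so $w_{\mathfrak p}\widehat f_c(g)=\check\rho_{\vec k}(\cdots)c(g^{J_1},\cdots\bar e_{\mathfrak p,*})=-\check\rho_{\vec k}(\cdots)c(g^{J_1},\cdots e_{\mathfrak p,*})=-\widehat f_c(g)$ by the sign-change property (Definition \ref{defn:harmonic}(\ref{it:har-a})); the only care needed is to check that the Atkin-Lehner twist $\check\rho_{\vec k}(\mathfrak i_p i_{\mathfrak p}^{-1}\wvec{0}{1}{\omega_{\mathfrak p}}{0})$ is trivial, which it is since this matrix has determinant $-\omega_{\mathfrak p}$, an element on which $\check\rho_{k_{\mathfrak p}}$ acts by $(-\omega_{\mathfrak p})^{(k_{\mathfrak p}-2)/2}$ — so for $k_{\mathfrak p}>2$ one absorbs this into the normalization, and for $k_{\mathfrak p}=2$ it is literally $1$. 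I expect the bookkeeping of these $\det$-twists against the Hecke normalizations to be the only delicate part; everything else is a direct unwinding of the definitions.
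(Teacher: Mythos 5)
Your proposal follows essentially the same route as the paper, whose proof of Propositions \ref{prop:har-to-mod} and \ref{prop:mod-to-har} is precisely this unwinding: conditions (\ref{it:J-center}), (\ref{it-J-mod-c}), (\ref{it:J-mod-d}) of Definition \ref{defn:J-mod-form} make $c\mapsto\widehat f_c$ a bijection onto $M_{\vec{k}}(U,E)$, harmonicity of $c(g,\cdot)$ translates into the $\mathrm U_{\mathfrak p}$- and $w_{\mathfrak p}$-eigenvalue conditions, and $\mathbb T^p_U$-equivariance is immediate because the coset representatives at $\mathfrak l\nmid p$ do not touch the $J_1$-components. One correction to your treatment of (3): you neither need, nor may you claim, that $\check{\rho}_{\vec{k}}(\mathfrak{i}_p i_{\mathfrak p}^{-1}\wvec{0}{1}{\omega_\mathfrak{p}}{0})$ is trivial --- $\check{\rho}_{k_\sigma}$ of the Atkin--Lehner matrix is not a scalar read off from its determinant when $k_\sigma>2$. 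The correct bookkeeping is that the twist built into the definitions of $w_{\mathfrak p}$ and $\mathrm U_{\mathfrak p}$ on $p$-adic forms cancels against the factor $\check{\rho}_{\vec{k}}\big((g\tau)_{J_1}^{-1}\big)$ hidden in $\widehat f_c(g\tau)$: writing $\tau$ for the matrix at $\mathfrak p$, one has $(w_{\mathfrak p}\widehat f_c)(g)=\check{\rho}_{\vec{k}}(\tau)\,\check{\rho}_{\vec{k}}(\tau^{-1}g_{J_1}^{-1})\,c(g^{J_1},g_{J_1}\tau e_{J_1,*})=\check{\rho}_{\vec{k}}(g_{J_1}^{-1})\,c(g^{J_1},g_{J_1}\tau e_{J_1,*})$, and since $\tau$ reverses the $\mathfrak p$-component of $e_{J_1,*}$ the sign change of harmonic cocycles gives $-\widehat f_c(g)$ with no residual weight-dependent factor; likewise for $\mathrm U_{\mathfrak p}$ the twists $\check{\rho}_{\vec{k}}(\mathfrak{i}_p i_{\mathfrak p}^{-1}(b_{\mathfrak p,i}))$ cancel before the harmonicity relation collapses the sum, so none of the $\det$-normalization issues you worry about actually arise, for any $\vec{k}$.
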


Conversely, if $\widehat{f} \in M_{\vec{k}}(U, E)$ satisfies
\begin{equation} \label{eq:atkin} w_\mathfrak{p} \widehat{f} = -
 \widehat{f},
\end{equation} and
\begin{equation} \label{eq:Up-eigenvalue}
\mathrm{U}_\mathfrak{p} \widehat{f} =  \widehat{f},
\end{equation}
for each $\mathfrak{p}\in J_1$, we associate to it a ``harmonic
cocyle'' valued modular form $c_{\widehat{f}}$ defined by
$$ c_{\widehat{f}}(g^{J_1}, e) = \check{\rho}_{\vec{k}}(g_e)\widehat{f}( g^{J_1}g_e)
, $$ where $g_e$ is an element of $G_{J_1}$ such that $g_e
(e_{J_1,*})=e$. Note that the value $c_{\widehat{f}}(g^{J_1}, e)$
does not depend on the choice of $g_e$.

\begin{prop}\label{prop:mod-to-har}
\begin{enumerate} \item
If $\widehat{f}\in M_{\vec{k}}(U, E)$ satisfies $($\ref{eq:atkin}$)$
and $($\ref{eq:Up-eigenvalue}$)$, then $$ c_{\widehat{f}}\in
C^{1}_{J_1}(U^{J_1}, L_{\vec{k}}(E)). $$
\item For $c\in C^{1}_{J_1}(U^{J_1}, L_{\vec{k}}(E))$ we have
$c_{\widehat{f}_c}=c$.
\end{enumerate}
\end{prop}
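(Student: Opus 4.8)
The plan is to verify that $c_{\widehat f}$ lies in $C^1_{J_1}(U^{J_1},L_{\vec k}(E))$ by checking the four defining conditions of Definition \ref{defn:J-mod-form} in turn, and then to deduce assertion (2) by a short formal computation with the definition of $\widehat f_c$.

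\emph{Well-definedness and the invariance conditions.} First I would check that $c_{\widehat f}(g^{J_1},e)$ is independent of the choice of $g_e\in G_{J_1}$ with $g_e(e_{J_1,*})=e$: two such choices differ on the right by an element of the isotropy group of $e_{J_1,*}$, which by \S\ref{ss:harmonic} is $\prod_{\mathfrak p\in J_1}F_\mathfrak p^\times U_\mathfrak p$ with $U_\mathfrak p=U_0(\omega_\mathfrak p)$; since $\check\rho_{\vec k}$ is trivial on the centre and $\widehat f\in M_{\vec k}(U,E)$ is right $U_0(\omega_\mathfrak p)$-invariant at each $\mathfrak p\in J_1$ (in the twisted sense built into the definition of a $p$-adic modular form), the value is unchanged. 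The central-character condition then follows from the triviality of the central character of $\widehat f$; the $U^{J_1}$-equivariance from the right $U^{J_1}$-invariance of $\widehat f$, using that an element of $U^{J_1}$ commutes with $g_e\in G_{J_1}$ and that the $\check\rho_{\vec k}$-actions of the distinct factors $\GL_2(F_\mathfrak q)$ ($\mathfrak q\mid p$) commute; and the $G(\BQ)$-equivariance from the left $B^\times$-invariance of $\widehat f$, reshuffling $g_e$ past the component away from $J_1$ of $x_0\in G(\BQ)$. In each case the relation $g_e(e_{J_1,*})=e$ is exactly what makes the bookkeeping close up.

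\emph{The harmonic cocycle condition (the main point).} The essential step is that for fixed $g^{J_1}$ the function $e\mapsto c_{\widehat f}(g^{J_1},e)$ satisfies (\ref{it:har-a}) and (\ref{it:har-b}) of Definition \ref{defn:harmonic}. For (\ref{it:har-a}), the change of sign when the $\mathfrak p$-component of $e$ is reversed ($\mathfrak p\in J_1$): one may take a representative of the reversed edge obtained by multiplying $g_e$ on the right by the image in $G_{J_1}$ of the Atkin element $\wvec{0}{1}{\omega_\mathfrak p}{0}$ (transported through $\hbar_\mathfrak p$ and $i_\mathfrak p$), and then unwinding the definition of $w_\mathfrak p$ and invoking the eigenvalue relation (\ref{eq:atkin}) gives $c_{\widehat f}(g^{J_1},\bar e)=-c_{\widehat f}(g^{J_1},e)$. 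For (\ref{it:har-b}), the vanishing of the sum of $c_{\widehat f}$ over the $\mathfrak p$-edges with fixed source vertex (and fixed components at the other $\mathfrak p'\in J_1$): the double-coset representatives $b_{\mathfrak p,i}$ entering the definition of $\mathrm U_\mathfrak p$ index exactly the $q_\mathfrak p$ edges at that vertex other than the one pointing toward $e_{\mathfrak p,*}$ (here $q_\mathfrak p$ is the residue cardinality), so the relation $\mathrm U_\mathfrak p\widehat f=\widehat f$ of (\ref{eq:Up-eigenvalue}), combined with the sign relation just established, forces the full sum over all $q_\mathfrak p+1$ edges at the vertex to be zero. This is the standard local dictionary at $\mathfrak p$ between $\mathrm U_\mathfrak p$-ordinary Steinberg-type forms and harmonic cocycles (cf.\ the analogous computations in \cite{BD99}); carrying it out carefully --- matching the twist factors $\check\rho_{\vec k}(\mathfrak i_p i_\mathfrak p^{-1}(\cdot))$ appearing in the definitions of $\mathrm U_\mathfrak p$ and $w_\mathfrak p$ against the $\star$-action, all in the presence of the twisted $\GL_2$-action of \S\ref{ss:harmonic} --- is the principal technical obstacle, and the only place where the argument goes beyond routine bookkeeping.

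\emph{Assertion (2).} Given $c\in C^1_{J_1}(U^{J_1},L_{\vec k}(E))$, I would evaluate $\widehat f_c$ at $g^{J_1}g_e$: its component away from $J_1$ is $g^{J_1}$ and its $J_1$-component is $g_e$, so by the definition of $\widehat f_c$ one gets $\widehat f_c(g^{J_1}g_e)=\check\rho_{\vec k}(g_e^{-1})\,c(g^{J_1},g_e e_{J_1,*})=\check\rho_{\vec k}(g_e^{-1})\,c(g^{J_1},e)$, whence $c_{\widehat f_c}(g^{J_1},e)=\check\rho_{\vec k}(g_e)\,\widehat f_c(g^{J_1}g_e)=c(g^{J_1},e)$. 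Together with the injectivity of $c\mapsto\widehat f_c$ from Proposition \ref{prop:har-to-mod} and assertion (1), this identifies $C^1_{J_1}(U^{J_1},L_{\vec k}(E))$ with the subspace of $M_{\vec k}(U,E)$ cut out by (\ref{eq:atkin}) and (\ref{eq:Up-eigenvalue}).
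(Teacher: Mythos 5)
Your proposal is correct and follows essentially the same route as the paper, whose proof of Propositions \ref{prop:har-to-mod} and \ref{prop:mod-to-har} simply asserts that $c\mapsto\widehat{f}_c$ is a bijection on the level of functions satisfying the non-harmonic conditions and that harmonicity of $c(g,\cdot)$ is equivalent to the relations $\mathrm{U}_\mathfrak{p}\widehat{f}_c=\widehat{f}_c$ and $w_\mathfrak{p}\widehat{f}_c=-\widehat{f}_c$; you have merely unpacked these checks (well-definedness via the isotropy group $F_\mathfrak{p}^\times U_0(\omega_\mathfrak{p})$, edge reversal versus $w_\mathfrak{p}$, the vertex sum versus $\mathrm{U}_\mathfrak{p}$), and your direct computation of $c_{\widehat{f}_c}=c$ matches the paper's bijectivity claim.
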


\noindent{\it Proof of Propositions \ref{prop:har-to-mod} and
\ref{prop:mod-to-har}.} It is easy to check that $c\mapsto
\widehat{f}_c$ is a one-to-one correspondence between the set of
functions $c: G^{\infty,J_1}\times \mathcal{E}_{J_1}\rightarrow
L_{\vec{k}}(E)$ that satisfy (\ref{it:J-center}), (\ref{it-J-mod-c})
and (\ref{it:J-mod-d}), and the set $M_{\vec{k}}(U, E)$. The
function $c$ is harmonic cocycle valued, i.e. $c(g, \cdot)$ for each
$g\in G^{\infty,J_1}$ is a harmonic cocycle, if and only if
$\mathrm{U}_\mathfrak{p}\widehat{f}_c=\widehat{f}_c$ and
$w_\mathfrak{p}\widehat{f}_c=-\widehat{f}_c$ for each
$\mathfrak{p}\in J_1$. That $c\mapsto \widehat{f}_c$ is
$\mathbb{T}^p_U$-equivariant follows directly from
the definitions of Hecke operators. \qed \\

The following is a direct consequence of Lemma \ref{lem:control-1}
and Proposition \ref{prop:har-to-mod}.

\begin{cor} For any norm $|\cdot|$
on $L_{\vec{k}}(E)$ we have
$$ \sup_{ g=(g_{J_1}, g^{J_1})\in G^\infty } |(g_{J_1}\star c)(g^{J_1}, e_{J_1,*})|
<\infty.
$$ In particular, for each $g^{J_1}\in G^{J_1}$, the harmonic cocycle $c(g^{J_1},
\cdot)$ is bounded.
\end{cor}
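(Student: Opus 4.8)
The plan is to recognize the supremum in the statement as the supremum of the values of the $p$-adic modular form $\widehat f_c$ attached to $c$ in Proposition \ref{prop:har-to-mod}, and then to invoke the control statement of Lemma \ref{lem:control-1}. First I would unwind the defining formula $\widehat f_c(g)=(g_{J_1}^{-1}\star c)(g^{J_1},e_{J_1,*})$: replacing $g=(g_{J_1},g^{J_1})\in G^\infty$ by $h=(g_{J_1}^{-1},g^{J_1})$ gives $\widehat f_c(h)=(g_{J_1}\star c)(g^{J_1},e_{J_1,*})$, so that
$$\sup_{g=(g_{J_1},g^{J_1})\in G^\infty}\bigl|(g_{J_1}\star c)(g^{J_1},e_{J_1,*})\bigr|=\sup_{h\in G^\infty}\bigl|\widehat f_c(h)\bigr|.$$
It therefore suffices to show that the image of $\widehat f_c$ is bounded for the given norm $|\cdot|$ on $L_{\vec k}(E)$.

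Next, by Proposition \ref{prop:har-to-mod}(a) we have $\widehat f_c\in M_{\vec k}(U,E)$; viewing it in $M_{\vec k}(U,\BC_p)$, Lemma \ref{lem:control-1} applies and the image of $\widehat f_c$ is contained in an $\mathcal O_{\BC_p}$-lattice of $L_{\vec k}(\BC_p)$. Such a lattice is bounded with respect to any norm on the finite-dimensional space $L_{\vec k}(\BC_p)$ (all norms being equivalent), and in particular with respect to the chosen norm restricted to the $E$-subspace $L_{\vec k}(E)$. Concretely, the content of Lemma \ref{lem:control-1} is that Fujisaki's theorem makes the double coset $B^\times\backslash\widehat B^\times/U$ finite, so the image of $\widehat f_c$ is cut out by finitely many vectors; hence it is bounded. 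Combined with the identity of suprema above, this is the first assertion.

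Finally, for the last sentence fix $g^{J_1}\in G^{J_1}$. By the definition of boundedness of a harmonic cocycle given above, $c(g^{J_1},\cdot)$ is bounded precisely when $\sup_{g_{J_1}\in G_{J_1}}\bigl|(g_{J_1}\star c)(g^{J_1},e_{J_1,*})\bigr|<\infty$, and this quantity is dominated by the supremum over all of $G^\infty$ just bounded, so it is finite. I do not expect a genuine difficulty here — the result is truly a corollary. The only point requiring care is bookkeeping with the twisted $\star$-action (which component of $g$ enters and in which direction the action is applied) so that the two suprema really coincide, together with the routine remark that Lemma \ref{lem:control-1}, phrased over $\BC_p$, transfers to an arbitrary norm on the $E$-form $L_{\vec k}(E)$.
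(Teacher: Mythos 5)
Your proposal is correct and follows exactly the route the paper intends: the paper labels this corollary a direct consequence of Lemma \ref{lem:control-1} and Proposition \ref{prop:har-to-mod}, and you use precisely these two results, identifying the supremum with $\sup_h|\widehat f_c(h)|$ via the substitution $h=(g_{J_1}^{-1},g^{J_1})$ and then invoking the lattice bound from Fujisaki's theorem. The bookkeeping with the $\star$-action and the transfer of the $\BC_p$-lattice statement to an arbitrary norm on $L_{\vec k}(E)$ are handled correctly, so nothing is missing.
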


Assume that $\emptyset \subsetneq J_2\subsetneq J_1$. Assume that
$\widehat{f}\in M_{\vec{k}}(U, E)$ satisfies (\ref{eq:atkin}) and
(\ref{eq:Up-eigenvalue}) for each $\mathfrak{p}\in J_1$. Let $c_{1}$
and $c_{2}$ be the $J_1$-type ``harmonic cocycle'' valued modular
form and the $J_2$-type one attached to $\widehat{f}$  respectively.

Let $g$ be an element of $G_{J_1\backslash J_2}$.

\begin{prop}\label{prop:vary-J} Suppose $k_\mathfrak{p}=2$ for each $\mathfrak{p}$ above $p$.
For any $g\in G_{J_1\backslash J_2}$, $h\in G^{J_1}$, and any
compactly supported continuous function $\alpha$ on $F_{J_2}\subset
\mathbf{P}_{J_2}$ we have
$$ \int (\alpha \otimes 1_{g \mathcal{U}_{J_1\backslash J_2,*}}) \mu_{c_1(h, \cdot)} = \int \alpha \mu_{c_2(gh, \cdot)} . $$
\end{prop}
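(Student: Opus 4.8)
The plan is to reduce the asserted equality, by unwinding the definition of $c_{\widehat{f}}$ and using Remark~\ref{rem:measure}, to a purely formal identity in $G^\infty$ which holds because $g$, $h$ and an auxiliary element $g''$ have components supported on pairwise disjoint sets of places.

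First I would record the simplifications forced by the hypothesis $k_\mathfrak{p}=2$ for all $\mathfrak{p}\mid p$. In parallel weight $2$ one has $V_{k_\sigma}(E)=L_{k_\sigma}(E)=E$ and the actions $\rho_{\vec{k}},\check{\rho}_{\vec{k}}$ of $G_p$ are trivial, so $L_{\vec{k}}(E)=E$ and all the twisting factors disappear from the formulas of \S\ref{ss:harm-mod-form}; in particular $c_{\widehat{f}}(g^{J_1},e')=\widehat{f}(g^{J_1}g_{e'})$ whenever $g_{e'}\in G_{J_1}$ sends $e_{J_1,*}$ to $e'$, and the analogous formula holds for the $J_2$-type form $c_2$. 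Since $g\in G_{J_1\backslash J_2}$ and $h\in G^{J_1}$, the product $gh$ has components only at places outside $J_2$, so $c_2(gh,\cdot)$ is defined; by the Corollary following Proposition~\ref{prop:mod-to-har} both $c_1(h,\cdot)$ and $c_2(gh,\cdot)$ are bounded harmonic cocycles. Hence by Corollary~\ref{cor:measure} their associated distributions are genuine $\BC_p$-valued measures on $F_{J_1}$ and $F_{J_2}$ respectively, and by Remark~\ref{rem:measure} they satisfy $\mu_{c_1(h,\cdot)}(g'\cdot\mathcal{U}_{e_{J_1,*}})=c_1(h,g'\cdot e_{J_1,*})$ for $g'\in G_{J_1}$ and $\mu_{c_2(gh,\cdot)}(g''\cdot\mathcal{U}_{e_{J_2,*}})=c_2(gh,g''\cdot e_{J_2,*})$ for $g''\in G_{J_2}$.

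Next, by the continuity of $\alpha\mapsto\int\alpha\,\mu_{c_2(gh,\cdot)}$ on a fixed compact open set (Corollary~\ref{cor:measure}) and additivity in $\alpha$, it suffices to treat $\alpha=1_{\mathcal{U}_{e''}}$ for $e''\in\mathcal{E}_{J_2}$ with $\mathcal{U}_{e''}\subset F_{J_2}$; since $G_{J_2}$ acts transitively on $\mathcal{E}_{J_2}$, every such $e''$ may be written as $g''\cdot e_{J_2,*}$ with $g''\in G_{J_2}$, and then $\mathcal{U}_{e''}=g''\cdot\mathcal{U}_{e_{J_2,*}}$. For such $\alpha$ the right-hand side equals $\mu_{c_2(gh,\cdot)}(g''\cdot\mathcal{U}_{e_{J_2,*}})=c_2(gh,g''\cdot e_{J_2,*})=\widehat{f}(gh\,g'')$. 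For the left-hand side, using the product decomposition $\mathcal{U}_{e_{J_1,*}}=\mathcal{U}_{e_{J_2,*}}\times\mathcal{U}_{J_1\backslash J_2,*}$ and $G_{J_1}=G_{J_2}\times G_{J_1\backslash J_2}$, the function $\alpha\otimes 1_{g\mathcal{U}_{J_1\backslash J_2,*}}$ is the indicator of $(g'',g)\cdot\mathcal{U}_{e_{J_1,*}}$, where $(g'',g)\in G_{J_1}$ denotes the element with $J_2$-component $g''$ and $(J_1\backslash J_2)$-component $g$; hence the left-hand side equals $\mu_{c_1(h,\cdot)}((g'',g)\cdot\mathcal{U}_{e_{J_1,*}})=c_1(h,(g'',g)\cdot e_{J_1,*})=\widehat{f}(h\,(g'',g))$.

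It then remains to identify $h\,(g'',g)$ with $g\,h\,g''$ in $G^\infty$: the components of $h$, of $g''$ and of $g$ are supported respectively at the places outside $J_1$, the places in $J_2$ and the places in $J_1\backslash J_2$, and these three sets are pairwise disjoint, so the three elements commute and $h\,(g'',g)=g\,h\,g''$, which finishes the proof. The delicate points are all contained in the reductions of the first two paragraphs: checking that $c_1(h,\cdot)$ and $c_2(gh,\cdot)$ are bounded so that Corollary~\ref{cor:measure} applies, realizing an arbitrary basic compact open subset of $F_{J_2}$ as a $G_{J_2}$-translate of $\mathcal{U}_{e_{J_2,*}}$, making sense of $1_{g\mathcal{U}_{J_1\backslash J_2,*}}$ as a function to which the measure on $F_{J_1}$ may be applied, and verifying that $\rho_{\vec{k}}$ and $\check{\rho}_{\vec{k}}$ really are trivial in parallel weight $2$; once these are in place the identity is formal.
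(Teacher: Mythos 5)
Your proof is correct and follows essentially the same route as the paper: reduce by boundedness/continuity to $\alpha$ the indicator of a translate $g''\mathcal{U}_{e_{J_2,*}}$ with $g''\in G_{J_2}$, then use Remark \ref{rem:measure} and the definition of $c_{\widehat f}$ (with $\check\rho_{\vec k}$ trivial in parallel weight $2$) to see that both sides equal $\widehat f(g''gh)$, the commutation being automatic since $g''$, $g$, $h$ are supported at disjoint sets of places. The paper's own proof is just a terser version of exactly this computation.
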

\begin{proof} We only need to consider the case that $\alpha$ is locally
constant, say $\alpha$ is of the form $\alpha=1_{x \mathcal{U}_{e_2,
*}}$, $x\in G_{J_2}$. In this case, both sides of the above equality
is $ \widehat{f}(xgh)$.
\end{proof}

\subsection{Cohomological valued modular forms and Schneider's
morphism}\label{ss:coh-sch}

We fix a prime $\mathfrak{p}\in J_0$ and an embedding
$\iota_\mathfrak{p}: F_\mathfrak{p}\hookrightarrow E$. In this
subsection we define cohomological valued modular form (of level
$U$).

For each $g\in G^{\infty,\mathfrak{p}}$ we have the groups
$\widetilde{\Gamma}^{\mathfrak{p}}_g$ and $\Gamma^{\mathfrak{p}}_g$.
As $\mathfrak{p}$ is clear to us, we denote them by
$\widetilde{\Gamma}_g$ and $\Gamma_g$. As $L_{\vec{k}}(E)$ is a
$\Gamma_g$-module, we can form the cohomological group
$H^1(\Gamma_g, L_{\vec{k}}(E))$. This group consists of equivalence
classes of the $L_{\vec{k}}(E)$-valued $1$-cocycles on $\Gamma_g$.

For a subgroup $\Gamma$ of $G_p$, and $h\in G_p$, we have the map
\begin{eqnarray*} r_h : H^1(\Gamma, L_{\vec{k}}(E))
&\rightarrow& H^1(h^{-1}\Gamma h, L_{\vec{k}}(E)) , \\
  \phi &\mapsto&
( r_h \phi)(\gamma')= \check{\rho}_{\vec{k}}(h)^{-1}\phi(h \gamma'
h^{-1}).
\end{eqnarray*}
If  $h\in U^{\mathfrak{p}}$ we have
$\widetilde{\Gamma}_{gh}=\widetilde{\Gamma}_g$ and $\Gamma_{gh} =
h_p^{-1} \Gamma_g h_p .$ So we have the map
$$r_{h_p}: H^1(\Gamma_g, L_{\vec{k}}(E))\rightarrow H^1(\Gamma_{gh}, L_{\vec{k}}(E)). $$
If $g'=xg$ with $x\in G(\BQ)$, then
$\widetilde{\Gamma}_{g'}=x\widetilde{\Gamma}_g x^{-1}$ and $
\Gamma_{g'} =x_{\mathfrak{p}} \Gamma_g x_{\mathfrak{p}}^{-1}$. So we
have the map
$$r_{x_{\mathfrak{p}}^{-1}}: H^1(\Gamma_g, L_{\vec{k}}(E))\rightarrow H^1(\Gamma_{xg}, L_{\vec{k}}(E)).$$

\begin{defn}\label{defn:coh-mod-form}
By a {\it cohomological valued modular form}  $($on
$G^{\infty,\mathfrak{p}}$ of {\it level $U)$} we mean a function $f$
on $G^{\infty,\mathfrak{p}}$ which satisfies the following
conditions:
\begin{enumerate}
\item $f(g)$ is in $H^1(\Gamma_g, V)$.
\item\label{it:coh-center} If $z\in
(F^{\infty,\mathfrak{p}})^{\times}$, then $f(zg)=f(g)$.
\item\label{it:coh-Up} If $h\in U^{\mathfrak{p}}$, then $f(gh)=(r_{h_p}f)(g)$.
\item\label{it:coh-Q} If $x\in G(\BQ)$, then $f(x g)=(r_{x_{\mathfrak{p}}^{-1}}f)(g)$.
\end{enumerate}
Let $MH^1_{\mathfrak{p}}(U^\mathfrak{p}, L_{\vec{k}}(E))$ be the
space of such cohomological valued modular forms.
\end{defn}

There exists a $\mathbb{T}^{p}_U$-action on
$MH^1_{\mathfrak{p}}(U^\mathfrak{p}, L_{\vec{k}}(E))$.

Let $\mathfrak{l}\nmid p$ be a prime of $F$ such that
$U_\mathfrak{l}\cong \mathrm{GL}_2(\mathcal{O}_{F_\mathfrak{l}})$.
Let $b_{\mathfrak{l}}$ be in
$U_{\mathfrak{l}}\wvec{\omega_{\mathfrak{l}}}{0}{0}{1}U_{\mathfrak{l}}$.
For any $g\in G^{\infty,\mathfrak{p}}$, let $\Gamma'_g$ be the
subgroup
$$\Gamma'_g=\Gamma_g \cap  \Gamma_{g b_{\mathfrak{l}}}$$ of $G_p$.  We have the
restriction map
$$\mathrm{Res}_{\Gamma_{gb_{\mathfrak{l}}}/\Gamma'_g}: H^1(\Gamma_{gb_{\mathfrak{l}}}, L_{\vec{k}}(E))\rightarrow H^1(\Gamma'_{g},
L_{\vec{k}}(E))$$ and the corestriction map
$$\mathrm{Cor}_{\Gamma_{g}/\Gamma'_g}: H^1(\Gamma'_g, L_{\vec{k}}(E))\rightarrow H^1(\Gamma_g,
L_{\vec{k}}(E)).$$ We define the action of $\mathrm{T}_\mathfrak{l}$
on $MH^1_{\mathfrak{p}}(U^\mathfrak{p}, L_{\vec{k}}(E))$ by
$$ (\mathrm{T}_\mathfrak{l}  f) (g)  =  \mathrm{Cor}_{\Gamma_g/\Gamma'_g}
\mathrm{Res}_{\Gamma_{gb_{\mathfrak{l}}}/\Gamma'_g}f (g
b_{\mathfrak{l}})    $$  for each $f\in
MH^1_{\mathfrak{p}}(U^\mathfrak{p}, L_{\vec{k}}(E))$.

Now, we define the Schneider morphism
$$\kappa^{\mathrm{sch}}: C^1_{\mathfrak{p}}(U^{\mathfrak{p}}, L_{\vec{k}}(E))\rightarrow MH^1_{\mathfrak{p}}(U^\mathfrak{p},
L_{\vec{k}}(E)).$$ For each $c\in
C^1_{\mathfrak{p}}(U^{\mathfrak{p}}, L_{\vec{k}}(E))$ and $g\in
U^{\mathfrak{p}}$, noting that $c(g, \cdot)$ is $
\Gamma_g$-invariant, we put
$$\kappa^{\mathrm{sch}}_c(g)=\delta_{ \Gamma_g }(c(g,\cdot)) \in H^1(\Gamma_g,
L_{\vec{k}}(E)),$$ where $\delta_{ \Gamma_g }$ is the map
(\ref{eq:delta}).

\begin{prop}\label{prop:schneider}
\begin{enumerate}
\item\label{it:har-coh-a} If $c\in C^1_{\mathfrak{p}}(U^{\mathfrak{p}}, L_{\vec{k}}(E))$, then $\kappa^{\mathrm{sch}}_c$ is in $MH^1_{\mathfrak{p}}(U^\mathfrak{p},
L_{\vec{k}}(E))$.
\item\label{it:har-coh-b} The map \begin{eqnarray*}\kappa^{\mathrm{sch}}: C^1_{\mathfrak{p}}(U^{\mathfrak{p}}, L_{\vec{k}}(E)) &\rightarrow& MH^1_{\mathfrak{p}}(U^\mathfrak{p},
L_{\vec{k}}(E))\\ c & \mapsto & \kappa^{\mathrm{sch}}_c
\end{eqnarray*} is a $\mathbb{T}^p_U$-equivariant isomorphism.
\end{enumerate}
\end{prop}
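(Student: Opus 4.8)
The plan is that part (a) is a bookkeeping exercise carrying the transformation laws of a harmonic‑cocycle valued modular form through the connecting map $\delta_{\Gamma_g}$, while part (b) is proved by writing down an explicit inverse for $\kappa^{\mathrm{sch}}$, everything being formal once one knows that $\delta_{\Gamma_g}$ is bijective on $\Gamma_g$‑invariant harmonic cocycles. For (a) I would verify the four conditions of Definition \ref{defn:coh-mod-form} for $\kappa^{\mathrm{sch}}_c$. Condition (1) holds because $c(g,\cdot)\in C^1_{\mathrm{har}}(\mathcal{T}_\mathfrak{p},L_{\vec k}(E))^{\Gamma_g}$ and $\delta_{\Gamma_g}$ lands in $H^1(\Gamma_g,L_{\vec k}(E))$. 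For (\ref{it:coh-center}): $z\in(F^{\infty,\mathfrak{p}})^\times$ is central, so $\widetilde\Gamma_{zg}=\widetilde\Gamma_g$, $\Gamma_{zg}=\Gamma_g$, and $c(zg,\cdot)=c(g,\cdot)$ by Definition \ref{defn:J-mod-form}(\ref{it:J-center}), whence $\kappa^{\mathrm{sch}}_c(zg)=\kappa^{\mathrm{sch}}_c(g)$. For (\ref{it:coh-Up}): $h\in U^{\mathfrak{p}}$ gives $\Gamma_{gh}=h_p^{-1}\Gamma_g h_p$ and $c(gh,\cdot)=h_p^{-1}\star c(g,\cdot)$ by Definition \ref{defn:J-mod-form}(\ref{it-J-mod-c}); applying Lemma \ref{lem:HA}(\ref{it:HA-c}) with the element $h_p$ yields $\kappa^{\mathrm{sch}}_c(gh)=r_{h_p}(\kappa^{\mathrm{sch}}_c(g))$. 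Condition (\ref{it:coh-Q}) is the identical computation with $x\in G(\BQ)$, using Definition \ref{defn:J-mod-form}(\ref{it:J-mod-d}) and Lemma \ref{lem:HA}(\ref{it:HA-c}) with $h=x_\mathfrak{p}^{-1}$.

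For (b), the $\mathbb{T}^p_U$‑equivariance is the compatibility of the two definitions of $\mathrm{T}_\mathfrak{l}$ ($\mathfrak{l}\nmid p$): after unwinding both, one is reduced to the fact that $\delta$ commutes with the restriction and corestriction maps entering the Hecke operator on $MH^1_\mathfrak{p}(U^{\mathfrak{p}},L_{\vec k}(E))$, which is exactly Lemma \ref{lem:HA}(\ref{it:HA-a}),(\ref{it:HA-b}). Injectivity of $\kappa^{\mathrm{sch}}$ is immediate from the injectivity of $\delta_{\Gamma_g}$ on harmonic cocycles: $\kappa^{\mathrm{sch}}_c=0$ forces $\delta_{\Gamma_g}(c(g,\cdot))=0$, hence $c(g,\cdot)=0$, for every $g$.

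Surjectivity is the substance. Given $f\in MH^1_\mathfrak{p}(U^{\mathfrak{p}},L_{\vec k}(E))$ I would set $c_f(g,\cdot):=\delta_{\Gamma_g}^{-1}(f(g))\in C^1_{\mathrm{har}}(\mathcal{T}_\mathfrak{p},L_{\vec k}(E))^{\Gamma_g}$, which makes sense once one has the key fact $(\star)$: for every $g$, the map $\delta_{\Gamma_g}$ restricts to an isomorphism $C^1_{\mathrm{har}}(\mathcal{T}_\mathfrak{p},L_{\vec k}(E))^{\Gamma_g}\xrightarrow{\sim}H^1(\Gamma_g,L_{\vec k}(E))$. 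Granting $(\star)$, the uniqueness of the harmonic preimage together with Lemma \ref{lem:HA}(\ref{it:HA-c}) turns the conditions of Definition \ref{defn:coh-mod-form} for $f$ into the conditions of Definition \ref{defn:J-mod-form} for $c_f$, so $c_f\in C^1_\mathfrak{p}(U^{\mathfrak{p}},L_{\vec k}(E))$; and $c\mapsto\kappa^{\mathrm{sch}}_c$, $f\mapsto c_f$ are then visibly mutually inverse, the $\mathbb{T}^p_U$‑equivariance of the inverse following from that of $\kappa^{\mathrm{sch}}$.

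I expect $(\star)$ to be the main obstacle. Injectivity of $\delta_{\Gamma_g}$ is the statement already recorded after (\ref{eq:delta}): a $\Gamma_g$‑invariant function on $\mathcal{T}_\mathfrak{p}$ whose coboundary is harmonic is $\Delta$‑harmonic, hence descends to a harmonic section of the associated local system on the finite graph $\bar\Gamma_g\backslash\mathcal{T}_\mathfrak{p}$, hence is constant by the maximum principle. Here one uses that the $\Gamma_g$‑action on $\mathcal{T}_\mathfrak{p}$ and on $L_{\vec k}(E)$ are trivial on the central subgroup $Z(\BQ)\cap\widetilde\Gamma_g$ (the central character of $\rho_{\vec k}$ being trivial), so the action factors through the cocompact lattice $\bar\Gamma_g\subset\mathrm{PGL}_2(F_\mathfrak{p})$, which acts on $\mathcal{T}_\mathfrak{p}$ with finite vertex and edge stabilizers and finite quotient; since $\mathrm{char}\,E=0$, the modules $C^i(\mathcal{T}_\mathfrak{p},L_{\vec k}(E))$ ($i=0,1$), being products of modules coinduced from these finite stabilizers, are acyclic in positive degrees by Shapiro's lemma. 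For surjectivity one feeds this vanishing into the long exact sequence of (\ref{eq:cover-sq}) together with that of the tautological sequence $0\to C^1_{\mathrm{har}}(\mathcal{T}_\mathfrak{p},L_{\vec k}(E))\to C^1(\mathcal{T}_\mathfrak{p},L_{\vec k}(E))\to C^0(\mathcal{T}_\mathfrak{p},L_{\vec k}(E))$ (last map $F\mapsto(v\mapsto\sum_{t(e)=v}F(e))$) and performs a short diagram chase — equivalently, an Euler‑characteristic count on $\bar\Gamma_g\backslash\mathcal{T}_\mathfrak{p}$ matching the dimension of $C^1_{\mathrm{har}}(\mathcal{T}_\mathfrak{p},L_{\vec k}(E))^{\Gamma_g}$ with that of $H^1(\Gamma_g,L_{\vec k}(E))$ — which upgrades injectivity to a bijection. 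This is the classical Schneider presentation of completed modular symbols; in practice I would either reproduce this argument or invoke \cite{CMP}, the one genuinely delicate ingredient being the maximum‑principle input used for the injectivity of $\delta_{\Gamma_g}$.
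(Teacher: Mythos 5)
Your proposal follows essentially the same route as the paper: part (a) is the same term-by-term verification via Lemma \ref{lem:HA}(\ref{it:HA-c}), injectivity of $\kappa^{\mathrm{sch}}$ comes from injectivity of $\delta_{\Gamma_g}$ on invariant harmonic cocycles, and bijectivity ultimately rests on the fact that $\delta_{\Gamma_g}\colon C^1_{\mathrm{har}}(\mathcal{T}_\mathfrak{p},L_{\vec k}(E))^{\Gamma_g}\rightarrow H^1(\Gamma_g,L_{\vec k}(E))$ is an isomorphism. For that last point the paper simply quotes \cite[Proposition 2.9]{CMP}; your sketch of the key fact $(\star)$ (Shapiro's lemma for the finite stabilizers, the maximum principle on the finite quotient graph, and the long exact sequences of the two tree sequences) is the standard argument behind that citation, so this part is fine, and your explicit inverse $f\mapsto c_f$ is just a restatement of that bijectivity.

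The one step you underestimate is the $\mathbb{T}^p_U$-equivariance in (b). It does not reduce merely to Lemma \ref{lem:HA}(\ref{it:HA-a},\ref{it:HA-b}): before those diagrams apply, one must identify the adelically defined operator $(\mathrm{T}_\mathfrak{l}c)(g,e)=\sum_i c(gt_i,e)$ with $\mathrm{Cor}_{\Gamma_g/\Gamma'_g}\mathrm{Res}_{\Gamma_{gb_\mathfrak{l}}/\Gamma'_g}$ applied to $c(gb_\mathfrak{l},\cdot)$. Concretely, one needs coset representatives $\alpha_i$ of $\widetilde{\Gamma}_g/\widetilde{\Gamma}'_g$ that simultaneously represent the local cosets $b_{\mathfrak{l}}^{-1}U^{\mathfrak{p}} b_{\mathfrak{l}} / (b_{\mathfrak{l}}^{-1}U^{\mathfrak{p}} b_{\mathfrak{l}}\cap U^{\mathfrak{p}})$, so that the Hecke sum over the $t_i$ can be rewritten as a sum over global elements $\alpha_i$ acting through $\iota^{\mathfrak{p}}_g$. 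This is exactly Lemma \ref{lem:coset-isom} in the paper, and it is not formal bookkeeping: its proof uses strong approximation for $\mathrm{SL}_1(B)$ together with a reduced-norm computation. With this global input supplied, your "unwinding" goes through and the rest of your argument matches the paper's.
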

\begin{proof} First we show that $\kappa^{\mathrm{sch}}_c$ is really in $MH^1_{\mathfrak{p}}(U^\mathfrak{p},
L_{\vec{k}}(E))$. For this we only need to deduce (a)-(d) in
Definition \ref{defn:coh-mod-form} term by term from Definition
\ref{defn:J-mod-form} (with $J=\{\mathfrak{p}\}$ there). By
definition we already have (a). If $z$ is in the center of
$G^{\infty,\mathfrak{p}}$, then $\Gamma_g=\Gamma_{zg}$, so $\delta_{
\Gamma_g }=\delta_{ \Gamma_{zg} }$. By Definition
\ref{defn:J-mod-form} (\ref{it:J-center}) (with $J=\{\mathfrak{p}\}$
there) we have $c(zg, \cdot)=c(g, \cdot)$. Thus
$\kappa^{\mathrm{sch}}_c(g)=\kappa^{\mathrm{sch}}_c(zg)$, which
yields (\ref{it:coh-center}). Items (\ref{it:coh-Up}) and
(\ref{it:coh-Q}) follow from Definition \ref{defn:J-mod-form}
(\ref{it-J-mod-c}, \ref{it:J-mod-d}) and Lemma \ref{lem:HA}
(\ref{it:HA-c}).

Next we prove (\ref{it:har-coh-b}). Put
$$ \widetilde{\Gamma}'_g= \widetilde{\Gamma}_g \cap \widetilde{\Gamma}_{gb_{\mathfrak{l}}}
=\{\gamma\in G(\BQ):  \gamma_{\mathfrak{l}} \in  g_{\mathfrak{l}}
(U_{\mathfrak{l}}\cap b_{\mathfrak{l}}
U_{\mathfrak{l}}b_{\mathfrak{l}}^{-1} ) g_{\mathfrak{l}}^{-1} \text{
and } \gamma_{\mathfrak{l}'} \in  g_{\mathfrak{l}'}
 U_{\mathfrak{l}'}  g_{\mathfrak{l}'}^{-1}
\text{ for }\mathfrak{l}'\neq \mathfrak{l, p} \}.
$$  Then $\Gamma'_g=\iota_g^{\mathfrak{p}}( \widetilde{\Gamma}'_g)$.
We decompose $\widetilde{\Gamma}_g$ into cosets
\begin{equation}\label{eq:decom}
\widetilde{\Gamma}_g = \coprod_i    \alpha_i \widetilde{\Gamma}'_g.
\end{equation} Via $\iota^{\mathfrak{p}}_g$ we obtain
$$  \Gamma _g = \coprod_i   g_{p}^{-1}\alpha_{i,p}g_p    \Gamma '_g
.$$ Thus \begin{eqnarray*} \mathrm{Cor}_{\Gamma_g/\Gamma'_g}
c(gb_{\mathfrak{l}}, e) &=&\sum_i  g_{p}^{-1}\alpha_{i,p}g_p \star
c(gb_{\mathfrak{l}}, e)\\ &=&\sum_i
\check{\rho}_{\vec{k}}(g_p^{-1}\alpha_{i,p}^\mathfrak{p} g_p)
\check{\rho}_{\vec{k}}(\alpha_{i,\mathfrak{p}})c(g b_{\mathfrak{l}},
\alpha_{i,\mathfrak{p}}^{-1}e) \\ &=&
\check{\rho}_{\vec{k}}(g_p^{-1}\alpha_{i,p}^\mathfrak{p} g_p)
c(\alpha_i gb_{\mathfrak{l}}, e).\end{eqnarray*}

By Lemma \ref{lem:coset-isom} below, from (\ref{eq:decom}) the
decomposition of $\widetilde{\Gamma}_g$, we obtain
$$
b_{\mathfrak{l}}^{-1}U^{\mathfrak{p}} b_{\mathfrak{l}} = \coprod_i
b_{\mathfrak{l}}^{-1} g^{-1}\alpha_i g b_{\mathfrak{l}} \cdot
(b_{\mathfrak{l}}^{-1}U^{\mathfrak{p}} b_{\mathfrak{l}}\cap
U^{\mathfrak{p}} ).$$ Then \begin{eqnarray*}
\mathrm{T}_{\mathfrak{l}}c(g,
\cdot) 
&=& \sum_i \check{\rho}_{\vec{k}}(g_p^{-1}\alpha_{i,p}^\mathfrak{p}
g_p) c(g b_{\mathfrak{l}}\cdot b_{\mathfrak{l}}^{-1} g^{-1}\alpha_i
gb_{\mathfrak{l}}, \cdot) \\ &=&\sum_i
\check{\rho}_{\vec{k}}(g_p^{-1}\alpha_{i,p}^\mathfrak{p} g_p)
c(\alpha_i gb_{\mathfrak{l}}, \cdot) =
\mathrm{Cor}_{\Gamma_g/\Gamma'_g} c(gb_{\mathfrak{l}}, e).
\end{eqnarray*} By Lemma \ref{lem:HA} (\ref{it:HA-a}, \ref{it:HA-b})
\begin{eqnarray*} (\mathrm{T}_\mathfrak{l}  \kappa^{\mathrm{sch}}_c) (g)  &=&
\mathrm{Cor}_{\Gamma_g/\Gamma'_g}
\mathrm{Res}_{\Gamma_{gb_{\mathfrak{l}}}/\Gamma'_g}
(\kappa^{\mathrm{sch}}_c)(g b_{\mathfrak{l}}) \\ & =&
\mathrm{Cor}_{\Gamma_g/\Gamma'_g}
\mathrm{Res}_{\Gamma_{gb_{\mathfrak{l}}}/\Gamma'_g}
\delta_{\Gamma_{gb_{\mathfrak{l}}}}(c(gb_{\mathfrak{l}}, \cdot)) \\
&=&\delta_{\Gamma_{g}}(\mathrm{Cor}_{\Gamma_g/\Gamma'_g}
\mathrm{Res}_{\Gamma_{gb_{\mathfrak{l}}}/\Gamma'_g}
(c(gb_{\mathfrak{l}}, \cdot))) = \delta_{\Gamma_{g}}
((\mathrm{T}_{\mathfrak{l}}c)(g , \cdot)).
\end{eqnarray*} This proves that $\kappa^{\mathrm{sch}}$ is
$\mathbb{T}^p_U$-equivariant.

That $\kappa^{\mathrm{sch}}$ is an isomrphism follows from
\cite[Proposition 2.9]{CMP}. Indeed, in loc. cit. it is showed that
$$\delta_{\Gamma_g}: C^1_{\mathrm{har}}(\mathcal{T}_{\mathfrak{p}},
L_{\vec{k}}(E))^{\Gamma_g}\rightarrow H^1(\Gamma_g,
L_{\vec{k}}(E))$$ is an isomorphism.
\end{proof}

\begin{lem}\label{lem:coset-isom} $gU^{\mathfrak{p}}g^{-1}/
g\Big(U^{\mathfrak{p},\mathfrak{l}}(U_{\mathfrak{l}}\cap
b_{\mathfrak{l}}U_{\mathfrak{l}}b_{\mathfrak{l}}^{-1})\Big)g^{-1}\cong
\tilde{\Gamma}_g/\tilde{\Gamma}'_g$.
\end{lem}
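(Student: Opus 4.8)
The plan is to describe both coset spaces through reduction at $\mathfrak{l}$ and to obtain the bijection from strong approximation. Consider the map $\Phi$ from $\widetilde{\Gamma}_g$ to $gU^{\mathfrak{p}}g^{-1}/g\big(U^{\mathfrak{p},\mathfrak{l}}(U_{\mathfrak{l}}\cap b_{\mathfrak{l}}U_{\mathfrak{l}}b_{\mathfrak{l}}^{-1})\big)g^{-1}$ that sends $\gamma\in G(\BQ)$ to the class of its component away from $\mathfrak{p}$; this is well defined because, by definition of $\widetilde{\Gamma}_g$, one has $\gamma_{\mathfrak{l}'}\in g_{\mathfrak{l}'}U_{\mathfrak{l}'}g_{\mathfrak{l}'}^{-1}$ at every finite $\mathfrak{l}'\neq\mathfrak{p}$, so the component away from $\mathfrak{p}$ lies in $gU^{\mathfrak{p}}g^{-1}$. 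Unwinding the definition of $\widetilde{\Gamma}'_g$ shows that $\Phi(\gamma)$ is the trivial coset exactly when $\gamma$ also satisfies $\gamma_{\mathfrak{l}}\in g_{\mathfrak{l}}(U_{\mathfrak{l}}\cap b_{\mathfrak{l}}U_{\mathfrak{l}}b_{\mathfrak{l}}^{-1})g_{\mathfrak{l}}^{-1}$, that is, when $\gamma\in\widetilde{\Gamma}'_g$; since $\widetilde{\Gamma}'_g\subseteq\widetilde{\Gamma}_g$, the same computation shows $\Phi$ factors through $\widetilde{\Gamma}_g/\widetilde{\Gamma}'_g$ and is injective there. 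So the content of the lemma is the surjectivity of $\Phi$.

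Since $b_{\mathfrak{l}}$ affects only the component at $\mathfrak{l}$, the target of $\Phi$ is identified with $g_{\mathfrak{l}}U_{\mathfrak{l}}g_{\mathfrak{l}}^{-1}/g_{\mathfrak{l}}(U_{\mathfrak{l}}\cap b_{\mathfrak{l}}U_{\mathfrak{l}}b_{\mathfrak{l}}^{-1})g_{\mathfrak{l}}^{-1}$, and, because $U_{\mathfrak{l}}$ is maximal, hence $U_{\mathfrak{l}}\cong\GL_2(\mathcal{O}_{F_{\mathfrak{l}}})$, and $b_{\mathfrak{l}}\in U_{\mathfrak{l}}\wvec{\omega_{\mathfrak{l}}}{0}{0}{1}U_{\mathfrak{l}}$, this quotient is the $\GL_2(\mathcal{O}_{F_{\mathfrak{l}}})$-set $\mathbf{P}^1(k_{\mathfrak{l}})$, where $k_{\mathfrak{l}}$ is the residue field of $F_{\mathfrak{l}}$. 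Thus surjectivity of $\Phi$ is equivalent to the assertion that the reduction of $\widetilde{\Gamma}_g$ at $\mathfrak{l}$, conjugated into $\GL_2(\mathcal{O}_{F_{\mathfrak{l}}})$ by $g_{\mathfrak{l}}$, acts transitively on $\mathbf{P}^1(k_{\mathfrak{l}})$. Here I use that $B$ splits at $\mathfrak{p}$, so the norm-one group $B^1$ has $B^1(F_{\mathfrak{p}})=\mathrm{SL}_2(F_{\mathfrak{p}})$ noncompact, and therefore $B^1(F)$ is dense in $\prod'_v B^1(F_v)$, the restricted product over the finite places $v\neq\mathfrak{p}$. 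Given $L\in\mathbf{P}^1(k_{\mathfrak{l}})$, I choose a norm-one element $u$ of $g_{\mathfrak{l}}U_{\mathfrak{l}}g_{\mathfrak{l}}^{-1}$ whose reduction carries the base point to $L$ — possible since $\mathrm{SL}_2(\mathcal{O}_{F_{\mathfrak{l}}})$ surjects onto $\mathrm{SL}_2(k_{\mathfrak{l}})$ and $\mathrm{SL}_2(k_{\mathfrak{l}})$ is transitive on $\mathbf{P}^1(k_{\mathfrak{l}})$ — and then approximate $u$ by an element $\gamma\in B^1(F)$ which lies in $g_{\mathfrak{l}'}U_{\mathfrak{l}'}g_{\mathfrak{l}'}^{-1}$ for all finite $\mathfrak{l}'\neq\mathfrak{p}$ and is congruent to $u$ modulo $\mathfrak{l}$. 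Such a $\gamma$ lies in $\widetilde{\Gamma}_g$ — no condition at $\mathfrak{p}$ is imposed — and $\Phi(\gamma)$ is the coset attached to $L$, proving surjectivity.

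The step I expect to be the crux is this last one: making strong approximation applicable, which works precisely because $\widetilde{\Gamma}_g$ imposes no local condition at $\mathfrak{p}$ and $\mathfrak{p}$ is a place where $B$ splits, so $B^1$ is noncompact there. Everything else is formal unwinding of the definitions of $\widetilde{\Gamma}_g$, $\widetilde{\Gamma}'_g$, and of the compact open subgroups involved; the statement and its proof are of the standard type in the theory of Hecke operators on arithmetic subgroups of quaternion algebras.
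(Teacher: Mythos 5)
Your proof is correct, and it rests on the same essential input as the paper's -- strong approximation for the norm-one group $\mathrm{SL}_1(B)$, available precisely because $B$ splits at $\mathfrak{p}$ and is definite at infinity -- but the execution is genuinely different. The paper never looks at the local structure of the quotient at $\mathfrak{l}$: it invokes strong approximation in the form of the class-set identity $\sharp\big(G(\BQ)\backslash G(\BA_{\BQ})/G_\infty G_\mathfrak{p}V\big)=\sharp\big(F^\times\backslash\widehat{F}^\times/F_\mathfrak{p}^\times N_{B/F}(V)\big)$, applies it to the two compact open subgroups $gU^{\mathfrak{p}}g^{-1}$ and $g\big(U^{\mathfrak{p},\mathfrak{l}}(U_{\mathfrak{l}}\cap b_{\mathfrak{l}}U_{\mathfrak{l}}b_{\mathfrak{l}}^{-1})\big)g^{-1}$, and uses that their reduced-norm images agree (the Iwahori-type group at $\mathfrak{l}$ still has norm image $\mathcal{O}_{F_\mathfrak{l}}^\times$) to conclude $gU^{\mathfrak{p}}g^{-1}=(gU^{\mathfrak{p}}g^{-1}\cap G(\BQ))\cdot g\big(U^{\mathfrak{p},\mathfrak{l}}(U_{\mathfrak{l}}\cap b_{\mathfrak{l}}U_{\mathfrak{l}}b_{\mathfrak{l}}^{-1})\big)g^{-1}$, which is exactly the statement that representatives lie in $\widetilde{\Gamma}_g$. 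You instead prove surjectivity of the natural map pointwise: you identify the target with $g_{\mathfrak{l}}U_{\mathfrak{l}}g_{\mathfrak{l}}^{-1}/g_{\mathfrak{l}}(U_{\mathfrak{l}}\cap b_{\mathfrak{l}}U_{\mathfrak{l}}b_{\mathfrak{l}}^{-1})g_{\mathfrak{l}}^{-1}\cong\mathbf{P}^1(k_{\mathfrak{l}})$, produce a determinant-one local representative of each class using surjectivity of $\mathrm{SL}_2(\mathcal{O}_{F_\mathfrak{l}})\to\mathrm{SL}_2(k_\mathfrak{l})$ and transitivity on $\mathbf{P}^1(k_\mathfrak{l})$, and then approximate it by an element of $B^{\times,1}$ subject to integrality conditions at all finite places away from $\mathfrak{p}$. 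Your route is more hands-on and makes visible why each class is hit, at the cost of using the specific $\GL_2/\mathbf{P}^1$ structure at $\mathfrak{l}$ and the norm-one representative step; the paper's route is shorter and works uniformly because it only compares reduced-norm images and class-set cardinalities. One cosmetic remark: strong approximation gives density of $B^{\times,1}$ in the restricted product over \emph{all} places different from $\mathfrak{p}$ (archimedean included); the statement you use, density in the finite part away from $\mathfrak{p}$, follows by projection, and since $\widetilde{\Gamma}_g$ imposes no archimedean condition this loses nothing. Also, to pass from ``congruent to $u$ modulo $\mathfrak{l}$'' to equality of cosets you implicitly use that the principal congruence subgroup of level $\mathfrak{l}$ is contained in $g_\mathfrak{l}(U_{\mathfrak{l}}\cap b_{\mathfrak{l}}U_{\mathfrak{l}}b_{\mathfrak{l}}^{-1})g_\mathfrak{l}^{-1}$; that is true and worth saying explicitly, but it is not a gap.
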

\begin{proof} This follows from the strong approximation theorem
for $\mathrm{SL}_1(B)$, the algebraic group with $\BQ$-points
$$B^{\times, 1}=\{x\in B: N_{B/F}(x)=1\},$$ (see \cite{Vig}). Here,
$N_{B/F}: B^\times \rightarrow F^\times$ is the reduced norm. By
this strong approximation theorem we obtain that, for any compact
open subgroup $V$ of $G^{\infty,\mathfrak{p}}$, one has $$
\sharp\Big(G(\BQ)\backslash G(\BA_{\BQ}) /G_\infty G_\mathfrak{p} V
\Big)=\sharp\left(F^\times \backslash
\widehat{F}^\times/F_\mathfrak{p}^\times N_{B/F}(V)\right).$$ So,
from the fact
$$N_{B/F}( gU^{\mathfrak{p}}g^{-1} )=N_{B/F} \Big( g( U^{\mathfrak{p}, \mathfrak{l}}(U_{\mathfrak{l}}\cap
b_{\mathfrak{l}}U_{\mathfrak{l}}b_{\mathfrak{l}}^{-1})
)g^{-1}\Big),$$ we obtain
$$ gU^{\mathfrak{p}}g^{-1} = (gU^{\mathfrak{p}}g^{-1}\cap G(\BQ)) \cdot g\Big(U^{\mathfrak{p},\mathfrak{l}}(U_{\mathfrak{l}}\cap
b_{\mathfrak{l}}U_{\mathfrak{l}}b_{\mathfrak{l}}^{-1})\Big)g^{-1},$$
or equivalently, the coset $gU^{\mathfrak{p}}g^{-1}/
g\Big(U^{\mathfrak{p},\mathfrak{l}}(U_{\mathfrak{l}}\cap
b_{\mathfrak{l}}U_{\mathfrak{l}}b_{\mathfrak{l}}^{-1})\Big)g^{-1}$
has a set of representatives in
$\widetilde{\Gamma}_g=gU^{\mathfrak{p}}g^{-1}\cap G(\BQ)$.
\end{proof}

\subsection{Coleman's morphism and $L$-invariant}\label{ss:coleman}

Let $\mathfrak{p}$ and $\iota_{\mathfrak{p}}$ be as in Section
\ref{ss:coh-sch}. For any $u\in \BC_p^\times$ that is not a unit,
let $\log_u:\BC_p^\times\rightarrow \BC_p$  denote the $p$-adic
logarithm such that $\log_u(u)=0$.

In Section \ref{ss:har-to-rigid} we attach to $c\in
C^1_{\mathrm{har}}(\mathcal{T}_\mathfrak{p}, L_{\vec{k}}(E))$ a
rigid distribution $\mu^{\iota_{\mathfrak{p}}}_{c}$. For a point
$z_0\in
\mathcal{H}_{\iota_\mathfrak{p}}=\BC_p-\iota_\mathfrak{p}(F_\mathfrak{p})$,
we define an $L_{\vec{k}}(E)$-valued function $\lambda_{z_0,c;u}$ on
$G_p$ by
\begin{equation} \label{eq:int-lambda}
\langle\lambda_{z_0,c;u}(g), P\otimes Q \rangle = \langle\int
P(t)\log_u(\frac{t-g_\mathfrak{p}
z_0}{t-z_0})\mu^{\iota_{\mathfrak{p}}}_{c}, Q \rangle \end{equation}
with $P\in V_{\iota_{\mathfrak{p}}}(E)$ and $Q\in
V^{\iota_{\mathfrak{p}}}(E)$. Here $P\mapsto P(t)$ is the linear map
defined by
$X_\mathfrak{p}^jY_\mathfrak{p}^{k_{\iota_\mathfrak{p}}-2-j}\mapsto
t^j$.

\begin{lem} \label{lem:lambda}
\begin{enumerate}
\item\label{it:lambda-0}
For any $x\in G_{\mathfrak{p}}$ we have
$$  \langle \int  P (t)\log_u(\frac{x_{\mathfrak{p}}^{-1}\cdot
t-\gamma_\mathfrak{p} z_0}{x_{\mathfrak{p}}^{-1}\cdot
t-z_0})\mu^{\iota_{\mathfrak{p}}}_{ c } , Q\rangle = \langle \int  P
(t)\log_u(\frac{ t-x_{\mathfrak{p}}\gamma_\mathfrak{p} z_0}{
t-x_{\mathfrak{p}}z_0})\mu^{\iota_{\mathfrak{p}}}_{ c }  , Q\rangle
.$$
\item\label{it:lambda-a}
For $x\in G_p$ we have
$$ \check{\rho}_{\vec{k}}(x) \lambda_{z_0, c;u}(\gamma) = \lambda_{x_\mathfrak{p}\cdot z_0, x\star c;u}(x \gamma x^{-1}). $$
\item\label{it:lambda-b}
If $c$ is $\Gamma$-invariant, then for any $\gamma\in \Gamma$
and $g\in G_p$ we have
$$ \lambda_{z_0,c;u}(\gamma g) =\lambda_{z_0,c;u} (\gamma)+\check{\rho}_{\vec{k}}(\gamma) \lambda_{z_0,c;u}(g).  $$
\item \label{it:lambda-c} Let $z_1, z_2$ be two points of $\mathcal{H}_{\iota_\mathfrak{p}}$. If $c$ is $\Gamma$-invariant, then there
exists $\ell\in L_{\vec{k}}(E)$ that only depends on $z_1, z_2$ and
$c$ such that
$$ \lambda_{z_2, c;u}(\gamma)-\lambda_{z_1, c;u}(\gamma)=(\gamma-1)\ell
$$ for all $\gamma\in \Gamma$.
\end{enumerate}
\end{lem}
\begin{proof}
Write
$\mathfrak{i}_\mathfrak{p}i_\mathfrak{p}^{-1}(x_{\mathfrak{p}}^{-1})=\wvec{a}{b}{c}{d}$,
Then
$$ \frac{x_{\mathfrak{p}}^{-1}\cdot
t- \gamma_\mathfrak{p} z_0}{x_{\mathfrak{p}}^{-1}\cdot t-z_0}=
\frac{ t- x_{\mathfrak{p}} \gamma_{\mathfrak{p}} z_0 }{ t-
x_{\mathfrak{p}} z_0 } \frac{c \gamma_{\mathfrak{p}}z_0+d}{cz_0+d} .
$$ Thus
\begin{eqnarray*} && \langle \int  P (t)\log_u(\frac{x_{\mathfrak{p}}^{-1}\cdot
t-\gamma_\mathfrak{p} z_0}{x_{\mathfrak{p}}^{-1}\cdot
t-z_0})\mu^{\iota_{\mathfrak{p}}}_{  c } , Q\rangle
\\ &=& \langle \int  P (t)\log_u(\frac{
t-x_{\mathfrak{p}}\gamma_\mathfrak{p} z_0}{
t-x_{\mathfrak{p}}z_0})\mu^{\iota_{\mathfrak{p}}}_{  c }  , Q\rangle
+ \log_u(\frac{c \gamma_{\mathfrak{p}}z_0+d}{cz_0+d} ) \langle \int
P (t)\mu^{\iota_{\mathfrak{p}}}_{  c } , Q\rangle .
\end{eqnarray*} By Proposition  \ref{prop:int-vanish} (in the case of
$J=\{\mathfrak{p}\}$), the latter term of the right hand side of
this equality is zero. This proves (\ref{it:lambda-0}).

Let $x$ be in $G_p$.  We have
\begin{eqnarray*}
&&\langle\check{\rho}_{\vec{k}}(x) \lambda_{z_0,c;u}(\gamma)
,P\otimes Q\rangle \\ &=& \langle\check{\rho}_{\vec{k}}(x)\int
(\rho_{k_\mathfrak{p}}(x_{\mathfrak{p}}^{-1})P)(t)\log_u(\frac{t-\gamma_\mathfrak{p}
z_0}{t-z_0})\mu^{\iota_{\mathfrak{p}}}_{c}  , Q\rangle  \\
&=& \langle \int  P (t)\log_u(\frac{x_{\mathfrak{p}^{-1}}\cdot
t-\gamma_\mathfrak{p} z_0}{x_{\mathfrak{p}}^{-1}\cdot
t-z_0})\mu^{\iota_{\mathfrak{p}}}_{x\star c } , Q\rangle \hskip 20pt  \text{by Proposition }\ref{prop:auto-change} \\
&=& \langle \int  P (t)\log_u(\frac{ t- x_{\mathfrak{p}}
\gamma_\mathfrak{p} z_0 }{  t- x_{\mathfrak{p}}
z_0})\mu^{\iota_{\mathfrak{p}}}_{x\star c } , Q \rangle \hskip 20pt \text{by } (\text{\ref{it:lambda-0}}) \\
&=& \langle \lambda_{x_{\mathfrak{p}}\cdot z_0, x\star c;u}(x\gamma
x^{-1}), P\otimes Q\rangle
\end{eqnarray*}  which shows (\ref{it:lambda-a}).

Using (\ref{it:lambda-0}) we obtain \begin{eqnarray*} \langle
\lambda_{z_0,c;u}(\gamma g)- \lambda_{z_0,c;u}(\gamma), P\otimes Q
\rangle &=&  \langle\int P(t)\log_u(\frac{t-\gamma_\mathfrak{p}
g_{\mathfrak{p}} z_0}{t- \gamma_\mathfrak{p}
z_0})\mu^{\iota_{\mathfrak{p}}}_{c}, Q \rangle \\
&=&\langle\int P(t)\log_u(\frac{\gamma_\mathfrak{p}^{-1} t-
g_{\mathfrak{p}} z_0}{\gamma_\mathfrak{p}^{-1} t-
z_0})\mu^{\iota_{\mathfrak{p}}}_{c}, Q \rangle. \end{eqnarray*} By
Proposition \ref{prop:auto-change} (in the case of
$J=\{\mathfrak{p}\}$) and the fact $\gamma\star c=c$, we have
\begin{eqnarray*} && \langle\int P(t)\log_u(\frac{\gamma_\mathfrak{p}^{-1} t-
g_{\mathfrak{p}} z_0}{\gamma_\mathfrak{p}^{-1} t-
z_0})\mu^{\iota_{\mathfrak{p}}}_{c}, Q \rangle \\ &=&
\langle\check{\rho}_{\vec{k}}(\gamma)\int
(\rho_{k_\mathfrak{p}}(\gamma_{\mathfrak{p}}^{-1})P)(t)\log_u(\frac{
t- g_{\mathfrak{p}} z_0}{ t- z_0})\mu^{\iota_{\mathfrak{p}}}_{c}, Q
\rangle \\
&=& \langle \check{\rho}_{\vec{k}}(\gamma)\lambda_{z_0,c;u}(g),
P\otimes Q\rangle.
\end{eqnarray*}
So we get (\ref{it:lambda-b}).

For (\ref{it:lambda-c}) we have
\begin{eqnarray*} && \langle \lambda_{z_2,c;u}(\gamma)-\lambda_{z_1,c;u}(\gamma) , P\otimes Q
\rangle \\ &=& \langle \int P(t) [\log_u(\frac{t-\gamma_\mathfrak{p}
z_2}{t-z_2})-\log_u(\frac{t-\gamma_\mathfrak{p} z_1}{t-z_1})]
\mu_c^{\iota_\mathfrak{p}}, Q \rangle \\
&=& \langle \int P(t) [\log_u(\frac{t-\gamma_\mathfrak{p}
z_2}{t-\gamma_\mathfrak{p} z_1})-\log_u(\frac{t-  z_2}{t-z_1})]
\mu_c^{\iota_\mathfrak{p}}, Q \rangle\\
&=& \langle \int P(t) [\log_u(\frac{\gamma_\mathfrak{p}^{-1}t-
z_2}{\gamma_\mathfrak{p}^{-1}t- z_1})-\log_u(\frac{t-  z_2}{t-z_1})]
\mu_c^{\iota_\mathfrak{p}}, Q \rangle.
\end{eqnarray*} Let $\ell$ be the element of $L_{\vec{k}}(E)$
defined by
$$ \langle \ell, P\otimes Q\rangle = \langle  \int P(t) \log_u(\frac{t-  z_2}{t-z_1})
\mu_c^{\iota_\mathfrak{p}}, Q\rangle .$$ As $\gamma\star c=c$, by
Proposition \ref{prop:auto-change} we have \begin{eqnarray*}  &&
\langle\int P(t) \log_u(\frac{\gamma_\mathfrak{p}^{-1}t-
z_2}{\gamma_\mathfrak{p}^{-1}t- z_1})  \mu_c^{\iota_\mathfrak{p}},
Q\rangle \\ &=& \langle\check{\rho}_{\vec{k}}(\gamma)   \int
(\rho_{k_{\mathfrak{p}}}(\gamma_\mathfrak{p}^{-1})P)(t)
\log_u(\frac{ t- z_2}{ t- z_1})  \mu_c^{\iota_\mathfrak{p}},
Q\rangle \\ & = & \langle \check{\rho}_{\vec{k}}(\gamma) \ell,
P\otimes Q\rangle,
\end{eqnarray*} as expected.
\end{proof}

By Lemma \ref{lem:lambda} (\ref{it:lambda-b}), if $c$ is
$\Gamma$-invariant for a subgroup $\Gamma$ of $G_p$, then the
restriction of $\lambda_{z_0,c;u}$ to $\Gamma$ is  an
$L_{\vec{k}}(E)$-valued $1$-cocycle on $\Gamma$. By Lemma
\ref{lem:lambda} (\ref{it:lambda-c}) its class $[\lambda_{z_0,c;u}]$
in $H^1(\Gamma, L_{\vec{k}}(E))$ does not depend on the choice of
$z_0$. In this way we obtain Coleman's morphism
$$\lambda_{\Gamma;u}: C^1_{\mathrm{har}}(\mathcal{T}_\mathfrak{p}, L_{\vec{k}}(E))^\Gamma \rightarrow H^1(\Gamma, L_{\vec{k}}(E)), \hskip 10pt c\mapsto [\lambda_{z_0,c;u}].
$$

\begin{lem} \label{lem:coleman} Let  $\Gamma_1\subset \Gamma_2$ be two subgroups of
$G_p$.
\begin{enumerate} \item\label{it:col-a} We have the following commutative diagram
\[ \xymatrix{ C^1_{\mathrm{har}}(\mathcal{T}_\mathfrak{p},L_{\vec{k}}(E))^{\Gamma_2} \ar[r]^{\lambda_{\Gamma_2;u}}
\ar@{^(->}[d] & H^1(\Gamma_2,L_{\vec{k}}(E))
\ar[d]_{\mathrm{Res}_{\Gamma_2/\Gamma_1}}
\\ C^1_{\mathrm{har}}(\mathcal{T}_\mathfrak{p},L_{\vec{k}}(E))^{\Gamma_1} \ar[r]^{\lambda_{\Gamma_1;u}}
 & H^1(\Gamma_1,L_{\vec{k}}(E))  . }\]
 \item\label{it:col-b} When $[\Gamma_2:\Gamma_1]$ is finite, we have the following commutative diagram
\[ \xymatrix{ C^1_{\mathrm{har}}(\mathcal{T}_\mathfrak{p},L_{\vec{k}}(E))^{\Gamma_1} \ar[r]^{\lambda_{\Gamma_1;u}}
\ar[d]_{\mathrm{Cor}_{\Gamma_2/\Gamma_1}}  &
H^1(\Gamma_2,L_{\vec{k}}(E))
\ar[d]_{\mathrm{Cor}_{\Gamma_2/\Gamma_1}}
\\ C^1_{\mathrm{har}}(\mathcal{T}_\mathfrak{p},L_{\vec{k}}(E))^{\Gamma_2} \ar[r]^{\lambda_{\Gamma_2;u}}
 & H^1(\Gamma_2,L_{\vec{k}}(E)) . }\]
\end{enumerate}
\end{lem}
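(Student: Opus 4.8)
The plan is to establish the two compatibility diagrams of Lemma~\ref{lem:coleman} exactly as in the proof of Lemma~\ref{lem:HA}, using the explicit cocycle $\lambda_{z_0,c;u}$ constructed in~(\ref{eq:int-lambda}) and its transformation properties recorded in Lemma~\ref{lem:lambda}. The key point is that Coleman's morphism $\lambda_{\Gamma;u}$ is defined at the level of an honest cocycle (not just a class): for $c\in C^1_{\mathrm{har}}(\mathcal{T}_\mathfrak{p},L_{\vec{k}}(E))^\Gamma$ and a fixed auxiliary point $z_0\in\mathcal{H}_{\iota_\mathfrak{p}}$, the function $\gamma\mapsto \lambda_{z_0,c;u}(\gamma)$ is by Lemma~\ref{lem:lambda}(\ref{it:lambda-b}) a $1$-cocycle on $\Gamma$, and by Lemma~\ref{lem:lambda}(\ref{it:lambda-c}) its class is independent of $z_0$. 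So it suffices to compare the cocycles, not the classes.

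For part~(\ref{it:col-a}), the restriction map $\mathrm{Res}_{\Gamma_2/\Gamma_1}$ on $H^1$ is induced by literally restricting a cocycle from $\Gamma_2$ to the subgroup $\Gamma_1$. Since $c$ is already $\Gamma_2$-invariant it is a fortiori $\Gamma_1$-invariant, and the cocycle $\lambda_{z_0,c;u}|_{\Gamma_1}$ computed from the $\Gamma_1$-invariant $c$ is literally the restriction of $\lambda_{z_0,c;u}$ computed from the $\Gamma_2$-invariant $c$ — both are given by the same formula~(\ref{eq:int-lambda}), with the same $z_0$ and the same distribution $\mu^{\iota_\mathfrak{p}}_c$. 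Hence the square commutes on the nose, and passing to cohomology classes gives the stated diagram. This is the routine half.

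For part~(\ref{it:col-b}), I would choose coset representatives $\Gamma_2=\coprod_i \gamma_i\Gamma_1$ and recall that the corestriction of a cocycle $\phi$ on $\Gamma_1$ is $(\mathrm{Cor}\,\phi)(\gamma)=\sum_i \check{\rho}_{\vec{k}}(\gamma_i)\,\phi(\gamma_i^{-1}\gamma\gamma_{j(i,\gamma)})$ where $\gamma_i^{-1}\gamma\gamma_{j(i,\gamma)}\in\Gamma_1$; the corestriction on the space of $\Gamma$-invariant harmonic cocycles is transport-of-structure of the same formula applied via the identification $\delta_\Gamma$ (cf.\ Lemma~\ref{lem:HA}(\ref{it:HA-b})), i.e.\ $\mathrm{Cor}_{\Gamma_2/\Gamma_1}(c)=\sum_i \gamma_i\star c$. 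The task is then to check that $\lambda_{z_0,\,\sum_i\gamma_i\star c;\,u}$ and $\mathrm{Cor}_{\Gamma_2/\Gamma_1}\bigl(\lambda_{z_0,c;u}\bigr)$ define the same class in $H^1(\Gamma_2,L_{\vec{k}}(E))$. Expanding the left side, linearity of~(\ref{eq:int-lambda}) in $c$ gives $\sum_i\lambda_{z_0,\gamma_i\star c;u}$, and Lemma~\ref{lem:lambda}(\ref{it:lambda-a}) rewrites $\lambda_{z_0,\gamma_i\star c;u}(\gamma)=\check{\rho}_{\vec{k}}(\gamma_i)\lambda_{\gamma_{i,\mathfrak{p}}^{-1}\cdot z_0,\,c;u}(\gamma_i^{-1}\gamma\gamma_i)$. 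This is $\check{\rho}_{\vec{k}}(\gamma_i)$ applied to the value of a cocycle \emph{attached to the base point $\gamma_{i,\mathfrak{p}}^{-1}\cdot z_0$ rather than $z_0$}, and evaluated at $\gamma_i^{-1}\gamma\gamma_i$ rather than at an element of $\Gamma_1$. Using Lemma~\ref{lem:lambda}(\ref{it:lambda-c}) to absorb the change of base point into a coboundary, together with the cocycle relation Lemma~\ref{lem:lambda}(\ref{it:lambda-b}) to split $\gamma_i^{-1}\gamma\gamma_{j}=(\gamma_i^{-1}\gamma\gamma_j)$ inside $\Gamma_1$ correctly, one matches this term by term with the standard corestriction formula, up to an explicit coboundary.

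The main obstacle is precisely this bookkeeping in part~(\ref{it:col-b}): the base point $z_0$ in Coleman's construction is not $\Gamma$-invariant, so conjugating by $\gamma_i$ moves $z_0$ to $\gamma_{i,\mathfrak{p}}^{-1}\cdot z_0$, and one must verify that the resulting discrepancies are coboundaries that do not affect the cohomology class — this is exactly what Lemma~\ref{lem:lambda}(\ref{it:lambda-c}) is designed to control, but it requires care to see that the coboundary terms assemble consistently over the finite index set. A clean way to organize this is to fix once and for all a single $z_0$, express everything as genuine cocycles plus explicit $(\gamma-1)\ell_i$ coboundary terms coming from~(\ref{it:lambda-c}), and observe that $\sum_i(\gamma-1)(\check{\rho}_{\vec{k}}(\gamma_i)\ell_i)$ is again a coboundary; alternatively, one reduces to the case $[\Gamma_2:\Gamma_1]$ prime as in Lemma~\ref{lem:HA} and handles it directly. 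I expect the argument to parallel the proof of Lemma~\ref{lem:HA} closely enough that it can be stated briefly.
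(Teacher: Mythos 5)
Your proposal is correct and follows essentially the same route as the paper: part (\ref{it:col-a}) is immediate from the definition, and part (\ref{it:col-b}) is proved by the same coset decomposition $\Gamma_2=\coprod_j x_j\Gamma_1$ and term-by-term comparison via Lemma \ref{lem:lambda}, ending with the observation that the discrepancy is the explicit coboundary $(\gamma-1)\sum_j\lambda_{z_0,x_j\star c;u}(x_j)$. The only cosmetic difference is that the paper's bookkeeping never needs Lemma \ref{lem:lambda}(\ref{it:lambda-c}): it exploits that $\lambda_{z_0,c;u}$ is defined on all of $G_p$ and uses (\ref{it:lambda-a}) and (\ref{it:lambda-b}) (plus the base-point change identity built into the definition) to assemble the coboundary terms directly.
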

\begin{proof} Assertion (\ref{it:col-a}) follows from the definition. We
prove (\ref{it:col-b}). Let $c$ be in
$C^1_{\mathrm{har}}(\mathcal{T}_\mathfrak{p},L_{\vec{k}}(E))^{\Gamma_1}$.
Write $\Gamma_2=\sqcup x_j \Gamma_1$. Then
$\mathrm{Cor}_{\Gamma_2/\Gamma_1}(c)=\sum\limits_j x_j\star c$.

As $u$ in the subscript is clear, we will omit it in the following.

For any $\gamma\in \Gamma_2$ and any $i$, there exists
$j=j_{\gamma,i}$ (depending on $i$ and $\gamma$) such that $\gamma
x_i=x_j h_{\gamma, j}$ with $h_{\gamma,j}\in \Gamma_1$. Then
$$ \gamma\mapsto \sum_j \check{\rho}_{\vec{k}}(x_j)\lambda_{z_0,c}(h_{\gamma,j}) $$ is an $L_{\vec{k}}(E)$-valued $1$-cocycle on
$\Gamma_2$, and its class in $H^1(\Gamma_2, L_{\vec{k}}(E))$ is
exactly $\mathrm{Cor}_{\Gamma_2/\Gamma_1}[\lambda_c]$. We have
{\allowdisplaybreaks
\begin{eqnarray*}
\check{\rho}_{\vec{k}}(x_j)\lambda_{z_0,c}(h_{\gamma,j}) &=&
\lambda_{x_j\cdot z_0, x_j\star c} (x_jh_{\gamma, j}x_j^{-1}) \hskip
20pt \text{by Lemma } \ref{lem:lambda}\ (\text{\ref{it:lambda-a}})
\\
&=& \lambda_{x_j\cdot z_0, x_j\star c} (\gamma x_ix_j^{-1})
\\
&=&\lambda_{z_0, x_j\star c}(\gamma x_i)- \lambda_{z_0, x_j\star
c}(x_j) \\
&=& \lambda_{z_0, x_j\star c}(\gamma x_ix_j^{-1}) +
\check{\rho}_{\vec{k}}(\gamma
x_ix_j^{-1}) \lambda_{z_0, x_j\star c}(x_j ) \\
&&-\lambda_{z_0, x_j\star c}(x_j) \hskip
50pt \text{by Lemma } \ref{lem:lambda} \ (\text{\ref{it:lambda-b}})\\
&=&\lambda_{z_0, x_j\star c}(\gamma) - \check{\rho}_{\vec{k}}(\gamma
x_ix_j^{-1}) \lambda_{z_0, x_j\star c}(x_jx^{-1}_i) \\ &&
+\check{\rho}_{\vec{k}}(\gamma x_ix_j^{-1})  \lambda_{z_0, x_j\star
c}(x_j )-\lambda_{z_0, x_j\star c}(x_j) \hskip
20pt \text{by Lemma } \ref{lem:lambda} \ (\text{\ref{it:lambda-b}}) \\
&=&\lambda_{z_0, x_j\star c}(\gamma) - \check{\rho}_{\vec{k}}(\gamma
) \lambda_{x_ix_j^{-1}z_0, x_i\star c}(x_jx^{-1}_i) \\ &&
+\check{\rho}_{\vec{k}}(\gamma) \lambda_{x_ix_j^{-1} z_0, x_i\star
c}(x_ix_jx_i^{-1} )-\lambda_{z_0, x_j\star c}(x_j) \hskip
20pt \text{by Lemma } \ref{lem:lambda}\ (\text{\ref{it:lambda-a}}) \\
&=&\lambda_{z_0,x_j\star c}(\gamma) -\check{\rho}_{\vec{k}}(\gamma)
(\lambda_{z_0, x_i\star
c}(1)- \lambda_{z_0, x_i\star c}(x_ix_j^{-1})) \\
&& +\check{\rho}_{\vec{k}}(\gamma)(\lambda_{z_0, x_i\star
c}(x_i)-\lambda_{z_0,x_i\star
c}(x_ix_j^{-1}))-\lambda_{z_0, x_j\star c}(x_j) \\
&=&\lambda_{z_0,x_j\star
c}(\gamma)+\check{\rho}_{\vec{k}}(\gamma)\lambda_{z_0, x_i\star
c}(x_i) -\lambda_{z_0, x_j\star c}(x_j).
\end{eqnarray*}
} Here, we have used $\lambda_{z_0, x_i\star c}(1)=0$.
 Taking sum we obtain
$$ \sum_j \check{\rho}_{\vec{k}}(x_j)\lambda_{z_0,c}(h_{\gamma,j}) = \lambda_{z_0,\sum\limits_j x_j\star c}(\gamma) +(\gamma-1)\sum_j\lambda_{z_0, x_j\star c}(x_j).  $$
This shows (\ref{it:col-b}). \end{proof}

We define the Coleman morphism
\begin{eqnarray*}\kappa^{\mathrm{col},\iota_{\mathfrak{p}}}_u:
C^1_{\mathfrak{p}}(U^{\mathfrak{p}}, L_{\vec{k}}(E)) & \rightarrow &
MH^1_{\mathfrak{p}}(U^\mathfrak{p}, L_{\vec{k}}(E))
\\
c &\mapsto &
\kappa_{c;u}^{\mathrm{col},\iota_{\mathfrak{p}}}(g)=\lambda_{\Gamma_g;u}(c(g,\cdot)).\end{eqnarray*}

\begin{prop}\label{prop:coleman}
\begin{enumerate}
\item\label{it:coleman-coh-a} If $c\in C^1_{\mathfrak{p}}(U^{\mathfrak{p}}, L_{\vec{k}}(E))$,
then $\kappa^{\mathrm{col},\iota_\mathfrak{p}}_{c;u}$ is in
$MH^1_{\mathfrak{p}}(U^\mathfrak{p}, L_{\vec{k}}(E))$.
\item\label{it:coleman-coh-b}
The map \begin{eqnarray*}\kappa^{\mathrm{col},\iota_\mathfrak{p}}_u:
C^1_{\mathfrak{p}}(U^{\mathfrak{p}}, L_{\vec{k}}(E))
&\rightarrow&MH^1_{\mathfrak{p}}(U^\mathfrak{p}, L_{\vec{k}}(E)) \\
c & \mapsto & \kappa^{\mathrm{col},\iota_\mathfrak{p}}_{c;u}
\end{eqnarray*} is $\mathbb{T}^p_U$-equivariant.
\end{enumerate}
\end{prop}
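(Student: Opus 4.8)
The plan is to run, almost verbatim, the argument used for the Schneider morphism in Proposition~\ref{prop:schneider}, replacing $\delta_{\Gamma_g}$ by $\lambda_{\Gamma_g;u}$ everywhere and invoking Lemma~\ref{lem:lambda} and Lemma~\ref{lem:coleman} in place of Lemma~\ref{lem:HA}. The only preliminary fact needed is the Coleman-morphism analogue of Lemma~\ref{lem:HA}~(\ref{it:HA-c}): for a subgroup $\Gamma$ of $G_p$, an element $h\in G_p$ and a $\Gamma$-invariant harmonic cocycle $c$ one has
$$ r_{h}\bigl(\lambda_{\Gamma;u}(c)\bigr)=\lambda_{h^{-1}\Gamma h;u}\bigl(h^{-1}\star c\bigr) \quad\text{in } H^1(h^{-1}\Gamma h,L_{\vec{k}}(E)). $$
This follows immediately from Lemma~\ref{lem:lambda}~(\ref{it:lambda-a}) (applied with $x=h^{-1}$), together with the base-point independence of the cohomology class established in Lemma~\ref{lem:lambda}~(\ref{it:lambda-c}).

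For part~(\ref{it:coleman-coh-a}) I would check conditions (a)--(d) of Definition~\ref{defn:coh-mod-form} one by one, exactly as in the proof of Proposition~\ref{prop:schneider}. Condition~(a) is Lemma~\ref{lem:lambda}~(\ref{it:lambda-b}) (the restriction to $\Gamma_g$ is a $1$-cocycle) and Lemma~\ref{lem:lambda}~(\ref{it:lambda-c}) (the class $\kappa^{\mathrm{col},\iota_\mathfrak{p}}_{c;u}(g)=\lambda_{\Gamma_g;u}(c(g,\cdot))$ is well defined, independent of $z_0$). Condition~(\ref{it:coh-center}) holds since $\Gamma_{zg}=\Gamma_g$ for central $z\in G^{\infty,\mathfrak{p}}$ and $c(zg,\cdot)=c(g,\cdot)$ by Definition~\ref{defn:J-mod-form}~(\ref{it:J-center}). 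Conditions~(\ref{it:coh-Up}) and~(\ref{it:coh-Q}) follow from Definition~\ref{defn:J-mod-form}~(\ref{it-J-mod-c}),~(\ref{it:J-mod-d}), which identify $c(gh,\cdot)$ and $c(xg,\cdot)$ with $\star$-translates of $c(g,\cdot)$, together with the displayed $r_h$-compatibility above and the relations $\Gamma_{gh}=h_p^{-1}\Gamma_g h_p$, $\Gamma_{xg}=x_\mathfrak{p}\Gamma_g x_\mathfrak{p}^{-1}$ --- precisely the way (\ref{it:coh-Up}),~(\ref{it:coh-Q}) were derived from Lemma~\ref{lem:HA}~(\ref{it:HA-c}) in the Schneider case.

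For part~(\ref{it:coleman-coh-b}) I would reproduce the Hecke computation in the proof of Proposition~\ref{prop:schneider} verbatim. Using the coset decomposition $\widetilde{\Gamma}_g=\coprod_i\alpha_i\widetilde{\Gamma}'_g$, Lemma~\ref{lem:coset-isom}, and the compatibility of the $\star$-action with $\mathrm{T}_\mathfrak{l}$, one first obtains the identity $(\mathrm{T}_\mathfrak{l}c)(g,\cdot)=\mathrm{Cor}_{\Gamma_g/\Gamma'_g}\,\mathrm{Res}_{\Gamma_{gb_\mathfrak{l}}/\Gamma'_g}\,c(gb_\mathfrak{l},\cdot)$ at the level of ``harmonic cocycle'' valued forms (this step is independent of whether one later applies $\delta$ or $\lambda$). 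Applying $\lambda_{\Gamma_g;u}$ and Lemma~\ref{lem:coleman}~(\ref{it:col-a}),~(\ref{it:col-b}) --- the Coleman analogue of Lemma~\ref{lem:HA}~(\ref{it:HA-a}),~(\ref{it:HA-b}) --- then yields $(\mathrm{T}_\mathfrak{l}\,\kappa^{\mathrm{col},\iota_\mathfrak{p}}_{c;u})(g)=\lambda_{\Gamma_g;u}\bigl((\mathrm{T}_\mathfrak{l}c)(g,\cdot)\bigr)=\kappa^{\mathrm{col},\iota_\mathfrak{p}}_{\mathrm{T}_\mathfrak{l}c;u}(g)$, which is the asserted $\mathbb{T}^p_U$-equivariance. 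I do not expect a serious obstacle: the only delicate point is that corestriction interacts with the base-point ambiguity of the Coleman cocycle, and this is already absorbed in the proof of Lemma~\ref{lem:coleman}~(\ref{it:col-b}), where the discrepancy appears as a coboundary $(\gamma-1)\sum_j\lambda_{z_0,x_j\star c}(x_j)$ and hence vanishes on cohomology. Note that, unlike for $\kappa^{\mathrm{sch}}$, we do not need injectivity or surjectivity here, so \cite[Proposition~2.9]{CMP} plays no role.
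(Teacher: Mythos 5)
Your proposal is correct and follows essentially the same route as the paper: conditions (a)--(d) of Definition \ref{defn:coh-mod-form} are verified via Lemma \ref{lem:lambda} (in particular part (\ref{it:lambda-a}), which gives the $r_h$-compatibility you display), and the $\mathbb{T}^p_U$-equivariance is obtained by rerunning the Hecke computation from Proposition \ref{prop:schneider} with Lemma \ref{lem:coleman} replacing Lemma \ref{lem:HA} (\ref{it:HA-a}, \ref{it:HA-b}). Your added details --- the explicit Coleman analogue of Lemma \ref{lem:HA} (\ref{it:HA-c}) and the remark that the base-point/coboundary ambiguity is absorbed in Lemma \ref{lem:coleman} (\ref{it:col-b}), while \cite[Proposition 2.9]{CMP} is not needed --- simply spell out steps the paper leaves implicit.
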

\begin{proof} For (a) we need to show that $\kappa^{\mathrm{col},\iota_{\mathfrak{p}}}_{c;u}$ satisfies (a)-(d) in
Definition \ref{defn:coh-mod-form}. Conditions (a) and (b) follows
directly form the definition; (c) and (d) follows form Lemma
\ref{lem:lambda} (\ref{it:lambda-a}).

The proof of (\ref{it:coleman-coh-b}) is similar to the assertion
that $\kappa^{\mathrm{sch}}$ is $\mathbb{T}_U^p$-equivariant in  the
proof of  Proposition \ref{prop:schneider} . We only need to use
Lemma \ref{lem:coleman} instead of Lemma \ref{lem:HA}
(\ref{it:HA-a}, \ref{it:HA-b}) used there. We omit the details.
\end{proof}

To end this section we define $L$-invariants of Teitelbaum type. See
\cite[Definition 3.4]{CMP}.

Now, let $f$ be a newform in $M_{\vec{k}}(U, E)$. Assume that
$\alpha_{\mathfrak{p}}(f)=1$. Then $f$ comes from a harmonic modular
form $c\in C^1_{\mathfrak{p}}(U^{\mathfrak{p}}, L_{\vec{k}}(E))$. We
attached to $f$ two cohomological valued modular forms
$\kappa^{\mathrm{sch}}_c$ and
$\kappa^{\mathrm{col},\iota_\mathfrak{p}}_{c;u}$. By Proposition
\ref{prop:schneider} and Proposition \ref{prop:coleman} both
$\kappa^{\mathrm{sch}}_c$ and
$\kappa^{\mathrm{col},\iota_\mathfrak{p}}_{c;u}$ lie in the
$f$-component of $MH^1(U^\mathfrak{p},L_{\vec{k}}(E))$ with respect
to $\mathbb{T}^p_U$-action. As $f$ is new, the multiplicity one
theorem for $\mathrm{GL}_2$ tells us the $f$-component in
$M_{\vec{k}}(U, E)$ and that in $\in
C^1_{\mathfrak{p}}(U^{\mathfrak{p}}, L_{\vec{k}}(E))$ is
$1$-dimensional over $E$. Then Proposition \ref{prop:schneider}
implies that $f$-component $MH^1(U^\mathfrak{p},L_{\vec{k}}(E))$ is
$1$-dimensional over $E$ and generated by $\kappa_c^{\mathrm{sch}}$.
Therefore there exists a unique
$\mathcal{L}^\Tei_{\iota_{\mathfrak{p}};u}(f) \in E$ such that
$$\kappa^{\mathrm{col}, \iota_{\mathfrak{p}}}_{c;u}=
\mathcal{L}^\Tei_{\iota_{\mathfrak{p}};u}(f)
\kappa_c^{\mathrm{sch}}.$$

Now, let $u$ vary.

Let $\ord_\mathfrak{p}$ be the function on $\BC_p^\times$ such that
$\ord_\mathfrak{p}(\omega_\mathfrak{p})=1$, that
$\ord_\mathfrak{p}(xy)=\ord_\mathfrak{p}(x)+\ord_\mathfrak{p}(y)$
and that $\ord_p(x)=0$ if $x$ is a unit. Here $\omega_\mathfrak{p}$
is a uniformizing element of $F_\mathfrak{p}$. If $u_1,
u_2\in\BC_p^\times$ are not unit, then
$$ \log_{u_1}-\log_{u_2}=\left(\log_{u_1}(\omega_\mathfrak{p})-\log_{u_2}(\omega_\mathfrak{p})\right)\ord_\mathfrak{p}. $$

\begin{prop}\label{prop:diff-L} We have
$$ \mathcal{L}^\Tei_{ \iota_\mathfrak{p};u_1}(f) - \mathcal{L}^\Tei_{ \iota_\mathfrak{p};u_2}(f) = \log_{u_1}(\omega_\mathfrak{p})-\log_{u_2}(\omega_\mathfrak{p}).$$
\end{prop}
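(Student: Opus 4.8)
\emph{Strategy.} Let $c\in C^1_\mathfrak{p}(U^{\mathfrak{p}},L_{\vec{k}}(E))$ be the harmonic cocycle valued modular form attached to $f$, so that $\kappa^{\mathrm{col},\iota_\mathfrak{p}}_{c;u_i}=\mathcal{L}^\Tei_{\iota_\mathfrak{p};u_i}(f)\,\kappa^{\mathrm{sch}}_c$ for $i=1,2$. Since $u_1,u_2$ are not units, the identity recalled just before the statement gives $\log_{u_1}-\log_{u_2}=\bigl(\log_{u_1}(\omega_\mathfrak{p})-\log_{u_2}(\omega_\mathfrak{p})\bigr)\ord_\mathfrak{p}$, and substituting this into (\ref{eq:int-lambda}) shows that, for every $g\in G^{\infty,\mathfrak{p}}$, every $z_0\in\mathcal{H}_{\iota_\mathfrak{p}}$ and every $\gamma\in\Gamma_g$,
\[
\bigl(\lambda_{z_0,c(g,\cdot);u_1}-\lambda_{z_0,c(g,\cdot);u_2}\bigr)(\gamma)=\bigl(\log_{u_1}(\omega_\mathfrak{p})-\log_{u_2}(\omega_\mathfrak{p})\bigr)\,\lambda^{\ord}_{z_0,c(g,\cdot)}(\gamma),
\]
where $\lambda^{\ord}_{z_0,c(g,\cdot)}(\gamma)\in L_{\vec{k}}(E)$ is defined by $\langle\lambda^{\ord}_{z_0,c(g,\cdot)}(\gamma),P\otimes Q\rangle=\langle\int P(t)\,\ord_\mathfrak{p}(\tfrac{t-\gamma_\mathfrak{p}z_0}{t-z_0})\,\mu^{\iota_\mathfrak{p}}_{c(g,\cdot)},Q\rangle$; this makes sense because $t\mapsto\ord_\mathfrak{p}(\tfrac{t-\gamma_\mathfrak{p}z_0}{t-z_0})$ is $\BZ$-valued, locally constant, and identically $0$ on a neighbourhood of $\infty$ in $\mathbf{P}^1(F_\mathfrak{p})$. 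Arguing exactly as in Lemma \ref{lem:lambda} (\ref{it:lambda-c}) the class $[\lambda^{\ord}_{z_0,c(g,\cdot)}]\in H^1(\Gamma_g,L_{\vec{k}}(E))$ is independent of $z_0$, and arguing as in Proposition \ref{prop:coleman} the assignment $g\mapsto[\lambda^{\ord}_{z_0,c(g,\cdot)}]$ defines an element $\kappa^{\mathrm{col},\ord}_c\in MH^1_{\mathfrak{p}}(U^\mathfrak{p},L_{\vec{k}}(E))$ that is $\mathbb{T}^p_U$-equivariant. Hence $\kappa^{\mathrm{col},\iota_\mathfrak{p}}_{c;u_1}-\kappa^{\mathrm{col},\iota_\mathfrak{p}}_{c;u_2}=\bigl(\log_{u_1}(\omega_\mathfrak{p})-\log_{u_2}(\omega_\mathfrak{p})\bigr)\kappa^{\mathrm{col},\ord}_c$, and since the $f$-component of $MH^1_{\mathfrak{p}}(U^\mathfrak{p},L_{\vec{k}}(E))$ is one dimensional and generated by $\kappa^{\mathrm{sch}}_c$ (Proposition \ref{prop:schneider}), the proposition is equivalent to the identity $\kappa^{\mathrm{col},\ord}_c=\kappa^{\mathrm{sch}}_c$.

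\emph{The comparison $\kappa^{\mathrm{col},\ord}_c=\kappa^{\mathrm{sch}}_c$.} It suffices to show, for each $g$, that $[\lambda^{\ord}_{z_0,c(g,\cdot)}]=\delta_{\Gamma_g}(c(g,\cdot))$ in $H^1(\Gamma_g,L_{\vec{k}}(E))$, and I would do this by exhibiting an explicit lift of $c_g:=c(g,\cdot)$ through the short exact sequence (\ref{eq:cover-sq}). Fix $z_0$ and let $v_0\in\mathcal{V}_\mathfrak{p}$ be its image under the $\GL_2(F_\mathfrak{p})$-equivariant Drinfeld reduction map $\mathcal{H}_{\iota_\mathfrak{p}}\to\mathcal{V}_\mathfrak{p}$, so that $\gamma_\mathfrak{p}z_0$ reduces to $\gamma_\mathfrak{p}v_0$. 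The combinatorial input is the elementary identity that, for $z_1,z_2\in\mathcal{H}_{\iota_\mathfrak{p}}$ and $t\in\mathbf{P}^1(F_\mathfrak{p})$, $\ord_\mathfrak{p}(\tfrac{t-z_1}{t-z_2})$ is, as a function of $t$, a finite $\BZ$-combination of indicators $\mathbf{1}_{\mathcal{U}_e}$ for the oriented edges $e$ on the geodesic in $\mathcal{T}_\mathfrak{p}$ joining the reductions of $z_2$ and $z_1$, up to an additive constant; the constant is killed upon integration against $\mu^{\iota_\mathfrak{p}}_{c_g}$ by Proposition \ref{prop:int-vanish}, and the remaining indicator pieces may be chosen so as to avoid $\infty$, so the integral decomposes into genuine pieces $\int_{\mathcal{U}_e}P(t)\mu^{\iota_\mathfrak{p}}_{c_g}$. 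Using this together with the defining property of $\mu^{\iota_\mathfrak{p}}_{c_g}$ (the normalization displayed just before Proposition \ref{prop:har-to-measure}), harmonicity of $c_g$ (Definition \ref{defn:harmonic} (\ref{it:har-b})) makes the telescoping $0$-cochain $v\mapsto\widetilde{c}_g(v)$, obtained by summing the edge contributions along the geodesic from $v_0$ to $v$, satisfy $\partial\widetilde{c}_g=c_g$. Finally, applying the edge identity with $z_1=\gamma_\mathfrak{p}z_0$, $z_2=z_0$ and telescoping along the path from $v_0$ to $\gamma_\mathfrak{p}v_0$ identifies $\lambda^{\ord}_{z_0,c_g}(\gamma)$ with $\gamma_\mathfrak{p}\star\widetilde{c}_g(v_0)-\widetilde{c}_g(v_0)$; since $c_g$ is $\Gamma_g$-invariant this lies in $L_{\vec{k}}(E)=\ker\partial$, and by the definition of the connecting homomorphism $\delta_{\Gamma_g}$ in (\ref{eq:delta}) it represents $\delta_{\Gamma_g}(c_g)=\kappa^{\mathrm{sch}}_c(g)$. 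Thus $[\lambda^{\ord}_{z_0,c_g}]=\kappa^{\mathrm{sch}}_c(g)$, and dividing $\kappa^{\mathrm{col},\iota_\mathfrak{p}}_{c;u_1}-\kappa^{\mathrm{col},\iota_\mathfrak{p}}_{c;u_2}=\bigl(\log_{u_1}(\omega_\mathfrak{p})-\log_{u_2}(\omega_\mathfrak{p})\bigr)\kappa^{\mathrm{sch}}_c$ by $\kappa^{\mathrm{sch}}_c$ in the one-dimensional $f$-component gives the stated formula.

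\emph{Main obstacle.} The delicate part is the bookkeeping in the second paragraph: pinning down the precise form of the edge identity for $\ord_\mathfrak{p}$ of a cross-ratio on $\mathcal{T}_\mathfrak{p}$ — including the behaviour of the sets $\mathcal{U}_e$ at the end $\infty$, which is exactly what forces the $\ord$-integrand to vanish near $\infty$ and legitimizes the decomposition into $\int_{\mathcal{U}_e}$-pieces — choosing the orientations in the telescoping sum compatibly, and then reconciling all of this with the Amice–Velu normalization built into $\mu^{\iota_\mathfrak{p}}_{c_g}$, so that $\partial\widetilde{c}_g$ comes out to be exactly $c_g$ (and not merely a scalar multiple of it). Here one must use the $\GL_2(F_\mathfrak{p})$-equivariance of the reduction map, the relation $\mathcal{U}_{ge}=g\mathcal{U}_e$, and Proposition \ref{prop:auto-change} with care. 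Everything else is formal, given the estimates and compatibilities for $\mu^{\iota_\mathfrak{p}}_c$ already established in Propositions \ref{prop:estimate}, \ref{prop:har-to-measure} and \ref{prop:int-vanish}, and the homological-algebra computation of $\delta_{\Gamma_g}$ from (\ref{eq:cover-sq}).
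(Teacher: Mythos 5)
Your proposal is correct and follows essentially the same route as the paper: both reduce, via $\log_{u_1}-\log_{u_2}=\bigl(\log_{u_1}(\omega_\mathfrak{p})-\log_{u_2}(\omega_\mathfrak{p})\bigr)\ord_\mathfrak{p}$ and the one-dimensionality of the $f$-component of $MH^1_{\mathfrak{p}}(U^\mathfrak{p},L_{\vec{k}}(E))$ generated by $\kappa^{\mathrm{sch}}_c$, to the single identity that the $\ord_\mathfrak{p}$-analogue of the Coleman class equals $\kappa^{\mathrm{sch}}_c$. The only divergence is that the paper disposes of that identity by citing \cite[\S 3.1]{DT2008}, whereas you sketch its standard proof (writing $\ord_\mathfrak{p}$ of the cross-ratio as a sum of edge indicators along the geodesic between the reductions of $z_0$ and $\gamma_\mathfrak{p}z_0$, killing the constant by Proposition \ref{prop:int-vanish}, and computing the connecting map of (\ref{eq:cover-sq}) by an explicit telescoping lift); the orientation and moment-normalization bookkeeping you flag as the main obstacle is exactly the content carried out in that reference.
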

\begin{proof} It is easy to check that Lemma
\ref{lem:lambda} again holds when the function $\log_u$ in
(\ref{eq:int-lambda}) is replaced by $\ord_\mathfrak{p}$. In
particular, it defines an element of
$MH^1_{\mathfrak{p}}(U^\mathfrak{p}, L_{\vec{k}}(E))$. As is showed
in \cite[\S 3.1]{DT2008} it is just $\kappa_c^\mathrm{sch}$,
yielding our assertion.
\end{proof}

\begin{defn}\label{def:Tei} We call $\mathcal{L}^{\Tei}_{\iota_{\mathfrak{p}};u}(f)$ the {\it
L-invariant of Teitelbaum type} of $f$ at $\mathfrak{p}$ for the
embedding $\iota_\mathfrak{p}$ and the logarithm $\log_u$.
\end{defn}

\section{Measures and integrals}\label{sec:meas-dist}

Fix a set $J$ of primes of $F$ above $p$. Let $J_0$ be the set of
primes $\mathfrak{p}\in J$ such that $\alpha_{\mathfrak{p}}=1$. We
assume that each $\mathfrak{p}\in J_0$ is split in $K$ and that
$k_\mathfrak{p}=2$ for each $\mathfrak{p}\in J_0$.

\subsection{The measure $\widetilde{\mu}_J$}

In this subsection we write $J'=J\backslash J_0$.

If $\vec{n}$ is a $J'$-tuple of nonnegative integers, we put
$$\widehat{\mathcal{O}}_{J_0;\vec{n}, \mathfrak{c}}
:=\bigcap_{\vec{m}}\widehat{\mathcal{O}}_{(\vec{m},\vec{n}),
\mathfrak{c}}$$ and $$ \widetilde{X}_{J_0;\vec{n},\mathfrak{c}}:=\BA
_F^{\infty,\times} \backslash \BA _K^{\infty,\times} /
\widehat{\mathcal{O}}_{J_0;\vec{n}, \mathfrak{c}}^\times
$$ where $\vec{m}$ runs through all $J_0$-tuples of
nonnegative integers. Similarly, we put
$$\widehat{\mathcal{O}}_{J;\mathfrak{c}}
:=\bigcap_{\vec{m}}\widehat{\mathcal{O}}_{\vec{m}, \mathfrak{c}}$$
and
$$\widetilde{X}_{J;\mathfrak{c}}:=\BA _F^{\infty,\times} \backslash
\BA _K^{\infty,\times} / \widehat{\mathcal{O}}_{J;
\mathfrak{c}}^\times,
$$
where $\vec{m}$ runs through all $J$-tuples of nonnegative integers.

Let $F^\times \backslash K^\times$ act on $
\widetilde{X}_{J_0;\vec{n},\mathfrak{c}}$ and
$\widetilde{X}_{J;\mathfrak{c}}$ by multiplication. The quotient
$(F^\times \backslash K^\times)\backslash
\widetilde{X}_{J;\mathfrak{c}}$ is isomorphic to the group
$\mathcal{G}_{J;\mathfrak{c}}$ defined in Section
\ref{ss:anti-ext-char}.

We write $\widetilde{X}_{J_0;\vec{n}, \mathfrak{c}}$ in the form
$$ \widetilde{X}_{J_0;\vec{n}, \mathfrak{c}} = (F_{J_0}^\times\backslash K_{J_0}^\times) \times (\BA_F^{\infty,J_0,\times}\backslash \BA_K^{\infty,J_0,\times} /
\widehat{\mathcal{O}}_{J_0;\vec{n}, \mathfrak{c}}^\times).$$

For each $\mathfrak{p}\in J_0$ we consider the twisted action of
$i_\mathfrak{p}(K_\mathfrak{p}^\times)$ on
$\mathbf{P}^1(F_\mathfrak{p})=F_\mathfrak{p}\cup\{\infty\}$ given in
Section \ref{ss:harmonic}. For this action
$i_\mathfrak{p}(K_\mathfrak{p}^\times)$ has two fixed points $0,
\infty$. So, for any point $\star\in
\mathbf{P}^1(F_{\mathfrak{p}})\backslash \{0, \infty\}$, the orbit
of $\star$ by $i_\mathfrak{p}(K_{\mathfrak{p}}^\times)$ is exactly
$\mathbf{P}^1(F_{\mathfrak{p}})\backslash \{0, \infty \}$. Taking
$\star=-\beta$ we obtain an isomorphism
$$\eta_\mathfrak{p}: F_\mathfrak{p}^\times \backslash K_\mathfrak{p}^\times   \xrightarrow{\sim}\mathbf{P}^1(F_\mathfrak{p})\backslash \{0, \infty\},
\hskip 10 pt (x,y)\mapsto  \frac{-\beta x }{y}.
$$ Note that
$\eta_\mathfrak{p}(F_\mathfrak{p}^\times\backslash
K_\mathfrak{p}^\times )= F_\mathfrak{p}^\times$ and
$\eta_\mathfrak{p}(\mathcal{O}_{F_\mathfrak{p}}^\times\backslash
\mathcal{O}_{K_\mathfrak{p}}^\times
)=\mathcal{O}_{F_\mathfrak{p}}^\times$. In this way, we identify
$F_{J_0}^\times\backslash
K_{J_0}^\times=\prod\limits_{\mathfrak{p}\in J_0}
F^\times_\mathfrak{p}\backslash K^\times_\mathfrak{p} $ with the
subset $F_{J_0}^\times=\prod\limits_{\mathfrak{p}\in
J_0}F^\times_\mathfrak{p}$ of $\mathbf{P}_{J_0}$.

Thus $\widetilde{X}_{J_0;\vec{n}, \mathfrak{c}}$ is isomorphic to
$$F^\times_{J_0} \times (\BA_F^{\infty,J_0,\times}\backslash \BA_K^{\infty,J_0,\times} /
\widehat{\mathcal{O}}_{J_0;\vec{n}, \mathfrak{c}}^\times).$$ Let
$\overline{X}_{J_0;\vec{n},\mathfrak{c}}$ be the larger set
$$ \overline{X}_{J_0;\vec{n},\mathfrak{c}} := F_{J_0} \times (\BA_F^{\infty,J_0,\times}\backslash \BA_K^{\infty,J_0,\times} / \widehat{\mathcal{O}}_{J_0;\vec{n},
\mathfrak{c}}^\times)\subset \mathbf{P}_{J_0} \times
(\BA_F^{\infty,J_0,\times}\backslash \BA_K^{\infty,J_0,\times} /
\widehat{\mathcal{O}}_{J_0;\vec{n}, \mathfrak{c}}^\times). $$

We pull back the measure $\mu_J$ on $\mathcal{G}_{J;\mathfrak{c}}$
(see Definition \ref{def:measure}) to a measure $\widetilde{\mu}_J$
on $\widetilde{X}_{J; \mathfrak{c}}$, so that for any compactly
supported $p$-adically continuous function $\alpha$ on
$\widetilde{X}_{J;\mathfrak{c}}$, we can define $\int \alpha
\widetilde{\mu}_J$. The function
$$\underline{\alpha} (x)=\sum_{a\in K^\times/F^\times} \alpha(ax) $$
is invariant by $F^\times \backslash K^\times$ and thus can be
viewed as a function on $\mathcal{G}_{J;\mathfrak{c}}$; we have $
\int \alpha \widetilde{\mu}_J =\int \underline{\alpha} \mu_J$. Here,
for $\mu$ being a measure on a $p$-adically topological space $X$,
we mean that  for each compact open subset $U$ of $X$, there exists
a positive constant $C_U$ such that, if $g$ is a $p$-adically
continuous function on $X$ supported on $U$, then $$ | \int g \mu |
\leq C_U ||g||_{\mathrm{Gauss}} ,
$$ where $||g||_{\mathrm{Gauss}} :=\sup\limits_{x\in
U}|g(x)|$.

For any $a\in \BA_K^\times$ and $J_0$-tuple of positive integers
$\vec{m}$ we have
\begin{equation} \label{eq:int-U}
\int 1_{ a \widehat{\mathcal{O}}_{(\vec{m},\vec{n}),
\mathfrak{c}}^\times } \widetilde{\mu}_J =
\frac{1}{\jmath(\prod_{\mathfrak{p}\in
J'}\alpha_\mathfrak{p}^{n_\mathfrak{p}})} \widehat{\nu}(a) \langle
\rho_{\vec{k}}
(\mathfrak{i}_p(\varsigma_J^{(\vec{m},\vec{n})}))^{-1}
\widehat{\mathbf{v}}_{\mathbf{m}}  ,
\widehat{\tilde{\phi}^{\infty,\dagger_J}} (a\varsigma_J^{(\vec{n})})
\rangle .
\end{equation}

We use $C_{J}^\flat$ to denote the space of functions of the form
$g+h$, where $g$ is a compactly supported $p$-adically continuous
function on $\widetilde{X}_{J;\mathfrak{c}}$, and $h$ is a compactly
supported $p$-adically continuous function on $
\overline{X}_{J_0;\vec{n},\mathfrak{c}}$ for some $\vec{n}$
($\vec{n}$ allowed to vary).

Below, we will extend the integral $\int g \widetilde{\mu}_J$ to
$C_{J}^\flat$.

Let $\phi$ and $\phi^{\dagger_J}$ be as in Section \ref{ss:phi},
 $c$ the $J_0$-typle ``harmonic cocycle'' valued modular form
(Definition \ref{defn:J-mod-form}) attached to
$\widehat{\tilde{\phi}^{\infty,\dagger_J}}$.

Let $\mu_{c(a\varsigma_{J',J_p\backslash J_0}^{(\vec{n})},\cdot)}$
be the (vector valued) measure on $$ F_{J_0} \times
(\BA_F^{\infty,J_0,\times} a \widehat{\mathcal{O}}_{J_0;\vec{n},
\mathfrak{c}}^\times) \subset \mathbf{P}_{J_0}
  \times (\BA_F^{\infty,J_0,\times} a
\widehat{\mathcal{O}}_{J_0;\vec{n}, \mathfrak{c}}^\times) \cong
\mathbf{P}_{J_0}  . $$ See Remark \ref{rem:measure} and Corollary
\ref{cor:measure}.

\begin{lem} \label{lem:int-nu-c} For every $a\in \BA_K^{\infty,J_0,\times}$
the integral $$ g\mapsto \frac{1}{\jmath(\prod_{\mathfrak{p}\in
J'}\alpha_\mathfrak{p}^{n_\mathfrak{p}})}\widehat{\nu}(a)
\langle\mathfrak{i}_p(\varsigma_{J'}^{(\vec{n})})^{-1}
\widehat{\mathbf{v}}_{\mathbf{m}}, \int g \: \mu_{c(
a\varsigma_{J',J_p\backslash J_0}^{(\vec{n})},\cdot)} \rangle$$ is
independent of representatives $a$ in the double coset
$\BA_F^{\infty,J_0,\times} a \widehat{\mathcal{O}}_{J_0;\vec{n},
\mathfrak{c}}^\times$.
\end{lem}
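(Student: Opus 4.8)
The plan is first to reduce the statement to an invariance property of the $p$-adic modular form $\widehat{\tilde{\phi}^{\infty,\dagger_J}}$, and then to quote (the proof of) Lemma~\ref{lem:theta}. Since the factors $\jmath(\prod_{\mathfrak{p}\in J'}\alpha_{\mathfrak{p}}^{n_{\mathfrak{p}}})$ and $\rho_{\vec{k}}(\mathfrak{i}_p(\varsigma_{J'}^{(\vec{n})}))^{-1}\widehat{\mathbf{v}}_{\mathbf{m}}$ do not involve $a$, and a general representative of the double coset can be written $zau$ with $z\in\BA_F^{\infty,J_0,\times}$ and $u\in\widehat{\mathcal{O}}_{J_0;\vec{n},\mathfrak{c}}^\times$, it suffices to treat the two replacements $a\mapsto za$ and $a\mapsto au$ separately. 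Moreover, since $k_{\mathfrak{p}}=2$ for $\mathfrak{p}\in J_0$, Corollary~\ref{cor:measure} tells us that $\mu_{c(h,\cdot)}$ is an honest (vector valued) measure on $F_{J_0}$; hence $g\mapsto\int g\,\mu_{c(h,\cdot)}$ is a continuous functional, determined by additivity and continuity by its values on the characteristic functions $1_{x\mathcal{U}_{e_{J_0,*}}}$ with $x\in G_{J_0}$, which run over a basis of compact opens of $F_{J_0}$. So I would only need to check the asserted independence after evaluating at $g=1_{x\mathcal{U}_{e_{J_0,*}}}$ for each $x\in G_{J_0}$.

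Next I would make the link with $\widehat{\tilde{\phi}^{\infty,\dagger_J}}$ explicit. For $h\in G^{\infty,J_0}$ and $x\in G_{J_0}$, Remark~\ref{rem:measure} gives $\int 1_{x\mathcal{U}_{e_{J_0,*}}}\,\mu_{c(h,\cdot)}=c(h,xe_{J_0,*})$, and because $c$ is the $J_0$-type harmonic cocycle valued form attached to $\widehat{\tilde{\phi}^{\infty,\dagger_J}}$ and $\check{\rho}_{\vec{k}}$ acts trivially on the $G_{J_0}$-factors (as $k_{\mathfrak{p}}=2$ there), this equals $\widehat{\tilde{\phi}^{\infty,\dagger_J}}(hx)$. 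Taking $h=a\varsigma_{J',J_p\backslash J_0}^{(\vec{n})}$ and using that $\rho_{\vec{k}}(\mathfrak{i}_p(x))=\mathrm{id}$ for $x\in G_{J_0}$ and $m_\sigma=0$ whenever $\mathfrak{p}_\sigma\in J_0$ (so the missing factor $\prod_{\mathfrak{p}\in J_0}\varsigma_{\mathfrak{p}}$ acts trivially on $\widehat{\mathbf{v}}_{\mathbf{m}}$), the bookkeeping in the proof of Lemma~\ref{lem:shift-inft-p}, applied to the adele $a\varsigma_{J',J_p\backslash J_0}^{(\vec{n})}x$, yields
\[
\frac{\widehat{\nu}(a)}{\jmath(\prod_{\mathfrak{p}\in J'}\alpha_{\mathfrak{p}}^{n_{\mathfrak{p}}})}\Big\langle\rho_{\vec{k}}(\mathfrak{i}_p(\varsigma_{J'}^{(\vec{n})}))^{-1}\widehat{\mathbf{v}}_{\mathbf{m}},\ \int 1_{x\mathcal{U}_{e_{J_0,*}}}\,\mu_{c(a\varsigma_{J',J_p\backslash J_0}^{(\vec{n})},\cdot)}\Big\rangle=\frac{1}{\jmath(\prod_{\mathfrak{p}\in J'}\alpha_{\mathfrak{p}}^{n_{\mathfrak{p}}})}\,\jmath\big[\tilde{\phi}^{\dagger_J}(a\varsigma_{J',J_p\backslash J_0}^{(\vec{n})}x)\,\nu(a)\big].
\]
Thus the lemma would be reduced to: for each fixed $x\in G_{J_0}$, the function $a\mapsto\tilde{\phi}^{\dagger_J}(a\varsigma_{J',J_p\backslash J_0}^{(\vec{n})}x)\,\nu(a)$ on $\BA_K^{\infty,J_0,\times}$ is invariant under left multiplication by $\BA_F^{\infty,J_0,\times}$ and under right multiplication by $\widehat{\mathcal{O}}_{J_0;\vec{n},\mathfrak{c}}^\times$.

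This last invariance I would prove exactly as Lemma~\ref{lem:theta}. For $z\in\BA_F^{\infty,J_0,\times}$, $z$ is central, so $\tilde{\phi}^{\dagger_J}(z\,a\varsigma_{J',J_p\backslash J_0}^{(\vec{n})}x)=\tilde{\phi}^{\dagger_J}(a\varsigma_{J',J_p\backslash J_0}^{(\vec{n})}x)$ by triviality of the central character, while $\nu(za)=\nu(a)$ because $\nu$ is anticyclotomic, hence trivial on $\BA_F^\times$. For $u\in\widehat{\mathcal{O}}_{J_0;\vec{n},\mathfrak{c}}^\times$, which is supported away from $J_0$, both $u$ and $w:=\varsigma_{J',J_p\backslash J_0}^{(\vec{n}),-1}u\,\varsigma_{J',J_p\backslash J_0}^{(\vec{n})}$ commute with $x\in G_{J_0}$, so $au\,\varsigma_{J',J_p\backslash J_0}^{(\vec{n})}x=(a\varsigma_{J',J_p\backslash J_0}^{(\vec{n})}x)\,w$; a place-by-place check — the Gross-point computation underlying Proposition~\ref{prop:comp} and Lemma~\ref{lem:theta}: at $\mathfrak{p}\in J'$ one uses that $\varsigma_{\mathfrak{p}}^{(n_{\mathfrak{p}})}$ conjugates $1+\mathfrak{p}^{n_{\mathfrak{p}}}\mathcal{O}_{K_{\mathfrak{p}}}$ into $U_0(\omega_{\mathfrak{p}})$, at the other primes above $p$ that the conjugate of $\mathcal{O}_{K_{\mathfrak{p}}}^\times$ is diagonal with unit entries, and at the finite places dividing $\mathfrak{n}_\nu$ the $U^p_{\mathfrak{c}}$-invariance of $\tilde{\phi}^{\dagger_J}$ arranged in \S\ref{ss:tilde-phi} — shows $w$ lies in the level subgroup of $\tilde{\phi}^{\dagger_J}$ up to a finite idele of $F$, which the trivial central character absorbs; hence $\tilde{\phi}^{\dagger_J}((a\varsigma_{J',J_p\backslash J_0}^{(\vec{n})}x)\,w)=\tilde{\phi}^{\dagger_J}(a\varsigma_{J',J_p\backslash J_0}^{(\vec{n})}x)$. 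Finally $\nu(au)=\nu(a)$ since $\nu_p=1$ and $\nu$, of conductor $\mathfrak{c}$, is trivial on $\widehat{\mathcal{O}}_{J_0;\vec{n},\mathfrak{c}}^\times$. Combined with the display above, this gives the claim.

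The hard part will be the place-by-place verification in the last paragraph that conjugating $\widehat{\mathcal{O}}_{J_0;\vec{n},\mathfrak{c}}^\times$ by the partial Gross point $\varsigma_{J',J_p\backslash J_0}^{(\vec{n})}$ lands in the level of $\tilde{\phi}^{\dagger_J}$, especially at the primes $\mathfrak{p}\in J'$ where the $n_{\mathfrak{p}}$-deep congruences must be matched against the $\mathrm{U}_{\mathfrak{p}}$-eigenform property; but since this is precisely the computation already carried out (implicitly) in the proof of Lemma~\ref{lem:theta} and of Proposition~\ref{prop:comp}, I would invoke it rather than redo it.
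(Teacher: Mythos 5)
Your proposal is correct, but it is organized differently from the paper's own argument. The paper proves the lemma entirely on the $p$-adic side and for arbitrary test functions $g$ at once: writing a general representative as $xay$, it uses the transformation law of the $J_0$-type harmonic-cocycle-valued form (Definition \ref{defn:J-mod-form} (\ref{it:J-center}), (\ref{it-J-mod-c})) together with the conjugation fact $(\varsigma_{J',J_p\backslash J_0}^{(\vec{n})})^{-1}y\varsigma_{J',J_p\backslash J_0}^{(\vec{n})}\in U^{J_0}$ to get $\mu_{c(xay\varsigma,\cdot)}=\check{\rho}_{\vec{k}}\big(((\varsigma)^{-1}y\varsigma)_p^{-1}\big)\mu_{c(a\varsigma,\cdot)}$, and then moves this operator across the pairing, where the eigenvector property $\mathfrak{i}_p(y)\widehat{\mathbf{v}}_{\mathbf{m}}=\widehat{\nu}(y)^{-1}\widehat{\mathbf{v}}_{\mathbf{m}}$ and $\widehat{\nu}(x)=1$ make the factor $\widehat{\nu}(xay)$ collapse to $\widehat{\nu}(a)$. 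You instead first reduce, via the measure property of Corollary \ref{cor:measure}, to indicators $1_{x\mathcal{U}_{e_{J_0,*}}}$, identify those integrals with $\widehat{\tilde{\phi}^{\infty,\dagger_J}}(a\varsigma_{J',J_p\backslash J_0}^{(\vec{n})}x)$ and then, via the bookkeeping of Lemma \ref{lem:shift-inft-p}, with $\jmath[\tilde{\phi}^{\dagger_J}(a\varsigma_{J',J_p\backslash J_0}^{(\vec{n})}x)\nu(a)]$, and finally quote the Lemma \ref{lem:theta}-type invariance of the complex form. Both routes rest on exactly the same Gross-point conjugation fact, which the paper asserts in one clause and you defer to Lemma \ref{lem:theta}/Proposition \ref{prop:comp}; your detour through the complex form buys a transparent link with Lemma \ref{lem:theta} and lets you use $\nu(u)=1$ instead of tracking the generally nontrivial value $\widehat{\nu}(y)$, at the price of the density reduction and the extra $\widehat{\nu}$-versus-$\nu$ bookkeeping. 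Two small points of care: the reduction to indicators is legitimate only because $\mu_{c(h,\cdot)}$ is bounded (this is where $k_{\mathfrak{p}}=2$ and Corollary \ref{cor:measure} enter), and elements of $\widehat{\mathcal{O}}_{J_0;\vec{n},\mathfrak{c}}^\times$ are not literally supported away from $J_0$ — their $J_0$-components lie in $\mathcal{O}_{F_{\mathfrak{p}}}^\times$, hence are central and harmless — so your phrase should be read in that sense; neither point is a gap.
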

\begin{proof} For any $x\in \BA_F^{\infty,J_0,\times}$ and $y\in \widehat{\mathcal{O}}_{J_0;\vec{n},
\mathfrak{c}}^\times$, since $(\varsigma_{J',J_p\backslash
J_0}^{(\vec{n})})^{-1}y\varsigma_{J',J_p\backslash
J_0}^{(\vec{n})}\in U^{J_0}$, by Definition \ref{defn:J-mod-form}
(\ref{it:J-center}, \ref{it-J-mod-c}) we have
$$\mu_{c( xay\varsigma_{J',J_p\backslash J_0}^{(\vec{n})},\cdot)}
 =
\check{\rho}_{\vec{k}}\bigg( \Big((\varsigma_{J',J_p\backslash
J_0}^{(\vec{n})})^{-1} y \varsigma_{J',J_p\backslash
J_0}^{(\vec{n})} \Big)^{-1}_p \bigg)
\mu_{c(a\varsigma_{J',J_p\backslash J_0}^{(\vec{n})},\cdot) } .
$$ Hence
\begin{eqnarray*} && \widehat{\nu}(xay)
\langle\mathfrak{i}_p(\varsigma_{J',J_p\backslash
J_0}^{(\vec{n})})^{-1} \widehat{\mathbf{v}}_{\mathbf{m}}, \int g \:
\mu_{c( xay\varsigma_{J',J_p\backslash J_0}^{(\vec{n})},\cdot)}
\rangle  \\ & = & \widehat{\nu}(xay)
\langle\mathfrak{i}_p((\varsigma_{J',J_p\backslash
J_0}^{(\vec{n})})^{-1}y \varsigma_{\emptyset, J_0}^{-1})
\widehat{\mathbf{v}}_{\mathbf{m}}, \int g \: \mu_{c(
a\varsigma_{J',J_p\backslash J_0}^{(\vec{n})},\cdot)} \rangle \\
& = & \widehat{\nu}(xay) \widehat{\nu}(y)^{-1}
\langle\mathfrak{i}_p(\varsigma_{J',J_p\backslash
J_0}^{(\vec{n})})^{-1} \widehat{\mathbf{v}}_{\mathbf{m}}, \int g \:
\mu_{c( a\varsigma_{J',J_p\backslash J_0}^{(\vec{n})},\cdot)}
\rangle \\ & = & \widehat{\nu}(a)
\langle\mathfrak{i}_p(\varsigma_{J'}^{(\vec{n})})^{-1}
\widehat{\mathbf{v}}_{\mathbf{m}}, \int g \: \mu_{c(
a\varsigma_{J',J_p\backslash J_0}^{(\vec{n})},\cdot)}\rangle.
\end{eqnarray*} Here, we use $\mathfrak{i}_p(\varsigma_{\emptyset, J_0}^{-1})\widehat{\mathbf{v}}_{\mathbf{m}}=\widehat{\mathbf{v}}_{\mathbf{m}}$,
$\mathfrak{i}_p(y)\widehat{\mathbf{v}}_{\mathbf{m}}
=\widehat{\nu}(y)^{-1}\widehat{\mathbf{v}}_{\mathbf{m}}$ and
$\widehat{\nu}(x)=1$.
\end{proof}

Let $\mu_{\nu,c}$ be the measure on $ \overline{X}_{J_0;\vec{n},
\mathfrak{c}}$ whose restriction to $ F_{J_0}  \times
(\BA_F^{\infty,J_0,\times} a \widehat{\mathcal{O}}_{J_0;\vec{n},
\mathfrak{c}}^\times) $ is that given by Lemma \ref{lem:int-nu-c}.

\begin{prop}\label{prop:rest} For any compactly supported continuous function $g$ on $
\widetilde{X}_{J_0;\vec{n}, \mathfrak{c}}$ we have
$$ \int g \mu_{\nu,c} = \int g \widetilde{\mu}_J.$$
\end{prop}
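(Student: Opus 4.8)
\textbf{Proof proposal for Proposition \ref{prop:rest}.}

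The plan is to reduce the identity $\int g\,\mu_{\nu,c}=\int g\,\widetilde{\mu}_J$ to the basic interpolation formula (\ref{eq:int-U}) by checking it on a generating family of test functions. First I would observe that both sides are continuous linear functionals on the space of compactly supported continuous functions on $\widetilde{X}_{J_0;\vec n,\mathfrak c}$, so it suffices to verify the equality on indicator functions of a basis of compact opens. Since $\widetilde{X}_{J_0;\vec n,\mathfrak c}\cong F_{J_0}^\times\times(\BA_F^{\infty,J_0,\times}\backslash\BA_K^{\infty,J_0,\times}/\widehat{\mathcal O}_{J_0;\vec n,\mathfrak c}^\times)$, and using the identification $\eta_\mathfrak p$ that embeds $F_{J_0}^\times\backslash K_{J_0}^\times$ as $F_{J_0}^\times\subset\mathbf P_{J_0}$, a cofinal family of compact opens is given by products $\mathcal U\times(\BA_F^{\infty,J_0,\times}a\widehat{\mathcal O}_{(\vec m,\vec n),\mathfrak c}^\times)$ where $\mathcal U=g_{J_0}\mathcal U_{e_{J_0,*}}$ is a basic ball in $F_{J_0}$ coming from a translate of the standard edge (for $\vec m$ large these exhaust everything). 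The key point, which I would establish next, is that the indicator of such a product, pushed down by $a'\mapsto\sum_{b\in K^\times/F^\times}1_{\ast}(ba')$, is exactly $1_{a\varsigma\,\widehat{\mathcal O}^\times}$ up to the choice of $g_{J_0}$; here one uses that $K_\mathfrak p^\times$ acts transitively on $\mathbf P^1(F_\mathfrak p)\setminus\{0,\infty\}$ (stated in Section \ref{ss:har-to-rigid}), so the $F^\times\backslash K^\times$-orbit of a ball in $F_{J_0}$ sweeps out the whole double coset.

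With this dictionary in hand, the computation of the left side uses Remark \ref{rem:measure} together with Lemma \ref{lem:int-nu-c}: for $k_\mathfrak p=2$ one has $\mu^{\iota_{J_0}}_{c(h,\cdot)}(g_{J_0}\mathcal U_{e_{J_0,*}})=c(h,g_{J_0}\,e_{J_0,*})$, so that
\[
\int 1_{\mathcal U\times(\BA_F^{\infty,J_0,\times}a\widehat{\mathcal O}^\times)}\,\mu_{\nu,c}
=\frac{\widehat\nu(a)}{\jmath(\prod_{\mathfrak p\in J'}\alpha_\mathfrak p^{n_\mathfrak p})}\,
\big\langle\mathfrak i_p(\varsigma_{J'}^{(\vec n)})^{-1}\widehat{\mathbf v}_{\mathbf m},\,c\big(a\varsigma_{J',J_p\backslash J_0}^{(\vec n)},\,g_{J_0}\,e_{J_0,*}\big)\big\rangle .
\]
Unwinding the definition of $c$ as the $J_0$-tuple harmonic cocycle attached to $\widehat{\tilde\phi^{\infty,\dagger_J}}$ via Proposition \ref{prop:mod-to-har}, i.e. $c(g^{J_0},e)=\check\rho_{\vec k}(g_e)\widehat{\tilde\phi^{\infty,\dagger_J}}(g^{J_0}g_e)$ with $g_e(e_{J_0,*})=e$, the right-hand pairing collapses to $\langle\rho_{\vec k}(\mathfrak i_p(\varsigma_J^{(\vec m,\vec n)}))^{-1}\widehat{\mathbf v}_{\mathbf m},\widehat{\tilde\phi^{\infty,\dagger_J}}(a\varsigma_J^{(\vec n)})\rangle$ after absorbing $g_{J_0}$ and the $\varsigma_{J_0}^{(\vec m)}$-factors. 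This is precisely the right side of (\ref{eq:int-U}), which by definition equals $\int 1_{a\widehat{\mathcal O}_{(\vec m,\vec n),\mathfrak c}^\times}\,\widetilde{\mu}_J$, hence equals $\int 1_{\mathcal U\times(\cdots)}\,\widetilde{\mu}_J$ under the push-down identification from the previous paragraph.

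The main obstacle I anticipate is the bookkeeping in the second paragraph: matching the twisted $\GL_2$-action on $\mathbf P_{J_0}$ (via $\mathbf i_\mathfrak p i_\mathfrak p^{-1}$ and the conjugation by $\hbar_\mathfrak p$ of Lemma \ref{lem:h-sigma}) with the adelic double-coset parametrization, and in particular checking that the edge $g_{J_0}e_{J_0,*}$ selected by a ball in $F_{J_0}$ corresponds, after the isomorphism $\eta_\mathfrak p$ and the choice $\star=-\beta$, to the right representative so that $\varsigma_J^{(\vec m,\vec n)}$ appears with the correct power of $\varsigma_{J_0}$. One must also confirm that the normalizing factor $\jmath(\prod_{\mathfrak p\in J'}\alpha_\mathfrak p^{n_\mathfrak p})^{-1}$ and the character value $\widehat\nu(a)$ are carried along consistently on both sides, which is exactly what Lemma \ref{lem:int-nu-c} was set up to guarantee. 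Once the indicator-function case is done, density of locally constant functions and the uniform bound in the definition of a measure upgrade the equality to all compactly supported continuous $g$, completing the proof.
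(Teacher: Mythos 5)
Your proposal is correct and follows essentially the same route as the paper: reduce to indicator functions of basic sets $U\times(\BA_F^{\infty,J_0,\times}b\widehat{\mathcal{O}}^\times_{J_0;\vec n,\mathfrak c})$ with $U$ a ball in $F_{J_0}^\times$, identify $U$ via $\eta_{J_0}$ with a translate $i_{J_0}(a,1)\varsigma_{J_0,J_0}^{(\vec m)}\mathcal{U}_{e_{J_0,*}}$, evaluate $\mu_{\nu,c}$ using Remark \ref{rem:measure}, Lemma \ref{lem:int-nu-c} and the definition of the harmonic cocycle $c$, and match the result with (\ref{eq:int-U}). The bookkeeping you flag as the main obstacle is exactly what the paper carries out via the computation $\varsigma_{J_0,J_0}^{(\vec m)}\mathcal{U}_{e_{J_0,*}}=-\beta\,\mathcal{U}(1,\vec m)$ and the identification (\ref{eq:eta-U}), together with triviality of $\check{\rho}_{\vec k}$ on $G_{J_0}$ and $\alpha_\mathfrak{p}=1$ for $\mathfrak{p}\in J_0$.
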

\begin{proof} Since both $\mu_{\nu,c}$ and $\widetilde{\mu}_J$
are measures on $ \widetilde{X}_{J_0;\vec{n}, \mathfrak{c}}$, we
only need to verify the formula when $g$ is locally constant.
Without loss of generality we may assume that $g=1_{U\times
(\BA_F^{J_0,\times} b \widehat{\mathcal{O}}_{J_0;\vec{n},
\mathfrak{c}}^\times)}$ where $U$ is a compact open subset of
$F_{J_0}^\times \cong F_{J_0}^\times\backslash K_{J_0}^\times$.
Since $U$ is a disjoint union of open subsets of the form $a
\mathcal{U}(1, \vec{m})$, we may further assume that
$U=-a\beta\mathcal{U}(1, \vec{m} )$ for some $a\in F_{J_0}^\times$.
Note that
\begin{equation}\label{eq:eta-U}
U=\eta_{J_0}\left((a,1)\prod_{\mathfrak{p}\in
J_0}(1+\mathfrak{p}^{m_{\mathfrak{p}}}\mathcal{O}_{K_\mathfrak{p}})
\cdot F_{J_0}^\times/F_{J_0}^\times\right). \end{equation}

By a simple computation we have $$ \zeta_{J_0,J_0}^{(\vec{
m})}\cdot\mathcal{U}_{e_{J_0,*}}=-\beta\mathcal{U}(1, \vec{m}).$$
Thus
$$ i_{J_0}(a,1)\zeta_{J_0,J_0}^{(\vec{
m})}\cdot  \mathcal{U}_{e_{J_0,
*}} = a\beta
\mathcal{U}(1, \vec{m})=U .$$

Now we have
\begin{eqnarray*}\int g \mu_{\nu,c}  & = & \frac{1}{\jmath(\prod_{\mathfrak{p}\in
J'}\alpha_\mathfrak{p}^{n_\mathfrak{p}})} \widehat{\nu}(b)
\langle\mathfrak{i}_p(\varsigma_{J'}^{(\vec{n})})^{-1}
\widehat{\mathbf{v}}_{\mathbf{m}}, c(b \varsigma_{J',J_p\backslash
J_0}^{(\vec{n})},
i_{J_0}(a,1)\varsigma_{J_0,J_0}^{(\vec{m})}e_{J_0,*}) \rangle \\
&=& \frac{1}{\jmath(\prod_{\mathfrak{p}\in
J'}\alpha_\mathfrak{p}^{n_\mathfrak{p}}\cdot\prod_{\mathfrak{p}\in
J_0}\alpha_\mathfrak{p}^{m_\mathfrak{p}})} \widehat{\nu}(b)
\langle\mathfrak{i}_p(\varsigma_{J}^{(\vec{m},\vec{n})})^{-1}
\widehat{\mathbf{v}}_{\mathbf{m}},
\widehat{\tilde{\phi}^{\infty,\dagger_J}}(i_{J_0}(a,1)b\varsigma_{J}^{(\vec{m},\vec{n})})
\rangle   \end{eqnarray*} where the last equality follows from
Remark \ref{rem:measure}. Here, we use the fact that
$\check{\rho}_{\vec{k}}|_{G_{J_0}}$ is trivial. By (\ref{eq:int-U})
the last term of the above formula is exactly $\int g
\widetilde{\mu}_J$.
\end{proof}

Now, we extend $\widetilde{\mu}_J$ to $C^\flat_{J}$ by putting
$$ \int (g+h) \widetilde{\mu}_J=\int g \widetilde{\mu}_J+\int h \mu_{\nu,c}.$$
This definition is compatible when $\vec{n}$ varies. Indeed, this
follows from an argument similar to the proof of Proposition
\ref{prop:comp}.

The action of $F^\times\backslash K^\times$ on
$\overline{X}_{J_0;\vec{n},\mathfrak{c}}$ and
$\widetilde{X}_{J;\mathfrak{c}}$ induces an action on $C^\flat_{J}$:
$$\gamma^*(f)(x):=f(\gamma x )$$ for $\gamma\in F^\times\backslash
K^\times$ and $f\in C^\flat_{J}$. It induces an action of
$F^\times\backslash K^\times$ on the dual of $C^\flat_{J}$.

\begin{lem} $\widetilde{\mu}_J$ is $F^\times\backslash K^\times$-invariant.
\end{lem}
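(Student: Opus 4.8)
The plan is to reduce the $F^\times\backslash K^\times$-invariance of $\widetilde{\mu}_J$ on $C^\flat_J$ to two statements, one for each of the two types of function that make up $C^\flat_J$. Fix $\gamma\in F^\times\backslash K^\times$ and write a given $f\in C^\flat_J$ as $f=g+h$ with $g$ a compactly supported continuous function on $\widetilde{X}_{J;\mathfrak{c}}$ and $h$ one on $\overline{X}_{J_0;\vec{n},\mathfrak{c}}$. The element $\gamma$ acts by a homeomorphism on each of $\widetilde{X}_{J;\mathfrak{c}}$ and $\overline{X}_{J_0;\vec{n},\mathfrak{c}}$: on the former by multiplication in the adelic quotient; on the latter, after the decomposition $\overline{X}_{J_0;\vec{n},\mathfrak{c}}=F_{J_0}\times(\BA_F^{\infty,J_0,\times}\backslash\BA_K^{\infty,J_0,\times}/\widehat{\mathcal{O}}_{J_0;\vec{n},\mathfrak{c}}^\times)$, by multiplication by $\gamma^{J_0}$ on the second factor and by the twisted action of the $J_0$-component $\gamma_{J_0}\in G_{J_0}$ (which on each $\mathbf{P}^1(F_\mathfrak{p})$, $\mathfrak{p}\in J_0$, is the scaling $x_\mathfrak{p}\mapsto(\gamma_\mathfrak{P}/\gamma_{\bar{\mathfrak{P}}})\,x_\mathfrak{p}$) on the first factor, preserving $F_{J_0}\subset\mathbf{P}_{J_0}$; in particular $\gamma^*h$ is again compactly supported continuous on $\overline{X}_{J_0;\vec{n},\mathfrak{c}}$ for the same $\vec{n}$. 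Thus $\gamma^*f=\gamma^*g+\gamma^*h$ is a decomposition of the same shape, so by the definition of $\int\,\cdot\,\widetilde{\mu}_J$ on $C^\flat_J$ it suffices to prove $\int\gamma^*g\,\widetilde{\mu}_J=\int g\,\widetilde{\mu}_J$ and $\int\gamma^*h\,\mu_{\nu,c}=\int h\,\mu_{\nu,c}$.

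The first identity is immediate from the construction: $\int g\,\widetilde{\mu}_J=\int\underline{g}\,\mu_J$ with $\underline{g}(x)=\sum_{a\in K^\times/F^\times}g(ax)$, and the substitution $a\mapsto\gamma a$ gives $\underline{\gamma^*g}=\underline{g}$, whence $\int\gamma^*g\,\widetilde{\mu}_J=\int\underline{g}\,\mu_J=\int g\,\widetilde{\mu}_J$.

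For the second identity I will expand both sides over the finitely many relevant double cosets $\BA_F^{\infty,J_0,\times}a\widehat{\mathcal{O}}_{J_0;\vec{n},\mathfrak{c}}^\times$, using the piecewise formula of Lemma \ref{lem:int-nu-c} for $\mu_{\nu,c}$. Writing $h_b$ for the fibre of $h$ over the coset of $b$, the fibre of $\gamma^*h$ over the coset of $a$ is $\xi\mapsto h_{\gamma^{J_0}a}(\gamma_{J_0}\cdot\xi)$. Since $k_\mathfrak{p}=2$ for every $\mathfrak{p}\in J_0$, the automorphy factors and the operator $\check{\rho}_{\vec{k}}$ appearing in Proposition \ref{prop:auto-change} are trivial on $G_{J_0}$ (cf.\ Remark \ref{rem:measure}, Remark \ref{rem:gamma-inv} and Corollary \ref{cor:measure}), so Proposition \ref{prop:auto-change} gives $\int h_{\gamma^{J_0}a}(\gamma_{J_0}\cdot\xi)\,d\mu_{c(a\varsigma_{J',J_p\backslash J_0}^{(\vec{n})},\cdot)}(\xi)=\int h_{\gamma^{J_0}a}(\xi)\,d\mu_{\gamma_{J_0}\star c(a\varsigma_{J',J_p\backslash J_0}^{(\vec{n})},\cdot)}(\xi)$; and by Definition \ref{defn:J-mod-form} (\ref{it:J-mod-d}) applied to $\gamma\in K^\times\subset G(\BQ)$ (again using $k_\mathfrak{p}=2$ to trivialize $\check{\rho}_{\vec{k}}(\gamma_{J_0})$) one has $\gamma_{J_0}\star c(a\varsigma_{J',J_p\backslash J_0}^{(\vec{n})},\cdot)=c((\gamma^{J_0}a)\varsigma_{J',J_p\backslash J_0}^{(\vec{n})},\cdot)$. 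Reindexing the sum by $b=\gamma^{J_0}a$, a bijection of the index set, and comparing with $\int h\,\mu_{\nu,c}$ via Lemma \ref{lem:int-nu-c}, the two sums agree except for an overall scalar $\widehat{\nu}(\gamma^{J_0})^{-1}$. Finally $\widehat{\nu}(\gamma^{J_0})=1$: since $\widehat{\nu}$ is trivial on $K^\times$ one has $\widehat{\nu}(\gamma^{J_0})=\widehat{\nu}(\gamma_{J_0})^{-1}$, and since $\nu_p=1$ while $m_\sigma=0$ for every $\sigma\in\Sigma_{J_0}$ (forced by $k_\sigma=2$), the relation between $\widehat{\nu}$ and $\nu$ shows $\widehat{\nu}_\mathfrak{p}\equiv1$ on $K_\mathfrak{p}^\times$ for each $\mathfrak{p}\in J_0$, whence $\widehat{\nu}(\gamma_{J_0})=1$. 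This yields $\int\gamma^*h\,\mu_{\nu,c}=\int h\,\mu_{\nu,c}$ and completes the argument.

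The point demanding the most care is the bookkeeping in the last paragraph: correctly identifying how $\gamma$ acts on the product decomposition of $\overline{X}_{J_0;\vec{n},\mathfrak{c}}$, matching the twisted (scaling) action with the hypotheses of Proposition \ref{prop:auto-change} and with the transformation law of Definition \ref{defn:J-mod-form} (\ref{it:J-mod-d}), and tracking the Gross-point elements $\varsigma_{J',J_p\backslash J_0}^{(\vec{n})}$ through the reindexing. The genuinely analytic input — the existence and boundedness of the distributions and the change-of-variables formula — is already in place, so what remains is a careful unwinding of the definitions, together with the two numerical facts $k_\mathfrak{p}=2$ on $J_0$ and $\nu_p=1$.
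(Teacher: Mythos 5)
Your proof is correct and takes essentially the same route as the paper, whose own argument is just the observation that the claim follows from Definition \ref{defn:J-mod-form}~(\ref{it:J-mod-d}) together with the triviality of $\check{\rho}_{\vec{k}}|_{G_{J_0}}$ (forced by $k_\mathfrak{p}=2$ for $\mathfrak{p}\in J_0$); your unwinding via Proposition \ref{prop:auto-change} and the coset reindexing is exactly this mechanism made explicit. Your extra verification that $\widehat{\nu}(\gamma^{J_0})=1$ (using triviality of $\widehat{\nu}$ on $K^\times K_\infty^\times$, $\nu_p=1$ and $m_\sigma=0$ on $\Sigma_{J_0}$) is a bookkeeping point the paper leaves implicit, and you are right that it is needed for the reindexed sums to match.
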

\begin{proof} This follows from Definition \ref{defn:J-mod-form}
(\ref{it:J-mod-d}) and the fact that
$\check{\rho}_{\vec{k}}|_{G_{J_0}}$ is trivial.
\end{proof}

Let $J_1$ be a subset of $J_0$. With $ J\backslash J_1$ instead of
$J$, we have a distribution $\widetilde{\mu}_{J\backslash J_1}$ on
$\widetilde{X}_{J\backslash J_1;\mathfrak{c}}$ and
$\overline{X}_{J\backslash J_1;\vec{n},\mathfrak{c}}$. We compare
$\widetilde{\mu}_J$ and $\widetilde{\mu}_{J\backslash J_1}$.

\begin{prop}\label{prop:transfer} If $V$ is a compact open subset of
$\BA_F^{\infty,J_1,\times}\backslash
\BA_K^{\infty,J_1,\times}/\widehat{\mathcal{O}}^\times_{J;\mathfrak{c}}$,
then
$$ \widetilde{\mu}_J(\prod_{v\in
J_1}\mathcal{O}_{F_v}  \times  V ) = \widetilde{\mu}_{J\backslash
J_1} (
 [1_{J_1}]   \times V)
, $$ where $[1_{J_1}]$ denotes the point $F_{J_1}^\times \cdot 1
\cdot \mathcal{O}_{K_{J_1}}^\times$ in $F_{J_1}^\times \backslash
K_{J_1}^\times/\mathcal{O}_{K_{J_1}}^\times$.
\end{prop}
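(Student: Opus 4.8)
The plan is to reduce the assertion to a pointwise identity on a basis of compact opens and then feed it into Proposition~\ref{prop:vary-J}, which is exactly the mechanism that lets one ``forget'' a set of exceptional directions. Write $J_2=J_0\setminus J_1$, so $J_0=J_1\sqcup J_2$ and $J\setminus J_1=J'\sqcup J_2$ with $J'=J\setminus J_0$; the set of $\mathfrak p\in J\setminus J_1$ with $\alpha_\mathfrak p=1$ is precisely $J_2$, so $J_2$ plays for $J\setminus J_1$ the role that $J_0$ plays for $J$. Observe that the $p$-stabilizations agree, $\phi^{\dagger_J}=\phi^{\dagger_{J\setminus J_1}}=:\phi^\dagger$, since both modify $\phi$ only at the primes with $r_\mathfrak p=0$, all of which lie in $J'$. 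Let $c_1$ (resp.\ $c_2$) be the $J_0$-type (resp.\ $J_2$-type) ``harmonic cocycle'' valued modular form of Definition~\ref{defn:J-mod-form} attached to $\widehat{\tilde\phi^{\infty,\dagger}}$ via Proposition~\ref{prop:mod-to-har}; both are well defined because $\alpha_\mathfrak p=1$ and $w_\mathfrak p\varphi_\mathfrak p=-\varphi_\mathfrak p$ at every $\mathfrak p\in J_0$, and they are related by Proposition~\ref{prop:vary-J} with ``$J_1$''$=J_0$, ``$J_2$''$=J_2$, ``$J_1\setminus J_2$''$=J_1$ (when $J_1=J_0$ this degenerates to Proposition~\ref{prop:har-to-mod} together with Remark~\ref{rem:measure}).

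Since $\widetilde\mu_J$ and $\widetilde\mu_{J\setminus J_1}$ are measures (bounded in Gaussian norm on each compact open), it suffices to prove the identity for $V$ in a basis. Decomposing $V$ into a finite disjoint union and shrinking, I may assume $V$ is a basic cylinder set whose $J_2$-component is, under the identification $\eta_{J_2}$ of $F_{J_2}^\times\backslash K_{J_2}^\times$ with $F_{J_2}^\times\subset\mathbf P_{J_2}$, a ball $W_{J_2}$ of some level, and whose part away from $J_0$ is a single double coset $\BA_F^{\infty,J_0,\times}\,b\,\widehat{\mathcal O}_{J_0;\vec n,\mathfrak c}^{\times}$-type piece (the $J'$-level being absorbed into $\vec n$). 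Then $\prod_{v\in J_1}\mathcal O_{F_v}\times V$ lies in $\overline X_{J_0;\vec n,\mathfrak c}$ and $[1_{J_1}]\times V$ lies in the corresponding $\overline X$ for $J\setminus J_1$; moreover $\prod_{v\in J_1}\mathcal O_{F_v}$ is, for each $\mathfrak p\in J_1$, one of the balls $\mathcal U_{e_\mathfrak p}$, hence equals $g_0\,\mathcal U_{e_{J_1,*}}$ for a suitable $g_0\in G_{J_1}$.

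Now compute. By (the evident extension to $\overline X$ of) Proposition~\ref{prop:rest} and the definition of $\mu_{\nu,c_1}$ through Lemma~\ref{lem:int-nu-c}, $\int 1_{\prod_{v\in J_1}\mathcal O_{F_v}\times V}\,\widetilde\mu_J$ equals
$$
\frac{\widehat\nu(b)}{\jmath\!\big(\prod_{\mathfrak p\in J'}\alpha_\mathfrak p^{\,n_\mathfrak p}\big)}\;
\Big\langle \rho_{\vec k}\big(\mathfrak i_p(\varsigma^{(\vec n)}_{J'})\big)^{-1}\widehat{\mathbf v}_{\mathbf m},\;
\int \big(1_{g_0\mathcal U_{e_{J_1,*}}}\otimes 1_{W_{J_2}}\big)\,\mu_{c_1(b\varsigma^{(\vec n)}_{J',J_p\setminus J_0},\,\cdot)}\Big\rangle,
$$
exactly as in the displayed computation in the proof of Proposition~\ref{prop:rest} and using \eqref{eq:int-U} (the $J_2$-part of a Gross point having been folded into the centre of $W_{J_2}$). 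Applying Proposition~\ref{prop:vary-J} with $g=g_0$, $h=b\varsigma^{(\vec n)}_{J',J_p\setminus J_0}$ and $\alpha=1_{W_{J_2}}$ turns the inner integral into $\int 1_{W_{J_2}}\,\mu_{c_2(g_0h,\,\cdot)}$. Because $k_\mathfrak p=2$ for every $\mathfrak p\in J_0$, both $\rho_{\vec k}$ and $\check\rho_{\vec k}$ are trivial on $G_{J_0}$, so $\mathfrak i_p(g_0)$ and the $J_0$-part of $\mathfrak i_p(\varsigma^{(\vec n)}_{J'})$ act trivially on $\widehat{\mathbf v}_{\mathbf m}$, and $\prod_{\mathfrak p\in J'}\alpha_\mathfrak p^{n_\mathfrak p}=\prod_{\mathfrak p\in (J\setminus J_1)\setminus J_2}\alpha_\mathfrak p^{n_\mathfrak p}$; reading the resulting expression backwards through Proposition~\ref{prop:rest} for the index set $J\setminus J_1$ (whose exceptional set is $J_2$) identifies it with $\int 1_{[1_{J_1}]\times V}\,\mu_{\nu,c_2}=\int 1_{[1_{J_1}]\times V}\,\widetilde\mu_{J\setminus J_1}$. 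The identification is forced because $c_1$ and $c_2$ are built from the \emph{same} $p$-adic modular form $\widehat{\tilde\phi^{\infty,\dagger}}$ through $c_{\widehat f}(g,e)=\check\rho_{\vec k}(g_e)\widehat f(gg_e)$, so after all cancellations both sides become one and the same value of $\widehat{\tilde\phi^{\infty,\dagger}}$ times the same scalar.

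The main obstacle is purely bookkeeping: one must track the Gross-point elements $\varsigma^{(\vec n)}_\bullet$, the uniformizer matrices $\varsigma^{(\vec m)}_{J_0,J_0}$, the element $\hbar_\mathfrak p$ of Lemma~\ref{lem:h-sigma}, and the twisted $\eta_\mathfrak p$-identifications so that $\prod_{v\in J_1}\mathcal O_{F_v}$ and the $J_2$-ball of $V$ appear \emph{exactly} as the distinguished ball $g\,\mathcal U_{J_1\setminus J_2,*}$ and the test function $\alpha$ in Proposition~\ref{prop:vary-J}, and one must check that restricting $\widehat{\tilde\phi^{\infty,\dagger}}$ from ``$J_0$-level'' to ``$J_2$-level'' introduces no spurious factor --- which is precisely the content of Proposition~\ref{prop:vary-J} combined with the weight-$2$ triviality of $\check\rho_{\vec k}$ on $G_{J_0}$. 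Everything else (the reduction to basic sets, the $\vec n$-compatibility of the extension of $\widetilde\mu_J$ to $C^\flat_J$, the measure bounds) is routine and parallels the proofs of Propositions~\ref{prop:comp} and~\ref{prop:rest}; the case $J_1=\emptyset$ is trivial, and $J_1=J_0$ is handled as above via the degenerate ``$J_2=\emptyset$'' version of Proposition~\ref{prop:vary-J}.
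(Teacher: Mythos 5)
Your proposal is correct and follows essentially the same route as the paper: reduce to basic open sets, unwind $\widetilde{\mu}_J$ through $\mu_{\nu,c}$ (Lemma \ref{lem:int-nu-c}), and match the resulting value of the single form $\widehat{\tilde{\phi}^{\infty,\dagger_J}}=\widehat{\tilde{\phi}^{\infty,\dagger_{J\backslash J_1}}}$ with the defining formula for $\widetilde{\mu}_{J\backslash J_1}$, using weight-$2$ triviality of $\check{\rho}_{\vec{k}}$ on $G_{J_0}$ and $\alpha_\mathfrak{p}=1$ on $J_0$. The only cosmetic difference is that you delegate the change of index set to Proposition \ref{prop:vary-J}, whereas the paper inlines that step via Remark \ref{rem:measure} and the dictionary $c_{\widehat f}(g,e)=\check{\rho}_{\vec k}(g_e)\widehat f(gg_e)$ --- which is exactly how Proposition \ref{prop:vary-J} is proved, so the two arguments coincide in substance.
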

\begin{proof} By the same argument as in the proof of Proposition
\ref{prop:rest}, we may take $V$ to be of the form
$$i_{J_0\backslash J_1}(a,1)\zeta_{J_0\backslash J_1,J_0\backslash
J_1}^{(\vec{ m})}\cdot \mathcal{U}_{e_{J_0\backslash J_1,
*}}\times \BA_F^{J_0,\times} b
\widehat{\mathcal{O}}_{J_0;\vec{n}, \mathfrak{c}}^\times$$ where
$\vec{m}$ and $\vec{n}$ are $J_0\backslash J_1$-tuple and $J'$-tuple
of positive integers respectively. Note that $$\prod_{v\in
J_1}\mathcal{O}_{F_v}=\mathcal{U}_{e_{J_1,0}}=\zeta_{\emptyset,J_1}\cdot\mathcal{U}_{e_{J_1,*}}.$$

Thus we have {\allowdisplaybreaks
\begin{eqnarray*}&&\widetilde{\mu}_J( \prod_{v\in
J_1}\mathcal{O}_{F_v} \times V)\\ &=& \mu_{\nu, c}( i_{J_0\backslash
J_1}(a,1)\zeta_{J_0\backslash J_1,J_0}^{(\vec{ m})}\cdot
U_{e_{J_0},*} \times \BA_F^{J_0,\times} b
\widehat{\mathcal{O}}_{J_0;\vec{n},
\mathfrak{c}}^\times) \\
 &=&  \frac{1}{\jmath(\prod_{\mathfrak{p}\in
J'}\alpha_\mathfrak{p}^{n_\mathfrak{p}})} \widehat{\nu}(b)
\langle\mathfrak{i}_p(\varsigma_{J'}^{(\vec{n})})^{-1}
\widehat{\mathbf{v}}_{\mathbf{m}},
\widehat{\tilde{\phi}^{\infty,\dagger_{J}}}(i_{J_0\backslash
J_1}(a,1)\zeta_{J_0\backslash J_1,J_0}^{(\vec{ m})}
b\varsigma_{J',J_p\backslash J_0}^{( \vec{n})}) \rangle \\  &=&
\frac{1}{\jmath(\prod_{\mathfrak{p}\in
J'}\alpha_\mathfrak{p}^{n_\mathfrak{p}}\cdot \prod_{\mathfrak{p}\in
J_0\backslash J_1}\alpha_\mathfrak{p}^{m_\mathfrak{p}})}
\widehat{\nu}(b) \langle\mathfrak{i}_p(\varsigma_{J\backslash
J_1}^{(\vec{n})})^{-1} \widehat{\mathbf{v}}_{\mathbf{m}},
\widehat{\tilde{\phi}^{\infty,\dagger_{J\backslash J_1}}}(
i_{J_0\backslash
J_1}(a,1) b\varsigma_{J\backslash J_1}^{( \vec{m},\vec{n})}) \rangle \\
&=& \widetilde{\mu}_{J\backslash J_1} ( [1_{J_1}] \times V) .
\end{eqnarray*}}

\noindent Here, the first equality is just Proposition
\ref{prop:rest}, the second equality follows from the definition of
$\mu_{\nu,c}$, the third equality follows from the fact that
$\widehat{\tilde{\phi}^{\infty,\dagger_{J}}}=\widehat{\tilde{\phi}^{\infty,\dagger_{J\backslash
J_1}}}$ and that the action of
$\mathfrak{i}_p(\varsigma_{J_0\backslash J_1,J_0}^{(\vec{m})})$ on
$L_{\vec{k}}(\BC_p)$ is trivial, and the last equality follows from
the definition of $\widetilde{\mu}_{J\backslash J_1}$.
\end{proof}

\subsection{Some computation (I): vanishing of some integrals}
\label{ss:comp-I}

For each $\mathfrak{p}\in J_0$
($\mathfrak{p}\mathcal{O}_K=\mathfrak{P}\overline{\mathfrak{P}}$) we
choose $\beta_\mathfrak{p}\in K^\times$ such that
$(\beta_\mathfrak{p})=\mathfrak{P}^{-h_\mathfrak{p}}$ with
$h_\mathfrak{p}>0$. \label{sec:gamma} We may even choose
$\beta_\mathfrak{p}$ such that $h_\mathfrak{p}$ is the smallest with
this property. For any subset $J_1$ of $J_0$, let $\Delta_{J_1}$ be
the subgroup of $K^\times/F^\times$ generated by
$\beta_\mathfrak{p}$ ($\mathfrak{p}\in J_1$).

The $\mathfrak{p}$-component of $\beta_\mathfrak{p}$, denoted by
$\beta_{\mathfrak{p},\mathfrak{p}}$, acts on
\begin{eqnarray*}
K_\mathfrak{p}^\times/F_\mathfrak{p}^\times &\cong&
F_\mathfrak{p}^\times,
\\ (x,y)F_\mathfrak{p}^\times &\mapsto& \frac{-\beta x}{y} ;
\end{eqnarray*} the
fundamental domain of this action is
$$ D_\mathfrak{p}:= \{x\in F^\times_\mathfrak{p}: 0\leq v_\mathfrak{p}(x) < h_\mathfrak{p}\}.
$$ We put $\underline{D}_\mathfrak{p}:= \mathcal{O}^\times_{F_\mathfrak{p}}$ and $$ B_\mathfrak{p}:=\{x\in
F^\times_\mathfrak{p}: v_\mathfrak{p}(x)\geq 0 \} .$$ Then
$$ (1-\beta_{\mathfrak{p},\mathfrak{p}}^*)
1_{B_\mathfrak{p}}=1_{D_{\mathfrak{p}}} . $$ For each finite place
$v\notin J_0$ we set $$ D_v=\mathcal{O}_{K_v}^\times/
\mathcal{O}_{F_{v}}^\times .
$$

For any element $a\in \BA_K^{\infty,\times}$ with $a_p=1$, and two
disjoint subsets $J_1$ and $J_2 $ of $J_0$, we put
$$ Z_{J_1,J_2,a} =  ( \prod_{\mathfrak{p}\in J_1} B_{\mathfrak{p}})\times (\prod_{\mathfrak{p}\in J_2}\underline{D}_\mathfrak{p}) \times a(\prod_{v\notin J_1\cup J_2} D_v)
.$$

We form the indexed set
$$ I = \LOG_J \times \{\text{finite places of } F\}. $$  If $i=(l, v)$,
we put $\mathrm{pr}_1(i)=l$ and $\mathrm{pr}_2(i)=v$.

For each pair $i=(l, v)$ let  $_vl$ denote the function on
$\BA_K^{\infty,\times}/\BA_F^{\infty,\times}$ such that
$$ _v l(x)=l(x_v) ,$$ where  $x_v$ denotes the element of
$\BA_K^{\infty,\times}/\BA_F^{\infty,\times}$ whose $v$-component is
$x_v$ and other components are all $1$. Then $l =\sum_v \: _vl$. We
also write $\ell_i$ for $_vl$.

\begin{lem} \label{lem:log-value} If $\mathrm{pr}_2(i)\notin \Sigma_J$, then
$\ell_i(\beta_\mathfrak{p})=0$ for each $\mathfrak{p}\in J_0$.
\end{lem}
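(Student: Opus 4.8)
The plan is to unwind the definition of $\ell_i$ and observe that $\beta_\mathfrak{p}$, being a unit at every finite place of $K$ away from $\mathfrak{P}$, contributes nothing to $\Gamma^-_J$ through a place $v\notin J$. Recall that $\ell_i={}_vl$ with $l=\mathrm{pr}_1(i)\in\LOG_J$ and $v=\mathrm{pr}_2(i)$, so that $\ell_i(\beta_\mathfrak{p})=l\big((\beta_\mathfrak{p})_v\big)$, where $(\beta_\mathfrak{p})_v\in\BA_K^{\infty,\times}$ is the idele whose components at the places of $K$ above $v$ agree with $\beta_\mathfrak{p}$ and whose other components are $1$. Since $l$ is a $\BC_p$-linear combination of the homomorphisms $\log_\sigma\colon\Gamma^-_J\to\BC_p$, and $\ell_i$ is obtained by pulling $l$ back along $\BA_K^{\infty,\times}\to\mathcal{G}_{J;\mathfrak{c}}\to\Gamma^-_J$, it suffices to show that the image of $(\beta_\mathfrak{p})_v$ in $\Gamma^-_J$ is trivial.

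First I would note that from $(\beta_\mathfrak{p})=\mathfrak{P}^{-h_\mathfrak{p}}$ with $\mathfrak{P}\mid\mathfrak{p}\in J_0\subseteq J$, together with the hypothesis $\mathrm{pr}_2(i)\notin\Sigma_J$ (so $v$ lies below no $\sigma\in\Sigma_J$, i.e.\ $v\notin J$; in particular $v\neq\mathfrak{p}$), the component of $\beta_\mathfrak{p}$ at every place $w$ of $K$ above $v$ has valuation $0$. Hence $(\beta_\mathfrak{p})_v$ lies in the image of $\mathcal{O}_{K_v}^\times=\prod_{w\mid v}\mathcal{O}_{K_w}^\times$ inside $\BA_K^{\infty,\times}$. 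Next I would observe that, because the orders $\mathcal{O}_{\vec{n},\mathfrak{c}}=\mathcal{O}_F+\mathfrak{c}\prod_{\mathfrak{q}\in J}\mathfrak{q}^{n_\mathfrak{q}}\mathcal{O}_K$ differ only at primes in $J$ and $\mathfrak{c}$ is coprime to $p$, the local component of $\widehat{\mathcal{O}}_{\vec{n},\mathfrak{c}}^\times$ at $v$ is independent of $\vec{n}$; consequently the image of $\mathcal{O}_{K_v}^\times$ in $\mathcal{G}_{\vec{n},\mathfrak{c}}$ is independent of $\vec{n}$ and finite, being a quotient of $\mathcal{O}_{K_v}^\times/(\mathcal{O}_{K_v}^\times\cap(1+\mathfrak{c}\,\mathcal{O}_{K_v}))$. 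Therefore the image of $\mathcal{O}_{K_v}^\times$ in $\mathcal{G}_{J;\mathfrak{c}}=\varprojlim_{\vec{n}}\mathcal{G}_{\vec{n},\mathfrak{c}}$ is finite, hence torsion; since $\Gamma^-_J$ is the maximal $\BZ_p$-free (in particular torsion-free) quotient of $\mathcal{G}_{J;\mathfrak{c}}$, this image dies in $\Gamma^-_J$. In particular $(\beta_\mathfrak{p})_v$ maps to $0$ in $\Gamma^-_J$, and so $\ell_i(\beta_\mathfrak{p})=l(0)=0$.

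I do not expect a serious obstacle; the only point needing a little care is the bookkeeping in the middle step, namely that $(\beta_\mathfrak{p})_v$ is supported only at places where $\beta_\mathfrak{p}$ is a unit (immediate from the principal ideal it generates) and that the image of $\mathcal{O}_{K_v}^\times$ in the relevant class group is finite (which reduces to the fact that the $J$-tower does not ramify further at $v\notin J$). If one prefers to avoid the class-group argument, one can instead use the explicit description $\Gamma^-_J\cong\big(\prod_{\mathfrak{q}\in J}(\mathcal{O}^1_{K_\mathfrak{q}}/\mathcal{O}^1_{F_\mathfrak{q}})/\mathrm{torsion}\big)$ up to finite index, together with the fact that each $\log_\sigma$ factors through the $\mathfrak{p}_\sigma$-component; but the torsion argument has the advantage of not requiring any hypothesis on $c_0$.
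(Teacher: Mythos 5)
Your proof is correct and follows essentially the same route as the paper: observe that $\beta_{\mathfrak{p},v}\in\mathcal{O}_{K_v}^\times$ since $(\beta_\mathfrak{p})=\mathfrak{P}^{-h_\mathfrak{p}}$ and $v\neq\mathfrak{p}$, and then that each ${}_v\log_\sigma$ (hence ${}_vl$) kills $\mathcal{O}_{K_v}^\times/\mathcal{O}_{F_v}^\times$ for $v\notin J$. The only difference is that you spell out, via the torsion/finite-image argument in $\Gamma^-_J$, the vanishing statement that the paper simply asserts.
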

\begin{proof} Write $i=(l,v)$. As $v\neq \mathfrak{p}$,  $\beta_{\mathfrak{p},
v}\in \mathcal{O}_{K_v}^\times$. For each $\sigma\in \Sigma_J$  we
have
$_v\log_{\sigma}|_{\mathcal{O}_{K_v}^\times/\mathcal{O}_{F_v}^\times}=0$.
In particular $_v\log_{\sigma}(\beta_\mathfrak{p})=0$. Thus
${_v}l(\beta_\mathfrak{p}) =0$.
\end{proof}

For a vector $\vec{t}=(t_i)_{i\in I}$ of nonnegative integers (with
$t_i=0$ for almost all $i\in I$) and two disjoint subsets
$J_1\subset J_0$ and $J_2\subset J$, we write
$$\ell^{\vec{t}}:= \prod_{i\in I}\ell_i^{t_i}, \hskip 10pt \lambda(\vec{t},J_1,J_2,a):=
1_{Z_{J_1,J_2,a}}\cdot \ell^{\vec{t}}.$$ If $(\vec{t}, J_1,J_2)$
satisfies the condition

($\flat$) $t_i=0$ when $\mathrm{pr}_2(i)\in \Sigma_{J_1}$,

\noindent  then $\lambda(\vec{t}, J_1, J_2,a)$ is in $C^\flat_J$.
When $J_1=\emptyset$, $(\flat)$ automatically holds.

Put $$ |\vec{t}|=\sum\limits_{ i \in I}t_i   \hskip 10pt  \text{ and
} \hskip 10pt |\vec{t}|_{J}=\sum\limits_{\mathrm{pr}_2(i)\in
\Sigma_{J}}t_i .
$$ If $t'_i\leq t_i$ for each $i\in I$, and if
$|\vec{t}'|_{J}<|\vec{t}|_{J}$, we write $\vec{t}'<_{J}\vec{t}$.

\begin{lem}\label{lem:1-gamma} Let $\mathfrak{p}$ be in $J_0$. For any $\vec{t}$ with $t_i=0$ for almost all $i\in I$,
$(1-\beta_\mathfrak{p}^*)\ell^{\vec{t}}$ is a linear combination of
$\ell^{\vec{t}'}$ with $\vec{t}'<_{J}\vec{t}$. Writing
$$(1-\beta_\mathfrak{p}^*)\ell^{\vec{t}}=\sum_{\vec{t'}<_J\vec{t}}c_{\vec{t'}}\ell^{\vec{t'}};$$
if $|\vec{t'}|=|\vec{t}|-1$ with $t'_i=t_i-1$, then
$c_{\vec{t'}}=-t_i\ell_i(\beta_\mathfrak{p})$.
\end{lem}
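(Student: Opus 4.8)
The plan is to exploit that each $\ell_i$ is additive. Fix $i=(l,v)\in I$. Since $l$ comes from a group homomorphism $\Gamma^-_J\to\BC_p$ (composed with $\mathrm{rec}_K$ and the projection $\BA_K^{\infty,\times}\to\Gamma^-_J$), the function $\ell_i={_v}l$ satisfies $\ell_i(xy)=\ell_i(x)+\ell_i(y)$; evaluating at the diagonal image of $\beta_\mathfrak{p}$ then yields the identity of functions $\beta_\mathfrak{p}^*\ell_i=\ell_i+\ell_i(\beta_\mathfrak{p})$, where $\ell_i(\beta_\mathfrak{p})$ is read as a constant function.

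First I would raise this to the $t_i$-th power by the binomial theorem and multiply over $i$:
$$\beta_\mathfrak{p}^*\ell^{\vec{t}}=\prod_{i\in I}\bigl(\ell_i+\ell_i(\beta_\mathfrak{p})\bigr)^{t_i}=\sum_{\vec{s}\leq\vec{t}}\Bigl(\prod_{i\in I}\binom{t_i}{s_i}\ell_i(\beta_\mathfrak{p})^{t_i-s_i}\Bigr)\ell^{\vec{s}},$$
the sum over vectors $\vec{s}$ of nonnegative integers with $s_i\leq t_i$ for all $i$. Since the term $\vec{s}=\vec{t}$ contributes $\ell^{\vec{t}}$ with coefficient $1$, subtraction gives
$$(1-\beta_\mathfrak{p}^*)\ell^{\vec{t}}=-\sum_{\vec{s}\leq\vec{t},\ \vec{s}\neq\vec{t}}\Bigl(\prod_{i\in I}\binom{t_i}{s_i}\ell_i(\beta_\mathfrak{p})^{t_i-s_i}\Bigr)\ell^{\vec{s}}.$$

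Next I would apply Lemma~\ref{lem:log-value}: if $\mathrm{pr}_2(i)\notin\Sigma_J$ then $\ell_i(\beta_\mathfrak{p})=0$, so the coefficient of $\ell^{\vec{s}}$ vanishes unless $s_i=t_i$ for every such $i$. Together with $\vec{s}\neq\vec{t}$, this forces $s_i<t_i$ for some $i$ with $\mathrm{pr}_2(i)\in\Sigma_J$, hence $|\vec{s}|_J<|\vec{t}|_J$, i.e.\ $\vec{s}<_J\vec{t}$; this proves the first assertion, with $c_{\vec{t}'}=-\prod_{i\in I}\binom{t_i}{t'_i}\ell_i(\beta_\mathfrak{p})^{t_i-t'_i}$. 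Finally, when $|\vec{t}'|=|\vec{t}|-1$ one has $t'_j=t_j$ for all $j\neq i$ and $t'_i=t_i-1$ for a single index $i$; the factors at $j\neq i$ equal $1$ and the factor at $i$ equals $\binom{t_i}{t_i-1}\ell_i(\beta_\mathfrak{p})=t_i\,\ell_i(\beta_\mathfrak{p})$, whence $c_{\vec{t}'}=-t_i\,\ell_i(\beta_\mathfrak{p})$. The only points needing attention are the additivity and well-definedness of $\ell_i$ (it factors through $\Gamma^-_J$ and annihilates $\BA_F^{\infty,\times}$, so descends to the relevant quotient) and the bookkeeping of the indices with $\mathrm{pr}_2(i)\notin\Sigma_J$; neither presents a real obstacle.
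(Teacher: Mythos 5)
Your proof is correct and takes essentially the same route as the paper: the paper's argument rests on the relation (\ref{eq:1-gamma}) together with the fact that $(1-\beta_\mathfrak{p}^*)\ell_i=-\ell_i(\beta_\mathfrak{p})$ is a constant, and your binomial expansion of $\prod_i\bigl(\ell_i+\ell_i(\beta_\mathfrak{p})\bigr)^{t_i}$ is just the closed form obtained by iterating that relation, with the same explicit coefficient $-t_i\ell_i(\beta_\mathfrak{p})$ in codimension one. Your explicit use of Lemma \ref{lem:log-value} to upgrade $\vec{t}'\leq\vec{t}$, $\vec{t}'\neq\vec{t}$ to $\vec{t}'<_J\vec{t}$ is exactly the point the paper leaves implicit in saying the difference is a constant.
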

\begin{proof} We use the relation
\begin{equation}\label{eq:1-gamma}
(1-\beta_\mathfrak{p}^*)(fg)=(1-\beta_\mathfrak{p}^*)f \cdot g
+f\cdot (1-\beta_\mathfrak{p}^*)g - (1-\beta_\mathfrak{p}^*)f \cdot
(1-\beta_\mathfrak{p}^*)g\end{equation} and the fact that
$(1-\beta_\mathfrak{p}^*)\ell_i=-\ell_i(\beta_\mathfrak{p})$ is a
constant.
\end{proof}

\begin{lem}\label{lem:int-0} If $(\vec{t}, J_1, J_2)$ satisfies $(\flat)$,
if $|\vec{t}|_{J}+\sharp(J_1)+\sharp(J_2)<\sharp(J_0)$, and if
$\widehat{\chi}$ is a $\Delta_{J_0\backslash (J_1\cup
J_2)}$-invariant element of $C^\flat_J$, then
$$ \int \widehat{\chi} \cdot \lambda(\vec{t}, J_1, J_2,a) \widetilde{\mu}_J =0. $$
\end{lem}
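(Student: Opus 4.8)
The strategy is an induction on $\sharp(J_0\setminus(J_1\cup J_2))$, peeling off one prime of $J_0$ at a time and reducing the inequality $|\vec t|_J+\sharp(J_1)+\sharp(J_2)<\sharp(J_0)$. First I would treat the base case in which $J_1\cup J_2=J_0$: then $\sharp(J_1)+\sharp(J_2)=\sharp(J_0)$, so the hypothesis forces $|\vec t|_J<0$, which is impossible; hence there is nothing to prove. So we may assume there is some $\mathfrak p\in J_0\setminus(J_1\cup J_2)$. The key observation is that $\widetilde\mu_J$ is $F^\times\backslash K^\times$-invariant, and in particular $\beta_\mathfrak p^*$-invariant; since $\widehat\chi$ is $\Delta_{J_0\setminus(J_1\cup J_2)}$-invariant and $\beta_\mathfrak p$ lies in that subgroup, the function $\widehat\chi\cdot\lambda(\vec t,J_1,J_2,a)$ and its translate by $\beta_\mathfrak p$ have the same integral. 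Therefore $\int \widehat\chi\cdot(1-\beta_\mathfrak p^*)\bigl(\lambda(\vec t,J_1,J_2,a)\bigr)\,\widetilde\mu_J=0$, and I would expand the left side.

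The next step is to compute $(1-\beta_\mathfrak p^*)\lambda(\vec t,J_1,J_2,a)$. Using $(1-\beta_{\mathfrak p,\mathfrak p}^*)1_{B_\mathfrak p}=1_{D_\mathfrak p}$ together with the product rule $(\ref{eq:1-gamma})$ and Lemma \ref{lem:1-gamma}, this difference becomes a sum of terms of three shapes: (i) terms where $1_{B_\mathfrak p}$ has been replaced by $1_{D_\mathfrak p}=1_{\underline D_\mathfrak p}$ up to the $\beta_\mathfrak p^{\BZ_{>0}}$-translates (i.e.\ $\mathfrak p$ moves from ``$J_1$'' to ``$J_2$'' type, since $\underline D_\mathfrak p=\mathcal O_{F_\mathfrak p}^\times$), carrying a possibly smaller exponent vector $\vec t'\le_J\vec t$; and (ii) terms where the exponent vector strictly drops in the $J$-graded sense, i.e.\ $\vec t'<_J\vec t$, with the region still of the original $(J_1,J_2)$ type or the modified one. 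Crucially, every term on the right is of the form $\widehat\chi$ times $\lambda(\vec t',J_1',J_2',a)$ with either $\sharp(J_1')+\sharp(J_2')=\sharp(J_1)+\sharp(J_2)+1$ and $|\vec t'|_J\le|\vec t|_J$ (when $\mathfrak p$ is absorbed into $J_2$), or with $(J_1',J_2')=(J_1,J_2)$ and $|\vec t'|_J<|\vec t|_J$ (the strictly-dropping terms, including the mixed cross term), and one should check the condition $(\flat)$ persists in each case. Solving for the single term that carries the full data $(\vec t,J_1,J_2)$ with $\mathfrak p$ now in $J_2$ — whose coefficient is $1$ — expresses $\int\widehat\chi\cdot\lambda(\vec t,J_1,J_2\cup\{\mathfrak p\},a)\,\widetilde\mu_J$ (after possibly re-indexing $a$, which is harmless) as a linear combination of integrals with strictly smaller invariant $|\vec t'|_J+\sharp(J_1')+\sharp(J_2')$, to which the induction hypothesis applies and which therefore all vanish.

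One subtlety I would handle carefully: the hypothesis of the lemma is stated for a region $Z_{J_1,J_2,a}$ of a specific shape, whereas after applying $(1-\beta_\mathfrak p^*)$ the region $1_{B_\mathfrak p}$ becomes $1_{D_\mathfrak p}$ which is a finite union of $\beta_\mathfrak p$-translates of $\underline D_\mathfrak p$; one must use $\Delta$-invariance of both $\widehat\chi$ and $\widetilde\mu_J$ once more to reduce each translate back to the standard region, and likewise absorb the $\beta_\mathfrak p$-dependence of $\ell^{\vec t'}$ (which shifts $\ell_i$ by the constant $\ell_i(\beta_\mathfrak p)$). I would also invoke Lemma \ref{lem:log-value} to note that $\ell_i(\beta_\mathfrak p)=0$ when $\mathrm{pr}_2(i)\notin\Sigma_J$, so the only surviving lower-order terms have their exponent drop concentrated in indices with $\mathrm{pr}_2(i)\in\Sigma_J$, which is exactly why the relevant invariant is $|\vec t|_J$ and not $|\vec t|$.

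\textbf{Main obstacle.} The genuine difficulty is bookkeeping: tracking, after expanding $(1-\beta_\mathfrak p^*)$ of a product, that \emph{every} resulting term either increases $\sharp(J_1)+\sharp(J_2)$ by one without increasing $|\vec t|_J$, or strictly decreases $|\vec t|_J$, so that the total invariant $|\vec t|_J+\sharp(J_1)+\sharp(J_2)$ strictly decreases and the induction closes; and checking that condition $(\flat)$ is preserved throughout (this is where the hypothesis $t_i=0$ for $\mathrm{pr}_2(i)\in\Sigma_{J_1}$ matters, since $\mathfrak p$ is being moved into $J_2$, not $J_1$, so no new constraint of type $(\flat)$ on the $\ell_i$'s with $\mathrm{pr}_2(i)\in\Sigma_\mathfrak p$ is triggered). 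Once the combinatorial structure is pinned down, each individual vanishing is immediate from the inductive hypothesis, so no hard analysis is involved — only careful organization of the expansion.
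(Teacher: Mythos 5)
There is a genuine gap, and it sits at the very center of the argument. In $Z_{J_1,J_2,a}$ the prime $\mathfrak p\in J_0\setminus(J_1\cup J_2)$ carries the indicator $1_{D_\mathfrak p}$ of the fundamental domain, not $1_{B_\mathfrak p}$, and the only identity the group $\Delta$ gives you is $(1-\beta_{\mathfrak p}^*)1_{B_\mathfrak p}=1_{D_\mathfrak p}$; so the useful move is to \emph{enlarge} the region, i.e.\ to pass to $\lambda(\vec t,J_1\cup\{\mathfrak p\},J_2,a)$ and apply the invariance of $\widehat\chi\,\widetilde\mu_J$ to \emph{that} function, recovering the original $\lambda(\vec t,J_1,J_2,a)$ as its first difference. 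Your plan instead applies $(1-\beta_\mathfrak p^*)$ to $\lambda(\vec t,J_1,J_2,a)$ itself; this yields no information, because $\beta_\mathfrak p^*$ translates $1_{D_\mathfrak p}$ off itself and, after using invariance, the leading terms cancel and only statements about exponent vectors $\vec t'<_J\vec t$ survive. Moreover the asserted conversion ``$\mathfrak p$ moves from $J_1$ to $J_2$ type'' is not available: $D_\mathfrak p=\bigsqcup_{j=0}^{h_\mathfrak p-1}\omega_\mathfrak p^{\,j}\underline D_\mathfrak p$, and these cosets are \emph{not} $\beta_\mathfrak p$-translates of $\underline D_\mathfrak p$ when $h_\mathfrak p>1$ (the $\mathfrak p$-component of $\beta_\mathfrak p$ shifts valuations by $h_\mathfrak p$), so ``reducing each translate back to the standard region by $\Delta$-invariance'' fails; re-indexing $a$ cannot help either, since $a_p=1$.

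Because you move $\mathfrak p$ toward $J_2$, you conclude that $(\flat)$ imposes no constraint, and as a result your argument never uses the hypothesis $|\vec t|_J+\sharp(J_1)+\sharp(J_2)<\sharp(J_0)$ except in the vacuous case $J_1\cup J_2=J_0$. That hypothesis is exactly what the correct step needs: by pigeonhole it furnishes a prime $\mathfrak p\in J_0\setminus(J_1\cup J_2)$ with $t_i=0$ for every $i$ with $\mathrm{pr}_2(i)=\mathfrak p$, and only for such a $\mathfrak p$ does the enlarged function $\lambda(\vec t,J_1\cup\{\mathfrak p\},J_2,a)$ satisfy $(\flat)$ and hence lie in $C^\flat_J$, the space on which $\widetilde\mu_J$ is defined; without this the key integration is not even legitimate. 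Finally, your induction on $\sharp(J_0\setminus(J_1\cup J_2))$ does not close: the lower-order terms produced by Lemma \ref{lem:1-gamma} retain the same pair $(J_1,J_2)$, hence the same value of your induction variable. The induction has to run on $|\vec t|_J$ (or on the combined invariant), with the non-vacuous base case $|\vec t|_J=0$ handled by Lemma \ref{lem:log-value} ($\ell^{\vec t}$ is then $\beta_\mathfrak p^*$-invariant, so $\lambda(\vec t,J_1,J_2,a)=(1-\beta_\mathfrak p^*)\lambda(\vec t,J_1\cup\{\mathfrak p\},J_2,a)$ integrates to zero), and with the inductive step enlarging $J_1$ by $\{\mathfrak p\}$ and invoking the hypothesis for $\vec t'<_J\vec t$ with both $(J_1\cup\{\mathfrak p\},J_2)$ and $(J_1,J_2)$.
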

\begin{proof} We prove the assertion by induction on $|\vec{t}|_{J}$.
Since $|\vec{t}|_{J}+\sharp(J_1)+\sharp(J_2)<\sharp(J_0)$, there
exists $\mathfrak{p}\in J_0\backslash (J_1\cup J_2)$ such that
$t_{i}=0$ when $\mathrm{pr}_2(i)=\mathfrak{p}$.  Then $(\vec{t},
J_1\cup\{\mathfrak{p}\},J_2)$ satisfies ($\flat$) and thus
$\lambda(\vec{t}, J_1\cup\{\mathfrak{p}\}, J_2,a)$ is in
$C_J^\flat$.

 By Lemma \ref{lem:log-value}, when $|\vec{t}|_{J}=0$, $\ell^{\vec{t}}$
is $\beta_\mathfrak{p}^*$-invariant. Then
$$\lambda(\vec{t}, J_1, J_2,a)=
(1-\beta_\mathfrak{p}^*)\lambda(\vec{t}, J_1\cup\{\mathfrak{p}\},
J_2,a).$$ As $\widetilde{\mu}_J$ and $\widehat{\chi}$ are
$\beta_\mathfrak{p}$-invariant, we have
$$ \int \widehat{\chi} \cdot \lambda(\vec{t}, J_1, J_2,a) \widetilde{\mu}_J = 0 . $$

Now we assume that $|\vec{t}|_{J}>0$.   As $\widetilde{\mu}_J$ is
$\beta_\mathfrak{p}$-invariant, we have
$$ \int (1-\beta_\mathfrak{p}^*)\Big(\widehat{\chi}\cdot \lambda(\vec{t}, J_1\cup\{\mathfrak{p}\}, J_2, a)\Big) \widetilde{\mu}_J =0. $$
By (\ref{eq:1-gamma}) we obtain
\begin{eqnarray*}
&&(1-\beta_\mathfrak{p}^*) \Big(\widehat{\chi}\cdot\lambda(\vec{t},
J_1\cup\{\mathfrak{p}\},J_2,a)\Big) \\ &
= & \widehat{\chi}\cdot (1-\beta_\mathfrak{p}^*) ( 1_{Z_{J_1 \cup\{\mathfrak{p}\}, J_2,a}} \ell^{\vec{t}} ) \\
& = & \widehat{\chi}\cdot [ ((1-\beta_\mathfrak{p}^*)
\ell^{\vec{t}} )\cdot 1_{Z_{J_1 \cup\{\mathfrak{p}\}, J_2,a}} +
\ell^{\vec{t}} \cdot
(1-\beta_\mathfrak{p}^*) 1_{Z_{J_1 \cup\{\mathfrak{p}\},J_2,a}} 
- (1-\beta_\mathfrak{p}^*)\ell^{\vec{t}} \cdot
(1-\beta_\mathfrak{p}^*)
1_{Z_{J_1 \cup\{\mathfrak{p}\},J_2,a}} ] \\
& = &  \widehat{\chi}\cdot((1-\beta_\mathfrak{p}^*)  \ell^{\vec{t}}
)\cdot 1_{Z_{J_1 \cup\{\mathfrak{p}\},J_2,a}}+
\widehat{\chi}\cdot\ell^{\vec{t}} \cdot
 1_{Z_{J_1,J_2,a}} 
+ \widehat{\chi}\cdot ((1-\beta_\mathfrak{p}^*) \ell^{\vec{t}})\cdot
1_{Z_{J_1,J_2,a}}
\end{eqnarray*}

By Lemma \ref{lem:1-gamma}, $(1-\beta_\mathfrak{p}^*)
\ell^{\vec{t}}$ is a linear combination of $ \ell^{\vec{t}'}$ with
$\vec{t}'<_{J}\vec{t}$. Since $(\vec{t}, J_1\cup\{\mathfrak{p}\},
J_2)$ satisfies ($\flat$) and since $\vec{t}'<_{J}\vec{t}$, both
$(\vec{t}', J_1\cup\{\mathfrak{p}\},J_2)$ and $(\vec{t}', J_1, J_2)$
satisfy ($\flat$). On the other hand
$$|\vec{t}'|_{J}+\sharp(J_1)+\sharp(J_2)<|\vec{t}'|_{J}+\sharp(J_1\cup
\{\mathfrak{p}\})+\sharp(J_2)<|\vec{t}|_{J} +\sharp(J_1\cup
\{\mathfrak{p}\})+\sharp(J_2)\leq \sharp(J_0).$$ Thus by the
inductive assumption we have
$$ \int \widehat{\chi}\cdot((1-\beta_\mathfrak{p}^*)  \ell^{\vec{t}} )\cdot
1_{Z_{J_1\cup \{\mathfrak{p}\},J_2,a}} \widetilde{\mu}_J = \int
\widehat{\chi}\cdot ((1-\beta_\mathfrak{p}^*)  \ell^{\vec{t}} )\cdot
1_{Z_{J_1,J_2,a}}\widetilde{\mu}_J=0.
$$ It follows that
$\int \widehat{\chi}\cdot \ell^{\vec{t}} \cdot
 1_{Z_{J_1,J_2,a}} \widetilde{\mu}_J
=0$. \end{proof}

When $|\vec{t}|=0$ and $\sharp(J_1)+\sharp(J_2)=\sharp(J_0)$, we
have the following useful average vanishing result.

\begin{lem}\label{lem:average-vanishing} Let $\{a_k\}$ be a set of representatives in
$\BA^{\infty,\times}_K $ of the coset $\BA^{\infty,\times}_F
K^\times \backslash \BA^{\infty,\times}_K/
\widehat{\mathcal{O}}^\times_K$. Assume that $J_1\bigsqcup J_2 =
J_0$ and that $J_2$ is nonempty. Let $\widehat{\chi}$ be a character
of $\Gamma^-_J$ that factors through $\Gamma^-_{J\backslash J_1}$,
and assume that $\widehat{\chi}_\mathfrak{p}=1$ for some
$\mathfrak{p}\in J_2$. Then
$$ \sum_{k} \int \widehat{\chi}\cdot 1_{Z_{J_1,J_2,a_k}} \widetilde{\mu}_{J} =0. $$
\end{lem}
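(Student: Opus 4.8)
The plan is to isolate one prime $\mathfrak{p}_0\in J_2$ with $\widehat{\chi}_{\mathfrak{p}_0}=1$, to kill the ``diagonal'' part of its slot by a $\beta_{\mathfrak{p}_0}$-coboundary argument in the spirit of Lemma \ref{lem:int-0}, and to absorb the remaining valuation-shifted terms into the sum over the class number by a re-indexing trick.

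Concretely, I would fix $\mathfrak{p}_0\in J_2$ with $\widehat{\chi}_{\mathfrak{p}_0}=1$ and work with $\beta_{\mathfrak{p}_0}\in K^\times$ from Section \ref{sec:gamma}, whose $\mathfrak{p}_0$-component acts on $K_{\mathfrak{p}_0}^\times/F_{\mathfrak{p}_0}^\times\cong F_{\mathfrak{p}_0}^\times$ by multiplication by an element of valuation $-h_{\mathfrak{p}_0}$ and which is a unit at every other finite place (so it fixes $B_\mathfrak{q}$ for $\mathfrak{q}\in J_1$, $\underline{D}_\mathfrak{q}$ for $\mathfrak{q}\in J_2\setminus\{\mathfrak{p}_0\}$, and $a_kD_v$ for $v\notin J_0$). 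Writing $Z^{(j)}_{a}$ for $Z_{J_1,J_2,a}$ with its $\mathfrak{p}_0$-slot $\underline{D}_{\mathfrak{p}_0}$ replaced by $\{x\in F_{\mathfrak{p}_0}^\times:v_{\mathfrak{p}_0}(x)=j\}$ (so $Z^{(0)}_a=Z_{J_1,J_2,a}$), the decomposition $D_{\mathfrak{p}_0}=\bigsqcup_{j=0}^{h_{\mathfrak{p}_0}-1}\{v_{\mathfrak{p}_0}=j\}$ together with $(1-\beta_{\mathfrak{p}_0}^*)1_{B_{\mathfrak{p}_0}}=1_{D_{\mathfrak{p}_0}}$ and $\beta_{\mathfrak{p}_0}^*1_{B_{\mathfrak{p}_0}}=1_{\{v_{\mathfrak{p}_0}\ge h_{\mathfrak{p}_0}\}}$ gives, in $C^\flat_J$,
$$1_{Z_{J_1,J_2,a}}=(1-\beta_{\mathfrak{p}_0}^*)\,1_{Z_{J_1\cup\{\mathfrak{p}_0\},J_2\setminus\{\mathfrak{p}_0\},a}}-\sum_{j=1}^{h_{\mathfrak{p}_0}-1}1_{Z^{(j)}_{a}}.$$
Since $\widetilde{\mu}_J$ is $F^\times\backslash K^\times$-invariant and $\widehat{\chi}$, coming from $\mathcal{G}_{J;\mathfrak{c}}$, is $K^\times$-invariant, integrating $\widehat{\chi}\cdot(\ \cdot\ )$ against $\widetilde{\mu}_J$ annihilates the coboundary term, so for every $k$ one gets $\int\widehat{\chi}\,1_{Z_{J_1,J_2,a_k}}\,\widetilde{\mu}_J=-\sum_{j=1}^{h_{\mathfrak{p}_0}-1}\int\widehat{\chi}\,1_{Z^{(j)}_{a_k}}\,\widetilde{\mu}_J$.

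The heart of the matter is then to show that $\sum_k\int\widehat{\chi}\,1_{Z^{(j)}_{a_k}}\,\widetilde{\mu}_J$ is independent of $j\in\BZ$. Here the hypothesis $\widehat{\chi}_{\mathfrak{p}_0}=1$ enters: it makes $\widehat{\chi}$ factor through $\mathcal{G}_{J\setminus\{\mathfrak{p}_0\};\mathfrak{c}}$, and by the compatibility of theta elements (Proposition \ref{prop:comp} with $n_{\mathfrak{p}_0}\to 0$, using that $\alpha_{\mathfrak{p}_0}=1$ and that $\pi_{\mathfrak{p}_0}$ is special, so $\varphi^{\dagger_J}_{\mathfrak{p}_0}=\varphi_{\mathfrak{p}_0}$ and $\varsigma_{\mathfrak{p}_0}^{(0)}=\varsigma_{\mathfrak{p}_0}$) the push-forward of $\widetilde{\mu}_J$ under the forgetful map $\widetilde{X}_{J;\mathfrak{c}}\to\widetilde{X}_{J\setminus\{\mathfrak{p}_0\};\mathfrak{c}}$ is $\widetilde{\mu}_{J\setminus\{\mathfrak{p}_0\}}$. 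The integrand $\widehat{\chi}\,1_{Z^{(j)}_{a_k}}$ is constant on the fibres of this map ($\widehat{\chi}$ by the factorization, and $1_{Z^{(j)}_{a_k}}$ because $\{v_{\mathfrak{p}_0}=j\}$ is a whole fibre of the $\mathfrak{p}_0$-coordinate), and it descends to the indicator of the set $Z_{J_1,J_2\setminus\{\mathfrak{p}_0\},\,\omega_{\mathfrak{P}_0}^j a_k}$ formed inside $\widetilde{X}_{J\setminus\{\mathfrak{p}_0\};\mathfrak{c}}$ (so that $\mathfrak{p}_0$ now plays the role of a place outside $J_0$ and the valuation-$j$ condition has been absorbed into the idele $\omega_{\mathfrak{P}_0}^j a_k$). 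Therefore $\sum_k\int\widehat{\chi}\,1_{Z^{(j)}_{a_k}}\,\widetilde{\mu}_J=\sum_k\int\widehat{\chi}\,1_{Z_{J_1,J_2\setminus\{\mathfrak{p}_0\},\,\omega_{\mathfrak{P}_0}^j a_k}}\,\widetilde{\mu}_{J\setminus\{\mathfrak{p}_0\}}$, and since $\{\omega_{\mathfrak{P}_0}^j a_k\}_k$ is again a set of representatives of $\BA^{\infty,\times}_F K^\times\backslash\BA^{\infty,\times}_K/\widehat{\mathcal{O}}^\times_K$ (left multiplication by the fixed idele $\omega_{\mathfrak{P}_0}^j$ just permutes the classes), the right-hand side does not depend on $j$.

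Finally, summing the per-$k$ identity over $k$ and writing $S:=\sum_k\int\widehat{\chi}\,1_{Z_{J_1,J_2,a_k}}\,\widetilde{\mu}_J$, the $j$-independence just established gives $S=-(h_{\mathfrak{p}_0}-1)S$, whence $h_{\mathfrak{p}_0}S=0$ and $S=0$, which is the assertion. The main obstacle, and the step I would spend most care on, is this $j$-independence: it rests on pinning down the push-forward of $\widetilde{\mu}_J$ along $\widetilde{X}_{J;\mathfrak{c}}\to\widetilde{X}_{J\setminus\{\mathfrak{p}_0\};\mathfrak{c}}$ as $\widetilde{\mu}_{J\setminus\{\mathfrak{p}_0\}}$ (which uses the theta-element compatibility together with $\alpha_{\mathfrak{p}_0}=1$ and $k_{\mathfrak{p}_0}=2$, the latter via Corollary \ref{cor:measure} so that the $\mathfrak{p}_0$-direction is a genuine measure), and then checking that the valuation shift in the $\mathfrak{p}_0$-coordinate is exactly cancelled by re-indexing the sum over the class number. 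The bookkeeping with the conductor $\mathfrak{c}$ and with the finite index $c_0$ between $\Gamma^-_J$ and $\prod_{\mathfrak{p}\in J}(\mathcal{O}_{K_\mathfrak{p}}^1/\mathcal{O}_{F_\mathfrak{p}}^1)/\mathrm{torsion}$ is routine but should be done carefully when making ``$\widehat{\chi}_{\mathfrak{p}_0}=1$ implies $\widehat{\chi}$ factors through $\mathcal{G}_{J\setminus\{\mathfrak{p}_0\};\mathfrak{c}}$'' precise.
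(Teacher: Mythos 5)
Your first step (the coboundary identity $1_{Z_{J_1,J_2,a}}=(1-\beta_{\mathfrak{p}_0}^*)1_{Z_{J_1\cup\{\mathfrak{p}_0\},J_2\setminus\{\mathfrak{p}_0\},a}}-\sum_{j=1}^{h_{\mathfrak{p}_0}-1}1_{Z^{(j)}_a}$ and the disappearance of the coboundary term under $\int(\cdot)\widetilde{\mu}_J$) is fine, but the step you yourself call the heart of the matter is a genuine gap. The claim that the push-forward of $\widetilde{\mu}_J$ along $\widetilde{X}_{J;\mathfrak{c}}\to\widetilde{X}_{J\setminus\{\mathfrak{p}_0\};\mathfrak{c}}$ equals $\widetilde{\mu}_{J\setminus\{\mathfrak{p}_0\}}$ is not what Proposition \ref{prop:comp} gives: that proposition only compares levels within the \emph{same} family $J$, and the resulting level-$0$ object at $\mathfrak{p}_0$ is built from the Gross point $\varsigma_{\mathfrak{p}_0}^{(0)}=\wvec{\vartheta}{-1}{1}{0}$, whereas $\widetilde{\mu}_{J\setminus\{\mathfrak{p}_0\}}$ is built from the ``$\mathfrak{p}_0\notin J$'' point $\varsigma_{\mathfrak{p}_0}=\wvec{\vartheta_{\mathfrak{P}}}{\vartheta_{\bar{\mathfrak{P}}}}{1}{1}$; your parenthetical ``$\varsigma_{\mathfrak{p}_0}^{(0)}=\varsigma_{\mathfrak{p}_0}$'' conflates these two different normalizations (the paper's notation is overloaded here). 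Worse, the identity is not merely unproven but false in general: pushing forward further to the group $\mathcal{G}_{J\setminus\{\mathfrak{p}_0\};\mathfrak{c}}$ and pairing with any character $\widehat{\chi}$ trivial at $\mathfrak{p}_0$, your claim would force $\mathscr{L}_J(\pi_K,\widehat{\nu}\widehat{\chi})=\mathscr{L}_{J\setminus\{\mathfrak{p}_0\}}(\pi_K,\widehat{\nu}\widehat{\chi})$ for all such $\widehat{\chi}$; but the left-hand side vanishes identically by Corollary \ref{cor:vanish-value} (the multiplier $\tilde{e}_{\mathfrak{p}_0}(\pi,\chi)$ has the factor $(1-\alpha_{\mathfrak{p}_0}^{-1}\chi(\mathfrak{P}))(1-\alpha_{\mathfrak{p}_0}^{-1}\chi(\bar{\mathfrak{P}}))=0$), while the right-hand side interpolates $L(\frac12,\pi_K\otimes\chi)/\Omega^-$ with nonvanishing local factors (Proposition \ref{prop:xie}) and is generically nonzero. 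The correct comparison is exactly Proposition \ref{prop:transfer}, which is a ``telescoped'' statement: the $\widetilde{\mu}_J$-mass of the whole region $v_{\mathfrak{p}_0}\ge 0$ matches the $\widetilde{\mu}_{J\setminus\{\mathfrak{p}_0\}}$-mass of the single slice $v_{\mathfrak{p}_0}=0$, and this does not yield your $j$-independence of $\sum_k\int\widehat{\chi}1_{Z^{(j)}_{a_k}}\widetilde{\mu}_J$ (all one gets formally is periodicity $S_{j+h_{\mathfrak{p}_0}}=S_j$ from $\beta_{\mathfrak{p}_0}$-invariance, which you already used). A secondary issue: even granting some push-forward statement, your re-indexing over $\{\omega_{\mathfrak{P}}^j a_k\}$ tacitly assumes that $\int\widehat{\chi}\,1_{Z_{J_1,J_2\setminus\{\mathfrak{p}_0\},b}}\widetilde{\mu}_{J\setminus\{\mathfrak{p}_0\}}$ depends only on the double-coset class of $b$, which is not automatic because translating $b$ by $K^\times$ also moves the $B$- and $\underline{D}$-slots at the remaining places of $J_0$.

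More structurally, your strategy tries to obtain the vanishing by purely formal manipulations of the measure, but in the paper the vanishing is an \emph{arithmetic} fact coming from the interpolation formula: applying Proposition \ref{prop:transfer} across all of $J_1$ at once, the sum over the class representatives $\{a_k\}$ becomes the full group integral, namely $[\mathcal{O}_K^\times:\mathcal{O}_F^\times]\,\mathscr{L}_{J\setminus J_1}(\pi,\widehat{\nu}\widehat{\chi})$, and this special value is then zero by Theorem \ref{thm:interpol}/Corollary \ref{cor:vanish-value} because the exceptional multiplier at the prime $\mathfrak{p}\in J_2$ with $\alpha_\mathfrak{p}=1$ and $\widehat{\chi}_\mathfrak{p}=1$ vanishes. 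Your argument never invokes this analytic input, which is the real source of the lemma; to repair your approach you would at least have to prove the $j$-independence, and I see no route to it that does not pass through essentially the same interpolation/vanishing statement.
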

\begin{proof}
Note that the map $$\bigcup_k Z_{\emptyset,J_0,a_k} \rightarrow
K^\times \BA^{\infty,\times}_F  \backslash \BA^{\infty,\times}_K
$$ is surjective and sends
$[ \mathcal{O}^\times_K: \mathcal{O}^\times_F]$ elements to one
element. Thus, it follows from Proposition \ref{prop:transfer} that
$$ \sum_{a_k} \int \widehat{\chi}\cdot 1_{Z_{J_1,J_2,a_k}} \widetilde{\mu}_{J} = \sum_{a_k}  \int \widehat{\chi}\cdot 1_{{Z}_{\emptyset,J_0, a_k}}
\widetilde{\mu}_{J\backslash J_1} = [ \mathcal{O}^\times_K:
\mathcal{O}^\times_F] \int \widehat{\chi} \:\mu_{J\backslash J_1} =
[ \mathcal{O}^\times_K: \mathcal{O}^\times_F]
\mathscr{L}_{J\backslash J_1} (\pi,\widehat{\nu}\widehat{\chi}) .$$
Applying Theorem \ref{thm:interpol} to $J\backslash J_1$ instead of
$J$, we obtain $\mathscr{L}_{J\backslash J_1}
(\pi,\widehat{\nu}\widehat{\chi}) =0$. Note that
$\widehat{\nu}_\mathfrak{p}=1$ since $k_\mathfrak{p}=2$.
\end{proof}

\subsection{Some computation (II)} \label{ss:comp-II}

In this subsection we fix an element $l$ of $\LOG_J$, and a subset
$J_2$ of $J_0$.

If $\Xi$ is a subset of $J_0\backslash J_2$, we use
$\vec{t}_{l,\Xi}$ to denote the vector with
$$\vec{t}_{l,\Xi,i}=\left\{\begin{array}{ll} 1 & \text{ if
}i\in\{l\}\times \Xi, \\ 0 & \text{ otherwise.}
\end{array}\right.$$ For any element $a\in \BA_K^{\infty,\times}$ with $a_p=1$, we put
$$ \Lambda_{l,\Xi, J_2, a}=\lambda(\vec{t}_{l,\Xi}, (J_0\backslash J_2)\backslash  \Xi, J_2, a) .  $$ When $\Xi$ is the empty set $\emptyset$,
$\Lambda_{l, \emptyset, J_2,a}$ is independent of $l$, so we write
it by  $\Lambda_{\emptyset, J_2, a}$.

For every $J'\subseteq J_0\backslash J_2$ we denote by $M(J')$ the
set of maps $f: J' \rightarrow J\backslash J_2$ such that
\begin{quote} $f(S)\nsubseteq S$ for all $S\subseteq J'$, $S\neq
\emptyset$. \end{quote}  Let $\vec{t}_{l,f}$
be the vector with $$\vec{t}_{l, f,i}=\left\{\begin{array}{ll} \sharp(f^{-1}(v)) & \text{ if }i=(l, v), \\
0 & \text{ if }\mathrm{pr}_1(i)\neq l. \end{array}\right.$$

\begin{lem}\label{lem:complex} If $(\vec{t}, J_1,J_2)$ satisfies that $t_i=0$ unless $i\in \{l\} \times  (J\backslash (J_1\cup J_2))$, if $|\vec{t}|+\sharp(J_1)+\sharp(J_2)=\sharp(J_0)$,
and if $\widehat{\chi}$ is a $\Delta_{J_0\backslash (J_1\cup J_2)}$-invariant
element of $C^\flat_J$,
then
$$ \int \widehat{\chi} \cdot \lambda(\vec{t}, J_1, J_2, a) \widetilde{\mu}_J = \vec{t}\:!
\sum_{(\Xi, f)}  \left(\prod_{v \in J_0\backslash (J_1\cup J_2\cup
\Xi) }{_{f(v)}}l(\beta_v) \right)\int \widehat{\chi}\cdot
\Lambda_{l,\Xi,J_2,a}\widetilde{\mu}_J, $$ where the sum runs
through all pairs $(\Xi, f)$ with $\Xi\subseteq J_0\backslash
(J_1\cup J_2)$ and $f\in M((J_0\backslash (J_1\cup J_2 \cup \Xi))$
such that $\vec{t}_{l,f}+\vec{t}_{l,\Xi}=\vec{t}$.
\end{lem}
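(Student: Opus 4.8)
\textbf{Proof proposal for Lemma \ref{lem:complex}.}
The plan is to expand $\lambda(\vec{t},J_1,J_2,a)=1_{Z_{J_1,J_2,a}}\cdot\ell^{\vec{t}}$ by repeatedly applying the operator $1-\beta_\mathfrak{q}^*$ for primes $\mathfrak{q}\in J_0\backslash(J_1\cup J_2)$, exactly as in the proof of Lemma \ref{lem:int-0}, but now keeping track of the surviving terms since the dimension count is tight ($|\vec{t}|+\sharp(J_1)+\sharp(J_2)=\sharp(J_0)$ rather than strictly less). First I would induct on $|\vec{t}|_J=|\vec{t}|$: for $|\vec{t}|=0$ the claim degenerates (only the pair $\Xi=\emptyset$, empty $f$ contributes, and both sides equal $\int\widehat{\chi}\cdot\Lambda_{\emptyset,J_2,a}\widetilde{\mu}_J$), so the base case is immediate. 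For the inductive step, pick $\mathfrak{q}\in J_0\backslash(J_1\cup J_2)$; since $|\vec{t}|>0$ and the support condition forces $t_i=0$ unless $i\in\{l\}\times(J\backslash(J_1\cup J_2))$, I can still find such a $\mathfrak{q}$ (note $\sharp(J_1)+\sharp(J_2)<\sharp(J_0)$ here). Then $\lambda(\vec{t},J_1,J_2,a)=(1-\beta_\mathfrak{q}^*)\lambda(\vec{t},J_1\cup\{\mathfrak{q}\},J_2,a)+\beta_\mathfrak{q}^*\lambda(\vec{t},J_1\cup\{\mathfrak{q}\},J_2,a)$, but $(1-\beta_\mathfrak{q}^*)1_{B_\mathfrak{q}}=1_{D_\mathfrak{q}}$ so the two pieces rearrange into $1_{Z_{J_1,J_2,a}}$ and $1_{Z_{J_1\cup\{\mathfrak{q}\},J_2,a}}$ summands, precisely as in the displayed computation in Lemma \ref{lem:int-0}.

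The key point is that $\widetilde{\mu}_J$ and $\widehat{\chi}$ are $\beta_\mathfrak{q}$-invariant (the former by the lemma just before \S\ref{ss:comp-I}, using $\check\rho_{\vec k}|_{G_{J_0}}$ trivial; the latter by hypothesis, since $\mathfrak{q}\in J_0\backslash(J_1\cup J_2)$), so $\int(1-\beta_\mathfrak{q}^*)\big(\widehat{\chi}\cdot\lambda(\vec{t},J_1\cup\{\mathfrak{q}\},J_2,a)\big)\widetilde{\mu}_J=0$. Expanding this with the Leibniz-type identity (\ref{eq:1-gamma}) and Lemma \ref{lem:1-gamma} produces three groups of terms: the one with $1_{Z_{J_1\cup\{\mathfrak{q}\},J_2,a}}\cdot(1-\beta_\mathfrak{q}^*)\ell^{\vec{t}}$, which by Lemma \ref{lem:1-gamma} involves only $\ell^{\vec{t}'}$ with $\vec{t}'<_J\vec{t}$ and hence vanishes by Lemma \ref{lem:int-0} (the strict inequality $|\vec{t}'|_J+\sharp(J_1\cup\{\mathfrak{q}\})+\sharp(J_2)<\sharp(J_0)$ now holds); the target term $\int\widehat{\chi}\cdot\ell^{\vec{t}}\cdot 1_{Z_{J_1,J_2,a}}\widetilde{\mu}_J$; and the cross term $\int\widehat{\chi}\cdot\big((1-\beta_\mathfrak{q}^*)\ell^{\vec{t}}\big)\cdot 1_{Z_{J_1,J_2,a}}\widetilde{\mu}_J$. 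The last term is where the combinatorics enters: by Lemma \ref{lem:1-gamma} it is a linear combination $\sum_{i:\,\mathrm{pr}_2(i)=\mathfrak{q}}-t_i\,\ell_i(\beta_\mathfrak{q})\int\widehat{\chi}\cdot\ell^{\vec{t}-e_i}\cdot 1_{Z_{J_1,J_2,a}}\widetilde{\mu}_J$ plus lower-order terms that again fall to Lemma \ref{lem:int-0}; here only $i=(l,\mathfrak{q})$ survives by the support hypothesis on $\vec{t}$. Each surviving integral $\int\widehat{\chi}\cdot\ell^{\vec{t}-e_{(l,\mathfrak{q})}}\cdot 1_{Z_{J_1,J_2,a}}\widetilde{\mu}_J$ is of the same shape with $|\vec{t}|$ reduced by one, $\widehat{\chi}$ still $\Delta_{J_0\backslash(J_1\cup J_2)}$-invariant, so the inductive hypothesis applies.

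Assembling, I get $\int\widehat{\chi}\cdot\lambda(\vec{t},J_1,J_2,a)\widetilde{\mu}_J$ expressed as $\sum_\mathfrak{q}(-\ t_{(l,\mathfrak{q})}l(\beta_\mathfrak{q}))$ — wait, more precisely $\sum_{v\in J\backslash(J_1\cup J_2)}t_{(l,v)}\,{_v}l(\beta_\mathfrak{q})$ over the chosen $\mathfrak{q}$ — times the expansions of the reduced integrals, and I must check that iterating over a sequence of choices $\mathfrak{q}_1,\mathfrak{q}_2,\dots$ in $J_0\backslash(J_1\cup J_2)$ reproduces exactly the sum over pairs $(\Xi,f)$ in the statement, with the multinomial coefficient $\vec{t}\,!$ arising from the number of orderings. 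Concretely, $\Xi$ records the subset of primes in $J_0\backslash(J_1\cup J_2)$ that are ``absorbed'' into the exponent vector at the end (contributing $\Lambda_{l,\Xi,J_2,a}$), while $f:J_0\backslash(J_1\cup J_2\cup\Xi)\to J\backslash J_2$ records, for each prime $\mathfrak{q}=\mathfrak{q}_j$ eliminated via $1-\beta_{\mathfrak{q}_j}^*$, which factor $\ell_{(l,f(\mathfrak{q}_j))}$ of $\ell^{\vec{t}}$ got differentiated — the constraint $f(S)\nsubseteq S$ for $\emptyset\neq S$ encodes that one never runs out of ``fresh'' $\ell$-factors to hit, which is forced by Lemma \ref{lem:log-value} (a factor $\ell_{(l,v)}$ with $v\notin\Sigma_J$ is $\beta_\mathfrak{q}^*$-invariant and so contributes $0$ to $(1-\beta_\mathfrak{q}^*)$) together with $\vec{t}_{l,f}+\vec{t}_{l,\Xi}=\vec{t}$. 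The main obstacle I anticipate is precisely this bookkeeping: showing that the recursive peeling-off, summed over all orders in which the primes of $J_0\backslash(J_1\cup J_2)$ can be processed, collapses cleanly to $\vec{t}\,!\sum_{(\Xi,f)}\big(\prod_{v\in J_0\backslash(J_1\cup J_2\cup\Xi)}{_{f(v)}}l(\beta_v)\big)$ without over- or under-counting, and that the constraint defining $M(J')$ is exactly the non-vanishing condition coming from Lemmas \ref{lem:log-value} and \ref{lem:1-gamma}. Once the combinatorial identity is pinned down, everything else is a routine assembly of the vanishing inputs (Lemma \ref{lem:int-0}) and the Leibniz expansion (Lemma \ref{lem:1-gamma}).
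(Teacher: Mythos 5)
Your overall plan (peel off primes of $J_0\backslash(J_1\cup J_2)$ with $1-\beta_{\mathfrak{q}}^*$, use the $\beta_{\mathfrak{q}}$-invariance of $\widehat{\chi}$ and $\widetilde{\mu}_J$, kill lower-order terms with Lemma \ref{lem:int-0}, recurse) is the paper's plan, but you have the roles of the Leibniz terms exactly reversed, and this breaks your induction. Expanding $0=\int(1-\beta_{\mathfrak{q}}^*)\bigl(\widehat{\chi}\,\ell^{\vec{t}}\,1_{Z_{J_1\cup\{\mathfrak{q}\},J_2,a}}\bigr)\widetilde{\mu}_J$ into (A) $(1-\beta_{\mathfrak{q}}^*)\ell^{\vec{t}}\cdot 1_{Z_{J_1\cup\{\mathfrak{q}\},J_2,a}}$, (B) $\ell^{\vec{t}}\cdot 1_{Z_{J_1,J_2,a}}$, (C) $-(1-\beta_{\mathfrak{q}}^*)\ell^{\vec{t}}\cdot 1_{Z_{J_1,J_2,a}}$: you discard (A) by Lemma \ref{lem:int-0}, but the strict inequality you invoke fails there, since the terms $\ell^{\vec{t}'}$ with $|\vec{t}'|=|\vec{t}|-1$ produced by Lemma \ref{lem:1-gamma} satisfy $|\vec{t}'|+\sharp(J_1\cup\{\mathfrak{q}\})+\sharp(J_2)=\sharp(J_0)$. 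These surviving terms are precisely the main terms: they carry the coefficients $t_{(l,v)}\,{}_{v}l(\beta_{\mathfrak{q}})$ with $v\neq\mathfrak{q}$ and the functions $\lambda(\vec{t}',J_1\cup\{\mathfrak{q}\},J_2,a)$, for which the hypothesis ``sum $=\sharp(J_0)$'' still holds because $J_1$ is enlarged as $|\vec{t}|$ drops; this is what the paper recurses on. Your cross term (C), on the other hand, involves only $\ell^{\vec{t}'}1_{Z_{J_1,J_2,a}}$ with $|\vec{t}'|<|\vec{t}|$ and $J_1$ unchanged, so it vanishes wholesale by Lemma \ref{lem:int-0}; in particular the integrals $\int\widehat{\chi}\,\ell^{\vec{t}-e_{(l,\mathfrak{q})}}1_{Z_{J_1,J_2,a}}\widetilde{\mu}_J$ you feed back into your induction are zero and do not satisfy your inductive hypothesis ($|\vec{t}|-1+\sharp(J_1)+\sharp(J_2)<\sharp(J_0)$). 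Note also that Lemma \ref{lem:1-gamma} gives coefficients $-t_i\ell_i(\beta_{\mathfrak{q}})$ for \emph{all} $i$ with $t_i>0$ (not just $\mathrm{pr}_2(i)=\mathfrak{q}$), and Lemma \ref{lem:log-value} only kills places outside $J$, so your claim that only $i=(l,\mathfrak{q})$ survives is wrong; in the correct recursion that index does not occur at all.

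There is a second gap: you peel at an arbitrary $\mathfrak{q}\in J_0\backslash(J_1\cup J_2)$. If $t_{(l,\mathfrak{q})}>0$, then $(\vec{t},J_1\cup\{\mathfrak{q}\},J_2)$ violates condition $(\flat)$, so $\lambda(\vec{t},J_1\cup\{\mathfrak{q}\},J_2,a)\notin C^{\flat}_J$ (a logarithm at $\mathfrak{q}$ multiplied by $1_{B_{\mathfrak{q}}}$ is unbounded near $0$), and the identity $\int(1-\beta_{\mathfrak{q}}^*)(\cdots)\widetilde{\mu}_J=0$ that you rely on is not even defined; likewise Lemma \ref{lem:int-0} is unavailable for such triples. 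The paper therefore peels only at primes $\mathfrak{p}$ with $t_{(l,\mathfrak{p})}=0$ and takes as base case the configuration where $t_i>0$ for every $i\in\{l\}\times(J_0\backslash(J_1\cup J_2))$ (then $\vec{t}=\vec{t}_{l,J_0\backslash(J_1\cup J_2)}$, the $M$-condition forces $\Xi=J_0\backslash(J_1\cup J_2)$, and both sides are literally the same term); your single base case $|\vec{t}|=0$ does not cover this situation, which is exactly the one where your inductive step cannot be performed. Finally, the constraint $f(S)\nsubseteq S$ is not ``forced by Lemma \ref{lem:log-value}''; it accumulates from the requirement $f(\mathfrak{p})\neq\mathfrak{p}$ at each peeled prime (automatic because $t_{(l,\mathfrak{p})}=0$ there), i.e.\ from the recursion over $f\in M(\{\mathfrak{p}\})$, and the factor $\vec{t}\,!$ comes out of $\vec{t}'!\cdot t_{l,f(\mathfrak{p})}=\vec{t}\,!$ at each step; with the corrected recursion this bookkeeping is straightforward, but as written your argument computes the wrong terms.
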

\begin{proof}
In the case  $t_i>0$ for each $i\in \{l\} \times  (J_0\backslash
(J_1\cup J_2))$, both sides of the equality are equal to $$\int
\widehat{\chi} \cdot \Lambda_{l, J_0\backslash (J_1\cup J_2) , J_2,
a}\widetilde{\mu}_J.$$  Indeed, so is the left hand side because we
must have $\vec{t}=\vec{t}_{l, J_0\backslash (J_1\cup J_2)}$. For
$(\Xi,f)$ appeared in the right hand side, we have
$$\vec{t}_{l,f}=\vec{t}_{l, J_0\backslash (J_1\cup
J_2)}-\vec{t}_{l, \Xi}=\vec{t}_{l, J_0\backslash (J_1\cup J_2\cup
\Xi)}.$$ This implies that $f(J_0\backslash (J_1\cup J_2 \cup
\Xi))=J_0\backslash (J_1\cup J_2 \cup \Xi)$. So the condition $f\in
M((J_0\backslash (J_1\cup J_2 \cup \Xi))$ forces
$J_0\backslash(J_1\cup J_2 \cup \Xi)=\emptyset$ or equivalently
$\Xi=J_0\backslash(J_1\cup J_2)$.

Suppose that $t_i=0$ for some $i=(\ell, \mathfrak{p})$,
$\mathfrak{p}\in J_0\backslash (J_1\cup J_2)$. By Lemma
\ref{lem:1-gamma} and Lemma \ref{lem:int-0} we have
\begin{eqnarray*} \int \widehat{\chi}\cdot \lambda(\vec{t}, J_1, J_2, a) \widetilde{\mu}_J
&=&- \int\widehat{\chi}\cdot
\Big((1-\beta_\mathfrak{p}^*)\prod_{v\in J} \ell ^{\vec{t}}
\Big)\cdot 1_{Z_{J_1\cup\{\mathfrak{p}\}, J_2,a}}
\widetilde{\mu}_J  \\
&=&   \int \widehat{\chi}\cdot \sum_{(f,
\vec{t}')}t_{l,f(\mathfrak{p})}\:{_{f(\mathfrak{p})}}l(\beta_\mathfrak{p})
\cdot \lambda(\vec{t}', J_1\cup\{\mathfrak{p}\},
J_2,a)\widetilde{\mu}_J ,
\end{eqnarray*} where $(f, \vec{t}')$ runs over the pairs with $f\in
M(\{\mathfrak{p}\})$ and $t_{l,f}+\vec{t}'=\vec{t}$. Now our
assertion follows by induction.
\end{proof}

We apply Lemma \ref{lem:complex} to the case that  $l$ satisfies
\begin{equation}\label{eq:sum-zero}
\sum_{v\in J\backslash J_2}\:_vl (\beta_\mathfrak{p})=0
\end{equation}
for each $\mathfrak{p}\in J_0\backslash J_2$. Put $h=|J_0|-|J_2|$,
and write $J_0\backslash J_2=\{\mathfrak{p}_1, \cdots,
\mathfrak{p}_h\}$.

\begin{prop}\label{prop:ell}
If $\widehat{\chi}$ is a $\Delta_{J_0\backslash   J_2}$-invariant
element of $C^\flat_J$, then {\tiny \begin{eqnarray*} && \int
\widehat{\chi}\left(\sum_{v\in J\backslash J_2} {
_vl}\right)^{h}\cdot 1_{Z_{\emptyset, J_2,a}}\widetilde{\mu}_J \\
&=& h! \int \widehat{\chi} \det \left(\begin{array}{cccc}
\Lambda_{l,\mathfrak{p}_1,J_2,a} -
{_{\mathfrak{p}_1}l(\beta_{\mathfrak{p}_1})}\Lambda_{\emptyset,J_2,a}
&
-{_{\mathfrak{p}_1}l(\beta_{\mathfrak{p}_2})}\Lambda_{\emptyset,J_2,a}
& \cdots & -{_{\mathfrak{p}_1}l(\beta_{\mathfrak{p}_h})}\Lambda_{\emptyset,J_2,a} \\
-{_{\mathfrak{p}_2}l(\beta_{\mathfrak{p}_1})}\Lambda_{\emptyset,J_2,a}
& \Lambda_{l,\mathfrak{p}_2,J_2,a} -
{_{\mathfrak{p}_2}l(\beta_{\mathfrak{p}_2})}\Lambda_{\emptyset,J_2,a}
& \cdots & -{_{\mathfrak{p}_2}l(\beta_{\mathfrak{p}_h})}\Lambda_{\emptyset,J_2,a} \\
\vdots & \vdots & \ddots & \vdots \\
-{_{\mathfrak{p}_h}l(\beta_{\mathfrak{p}_1})}\Lambda_{\emptyset,J_2,a}
&
-{_{\mathfrak{p}_h}l(\beta_{\mathfrak{p}_2})}\Lambda_{\emptyset,J_2,a}
& \cdots & \Lambda_{l,\mathfrak{p}_h,J_2,a}
-{_{\mathfrak{p}_h}l(\beta_{\mathfrak{p}_h})}\Lambda_{\emptyset,J_2,a}
 \end{array} \right) \widetilde{\mu}_J \end{eqnarray*}}
\end{prop}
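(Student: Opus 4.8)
The plan is to reduce the computation to Lemma~\ref{lem:complex} by a multinomial expansion and then to repackage the resulting combinatorial sum as the stated determinant, the last step resting on an elementary linear--algebra identity that is precisely where hypothesis $(\ref{eq:sum-zero})$ enters. Throughout write $h=\sharp(J_0)-\sharp(J_2)$, $J_0\backslash J_2=\{\mathfrak{p}_1,\dots,\mathfrak{p}_h\}$, $T_\Xi:=(J_0\backslash J_2)\backslash\Xi$ for $\Xi\subseteq J_0\backslash J_2$, and $c_{\mathfrak{q},\mathfrak{p}}:={}_{\mathfrak{q}}l(\beta_{\mathfrak{p}})$. First I would expand, by the multinomial theorem,
\[
\Big(\sum_{v\in J\backslash J_2}{}_vl\Big)^{h}\cdot 1_{Z_{\emptyset,J_2,a}}=\sum_{\vec{t}}\frac{h!}{\vec{t}!}\,\lambda(\vec{t},\emptyset,J_2,a),
\]
the sum being over $\vec{t}$ supported on $\{l\}\times(J\backslash J_2)$ with $|\vec{t}|=h$.

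Since $(\flat)$ is automatic when $J_1=\emptyset$ and $h+\sharp(J_2)=\sharp(J_0)$, Lemma~\ref{lem:complex} applies to each term $\int\widehat{\chi}\cdot\lambda(\vec{t},\emptyset,J_2,a)\widetilde{\mu}_J$. Substituting and interchanging the two summations — the constraint $\vec{t}_{l,f}+\vec{t}_{l,\Xi}=\vec{t}$ then simply disappears, because every pair $(\Xi,f)$ with $\Xi\subseteq J_0\backslash J_2$ and $f\in M(T_\Xi)$ produces a vector $\vec{t}_{l,f}+\vec{t}_{l,\Xi}$ of the required support and of total weight $h$ — I obtain
\[
\int\widehat{\chi}\Big(\sum_{v\in J\backslash J_2}{}_vl\Big)^{h}1_{Z_{\emptyset,J_2,a}}\widetilde{\mu}_J=h!\sum_{\Xi\subseteq J_0\backslash J_2}\Big(\sum_{f\in M(T_\Xi)}\prod_{v\in T_\Xi}c_{f(v),v}\Big)\int\widehat{\chi}\cdot\Lambda_{l,\Xi,J_2,a}\widetilde{\mu}_J.
\]

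Next I would identify this with $h!\int\widehat{\chi}\det(M)\widetilde{\mu}_J$, $M$ the matrix in the statement, using two facts. On the one hand, $M=D-\Lambda_{\emptyset,J_2,a}C$ with $D=\mathrm{diag}(\Lambda_{l,\mathfrak{p}_1,J_2,a},\dots,\Lambda_{l,\mathfrak{p}_h,J_2,a})$ and $C=(c_{\mathfrak{p}_i,\mathfrak{p}_j})_{i,j}$; expanding $\det M$ by multilinearity in the rows and collapsing the resulting products by means of the relations $\Lambda_{\emptyset,J_2,a}^2=\Lambda_{\emptyset,J_2,a}$, $\Lambda_{\emptyset,J_2,a}\Lambda_{l,\Xi,J_2,a}=\Lambda_{l,\Xi,J_2,a}$ and $\prod_{i\in S}\Lambda_{l,\mathfrak{p}_i,J_2,a}=\Lambda_{l,\{\mathfrak{p}_i:i\in S\},J_2,a}$ — all immediate from $\Lambda_{l,\Xi,J_2,a}=1_{Z_{(J_0\backslash J_2)\backslash\Xi,J_2,a}}\prod_{v\in\Xi}{}_vl$ together with the fact that the sets $Z_{\bullet,J_2,a}$ form a nested family — yields
\[
\det M=\sum_{\Xi\subseteq J_0\backslash J_2}(-1)^{|T_\Xi|}\det\big((c_{\mathfrak{q},\mathfrak{p}})_{\mathfrak{q},\mathfrak{p}\in T_\Xi}\big)\,\Lambda_{l,\Xi,J_2,a}.
\]
On the other hand one needs the combinatorial identity $\sum_{f\in M(T)}\prod_{v\in T}c_{f(v),v}=(-1)^{|T|}\det\big((c_{\mathfrak{q},\mathfrak{p}})_{\mathfrak{q},\mathfrak{p}\in T}\big)$ for every $T\subseteq J_0\backslash J_2$. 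Granting both, the proposition follows by comparing the two boxed displays term by term; note that each $\Lambda_{l,\Xi,J_2,a}$, and each product of $\Lambda$'s arising above, lies in $C^\flat_J$ because $(\flat)$ holds for $(\vec{t}_{l,\Xi},(J_0\backslash J_2)\backslash\Xi,J_2)$, so all the integrals are defined.

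The main obstacle is this combinatorial identity, which is exactly where $(\ref{eq:sum-zero})$ is used. My plan for it: by $(\ref{eq:sum-zero})$ we may substitute $c_{\mathfrak{p},\mathfrak{p}}=-\sum_{w\in J\backslash J_2,\,w\neq\mathfrak{p}}c_{w,\mathfrak{p}}$ into the Leibniz expansion $\det\big((c_{\mathfrak{q},\mathfrak{p}})_{\mathfrak{q},\mathfrak{p}\in T}\big)=\sum_\sigma\mathrm{sgn}(\sigma)\prod_{v\in T}c_{\sigma(v),v}$; expanding the factors attached to the fixed points of $\sigma$, the total reorganizes into a sum over fixed-point-free maps $\widetilde{f}\colon T\to J\backslash J_2$, the coefficient of $\prod_{v\in T}c_{\widetilde{f}(v),v}$ being $(-1)^{|T|}\sum_{N\subseteq T:\ \widetilde{f}(N)=N}\mathrm{sgn}(\widetilde{f}|_N)(-1)^{|N|}$. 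A short cycle count then shows this inner sum is $1$ when $\widetilde{f}\in M(T)$ — the only $N$ with $\widetilde{f}(N)=N$ is then $\emptyset$ — and $0$ otherwise, since in that case the subsets $N$ with $\widetilde{f}(N)=N$ are exactly the unions of the (at least one) cycles of $\widetilde{f}$ restricted to the nonempty eventual image $\bigcap_{k\ge1}\widetilde{f}^{\,k}(T)$, each such $N$ contributing $(-1)$ to the power of the number of its cycles, so the sum equals $(1-1)^{(\text{number of cycles})}=0$. This gives $\sum_{f\in M(T)}\prod_{v\in T}c_{f(v),v}=(-1)^{|T|}\det\big((c_{\mathfrak{q},\mathfrak{p}})_{\mathfrak{q},\mathfrak{p}\in T}\big)$ and completes the argument.
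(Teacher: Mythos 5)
Your proposal is correct and follows essentially the same route as the paper's own proof: multinomial expansion combined with Lemma \ref{lem:complex} in the case $J_1=\emptyset$, expansion of the determinant as $\sum_{\Xi}d_\Xi\,\Lambda_{l,\Xi,J_2,a}$ with $d_\Xi=\det(-{}_{\mathfrak{q}}l(\beta_{\mathfrak{p}}))_{\mathfrak{q},\mathfrak{p}\in (J_0\backslash J_2)\backslash\Xi}$, and then the combinatorial identity made available by $(\ref{eq:sum-zero})$. The only divergence is that the paper simply cites this last identity from \cite[Lemma 4.9]{Spiess}, whereas you re-derive it by the substitution and cycle-count argument, which is fine apart from the cosmetic point that the maximal $\widetilde{f}$-invariant subset of $T$ should be described as the union of the cycles of $\widetilde{f}$ contained in $T$ rather than as $\bigcap_{k\ge 1}\widetilde{f}^{\,k}(T)$, since $\widetilde{f}$ need not map $T$ into itself.
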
 Here we denote $\Lambda_{l,\{\mathfrak{p}_i\},J_2,a}$
by $\Lambda_{l, \mathfrak{p}_i ,J_2,a}$.
\begin{proof} Appliying Lemma \ref{lem:complex} to the case of $J_1=\emptyset$ we
obtain
$$\int \widehat{\chi}\cdot\left(\sum_{v\in J\backslash J_2} { _vl}\right)^{h}\cdot 1_{Z_{\emptyset, J_2},a}\widetilde{\mu}_J =
h!\sum_{(\Xi, f)} \left(\prod_{v \in (J_0\backslash J_2)\backslash
\Xi} {_{f(v)}l}(\beta_v) \right)\int
\widehat{\chi}\cdot\Lambda_{l,\Xi,J_2,a}\widetilde{\mu}_J,$$ where
the sum runs through all pairs $(\Xi, f)$ with $\Xi\subseteq
J_0\backslash J_2$ and $f\in M(J_0\backslash (J_2 \cup \Xi) )$.

On the other hand,{\small
\begin{eqnarray*} & & \det \left(\begin{array}{cccc} \Lambda_{l,\mathfrak{p}_1,J_2,a}
-
{_{\mathfrak{p}_1}l(\beta_{\mathfrak{p}_1})}\Lambda_{\emptyset,J_2,a}
&
-{_{\mathfrak{p}_1}l(\beta_{\mathfrak{p}_2})}\Lambda_{\emptyset,J_2,a}
& \cdots &
-{_{\mathfrak{p}_1}l(\beta_{\mathfrak{p}_h})}\Lambda_{\emptyset,J_2,a}
\\ -{_{\mathfrak{p}_2}l(\beta_{\mathfrak{p}_1})}\Lambda_{\emptyset,J_2,a} & \Lambda_{l,\mathfrak{p}_2,J_2,a}
- {_{\mathfrak{p}_2}l(\beta_{\mathfrak{p}_2})}\Lambda_{\emptyset,J_2,a} & \cdots & -{_{\mathfrak{p}_2}l(\beta_{\mathfrak{p}_h})}\Lambda_{\emptyset,J_2,a} \\
\vdots & \vdots & \ddots & \vdots \\
-{_{\mathfrak{p}_h}l(\beta_{\mathfrak{p}_1})}\Lambda_{\emptyset,J_2,a}
& -{_{\mathfrak{p}_h}l(\beta_{\mathfrak{p}_2})}
\Lambda_{\emptyset,J_2,a} & \cdots & \Lambda_{l,
\mathfrak{p}_h,J_2,a}
-{_{\mathfrak{p}_h}l(\beta_{\mathfrak{p}_h})}\Lambda_{\emptyset,J_2,a}
 \end{array} \right) \\ & & =  \sum_{\Xi\subseteq J_0\backslash J_2} d_\Xi \Lambda_{l,\Xi,J_2, a}
 \end{eqnarray*}}
 where
 $$d_\Xi=\det(-{ _{\mathfrak{p}_j} }\ell(\beta_{\mathfrak{p}_i}))_{\mathfrak{p}_i,\mathfrak{p}_j\in J_0\backslash (J_2\cup \Xi)}.$$

As (\ref{eq:sum-zero}) holds for each $\mathfrak{p}\in J_0\backslash
J_2$, our assertion follows from the following lemma.
\end{proof}

\begin{lem} \cite[Lemma 4.9]{Spiess}  Let $k\leq m$ be positive integers, and let $(c_{ij})_{1\leq i\leq k, 1\leq j\leq m}$
be a $k\times m$-matrix with entries in a commutative ring such that $\sum_{j=1}^m c_{ij}=0$ for all $i=1,\cdots, k$. Then
$$ \det(-c_{ij})_{1\leq i,j\leq k} = \sum_f \prod_{i=1}^k c_{i f(i)} $$ where the sum runs through
all maps $f: \{1,\cdots, k\}\rightarrow \{1,\cdots, m\}$ with $f(S)\nsubseteq S$ for all $S\subseteq \{1,\cdots, k\}$, $S\neq \emptyset$.
\end{lem}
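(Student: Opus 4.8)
The statement is a purely algebraic identity about a $k\times m$ matrix $(c_{ij})$ over a commutative ring satisfying the row-sum condition $\sum_{j=1}^m c_{ij}=0$. The plan is to expand $\det(-c_{ij})_{1\le i,j\le k}$ by the Leibniz formula and then reorganize the resulting sum by grouping together those permutation-like summands that share a common ``value pattern'' $f:\{1,\dots,k\}\to\{1,\dots,m\}$. Concretely, $\det(-c_{ij})_{1\le i,j\le k}=\sum_{\tau\in S_k}\mathrm{sgn}(\tau)\prod_{i=1}^k(-c_{i\tau(i)})=(-1)^k\sum_{\tau\in S_k}\mathrm{sgn}(\tau)\prod_i c_{i\tau(i)}$. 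I would like to show that, after using the row-sum relations to substitute $c_{ii}=-\sum_{j\neq i}c_{ij}$ appropriately, every term $\prod_i c_{i\tau(i)}$ is absorbed into a sum over maps $f$ that are \emph{not} permutations in the strong sense required, i.e.\ those with $f(S)\subseteq S$ for some nonempty $S$ cancel out, while those with $f(S)\not\subseteq S$ for all nonempty $S$ survive with coefficient $1$.

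The cleanest route is induction on $k$. For $k=1$ the claim is $-c_{11}=\sum_{j\ge 2}c_{1j}$, which is exactly the row-sum hypothesis (the only map $f\colon\{1\}\to\{1,\dots,m\}$ with $f(\{1\})\not\subseteq\{1\}$ is $f(1)=j$ for $j\ge2$). For the inductive step I would expand the determinant along the last row: $\det(-c_{ij})_{1\le i,j\le k}=\sum_{j=1}^k(-c_{kj})(-1)^{k+j}M_{kj}$, where $M_{kj}$ is the $(k,j)$ minor. The minors $M_{kj}$ do \emph{not} themselves satisfy a clean row-sum condition (a column has been deleted), so the real work is to first rewrite the last row using $-c_{kk}=\sum_{j\neq k}c_{kj}$ and then combine the $c_{kk}$-contribution with the others so that each surviving coefficient is a product of off-``diagonal-block'' entries. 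An equivalent and perhaps more transparent bookkeeping device: introduce a formal $(k\times m)$ setup, observe both sides are multilinear in the rows, and reduce to the case where each row is a single basis vector $e_j-e_i$ (difference of a chosen off-diagonal column and the diagonal column), since these span the row-sum-zero space; then both sides become explicit combinatorial sums over functions $f$, and one checks they agree by an inclusion-exclusion / sign-reversing-involution argument that kills exactly the $f$ with $f(S)\subseteq S$ for some $S$.

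The main obstacle is the combinatorial cancellation: proving that in $(-1)^k\sum_{\tau}\mathrm{sgn}(\tau)\prod_i c_{i\tau(i)}$, after the row-sum substitutions, all contributions indexed by a function $f$ admitting a nonempty ``invariant'' subset $S$ with $f(S)\subseteq S$ cancel. I expect to handle this by a sign-reversing involution: given such an $f$, choose the minimal such $S$ (say in some fixed order) and pair up the terms by swapping/cycling the images within $S$ in a way that reverses the sign coming from $\mathrm{sgn}(\tau)$ while preserving the monomial $\prod c_{if(i)}$; the fixed points of the involution are precisely the $f\in M(\{1,\dots,k\})$, each contributing $\prod_{i=1}^k c_{if(i)}$ with coefficient $+1$. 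Since this is exactly Lemma 4.9 of \cite{Spiess}, I would in practice simply cite that reference; but the self-contained argument sketched above — Leibniz expansion, reduction to basis rows $e_j-e_i$ by multilinearity, and a sign-reversing involution on functions with a proper invariant subset — is the proof I would write out if a proof were required here.
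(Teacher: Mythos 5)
The paper offers no proof of this lemma at all—it is quoted verbatim with the citation to Spiess, Lemma 4.9—so your stated intention to simply cite that reference is exactly what the paper does. Your backup sketch is also sound, and in fact the multilinearity route closes itself without the sign-reversing involution you defer to: both sides are multilinear in the rows, so one reduces to rows of the form $e_{j_i}-e_i$ with $j_i\neq i$, where the left side becomes $\det(I-N)$ for the functional digraph of $i\mapsto j_i$ (equal to $1$ if no nonempty $S\subseteq\{1,\dots,k\}$ satisfies $f(S)\subseteq S$, and $0$ otherwise), which visibly agrees with the right-hand side.
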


\section{$L$-invariants and group cohomology} \label{sec:L-inv-gh}

\subsection{Notations}

For each $\mathfrak{p}\in J_0$ let $\beta_\mathfrak{p}$ be the
element in $ K^\times/F^\times$ provided in Section \ref{ss:comp-I}.
Let $\Delta=\Delta_{J_0}$ be the subgroup of $ K^\times/F^\times$
generated by $\beta_\mathfrak{p}$ ($\mathfrak{p}\in J_0$). Then
$\Delta$ is a free abelian group with generators
$\beta_\mathfrak{p}$ ($\mathfrak{p}\in J_0$).

We fix a $J\backslash J_0$-tuple $\vec{n}$.

For a subset $J_2\subset J_0$ we consider the spaces
$$ \widetilde{X}^{J_2}_{J_0;\vec{n}, \mathfrak{c}} =
(F_{J_0\backslash J_2}^\times\backslash K_{J_0\backslash
J_2}^\times) \times (\BA_F^{\infty,J_0,\times}\backslash
\BA_K^{\infty,J_0,\times} / \widehat{\mathcal{O}}_{J_0;\vec{n},
\mathfrak{c}}^\times) $$ and
$$\overline{X}^{J_2}_{J_0;\vec{n},\mathfrak{c}} :=
F_{J_0\backslash J_2}\times (\BA_F^{\infty,J_0,\times}\backslash
\BA_K^{\infty,J_0,\times} / \widehat{\mathcal{O}}_{J_0;\vec{n},
\mathfrak{c}}^\times). $$ Via $\eta_{J_2}$ we identify
$\widetilde{X}^{J_2}_{J_0;\vec{n}, \mathfrak{c}}$ with a subset of
$\overline{X}^{J_2}_{J_0;\vec{n},\mathfrak{c}}$. If $J_0\backslash
J_2=J_{1,1}\bigsqcup J_{1,2}$ is a disjoint union, we have a set
$$\overline{X}^{J_2}_{J_0;\vec{n},\mathfrak{c}}(J_{1,1}, J_{1,2}) :=
(F_{J_{1,1}} \times   F_{J_{1,2}}^\times )\times
(\BA_F^{\infty,J_0,\times}\backslash \BA_K^{\infty,J_0,\times} /
\widehat{\mathcal{O}}_{J_0;\vec{n}, \mathfrak{c}}^\times). $$

Let $\bar{C}^{J_2}(J_{1,1}, J_{1,2})$ be the completion of the space
of compact supported functions on
$\overline{X}^{J_2}_{J_0;\vec{n},\mathfrak{c}}(J_{1,1}, J_{1,2})$
for the Gauss norm. When $J_{1,1}=J_0\backslash J_2$ and
$J_{1,2}=\emptyset$, we denote it by $\bar{C}^{J_2}$. Then
$\bar{C}^{J_2}(J_{1,1}, J_{1,2})$ is closed in $\bar{C}^{J_2}$ for
each pair $(J_{1,1}, J_{1,2})$.

Let $\Delta$ act on the spaces
$\overline{X}^{J_2}_{J_0;\vec{n},\mathfrak{c}}$ and
$\overline{X}^{J_2}_{J_0;\vec{n},\mathfrak{c}}(J_{1,1}, J_{1,2})$.
These actions induce actions of $\Delta$ on $\bar{C}^{J_2}$ and
$\bar{C}^{J_2}(J_{1,1}, J_{1,2})$.

Let $I_{J_{1,2}}$ be the subset of $I$ consisting of
$i=(\log_\iota,\mathfrak{p})$ with $\mathfrak{p}\in J_{1,2}$ and
$\iota\in \Sigma_\mathfrak{p}$. Let $\mathcal{C}^{J_2,\leq N} $
denote the subspace
$$ \bigoplus_{J_{1,2}\subset J_0\backslash J_2} \bigoplus_{\tiny\begin{array}{c} \vec{m}:
I_{J_{1,2}}\text{-tuple}, \\  \vec{m}>0, |\vec{m}|\leq N
\end{array}} \left(\bar{C}^{J_2}(J_{1,1}, J_{1,2})\right)^{\Delta_{J_2}} \cdot
\ell^{\vec{m}}
$$ endowed with the product topology, where $\vec{m}>0$
means each component $m_{\iota,\mathfrak{p}}$ of $\vec{m}$ is
positive. There exists a natural action of $\Delta$ on
$\mathcal{C}^{J_2,\leq N}$.

Similarly, let $\bar{C}(F_{J_2})$ denote the completion of the space
of compactly supported functions on $F_{J_2}=\prod_{\mathfrak{p}\in
J_2}F_{\mathfrak{p}}$. Then $\Delta$ acts on $\bar{C}(F_{J_2})$. Let
$\mathcal{C}_{J_2}^{\leq N}$ be the space
$$  \bigoplus_{J'\subset J_2} \bigoplus_{\tiny\begin{array}{c} \vec{m}:
I_{J'}\text{-tuple}, \\  \vec{m}>0, |\vec{m}|\leq N
\end{array}} \bar{C}(F_{J_2\backslash J'}\times F_{J'}^\times)^{\Delta_{J_0\backslash J_2}} \cdot
\ell^{\vec{m}} $$ equipped with the product topology.

\subsection{$L$-invariants revisited} \label{sec:partial}

Let $\mathfrak{p}$ be in $J_0$. Write
$\mathfrak{p}\mathcal{O}_K=\mathfrak{P}\bar{\mathfrak{P}}$. Then
$K_\mathfrak{p}=K_\mathfrak{P}\oplus K_{\bar{\mathfrak{P}}}$.  Write
$$ \beta_{\mathfrak{p},\mathfrak{p}}=(u_1\omega_{\mathfrak{p}}^{-h_\mathfrak{p}},
u_2)\in K_\mathfrak{p}=K_{\mathfrak{P}}\oplus
K_{\bar{\mathfrak{P}}}\cong F_\mathfrak{p}\oplus F_{\mathfrak{p}} $$
for $u_1,u_2\in \mathcal{O}_{F_\mathfrak{p}}^\times$. Put
$u_\mathfrak{p}=\frac{u_1}{u_2\omega_\mathfrak{p}^{h_\mathfrak{p}}}$.

\begin{prop}\label{prop:deriv-induction}
If $\xi\in (\bar{C}^\mathfrak{p})^{\Delta_\mathfrak{p}}$, then for
each $\iota\in\Sigma_\mathfrak{p}$ we have
\begin{equation}\label{eq:key}  \int
 { _\mathfrak{p}}\log_{\iota }\cdot 1_{D_\mathfrak{p}}\cdot
 \xi \: \widetilde{\mu}_J =  \mathcal
{L}^\Tei_{\iota, u_\mathfrak{p}^\iota}(f) \int \left(
\sum_{i=1}^{h_\mathfrak{p}} 1_{\omega_\mathfrak{p}^{i}
B_\mathfrak{p}}\right) \cdot \xi \:\widetilde{\mu}_{J}.
\end{equation}
\end{prop}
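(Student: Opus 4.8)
The goal is to identify, for $\xi$ fixed by $\Delta_\mathfrak{p}$, the integral $\int {_\mathfrak{p}}\log_\iota \cdot 1_{D_\mathfrak{p}}\cdot \xi\,\widetilde{\mu}_J$ with a multiple of $\int (\sum_i 1_{\omega_\mathfrak{p}^i B_\mathfrak{p}})\cdot \xi\,\widetilde{\mu}_J$, the constant being the Teitelbaum $L$-invariant $\mathcal{L}^\Tei_{\iota,u_\mathfrak{p}^\iota}(f)$. The plan is to translate both sides into the group cohomology language of Section \ref{sec:L-inv-gh}, using the fact that $\widetilde{\mu}_J$ is $F^\times\backslash K^\times$-invariant (hence $\Delta$-invariant) and so defines a class in $H^0(\Delta_\mathfrak{p}, (\mathcal{C}^{\mathfrak{p}})^*)$, while the cochain $\gamma\mapsto \lambda_{z_0,c;u}(\gamma)$ built from Coleman's morphism in Section \ref{ss:coleman} and the harmonic cocycle valued modular form $c$ attached to $\widehat{\tilde\phi^{\infty,\dagger_J}}$ gives a class in $H^1(\Delta_\mathfrak{p}, \mathcal{C}^{\mathfrak{p},\leq 1})$. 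The left-hand side of \eqref{eq:key} is, up to the constant $\log_u(\omega_\mathfrak{p})$ normalization implicit in the choice $u=u_\mathfrak{p}^\iota$, the evaluation at the generator $\beta_\mathfrak{p}$ of the cup product $[\widetilde{\mu}_J]\cup[c_{\mathfrak{p},l}]$, whereas the right-hand side is the same evaluation with $[c_{\mathfrak{p},l}]$ replaced by $[c_{\mathfrak{p},\ord}]$; by Proposition \ref{prop:diff-L} and the definition of $\mathcal{L}^\Tei$ these two cohomology classes differ exactly by the scalar $\mathcal{L}^\Tei_{\iota,u_\mathfrak{p}^\iota}(f)$.

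More concretely, first I would unwind the action of $\beta_{\mathfrak{p},\mathfrak{p}}$ under the isomorphism $\eta_\mathfrak{p}\colon F_\mathfrak{p}^\times\backslash K_\mathfrak{p}^\times \xrightarrow{\sim}\mathbf{P}^1(F_\mathfrak{p})\setminus\{0,\infty\}$ from Section \ref{ss:comp-I}: under the twisted action $\beta_{\mathfrak{p},\mathfrak{p}}$ translates $x\mapsto u_\mathfrak{p}^{-1}x$ on $F_\mathfrak{p}^\times$, so $(1-\beta_{\mathfrak{p},\mathfrak{p}}^*)1_{B_\mathfrak{p}}=1_{D_\mathfrak{p}}$ and $D_\mathfrak{p}=\bigsqcup_{i=0}^{h_\mathfrak{p}-1}\omega_\mathfrak{p}^i\mathcal{O}_{F_\mathfrak{p}}^\times$. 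Next I would express $1_{D_\mathfrak{p}}\cdot{_\mathfrak{p}}\log_\iota$ as $(1-\beta_\mathfrak{p}^*)$ applied to $1_{B_\mathfrak{p}}\cdot{_\mathfrak{p}}\log_\iota$ plus a correction term proportional to ${_\mathfrak{p}}\log_\iota(\beta_\mathfrak{p})\cdot 1_{D_\mathfrak{p}}$ — here the key computation ${_\mathfrak{p}}\log_\iota(\beta_\mathfrak{p})=\iota(\text{something involving }u_\mathfrak{p})$ comes from the explicit formula for $\tilde\epsilon_\sigma$ in Section \ref{ss:anti-ext-char}. Using the $\Delta_\mathfrak{p}$-invariance of both $\widetilde\mu_J$ and $\xi$, the $(1-\beta_\mathfrak{p}^*)$-exact part integrates to zero, and one is reduced to relating $\int 1_{B_\mathfrak{p}}\cdot{_\mathfrak{p}}\log_\iota\cdot\xi\,\widetilde\mu_J$ to $\int (\sum_i 1_{\omega_\mathfrak{p}^i B_\mathfrak{p}})\cdot\xi\,\widetilde\mu_J$. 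This last step is where the $L$-invariant enters: the measure $\mu_c^{\iota}$ attached to the harmonic cocycle $c=c_{\widehat{\tilde\phi}}$ satisfies, by Proposition \ref{prop:har-to-measure} and Remark \ref{rem:measure}, $\mu_c^{\iota}(g\mathcal{U}_{e_{\mathfrak{p},*}})=c(g\cdot e_{\mathfrak{p},*})$ when $k_\mathfrak{p}=2$, so integrating $\log_u$-weighted against $\mu_c^{\iota}$ on $B_\mathfrak{p}$ produces precisely the Coleman cocycle value $\lambda_{z_0,c;u}(\beta_\mathfrak{p})$, which by definition of $\mathcal{L}^\Tei_{\iota,u}(f)$ (Section \ref{ss:coleman}) equals $\mathcal{L}^\Tei_{\iota,u}(f)$ times the Schneider cocycle value $\delta(c(g,\cdot))(\beta_\mathfrak{p})$, and the latter is, by \cite[\S 3.1]{DT2008} and Proposition \ref{prop:diff-L}, computed by the $\ord_\mathfrak{p}$-weighted integral giving $\sum_i 1_{\omega_\mathfrak{p}^i B_\mathfrak{p}}$.

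The heart of the argument, and the main obstacle, is the bookkeeping that turns the abstract equality $[\lambda_{z_0,c;u}]=\mathcal{L}^\Tei_{\iota,u}(f)[\kappa^{\mathrm{sch}}_c]$ in $H^1(\Gamma_g,L_{\vec{k}}(E))$ — which holds for the arithmetic groups $\Gamma_g$ attached to the quaternion algebra — into a statement about integration against $\widetilde\mu_J$ over the adelic quotient $\widetilde X_{J;\mathfrak{c}}$. One must carefully match the edge $e_{\mathfrak{p},*}$ and the Gross point translates $\varsigma_J^{(\vec n)}$ with the decomposition of $\overline X^{\mathfrak{p}}_{J_0;\vec n,\mathfrak{c}}$ into $\Delta_\mathfrak{p}$-orbits, track the twist $\mathrm{Ad}(\hbar_\mathfrak{p})$ from Lemma \ref{lem:h-sigma} that relates $i_\mathfrak{p}$ and $\mathbf{i}_\mathfrak{p}$, and verify that the choice $u=u_\mathfrak{p}^\iota$ is exactly what makes the constant term in the $(1-\beta_\mathfrak{p}^*)$-expansion vanish (this is why the logarithm branch $\log_{u_\mathfrak{p}^\iota}$ with $\log_{u_\mathfrak{p}^\iota}(u_\mathfrak{p}^\iota)=0$ appears). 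I expect the functional-analytic point — that all these integrals make sense because $c$ is bounded (the corollary after Proposition \ref{prop:har-to-mod}) and hence $\mu_c^{\iota}$ extends to compactly supported continuous functions (Corollary \ref{cor:measure}) — to be routine once the combinatorial identification is set up, so the real work is the orbit-decomposition computation plus the invocation of the relation between Coleman's and Schneider's morphisms specialized to the weight-two case.
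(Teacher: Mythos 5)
Your proposal has two genuine problems, one of circularity and one of substance. First, the cohomological shortcut in your opening paragraph — that $[\widetilde{\mu}_J]\cup[c_{\mathfrak{p},l}]$ and $[\widetilde{\mu}_J]\cup[c_{\mathfrak{p},\ord}]$ in $H^1(\Delta_\mathfrak{p},\mathcal{D}^{\mathfrak{p},\le N})$ ``differ exactly by the scalar $\mathcal{L}^\Tei_{\iota,u_\mathfrak{p}^\iota}(f)$ by the definition of $\mathcal{L}^\Tei$'' — is precisely Proposition \ref{prop:coh-1}, which in the paper is \emph{deduced from} the statement you are asked to prove. The definition of $\mathcal{L}^\Tei$ is an identity in $H^1(\Gamma_g,L_{\vec{k}}(E))$ for the arithmetic groups $\Gamma_g$ attached to the quaternion algebra, not in the $\Delta$-cohomology of function spaces on $\overline{X}^{\mathfrak{p}}_{J_0;\vec{n},\mathfrak{c}}$, so invoking it at the level of $\Delta$ assumes the conclusion. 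Second, your concrete reduction is false as stated: since the branch is chosen so that ${_\mathfrak{p}}\log_\iota(\beta_{\mathfrak{p},\mathfrak{p}})=0$, one has ${_\mathfrak{p}}\log_\iota\cdot 1_{D_\mathfrak{p}}=(1-\beta_\mathfrak{p}^*)({_\mathfrak{p}}\log_\iota\cdot 1_{B_\mathfrak{p}})$ with no correction term, so your appeal to $\Delta_\mathfrak{p}$-invariance (``the exact part integrates to zero'') would force the left-hand side of (\ref{eq:key}) to vanish identically, contradicting the proposition. The reason this naive integration by parts is illegitimate is that ${_\mathfrak{p}}\log_\iota\cdot 1_{B_\mathfrak{p}}$ is not an admissible test function for $\widetilde{\mu}_J$: it is discontinuous at the boundary point $0\in\mathbf{P}^1(F_\mathfrak{p})$ and lies outside $C^\flat_J$ and $\bar{C}^{\mathfrak{p}}$ (this is exactly what condition $(\flat)$ in Section \ref{ss:comp-I} excludes), and the failure of that formal cancellation is what the $L$-invariant quantifies.

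What is missing is the actual mechanism that converts the $\Gamma_g$-level identity $\kappa^{\mathrm{col},\iota_\mathfrak{p}}_{c;u}=\mathcal{L}^\Tei_{\iota,u}(f)\,\kappa^{\mathrm{sch}}_c$ into the stated integral identity, and it is more than bookkeeping: one reduces to locally constant $\xi$ and uses Proposition \ref{prop:vary-J} to rewrite both integrals through the finitely many distributions $\mu_{c(\ell,\cdot)}$, $\ell=t_k\varsigma_{J\backslash J_0}^{(\vec{n})}h_kg_j$; one cannot evaluate cocycles at $\beta_\mathfrak{p}$ itself (it does not lie in $\widetilde{\Gamma}_\ell$) but must pass to a power $\beta_\mathfrak{p}^r$ landing in the level group, and recover the factor $r$ at the end from $\beta_\mathfrak{p}$-invariance of $\xi$; the Coleman integral $I_\ell=\int\log_{u_\mathfrak{p}^\iota}\bigl(\frac{\iota(x)-\gamma_\mathfrak{p}z_0}{\iota(x)-z_0}\bigr)\mu_{c(\ell,\cdot)}$ (a bounded integrand, unlike your $\log$-weighted integral over $B_\mathfrak{p}$) is related to $\int_{\mathcal{F}}{_\mathfrak{p}}\log_\iota\,\mu_{c(\ell,\cdot)}$ over a fundamental domain by the telescoping argument of Lemma \ref{lem:key-compute}; and both the base-point term $\log_{u_\mathfrak{p}^\iota}(z_0)$ and the coboundary ambiguity in ``Coleman $=\mathcal{L}\cdot$Schneider'' are killed only after pairing against the $\gamma_\ell$-fixed vector $\mathfrak{i}_p(\varsigma_{J\backslash J_0}^{(\vec{n})})^{-1}\widehat{\mathbf{v}}_{\mathbf{m}}$ (Lemmas \ref{lem:fixed} and \ref{lem:repeat-L-inv}), after which the Schneider cocycle is evaluated explicitly along the geodesic from $v$ to $\gamma_\mathfrak{p}v$, producing $\sum_{i=1}^{rh_\mathfrak{p}}1_{\omega_\mathfrak{p}^iB_\mathfrak{p}}$. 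You correctly identified the Coleman/Schneider comparison, the weight-two measure identification, and the role of the branch $u_\mathfrak{p}^\iota$, but without the power $\beta_\mathfrak{p}^r$, the fixed-vector pairing, and the fundamental-domain computation, the proposal does not yield a proof.
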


Note that ${ _\mathfrak{p}}\log_{\iota }$ is exactly
$$\log_{u_\mathfrak{p}^\iota}^{\iota}: K_\mathfrak{p}^\times \rightarrow \BC_p ,\ \ \ \ (a_1,a_2)\mapsto
\log_{u_\mathfrak{p}^\iota}((a_1/a_2)^\iota).$$ Indeed, both of them
are additive characters that coincide with each other on
$\mathcal{O}^\times_{K_\mathfrak{p}}$ and vanish at
$\beta_{\mathfrak{p},\mathfrak{p}}$.

Since $\widetilde{\mu}_J$ is a measure, to prove (\ref{eq:key}) we
may restrict to the case that $\xi$ is locally constant. Put
$J'_0=J_0\backslash \{\mathfrak{p}\}$. We write $$ \xi=\sum_{k} d_k
1_{\mathcal{U}(a_k, m_k)\times \BA_F^{\infty,J_0,\times} t_k
\widehat{\mathcal{O}}_{J_0;\vec{n}, \mathfrak{c}}^\times}, $$ where
$\mathcal{U}(a_k, m_k)$ are open discs of $F_{J'_0}$, and $d_k$ are
constants in $\BC_p$. Let $h_k$ be an element of $\GL_2(F_{J'_0})$
such that $\mathcal{U}(a_k, m_k)=h_k \mathcal{U}_{e_{J'_0,*}}$.

Write $\tilde{\phi} = \sum_j b_j\pi'(g_j)\varphi$ with $b_j\in E$
and $(g_j)_{\infty,p}=1$. Then $\tilde{\phi}^{\infty,\dagger_J}=
\sum_j b_j\pi'(g_j)\varphi^{\infty,\dagger_J}$. Let $c(\cdot,\cdot)$
be the $\mathfrak{p}$-type ``harmonic cocycle'' valued modular form
attached to $\widehat{\varphi^{\infty,\dagger_J}}$ (Section
\ref{ss:harm-mod-form}).

Let $V$ be a compact open subgroup of
$\BA_{K}^{\infty,\mathfrak{p},\times}$ such that
$$i_p^\mathfrak{p}(V)\subset \varsigma_{J\backslash J_0}^{(\vec{n})}h_kg_jU^{\mathfrak{p}}g_j^{-1}h_k^{-1}(\varsigma_{J\backslash J_0}^{(\vec{n})})^{-1}$$ for any $j, k$.
Let $r$ be a positive integer such that $\beta_\mathfrak{p}^r$ is
contained in $V$. Enlarging $r$ if necessary we may assume that the
out-$p$ part $(\beta_\mathfrak{p}^r)^p$ of $\beta_\mathfrak{p}^r$ is
in $\widehat{\mathcal{O}}^\times_{\mathfrak{c}}$.

For $\ell=t_k\varsigma_{J\backslash J_0}^{(\vec{n})} h_k g_j$ let
$\gamma_\ell$ be the element
$\iota_\ell^{\mathfrak{p}}(\beta_{\mathfrak{p}}^r)$ in
$\Gamma^\mathfrak{p}_\ell$. Note that the $\mathfrak{p}$-component
of $\gamma_{\ell}$ is $\beta_{\mathfrak{p},\mathfrak{p}}^r$,
independent of $j$ and $k$. We write $\gamma_\mathfrak{p}$ for it.
The out-$J_0$-part of $\gamma_{\ell}$ is also independent of $j$ and
$k$; we write $\gamma^{J_0}$ for it.

Fix a point $z_0\in \mathcal{H}_\mathfrak{p}$. Put $$I_{ \ell
}:=\int_{\mathbf{P}^1(F_\mathfrak{p})} \log_{
u_\mathfrak{p}^\iota}(\frac{ \iota (x)-\gamma_\mathfrak{p}
z_0}{\iota (x)-z_0})\mu _{c(\ell ,\cdot)}.$$

\begin{lem}\label{lem:key-compute} We have
$$I_{\ell}=\int_{\mathcal{F}} ( \log_{u_\mathfrak{p}^\iota}( z_0) - \log_{u_\mathfrak{p}^\iota} \iota(x) )
 \mu_{c(\ell,\cdot)}.$$
\end{lem}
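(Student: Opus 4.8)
\textbf{Proof plan for Lemma \ref{lem:key-compute}.}

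The plan is to unwind the definition of the integral $I_\ell$ by breaking $\mathbf{P}^1(F_\mathfrak{p})$ into the pieces on which $\gamma_\mathfrak{p}=\beta_{\mathfrak{p},\mathfrak{p}}^r$ acts in a controlled way, and then to use the $\gamma_\ell$-invariance of the harmonic cocycle $c(\ell,\cdot)$ together with the change-of-variables formula of Proposition \ref{prop:auto-change} to telescope the sum. First I would recall that, under the twisted action of $i_\mathfrak{p}(K_\mathfrak{p}^\times)$ on $\mathbf{P}^1(F_\mathfrak{p})=F_\mathfrak{p}\cup\{\infty\}$ introduced in Section \ref{ss:harmonic}, the element $\beta_{\mathfrak{p},\mathfrak{p}}$ acts by multiplication by $u_\mathfrak{p}$ (a non-unit scalar with $\ord_\mathfrak{p}(u_\mathfrak{p})=-h_\mathfrak{p}$), fixing $0$ and $\infty$; hence $\gamma_\mathfrak{p}$ acts by multiplication by $u_\mathfrak{p}^r$. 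The fundamental domain $\mathcal{F}$ for the action of $\gamma_\mathfrak{p}$ on $F_\mathfrak{p}^\times\subset\mathbf{P}^1(F_\mathfrak{p})$ is an annulus of the form $\{x: 0\le \ord_\mathfrak{p}(x)<rh_\mathfrak{p}\}$ (or its $\iota$-image), and $F_\mathfrak{p}^\times=\bigsqcup_{n\in\BZ}\gamma_\mathfrak{p}^n\mathcal{F}$; the two fixed points $0,\infty$ are null sets for $\mu_{c(\ell,\cdot)}$ by Proposition \ref{prop:int-vanish} with $j=0$ (the total mass vanishes).

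The main computation is then the following. On each translate $\gamma_\mathfrak{p}^n\mathcal{F}$ I would substitute $x=\gamma_\mathfrak{p}^n y$ with $y\in\mathcal{F}$. Using Proposition \ref{prop:auto-change} for the element $g=\gamma_\mathfrak{p}^{-n}\in G_\mathfrak{p}$ and the $\gamma_\ell$-invariance of $c(\ell,\cdot)$ (so $\gamma_\ell\star c(\ell,\cdot)=c(\ell,\cdot)$, and the weight-$2$ normalization makes the automorphy factor trivial, cf.\ Remark \ref{rem:measure}), the measure $\mu_{c(\ell,\cdot)}$ restricted to $\gamma_\mathfrak{p}^n\mathcal{F}$ pulls back to $\mu_{c(\ell,\cdot)}$ on $\mathcal{F}$. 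Meanwhile the integrand transforms by the cocycle identity for $\log_{u_\mathfrak{p}^\iota}$: since $\log_{u_\mathfrak{p}^\iota}$ is an additive homomorphism $F_\mathfrak{p}^\times\to\BC_p$ killing $u_\mathfrak{p}$ (hence killing $u_\mathfrak{p}^r$, the scalar by which $\gamma_\mathfrak{p}$ acts), one gets
$$\log_{u_\mathfrak{p}^\iota}\!\Big(\frac{\iota(\gamma_\mathfrak{p}^n y)-\gamma_\mathfrak{p} z_0}{\iota(\gamma_\mathfrak{p}^n y)-z_0}\Big)=\log_{u_\mathfrak{p}^\iota}\!\Big(\frac{\iota(y)-\gamma_\mathfrak{p}^{1-n} z_0}{\iota(y)-\gamma_\mathfrak{p}^{-n} z_0}\Big),$$
so that summing over $n\in\BZ$ the terms telescope. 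Writing $\log_{u_\mathfrak{p}^\iota}\!\big((\iota(y)-\gamma_\mathfrak{p}^{1-n}z_0)/(\iota(y)-\gamma_\mathfrak{p}^{-n}z_0)\big)$ as a difference $F(1-n)-F(-n)$ where $F(m)=\log_{u_\mathfrak{p}^\iota}(\iota(y)-\gamma_\mathfrak{p}^{m}z_0)$, the partial sums collapse; one must then check that the ``boundary'' contributions $F(m)$ as $m\to\pm\infty$ produce exactly $\log_{u_\mathfrak{p}^\iota}(z_0)-\log_{u_\mathfrak{p}^\iota}\iota(y)$. Concretely, as $m\to+\infty$, $\gamma_\mathfrak{p}^m z_0\to 0$ in $\BC_p$ (since $|u_\mathfrak{p}|<1$), so $F(m)\to\log_{u_\mathfrak{p}^\iota}(\iota(y))$ up to a term that is constant in $y$ and is itself a multiple of $\log_{u_\mathfrak{p}^\iota}(u_\mathfrak{p}^{r})=0$; as $m\to-\infty$, $\gamma_\mathfrak{p}^m z_0\to\infty$ and $F(m)=\log_{u_\mathfrak{p}^\iota}(\gamma_\mathfrak{p}^m z_0)+\log_{u_\mathfrak{p}^\iota}(\iota(y)/\gamma_\mathfrak{p}^m z_0-1)\to \log_{u_\mathfrak{p}^\iota}(z_0)$, again using that $\log_{u_\mathfrak{p}^\iota}(\gamma_\mathfrak{p}^m)=0$. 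This identifies the telescoped sum with $\int_{\mathcal{F}}(\log_{u_\mathfrak{p}^\iota}(z_0)-\log_{u_\mathfrak{p}^\iota}\iota(y))\,\mu_{c(\ell,\cdot)}$, as claimed.

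The step I expect to be the main obstacle is making the interchange of the infinite sum over $n$ with the integral rigorous, and controlling the limits $F(\pm\infty)$ at the level of the analytic distribution $\mu_{c(\ell,\cdot)}$ rather than pointwise. Since $\mu_{c(\ell,\cdot)}$ is only a distribution (here actually a measure, as $k_\mathfrak{p}=2$), I would justify this via the boundedness estimate of Proposition \ref{prop:estimate}: the tails of the sum are integrals over small annuli near $0$ and near $\infty$ of functions whose sup-norm decays, so the Riemann-sum definition of the integral (Proposition \ref{prop:har-to-measure}, Corollary \ref{cor:measure}) lets one pass to the limit. Care is also needed because $\log_{u_\mathfrak{p}^\iota}\iota(x)$ is unbounded near the fixed points $0,\infty$; but near each fixed point the relevant integrand $\log_{u_\mathfrak{p}^\iota}((\iota(x)-\gamma_\mathfrak{p}z_0)/(\iota(x)-z_0))$ stays bounded (it tends to $0$), so no divergence actually occurs on the left side, and on the right side the decomposition into $\mathcal{F}$-translates already separates out the singular points. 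Once these convergence points are settled, the identity follows from the formal telescoping described above.
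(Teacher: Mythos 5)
Your argument is essentially the paper's own proof: fold the integral over $\mathbf{P}^1(F_\mathfrak{p})$ onto the fundamental domain $\mathcal{F}$ using the $\Gamma^\mathfrak{p}_\ell$-invariance of $\mu_{c(\ell,\cdot)}$ (Remark \ref{rem:gamma-inv}, trivial automorphy factor in weight $2$), then telescope the resulting sum pointwise, using that $\log_{u_\mathfrak{p}^\iota}$ kills powers of $u_\mathfrak{p}$ and roots of unity; your convergence remarks supply exactly what the paper leaves implicit. One slip to correct: since $\ord_\mathfrak{p}(u_\mathfrak{p})=-h_\mathfrak{p}<0$ we have $|u_\mathfrak{p}|>1$, not $|u_\mathfrak{p}|<1$, so $\gamma_\mathfrak{p}^m z_0\to\infty$ as $m\to+\infty$ and $\gamma_\mathfrak{p}^m z_0\to 0$ as $m\to-\infty$; with the limits as you stated them the telescoped boundary terms would give $\log_{u_\mathfrak{p}^\iota}\iota(x)-\log_{u_\mathfrak{p}^\iota}(z_0)$, the negative of the lemma, whereas with the corrected limits $F(+\infty)=\log_{u_\mathfrak{p}^\iota}(z_0)$ and $F(-\infty)=\log_{u_\mathfrak{p}^\iota}\iota(x)$ one recovers the claimed identity. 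Also, Proposition \ref{prop:int-vanish} gives vanishing of the total mass, not that $\{0,\infty\}$ are null sets; this mis-citation is harmless here because, as you note, the integrand vanishes at the two fixed points, so their neighborhoods contribute nothing in the limit.
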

\begin{proof}  Let
$\mathcal{F}=\mathcal{F}_{\gamma_\mathfrak{p}}$ be a fundament
domain in $\mathbf{P}^1(F_\mathfrak{p})\backslash \{0, \infty\}$ for
the action of $\gamma_\mathfrak{p}$. We may take
$\mathcal{F}=\bigcup\limits_{i=0}^{r-1} \beta_{\mathfrak{p}
,\mathfrak{p}}^{-i}D_\mathfrak{p}$.

By Remark \ref{rem:gamma-inv}, $\mu_{c(\ell,\cdot)}$ is
$\Gamma^\mathfrak{p}_{\ell}$-invariant. So
\begin{eqnarray*}   I_{\ell} &=& \int_{\mathbf{P}^1(F_\mathfrak{p})} \log_{
u_\mathfrak{p}^\iota}(\frac{ \iota (x)-\gamma_\mathfrak{p}
z_0}{\iota (x)-
z_0})\mu _{c(\ell,\cdot)} \\
&=& \int_{\mathcal{F}} \sum_{j=-\infty}^{\infty}
\log_{u_\mathfrak{p}^\iota}(\frac{\iota(\gamma_\mathfrak{p}^j
x)-\gamma_\mathfrak{p} z_0}{\iota (\gamma_\mathfrak{p}^j x)-
z_0})\mu _{c(\ell,\cdot)}
\end{eqnarray*} As $
\mathfrak{i}_\mathfrak{p}i_\mathfrak{p}^{-1}(\gamma_\mathfrak{p})
=\wvec{u_1^r\omega_\mathfrak{p}^{-rh_\mathfrak{p}}}{0}{0}{u_2^r},$
we have {\allowdisplaybreaks
\begin{eqnarray*} && \sum_{j=-\infty}^{\infty}
\log_{u_\mathfrak{p}^\iota}(\frac{\iota( \gamma_\mathfrak{p}^j x)-
 \gamma_\mathfrak{p} z_0}{\iota ( \gamma_\mathfrak{p}^j
x)- z_0}) \hskip 5pt = \hskip 5pt \sum_{j=-\infty}^{\infty}
\log_{u_\mathfrak{p}^\iota}(\frac{\iota(u_\mathfrak{p}^{rj}  x) -
\iota(u_\mathfrak{p}^r) z_0}{\iota(u_\mathfrak{p}^{rj}  x)-z_0}) \\
&=& \sum_{j=-\infty}^{\infty}
\log_{u_\mathfrak{p}^\iota}(\frac{\iota(u_\mathfrak{p}^{r(j-1)} x) -
z_0}{\iota(u_\mathfrak{p}^{rj} x)-z_0}) \hskip 5pt = \hskip 5pt
\lim_{N\rightarrow +\infty}\log_{u_\mathfrak{p}^\iota}
(\frac{\iota(u_\mathfrak{p}^{-r(N+1)}
x)-z_0}{\iota(u_\mathfrak{p}^{rN} x)-z_0}) \\ &=& \lim_{N\rightarrow
+\infty}\log_{u_\mathfrak{p}^\iota}
(\frac{\iota(u_\mathfrak{p}^{-r(N+1)}
x)-z_0}{\iota(x)-\iota(u_\mathfrak{p}^{-rN} )z_0}) \hskip 5pt =
\hskip 5pt
\log_{u_\mathfrak{p}^\iota}z_0-\log_{u_\mathfrak{p}^\iota}\iota(x).
\end{eqnarray*}}

\noindent Thus we obtain the equality in the lemma.
\end{proof}

\begin{lem}\label{lem:fixed} $\mathfrak{i}_p(\varsigma_{J\backslash J_0}^{(\vec{n})})^{-1}
\widehat{\mathbf{v}}_{\mathbf{m}}$ is fixed by
$\mathfrak{i}_p(\gamma_\ell)$.
\end{lem}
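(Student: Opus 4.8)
The plan is to recast the assertion as the single statement that $\mathfrak i_p\!\big(\varsigma_{J\backslash J_0}^{(\vec n)}\,\gamma_\ell\,(\varsigma_{J\backslash J_0}^{(\vec n)})^{-1}\big)$ acts trivially on $\widehat{\mathbf v}_{\mathbf m}$; this is legitimate because $\mathfrak i_p$ is, place by place, the composite of a group isomorphism $B_{\mathfrak p_\sigma}^\times\xrightarrow{\sim}\GL_2(F_{\sigma'})$ with the embedding $\sigma'$, hence a homomorphism. Write $\varsigma=\varsigma_{J\backslash J_0}^{(\vec n)}$. The first observation is that, since $k_{\mathfrak q}=2$ for every $\mathfrak q\in J_0$, the constraint $-\tfrac{k_\sigma}{2}<m_\sigma<\tfrac{k_\sigma}{2}$ forces $m_\sigma=0$ for all $\sigma\in\Sigma_{J_0}$; thus the tensor factors of $\widehat{\mathbf v}_{\mathbf m}=\jmath(\bigotimes_\sigma v_{m_\sigma})$ at the primes of $J_0$ are the constant $v_0=1$, on which $\GL_2$ acts trivially through $\rho_2$. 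Hence any element of $G_p$ all of whose components above $J_p\backslash J_0$ are trivial fixes $\widehat{\mathbf v}_{\mathbf m}$, and it is enough to identify the components of $\varsigma\gamma_\ell\varsigma^{-1}$ above $J_p\backslash J_0$.

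The second step is a componentwise computation of $\varsigma\gamma_\ell\varsigma^{-1}$. Recall $\gamma_\ell=\iota_\ell^{\mathfrak p}(\beta_\mathfrak{p}^r)=\ell_p^{-1}(\beta_\mathfrak{p}^r)_p\,\ell_p$ with $\ell=t_k\varsigma h_kg_j$ and with $\ell_p$ taken to have trivial $\mathfrak p$-component. Since $t_k,g_j$ are trivial at all primes above $p$ and $h_k$ is trivial outside $J_0\backslash\{\mathfrak p\}$, for every prime $\mathfrak q\mid p$ with $\mathfrak q\neq\mathfrak p$ and $\mathfrak q\notin J_0$ the $\mathfrak q$-component of $\ell_p$ equals $\varsigma_{\mathfrak q}$, so the $\varsigma$-conjugation cancels and
\[
(\varsigma\gamma_\ell\varsigma^{-1})_{\mathfrak q}=i_{\mathfrak q}\big((\beta_\mathfrak{p}^r)_{\mathfrak q}\big)\in i_{\mathfrak q}(K_{\mathfrak q}^\times),
\]
a torus element because $\beta_\mathfrak{p}\in K^\times$. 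At $\mathfrak p$ and at the primes of $J_0\backslash\{\mathfrak p\}$ the corresponding components are merely some matrices over the relevant local fields, which by the first step act trivially on $\widehat{\mathbf v}_{\mathbf m}$. Then I would invoke the eigenvector identity established in the proof of Lemma \ref{lem:shift-inft-p}, namely $\rho_{\vec k}(\mathfrak i_p(a_p)^{-1})\widehat{\mathbf v}_{\mathbf m}=\prod_\sigma(\sigma' a_{\mathfrak P_\sigma}/\bar\sigma' a_{\bar{\mathfrak P}_\sigma})^{m_\sigma}\widehat{\mathbf v}_{\mathbf m}$, whose right-hand scalar equals $\widehat\nu_p(a_p)$ once $\nu_p=1$; applying it with $a=\beta_\mathfrak{p}^r$ and using that the components above $J_0$ are immaterial gives
\[
\rho_{\vec k}\big(\mathfrak i_p(\varsigma\gamma_\ell\varsigma^{-1})\big)\widehat{\mathbf v}_{\mathbf m}=\widehat\nu_p\big((\beta_\mathfrak{p}^r)_p\big)^{-1}\,\widehat{\mathbf v}_{\mathbf m}.
\]

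It then remains to see that $\widehat\nu_p\big((\beta_\mathfrak{p}^r)_p\big)=1$. The character $\widehat\nu$ factors through $\mathrm{Gal}(\overline K/K)$, i.e.\ is a character of $K^\times K_\infty^\times\backslash\BA_K^\times$ of conductor $\mathfrak c$ coprime to $p$; in particular it is trivial on $K^\times$, on $K_\infty^\times$, and on $\widehat{\mathcal O}^\times_{\mathfrak c}$. Since $\beta_\mathfrak{p}^r\in K^\times$ and, by the choice of $r$, its out-$p$ part lies in $\widehat{\mathcal O}^\times_{\mathfrak c}$, evaluating $\widehat\nu$ on the principal idele $\beta_\mathfrak{p}^r$ and decomposing into local components forces $\widehat\nu_p\big((\beta_\mathfrak{p}^r)_p\big)=1$. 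Combining with the previous display yields $\rho_{\vec k}(\mathfrak i_p(\varsigma\gamma_\ell\varsigma^{-1}))\widehat{\mathbf v}_{\mathbf m}=\widehat{\mathbf v}_{\mathbf m}$, which is the assertion. I expect the only delicate point to be the bookkeeping in the second step — keeping track of precisely which $\varsigma$-factors cancel at each prime above $p$, and matching the torus action on $\widehat{\mathbf v}_{\mathbf m}$ with the local components $\widehat\nu_{\mathfrak q}$ of $\widehat\nu_p$; everything else is formal.
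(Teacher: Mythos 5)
Your proposal is correct and follows essentially the same route as the paper: triviality of the weight-$2$ action at the primes of $J_0$, identification of the out-$J_0$ part of $\gamma_\ell$ with a $\varsigma$-conjugate of the torus element $(\beta_\mathfrak{p}^r)_p$, the eigenvector identity from Lemma \ref{lem:shift-inft-p}, and finally $\widehat{\nu}_p((\beta_\mathfrak{p}^r)_p)=1$ from triviality of $\widehat{\nu}$ on $K^\times$ and on $\widehat{\mathcal{O}}_\mathfrak{c}^\times$ via the choice of $r$. The only blemish is your claim that $t_k$ is trivial at all primes above $p$ (it need not be at $\mathfrak{q}\in J_p\backslash J_0$), but since $t_{k,\mathfrak{q}}$ and $\beta_{\mathfrak{p},\mathfrak{q}}^r$ both lie in the commutative group $K_\mathfrak{q}^\times$ the conjugation still cancels, so the argument is unaffected.
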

\begin{proof} Since $k_\mathfrak{q}=2$ for each $\mathfrak{q}\in J_0$, $J_0$-part
of $\mathfrak{i}_p(\gamma_\ell)$ acts trivially on $V_{\vec{k}}(E)$.
So,
$$ \mathfrak{i}_p(\gamma_\ell)\mathfrak{i}_p(\varsigma_{J\backslash J_0}^{(\vec{n})})^{-1}
\widehat{\mathbf{v}}_{\mathbf{m}}=\mathfrak{i}_p(\gamma^{J_0})\mathfrak{i}_p(\varsigma_{J\backslash
J_0}^{(\vec{n})})^{-1} \widehat{\mathbf{v}}_{\mathbf{m}} =
\mathfrak{i}_p(\varsigma_{J\backslash J_0}^{(\vec{n})})^{-1}
\mathfrak{i}_p(\beta^{J_0}_{\mathfrak{p},p})^r
\widehat{\mathbf{v}}_{\mathbf{m}} =
\widehat{\nu}(\beta^r_{\mathfrak{p},p})\mathfrak{i}_p(\varsigma_{J\backslash
J_0}^{(\vec{n})})^{-1} \widehat{\mathbf{v}}_{\mathbf{m}}.  $$ Here
$\beta^{J_0}_{\mathfrak{p},p}$ denotes the $J_p\backslash J_0$-part
of $\beta$. But
$\widehat{\nu}(\beta^r_{\mathfrak{p},p})=\widehat{\nu}((\beta_\mathfrak{p}^r)^p)^{-1}=1$
since $(\beta_\mathfrak{p}^r)^p$ is in
$\widehat{O}_{\mathfrak{c}}^\times$ and since $\widehat{\nu}$ is
trivial on $\widehat{O}_{\mathfrak{c}}^\times$. The reader should be
cautious that $(\beta_\mathfrak{p}^r)^p$ denotes the out-$p$ part of
$\beta_\mathfrak{p}^r$, not the $p$-th power of
$\beta_\mathfrak{p}^r$.
\end{proof}

For a choice $z'_0$ different from $z_0$, by Lemma \ref{lem:lambda}
(\ref{it:lambda-c}) there exists   $v\in L_{\vec{k}}(E)$ such that
\begin{eqnarray*} (\gamma-1)v &=& \int_{\mathbf{P}^1(F_\mathfrak{p})} \log_{
u_\mathfrak{p}^\iota}(\frac{ \iota (x)-\gamma z_0}{\iota (x)-
z_0})\mu _{c(\ell,\cdot)}- \int_{\mathbf{P}^1(F_\mathfrak{p})}
\log_{ u_\mathfrak{p}^\iota}(\frac{ \iota (x)-\gamma z'_0}{\iota
(x)- z'_0})\mu
_{c(\ell,\cdot)} \\
&=&
(\log_{u_\mathfrak{p}^\iota}z_0-\log_{u_\mathfrak{p}^\iota}(z'_0))\int
1_\mathcal{F} \mu _{c(\ell,\cdot)}.
\end{eqnarray*} Thus by Lemma \ref{lem:fixed} we have
$$ \langle \mathfrak{i}_p(\varsigma_{J\backslash J_0}^{(\vec{n})})^{-1}
\widehat{\mathbf{v}}_{\mathbf{m}} ,\int 1_\mathcal{F} \mu
_{c(\ell,\cdot)}\rangle =0 .$$ Applying Proposition
\ref{prop:vary-J} we obtain
$$   \int 1_{\mathcal{F}}\cdot
 { _\mathfrak{p}}\ell_{\iota }\cdot \xi \: \widetilde{\mu}_J  = - \frac{1}{\jmath(\prod_{\mathfrak{q}\in
J\backslash J_0}\alpha_\mathfrak{q}^{n_\mathfrak{q}})}\sum_{j,k}
b_jd_k\widehat{\nu}(t_k)\langle
\mathfrak{i}_p(\varsigma_{J\backslash J_0}^{(\vec{n})})^{-1}
\widehat{\mathbf{v}}_{\mathbf{m}}, I_{t_k \varsigma_{J\backslash
J_0}^{(\vec{n})}h_kg_j} \rangle.
$$

\begin{lem}\label{lem:repeat-L-inv} For each $\ell=t_k
\varsigma_{J\backslash J_0}^{(\vec{n})}h_kg_j$, $$\langle
\mathfrak{i}_p(\varsigma_{J\backslash J_0}^{(\vec{n})}))^{-1}
\widehat{\mathbf{v}}_{\mathbf{m}},I_\ell\rangle
=\mathcal{L}^\Tei_{\iota, u_\mathfrak{p}^\iota}(f)\langle
\mathfrak{i}_p(\varsigma_{J\backslash J_0}^{(\vec{n})}))^{-1}
\widehat{\mathbf{v}}_{\mathbf{m}},
\delta_{\Gamma^\mathfrak{p}_\ell}c(\ell,\cdot)(\gamma_\ell)\rangle.$$
\end{lem}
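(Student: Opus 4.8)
The plan is to unwind both sides of the asserted identity into statements about the cocycle $\lambda_{z_0,c(\ell,\cdot);u_\mathfrak{p}^\iota}$ attached in Section \ref{ss:coleman} and the Schneider cocycle $\delta_{\Gamma_\ell^\mathfrak{p}}c(\ell,\cdot)$, and then invoke the definition of the $L$-invariant of Teitelbaum type. First I would observe that, by construction, $I_\ell$ is nothing but the $\iota$-component of the Coleman cochain evaluated at $\gamma_\ell$: comparing \eqref{eq:int-lambda} with the definition of $I_\ell$ (and using Lemma \ref{lem:key-compute} to rewrite the integrand), we have, for all $P\in V_{\iota_\mathfrak{p}}(E)$, $Q\in V^{\iota_\mathfrak{p}}(E)$,
\[
\langle \lambda_{z_0,c(\ell,\cdot);u_\mathfrak{p}^\iota}(\gamma_\ell),\,P\otimes Q\rangle
= \langle \int P(t)\log_{u_\mathfrak{p}^\iota}\!\Big(\tfrac{t-\gamma_{\mathfrak{p}}z_0}{t-z_0}\Big)\mu^{\iota_\mathfrak{p}}_{c(\ell,\cdot)},\,Q\rangle,
\]
so that pairing against $\widehat{\mathbf{v}}_\mathbf{m}$ (suitably twisted by $\mathfrak{i}_p(\varsigma_{J\backslash J_0}^{(\vec n)})^{-1}$) recovers $\langle \mathfrak{i}_p(\varsigma_{J\backslash J_0}^{(\vec n)})^{-1}\widehat{\mathbf{v}}_\mathbf{m},I_\ell\rangle$ up to taking the appropriate $\mathbf{v}_\mathbf{m}$-component. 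The point is that $I_\ell$ is the value at $\gamma_\ell$ of (the $\iota$-slice of) the cocycle representing $\kappa^{\mathrm{col},\iota_\mathfrak{p}}_{c;u_\mathfrak{p}^\iota}(\ell)$, while $\delta_{\Gamma_\ell^\mathfrak{p}}c(\ell,\cdot)(\gamma_\ell)$ is the value at $\gamma_\ell$ of a cocycle representing $\kappa^{\mathrm{sch}}_c(\ell)$.

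Next I would invoke the key structural fact from Section \ref{ss:coleman}: since $\widehat{\varphi^{\infty,\dagger_J}}$ lies in the $f$-isotypic component for $\mathbb{T}^p_U$, both $\kappa^{\mathrm{sch}}_c$ and $\kappa^{\mathrm{col},\iota_\mathfrak{p}}_{c;u_\mathfrak{p}^\iota}$ lie in the one-dimensional $f$-component of $MH^1_\mathfrak{p}(U^\mathfrak{p},L_{\vec k}(E))$ (Proposition \ref{prop:schneider}, Proposition \ref{prop:coleman}), and by Definition \ref{def:Tei},
\[
\kappa^{\mathrm{col},\iota_\mathfrak{p}}_{c;u_\mathfrak{p}^\iota} = \mathcal{L}^\Tei_{\iota,u_\mathfrak{p}^\iota}(f)\,\kappa^{\mathrm{sch}}_c.
\]
Evaluating at $g=\ell$ (or rather at the appropriate element of $G^{\infty,\mathfrak{p}}$ with this $\Gamma_\ell^\mathfrak{p}$), this is an equality in $H^1(\Gamma_\ell^\mathfrak{p},L_{\vec k}(E))$; so the cocycles $\lambda_{z_0,c(\ell,\cdot);u_\mathfrak{p}^\iota}$ and $\mathcal{L}^\Tei_{\iota,u_\mathfrak{p}^\iota}(f)\,\delta_{\Gamma_\ell^\mathfrak{p}}c(\ell,\cdot)$ differ by a coboundary $(\gamma-1)v$ for some $v\in L_{\vec k}(E)$. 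The remaining task is to check that this coboundary contributes nothing after pairing against $\mathfrak{i}_p(\varsigma_{J\backslash J_0}^{(\vec n)})^{-1}\widehat{\mathbf{v}}_\mathbf{m}$: this is exactly Lemma \ref{lem:fixed}, which says $\mathfrak{i}_p(\varsigma_{J\backslash J_0}^{(\vec n)})^{-1}\widehat{\mathbf{v}}_\mathbf{m}$ is fixed by $\mathfrak{i}_p(\gamma_\ell)$, so that $\langle \mathfrak{i}_p(\varsigma_{J\backslash J_0}^{(\vec n)})^{-1}\widehat{\mathbf{v}}_\mathbf{m},(\gamma_\ell-1)v\rangle = \langle (\mathfrak{i}_p(\gamma_\ell)^{-1}-1)\mathfrak{i}_p(\varsigma_{J\backslash J_0}^{(\vec n)})^{-1}\widehat{\mathbf{v}}_\mathbf{m},\check\rho_{\vec k}(\gamma_\ell)v\rangle = 0$. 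Combining these gives the claimed identity.

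The main obstacle I anticipate is bookkeeping rather than conceptual: one must be careful that the element of $G^{\infty,\mathfrak{p}}$ at which we evaluate $\kappa^{\mathrm{sch}}_c$ and $\kappa^{\mathrm{col},\iota_\mathfrak{p}}_{c;u_\mathfrak{p}^\iota}$ really has isotropy group $\Gamma_\ell^\mathfrak{p}=\iota_\ell^\mathfrak{p}(\widetilde\Gamma_\ell^\mathfrak{p})$ and that $\gamma_\ell = \iota_\ell^\mathfrak{p}(\beta_\mathfrak{p}^r)$ indeed lies in it (this is the reason $V$ and $r$ were chosen as they were, just before the lemma), and that the Coleman cocycle $\lambda_{z_0,c(\ell,\cdot);u_\mathfrak{p}^\iota}$ is built from the $\iota$-embedding $u_\mathfrak{p}^\iota$ — matching the remark after Proposition \ref{prop:deriv-induction} that ${}_\mathfrak{p}\log_\iota = \log^\iota_{u_\mathfrak{p}^\iota}$, so that the logarithm branch appearing in $I_\ell$ is precisely the one defining $\mathcal{L}^\Tei_{\iota,u_\mathfrak{p}^\iota}(f)$. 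Once the identifications of $I_\ell$ with the Coleman cocycle value and of $\delta_{\Gamma_\ell^\mathfrak{p}}c(\ell,\cdot)(\gamma_\ell)$ with the Schneider cocycle value are pinned down, the rest is a two-line application of Definition \ref{def:Tei} and Lemma \ref{lem:fixed}.
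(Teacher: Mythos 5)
Your proposal is correct and follows essentially the same route as the paper: the paper's proof likewise cites the identity $\kappa^{\mathrm{col},\iota_{\mathfrak{p}}}_{c;u}=\mathcal{L}^\Tei_{\iota_{\mathfrak{p}};u}(f)\,\kappa^{\mathrm{sch}}_c$ from Section \ref{ss:coleman} to write $I_\ell$ as $\mathcal{L}^\Tei_{\iota,u_\mathfrak{p}^\iota}(f)\,\delta_{\Gamma^\mathfrak{p}_\ell}c(\ell,\cdot)(\gamma_\ell)$ plus a coboundary $(\mathfrak{i}_p(\gamma_\ell)-1)y$, and then kills that term by pairing against the $\mathfrak{i}_p(\gamma_\ell)$-fixed vector via Lemma \ref{lem:fixed}. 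Your extra bookkeeping (identifying $I_\ell$ with the Coleman cocycle value and checking the adjointness in the vanishing step) just makes explicit what the paper leaves implicit.
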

\begin{proof} In Section \ref{ss:coleman} we proved that there exists $y\in L^{\iota}(E)$ such that
$$ I_{\ell} = \mathcal{L}_{\iota, u_\mathfrak{p}}(f)\delta_{\Gamma^\mathfrak{p}_\ell}c(\ell,\cdot)(\gamma_\ell) + (\mathfrak{i}_p(\gamma_\ell)-1)y. $$
By Lemma \ref{lem:fixed} we have $\langle
\mathfrak{i}_p(\varsigma_{J'}^{(\vec{n})})^{-1}
\widehat{\mathbf{v}}_{\mathbf{m}}, (\mathfrak{i}_p(\gamma_\ell)-1)y
\rangle = \langle
(\mathfrak{i}_p(\gamma_\ell)-1)\mathfrak{i}_p(\varsigma_{J'}^{(\vec{n})})^{-1}\widehat{\mathbf{v}}_{\mathbf{m}},
y\rangle=0 $.
\end{proof}

\noindent{\it Proof of Proposition \ref{prop:deriv-induction}.}
Write $\ell=t_k \varsigma_{J\backslash J_0}^{(\vec{n})}h_kg_j$. For
any vertex $v$ in the tree $\mathcal{T}_{\mathfrak{p}}$,
$$\delta_{\Gamma_\ell}c(\ell,\cdot)(\gamma_\ell)= \sum_{e\in v\rightarrow \gamma_\mathfrak{p} v} c(\ell,e).
$$ We may take $v= \omega_\mathfrak{p}^{-rh_\mathfrak{p}}\mathcal{O}_{F_\mathfrak{p}}\oplus
\mathcal{O}_{F_\mathfrak{p}}$. Then the edges in the chain
$v\rightarrow \gamma_\mathfrak{p} v$ are
$$i_\mathfrak{p}((\omega_{\mathfrak{p}}^{i},1))
\bar{e}_{\mathfrak{p},0} \hskip 10pt (1\leq i\leq 2rh_\mathfrak{p}).
$$ Thus when $\ell$ runs over the set $\{t_k \varsigma_{J\backslash
J_0}^{(\vec{n})}h_kg_j\}_{j,k,t}$, the sum
$$\frac{1}{\jmath(\prod_{\mathfrak{q}\in
J\backslash J_0}\alpha_\mathfrak{q}^{n_\mathfrak{q}})}\sum_{j,k}
b_jd_k\widehat{\nu}(t_k)\langle
\mathfrak{i}_p(\varsigma_{J'}^{(\vec{n})}))^{-1}
\widehat{\mathbf{v}}_{\mathbf{m}}, \delta_{\Gamma^\mathfrak{p}_{t_k
\varsigma_{J\backslash J_0}^{(\vec{n})}h_kg_j}}c(t_k
\varsigma_{J\backslash J_0}^{(\vec{n})}h_kg_j,\cdot)(\gamma_{t_k
\varsigma_{J\backslash J_0}^{(\vec{n})}h_kg_j})\rangle$$ is exactly
$-\int (\sum\limits_{i=1}^{rh_\mathfrak{p}}1_{
\omega_\mathfrak{p}^{i} B_\mathfrak{p}})
 \xi \:\widetilde{\mu}_{J}$. As
$\beta_\mathfrak{p}^*(\xi)=\xi$, we have
$$ \beta_\mathfrak{p}^*(\sum_{i=(s-1)h_\mathfrak{p}+1}^{sh_\mathfrak{p}}1_{
\omega_\mathfrak{p}^{i} B_\mathfrak{p}}
 \cdot \xi) = \sum_{i=sh_\mathfrak{p}+1}^{(s+1)h_\mathfrak{p}} 1_{\omega_\mathfrak{p}^{i}
B_\mathfrak{p}}
 \cdot \xi $$ and thus
$$\int
(\sum_{i=1}^{rh_\mathfrak{p}} 1_{ \omega_\mathfrak{p}^{i}
B_\mathfrak{p}})
 \xi \:\widetilde{\mu}_{J}=r\int (\sum_{i=1}^{h_\mathfrak{p}} 1_{ \omega_\mathfrak{p}^{i} B_\mathfrak{p}}) \xi
\:\widetilde{\mu}_{J}.$$ Similarly,
$$\int
1_{\mathcal{F}}\cdot  { _\mathfrak{p}}\log_{\iota } \cdot
 \xi \:\widetilde{\mu}_{J}=r\int 1_{D_\mathfrak{p}}\cdot  { _\mathfrak{p}}\log_{\iota } \cdot
 \xi
\:\widetilde{\mu}_{J}.$$   Now Proposition
\ref{prop:deriv-induction} follows from Lemma
\ref{lem:repeat-L-inv}. \qed

\subsection{Cohomology of $\Delta$}

\begin{lem}
\begin{enumerate}
\item\label{it:strict-a} For each $\gamma\in \Delta_{J_2}$, $\gamma^*-1$ is strict on
$\mathcal{C}^{J_2,\leq N}$ for the given topology.
\item\label{it:strict-b} $\gamma^*-1: \mathcal{C}^{J_2,\leq N+1}\rightarrow \mathcal{C}^{J_2,\leq
N}$ is surjective.
\end{enumerate}\end{lem}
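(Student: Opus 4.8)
The plan is to prove both assertions by induction on the structure of $\mathcal{C}^{J_2,\leq N}$, reducing everything to the case of a single copy of $\bar{C}^{J_2}$ with a $\Delta$-action coming from multiplication by the $\beta_\mathfrak{p}$'s. The first observation is that $\mathcal{C}^{J_2,\leq N}$ is a finite direct sum of pieces of the shape $\left(\bar{C}^{J_2}(J_{1,1},J_{1,2})\right)^{\Delta_{J_2}}\cdot\ell^{\vec m}$, and that the operator $\gamma^*-1$ for $\gamma\in\Delta_{J_2}$ acts on each summand compatibly: since the functions in $\bar{C}^{J_2}(J_{1,1},J_{1,2})$ are already $\Delta_{J_2}$-invariant by construction, $\gamma^*-1$ acts only through the logarithm factor $\ell^{\vec m}$, and $\gamma^*\ell^{\vec m}-\ell^{\vec m}$ is, by the product rule (\ref{eq:1-gamma}) applied iteratively — exactly as in Lemma \ref{lem:1-gamma} — a linear combination of $\ell^{\vec m'}$ with $|\vec m'|<|\vec m|$ and with coefficients that are constants (values $\ell_i(\beta_\mathfrak{q})$). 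So $\gamma^*-1$ is lower-triangular with respect to the filtration by $|\vec m|$, with zero on the top-degree part. This already forces the image and kernel to be well-understood, but one needs to be careful about what ``strict'' means and to get surjectivity of the degree-raising map.

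For assertion (\ref{it:strict-a}), strictness means the quotient topology on $(\gamma^*-1)\mathcal{C}^{J_2,\leq N}$ coming from $\mathcal{C}^{J_2,\leq N}$ agrees with the subspace topology from $\mathcal{C}^{J_2,\leq N}$, equivalently that $\gamma^*-1$ is open onto its image. Here I would use that each $\bar{C}^{J_2}(J_{1,1},J_{1,2})$ is a $p$-adic Banach space (completion of compactly supported functions for the Gauss norm), so $\mathcal{C}^{J_2,\leq N}$ is a finite direct sum of Banach spaces, hence itself Banach. Then the standard fact is that a continuous $\mathbb{Z}_p$-linear (indeed $E$-linear) map between Banach spaces over a discretely valued field with closed image is automatically strict (open mapping theorem). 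So the real content is that $(\gamma^*-1)\mathcal{C}^{J_2,\leq N}$ is closed. I would establish this by exhibiting a continuous $E$-linear splitting: because $\gamma^*-1$ is lower-triangular and nilpotent modulo the invariants, one can write down an explicit ``antiderivative'' operator on the logarithm factors. Concretely, fix $\mathfrak{p}\in J_2$ with $\ell_i(\beta_\mathfrak{p})\neq 0$ for the relevant $i$ (such exists since $\ell$ is a genuine logarithm direction) and integrate in that variable; iterating lowers $|\vec m|$ and terminates. The image is then the kernel of a continuous projection, hence closed.

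For assertion (\ref{it:strict-b}), given $g\in\mathcal{C}^{J_2,\leq N}$ I want $G\in\mathcal{C}^{J_2,\leq N+1}$ with $(\gamma^*-1)G=g$. Using the triangular structure, write $g=\sum_{\vec m}g_{\vec m}\ell^{\vec m}$ with $g_{\vec m}$ in the appropriate $\Delta_{J_2}$-invariant Banach space. I would construct $G$ degree by degree starting from the highest $|\vec m|=N$ appearing in $g$: there one solves $(\gamma^*\ell^{\vec m'}-\ell^{\vec m'})\cdot(\text{leading term})=g_N\ell^N$ by raising the multidegree by one in a chosen coordinate — the coefficient produced is a nonzero multiple of some $\ell_i(\beta_\mathfrak{p})$, which is invertible in $E$ — then subtract and descend to lower degrees. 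The termination is automatic since we only ever need degrees $\le N+1$; the point of allowing $\leq N+1$ on the source is precisely that ``inverting $\gamma^*-1$'' on a weight-$N$ log monomial costs one degree. The coefficients and the invariant functions $g_{\vec m}$ are carried along unchanged, so $G$ lands in $\mathcal{C}^{J_2,\leq N+1}$.

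The main obstacle I anticipate is the bookkeeping in the combination of (\ref{eq:1-gamma}) with the multi-index structure over the index set $I_{J_{1,2}}$ — one must check that the ``lowering'' terms $(1-\beta_\mathfrak{p}^*)1_{Z}$ coming from the characteristic functions (as in the proof of Lemma \ref{lem:int-0}) do not interfere, i.e. that on the subspaces $\bar{C}^{J_2}(J_{1,1},J_{1,2})$ the functions really are $\beta_\mathfrak{p}^*$-invariant for $\mathfrak{p}\in J_2$, so that $\gamma^*-1$ acts purely on the $\ell^{\vec m}$ factor and the triangular-nilpotent picture is exact rather than merely approximate. Once that is pinned down, both statements follow from the open mapping theorem and an explicit finite recursion on $|\vec m|$.
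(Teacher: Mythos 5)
Your structural observation is the right one and matches the paper's: because the coefficient functions are $\Delta_{J_2}$-invariant, $\gamma^*-1$ acts only through the shifts $\ell_i\mapsto\ell_i+\ell_i(\gamma)$, i.e.\ as a matrix with entries in $\BC_p$ that is triangular and nilpotent for the filtration by total log-degree. Your reduction of strictness to closedness of the image via the open mapping theorem is also fine (the base field $\BC_p$ is not discretely valued, but the open mapping theorem for nonarchimedean Banach spaces does not need that). The gap is in how you propose to prove closedness: you ``fix $\mathfrak{p}\in J_2$ with $\ell_i(\beta_\mathfrak{p})\neq 0$ for the relevant $i$'' and build an antiderivative by dividing by this constant. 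Nothing guarantees such nonvanishing: the relevant constants are $\ell_i(\gamma)$ with $i\in I_{J_0\backslash J_2}$ (logarithms at places \emph{outside} $J_2$ evaluated at elements of $\Delta_{J_2}$), and they can vanish --- for $\gamma$ trivial one even has $\gamma^*-1=0$, so no splitting exists although strictness holds trivially. Moreover, even when the constants are nonzero, your splitting operator must land back in $\mathcal{C}^{J_2,\leq N}$, whose summands carry support constraints (a coefficient multiplying a monomial involving the place $\mathfrak{q}$ must lie in $\bar{C}^{J_2}(J_{1,1},J_{1,2})$ with $\mathfrak{q}\in J_{1,2}$, hence vanish along $x_\mathfrak{q}=0$); ``integrating in that variable'' does not respect this. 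The paper avoids all of this: it embeds $\mathcal{C}^{J_2,\leq N}$ into the free module $\widetilde{\mathcal{C}}^{J_2,\leq N}=\bigoplus_{\vec m}(\bar{C}^{J_2})^{\Delta_{J_2}}\cdot\ell^{\vec m}$, notes that $\gamma^*-1$ is given there by a scalar matrix, so strictness is pure finite-dimensional linear algebra over $\BC_p$ (no nonvanishing, no splitting), and then passes to the closed subspace $\mathcal{C}^{J_2,\leq N}$.

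For part (b) the same two issues are fatal to your sketch, and here they are not mere bookkeeping. (i) Your recursion divides by $\ell_i(\gamma)$, again without justification. (ii) Raising the multidegree in a chosen coordinate $\mathfrak{q}$ forces the new coefficient into $\bar{C}^{J_2}(\cdot,J_{1,2}\cup\{\mathfrak{q}\})$, i.e.\ it must vanish along $\{x_\mathfrak{q}=0\}$; but the coefficients of the element $g$ you are trying to hit need not satisfy this --- most visibly its degree-zero component, which is an arbitrary $\Delta_{J_2}$-invariant function on the whole space $F_{J_0\backslash J_2}\times(\cdots)$, so the very first step of the recursion cannot be performed as described. (iii) The expansion $(\gamma^*-1)\ell^{\vec m+e_{i_0}}$ contributes, in each total degree, monomials other than $\ell^{\vec m}$ (exponent raised at $i_0$ and lowered at some $j\neq i_0$), so one has to solve a linear system among all monomials of a given degree, not divide by a single constant as in (\ref{eq:1-gamma}) applied once. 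You flag the support-compatibility question at the end as something ``to be pinned down'', but it is precisely the content of assertion (b) (which the paper omits as easy); as written, your argument does not establish it.
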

\begin{proof}
We consider the big set
$$ \widetilde{\mathcal{C}}^{J_2,\leq N}:=\bigoplus_{\tiny\begin{array}{c}\vec{m}:I_{J_0\backslash J_2}\text{-tuple}, \\ |\vec{m}|\leq N \end{array}}
(\bar{C}^{J_2})^{\Delta_{J_2}}\cdot\ell^{\vec{m}}
$$ endowed  with the product
topology. The map
$$\gamma^*-1: \widetilde{\mathcal{C}}^{J_2,\leq N}\rightarrow \widetilde{\mathcal{C}}^{J_2,\leq N}$$ is a homomorphism of finite rank
$(\bar{C}^{J_2})^{\Delta_{J_2}} $-module, and the matrix of
$\gamma^*-1$ with respect to the basis $$\{\ell^{\vec{m}}: \vec{m}
\text{ is an }I_{J_0\backslash J_2}\text{-tuple},  |\vec{m}|\leq N
\}$$ has entries in $\BC_p$. Thus $\gamma^*-1$ is strict.

When $J'\supset J_{1,2}$, $\bar{C}^{J_2}(J_{1,1},
J_{1,2})^{\Delta_{J_2}} $ is  closed in
$\bar{C}^{J_2}((J_0\backslash J_2)\backslash J', J')^{\Delta_{J_2}}
$. Thus $\mathcal{C}^{J_2,\leq N}$ is  closed in
$\widetilde{\mathcal{C}}^{J_2,\leq N}$. It follows that $\gamma^*-1:
\mathcal{C}^{J_2,\leq N} \rightarrow \mathcal{C}^{J_2,\leq N}$ is
strict. This proves (\ref{it:strict-a}).

The proof of (\ref{it:strict-b}) is easy and omitted.
\end{proof}

\begin{rem} $\widetilde{\mathcal{C}}^{J_2,\leq N}$ is not a subset of
$\bar{C}^{J_2}$.
\end{rem}

Let $\mathcal{D}^{J_2,\leq N}$ and $\Dist^{J_2}$ be the dual spaces
of $\mathcal{C}^{J_2,\leq N}$ and $\bar{C}^{J_2}$ respectively. The
natural map $\mathcal{C}^{J_2,\leq N}\rightarrow \bar{C}^{J_2}$ is
continuous and $\Delta$-equivariant. So by restricting we obtain a
$\Delta$-equivariant continuous map $\Dist^{J_2}\rightarrow
\mathcal{D}^{J_2,\leq N}$.

If $J_3$ is a subset of $J_0$ disjoint from $J_2$, then we have a
$\Delta$-equivariant map
$$ \mathcal{C}_{J_3}^{\leq M}\times \mathcal{D}^{J_2,\leq M+N}  \rightarrow \mathcal{D}^{J_2\cup J_3, \leq N}.
$$ We derive from it a pairing
$$ H^i(\Delta, \mathcal{C}_{J_3}^{\leq M}) \times H^j(\Delta, \mathcal{D}^{J_2,\leq M+N}) \rightarrow H^{i+j}(\Delta, \mathcal{D}^{J_2\cup J_3, \leq N})
.$$ Similarly, we have a $\Delta$-equivariant map
$$ \mathcal{C}_{J_3}^{\leq M}\times \mathcal{C}_{J_2}^{\leq N}  \rightarrow \mathcal{C}_{J_2\cup J_3}^{ \leq M+N}.
$$ We derive from it a pairing
$$ H^i(\Delta, \mathcal{C}_{J_3}^{\leq M}) \times H^j(\Delta, \mathcal{C}_{J_2}^{\leq N}) \rightarrow H^{i+j}(\Delta, \mathcal{C}_{J_2\cup J_3}^{ \leq M+N})
.$$

For each $\mathfrak{p}\in J_0$ let $\ord_\mathfrak{p}$ be the
function on $ F_{\mathfrak{p}}^\times$ such that
$\mathrm{ord}_\mathfrak{p}(u\omega_\mathfrak{p}^n)=n $ for every
$u\in \mathcal{O}_{F_{\mathfrak{p} } }^\times $. We attach to
$\mathrm{ord}_\mathfrak{p}$ the $1$-cocycle on $\Delta$ with values
in $\mathcal{C}_\mathfrak{p}^{\leq 1}$: For each $\gamma\in \Delta$
we put
$$ c_{\mathfrak{p},\ord}(\gamma)=(\gamma^*-1)(-\mathrm{ord}_\mathfrak{p}\cdot 1_{\mathcal{O}_\mathfrak{p}}). $$
For $l\in \LOG_J$ we define $c_{\mathfrak{p},l}$ by $$
c_{\mathfrak{p},l}(\gamma)=(\gamma^*-1)(-{_\mathfrak{p}}l\cdot
1_{\mathcal{O}_\mathfrak{p}}) .$$ When $l=\log_\sigma$
($\sigma\in\Sigma_J$), we also write $c_{\mathfrak{p},\sigma,\log}$
for $c_{\mathfrak{p},\log_\sigma}$. If
$l=\sum_{\sigma\in\Sigma_J}s_\sigma \log_\sigma$, then
$$c_{\mathfrak{p},l}=\sum_{\sigma\in \Sigma_J}s_\sigma c_{\mathfrak{p},\sigma,\log}.$$

We have
$$ c_{\mathfrak{p},\ord}(\beta_\mathfrak{q})
= \left \{ \begin{array}{cl} \sum\limits_{i=1}^{h_\mathfrak{p}} 1_{\omega_\mathfrak{p}^i B_\mathfrak{p}} & \text{if }\mathfrak{q}=\mathfrak{p} \\
0 & \text{if }\mathfrak{q}\neq \mathfrak{p}.
\end{array} \right.
$$ When $\sigma\in \Sigma_\mathfrak{p}$, $$ c_{\mathfrak{p},\sigma,\log}(\beta_\mathfrak{q})
= \left \{ \begin{array}{cl}{ _\mathfrak{p}}\log_\iota \cdot 1_{D_\mathfrak{p}} & \text{if }\mathfrak{q}=\mathfrak{p} \\
 -{ _\mathfrak{p}}\log_\iota(\beta_\mathfrak{q})\cdot 1_{B_\mathfrak{p}} & \text{if }\mathfrak{q}\neq
  \mathfrak{p}.
\end{array} \right.
$$ When $\sigma\notin \Sigma_\mathfrak{p}$,
\begin{equation}\label{eq:trivial-ord} c_{\mathfrak{p},\sigma,\log}=
{_\mathfrak{p}}\log_\sigma(\omega_\mathfrak{p})c_{\mathfrak{p},\ord}.
\end{equation}
Hence $[c_{\mathfrak{p},\ord}]$ and $[c_{\mathfrak{p},\iota,\log}]$
($l\in \LOG_J$) are all in $H^1(\Delta,
\mathcal{C}_\mathfrak{p}^{\leq 1})$.

The $\Delta$-invariant distribution $\widetilde{\mu}_J$ induces an
element $[\widetilde{\mu}_J]$ in $H^0(\Delta,
\mathrm{Dist}^{\emptyset})$. Consider $[c_{\mathfrak{p},\ord}]$ and
$[c_{\mathfrak{p}, l}]$ as elements in $H^1(\Delta,
\mathcal{C}_\mathfrak{p}^{\leq N})$. Then we obtain elements
$[c_{\mathfrak{p},\ord}] \cup [\widetilde{\mu}_J]$ and
$[c_{\mathfrak{p},l}] \cup [\widetilde{\mu}_J]$ in $H^1(\Delta,
\mathcal{D}^{\mathfrak{p},\leq N} )$.


\begin{prop}\label{prop:coh-1} If $\mathfrak{p}\in J_0$ and if $\sigma\in\Sigma_\mathfrak{p}$, then for each $N\geq 1$
we have $$ [c_{\mathfrak{p},\sigma,\log}] \cup [\widetilde{\mu}_J] =
\mathcal{L}^\Tei_{\sigma, u_\mathfrak{p}^{\sigma}}(f)
[c_{\mathfrak{p},\ord}] \cup [\widetilde{\mu}_J] $$ in $H^1(\Delta,
\mathcal{D}^{\mathfrak{p},\leq N} )$.
\end{prop}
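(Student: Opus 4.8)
The plan is to reduce the cohomological identity in $H^1(\Delta,\mathcal{D}^{\mathfrak{p},\leq N})$ to a pointwise statement about the $\Delta$-invariant distribution $\widetilde{\mu}_J$, and then invoke Proposition \ref{prop:deriv-induction}. Since $\Delta$ is free abelian with generators $\beta_\mathfrak{q}$ ($\mathfrak{q}\in J_0$), a class in $H^1(\Delta,M)$ for any $\Delta$-module $M$ is determined by the cocycle values at the generators, modulo coboundaries; and since $\widetilde{\mu}_J$ is $\Delta$-invariant, the cup product $[c]\cup[\widetilde{\mu}_J]$ is represented by the $1$-cocycle $\gamma\mapsto c(\gamma)\cdot\widetilde{\mu}_J$ (pairing $\mathcal{C}_\mathfrak{p}^{\leq N}$ with $\mathrm{Dist}^{\emptyset}$ into $\mathcal{D}^{\mathfrak{p},\leq N}$). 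So it suffices to show that the two cocycles $\gamma\mapsto c_{\mathfrak{p},\sigma,\log}(\gamma)\cdot\widetilde{\mu}_J$ and $\gamma\mapsto \mathcal{L}^\Tei_{\sigma,u_\mathfrak{p}^\sigma}(f)\,c_{\mathfrak{p},\ord}(\gamma)\cdot\widetilde{\mu}_J$ differ by a coboundary in $\mathcal{D}^{\mathfrak{p},\leq N}$.

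First I would record the explicit generator values computed right before the statement: $c_{\mathfrak{p},\ord}(\beta_\mathfrak{p})=\sum_{i=1}^{h_\mathfrak{p}}1_{\omega_\mathfrak{p}^iB_\mathfrak{p}}$ and $c_{\mathfrak{p},\ord}(\beta_\mathfrak{q})=0$ for $\mathfrak{q}\neq\mathfrak{p}$; while $c_{\mathfrak{p},\sigma,\log}(\beta_\mathfrak{p})={_\mathfrak{p}}\log_\sigma\cdot 1_{D_\mathfrak{p}}$ and $c_{\mathfrak{p},\sigma,\log}(\beta_\mathfrak{q})=-{_\mathfrak{p}}\log_\sigma(\beta_\mathfrak{q})\cdot 1_{B_\mathfrak{p}}$ for $\mathfrak{q}\neq\mathfrak{p}$. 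The plan is to test the difference cocycle against elements $\xi\in(\bar C^{\mathfrak{p}})^{\Delta_\mathfrak{p}}$. Evaluating at $\beta_\mathfrak{p}$: by Proposition \ref{prop:deriv-induction}, $\int {_\mathfrak{p}}\log_\sigma\cdot 1_{D_\mathfrak{p}}\cdot\xi\,\widetilde{\mu}_J=\mathcal{L}^\Tei_{\sigma,u_\mathfrak{p}^\sigma}(f)\int(\sum_{i=1}^{h_\mathfrak{p}}1_{\omega_\mathfrak{p}^iB_\mathfrak{p}})\xi\,\widetilde{\mu}_J$, so the two cocycles agree on the generator $\beta_\mathfrak{p}$ when paired against $\widetilde{\mu}_J$. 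Evaluating at $\beta_\mathfrak{q}$ with $\mathfrak{q}\neq\mathfrak{p}$: the second cocycle gives $0$, while the first gives $-{_\mathfrak{p}}\log_\sigma(\beta_\mathfrak{q})\,1_{B_\mathfrak{p}}\cdot\widetilde{\mu}_J$. Here I would use that $(1-\beta_{\mathfrak{p},\mathfrak{p}}^*)1_{B_\mathfrak{p}}=1_{D_\mathfrak{p}}$, so that $1_{B_\mathfrak{p}}\cdot\widetilde{\mu}_J$ differs from a $\Delta$-coboundary applied to a distribution supported appropriately; more precisely, since $\widetilde{\mu}_J$ is $\beta_\mathfrak{p}$-invariant, the linear functional $\xi\mapsto\int 1_{B_\mathfrak{p}}\xi\,\widetilde{\mu}_J$ restricted to $\Delta_\mathfrak{p}$-invariants is governed by $1_{D_\mathfrak{p}}\cdot\widetilde{\mu}_J$, which by Proposition \ref{prop:deriv-induction} is $\mathcal{L}^\Tei$-proportional to $\sum 1_{\omega_\mathfrak{p}^iB_\mathfrak{p}}\cdot\widetilde{\mu}_J=c_{\mathfrak{p},\ord}(\beta_\mathfrak{p})\cdot\widetilde{\mu}_J$.

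The cleanest route, which I would take, is to produce an explicit primitive: define $\mathfrak{d}\in\mathcal{D}^{\mathfrak{p},\leq N}$ by $\langle\mathfrak{d},\xi\rangle=\int(\text{something built from }1_{B_\mathfrak{p}})\,\xi\,\widetilde{\mu}_J$ and check that $(\gamma^*-1)\mathfrak{d}$ equals the difference cocycle evaluated at $\gamma$, for $\gamma$ ranging over the generators; because $\Delta$ is free abelian and the cocycle relation is multiplicative, agreement on generators plus the coboundary computed on generators propagates to all of $\Delta$. The verification on $\beta_\mathfrak{p}$ is exactly Proposition \ref{prop:deriv-induction} combined with $(1-\beta_{\mathfrak{p},\mathfrak{p}}^*)1_{B_\mathfrak{p}}=1_{D_\mathfrak{p}}$; the verification on $\beta_\mathfrak{q}$, $\mathfrak{q}\neq\mathfrak{p}$, uses Lemma \ref{lem:log-value} (giving $_\mathfrak{p}\log_\sigma$ restricted to $\mathcal{O}_{K_\mathfrak{q}}^\times$ is trivial, controlling the constants ${_\mathfrak{p}}\log_\sigma(\beta_\mathfrak{q})$) together with the fact that $\beta_\mathfrak{q}$ acts trivially on the $1_{B_\mathfrak{p}}$-supported part up to the $D_\mathfrak{p}$-shift. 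I expect the main obstacle to be bookkeeping: making sure the primitive $\mathfrak{d}$ lies in the correct bounded space $\mathcal{D}^{\mathfrak{p},\leq N}$ (so that it pairs continuously against $\mathcal{C}_\mathfrak{p}^{\leq N}$ and $\mathcal{C}^{\mathfrak{p},\leq N}$), and that all evaluations are legitimately reduced to $\xi\in(\bar C^{\mathfrak{p}})^{\Delta_\mathfrak{p}}$ so that Proposition \ref{prop:deriv-induction} applies verbatim; the use of $N\geq 1$ is precisely to have room for the degree-one functions $_\mathfrak{p}\log_\sigma$ and $\ord_\mathfrak{p}$. Once the generator computations are in place, the freeness of $\Delta$ makes the passage to a genuine cohomology identity formal.
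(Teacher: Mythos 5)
Your overall shape is the right one (and is the paper's): represent the cup products by the cocycles $\tau\mapsto c(\tau)\cdot\widetilde{\mu}_J$, feed Proposition \ref{prop:deriv-induction} in at the generator $\beta_\mathfrak{p}$, and let the commutativity/freeness of $\Delta$ propagate. But there are two genuine gaps. First, the assertion that ``the two cocycles agree on the generator $\beta_\mathfrak{p}$ when paired against $\widetilde{\mu}_J$'' is not correct as an identity in $\mathcal{D}^{\mathfrak{p},\leq N}$: Proposition \ref{prop:deriv-induction} only says that the distribution $\mu_0:=\bigl(c_{\mathfrak{p},\sigma,\log}(\beta_\mathfrak{p})-\mathcal{L}^\Tei_{\sigma,u_\mathfrak{p}^\sigma}(f)\,c_{\mathfrak{p},\ord}(\beta_\mathfrak{p})\bigr)\widetilde{\mu}_J$ vanishes on $\Delta_\mathfrak{p}$-invariant test functions $\xi\in(\bar C^{\mathfrak{p}})^{\Delta_\mathfrak{p}}$; it is in general a nonzero element of $\mathcal{D}^{\mathfrak{p},\leq N+1}$. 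Second, and this is the main missing idea, your explicit primitive $\mathfrak{d}$ with $\langle\mathfrak{d},\xi\rangle=\int(\text{something built from }1_{B_\mathfrak{p}})\,\xi\,\widetilde{\mu}_J$ does not exist: the natural integrand $({_\mathfrak{p}}\log_\sigma-\mathcal{L}\,\ord_\mathfrak{p})\,1_{B_\mathfrak{p}}\,\xi$ is unbounded as $x_\mathfrak{p}\to 0$, while $\widetilde{\mu}_J$ is only defined on $C^\flat_J$ and, precisely because $\alpha_\mathfrak{p}=1$, has no decay along the annuli $\omega_\mathfrak{p}^n\underline{D}_\mathfrak{p}$; the defining integrals diverge for general $\xi\in\mathcal{C}^{\mathfrak{p},\leq N}$. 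The primitive can only be produced abstractly, and that is the actual content of the paper's proof: $\mu_0$ kills $(\mathcal{C}^{\mathfrak{p},\leq N+1})^{\Delta_\mathfrak{p}}=(\bar C^{\mathfrak{p}})^{\Delta_\mathfrak{p}}$, so by strictness of $\beta_\mathfrak{p}^*-1$ on $\mathcal{C}^{\mathfrak{p},\leq N+1}$ there is $\mu_1\in\mathcal{D}^{\mathfrak{p},\leq N+1}$ with $(\beta_{\mathfrak{p}*}-1)\mu_1=\mu_0$; then, using that $c_{\mathfrak{p},\sigma,\log}$ and $c_{\mathfrak{p},\ord}$ are coboundaries of explicit (unbounded) functions and that $\Delta$ is abelian, one gets $(\tau_*-1)\mu_0=(\beta_{\mathfrak{p}*}-1)\bigl[(c_{\mathfrak{p},\sigma,\log}(\tau)-\mathcal{L}\,c_{\mathfrak{p},\ord}(\tau))\widetilde{\mu}_J\bigr]$, and the surjectivity $(\beta_\mathfrak{p}^*-1)\mathcal{C}^{\mathfrak{p},\leq N+1}=\mathcal{C}^{\mathfrak{p},\leq N}$ converts this into $(\tau_*-1)\mu_1=(c_{\mathfrak{p},\sigma,\log}(\tau)-\mathcal{L}\,c_{\mathfrak{p},\ord}(\tau))\widetilde{\mu}_J$ in $\mathcal{D}^{\mathfrak{p},\leq N}$ for every $\tau\in\Delta$. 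This is also why the proof works at level $N+1$ and only concludes at level $N$.

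A smaller but real error: at the generators $\beta_\mathfrak{q}$, $\mathfrak{q}\neq\mathfrak{p}$, you invoke Lemma \ref{lem:log-value} to control ${_\mathfrak{p}}\log_\sigma(\beta_\mathfrak{q})$. That lemma only concerns components at places outside $J$; here the relevant component is at $\mathfrak{p}\in J$, where $\beta_\mathfrak{q}$ is a unit but not $1$, so ${_\mathfrak{p}}\log_\sigma(\beta_\mathfrak{q})$ is in general nonzero (these constants reappear later as the off-diagonal entries in Proposition \ref{prop:ell}). In the paper's argument no separate verification at $\beta_\mathfrak{q}$ is needed, because the identity $(\tau_*-1)\mu_1=(c_{\mathfrak{p},\sigma,\log}(\tau)-\mathcal{L}\,c_{\mathfrak{p},\ord}(\tau))\widetilde{\mu}_J$ is established for all $\tau$ at once; if you insist on a generator-by-generator check with an explicit primitive, you would have to prove these nontrivial identities at each $\beta_\mathfrak{q}$ as well, which your sketch does not do.
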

\begin{proof} Let $\mu_0$ denote the distribution $$\left(c_{\mathfrak{p},\sigma,\log} (\beta_\mathfrak{p}) - \mathcal{L}^\Tei_{\sigma, u_\mathfrak{p}^{\sigma}}(f)
c_{\mathfrak{p},\ord} (\beta_\mathfrak{p}) \right) \widetilde{\mu}_J
\in \mathcal{D}^{\mathfrak{p},\leq N+1}.$$ By Proposition
\ref{prop:deriv-induction}, $\mu_0$ vanishes on
$(\mathcal{C}^{\mathfrak{p},\leq
N+1})^{\Delta_\mathfrak{p}}=(\bar{C}^{\mathfrak{p}})^{\Delta_\mathfrak{p}}
 $. As
$\beta^*_\mathfrak{p}-1$ is strict, there exists
$\mu_1\in\mathcal{D}^{\mathfrak{p},\leq N+1}$ such that
$$ \langle \mu_1, (\beta_\mathfrak{p}^*-1) \alpha \rangle = \langle \mu_0,
\alpha \rangle$$ for each $\alpha\in \mathcal{C}^{\mathfrak{p},\leq
N+1}$. Equivalently $(\beta_{\mathfrak{p}*}-1)\mu_1= \mu_0$.

Now, for any $\tau\in \Delta$ and $\alpha\in
\mathcal{C}^{\mathfrak{p},\leq N+1}$ we have {\allowdisplaybreaks
\begin{eqnarray*}
&& \langle (\tau_*-1)\mu_1, (\beta_\mathfrak{p}^*-1)\alpha\rangle \\
& = & \langle (\beta_{\mathfrak{p}*}-1)
(\tau_*-1)\mu_1, \alpha \rangle \\
& = & \langle  (\tau_*-1)(\beta_{\mathfrak{p}*}-1)\mu_1, \alpha \rangle \\
&=& \langle (\tau_*-1)(c_{\mathfrak{p},\sigma,\log}
(\beta_\mathfrak{p}) - \mathcal{L}^\Tei_{\sigma,
u_\mathfrak{p}^{\sigma}}(f) c_{\mathfrak{p},\ord}
(\beta_\mathfrak{p}) ) \widetilde{\mu}_J
, \alpha \rangle \\
&=& \langle (\beta_{\mathfrak{p}*}-1)(c_{\mathfrak{p},\sigma,\log}
(\tau) - \mathcal{L}^\Tei_{\sigma, u_\mathfrak{p}^{\sigma}}(f)
c_{\mathfrak{p},\ord} (\tau) ) \widetilde{\mu}_J , \alpha \rangle \\
&=&\langle (c_{\mathfrak{p},\sigma,\log} (\tau) -
\mathcal{L}^\Tei_{\sigma, u_\mathfrak{p}^{\sigma}}(f)
c_{\mathfrak{p},\ord} (\tau) ) \widetilde{\mu}_J ,
(\beta_\mathfrak{p}^*-1)\alpha \rangle.
\end{eqnarray*}} As
$(\beta_\mathfrak{p}^*-1)\mathcal{C}^{\mathfrak{p},\leq
N+1}=\mathcal{C}^{\mathfrak{p},\leq N}$, we obtain
$$ (\tau_*-1)\mu_1 = (c_{\mathfrak{p},\sigma,\log} (\tau) - \mathcal{L}^\Tei_{\sigma,
u_\mathfrak{p}^{\sigma}}(f) c_{\mathfrak{p},\ord} (\tau) )
\widetilde{\mu}_J $$ in $\mathcal{D}^{\mathfrak{p},\leq N}$.
\end{proof}

\begin{defn}\label{defn:L-inv} For $\sigma\in \Sigma_J$ and $\mathfrak{p}\in J_0$, we put
$$ \mathcal{L}_{\mathfrak{p},\sigma  }(f) = \left\{\begin{array}{ll}
\mathcal{L}^\Tei_{\sigma, u_\mathfrak{p}^\sigma}(f) &  \text{ if }\sigma\in \Sigma_\mathfrak{p}, \\
{_\mathfrak{p}}\log_\sigma(\omega_\mathfrak{p}) &  \text{ if
}\sigma\notin \Sigma_\mathfrak{p}.
\end{array}\right.
$$ For $l=\sum\limits_{\sigma\in\Sigma_J}s_\sigma \log_\sigma$, we set
$$\mathcal{L}_{\mathfrak{p}, l }(f)=\sum_{\sigma\in\Sigma_J}s_\sigma
\mathcal{L}_{\mathfrak{p},\sigma}(f).$$
\end{defn}

The following is a direct consequence of Proposition
\ref{prop:coh-1} and (\ref{eq:trivial-ord}).

\begin{cor} For each $\mathfrak{p}\in J_0$, each $l\in\LOG_J$ and each $N\geq 1$ we have $$
[c_{\mathfrak{p},l}] \cup [\widetilde{\mu}_J] = \mathcal{L}_{
\mathfrak{p},l}(f) [c_{\mathfrak{p},\ord}] \cup [\widetilde{\mu}_J]
$$ in $H^1(\Delta, \mathcal{D}^{\mathfrak{p},\leq N} )$.
\end{cor}

\begin{thm}\label{thm:coh} Write $J_0=\{\mathfrak{p}_1, \cdots
\mathfrak{p}_r\}$. Let $l_i$ $(i=1,\cdots, r)$ be in $\LOG_J$. Then
for each $N\geq 1$ we have
$$ [c_{\mathfrak{p}_1,l_1}]\cup \cdots\cup [c_{\mathfrak{p}_r,l_r}] \cup [\widetilde{\mu}_J]
=  \left(\prod_{i=1}^r \mathcal{L}_{ \mathfrak{p}_{i},l_i}(f)\right)
[c_{\mathfrak{p}_1,\ord}] \cup \cdots \cup [c_{\mathfrak{p}_r,\ord}]
\cup [\widetilde{\mu}_J]
$$ in $H^r(\Delta, \mathcal{D}^{J_0, \leq N})$.
\end{thm}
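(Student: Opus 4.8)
The plan is to deduce the identity from the Corollary stated just above Theorem~\ref{thm:coh} (which is the case $\sharp(J_0)=1$) by peeling off the primes of $J_0$ one at a time. Write $r=\sharp(J_0)$, and for $0\le m\le r$ set
$$
A_m:=[c_{\mathfrak{p}_1,\ord}]\cup\cdots\cup[c_{\mathfrak{p}_m,\ord}]\cup[c_{\mathfrak{p}_{m+1},l_{m+1}}]\cup\cdots\cup[c_{\mathfrak{p}_r,l_r}]\cup[\widetilde{\mu}_J].
$$
Here the iterated cup product is formed by first combining the $r$ degree-one classes into a class in $H^r(\Delta,\mathcal{C}_{J_0}^{\leq r})$ via the pairings $\mathcal{C}_{J'}^{\leq M}\times\mathcal{C}_{J''}^{\leq N}\to\mathcal{C}_{J'\cup J''}^{\leq M+N}$ (applicable because the $\mathfrak{p}_i$ are distinct, so the index sets are disjoint) and then pairing with $[\widetilde{\mu}_J]\in H^0(\Delta,\Dist^{\emptyset})$, after mapping $\widetilde{\mu}_J$ into $\mathcal{D}^{\emptyset,\leq r+N}$; associativity of these module pairings makes the construction independent of the bracketing, and $A_m\in H^r(\Delta,\mathcal{D}^{J_0,\leq N})$. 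Since $A_0$ is the left-hand side of the theorem and $A_r=[c_{\mathfrak{p}_1,\ord}]\cup\cdots\cup[c_{\mathfrak{p}_r,\ord}]\cup[\widetilde{\mu}_J]$, it suffices to prove $A_m=\mathcal{L}_{\mathfrak{p}_{m+1},l_{m+1}}(f)\,A_{m+1}$ for $0\le m\le r-1$; multiplying these $r$ identities together (they telescope) then gives the assertion, for every $N\ge1$.

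For one peeling step I would argue as follows. The module pairings $\mathcal{C}_{J'}\times\mathcal{C}_{J''}\to\mathcal{C}_{J'\cup J''}$ are multiplication of functions, hence symmetric and associative, so the cup product among the degree-one classes $[c_{\mathfrak{p}_i,\cdot}]$ is graded-commutative; moving $[c_{\mathfrak{p}_{m+1},l_{m+1}}]$ past $[c_{\mathfrak{p}_{m+2},l_{m+2}}],\dots,[c_{\mathfrak{p}_r,l_r}]$ so that it becomes adjacent to $[\widetilde{\mu}_J]$ introduces the sign $\varepsilon=(-1)^{r-m-1}$. Reassociating,
$$
A_m=\varepsilon\,\bigl([c_{\mathfrak{p}_1,\ord}]\cup\cdots\cup[c_{\mathfrak{p}_m,\ord}]\cup[c_{\mathfrak{p}_{m+2},l_{m+2}}]\cup\cdots\cup[c_{\mathfrak{p}_r,l_r}]\bigr)\cup\bigl([c_{\mathfrak{p}_{m+1},l_{m+1}}]\cup[\widetilde{\mu}_J]\bigr),
$$
where the first factor lies in $H^{r-1}(\Delta,\mathcal{C}_{J_0\setminus\{\mathfrak{p}_{m+1}\}}^{\leq r-1})$ and the second, by $\mathcal{C}_{\mathfrak{p}_{m+1}}^{\leq1}\times\mathcal{D}^{\emptyset,\leq r+N}\to\mathcal{D}^{\mathfrak{p}_{m+1},\leq r-1+N}$, in $H^1(\Delta,\mathcal{D}^{\mathfrak{p}_{m+1},\leq r-1+N})$. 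The preceding Corollary, applied with $\mathfrak{p}=\mathfrak{p}_{m+1}$ and truncation level $r-1+N\ (\ge1)$, gives $[c_{\mathfrak{p}_{m+1},l_{m+1}}]\cup[\widetilde{\mu}_J]=\mathcal{L}_{\mathfrak{p}_{m+1},l_{m+1}}(f)\,[c_{\mathfrak{p}_{m+1},\ord}]\cup[\widetilde{\mu}_J]$. Substituting this (cup product is bilinear and functorial), then reassociating and moving $[c_{\mathfrak{p}_{m+1},\ord}]$ back into the $(m+1)$-th slot — which again contributes $\varepsilon$ — yields $A_m=\varepsilon^2\mathcal{L}_{\mathfrak{p}_{m+1},l_{m+1}}(f)\,A_{m+1}=\mathcal{L}_{\mathfrak{p}_{m+1},l_{m+1}}(f)\,A_{m+1}$, as needed.

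All of the arithmetic content is concentrated in the preceding Corollary, hence ultimately in Propositions~\ref{prop:coh-1} and~\ref{prop:deriv-induction}, the computation identifying the $L$-invariant of Teitelbaum type; everything else in the argument is formal homological algebra. The one point that genuinely requires care is the consistent bookkeeping of the truncation levels $\leq N$ across the coefficient modules $\mathcal{C}_{?}^{\leq\bullet}$, $\mathcal{C}^{?,\leq\bullet}$, $\mathcal{D}^{?,\leq\bullet}$ and $\Dist^{?}$: one must raise the level high enough (namely to $\ge r-1+N$) before peeling off a prime, and check at each step that the relevant pairings, and the cup products they induce on $H^{\bullet}(\Delta,-)$, are associative and graded-commutative in the sense used above — all of which follows directly from their definition via multiplication of functions together with the evaluation pairing between functions and distributions. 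I expect this level- and sign-tracking, routine though it is, to be the main obstacle to writing the proof out cleanly.
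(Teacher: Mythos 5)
Your proposal is correct and follows essentially the same route as the paper: the paper proves the statement by induction on $j$, replacing $[c_{\mathfrak{p}_j,l_j}]$ by $\mathcal{L}_{\mathfrak{p}_j,l_j}(f)[c_{\mathfrak{p}_j,\ord}]$ one prime at a time via Proposition~\ref{prop:coh-1} applied at a raised truncation level ($\leq N+j-1$), with the signs from graded-commutativity cancelling exactly as in your $\varepsilon^2=1$ step. Your telescoping product $A_m$ is just a reindexing of that induction, and your truncation bookkeeping (level $r-1+N$ before peeling) matches the paper's.
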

\begin{proof} For $j=1,\cdots, r$ we write $J(j)=\{\mathfrak{p}_1, \cdots \mathfrak{p}_j\}$.
We prove $$ [c_{\mathfrak{p}_{1},l_{1}}]\cup \cdots\cup
[c_{\mathfrak{p}_j, l_j}] \cup [\widetilde{\mu}_J] =
\left(\prod_{i=1}^j \mathcal{L}_{\mathfrak{p}_{i},l_i }(f)\right)
[c_{\mathfrak{p}_{1},\ord}] \cup \cdots \cup
[c_{\mathfrak{p}_j,\ord}] \cup [\widetilde{\mu}_J]
$$ in $H^j(\Delta, \mathcal{D}^{J(j),\leq N})$ by induction on $j$.

When $j=1$,
this is just Proposition \ref{prop:coh-1}. Assume $j\geq 2$ and the assertion
holds when $j$ is replaced by $j-1$. Then we have
$$ [c_{\mathfrak{p}_{1},l_{1}}]\cup \cdots\cup
[c_{\mathfrak{p}_{j-1},l_{j-1}}] \cup [\widetilde{\mu}_J] =
\left(\prod_{i=1}^{j-1} \mathcal{L}_{\mathfrak{p}_i,l_i}(f)\right)
[c_{\mathfrak{p}_{1},\ord}] \cup \cdots \cup
[c_{\mathfrak{p}_{j-1},\ord}] \cup [\widetilde{\mu}_J]
$$ in $H^{j-1}(\Delta, \mathcal{D}^{J(j-1),\leq N+1} )$. So from the pairing
$$ H^1(\Delta, \mathcal{C}_{\mathfrak{p}_j}^{\leq 1})\times H^{j-1}(\Delta, \mathcal{D}^{J(j-1),\leq N+1} )
\rightarrow H^{j}(\Delta, \mathcal{D}^{J(j),\leq N}) $$ we obtain
\begin{eqnarray*}&& [c_{\mathfrak{p}_{1},l_{1}}]\cup \cdots\cup
[c_{\mathfrak{p}_j,l_j}] \cup [\widetilde{\mu}_J] \\ &=&
(-1)^{j-1}[c_{\mathfrak{p}_j,l_j}]\cup\left(
[c_{\mathfrak{p}_{1},l_{1}}]\cup \cdots \cup
[c_{\mathfrak{p}_{j-1},l_{j-1}}]
 \cup [\widetilde{\mu}_J]\right) \\ &=&
\left(\prod_{i=1}^{j-1} \mathcal{L}_{
\mathfrak{p}_{i},l_i}(f)\right)(-1)^{j-1}[c_{\mathfrak{p}_j,l_j}]\cup
\left([c_{\mathfrak{p}_{1},\ord}] \cup \cdots \cup
[c_{\mathfrak{p}_{j-1},\ord}] \cup [\widetilde{\mu}_J]\right) \\
&=& \left(\prod_{i=1}^{j-1} \mathcal{L}_{
\mathfrak{p}_i,l_i}(f)\right) [c_{\mathfrak{p}_{1},\ord}] \cup
\cdots \cup [c_{\mathfrak{p}_{j-1},\ord}]\cup
[c_{\mathfrak{p}_j,l_j}]\cup [\widetilde{\mu}_J]
\end{eqnarray*} in $H^j(\Delta, \mathcal{D}^{J(j),\leq N} )$.

Since
$$[c_{\mathfrak{p}_j,l_j}]\cup [\widetilde{\mu}_J] = \mathcal{L}_{ \mathfrak{p}_j,l_j}(f) [c_{\mathfrak{p}_j,\ord}] \cup
[\widetilde{\mu}_J]$$ in $H^1(\Delta,
\mathcal{D}^{\mathfrak{p}_j,\leq N+j-1})$ which is ensured by
Proposition \ref{prop:coh-1}, from the pairing
$$H^{j-1}(\Delta, \mathcal{C}_{J(j-1)}^{\leq j-1} )\times H^1(\Delta, \mathcal{D}^{\mathfrak{p}_j,\leq N+j-1})\rightarrow H^j(\Delta, \mathcal{D}^{J(j),\leq N})
$$ we obtain the relation
$$[c_{\mathfrak{p}_{1},\ord}] \cup \cdots
\cup [c_{\mathfrak{p}_{j-1},\ord}]\cup [c_{\mathfrak{p}_j,l_j}]\cup
[\widetilde{\mu}_J] = \mathcal{L}_{\mathfrak{p}_j,l_j}(f)
[c_{\mathfrak{p}_{1},\ord}] \cup \cdots \cup
[c_{\mathfrak{p}_{j-1},\ord}] \cup [c_{\mathfrak{p}_{j},\ord}] \cup
[\widetilde{\mu}_J]
$$ in $H^j(\Delta, \mathcal{D}^{J(j),\leq N})$. This finishes the
inductive proof.
\end{proof}

\section{Exceptional zero formula}\label{sec:proof-main}

We fix a disjoint decomposition $J=J'\bigsqcup J''$ of $J$; $J''$ may be empty. Put $J_1=J'\cap J_0$ and $J_2=J''\cap J_0$.
Assume that $r:=\sharp (J_1) >0$.

For each $\sigma\in \Sigma_{J'}$ we take a constant $s_\sigma$. We
take $l$ in Proposition \ref{prop:ell} to be
$$l= \sum_{\sigma\in \Sigma_{J'}} s_\sigma
\log_\sigma.$$ Then for any $\mathfrak{p}\in J'$ we have
$$ \sum_{v\in J\backslash J'' } {_vl}(\beta_{\mathfrak{p}})=\sum_{v\in J\backslash J_2 } {_vl}(\beta_{\mathfrak{p}})=0.
$$

We take a set of representatives $\{a: a_p=1\}$ in
$\BA^{\infty,\times}_K $ of the coset $\BA^{\infty,\times}_F
K^\times \backslash \BA^{\infty,\times}_K/
\widehat{\mathcal{O}}^\times_K$.

\begin{prop} \label{prop:final} Let $\widehat{\chi}$ be a character of $\Gamma^-_J$ such that $\widehat{\chi}_\mathfrak{p}=1$
for every $\mathfrak{p}\in J_1$. Then {\small $$ \int \widehat{\chi}
\cdot \sum_a\left(\sum_{v\in J\backslash J_2} { _vl}\right)^{r}\cdot
1_{Z_{\emptyset,J_2,a}} \widetilde{\mu}_J = r!
\prod_{\mathfrak{p}\in J_1
}\left(h_\mathfrak{p}\mathcal{L}_{\mathfrak{p},l}\right ) \cdot
\mathscr{L}_{J\backslash J_1} (\pi_K,
\widehat{\nu}\widehat{\chi}),$$ }where
$\mathcal{L}_{\mathfrak{p},l}=\mathcal{L}_{\mathfrak{p},l}(f)$ is
defined in Definition $\ref{defn:L-inv}$.
\end{prop}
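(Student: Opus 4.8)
## Proof proposal for Proposition \ref{prop:final}

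The plan is to combine the cohomological identity of Theorem \ref{thm:coh} with the combinatorial/determinantal expansion of Proposition \ref{prop:ell}, and then identify the ``ord-side'' of the resulting formula with $\mathscr{L}_{J\backslash J_1}$ via Proposition \ref{prop:transfer} and Lemma \ref{lem:average-vanishing}. First I would apply Proposition \ref{prop:ell} with $J_2$ replaced by $J_2$ as in the statement, so that $h = \sharp(J_0)-\sharp(J_2) = \sharp(J_1) = r$ and $J_0\backslash J_2 = J_1 = \{\mathfrak{p}_1,\dots,\mathfrak{p}_r\}$. This rewrites $\int\widehat{\chi}\cdot(\sum_{v\in J\backslash J_2}{_v}l)^r\cdot 1_{Z_{\emptyset,J_2,a}}\widetilde{\mu}_J$ as $r!$ times the integral of $\widehat{\chi}$ against a determinant whose entries are the functions $\Lambda_{l,\mathfrak{p}_i,J_2,a}$ and $\Lambda_{\emptyset,J_2,a}$ weighted by the scalars ${_{\mathfrak{p}_i}}l(\beta_{\mathfrak{p}_j})$.

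Next I would translate this determinant into the language of group cohomology of $\Delta$. The key observation is that the cocycles $c_{\mathfrak{p},l}$ and $c_{\mathfrak{p},\ord}$ evaluated on the generators $\beta_{\mathfrak{q}}$ produce exactly the building blocks $\Lambda_{l,\mathfrak{p},J_2,a}$, $1_{D_\mathfrak{p}}$, $\sum_i 1_{\omega_\mathfrak{p}^i B_\mathfrak{p}}$ appearing above (compare the formulas for $c_{\mathfrak{p},\sigma,\log}(\beta_\mathfrak{q})$ and $c_{\mathfrak{p},\ord}(\beta_\mathfrak{q})$ just before Proposition \ref{prop:coh-1}). Pairing the cup product $[c_{\mathfrak{p}_1,l}]\cup\cdots\cup[c_{\mathfrak{p}_r,l}]$ with $[\widetilde{\mu}_J]\in H^0(\Delta,\mathrm{Dist}^\emptyset)$ and evaluating the resulting class in $H^r(\Delta,\mathcal{D}^{J_0,\leq r})$ on the $r$-tuple of generators $(\beta_{\mathfrak{p}_1},\dots,\beta_{\mathfrak{p}_r})$ yields, by the standard formula for cup products of inhomogeneous cochains on a free abelian group, precisely the determinant integral from Proposition \ref{prop:ell}. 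Then Theorem \ref{thm:coh} replaces $[c_{\mathfrak{p}_1,l}]\cup\cdots\cup[c_{\mathfrak{p}_r,l}]\cup[\widetilde{\mu}_J]$ by $\big(\prod_{i=1}^r\mathcal{L}_{\mathfrak{p}_i,l}(f)\big)[c_{\mathfrak{p}_1,\ord}]\cup\cdots\cup[c_{\mathfrak{p}_r,\ord}]\cup[\widetilde{\mu}_J]$, so the left-hand side becomes $r!\prod_{\mathfrak{p}\in J_1}\mathcal{L}_{\mathfrak{p},l}$ times the analogous integral in which every $\Lambda_{l,\mathfrak{p}_i}$ is replaced by $c_{\mathfrak{p}_i,\ord}(\beta_{\mathfrak{p}_i}) = \sum_{j=1}^{h_{\mathfrak{p}_i}}1_{\omega_{\mathfrak{p}_i}^j B_{\mathfrak{p}_i}}$ and the off-diagonal entries vanish (since $c_{\mathfrak{p},\ord}(\beta_{\mathfrak{q}})=0$ for $\mathfrak{q}\neq\mathfrak{p}$).

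The last step is to evaluate $\int\widehat{\chi}\cdot\sum_a\prod_{i=1}^r\big(\sum_{j=1}^{h_{\mathfrak{p}_i}}1_{\omega_{\mathfrak{p}_i}^j B_{\mathfrak{p}_i}}\big)\cdot 1_{Z_{\emptyset,J_2,a}}\widetilde{\mu}_J$. Using the $\Delta$-invariance of $\widetilde{\mu}_J$ and $\widehat{\chi}$ exactly as in the proof of Proposition \ref{prop:deriv-induction} (where $\beta_\mathfrak{p}^*$ shifts $1_{\omega_\mathfrak{p}^j B_\mathfrak{p}}$ by one step), each factor $\sum_{j=1}^{h_\mathfrak{p}}1_{\omega_\mathfrak{p}^j B_\mathfrak{p}}$ contributes a factor $h_\mathfrak{p}$ after one averages the product $1_{B_\mathfrak{p}}\cdot(\cdots)$ over a fundamental domain, reducing the integral to $\prod_{\mathfrak{p}\in J_1}h_\mathfrak{p}$ times $\int\widehat{\chi}\cdot\sum_a 1_{Z_{J_1,J_2,a}}\widetilde{\mu}_J$. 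Since $\widehat{\chi}_\mathfrak{p}=1$ for all $\mathfrak{p}\in J_1$, $\widehat{\chi}$ factors through $\Gamma^-_{J\backslash J_1}$; by Proposition \ref{prop:transfer} (transfer from $Z_{J_1,J_2,a}$ on $\widetilde{X}_{J;\mathfrak{c}}$ to the point $[1_{J_1}]$ on $\widetilde{X}_{J\backslash J_1;\mathfrak{c}}$) and the surjectivity count used in Lemma \ref{lem:average-vanishing}, this equals $[\mathcal{O}_K^\times:\mathcal{O}_F^\times]^{-1}\cdot[\mathcal{O}_K^\times:\mathcal{O}_F^\times]\cdot\int\widehat{\chi}\,\mu_{J\backslash J_1} = \mathscr{L}_{J\backslash J_1}(\pi_K,\widehat{\nu}\widehat{\chi})$ — the normalization constants cancel because $\sum_a 1_{Z_{\emptyset,J_0,a}}$ covers $K^\times\BA_F^{\infty,\times}\backslash\BA_K^{\infty,\times}$ exactly $[\mathcal{O}_K^\times:\mathcal{O}_F^\times]$-to-one. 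Assembling the three steps gives the asserted formula.

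The main obstacle I anticipate is making the second step fully rigorous: one must check carefully that evaluating the cup product $[c_{\mathfrak{p}_1,l}]\cup\cdots\cup[c_{\mathfrak{p}_r,l}]\cup[\widetilde{\mu}_J]$ at the generators of $\Delta$ reproduces \emph{precisely} the determinant of Proposition \ref{prop:ell} (including signs and the diagonal-versus-off-diagonal structure), and that the various bounded-cohomology groups $H^\bullet(\Delta,\mathcal{C}^{\leq N}_\bullet)$, $H^\bullet(\Delta,\mathcal{D}^{\leq N}_\bullet)$ with their pairings are the correct receptacles so that Theorem \ref{thm:coh} applies with the right value of $N$. Once that bookkeeping is in place, the rest is the combinatorics of Proposition \ref{prop:ell} together with the $\beta_\mathfrak{p}$-shift argument already carried out in Section \ref{sec:partial}.
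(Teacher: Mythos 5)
Your overall skeleton (Proposition \ref{prop:ell}, then the cohomological identity of Theorem \ref{thm:coh}, then transfer to $\mathscr{L}_{J\backslash J_1}$ via Proposition \ref{prop:transfer}) is the same as the paper's, but the central step you flag as "bookkeeping" is in fact a genuine gap, and it hides the one input you never mention. The determinant of Proposition \ref{prop:ell} is \emph{not} the evaluation of $[c_{\mathfrak{p}_1,l}]\cup\cdots\cup[c_{\mathfrak{p}_r,l}]\cup[\widetilde{\mu}_J]$ at the generators: the off-diagonal entries match $c_{\mathfrak{p}_i,l}(\beta_{\mathfrak{p}_j})$, but the diagonal ones do not, because the full $l$ contains directions $\sigma\notin\Sigma_{\mathfrak{p}_i}$, for which ${_{\mathfrak{p}_i}}\log_\sigma$ is proportional to $\ord_{\mathfrak{p}_i}$ and ${_{\mathfrak{p}_i}}l(\beta_{\mathfrak{p}_i})\neq 0$; the exact cocycle identification $\det(\Omega_{a,T})=c_{\log,T}(\beta_{\mathfrak{p}_{i_1}},\dots,\beta_{\mathfrak{p}_{i_k}})\otimes 1_{Z^T_{J_1\backslash T,J_2,a}}$ only holds for the "pure" sub-matrices built from the cocycles $c_{\mathfrak{p}_i,\ell_i}$ with $\ell_i=\sum_{\sigma\in\Sigma_{\mathfrak{p}_i}}s_\sigma\log_\sigma$ (for which ${_{\mathfrak{p}_i}}\ell_i(\beta_{\mathfrak{p}_i})=0$). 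This is why the paper splits $\bar{\Omega}_a=\Omega_a+M_a$, expands $\det(\bar{\Omega}_a)=\sum_T\det(\Omega_{a,T})\cdot(1_{B_T}\otimes f^T_a)$, checks that the weights $\widehat{\chi}^T f^T_a 1_{Z^T_{J_1\backslash T,J_2,a}}$ are $\Delta_T$-invariant (so that the pairing with the $H^{\sharp T}$-class is well defined), and applies Theorem \ref{thm:coh} only to each $\det(\Omega_{a,T})$.

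The consequence of this finer decomposition is that cross terms appear which do not vanish for formal reasons: after the cohomological substitution, the terms indexed by $S\subsetneq J_1\backslash T$ reduce (via translation by a suitable $b\in K^\times$ and Proposition \ref{prop:transfer}) to values $\mathscr{L}_{J\backslash(S\cup T)}(\widehat{\nu}\widehat{\chi})$, and these are killed only by Corollary \ref{cor:vanish-value}, i.e.\ by the exceptional zero of the intermediate $p$-adic $L$-functions coming from $\alpha_\mathfrak{p}=1$ together with the hypothesis $\widehat{\chi}_\mathfrak{p}=1$ on $J_1$. Only after this vanishing do the surviving terms ($S=J_1\backslash T$) assemble, summing over $T$, into $\prod_{i}\bigl(\mathcal{L}_{\mathfrak{p}_i,\ell_i}+\sum_{i'\neq i}{_{\mathfrak{p}_i}}\ell_{i'}(\omega_{\mathfrak{p}_i})\bigr)=\prod_{\mathfrak{p}\in J_1}\mathcal{L}_{\mathfrak{p},l}$. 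Your proposal asserts instead that Theorem \ref{thm:coh} applied wholesale converts the determinant into $\prod_\mathfrak{p}\mathcal{L}_{\mathfrak{p},l}$ times a purely diagonal ord-determinant, which skips exactly this cancellation; without identifying Corollary \ref{cor:vanish-value} (or an equivalent argument) as the mechanism disposing of the mixed terms, the proof does not close. Your final step (reducing the ord-side to $\sum_a\int\widehat{\chi}\,1_{Z_{J_1,J_2,a}}\widetilde{\mu}_J$ and identifying it with $\mathscr{L}_{J\backslash J_1}(\pi_K,\widehat{\nu}\widehat{\chi})$ by the transfer and counting arguments) is essentially the paper's, though note that the shift from $1_{\omega_{\mathfrak{p}}^{j}B_\mathfrak{p}}$ to $1_{B_\mathfrak{p}}$ is effected by translation by a global element of $K^\times$ of prescribed valuation (re-indexing the sum over $a$), not by $\beta_\mathfrak{p}$-invariance alone, since $j$ need not be a multiple of $h_\mathfrak{p}$.
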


When $\widehat{\chi}_\mathfrak{p}=1$, $\widehat{\chi}$ factors
through $\Gamma^-_{J\backslash J_1}$, and
$\widehat{\nu}\widehat{\chi}$ lies in the $J\backslash J_1$-branch
of $\widehat{\nu}$. So, the notation $\mathscr{L}_{J\backslash J_1}
(\pi_K, \widehat{\nu}\widehat{\chi})$ makes sense.

\begin{proof}  By continuity we may suppose that
$\widehat{\chi}$ factors through $\BA _F^{\infty,\times}K^\times
\backslash \BA _K^{\infty,\times} /
\widehat{\mathcal{O}}_{J_0;\vec{n}, \mathfrak{c}}^\times $ for some
$J\backslash J_0$-tuple $\vec{n}$. Indeed, among the characters
satisfying our condition, those of finite order are dense.

Write $J_1=\{\mathfrak{p}_1,\cdots, \mathfrak{p}_r\}$ and
$J'=\{\mathfrak{p}_1,\cdots, \mathfrak{p}_d\}$. For our convenience
we identify $J'$ with the index set $\{1,\cdots, d\}$. For each
$i=1,\cdots, d$, put $\ell_i=\sum\limits_{\sigma\in
\Sigma_{\mathfrak{p}_i}}s_\sigma \log_\sigma$. Then
$l=\sum\limits_{i=1}^d\ell_i$.

Since $J_2$ is fixed, we will write $\Lambda_{?, \mathfrak{p}, a}$
for $\Lambda_{?, \mathfrak{p}, J_2,a}$ ($\mathfrak{p}\in J_1$), and
write $\Lambda_{\emptyset, a}$ for $\Lambda_{\emptyset, J_2,a}$. For
the meanings of these notations see Section \ref{ss:comp-II}.

Let $\bar{\Omega}_a$ be the matrix $$ \left(\begin{array}{cccc}
\Lambda_{l,\mathfrak{p}_1,a} -
{_{\mathfrak{p}_1}l(\beta_{\mathfrak{p}_1})}\Lambda_{\emptyset,a} &
-{_{\mathfrak{p}_1}l(\beta_{\mathfrak{p}_2})}\Lambda_{\emptyset,a} &
\cdots &
-{_{\mathfrak{p}_1}l(\beta_{\mathfrak{p}_r})}\Lambda_{\emptyset,a}
\\ -{_{\mathfrak{p}_2}l(\beta_{\mathfrak{p}_1})}\Lambda_{\emptyset,a} & \Lambda_{l,\mathfrak{p}_2,a}
- {_{\mathfrak{p}_2}l(\beta_{\mathfrak{p}_2})}\Lambda_{\emptyset,a} & \cdots & -{_{\mathfrak{p}_2}l(\beta_{\mathfrak{p}_r})}\Lambda_{\emptyset,a} \\
\vdots & \vdots & \ddots & \vdots \\
-{_{\mathfrak{p}_r}l(\beta_{\mathfrak{p}_1})}\Lambda_{\emptyset,a} &
-{_{\mathfrak{p}_r} l(\beta_{\mathfrak{p}_2})} \Lambda_{\emptyset,a}
& \cdots & \Lambda_{ l, \mathfrak{p}_r,a} -{_{\mathfrak{p}_r}
l(\beta_{\mathfrak{p}_r})}\Lambda_{\emptyset,a}
 \end{array} \right) . $$
 If $i\neq j\in\{1,\cdots, r\}$, then
 $$ \bar{\Omega}_{a,ij}= -{_{\mathfrak{p}_i}}l(\beta_{\mathfrak{p}_j}) \Lambda_{\emptyset,a}  =
 - \sum_{\sigma\in \Sigma_{\mathfrak{p}_i}} s_\sigma {_{\mathfrak{p}_i}}\log_\sigma(\beta_{\mathfrak{p}_j})\Lambda_{\emptyset,a}. $$
 For $i\in\{1,\cdots, r\}$ we have
\begin{eqnarray*} \bar{\Omega}_{a,ii} &=& \Lambda_{\ell_i,
\mathfrak{p}_i,a}+\sum_{1\leq i'\leq d, i'\neq i}
\Lambda_{\ell_{i'}, \mathfrak{p}_i,a} - {_{\mathfrak{p}_i}}
l(\beta_{\mathfrak{p}_i})\Lambda_{\emptyset,a} .\end{eqnarray*} Let
$\Omega_a$ be the matrix with
$$\Omega_{a, ii}= \Lambda_{\ell_i, \mathfrak{p}_i,a}$$ and
$$\Omega_{a, ij}=\bar{\Omega}_{a, ij}$$ if $i\neq j$. Let $M_a$ be the
diagonal matrix {\small $$\left(\begin{array}{ccc}
\sum\limits_{1\leq i'\leq d, i'\neq 1} \Lambda_{\ell_{i'},
\mathfrak{p}_1,a}
-{_{\mathfrak{p}_1}}l(\beta_{\mathfrak{p}_1})\Lambda_{\emptyset,a}
&& \\ & \ddots & \\ && \sum\limits_{1\leq i'\leq d, i'\neq r}
\Lambda_{\ell_{i'}, \mathfrak{p}_r,a}
-{_{\mathfrak{p}_r}}l(\beta_{\mathfrak{p}_r})\Lambda_{\emptyset,a}
\end{array} \right).$$}

\noindent Then $\bar{\Omega}_a=\Omega_a+M_a$.

For arbitrary two disjoint subsets $T$ and $S$ of $J_1$ we set
$$ Z^T_{S,J_2,a} =  \prod_{\mathfrak{p}\in S} B_{\mathfrak{p}} \times \prod_{\mathfrak{p}\in J_2} \underline{D}_{\mathfrak{p}}
\times a(\prod_{v\notin S\cup T\cup J_2 } D_v)
$$ and $$B_T=\prod_{\mathfrak{p}\in T} B_{\mathfrak{p}}.$$
Put
\begin{eqnarray*}f^T_{a}:&=& \prod_{i\in J_1\backslash T}
\Big(\sum\limits_{i':1\leq i'\leq d, i'\neq
i}{_{\mathfrak{p}_i}}\ell_{i'} \cdot 1_{Z^T_{J_1\backslash (T\cup
\{\mathfrak{p}_i\}),J_2,a}}-
{_{\mathfrak{p}_i}}l(\beta_{\mathfrak{p}_i})\cdot
1_{Z^T_{J_1\backslash T, J_2,a}}\Big) \\
&=& \sum_{S\subseteq J_1\backslash T}\prod_{i\in J_1\backslash
(T\cup S)}\left(\sum\limits_{i':1\leq i'\leq d, i'\neq i}
{_{\mathfrak{p}_i}}\ell_{i'} \right) \cdot \prod_{i\in
S}{_{\mathfrak{p}_i}}l(\beta_{\mathfrak{p}_i} )\cdot
1_{Z^T_{S,J_2,a}} .\end{eqnarray*} Then $f^T_{a}$ is
$\Delta_T$-invariant, and
\begin{eqnarray*} 1_{B_T}\otimes f^T_{a} &=& \prod_{i\in J_1\backslash T}
\left(\sum\limits_{1\leq i'\leq d, i'\neq i} \Lambda_{\ell_{i'},
\mathfrak{p}_i,a}
-{_{\mathfrak{p}_i}}l(\beta_{\mathfrak{p}_i})\Lambda_{\emptyset,a}
\right)
\end{eqnarray*}

By our condition on $\widehat{\chi}$, considered as a function on
$\BA_F^{\infty,\times}\backslash \BA_K^{\infty,\times}$,
$\widehat{\chi}$ is the pull-back of a function on
$\BA_F^{\infty,J_1,\times}\backslash \BA_K^{\infty,J_1,\times}$. We
write $\widehat{\chi}^{J_1}$  for the latter function. For any
$T\subset J_1$ let $\widehat{\chi}^T$ be the pull-back of
$\widehat{\chi}^{J_1}$ to $\BA_F^{\infty,T,\times}\backslash
\BA_K^{\infty,T,\times} $.

We use $\Omega_{a,T}$ to denote the submatrix of $\Omega_a$
consisting of the $(i,j)$-entries ($i,j\in T$). Then
\begin{eqnarray*} \det (\bar{\Omega}_a) &=& \sum_{T} \det
(\Omega_{a,T}) \cdot \prod_{i\in J_1\backslash T}
\Big(\sum\limits_{1\leq i'\leq d, i'\neq i} \Lambda_{\ell_{i'},
\mathfrak{p}_i,a}
-{_{\mathfrak{p}_i}}l(\beta_{\mathfrak{p}_i})\Lambda_{\emptyset,a}
\Big) \\
&=& \sum_{T} \det (\Omega_{a,T}) \cdot ( 1_{B_T} \otimes f^T_{a})
\end{eqnarray*} with the convention that
$\det\Omega_{a,\emptyset}=1$ when $T=\emptyset$.

When $T=\{i_1, \cdots, i_k\}$ with $i_1<i_2<\cdots<i_k$, we define
$$ c_{\log, T}(g_1, \cdots, g_k) = \sum_{P}(-1)^P c_{\mathfrak{p}_{i_1},\ell_{i_1}}(g_{{p_1}}) \cdots c_{\mathfrak{p}_{i_k},\ell_{i_k}}
(g_{{p_k}}), \hskip 10pt g_1,\cdots, g_k\in \Delta_T,
$$ where in the sum $P$ runs over all permutations $(p_1,\cdots, p_k)$ of $\{1,\cdots,
k\}$, and $$(-1)^P=\left\{\begin{array}{cl} 1 & \text{ if } P \text{ is an even permutation}\\
-1 & \text{ if } P \text{ is an odd permutation}.
\end{array} \right.$$ Then $c_{\log, T}$ is a $k$-cocycle on
$\Delta_T$, and $$[c_{\log,
T}]=[c_{\mathfrak{p}_{i_1},\ell_{i_1}}|_T]\cup\cdots\cup
[c_{\mathfrak{p}_{i_k},\ell_{i_1}}|_T].$$ Here, for a $1$-cocycle
$c$ on $\Delta_{J_0}$, $c|_T$ means the restriction of $c$ to
$\Delta_T$. Note that
$$\det(\Omega_{a,T})= c_{\log, T}(\beta_{\mathfrak{p}_{i_1}}, \cdots, \beta_{\mathfrak{p}_{i_k}})
\otimes 1_{Z^T_{J_1\backslash T,J_2,a}} .$$  As $\widehat{\chi}^T
f^T_a1_{Z^T_{J_1\backslash T,J_2,a}}$ is $\Delta_T$-invariant, we
have
$$ \int \widehat{\chi}\cdot\det (\Omega_{a,T}) \cdot (1_{B_T}\otimes f^T_{a}) \widetilde{\mu}_J =
 ([\widehat{\chi}^Tf^T_{a}1_{Z^T_{J_1\backslash T,J_2,a}}]\cup [c_{\log, T}] \cup [\widetilde{\mu}_J]) (\beta_{\mathfrak{p}_{i_1}}, \cdots, \beta_{\mathfrak{p}_{i_k}}).
$$ By Theorem \ref{thm:coh} and its proof we
obtain \begin{eqnarray*} && \int \widehat{\chi}\cdot \det
(\Omega_{a,T}) \cdot (1_{B_T}\otimes f^T_{a}) \widetilde{\mu}_J
\\ &=& \big( \prod_{i\in
T} \mathcal{L}_{ \mathfrak{p}_i,\ell_i} \big)
([\widehat{\chi}^Tf^T_{a}1_{Z^T_{J_1\backslash T,J_2,a}}]\cup
[c_{\mathfrak{p}_{i_1}, \ord}]\cup \cdots \cup
[c_{\mathfrak{p}_{i_k} ,\ord}] \cup
[\widetilde{\mu}_J])(\beta_{i_1},\cdots, \beta_{i_k}) \\
&=& \big( \prod_{i\in T} \mathcal{L}_{\mathfrak{p}_i,\ell_i}
\big)\cdot \int (\sum_{s_1=1}^{h_{i_1}}\cdots\sum_{s_k=1}^{h_{i_k}}
1_{\omega_{\mathfrak{p}_{i_1}}^{s_1}B_{\mathfrak{p}_{i_1}}}\otimes
\cdots\otimes 1_{\omega_{\mathfrak{p}_{i_k}}^{s_k}B_{\mathfrak{p}_{i_k}}})  \\
&&  \otimes   \sum_{S\subseteq J_1\backslash T}\prod_{i\in
J_1\backslash (T\cup S)}\left(\sum\limits_{i':1\leq i'\leq d, i'\neq
i} {_{\mathfrak{p}_i}}\ell_{i'} \right) \cdot \prod_{i\in
S}\left(-{_{\mathfrak{p}_i}}l(\beta_{\mathfrak{p}_i} )\right)\cdot
\widehat{\chi}^T 1_{Z^T_{S,J_2,a}} \widetilde{\mu}_J.
\end{eqnarray*} Here, we write $h_i$ for $h_{\mathfrak{p}_i}$ ($1\leq i\leq
d$).

When $S\subsetneq J_1\backslash T$, writing $U:=J_1\backslash (S\cup
T)=\{j_1,\cdots, j_u\}$ we have
$$ \prod_{i\in
J_1\backslash (T\cup S)}\left(\sum\limits_{i':1\leq i'\leq d, i'\neq
i} {_{\mathfrak{p}_i}}\ell_{i'} \right) =
\sum_{t_1=0}^{h_{j_1}-1}\cdots \sum_{t_u=0}^{h_{j_u}-1}
c_{t_1,\cdots, t_u}
1_{\omega_{\mathfrak{p}_{j_1}}^{t_1}\mathcal{O}^\times_{\mathfrak{p}_{j_1}}}\otimes
\cdots \otimes
1_{\omega_{\mathfrak{p}_{j_u}}^{t_u}\mathcal{O}^\times_{\mathfrak{p}_{j_u}}},
$$ where $c_{t_1,\cdots, t_u}$ are constants in $\BC_p$ that can be
precisely determined but not important. Take $b\in K^\times$ such
that $$(b)=s_1 \mathfrak{P}_{i_1}+ \cdots + s_k\mathfrak{P}_{i_k} +
t_1 \mathfrak{P}_{j_1}+ \cdots + t_u\mathfrak{P}_{j_u} + \text{sum
of primes not above } p .$$ Let $b^{(p)}$ be the out-$p$-part of
$b$. Then {\allowdisplaybreaks \begin{eqnarray*} && \sum_a   \int
\widehat{\chi} \Big( (
1_{\omega_{\mathfrak{p}_{i_1}}^{s_1}B_{\mathfrak{p}_{i_1}}}\otimes
\cdots\otimes
1_{\omega_{\mathfrak{p}_{i_k}}^{s_k}B_{\mathfrak{p}_{i_k}}}) \otimes
(1_{\omega_{\mathfrak{p}_{j_1}}^{t_1}\mathcal{O}^\times_{\mathfrak{p}_{j_1}}}\otimes
\cdots \otimes
1_{\omega_{\mathfrak{p}_{j_u}}^{t_u}\mathcal{O}^\times_{\mathfrak{p}_{j_u}}})
\otimes  1_{Z^{T\cup U}_{S,J_2,a}} \Big) \widetilde{\mu}_J
\\ &=& \sum_a   \int b^*\left( \widehat{\chi} \Big( (
1_{\omega_{\mathfrak{p}_{i_1}}^{s_1}B_{\mathfrak{p}_{i_1}}}\otimes
\cdots\otimes
1_{\omega_{\mathfrak{p}_{i_k}}^{s_k}B_{\mathfrak{p}_{i_k}}}) \otimes
(1_{\omega_{\mathfrak{p}_{j_1}}^{t_1}\mathcal{O}^\times_{\mathfrak{p}_{j_1}}}\otimes
\cdots \otimes
1_{\omega_{\mathfrak{p}_{j_u}}^{t_u}\mathcal{O}^\times_{\mathfrak{p}_{j_u}}})
\otimes  1_{Z^{T\cup U}_{S,J_2,a}} \Big)\right) \widetilde{\mu}_J
\\&=& \sum_a   \int  \widehat{\chi} \Big( (
1_{ B_{\mathfrak{p}_{i_1}}}\otimes \cdots\otimes 1_{
B_{\mathfrak{p}_{i_k}}}) \otimes (1_{
\mathcal{O}^\times_{\mathfrak{p}_{j_1}}}\otimes \cdots \otimes 1_{
\mathcal{O}^\times_{\mathfrak{p}_{j_u}}}) \otimes  1_{Z^{T\cup
U}_{S,J_2,a/b^{(p)}}}\Big) \widetilde{\mu}_J \\
&=& \sum_a \int \widehat{\chi} \cdot 1_{S\cup T, J_2\cup
U,a/b^{(p)}} \widetilde{\mu}_J \hskip 5pt = \hskip 5pt
\mathscr{L}_{J\backslash (S\cup T)}(\widehat{\nu}\widehat{\chi})
.\end{eqnarray*} }

\noindent By our condition on $\widehat{\chi}$  we have
$\widehat{\chi}_\mathfrak{p}=1$ for any $\mathfrak{p}\in
J_1\backslash (S\cup T)$. So, by Corollary \ref{cor:vanish-value} we
have $\mathscr{L}_{J\backslash (S\cup
T)}(\widehat{\nu}\widehat{\chi})=0$. Hence,
\begin{eqnarray*}&& \big( \prod_{i\in T} \mathcal{L}_{
\mathfrak{p}_i,\ell_i} \big)\cdot \int
(\sum_{s_1=1}^{h_{i_1}}\cdots\sum_{s_k=1}^{h_{i_k}}
1_{\omega_{\mathfrak{p}_{i_1}}^{s_1}B_{\mathfrak{p}_{i_1}}}\otimes
\cdots\otimes 1_{\omega_{\mathfrak{p}_{i_k}}^{s_k}B_{\mathfrak{p}_{i_k}}})  \\
&&  \otimes   \sum_{S\subsetneq J_1\backslash T}\prod_{i\in
J_1\backslash (T\cup S)}\left(\sum\limits_{i':1\leq i'\leq d, i'\neq
i} {_{\mathfrak{p}_i}}\ell_{i'} \right) \cdot \prod_{i\in
S}(-{_{\mathfrak{p}_i}}l(\beta_{\mathfrak{p}_i} ))\cdot
\widehat{\chi}^T 1_{Z^T_{S,J_2,a}} \widetilde{\mu}_J
=0.\end{eqnarray*}

When $S= J_1\backslash T$, we have
\begin{eqnarray*}&& \sum_a\int
 (1_{\omega_{\mathfrak{p}_{i_1}}^{s_1}B_{\mathfrak{p}_{i_1}}}\otimes
\cdots 1_{\omega_{\mathfrak{p}_{i_k}}^{s_k}B_{\mathfrak{p}_{i_k}}})
\otimes \widehat{\chi}^T 1_{Z^T_{J_1\backslash T,J_2,a}}
\widetilde{\mu}_J
\\ &=& \sum_a \int \widehat{\chi} 1_{Z_{J_1, J_2,a}}
\widetilde{\mu}_J \hskip 5pt = \hskip 5pt \mathscr{L}_{J\backslash
J_1}(\pi,\widehat{\nu}\widehat{\chi}) .
\end{eqnarray*}

Therefore
\begin{eqnarray*} & & \sum_a \int \widehat{\chi} \cdot \det (\Omega_{a,T}) \cdot (1_{B_T}\otimes f^T_{a} )
\widetilde{\mu}_J \\
&=& (\prod_{i=1}^r h_{\mathfrak{p}_i})\cdot
  \big( \prod_{i\in T} \mathcal{L}_{
\mathfrak{p}_i,\ell_i} \big)\cdot\Big(\prod_{i\in J_1\backslash T}
\sum\limits_{i':1\leq i'\leq d, i'\neq i}
{_{\mathfrak{p}_i}}\ell_{i'}(\omega_{\mathfrak{p}_i})\Big)\cdot
\mathscr{L}_{J\backslash J_1}(\pi,\widehat{\nu}\widehat{\chi}).
\end{eqnarray*}

Now, applying Proposition \ref{prop:ell} we obtain
{\allowdisplaybreaks
\begin{eqnarray*}
&&\sum_a\int \widehat{\chi} \cdot\left(\sum_{v\in J\backslash J_2} {
_vl}\right)^{r}\cdot 1_{Z_{\emptyset,J_2,a}}
\widetilde{\mu}_J   \\
&=&r!\sum_a\int \widehat{\chi} \cdot  \det(\bar{\Omega}_a)
\widetilde{\mu}_J  \\
&=& r! \sum_{T\subset J_1} \sum_a\int \widehat{\chi}\cdot \det
(\Omega_{a,T}) \cdot (1_{B_T}\otimes f^T_{a} ) \widetilde{\mu}_J
\\ &=&
r! \cdot (\prod_{i=1}^r h_{\mathfrak{p}_i})\cdot \sum_{T\subset J_1}
\big( \prod_{i\in T} \mathcal{L}_{ \mathfrak{p}_i, \ell_i }
\big)\cdot\Big(\prod_{i\in J_1\backslash T} \sum\limits_{i':1\leq
i'\leq d, i'\neq i}
{_{\mathfrak{p}_i}}\ell_{i'}(\omega_{\mathfrak{p}_i})\Big)\cdot
\mathscr{L}_{J\backslash J_1}(\pi,\widehat{\nu},\widehat{\chi})
\\
&=& r! \cdot (\prod_{i=1}^r h_{\mathfrak{p}_i}) \cdot \prod_{i=1}^r
\left(\mathcal{L}_{ \mathfrak{p}_i, \ell_i}+ \sum\limits_{i':1\leq
i'\leq d,i'\neq i}
{_{\mathfrak{p}_i}}\ell_{i'}(\omega_{\mathfrak{p}_i})\right) \cdot
\mathscr{L}_{J\backslash J_1}(\pi,\widehat{\nu} \widehat{\chi}),
\end{eqnarray*}}

\noindent as expected.
\end{proof}

With $(s_\sigma)_{\sigma\in \Sigma_J}$ fixed, we put
$\mathscr{L}_J(t;\widehat{\nu}\widehat{\chi}, (s_\sigma)_\sigma)=
\mathscr{L}_J((ts_\sigma)_\sigma,\pi_K, \widehat{\nu}\widehat{\chi}
)$.

\begin{thm} \label{thm:final} We have \begin{eqnarray*}
&& \frac{d^k}{dt^k}\mathscr{L}_J(t; \widehat{\nu}\widehat{\chi}, (s_\sigma)_\sigma)|_{t=0}\\
&=&\left\{\begin{array}{ll}0 & \text{ if } 0\leq k< r , \\
r!\prod\limits_{\mathfrak{p}\in J_1 }\left( \sum\limits_{\sigma\in
\Sigma_J  \backslash \Sigma_\mathfrak{p}} s_\sigma
{_\mathfrak{p}}\ell_\sigma(\omega_{\mathfrak{p}})
+\sum\limits_{\sigma\in \Sigma_\mathfrak{p}}s_\sigma
\mathcal{L}^\Tei_{\sigma, u_\mathfrak{p}^\sigma} \right ) \cdot
\mathscr{L}_{J\backslash J_1} (\pi_K,\widehat{\nu} \widehat{\chi}) &
\text{ if } k=r.
\end{array}\right.\end{eqnarray*}
\end{thm}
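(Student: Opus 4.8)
The plan is to reduce Theorem \ref{thm:final} to the integral computation of Proposition \ref{prop:final} by relating the $t$-derivatives of $\mathscr{L}_J(t;\widehat{\nu}\widehat{\chi},(s_\sigma)_\sigma)$ at $t=0$ to integrals of powers of the logarithm function $l=\sum_{\sigma\in\Sigma_J}s_\sigma\log_\sigma$ against the distribution $\widetilde{\mu}_J$. First I would recall from Section \ref{sec:p-L-function} that $\mathscr{L}_J(\vec{s},\pi_K,\widehat{\nu})=\widehat{\Theta}_J(\widehat{\nu}\epsilon^{\vec{s}})$ is obtained by integrating the character $\epsilon^{\vec{s}}\widehat{\chi}$ against $\widetilde{\mu}_J$ (via $\underline{\alpha}$, passing to $\mathcal{G}_{J;\mathfrak{c}}$), and that $\epsilon^{\vec{s}}(a)=\exp(\sum_\sigma s_\sigma\log_\sigma(a))$. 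Setting $s_\sigma\mapsto ts_\sigma$ and differentiating $k$ times under the integral sign at $t=0$ produces $\int \widehat{\chi}\cdot l^k\,\widetilde{\mu}_J$ (more precisely a sum over the representatives $a$ of $\int\widehat{\chi}\cdot l^k\cdot 1_{Z_{\emptyset,J_2,a}}\widetilde{\mu}_J$, once one accounts for the decomposition of $\mathcal{G}_{J;\mathfrak{c}}$ under $F^\times\backslash K^\times$ and the chosen coset representatives). The differentiation under the integral is legitimate because $\widetilde{\mu}_J$ is a measure and the integrands vary analytically in $t$ on the relevant disc $\{|s_\sigma|\le |c_0|_p p^{-2}\}$; I would note this but not belabor it.

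The next step is bookkeeping on which variables survive. Because $\widehat{\chi}_\mathfrak{p}=1$ for $\mathfrak{p}\in J_1$, the character $\widehat{\chi}$ descends to $\Gamma^-_{J\backslash J_1}$, and when we expand $l^k=\bigl(\sum_{v}\: _vl\bigr)^k$ the only terms contributing are governed by the $J_1$-components; writing $l=\sum_{\sigma\in\Sigma_{J'}}s_\sigma\log_\sigma$ with $J'\supseteq J_1$, the choice of decomposition $J=J'\sqcup J''$ and $J_2=J''\cap J_0$ makes $\sum_{v\in J\backslash J_2}\:{}_vl(\beta_\mathfrak{p})=0$ for all $\mathfrak{p}\in J'$, which is exactly the hypothesis (\ref{eq:sum-zero}) needed for Proposition \ref{prop:ell}. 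For $k<r$ I would invoke Lemma \ref{lem:int-0}: the relevant integrals $\int\widehat{\chi}\cdot\lambda(\vec{t},J_1,J_2,a)\widetilde{\mu}_J$ vanish whenever $|\vec{t}|_J+\sharp(J_1)+\sharp(J_2)<\sharp(J_0)$, and since $k<r=\sharp(J_1)\le\sharp(J_0)-\sharp(J_2)$ after absorbing $J_2$, every monomial of $l^k\cdot 1_{Z_{\emptyset,J_2,a}}$ is of this form; hence the $k$-th derivative is $0$ for $0\le k<r$. For $k=r$ I would apply Proposition \ref{prop:final} directly: it gives $\sum_a\int\widehat{\chi}\cdot l^r\cdot 1_{Z_{\emptyset,J_2,a}}\widetilde{\mu}_J = r!\prod_{\mathfrak{p}\in J_1}\bigl(h_\mathfrak{p}\mathcal{L}_{\mathfrak{p},l}\bigr)\cdot\mathscr{L}_{J\backslash J_1}(\pi_K,\widehat{\nu}\widehat{\chi})$, and I would then unpack $\mathcal{L}_{\mathfrak{p},l}=\sum_{\sigma\in\Sigma_J}s_\sigma\mathcal{L}_{\mathfrak{p},\sigma}$ using Definition \ref{defn:L-inv}, splitting $\Sigma_J=\Sigma_\mathfrak{p}\sqcup(\Sigma_J\backslash\Sigma_\mathfrak{p})$ so that $h_\mathfrak{p}\mathcal{L}_{\mathfrak{p},l}=h_\mathfrak{p}\bigl(\sum_{\sigma\in\Sigma_J\backslash\Sigma_\mathfrak{p}}s_\sigma\:{}_\mathfrak{p}\log_\sigma(\omega_\mathfrak{p})+\sum_{\sigma\in\Sigma_\mathfrak{p}}s_\sigma\mathcal{L}^\Tei_{\sigma,u_\mathfrak{p}^\sigma}\bigr)$; comparing with the claimed formula amounts to checking the normalization of $_\mathfrak{p}\log_\sigma(\omega_\mathfrak{p})$ against $h_\mathfrak{p}$ (note $_\mathfrak{p}\log_\sigma$ vanishes at $\beta_{\mathfrak{p},\mathfrak{p}}$, whose $\mathfrak{p}$-valuation involves $h_\mathfrak{p}$), so that the factor $h_\mathfrak{p}$ is already encoded and the final expression matches.

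The main obstacle I expect is making the differentiation-under-the-integral step and the passage from $\widehat{\Theta}_J(\widehat{\nu}\epsilon^{t\vec{s}})$ to $\sum_a\int\widehat{\chi}\cdot l^k\cdot 1_{Z_{\emptyset,J_2,a}}\widetilde{\mu}_J$ fully rigorous: one must match the coset representatives $\{a:a_p=1\}$ of $\BA_F^{\infty,\times}K^\times\backslash\BA_K^{\infty,\times}/\widehat{\mathcal{O}}_K^\times$ used in Proposition \ref{prop:final} with the structure of $\widetilde{X}_{J;\mathfrak{c}}$ and $\mathcal{G}_{J;\mathfrak{c}}$, keep careful track of the index $[\mathcal{O}_K^\times:\mathcal{O}_F^\times]$ appearing in Lemma \ref{lem:average-vanishing} and verify it does not distort the constant, and confirm that the logarithm $l$ on $\Gamma^-_J$ pulls back correctly to the function $\sum_{v\in J\backslash J_2}\:{}_vl$ on $\widetilde{X}_{J;\mathfrak{c}}$ appearing in the integral (this uses that $\widehat{\chi}$ is unramified outside $J_2$-directions and trivial in $J_1$-directions, so the $J_1$-part of $l$ is what the derivative sees). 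Once this dictionary is in place, the theorem follows by assembling Lemma \ref{lem:int-0} (for $k<r$) and Proposition \ref{prop:final} (for $k=r$), and Theorem \ref{thm:C} and Theorem \ref{thm:B} are then obtained in Section \ref{sec:proof-main} by specializing $(s_\sigma)$ and $\widehat{\chi}$ and squaring.
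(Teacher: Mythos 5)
Your overall route coincides with the paper's: interpret $\frac{d^k}{dt^k}\mathscr{L}_J(t;\widehat{\nu}\widehat{\chi},(s_\sigma)_\sigma)\big|_{t=0}$ as the integral of $\widehat{\chi}\cdot l^k$ against the theta measure, kill $k<r$ with Lemma \ref{lem:int-0}, and feed $k=r$ into Proposition \ref{prop:final}. The genuine problem is your reconciliation of constants at the end. Proposition \ref{prop:final} yields $r!\prod_{\mathfrak{p}\in J_1}\left(h_\mathfrak{p}\mathcal{L}_{\mathfrak{p},l}\right)\cdot\mathscr{L}_{J\backslash J_1}(\pi_K,\widehat{\nu}\widehat{\chi})$, whereas the bracket in Theorem \ref{thm:final} is, by Definition \ref{defn:L-inv}, exactly $\mathcal{L}_{\mathfrak{p},l}=\sum_{\sigma}s_\sigma\mathcal{L}_{\mathfrak{p},\sigma}$ with no $h_\mathfrak{p}$: neither $\mathcal{L}^\Tei_{\sigma,u_\mathfrak{p}^\sigma}$ nor ${}_{\mathfrak{p}}\log_\sigma(\omega_{\mathfrak{p}})$ carries a hidden $h_\mathfrak{p}$ (the fact that ${}_{\mathfrak{p}}\log_\sigma$ vanishes at $\beta_{\mathfrak{p},\mathfrak{p}}$ fixes the branch of the logarithm, it does not rescale its value at $\omega_\mathfrak{p}$). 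So your assertion that ``the factor $h_\mathfrak{p}$ is already encoded'' is unfounded, and with your normalization of the first step the $k=r$ derivative would come out too large by $\prod_{\mathfrak{p}\in J_1}h_\mathfrak{p}$.

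The missing ingredient is a normalizing factor in the very first identity. The derivative is an integral against $\mu_J$ on $\mathcal{G}_{J;\mathfrak{c}}$, and its expression through $\widetilde{\mu}_J$ and the sets $Z_{\emptyset,J_2,a}$ is $\frac{d^k}{dt^k}\mathscr{L}_J(t;\cdot)\big|_{t=0}=\int\widehat{\chi}\,l^k\mu_J=\frac{1}{\prod_{\mathfrak{p}\in J_1}h_\mathfrak{p}}\sum_a\int\widehat{\chi}\cdot\big(\sum_v{}_vl\big)^k\cdot 1_{Z_{\emptyset,J_2,a}}\widetilde{\mu}_J$, not the unnormalized sum you wrote: at each $\mathfrak{p}\in J_1$ the $\mathfrak{p}$-slot of $Z_{\emptyset,J_2,a}$ is the fundamental domain $D_\mathfrak{p}$, a union of $h_\mathfrak{p}$ unit cosets $\omega_\mathfrak{p}^i\mathcal{O}_{F_\mathfrak{p}}^\times$, so relative to the normalization implicit in Proposition \ref{prop:transfer} and in the identity $\sum_a\int\widehat{\chi}\,1_{Z_{J_1,J_2,a}}\widetilde{\mu}_J=\mathscr{L}_{J\backslash J_1}(\pi_K,\widehat{\nu}\widehat{\chi})$ used inside the proof of Proposition \ref{prop:final}, the sum $\sum_a 1_{Z_{\emptyset,J_2,a}}$ overcounts by exactly $\prod_{\mathfrak{p}\in J_1}h_\mathfrak{p}$. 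It is this prefactor that cancels the $h_\mathfrak{p}$'s coming out of Proposition \ref{prop:final}; it cannot be absorbed into the $L$-invariants. Your handling of $k<r$ via Lemma \ref{lem:int-0} is fine, and your reduction of $\sum_v{}_vl$ to the $J\backslash J_2$-part is essentially the paper's, provided you also note that the terms ${}_vl$ with $v\notin J$ are nonzero constants on $Z_{\emptyset,J_2,a}$ and are disposed of by one more application of Lemma \ref{lem:int-0} rather than by pointwise vanishing.
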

\begin{proof} We have
$$ \frac{d^k}{dt^k}\mathscr{L}_J(t;\widehat{\nu}\widehat{\chi},
(s_\sigma)_\sigma)|_{t=0} = \int \widehat{\chi}\cdot l^k \mu_J =
\frac{1}{\prod_{\mathfrak{p}\in J_1}h_\mathfrak{p}}\int \sum_a
\widehat{\chi}\cdot \left(\sum_{v} { _vl}\right)^{k}\cdot
1_{Z_{\emptyset,J_2,a}} \widetilde{\mu}_J,
$$ where $v$ runs through all finite places of $F$. When $k<r$, by Lemma \ref{lem:int-0} this is equal to
$0$. When $k=r$, again using Lemma \ref{lem:int-0} we obtain
$$ \frac{1}{\prod_{\mathfrak{p}\in J_1}h_\mathfrak{p}}\int
\sum_a \widehat{\chi}\cdot \left(\sum_{v} { _vl}\right)^{r}\cdot
1_{Z_{\emptyset,J_2,a}}\ \widetilde{\mu}_J =
\frac{1}{\prod_{\mathfrak{p}\in J_1}h_\mathfrak{p}}\int
\sum_a\widehat{\chi}\cdot\left(\sum_{v\in J} { _vl}\right)^{r}\cdot
1_{Z_{\emptyset,J_2,a}} \ \widetilde{\mu}_J. $$ But
$$\left(\sum_{v\in J} { _vl}\right)^{r}\cdot 1_{Z_{\emptyset,J_2,a}}=
\left(\sum_{v\in J\backslash J_2} { _vl}\right)^{r}\cdot
1_{Z_{\emptyset,J_2,a}}.$$ By Proposition \ref{prop:final} we obtain
the assertion for $k=r$.
\end{proof}

Allowing $s_\sigma$ varying, we obtain the following

\begin{thm}\label{thm:auxi}
Let $\widehat{\chi}$ be a character of $\Gamma^-_J$ such that
$\widehat{\chi}_\mathfrak{p}=1$ for every $\mathfrak{p}\in J_1$. If
$\vec{\sigma}=(\sigma_1, \cdots, \sigma_r)$ is a tuple of elements
in $\Sigma_{J'}$, then
$$ \left.\frac{\partial^r}{\partial s_{\sigma_1}\cdots \partial s_{\sigma_r}} \mathscr{L}_J(\vec{s},\pi_K,\widehat{\nu}\widehat{\chi}
)\right|_{\vec{s}=\vec{0}} =  \Big(\sum_{P=(j_1,\cdots, j_r)}
\prod_{i=1}^r \mathcal{L}_{\mathfrak{p}_{j_i},\sigma_i}\Big)\cdot
\mathscr{L}_{J\backslash J_1}(\pi_K,\widehat{\nu} \widehat{\chi}),$$
where in the sum $P$ runs over all permutations $(j_1, \cdots, j_r)$
of $\{1,\cdots, r\}$.
\end{thm}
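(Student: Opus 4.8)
The plan is to derive Theorem~\ref{thm:auxi} from Theorem~\ref{thm:final} by a purely formal multilinearity argument. Recall that in the setup of Theorem~\ref{thm:final} we fix $\widehat{\chi}$ with $\widehat{\chi}_\mathfrak{p}=1$ for $\mathfrak{p}\in J_1$, set $l=\sum_{\sigma\in\Sigma_{J'}}s_\sigma\log_\sigma$, and consider $\mathscr{L}_J(t;\widehat{\nu}\widehat{\chi},(s_\sigma)_\sigma)=\mathscr{L}_J((ts_\sigma)_\sigma,\pi_K,\widehat{\nu}\widehat{\chi})$. Theorem~\ref{thm:final} computes the $r$-th $t$-derivative at $t=0$ and shows the lower-order ones vanish. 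The key observation is that $\mathscr{L}_J$ is analytic in $\vec{s}=(s_\sigma)_{\sigma\in\Sigma_{J'}}$ near $\vec 0$, so it has a Taylor expansion; the $t$-derivative $\frac{d^r}{dt^r}\mathscr{L}_J(t;\cdot,(s_\sigma))|_{t=0}$ is exactly $r!$ times the degree-$r$ homogeneous part of that expansion (evaluated at $\vec s$). Since Theorem~\ref{thm:final} shows all lower-order terms vanish and exhibits the degree-$r$ part as $\prod_{i=1}^r\big(\sum_{\sigma}s_\sigma\mathcal{L}_{\mathfrak{p}_i,\sigma}\big)\cdot\mathscr{L}_{J\backslash J_1}(\pi_K,\widehat\nu\widehat\chi)$ (using Definition~\ref{defn:L-inv}: $\mathcal{L}_{\mathfrak{p},\sigma}=\mathcal{L}^\Tei_{\sigma,u_\mathfrak{p}^\sigma}$ when $\sigma\in\Sigma_\mathfrak{p}$ and $={}_\mathfrak{p}\log_\sigma(\omega_\mathfrak{p})$ otherwise), the whole degree-$r$ part of the Taylor expansion of $\mathscr{L}_J$ at $\vec 0$ is this product.

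First I would record the analyticity: by construction (Section~\ref{sec:p-L-function}) $\mathscr{L}_J(\vec s,\pi_K,\widehat\nu\widehat\chi)=\widehat\Theta_J(\widehat\chi\widehat\nu^{-1}\epsilon^{\vec s})$ is an analytic function of $\vec s$ on a polydisc around $\vec 0$, so it admits a convergent multivariable Taylor expansion $\sum_{\vec m}c_{\vec m}\vec s^{\,\vec m}$. Restricting to the line $\vec s=t(s_\sigma)_\sigma$ gives $\mathscr{L}_J(t;\cdot,(s_\sigma))=\sum_k t^k\big(\sum_{|\vec m|=k}c_{\vec m}(s_\sigma)^{\vec m}\big)$, hence $\frac1{k!}\frac{d^k}{dt^k}|_{t=0}=\sum_{|\vec m|=k}c_{\vec m}(s_\sigma)^{\vec m}$, a polynomial in $(s_\sigma)$. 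Theorem~\ref{thm:final} says this polynomial is $0$ for $k<r$, so $c_{\vec m}=0$ whenever $|\vec m|<r$; and for $k=r$ it equals $\prod_{\mathfrak{p}\in J_1}\big(\sum_{\sigma\in\Sigma_J}s_\sigma\mathcal{L}_{\mathfrak{p},\sigma}\big)\cdot\mathscr{L}_{J\backslash J_1}(\pi_K,\widehat\nu\widehat\chi)$ — here I combine the two cases of Definition~\ref{defn:L-inv} to rewrite the expression appearing in Theorem~\ref{thm:final} uniformly (the $\sigma\notin\Sigma_\mathfrak{p}$ contributions ${}_\mathfrak{p}\ell_\sigma(\omega_\mathfrak{p})$ are precisely $\mathcal{L}_{\mathfrak{p},\sigma}$ by definition, and only $\sigma\in\Sigma_{J'}$ occur since $l\in\LOG_{J'}$). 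Thus the degree-$r$ homogeneous part $Q_r(\vec s):=\sum_{|\vec m|=r}c_{\vec m}\vec s^{\,\vec m}$ equals $\prod_{i=1}^r\big(\sum_{\sigma}s_{\sigma}\mathcal{L}_{\mathfrak{p}_i,\sigma}\big)\cdot\mathscr{L}_{J\backslash J_1}(\pi_K,\widehat\nu\widehat\chi)$ as a polynomial identity in the $s_\sigma$, valid for all $(s_\sigma)$ in the domain (and hence identically, two polynomials agreeing on an open set being equal).

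Next I would extract the mixed partial. For a tuple $\vec\sigma=(\sigma_1,\dots,\sigma_r)$ of elements of $\Sigma_{J'}$, the operator $\frac{\partial^r}{\partial s_{\sigma_1}\cdots\partial s_{\sigma_r}}$ applied at $\vec s=\vec 0$ annihilates every monomial of total degree $\ne r$ and picks out, from $Q_r$, the coefficient weighted by the number of orderings: for a product $\prod_{i=1}^r L_i(\vec s)$ of linear forms $L_i(\vec s)=\sum_\sigma s_\sigma a_{i,\sigma}$, one has $\frac{\partial^r}{\partial s_{\sigma_1}\cdots\partial s_{\sigma_r}}\prod_i L_i\big|_{\vec 0}=\sum_{P}\prod_{i=1}^r a_{i,\sigma_{P(i)}}$ where $P$ runs over permutations of $\{1,\dots,r\}$; this is the standard polarization identity for products of linear forms and is a one-line computation. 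Applying it with $L_i(\vec s)=\sum_\sigma s_\sigma\mathcal{L}_{\mathfrak{p}_i,\sigma}$ (so $a_{i,\sigma}=\mathcal{L}_{\mathfrak{p}_i,\sigma}$) and multiplying by the scalar $\mathscr{L}_{J\backslash J_1}(\pi_K,\widehat\nu\widehat\chi)$ gives exactly
$$ \left.\frac{\partial^r}{\partial s_{\sigma_1}\cdots\partial s_{\sigma_r}}\mathscr{L}_J(\vec s,\pi_K,\widehat\nu\widehat\chi)\right|_{\vec s=\vec 0}=\Big(\sum_{P=(j_1,\dots,j_r)}\prod_{i=1}^r\mathcal{L}_{\mathfrak{p}_{j_i},\sigma_i}\Big)\cdot\mathscr{L}_{J\backslash J_1}(\pi_K,\widehat\nu\widehat\chi), $$
re-indexing the sum over permutations $P$ of $\{1,\dots,r\}$ as $(j_1,\dots,j_r)$, which is the claimed formula.

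The argument is essentially bookkeeping once Theorem~\ref{thm:final} is in hand, so there is no serious obstacle; the only point requiring a little care is to confirm that Theorem~\ref{thm:final} really does identify the \emph{entire} degree-$\le r$ Taylor jet of $\mathscr{L}_J$ at $\vec 0$ (not merely its restriction to one line), which follows because the conclusion of Theorem~\ref{thm:final} holds for \emph{every} direction $(s_\sigma)$ and a polynomial of degree $\le r$ is determined by its values (as a function of the direction) together with the vanishing of lower-order parts — equivalently, the homogeneous components of a fixed analytic function are well-defined and Theorem~\ref{thm:final} pins down each of them. I would state this explicitly as a short lemma (or inline remark) to make the passage from "one-variable $t$-derivatives in all directions" to "the multivariable Taylor jet" rigorous, and then the polarization identity finishes the proof.
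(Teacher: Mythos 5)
Your proposal is correct and follows essentially the same route as the paper: the paper likewise deduces from Theorem \ref{thm:final} the Taylor expansion (\ref{eq:auxi}) of $\mathscr{L}_J$ at $\vec{s}=\vec{0}$ (vanishing of the lower-order jet plus the degree-$r$ part as a product of linear forms times $\mathscr{L}_{J\backslash J_1}(\pi_K,\widehat{\nu}\widehat{\chi})$), and then extracts the mixed partial $\partial^r/\partial s_{\sigma_1}\cdots\partial s_{\sigma_r}$ by coefficient bookkeeping, which is exactly your polarization identity phrased via monomial coefficients and the multinomial factor $\vec{n}!$. Your explicit remark that directional derivatives in all directions determine the degree-$\le r$ jet is precisely the step the paper leaves implicit in passing from Theorem \ref{thm:final} to (\ref{eq:auxi}), so no genuinely different argument is involved.
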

\begin{proof}
By Theorem \ref{thm:final} we have
\begin{equation} \label{eq:auxi}
\mathscr{L}_J(\vec{s},\pi_K,\widehat{\nu}\widehat{\chi}) =
\mathscr{L}_{J\backslash J_1} (\pi_K,\widehat{\nu} \widehat{\chi})
\prod\limits_{\mathfrak{p}\in J_1 }\left( \sum\limits_{\sigma\in
 \Sigma_J } s_\sigma
 \mathcal{L}_{\mathfrak{p},\sigma} \right )
  +
 \text{ higher order terms}.  \end{equation}
For every vector $\vec{n}=(n_\tau)_{\tau\in\Sigma_{J'}}$ of nonnegative integers with $\sum_{\tau}n_\tau=r$,
the coefficient of $\prod_{\tau\in \Sigma_{J'}}  s_{\tau}^{n_\tau}$ is
$$\mathscr{L}_{J\backslash J_1} (\pi_K,\widehat{\nu} \widehat{\chi}) \sum_{f}\prod_{i=1}^r\mathcal{L}_{\mathfrak{p}_i,{f(i)}},$$
where $f$ runs over all maps $f:\{1,\cdots, r\}\rightarrow
\Sigma_{J'}$ such that $\sharp f^{-1}(\tau)=n_\tau$ for each
$\tau\in \Sigma_{J'}$. Let $f_0$ be such a map, and write
$\sigma_i=f_0(i)$. Then $$\mathscr{L}_{J\backslash J_1}
(\pi_K,\widehat{\nu} \widehat{\chi})
\sum_{f}\prod_{i=1}^r\mathcal{L}_{\mathfrak{p}_i,{f(i)}}
=\frac{1}{\vec{n}!}\mathscr{L}_{J\backslash J_1}
(\pi_K,\widehat{\nu} \widehat{\chi})\sum_{P=(j_1,\cdots,
j_r)}\prod_{i=1}^r\mathcal{L}_{\mathfrak{p}_i,\sigma_{j_i}},  $$
where $P$ runs over all permutations $(j_1, \cdots, j_r)$ of
$\{1,\cdots, r\}$. Hence,
\begin{eqnarray*}\frac{\partial^r}{\partial s_{\sigma_1}\cdots \partial s_{\sigma_r}}
\mathscr{L}_J(\vec{s},\pi_K,\widehat{\nu}\widehat{\chi})\Big|_{\vec{s}=\vec{0}}
 &=& \frac{\partial^r}{\prod_{\tau\in\Sigma_{J'}}\partial
s_{\tau}^{n_\tau}}\mathscr{L}_J(\pi_K,\widehat{\nu}\widehat{\chi},
\vec{s}) \\ & = & \vec{n}!\cdot\frac{1}{\vec{n}!}\cdot
\mathscr{L}_{J\backslash J_1} (\pi_K,\widehat{\nu}
\widehat{\chi})\sum_{P=(j_1,\cdots,
j_r)}\prod_{i=1}^r\mathcal{L}_{\mathfrak{p}_i,\sigma_{j_i}}
\\ &=& \mathscr{L}_{J\backslash J_1} (\pi_K,\widehat{\nu}
\widehat{\chi})\sum_{P=(j_1,\cdots,
j_r)}\prod_{i=1}^r\mathcal{L}_{\mathfrak{p}_i,\sigma_{j_i}}\\  &=&
\mathscr{L}_{J\backslash J_1} (\pi_K,\widehat{\nu}
\widehat{\chi})\sum_{P=(j_1,\cdots,
j_r)}\prod_{i=1}^r\mathcal{L}_{\mathfrak{p}_{j_i},\sigma_{i}} ,
\end{eqnarray*} as desired.
\end{proof}

As last, we prove Theorems \ref{thm:A}, \ref{thm:B}, \ref{thm:C} and
\ref{thm:D}. Theorem \ref{thm:D} follows immediately from
(\ref{eq:auxi}).

We fix notations. Take $J_1$ to be $J_p$, and let $J_1$ and $J_2$ be
as in the introduction. We take the CM type $\Sigma_K$ to be set of
the places above the infinite place of $\mathcal{K}$ corresponding
to a fixed embedding $\mathcal{K}\hookrightarrow \BC$. Write
$\mathcal{K}\cap \jmath^{-1}\Omega_p=\mathscr{P}$ and
$p\mathcal{O}_\mathcal{K}=\mathscr{P}\bar{\mathscr{P}}$. Then in the
decomposition
$\mathfrak{p}\mathcal{O}_K=\mathfrak{P}\bar{\mathfrak{P}}$ for each
prime $\mathfrak{p}$ above $p$, $\mathfrak{P}$ is above
$\mathscr{P}$, and $\bar{\mathfrak{P}}$ is above
$\bar{\mathscr{P}}$. We take $\mathfrak{c}$ in Section
\ref{ss:anti-ext-char} to be $\mathcal{O}_F$. Take $\mathbf{m}$ to
be the zero vector $\mathbf{0}$, and take $\nu$ to be the trivial
character so that $\widehat{\nu}$ is also trivial.

Let $\pi$ be the automorphic representation of
$\mathrm{GL}_2(\BA_F)$ associated to $\mathbf{f}$. The local
conditions on $\mathbf{f}$ ensure that there exists a definite
quaternion algebra $B$ over $F$ whose discriminant $\mathbf{n}^-_b$
is the product of the primes that divide $\mathfrak{n}^{-}$ and are
inert in $K$, and an automorphic representation on $B$ attached to
$\pi$ via Jacquet-Langlands correspondence. So we can carry out the
constructions in Sections \ref{sec:special-value} and
\ref{sec:p-L-function}. We should note that $\pi$, $\nu=1$ and $B$
satisfy the local assumptions made after Proposition
\ref{prop:cal-P}.

Let $L_{J_p}(\pi_K, \widehat{\chi})$ be the $p$-adic $L$-function
constructed in Section \ref{sec:p-L-function}. Put
$\Omega^-_{\mathbf{f}}=\Omega^-_{J_p,\phi}$ and
$$L_p(\pi_K, \widehat{\chi})=
\widehat{\chi}(\mathfrak{N}^+)^{-1}\cdot
(\prod_{v}\widehat{\chi}_v(\omega_v))^{-1}\cdot L_{J_p}(\pi_K,
\widehat{\chi})$$ where $v$ runs over the set $(\ref{eq:set})$.
Restricting $L_p(\pi_K, \widehat{\chi})$ to (\ref{eq:cont-char}) we
obtain the $p$-adic $L$-function demanded in Theorem \ref{thm:A}.

We apply Theorem \ref{thm:final} to prove Theorem \ref{thm:B} and
Theorem \ref{thm:C}.

Put
$$\mathcal{L}^\Tei_\mathfrak{p}(\mathbf{f})=\sum_{\iota \in \Sigma_\mathfrak{p}}\mathcal{L}^\Tei_{\iota,
u_\mathfrak{p}^\iota}(\mathbf{f})+\sum\limits_{\sigma\in
\Sigma_{J_p} \backslash \Sigma_\mathfrak{p}}
{_\mathfrak{p}}\ell_\sigma(\omega_{\mathfrak{p}}).$$

Let $\beta_\mathcal{K}$ be an element of $\mathcal{K}^\times$ such
that $\beta_\mathcal{K}$ is a unit at all finite places outside of
$\mathscr{P}$. Let $u_\mathcal{K}$ be the element $
\frac{\beta_{\mathcal{K},\mathscr{P}}}{\beta_{\mathcal{K},\bar{\mathscr{P}}}}$
of $\BQ_p$. Consider the infinitesimal of $\langle \ \cdot \
\rangle_{\mathrm{anti}}^s$ as an character of $\mathcal{K}^\times
\backslash \BA_\mathcal{K}^\times$. Its restriction to
$\mathcal{K}^\times_p=\mathcal{K}^\times_\mathscr{P}\times
\mathcal{K}^\times_{\bar{\mathscr{P}}}$ becomes $(x,y)\mapsto
\log_{u_\mathcal{K}}(\frac{x}{y})$.  Note that $
\epsilon^{(s,\cdots,s)}$ is the restriction of $\langle \ \cdot \
\rangle_{\mathrm{anti}}^s$. So $\sum\limits_{\sigma\in
\Sigma_{J_p}}\ell_\sigma$ is the infinitesimal of $\langle \ \cdot \
\rangle_{\mathrm{anti}}^s$. Hence, $$\sum\limits_{\sigma\in
\Sigma_{p} \backslash \Sigma_\mathfrak{p}}
{_\mathfrak{p}}\ell_\sigma(\omega_{\mathfrak{p}}) =
\frac{1}{\ord_\mathfrak{p}(u_\mathfrak{p})} \sum_{\sigma\in
\Sigma_{J_p}} {_\mathfrak{p}}\ell_\sigma(u_{\mathfrak{p}})  =
\frac{1}{\ord_\mathfrak{p}(u_\mathfrak{p})}
\log_{u_\mathcal{K}}(N_{F/\BQ}(u_\mathfrak{p}))=\frac{1}{\ord_\mathfrak{p}(u_\mathfrak{p})}\sum_{\iota\in\Sigma_\mathfrak{p}}\log_{u_\mathcal{K}}
(u_\mathfrak{p}^\iota).$$ Observe that
$$\frac{1}{\ord_\mathfrak{p}(u_\mathfrak{p})}\log_{u_\mathcal{K}}
(u_\mathfrak{p}^\iota)=\log_{u_\mathcal{K}}(\omega_\mathfrak{p})-\log_{u_\mathfrak{p}^\iota}(\omega_\mathfrak{p}).$$
By Proposition \ref{prop:diff-L} we have
$$ \mathcal{L}^\Tei_\mathfrak{p}(\mathbf{f})=\sum_{\iota
\in \Sigma_\mathfrak{p}}\mathcal{L}^\Tei_{\iota,
u_\mathcal{K}}(\mathbf{f}).$$

Theorem \ref{thm:final} tells us
\begin{eqnarray*} \frac{d^n L_p(s,\mathbf{f}/K)}{d s^n}\Big{|}_{s=0}
= \left\{ \begin{array}{ll} 0 & \text{ if } n< 2r , \\ (r!)^2
\left(\prod\limits_{\mathfrak{p}\in J_1 }
\mathcal{L}_{\mathfrak{p}}^\Tei(\mathbf{f})
 \right )^2 \cdot
L_{J_2} ((0,\cdots,0),\pi_K) & \text{ if }n=2r,
\end{array} \right.
\end{eqnarray*}
and
$$ L_{J_p}((s_\sigma)_{\sigma\in \Sigma_{J_p}},\pi_K) = L_{J_2} ((0,\cdots 0),\pi_K)
\cdot \prod\limits_{\mathfrak{p}\in J_1 }\left(
\sum\limits_{\sigma\in
 \Sigma_{J_p} } s_\sigma
 \mathcal{L}_{\mathfrak{p},\sigma} \right )^2
  + \text{ higher order terms}.   $$

By the interpolation formula (i.e. Theorem \ref{thm:interpol}) for
$L_{J_2} (\cdot, \pi_K)$ we have
$$L_{J_2} ((0,\cdots, 0),\pi_K)= \prod_{\mathfrak{p}\in J_2}
e_{\mathfrak{p}}(1) \cdot \frac{L(\frac{1}{2},
\pi_K)}{\Omega^-_{J_2,\phi}} = \prod_{\mathfrak{p}\in J_2}
e_{\mathfrak{p}}(1) \cdot \frac{L(1,
\mathbf{f}/K)}{\Omega^-_{J_2,\phi}}
$$ We show $\Omega^-_{J_2,\phi}=\Omega^-_{J_p,\phi}$ to complete our
proof. By the formula on $\Omega^-_{J,\phi}$ given in Section
\ref{sec:p-L-function} we obtain
$$ \Omega^-_{J_2,\phi}=\Omega^-_{J_p,\phi}\cdot \prod_{\mathfrak{p}\in J_1}\epsilon(\frac{1}{2}, \pi_\mathfrak{p},\psi_\mathfrak{p}) .$$
The root number $\epsilon(\frac{1}{2},
\pi_\mathfrak{p},\psi_\mathfrak{p})$ is computed by
\cite[Proposition 3.6]{JL70} which says that
$$\epsilon(\frac{1}{2}, \pi_\mathfrak{p},\psi_\mathfrak{p})=
\epsilon(\frac{1}{2},
\mu_\mathfrak{p},\psi_\mathfrak{p})\epsilon(\frac{1}{2},
\mu_\mathfrak{p}|\cdot|_\mathfrak{p}^{-1},\psi_\mathfrak{p})$$ if
$\pi_\mathfrak{p}=\sigma(\mu_\mathfrak{p},
\mu_\mathfrak{p}|\cdot|_\mathfrak{p}^{-1})$. Since
$\mu_\mathfrak{p}^2|\cdot|_\mathfrak{p}^{-1}=1$, it follows from
Tate's local functional equation (see \cite[Proposition 3.1.5,
3.1.9]{Bump}) that $\epsilon(\frac{1}{2},
\mu_\mathfrak{p},\psi_\mathfrak{p})\epsilon(\frac{1}{2},
\mu_\mathfrak{p}|\cdot|_\mathfrak{p}^{-1},\psi_\mathfrak{p})=1$.

\end{document}